\numberwithin{equation}{section}
\let\OLDthebibliography\thebibliography
\renewcommand\thebibliography[1]{
  \OLDthebibliography{#1}
  \setlength{\parskip}{0pt}
  \setlength{\itemsep}{2pt plus 0.5ex}
}
\def\@cite#1#2{{\m@th\upshape\bfseries%
[{#1\if@tempswa{\m@th\upshape\mdseries, #2}\fi}]}}
\theoremstyle{plain}
\newtheorem{theorem}{Theorem}[subsection]
\newtheorem{corollary}[theorem]{Corollary}
\newtheorem{proposition}[theorem]{Proposition}
\newtheorem{lemma}[theorem]{Lemma}
\theoremstyle{definition}
\newtheorem{definition}[theorem]{Definition}
\newtheorem{example}[theorem]{Example}
\newtheorem{remark}[theorem]{Remark}
\theoremstyle{remark}
\newcommand{\B}{{\mathcal{B}}}
\newcommand{\F}{{\mathcal{F}}}
\newcommand{\I}{{\mathcal{I}}}
\newcommand{\J}{{\mathcal{J}}}
\newcommand{\K}{{\mathcal{K}}}
\renewcommand{\L}{{\mathcal{L}}}
\newcommand{\M}{{\mathcal{M}}}
\newcommand{\N}{{\mathcal{N}}}
\renewcommand{\O}{{\mathcal{O}}}
\renewcommand{\P}{{\mathcal{P}}}
\renewcommand{\S}{{\mathcal{S}}}
\newcommand{\T}{{\mathcal{T}}}
\def\al{\alpha}
\def\be{\beta}
\def\ga{\gamma}
\def\Ga{\Gamma}
\def\de{\delta}
\def\ze{\zeta}
\def\la{\lambda}
\def\La{\Lambda}
\def\si{\sigma}
\newcommand{\bC}{\mathbb{C}}
\newcommand{\bI}{\mathbb{I}}
\newcommand{\bN}{\mathbb{N}}
\newcommand{\bT}{\mathbb{T}}
\newcommand{\bZ}{\mathbb{Z}}
\newcommand{\fJ}{{\mathfrak{J}}}
\newcommand{\foral}{\text{ for all }}
\newcommand{\qand}{\quad\text{and}\quad}
\newcommand{\ca}{\mathrm{C}^*}
\newcommand{\ol}{\overline}
\newcommand{\Aut}{\operatorname{Aut}}
\newcommand{\End}{\operatorname{End}}
\newcommand{\id}{{\operatorname{id}}}
\newcommand{\inv}{{\operatorname{inv}}}
\newcommand{\Inv}{{\operatorname{Inv}}}
\newcommand{\mt}{\emptyset}
\newcommand{\PO}{\operatorname{PO}}
\newcommand{\spn}{\operatorname{span}}
\newcommand{\supp}{\operatorname{supp}}
\newcommand{\sca}[1]{\left\langle#1\right\rangle} 
\newcommand{\nor}[1]{\left\Vert #1\right\Vert} 
\newcommand{\un}[1]{{\underline{#1}}} 
\begin{document}


\title[Equivariant Nica-Pimsner Quotients]{Equivariant Nica-Pimsner Quotients associated with Strong Compactly Aligned Product Systems}


\author[J.A. Dessi]{Joseph A. Dessi}
\address{School of Mathematics, Statistics and Physics\\ Newcastle University\\ Newcastle upon Tyne\\ NE1 7RU\\ UK}
\email{j.dessi@newcastle.ac.uk}

\author[E.T.A. Kakariadis]{Evgenios T.A. Kakariadis}
\address{School of Mathematics, Statistics and Physics\\ Newcastle University\\ Newcastle upon Tyne\\ NE1 7RU\\ UK}
\email{evgenios.kakariadis@newcastle.ac.uk}

\thanks{2010 {\it  Mathematics Subject Classification.} 46L08, 47L55, 46L05}

\thanks{{\it Key words and phrases:} Product systems, Toeplitz-Nica-Pimsner algebras, equivariant ideals.}

\begin{abstract}
We parametrise the gauge-invariant ideals of the Toeplitz-Nica-Pimsner algebra of a strong compactly aligned product system over $\mathbb{Z}_+^d$ by using $2^d$-tuples of ideals of the coefficient algebra that are invariant, partially ordered, and maximal.
We give an algebraic characterisation of maximality that allows the iteration of a $2^d$-tuple to the maximal one inducing the same gauge-invariant ideal.
The parametrisation respects inclusions and intersections, while we characterise the join operation on the $2^d$-tuples that renders the parametrisation a lattice isomorphism.

The problem of the parametrisation of the gauge-invariant ideals is equivalent to the study of relative Cuntz-Nica-Pimsner algebras, for which we provide a generalised Gauge-Invariant Uniqueness Theorem.
We focus further on equivariant quotients of the Cuntz-Nica-Pimsner algebra and provide applications to regular product systems, C*-dynamical systems, strong finitely aligned higher-rank graphs, and product systems on finite frames.
In particular, we provide a description of the parametrisation for (possibly non-automorphic) C*-dynamical systems and row-finite higher-rank graphs, which squares with known results when restricting to crossed products and to locally convex row-finite higher-rank graphs.
\end{abstract}

\maketitle

\tableofcontents

\section{Introduction}\label{S:intro}

\subsection{Product systems}

Product systems offer a uniform language to encode a large number of C*-constructions associated with a unital subsemigroup $P$ of a discrete group $G$, e.g., C*-dynamical systems, higher-rank graphs, isometric semigroup representations, non-invertible dynamics, and subshifts.
They form the semigroup analogue of group Fell bundles, with the ability to encode transformations that may not be reversible.
Their associated C*-algebras provide a vast source for producing examples and counterexamples, and \emph{the current programme seeks to relate their C*-properties to properties of the geometric structure}.
Much progress has been made in this endeavour for $P = \bZ_+$, however many questions remain open even for $P = \bZ_+^d$.

When $P = \bZ_+$, the product system comes from a single C*-correspondence $X$.
The quantisation is employed by a Fock space construction where the elements of $X$ act as left creation operators, giving rise to the Toeplitz-Pimsner algebra $\T_X$.
There is a distinguished equivariant quotient of $\T_X$ that plays the role of the boundary or quasidiagonal universe for $X$, namely the Cuntz-Pimsner algebra $\O_X$, which is the minimal for equivariant representations that are injective on $X$.
The Cuntz-Pimsner algebra arises naturally in the C*-theory, as it encapsulates the Cuntz-Krieger algebra of a graph and the C*-crossed product of a single $*$-automorphism.

Due to their ample applications, C*-algebras of single C*-correspondences have been under thorough study.
Major developments in this direction include the parametrisation of equivariant ideals \cite{Kat07}, the study of ideal structure and simplicity \cite{CKO19}, K-theory computation \cite{Kat04} and classification \cite{BTZ18}, necessary and sufficient conditions for nuclearity and exactness \cite{Kat04}, and the parametrisation of the KMS-simplex \cite{LN04}.
The provided descriptions relate to structural properties of the C*-correspondence and its coefficient algebra, with direct translations into properties of the inducing geometric object.
For example, the gauge-invariant ideals of the Cuntz-Krieger algebra of a row-finite graph are in bijection with the hereditary saturated vertex sets of the ambient graph \cite{BPRS00}.

Several of these questions have been treated with much success beyond the $\bZ_+$-case.
One of the pivotal steps was introduced by Fowler \cite{Fow02} for compactly aligned product systems over quasi-lattices, where the associated C*-algebras admit a Wick ordering due to the Nica-Pimsner relations of the Fock representation.
The KMS-simplex of the Fock C*-algebra, and in particular KMS-states of finite type, was studied by Afsar, Neshveyev and Larsen \cite{ALN20}, encompassing the work of many hands. 
Extending further, Kwa\'{s}niewski and Larsen \cite{KL19a, KL19b} provided a detailed study of the C*-algebras associated with right LCM semigroups.
In this case the boundary quotient and its link to the C*-envelope was identified by Dor-On, the second-named author, Katsoulis, Laca and Li \cite{DKKLL20}, while nuclearity and exactness was answered by the second-named author, Katsoulis, Laca and Li \cite{KKLL21b}. 
In her breakthrough results, Sehnem \cite{Seh18, Seh21} identified in full generality the boundary quotient of the Fock representation through the strong covariance relations linking it to the C*-envelope of the nonselfadjoint tensor algebra.
In their recent work, Brix, Carlsen and Sims \cite{BCS23} explore the ideal structure for C*-algebras related to commuting local homeomorphisms, pushing the theory well beyond simplicity.

The aforementioned results fit in a wider programme for bringing the C*-algebras of product systems within Elliott's Classification Programme, which has been answered for $P=\bZ_+$ by Brown, Tikuisis and Zelenberg \cite{BTZ18}.
It asserts that if the coefficient algebra $A$ of a C*-correspondence $X$ is classifiable, the left action is by compacts, and $X$ admits a Rokhlin property, then $\O_X$ is classifiable.
One of the key features in the examination of $\O_X$ is that the strong covariance is encoded by a single ideal of $A$ coined by Katsura \cite{Kat04}.
However, the picture for general semigroups can be much more complicated, since the strong covariance relations make extensive use of families of submodules induced by the right ideal space of the semigroup \cite{Seh18}.
Even their description for compactly aligned product systems over quasi-lattices, as coined by Carlsen, Larsen, Sims and Vitadello \cite{CLSV11}, is based on checking relations on families of compact operators on all possible finite subsets of the semigroup. 

\subsection{Motivation}

When moving towards a classification in terms of \cite{BTZ18}, it is reasonable to ask whether the picture is simplified when the left actions are by compacts.
This has been achieved by Dor-On and the second-named author \cite{DK18} in the context of strong compactly aligned product systems over $P = \bZ_+^d$, thus opening a direction of lifting results from the $P = \bZ_+$ case.
Strong compact alignment induces an elimination argument on projections, while still accounting for major examples, e.g., C*-dynamical systems, row-finite higher-rank graphs, product systems on finite frames, and regular product systems.
The strong covariance relations here are reduced to checking covariance on a family of $2^d$ ideals of $A$, in direct connection to Katsura's ideal \cite{Kat04}.
This leads to our first point of motivation for the current work.
The parametrisation of the gauge-invariant ideals for $P = \bZ_+$ established by Katsura \cite{Kat07} is implemented by pairs of ideals of $A$ satisfying a compatibility relation.
Due to the analogy established in \cite{DK18} for the strong covariance relations, it is natural to ask whether a similar parametrisation lifts to the $P=\bZ_+^d$ case, as has been inquired in \cite[Question 9.2]{DK18}.

We draw further motivation from examples of strong compactly aligned product systems that have been examined in this respect.
The classical result appears in the case of automorphic dynamical systems over $\bZ^d$, where there is a lattice isomorphism between the gauge-invariant ideals of the crossed product $A \rtimes_\al \bZ^d$ and the $\al$-invariant ideals of $A$.
In the context of graph C*-algebras of locally convex row-finite higher-rank graphs, the parametrisation is obtained by hereditary saturated vertex sets through the work of Raeburn, Sims and Yeend \cite{RSY03, RSY04}.
Sims \cite{Sim06} further provided a parametrisation for finitely aligned higher-rank graphs, encoded by hereditary vertex sets together with satiated path sets.
In the current work we wish to encompass C*-dynamics and higher-rank graphs in a uniform way, and provide an alternative of \cite{Sim06} in the row-finite case depending only on properties of vertex sets.

Crossed products and graph C*-algebras are incarnations of the Cuntz-Nica-Pimsner algebra $\N\O_X$ of the associated product system $X$.
However, here we are interested in covering \emph{all} possible equivariant quotients of the Toeplitz-Nica-Pimsner algebra $\N\T_X$. 
As a base case, consider $\T \otimes \T$ (for the Toeplitz algebra $\T$) and use the vertices of the square (as $d=2$) for labelling ideals of $\T \otimes \T$ so that 
\begin{center}
\begin{tikzcd}
(0,1) \arrow[dash]{r} \arrow[dash]{d} & (1,1) \arrow[dash]{d} \\
(0,0) \arrow[dash]{r} & (1,0)
\end{tikzcd}
$\longrightarrow$
\begin{tikzcd}
\T \otimes \K \arrow[dash]{r} \arrow[dash]{d} & \K \otimes \K \arrow[dash]{d} \\
\{0\}\arrow[dash]{r} & \K \otimes \T
\end{tikzcd}
\end{center}
for the compact operators $\K \subseteq \T$.
Then the gauge-invariant ideals of $\T \otimes \T$ can be read off an inclusion-perserving association mapping vertex sets of the square to sums of the corresponding ideals.
In other words, we obtain
\begin{center}
\begin{tikzcd}
\{(0,1), (1,1)\} \arrow[dash]{r} \arrow[dash]{d} & \{(1,1)\} \arrow[dash]{d} \\
\{(0,0), (1,0), (0,1), (1,1)\} \arrow[dash]{r} & \{(1,0), (1,1)\}
\end{tikzcd}
$\longrightarrow$
\begin{tikzcd}
\T \otimes \K \arrow[dash]{r} \arrow[dash]{d} & \K \otimes \K \arrow[dash]{d} \\
\T\otimes \K + \K \otimes \T \arrow[dash]{r} & \K \otimes \T.
\end{tikzcd}
\end{center}
A similar decomposition for the boundary ideal $\ker\{ \N\T_X \to \N\O_X\}$ has been established in \cite{DK18} and is akin to the one of Deaconu \cite{Dea07}, further exploited by Fletcher \cite{Fle17}.
Such a decomposition has been used successfully for the computation of the K-theory of $\N\O_X$ in terms of $A$ for low-rank cases, such as finite $2$-rank graphs by Evans \cite{Eva08}, and for two commuting $*$-automorphisms by Barlak \cite{Bar16}.
The general case still remains unresolved, and our motivation is to shed more light on this construction.

At the same time, we are interested in equivariant quotients of $\N\T_X$ that may not be injective on $X$, as they arise naturally in the context of KMS-states.
There has been a great number of results on KMS-states for C*-algebras of finite graphs and dynamics, following the seminal work of Exel and Laca \cite{EL03}, and of Laca and Neshveyev \cite{LN04}.
Similar studies have been carried out for finite higher-rank graphs, with Christensen \cite{Chr20} providing the complete picture.
A parametrisation of the gauge-invariant KMS-states has been obtained by the second-named author in the presence of finite frames \cite{Kak20}.
One of the main tools in \cite{Chr20, Kak20} is the Wold decomposition of a KMS-state in parts that are $F$-finite and $F^c$-infinite, for $F \subseteq \{1, \dots, d\}$.
This corresponds to KMS-states annihilating the gauge-invariant ideal generated by the projections along $F^c$. 
The construction in \cite{Kak20} uses an ambient product system over $F$ with the coefficient algebra arising from the $F^c$-core.
Here we wish to close the circle with \cite{Kak20}, and provide a full characterisation for each $F$-quotient of the Toeplitz-Nica-Pimsner algebra as the Cuntz-Nica-Pimsner algebra of the $F$-induced product system.

There is an interplay between equivariant quotients and relative Cuntz-Nica-Pimsner algebras, for which we wish to establish a Gauge-Invariant Uniqueness Theorem.
This type of result was initiated by an Huef and Raeburn for Cuntz-Krieger algebras \cite{HR97}, and various generalisations were given by Doplicher, Pinzari and Zuccante \cite{DPZ98}, Fowler, Muhly and Raeburn \cite{FMR03}, and Fowler and Raeburn \cite{FR99}.
Katsura \cite{Kat07} proved the Gauge-Invariant Uniqueness Theorem in full generality for relative Cuntz-Pimsner algebras in the $P=\bZ_+$ case.
A further point of motivation has been to establish a similar result for $P = \bZ_+^d$.
This relies on analysing polynomial equations on the cores, as has been pointed out in \cite{DK18}.
Due to the Nica-Pimsner relations, the solutions need to adhere to invariance as well as a partial ordering; however simple examples show that different subsets of solutions may induce the same gauge-invariant ideal, thus posing the question of finding the appropriate compatibility conditions that characterise the maximal solution set.

\subsection{Summary of main results}

Before we provide the technical description of our results, let us present here the central points of this manuscript.
First of all we establish the parametrisation of the gauge-invariant ideals of the Toeplitz-Nica-Pimsner algebra $\N\T_X$ of a strong compactly aligned product system $X$ over $\bZ_+^d$, continuing the programme of \cite{DK18}.
This class contains product systems where the left actions are by compacts.
As a byproduct we obtain the parametrisation of the gauge-invariant ideals for any relative Cuntz-Nica-Pimsner algebra of $X$, including the boundary quotient $\N\O_X$.
In the process we produce a Gauge-Invariant Uniqueness Theorem for equivariant quotients in-between $\N\T_X$ and $\N\O_X$.
These results are in direct analogy to (and recover) the one-dimensional case \cite{Kat07}.

We apply our results to C*-dynamical systems over $\bZ_+^d$.
When the system is injective, the gauge-invariant ideals of the Cuntz-Nica-Pimsner algebra correspond to positively and negatively invariant ideals of $A$, hence recovering the classical C*-crossed product result for $*$-automorphisms.
Moreover, we recover the parametrisation of \cite{RSY04} for locally convex row-finite higher-rank graphs by hereditary saturated sets.
We provide the parametrisation in the case of (just) row-finite higher-rank graphs, showing that this too can be achieved through vertex sets alone.
Additionally, we interpret the parametrisation when the product system is regular; if in particular the coefficient algebra is simple, then $\N\O_X$ does not admit non-trivial gauge-invariant ideals.
In the presence of finite frames, we address the decomposition of \cite{Kak20}, and show that the quotient of $\N\T_X$ by the $F$-ideal can be realised as the boundary C*-algebra of a product system supported on $F$ with coefficients in the $F^c$-core (this is achieved in two ways).

\subsection{Description of main results}

Let us fix notation (see in conjunction with the general notation we adopt in Subsections \ref{Ss:notation} and \ref{Ss: scaps}).
We write $[d] := \{1, \dots, d\}$ for $d \in \bN$.
We write $\un{n}$ for the elements of $\bZ_+^d$ and will denote its generators by $\un{i}$ for $i \in [d]$.
We write $\un{n} \perp F$ for $F \subseteq [d]$ if $\supp \un{n} \cap F = \mt$.
Moreover we write $\un{1}_F:= \sum_{i \in F} \un{i}$ for $\mt \neq F \subseteq [d]$.
For a product system $X = \{X_{\un{n}}\}_{\un{n} \in \bZ_+^d}$ with coefficients in a C*-algebra $A$ and an ideal $I$ of $A$, we write
\[
X_{\un{n}}(I) := [\sca{X_{\un{n}}, I X_{\un{n}}}]
\qand
X_{\un{n}}^{-1}(I) := \{a \in A \mid \sca{X_{\un{n}}, a X_{\un{n}}} \subseteq I \}.
\]

A strong compactly aligned product system $X = \{X_{\un{n}}\}_{\un{n} \in \bZ_+^d}$ with coefficients in $A$ is a compactly aligned product system that in addition satisfies
\[
\K(X_{\un{n}}) \otimes \id_{X_\un{i}} \subseteq \K(X_{\un{n}} \otimes_A X_{\un{i}})
\textup{ whenever $\un{n}\perp\un{i}$, where $i\in[d],\un{n}\in\bZ_+^d\setminus\{\un{0}\}$}.
\]
A $2^d$-tuple $\L := \{\L_F\}_{F \subseteq [d]}$ of $X$ is a family of $2^d$ non-empty subsets of $A$.
We will say that $\L$ is \emph{invariant} if 
\[
[\sca{X_{\un{n}}, \L_F X_{\un{n}}}] \subseteq \L_F \text{ whenever } \un{n} \perp F,
\]
and we will say that $\L$ is \emph{partially ordered} if 
\[
\L_{F_1} \subseteq \L_{F_2} \text{ whenever } F_1 \subseteq F_2 \subseteq [d].
\]

If $(\pi,t)$ is a Nica-covariant representation of $X$ in some $\B(H)$, we write $\psi_{\un{n}}$ for the induced $*$-representation on $\K(X_{\un{n}})$.
For $i \in [d]$, we use an approximate unit $(k_{\un{i},\la})_{\la\in\La}$ of $\K(X_{\un{i}})$ to define the projection $p_{\un{i}}:=\textup{w*-}\lim_\la \psi_{\un{i}}(k_{\un{i},\la})$, and we set
\begin{equation*}
q_\mt:= I_H,
q_{\un{i}} := I_H - p_{\un{i}},
\text{ and }
q_F:=\prod_{i\in F}(I_H - p_{\un{i}}) \textup{ for $\mt\neq F\subseteq [d]$}.
\end{equation*}
The key relation is that if $a \in \bigcap \{ \phi_{\un{i}}^{-1}(\K(X_{\un{i}})) \mid i \in F \}$, then
\[
\pi(a) q_F = \pi(a) + \sum \{ (-1)^{|\un{n}|} \psi_{\un{n}}(\phi_{\un{n}}(a)) \mid \un{0} \neq \un{n} \leq \un{1}_F\},
\]
and thus $\pi(a) q_F \in \ca(\pi,t)$, although it may not be that $q_F \in \ca(\pi,t)$.
We reserve $(\ol{\pi}_X, \ol{t}_X)$ for the universal Nica-covariant representation of $X$.
Due to the aforementioned relation, if $\L$ is a $2^d$-tuple such that $\L_F \subseteq \bigcap \{ \phi_{\un{i}}^{-1}(\K(X_{\un{i}})) \mid i \in F \}$ for every $\mt \neq F \subseteq [d]$, then the ideal $\sca{\ol{\pi}_X(\L_F) \ol{q}_{X,F} \mid F \subseteq [d]}$ is a gauge-invariant ideal of $\N\T_X$.
We call such tuples \emph{relative}, and write $\N\O(\L,X)$ for the corresponding equivariant quotient.

The main result in \cite{DK18} is that $\N\O_X \cong \N\O(\I,X)$ for the family $\I := \{\I_F\}_{F \subseteq [d]}$, where
\[ 
\I_F := \bigcap\{X_\un{n}^{-1}(\J_F)\mid \un{n}\perp F\}
\quad \text{for} \quad
\J_F :=
(\bigcap_{i\in F}\ker\phi_{\un{i}})^\perp\cap(\bigcap_{i\in [d]}\phi_{\un{i}}^{-1}(\K(X_{\un{i}}))).
\]
We note that $\I_\mt = \J_\mt = \{0\}$.
Every $\I_F$ is $F^\perp$-invariant (in fact the largest $F^\perp$-invariant ideal of $\J_F$), and the family $\I$ is partially ordered.
In order to understand a general equivariant quotient, we pare down these properties.

First we consider the case where $\L$ is a $2^d$-tuple of $X$ satisfying $\L \subseteq \I$; we term such tuples as (E)-$2^d$-tuples (where ``E" stands for embedding).
By definition the quotient $\N\O(\L,X)$ lies in-between $\N\T_X$ and $\N\O_X$, and thus $X$ embeds into $\N\O(\L,X)$.
In Lemma \ref{L:e inv} and Lemma \ref{L:e po}, we show that we can then induce (E)-$2^d$-tuples $\Inv(\L)$ and $\PO(\L)$ defined by
\[
\Inv(\L)_F:= \ol{\spn} \{X_\un{n}(\L_F)\mid \un{n}\perp F\}
\qand
\PO(\L)_F:= \sum\{\sca{\L_D} \mid D\subseteq F\}
\]
for all $F \subseteq [d]$, such that $\L \subseteq \Inv(\L)$, $\L \subseteq \PO(\L)$ and
\[
\N\O(\L,X) = \N\O(\Inv(\L), X) = \N\O(\PO(\L), X).
\]
In particular $\Inv(\L)$ is invariant, $\PO(\L)$ is partially ordered, and $\PO(\L)$ is invariant if $\L$ is invariant.
Hence without loss of generality we may restrict to invariant, partially ordered (E)-$2^d$-tuples of ideals by replacing $\L$ with $\PO(\Inv(\L))$.
However, as we demonstrate in Example \ref{E:not en}, these properties alone do not provide injectivity of the association $\L \mapsto \N\O(\L,X)$.
It is not hard to see that there is a unique maximal (E)-$2^d$-tuple $\M$ such that $\PO(\Inv(\L)) \subseteq \M$ and 
\[
\N\O(\L, X) = \N\O(\PO(\Inv(\L)),X) = \N\O(\M,X).
\]
The prototypical example of maximal (E)-$2^d$-tuples is obtained from equivariant injective Nica-covariant representations $(\pi,t)$, by defining
\[
\L_F^{(\pi,t)} := \pi^{-1}( B_{(\un{0}, \un{1}_F]}^{(\pi,t)} ) \foral F \subseteq [d].
\]
Here $B_{(\un{0}, \un{1}_F]}^{(\pi,t)}$ denotes the $(\un{0}, \un{1}_F]$-core of $\ca(\pi,t)$, and $\L_\mt^{(\pi,t)} = \{0\}$ since $\pi$ is injective.
With this in hand, we make the following key observation towards a Gauge-Invariant Uniqueness Theorem (GIUT) for relative Cuntz-Nica-Pimsner algebras.

\medskip

\noindent
{\bf Theorem A.} \emph{(Theorem \ref{T:d GIUT M}) Let $X$ be a strong compactly aligned product system with coefficients in a C*-algebra $A$. 
Let $\L$ be a maximal (E)-$2^d$-tuple of $X$ and suppose that $(\pi,t)$ is a Nica-covariant representation of $X$. 
Then $\N\O(\L,X)\cong\ca(\pi,t)$ via a canonical $*$-isomorphism if and only if $(\pi,t)$ admits a gauge action and $\L^{(\pi,t)}=\L$.}

\medskip

Since maximality is a necessary ingredient for the GIUT, we give an algebraic characterisation without reference to a Nica-covariant representation.
To this end, for every $\mt \neq F \subsetneq [d]$ we define
\[
\L_{\inv, F} := \bigcap_{\un{m}\perp F}X_\un{m}^{-1}(\cap_{F\subsetneq D}\L_D)
\qand
\L_{\lim, F} := \{a\in A \mid \lim_{\un{m}\perp F}\|\phi_\un{m}(a)+\K(X_\un{m}\L_F)\|=0\}.
\]
Note that  the definitions of $\L_{\inv, F}$ and $\L_{\lim, F}$ do not require $\L$ to be an (E)-$2^d$-tuple.
When $\L$ is an (E)-$2^d$-tuple, we define the $2^d$-tuple $\L^{(1)}$ by
\[
\L_F^{(1)} 
:=
\begin{cases}
\{0\} & \text{ if } F = \mt, \\
\I_F \cap \L_{\inv, F} \cap \L_{\lim, F} & \text{ if } \mt \neq F \subsetneq [d], \\
\L_{[d]} & \text{ if } F = [d].
\end{cases}
\]
In Proposition \ref{P:L1 po} we show that $\L^{(1)}$ is an (E)-$2^d$-tuple of ideals that is invariant and partially ordered when $\L$ is so, satisfying
\[
\sca{\ol{\pi}_X(\L_F) \ol{q}_{X,F} \mid F \subseteq [d]}
=
\sca{\ol{\pi}_X(\L_F^{(1)}) \ol{q}_{X,F} \mid F \subseteq [d]}.
\]
Maximality is then described in terms of the first iteration.

\medskip

\noindent
{\bf Theorem B.} \emph{(Theorem \ref{T:m fam v2}) Let $X$ be a strong compactly aligned product system with coefficients in a C*-algebra $A$ and suppose that $\L$ is a $2^d$-tuple of $X$.
Then $\L$ is a maximal (E)-$2^d$-tuple of $X$ if and only if $\L$ satisfies the following four conditions:
\begin{enumerate}
\item $\L$ consists of ideals and $\L \subseteq \J$,
\item $\L$ is invariant,
\item $\L$ is partially ordered,
\item $\L^{(1)} \subseteq \L$.
\end{enumerate}
}

\medskip

For $k \in \bZ_+$ we write $\L^{(k+1)} := (\L^{(k)})^{(1)}$, and by convention we set $\L^{(0)}:=\L$.
By induction we have
\[
\L^{(k)} \subseteq \L^{(k+1)} \text{ and } \N\O(\L,X) = \N\O(\L^{(k)}, X), \text{ for all $k \in \bZ_+$}.
\]
Thus it is natural to ask if these iterations stabilise to the maximal (E)-$2^d$-tuple inducing $\N\O(\L,X)$.
In Theorem \ref{T:d-1 m fam} we show that 
\[
\L^{(d-1)} = \L^{(m)} \foral m \geq d-1,
\]
and thus $\L^{(d-1)}$ is the maximal (E)-$2^d$-tuple inducing $\N\O(\L,X)$.
Hence we obtain the maximal (E)-$2^d$-tuple inducing $\N\O(\L,X)$ by first enlarging $\L$ to an (E)-$2^d$-tuple that is invariant, partially ordered and consists of ideals, and then taking its $(d-1)$-iteration. 
Combining Theorem A with Theorem B then gives the full form of the Gauge-Invariant Uniqueness Theorem.

\medskip

\noindent
{\bf Theorem C.} \emph{(Theorem \ref{T:d GIUT E})
Let $X$ be a strong compactly aligned product system with coefficients in a C*-algebra $A$. 
Let $\L$ be an (E)-$2^d$-tuple of $X$ and $(\pi,t)$ be a Nica-covariant representation of $X$. 
Then $\N\O(\L,X)\cong\ca(\pi,t)$ via a canonical $*$-isomorphism if and only if $(\pi,t)$ admits a gauge action and 
\[
\L^{(\pi,t)}=\bigg(\PO\big(\Inv\left(\L\right) \big) \bigg)^{(d-1)}.
\]
}

\medskip

Next we pass to the parametrisation of the quotients that may not be injective on $X$.
We circumvent this by ``deleting the kernel", i.e., by utilising the quotient product system construction.
To this end, we write $[\hspace{1pt}\cdot\hspace{1pt}]_I$ for the quotient maps induced by a positively invariant ideal $I \subseteq A$, following the notation of \cite{Kat07}.
We say that $\L$ is an \emph{NT-$2^d$-tuple of $X$} if the family $[\L]_{\L_\mt} := \{ [\L_F]_{\L_\mt} \}_{F \subseteq [d]}$ is a maximal (E)-$2^d$-tuple of $[X]_{\L_\mt}$.
In Subsection \ref{Ss:NT tuple} we provide a detailed description of the structural properties that render a $2^d$-tuple $\L$ an NT-$2^d$-tuple.
In particular, in Proposition \ref{P:NTcom} we provide a characterisation of an NT-$2^d$-tuple with no reference to the quotient product system construction when $A$ acts on $X$ by compacts.
By performing this reduction and combining with the results for (E)-$2^d$-tuples, we obtain the parametrisation in its full generality.

\medskip

\noindent
{\bf Theorem D.} \emph{(Proposition \ref{P:JL conc}, Theorem \ref{T:NT param}). Let $X$ be a strong compactly aligned product system with coefficients in a C*-algebra $A$. 
Then there is a bijection between the set of NT-$2^d$-tuples of $X$ and the set of gauge-invariant ideals of $\N\T_X$ given by
\begin{align*}
\L & \mapsto \fJ^\L:= \ker \pi^\L \times t^\L ,\text{ for } \pi^\L \times t^\L \colon \N\T_X \to \N\O([\L]_{\L_\mt}, [X]_{\L_\mt}) \\
\fJ & \mapsto \L^\fJ:= \L^{(Q_\fJ \circ \ol{\pi}_X, Q_\fJ \circ \ol{t}_X)}, \text{ for } Q_\fJ \colon \N\T_X \to \N\T_X / \fJ.
\end{align*}
Moreover, if $\L$ is an NT-$2^d$-tuple of $X$, then we have
\begin{align*}
\mathfrak{J}^\L
=
\langle \ol{\pi}_X(a) +\sum_{\un{0}\neq\un{n}\leq\un{1}_F}(-1)^{|\un{n}|}\ol{\psi}_{X,\un{n}}(k_\un{n}) \mid 
& F\subseteq[d], a\in\L_F,  k_\un{n}\in\K(X_\un{n}), \\
& [\phi_\un{n}(a)]_{\L_\mt}=[k_\un{n}]_{\L_\mt} \; \textup{for all} \; \un{0}\neq\un{n}\leq\un{1}_F \rangle.
\end{align*}
}

\medskip

We note here that this description of $\fJ^\L$ in the $d=1$ case is not explicit in \cite{Kat07}, but it can be derived from \cite[Theorem 7.17]{Kwa13} with some additional work.
Moreover, the Nica-covariant representation $(\pi^\L, t^\L)$ is well-defined since the association $X \to [X]_{\L_\mt}$ lifts to a canonical $*$-epimorphism $\N\T_X \to \N\T_{[X]_{\L_\mt}}$.
It is known that this is not the case in general for $\N\O_X$ and $\N\O_{[X]_{\L_\mt}}$ (even for $d=1$, see Example \ref{E:surjcounter}).
Nevertheless, by using the NT-$2^d$-tuple machinery, we determine precisely when this holds as part of our applications in Subsection \ref{Ss:part}.
The key requirement is that $\L_\mt$ is both positively and negatively invariant, and $\I \subseteq \L$.

The bijection of Theorem C induces a lattice structure on the NT-$2^d$-tuples that renders it a lattice isomorphism, where we use the usual lattice operations for the gauge-invariant ideals.
It is then important to understand the join and meet operations in this setting.
The bijection is easily seen to preserve inclusions and intersections.
For the join operation we use the iteration process that describes the maximality property.

\medskip

\noindent
{\bf Theorem E.} \emph{(Proposition \ref{P:NTwedge}, Proposition \ref{P:NTvee}) Let $X$ be a strong compactly aligned product system with coefficients in a C*-algebra $A$.
We equip the set of NT-$2^d$-tuples of $X$ with the lattice structure determined by the operations
\begin{align*}
\L_1\vee\L_2  :=\L^{\fJ^{\L_1}+\fJ^{\L_2}} \qand
\L_1\wedge\L_2  :=\L^{\fJ^{\L_1}\cap\fJ^{\L_2}}.
\end{align*}
Then
\[
(\L_1\wedge\L_2)_F=\L_{1,F}\cap\L_{2,F}\foral F\subseteq[d],
\]
and
\begin{align*}
(\L_1\vee\L_2)_F
=
\begin{cases}
\ol{\pi}_X^{-1}(\fJ^{\L_1}+\fJ^{\L_2}) & \text{ if } F= \mt, \\
[\hspace{1pt}\cdot\hspace{1pt}]_{(\L_1\vee\L_2)_\mt}^{-1} \left[ \big( \left( \L_{1,F}+\L_{2,F}+(\L_1\vee\L_2)_\mt \right) / (\L_1\vee\L_2)_\mt \big)^{(d-1)} \right] & \text{ if } \mt\neq F\subseteq[d].
\end{cases}
\end{align*}
}

\medskip

The parametrisation on $\N\T_X$ descends naturally to $\N\O(\K,X)$ for any relative $2^d$-tuple $\K$.
The only difference is that the lattice isomorphism is by NT-$2^d$-tuples that contain $\K$.
By choosing $\K = \I$ we obtain the parametrisation on $\N\O_X$ by what we call \emph{NO-$2^d$-tuples}.

Next we pass to applying our results to specific classes of product systems.
First we deal with regular product systems, i.e., when each left action is injective and by compacts.
In Corollary \ref{C:regNObij} we show that the parametrisation for the gauge-invariant ideals of $\N\O_X$ is given by single ideals of $A$ that are positively and negatively invariant.
This relies on the fact that the positively and negatively invariant ideal will be the $\L_\mt$-member of an NO-$2^d$-tuple $\L$, while by regularity $\L_F = A$ for all $\mt \neq F \subseteq [d]$.
This implies that $\N\O_X$ does not admit non-trivial gauge-invariant ideals when $A$ is simple.
Passing to $\N\T_X$, in Corollary \ref{C:regNTbij} we show that the gauge-invariant ideals of $\N\T_X$ are in bijection with the families of incomparable subsets of $[d]$, when $A$ is non-zero and simple.
A direct application for the Toeplitz algebra $\T^{\otimes d}$ of $\bZ_+^d$ produces the parametrisation of its gauge-invariant ideals by vertex sets on the $d$-hypercube.

The second class of examples we consider arises from semigroup actions $\al \colon \bZ_+^d \to \End(A)$.
We write $X_\al$ for the associated product system.
We characterise the NT-$2^d$-tuples as follows.

\medskip

\noindent
{\bf Corollary F.} \emph{(Corollary \ref{C:dynsysjargNT}) Let $(A,\al,\bZ_+^d)$ be a C*-dynamical system. 
Let $\K$ and $\L$ be $2^d$-tuples of $X_\al$.
Then $\L$ is a $\K$-relative NO-$2^d$-tuple of $X_\al$ if and only if $\K\subseteq\L$ and the following hold:
\vspace{2pt}
\begin{enumerate} \setlength\itemsep{.3em}
\item $\L$ consists of ideals and $\L_F \cap (\bigcap_{i\in F}\al_\un{i}^{-1}(\L_\mt))\subseteq\L_\mt$ for all $\mt\neq F\subseteq[d]$,
\item $\L_F\subseteq\bigcap_{\un{n}\perp F}\al_\un{n}^{-1}(\L_F)\foral F\subseteq[d]$,
\item $\L$ is partially ordered,
\item $I_{1,F}\cap I_{2,F}\cap I_{3,F}\subseteq\L_F$ for all $\mt\neq F\subsetneq[d]$, where
\vspace{.3em}
\begin{itemize}  \setlength\itemsep{.3em}
\item $I_{1,F}:=\bigcap_{\un{n}\perp F}\al_\un{n}^{-1}(\{a\in A\mid a(\bigcap_{i\in F}\al_\un{i}^{-1}(\L_\mt))\subseteq\L_\mt\})$,
\item $I_{2,F}:=\bigcap_{\un{m}\perp F}\al_\un{m}^{-1}(\cap_{F\subsetneq D}\L_D)$,
\item $I_{3,F}:=\{a\in A\mid \lim_{\un{m}\perp F}\|\al_\un{m}(a)+[\al_\un{m}(A)\L_F\al_\un{m}(A)]\|=0\}$. 
\end{itemize}
\end{enumerate}
}

\medskip

If the dynamical system is in addition injective, then the associated product system is regular.
Thus we derive a bijection between the gauge-invariant ideals of $\N\O_{X_\al}$ and the single ideals $I$ of $A$ such that $\al_\un{n}(I) \subseteq I$ and $\al_{\un{n}}^{-1}(I) \subseteq I$ for all $\un{n} \in \bZ_+^d$.
When the system is automorphic we recover the well-known parametrisation of the gauge-invariant ideals of the crossed product $A \rtimes_\al \bZ^d$ by ideals $I \subseteq A$ such that $\al_{\un{n}}(I) = I$ for all $\un{n} \in \bZ_+^d$.

The third class of examples we consider relates to finitely aligned higher-rank graphs $(\La,d)$.
We write $X(\La)$ for the associated product system.
In keeping with the literature, in this case we write $k$ for the rank and reserve $d$ for the degree map.
We use $r$ and $s$ for the range and source maps.
One of the main subclasses here is that of row-finite higher-rank graphs.
The key property is that the strong covariance ideals are in bijection with $F$-tracing vertices, i.e., vertices that are not $F$-sources and the source of any $F^c$-path ending on them is not an $F$-source as well \cite{DK18}.
A family $H=\{H_F\}_{F\subseteq[k]}$ of subsets of vertices is called \emph{absorbent in $\La$} if the following holds for every $\mt\neq F\subsetneq[k]$: a vertex $v \in \La^{\un{0}}$ belongs to $H_F$ whenever it satisfies
\begin{enumerate}
\item $v$ is $F$-tracing,
\item $s(v\La^\un{m})\subseteq\cap_{F\subsetneq D}H_D\foral \un{m}\perp F$, and
\item there exists $\un{m}\perp F$ such that whenever $\un{n}\perp F$ and $\un{n}\geq\un{m}$, we have $s(v\La^\un{n})\subseteq H_F$ and $|v\La^\un{n}|<\infty$.
\end{enumerate}
By translating our characterisation of NT-$2^k$-tuples, we obtain the following corollary.

\medskip

\noindent
{\bf Corollary G.} \emph{(Proposition \ref{P:hrNT})
Let $(\La,d)$ be a strong finitely aligned $k$-graph.
Let $\L$ be a $2^k$-tuple of $X(\La)$ that consists of ideals and let $H_\L$ be the corresponding family of sets of vertices of $\La$.
Then $\L$ is an NT-$2^k$-tuple of $X(\La)$ if and only if  the following four conditions hold:
\begin{enumerate}
\item for each $\mt\neq F\subseteq[k]$, we have
\[
H_{\L, F} 
\subseteq 
H_{\L, \mt} 
\cup 
\{v \notin H_{\L, \mt} \mid |v\Ga(\La\setminus H_{\L, \mt})^\un{i}|<\infty \; \forall i\in[k]\; \textup{and} \; v \; \textup{is not an $F$-source in $\Ga(\La\setminus H_{\L, \mt})$}\},
\]
\item $H_\L$ is hereditary in $\La$,
\item $H_\L$ is partially ordered,
\item $H_\L \setminus H_{\L, \mt} := \{H_{\L, F} \setminus H_{\L, \mt} \}_{F\subseteq[k]}$ is absorbent in $\Ga(\La\setminus H_{\L, \mt})$.
\end{enumerate}
}

\medskip

Due to Proposition \ref{P:NTcom} we can provide a further translation for the case of row-finite graphs.
The following provides an alternative description of the parametrisation to the one given in \cite{Sim06} using just vertex sets.

\medskip

\noindent
{\bf Corollary H.} \emph{(Proposition \ref{P:rfNT})
Let $(\La,d)$ be a row-finite $k$-graph.
Let $\L$ be a $2^k$-tuple of $X(\La)$ that consists of ideals and let $H_\L$ be the corresponding family of sets of vertices of $\La$.
Then $\L$ is an NT-$2^k$-tuple of $X(\La)$ if and only if the following four conditions hold:
\begin{enumerate}
\item for each $\mt\neq F\subseteq[k]$, we have
\[
H_{\L, F} 
\subseteq 
H_{\L, \mt} 
\cup 
\{v \notin H_{\L, \mt} \mid v \; \textup{is not an $F$-source in $\Ga:=\Ga(\La\setminus H_{\L, \mt})$}\},
\]
\item $H_\L$ is hereditary in $\La$,
\item $H_\L$ is partially ordered,
\item $H_{1,F}\cap H_{2,F}\cap H_{3,F}\subseteq H_{\L,F}$ for all $\mt\neq F\subsetneq[k]$, where
\vspace{.3em}
\begin{itemize}
\item $H_{1,F}:=\bigcap_{\un{n}\perp F}\{v\in\La^\un{0}\mid s(v\La^\un{n})\subseteq H_{\L, \mt}\cup\{v \notin H_{\L, \mt} \mid v \; \textup{is not an $F$-source in $\Ga$}\}\}$,
\item $H_{2,F}:=\bigcap_{\un{m}\perp F}\{v\in\La^\un{0}\mid s(v\La^\un{m})\subseteq\cap_{F\subsetneq D}H_{\L,D}\}$,
\item $H_{3,F}$ is the set of all $v\in\La^\un{0}$ for which there exists $\un{m}\perp F$ such that whenever $\un{n}\perp F$ and $\un{n}\geq\un{m}$, we have $s(v\La^\un{n})\subseteq H_{\L,F}$.
\end{itemize}
\end{enumerate}
}

\medskip

If $\La$ is locally convex and row-finite, then positive (and negative) invariance of an ideal is equivalent to the related set of vertices being hereditary (and saturated).
In this case the NO-$2^k$-tuples $\L$ are determined just by $\L_\mt$, and thus our results recover the parametrisation of Raeburn, Sims and Yeend \cite{RSY04} (Corollary \ref{C:RSYparam1}).

Finally, we restrict our attention to product systems wherein each fibre admits a finite frame (except perhaps for $A$).
In connection with the parametrisation of the KMS-states, we exploit a decomposition of $X$ with respect to a given $\mt \neq F \subseteq [d]$.
One direction of this construction was implicit in \cite{Kak20}, and here we close this circle.
Namely, given $X$ and $F \subseteq [d]$, we define
\[
B^{F^\perp}:=\ca(\ol{\pi}_X(A),\ol{t}_{X,\un{i}}(X_\un{i})\mid i\in F^c)\subseteq\N\T_X,
\]
a collection $Z^{F^\perp}_X := \{X_\un{n}\}_{\un{n} \perp F}$, and a collection $Y^{F}_X := \{Y_{X,\un{n}}^F\}_{\un{n} \in \supp^{-1}(F)}$ by
\begin{align*}
Y_{X,\un{0}}^{F} & :=B^{F^\perp}
\qand
Y_{X,\un{n}}^{F} :=[\ol{t}_{X, \un{n}}(X_{\un{n}})B^{F^\perp}]\subseteq\N\T_X\foral \un{0} \neq \un{n} \in \supp^{-1}(F).
\end{align*}
Then the collections $Z^{F^\perp}_X$ and $Y^F_X$ become product systems with the structure inherited from $X$ and $\N\T_X$, with 
\[
\N\T_{Z^{F^\perp}_X}\cong B^{F^\perp}
\qand
\N\T_{Y_X^F} \cong \N\T_X.
\]
We are interested in describing the quotient of $\N\T_X$ by $\sca{\ol{\pi}_{X}(A) \ol{q}_{X, \un{i}} \mid i \in F}$ as a Cuntz-Nica-Pimsner algebra.
We have two approaches in this endeavour, differing only at which point we wish to delete the kernel.

\medskip

\noindent
{\bf Corollary I.} \emph{(Corollary \ref{C:somedir}) Let $X$ be a product system over $\bZ_+^d$ with coefficients in a C*-algebra $A$, wherein $X_\un{i}$ admits a finite frame for all $i\in[d]$, and fix $\mt \neq F \subseteq [d]$.
On the one hand, define the positively invariant ideal
\[
I_{Y_X^F} := \ker \{ Y^F_{X, \un{0}} \to \N\T_{Y^F_X} / \sca{\ol{\pi}_{Y^F_X}(Y^F_{X, \un{0}})\ol{q}_{Y^F_X,\un{i}}\mid i\in F} \}
\]
for the product system $Y^F_X$ related to $X$ and $F$.
On the other hand, define the positively invariant ideal
\[
I_X^F := \ker\{ A \to \N\T_X/ \sca{\ol{\pi}_X(A) \ol{q}_{X, \un{i}} \mid i \in F } \}
\]
for $X$, and consider the product system $Y^F_{[X]_{I_X^F}}$ related to $[X]_{I_X^F}$ and $F$.
Then there are canonical $*$-isomorphisms
\[
\N\O_{[Y^F_X]_{I_{Y_X^F}}} 
\cong
\N\T_X / \sca{\ol{\pi}_X(A) \ol{q}_{X, \un{i}} \mid i \in F }
\cong
\N\O_{Y^F_{[X]_{I_X^F}}}.
\]
If, in addition, $X_{\un{i}}$ is injective for all $i \in F$, then $Y_X^F$ is regular, $I_{Y_X^F} = \{0\}$ and $I_X^F = \{0\}$.
}

\medskip

This gives another avenue for obtaining the results of \cite{Kak20}.
The $F^c$-equivariant KMS-states of $\N\T_X$ that annihilate $\sca{\ol{\pi}_X(A) \ol{q}_{X, \un{i}} \mid i \in F }$ can be obtained from tracial states of $A$ annihilating $I_X^F$ by first inducing a KMS-state of finite type on the Toeplitz-Nica-Pimsner algebra of $Z^{F^\perp}_{[X]_{I_X^F}}$ (by using the Fock space construction) and then extending it to a KMS-state on the Cuntz-Nica-Pimsner algebra of $Y_{[X]_{I_X^F}}^F$ (by using a direct limit argument on the fixed point algebra).

We close with a note that several of the construction arguments we use apply to the general setting of product systems and we have opted to include them at that generality.
The problem of parametrising the gauge-invariant ideals in the general case requires a different approach, as there is no direct connection to ideals of just the coefficient algebra.
Indeed, this is the case even for the reduced strong covariant algebra \cite{DKKLL20, Seh18, Seh21}.

\subsection{Contents of sections}

In Section \ref{S:prod sys} we present the constructions that we will need.
We place extra attention on the quotient product system construction, which we present at full generality.
We then give a presentation of the main results of \cite{DK18}, and elucidate some of the key points that are used in the sequel.
Most importantly, we show that injectivity on the fixed point algebra reduces to checking just on the $[\un{0}, \un{1}_{[d]}]$-core, a trick that was used implicitly in \cite{DK18}.
Moreover, we study the $IXI$ construction and show an association between $\N\T_{IXI}$ and $\N\T_X$ (resp. $\N\O_{IXI}$ and $\N\O_X$) that is used in Section \ref{S:app}.

In Section \ref{S:relcnpalg} we focus on $2^d$-tuples and the ideals that they induce.
We give a step-by-step analysis of relativity, invariance, and partial ordering, and we demonstrate the maximality condition required for our parametrisation.
After presenting (E)-$2^d$-tuples and maximal (E)-$2^d$-tuples, we prove the Gauge-Invariant Uniqueness Theorem for equivariant quotients in-between $\N\T_X$ and $\N\O_X$.
We also capture maximal (E)-$2^d$-tuples algebraically and show how they can be constructed.

In Section \ref{S:g inv struc} we give the full parametrisation by NT-$2^d$-tuples, and show how this descends to the relative Cuntz-Nica-Pimsner algebras.
Moreover, we study the induced lattice isomorphism and give a description of the join and meet operations.

Finally, in Section \ref{S:app} we give applications and connections with the literature.
First we study a case where the NO-$2^d$-tuples depend only on the selection of a single ideal, which is then used in the specific cases of injective C*-dynamical systems and locally convex row-finite graphs.
Moving beyond that point, we interpret our parametrisation in terms of the structural data for C*-dynamical systems and row-finite higher-rank graphs in general.
As a special case we also study the impact on regular product systems.
We close this section by examining product systems on finite frames.

\medskip

\noindent {\bf Acknowledgements.}
Joseph Dessi acknowledges support from EPSRC as part of his PhD thesis on the programme ``The Structure of C*-Algebras of Product Systems'' (Ref. 2441268).
Evgenios Kakariadis acknowledges support from EPSRC as part of the programme ``Operator Algebras for Product Systems'' (EP/T02576X/1). 

The main results of the manuscript were pre-announced by Joseph Dessi at the ``Algebras of operators on Banach spaces and C*-algebras" workshop that took place at the Institute of Mathematics at the Czech Academy of Sciences on July 7-8, 2022. 
Joseph Dessi would like to thank the organisers for their hospitality.

After communicating a finalised version of the manuscript to Adam Dor-On, we were informed that Boris Bilich has been independently investigating the parametrisation of gauge-invariant ideals by using product system extensions when the left action is by compacts \cite{BDM23}.
The authors would like to thank Boris Bilich, Adam Dor-On and Ralf Meyer for bringing this line of research to their attention.

The authors would like to thank the reviewer for their comments and suggestions to improve the presentation of the content.

\medskip

\noindent {\bf Open access statement.}
For the purpose of open access, the authors have applied a Creative Commons Attribution (CC BY) license to any Author Accepted Manuscript (AAM) version arising.

\section{C*-correspondences and product systems}\label{S:prod sys}

\subsection{Notation} \label{Ss:notation}

By a lattice we will always mean a distributive lattice with operations $\vee$ and $\wedge$.
We write $\bZ_+$ for the nonnegative integers $\{0,1,\dots\}$ and $\bN$ for the positive integers $\{1,2,\dots\}$.
We denote the unit circle in the complex plane by $\bT$.
If $A,B$ and $C$ are sets and $f \colon A\times B\to C$ is a map, then we set
\[
f(A,B) := \{f(a,b)\mid a\in A,b\in B\};
\]
for example, if $H$ is a Hilbert space, then $\sca{H,H} := \{\sca{\xi,\eta}\mid\xi,\eta\in H\}$.
If $V$ is a normed vector space and $S\subseteq V$ is a subset, then $[S]$ denotes the norm-closed linear span of $S$ inside $V$.
If we only wish to take the linear span then this will always be clearly stated.
We recall the Hewitt-Cohen Factorisation Theorem, e.g., \cite[Proposition 2.33]{RW98}.

\begin{theorem}[Hewitt-Cohen Factorisation Theorem]\label{T:HCFT}
Let $A$ be a C*-algebra, $X$ be a Banach space and $\pi \colon A \to \B(X)$ be a bounded homomorphism.
Then $[\pi(A)X]=\pi(A) X$. 
\end{theorem}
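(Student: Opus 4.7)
The forward inclusion $\pi(A)X \subseteq [\pi(A)X]$ is immediate, so the substance of the theorem is the reverse inclusion: every $z \in [\pi(A)X]$ should factor as $\pi(a) y$ for some $a \in A$ and $y \in X$. My plan is to invoke Cohen's factorization theorem for bounded left Banach modules over a Banach algebra with a bounded approximate identity, applied to the operator-module structure on $X$ supplied by $\pi$.

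First I would verify the hypothesis of Cohen's theorem: non-degeneracy of the $A$-action on $[\pi(A)X]$. Fixing a contractive approximate unit $(e_\lambda)_{\lambda \in \Lambda}$ of the C*-algebra $A$, continuity of multiplication in $A$ gives $\pi(e_\lambda) \pi(b) x = \pi(e_\lambda b) x \to \pi(b) x$ for every $b \in A$ and $x \in X$. A $3\varepsilon$-argument using the uniform bound $\|\pi(e_\lambda)\| \leq \|\pi\|$ together with density of $\pi(A) X$ in its closed span then extends this convergence to $\pi(e_\lambda) z \to z$ for every $z \in [\pi(A)X]$.

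With non-degeneracy in place, the standard Cohen iterative construction applies. For a fixed small parameter $0 < \delta < 1$, one inductively selects indices $\lambda_1, \lambda_2, \ldots \in \Lambda$ and builds a sequence $a_n \in A$, given by weighted combinations of the $e_{\lambda_k}$, together with a corresponding sequence $y_n \in X$. The $\lambda_n$ are chosen so as to simultaneously preserve a Neumann-series bound governing invertibility of related elements in the unitization $\tilde A$ and (using the non-degeneracy of the first step) to ensure that $(y_n)$ is Cauchy. Passing to the limits $a = \lim_n a_n \in A$ and $y = \lim_n y_n \in X$ yields the factorisation $z = \pi(a) y$ and hence the equality $[\pi(A)X] = \pi(A) X$.

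The main obstacle is the delicate inductive bookkeeping: each $\lambda_{n+1}$ must bring $\pi(e_{\lambda_{n+1}})$ close enough to the identity on the current candidate vector $y_n$ (with error decaying fast enough relative to the geometric factor $(1-\delta)^n$) so that both Cauchyness of $(y_n)$ and the uniform Neumann invertibility bound are maintained in tandem. A secondary point requiring care is the algebraic arrangement ensuring that the limiting element $a$ lies in $A$ rather than just in $\tilde A$, even though the auxiliary invertible elements produced by the Neumann argument live in $\tilde A$. Both points are standard in the Cohen factorisation literature and yield the theorem.
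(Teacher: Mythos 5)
Your proposal is correct, and it coincides with the paper's approach in the only sense available: the paper states this result without proof, citing \cite{RW98}, and the proof given there is precisely the Cohen factorisation scheme you outline — first non-degeneracy of the action of a contractive approximate unit on $[\pi(A)X]$ via the $3\varepsilon$-argument with the uniform bound $\|\pi(e_\la)\|\leq\|\pi\|$, then the inductive choice of indices that simultaneously maintains the Neumann-series invertibility bound in the unitisation $\tilde{A}$ and the Cauchyness of the vectors $y_n$, yielding $z=\pi(a)y$ with $a=\lim_n a_n\in A$. Your closing remarks correctly isolate the only delicate points (the interlocked bookkeeping, and that $a$ lands in $A$ rather than merely in $\tilde{A}$ because each $a_n$ is a norm-convergent weighted sum of the $e_{\la_k}$), so nothing is missing.
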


All ideals of C*-algebras are taken to be two-sided and norm-closed.
If $A$ is a C*-algebra and $S\subseteq A$ is a subset, then $\sca{S}$ denotes the ideal of $A$ generated by $S$.
If $I\subseteq A$ is an ideal, then we set $I^\perp:=\{a\in A\mid aI=\{0\}\}$.

If $A = \ca(a_i \mid i \in \bI)$ and $B = \ca(b_i \mid i \in \bI)$ are C*-algebras, then a map $\Phi \colon A \to B$ will be called \emph{canonical} if it preserves generators of the same index, i.e., $\Phi(a_i) = b_i$ for all $i \in \bI$.

\subsection{C*-correspondences}\label{Ss:C*-cor}

We assume familiarity with the elementary theory of right Hilbert C*-modules.
The reader is addressed to \cite{Lan95, MT05} for an excellent introduction to the subject.
We will briefly outline the fundamentals of the theory of C*-correspondences.
We also recount Katsura's parametrisation in the C*-correspondences context \cite{Kat07}.

Let $A$ be a C*-algebra and $X$ be a right Hilbert $A$-module.
We write $\L(X)$ for the C*-algebra of adjointable operators on $X$, and $\K(X)$ for the ideal of (generalised) compact operators on $X$.
Recall that $\K(X)$ is densely spanned by the rank-one operators $\Theta_{\xi,\eta}^X \colon \zeta\mapsto\xi\sca{\eta,\zeta}$, for $\xi,\eta,\zeta\in X$.
When the right Hilbert C*-module $X$ is clear from the context, we will write $\Theta_{\xi,\eta}$ instead of $\Theta_{\xi, \eta}^X$.
A \emph{C*-correspondence} $X$ over a C*-algebra $A$ is a right Hilbert $A$-module equipped with a left action implemented by a $*$-homomorphism $\phi_X \colon A\to\L(X)$.
When the left action is clear from the context, we will abbreviate $\phi_X(a)\xi$ as $a\xi$, for $a\in A$ and $\xi\in X$.
The C*-correspondence $X$ is said to be \emph{non-degenerate} if $[\phi_X(A)X]=X$. 
If $\phi_X$ is injective, then we say that $X$ is \emph{injective}. 
If $X$ is injective and $\phi_X(A)\subseteq\K(X)$, then we say that $X$ is \emph{regular}.

Any C*-algebra $A$ can be viewed as a non-degenerate C*-correspondence over itself, with the right (resp. left) action given by right (resp. left) multiplication in $A$, and $A$-valued inner product given by $\sca{a,b}=a^*b$ for all $a,b\in A$.
Then $A\cong\K(A)$ by the left action $\phi_A$, and thus $A$ is non-degenerate by an application of an approximate unit.

Let $X$ and $Y$ be C*-correspondences over a C*-algebra $A$.
We call an $A$-bimodule linear map $u \colon X\to Y$ a \emph{unitary} if it is an inner-product-preserving surjection.
If such a $u$ exists, then it is adjointable, and we say that $X$ and $Y$ are \emph{unitarily equivalent} (symb. $X\cong Y$).

We write $X\otimes_A Y$ for the $A$-balanced tensor product.
Given $S\in\L(X)$, there exists an operator $S\otimes\text{id}_Y\in\L(X\otimes_A Y)$ determined on simple tensors by $\xi\otimes\eta\mapsto (S\xi)\otimes \eta$ for all $\xi\in X$ and $\eta\in Y$, e.g., \cite[p. 42]{Lan95}.
The assignment $S\mapsto S\otimes\text{id}_Y$ constitutes a unital $*$-homomorphism from $\L(X)$ to $\L(X\otimes_A Y)$.
In this way we can define a left action $\phi_{X\otimes_A Y}$ on $X\otimes_A Y$ by $\phi_{X\otimes_A Y}(a)= \phi_X(a) \otimes\id_Y$ for all $a\in A$, thereby endowing $X\otimes_A Y$ with the structure of a C*-correspondence over $A$.
The $A$-balanced tensor product is associative.
Moreover, the right action of $X$ yields a unitary $X \otimes_A A \to X$ determined by $\xi \otimes a\mapsto \xi a$ for all $a\in A$ and $\xi\in X$.
The left action of $X$ yields a unitary $A\otimes_A X\to[\phi_X(A)X]$ determined by $a\otimes\xi\mapsto\phi_X(a)\xi$ for all $a\in A$ and $\xi\in X$.

A \emph{(Toeplitz) representation} $(\pi,t)$ of the C*-correspondence $X$ on $\B(H)$ is a pair of a $*$-homomorphism $\pi \colon A\to \B(H)$ and a linear map $t \colon X\to \B(H)$ such that
\[
\pi(a) t(\xi) = t(\phi_X(a) \xi)
\text{ and }
t(\xi)^* t(\eta) = \pi(\sca{\xi,\eta})
\foral
a \in A, \xi, \eta \in X.
\]
Then $(\pi,t)$ automatically satisfies $t(\xi)\pi(a) = t(\xi a)$ for all $a \in A$ and $\xi \in X$.
Moreover, there exists a $*$-homomorphism $\psi \colon \K(X) \to \B(H)$ such that $\psi(\Theta_{\xi,\eta})=t(\xi)t(\eta)^*$ for all $\xi,\eta\in X$, e.g., \cite[Lemma 2.2]{KPW98}.
We say that $(\pi,t)$ is \emph{injective} if $\pi$ is injective; then both $t$ and $\psi$ are isometric.

Given a representation $(\pi,t)$ of $X$, we write $\ca(\pi,t)$ for the C*-algebra generated by $\pi(A)$ and $t(X)$.
A representation $(\pi,t)$ is said to \emph{admit a gauge action} $\ga \colon \bT\to\text{Aut}(\ca(\pi,t))$ if $\ga$ is a group homomorphism, $\{\ga_z\}_{z\in\bT}$ is point-norm continuous and
\[ \ga_z(\pi(a))=\pi(a) \foral a\in A \; \text{and} \; \ga_z(t(\xi))=zt(\xi) \foral \xi\in X, \]
for all $z\in\bT$.
When a gauge action $\ga$ exists, it is necessarily unique.
An ideal $\mathfrak{J}\subseteq\ca(\pi,t)$ is called \emph{gauge-invariant} or \emph{equivariant} if $\ga_z(\mathfrak{J})\subseteq\mathfrak{J}$ for all $z\in\bT$ (thus $\ga_z(\fJ) = \fJ$ for all $z \in \bT$).

The \emph{Toeplitz-Pimsner algebra} $\T_X$ is the universal C*-algebra with respect to the representations of $X$.
Let $J\subseteq A$ be such that $J\subseteq\phi_X^{-1}(\K(X))$. 
The \emph{$J$-relative Cuntz-Pimsner algebra} $\O(J,X)$ is the universal C*-algebra with respect to the \emph{$J$-covariant} representations of $X$; that is, the representations $(\pi,t)$ of $X$ satisfying $\pi(a) =\psi(\phi_X(a))$ for all $a\in J$.
Traditionally the relative Cuntz-Pimsner algebras are defined with respect to ideals of $A$ rather than just subsets, however the two versions are equivalent since $\O(J, X) = \O(\sca{J},X)$.
When $J=\{0\}$, we have $\O(J,X)=\T_X$.
For the ideal
\[
J_X:=(\ker\phi_X)^\perp\cap\phi_X^{-1}(\K(X)),
\]
we obtain that $\O(J_X,X)$ is the \emph{Cuntz-Pimsner algebra} $\O_X$  \cite{Kat04}.
Katsura's ideal $J_X$ is the largest ideal on which the restriction of $\phi_X$ is injective with image contained in $\K(X)$ \cite{Kat03}.

One of the main tools in the theory is the Gauge-Invariant Uniqueness Theorem, obtained in its full generality by Katsura \cite{Kat07}.
An alternative proof can be found in \cite{Kak16}, and Frei \cite{Fre21} extended this method to include all relative Cuntz-Pimsner algebras, in connection with \cite{Kat07}.

\begin{theorem}[$\bZ_+$-GIUT] \cite[Corollary 11.8]{Kat07} \label{T:giut}
Let $X$ be a C*-correspondence over a C*-algebra $A$, let $J\subseteq A$ be an ideal satisfying $J\subseteq J_X$ and let $(\pi,t)$ be a representation of $X$. 
Then $\O(J,X)\cong\ca(\pi,t)$ via a canonical $*$-isomorphism if and only if $(\pi,t)$ is injective, admits a gauge action and satisfies $J=\pi^{-1}(\psi(\K(X)))$.
\end{theorem}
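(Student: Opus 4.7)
The plan is to argue the two implications separately; the substantive content is in the ``if'' direction and is treated via a standard gauge-invariant uniqueness argument.

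For the ``only if'' direction, suppose the canonical $*$-isomorphism $\Phi \colon \O(J,X) \to \ca(\pi,t)$ exists. The universal property of $\O(J,X)$ immediately supplies a gauge action on $\O(J,X)$ (each rescaling of the universal embedding of $X$ remains $J$-covariant), which transports through $\Phi$ to give a gauge action on $\ca(\pi,t)$. Injectivity of $(\pi,t)$ reduces to injectivity of the canonical map $A \to \O(J,X)$, and this holds because $J \subseteq J_X$: the surjection $\O(J,X) \to \O_X$ factors the canonical map $A \to \O_X$, which is injective by Katsura's construction of $\O_X$. Finally, $J$-covariance gives $J \subseteq \pi^{-1}(\psi(\K(X)))$ directly, while the reverse inclusion uses universality: any $a$ with $\pi(a) \in \psi(\K(X))$ pulls back through $\Phi^{-1}$ to a relation in $\O(J,X)$ that forces $a \in J$.

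For the ``if'' direction, suppose $(\pi,t)$ is injective, admits a gauge action $\ga$ and satisfies $J = \pi^{-1}(\psi(\K(X)))$. Averaging $\ga$ against the Haar measure on $\bT$ produces a faithful conditional expectation $E_\ga \colon \ca(\pi,t) \to \ca(\pi,t)^\ga$; the universal gauge action on $\O(J,X)$ yields a matching faithful expectation, and the canonical surjection $\Phi \colon \O(J,X) \to \ca(\pi,t)$ intertwines the two expectations. Hence $\Phi$ is injective if and only if its restriction to the fixed point algebra is injective. The fixed point algebra is the norm closure of an increasing union of finite ``cores'' obtained by summing the images of $\K(X^{\otimes k})$ under the induced representations on tensor powers, and I would prove injectivity of $\Phi$ on each core by induction on $k$.

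The main obstacle is the inductive step, and it is the sole place where the hypothesis $J = \pi^{-1}(\psi(\K(X)))$ is used. Given an element of $\ker \Phi$ lying in the core at level $n+1$, I would decompose it as a sum of a lower-order part $b_n$ and a ``top-level'' compact $k \in \K(X^{\otimes(n+1)})$. Using the isomorphism $\K(X^{\otimes(n+1)}) \cong \K(X^{\otimes n} \otimes_A X)$ together with an approximate identity in $\K(X)$, one peels off the outermost tensor factor to extract an element $a \in A$ satisfying $\pi(a) \in \psi(\K(X))$. The hypothesis places $a \in J$, and $J$-covariance rewrites the top-level contribution as an element of the lower-order core, so that the inductive hypothesis applies. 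The bookkeeping here is intricate, as worked out in \cite{Kat07} and streamlined in \cite{Kak16, Fre21}, but the condition $J = \pi^{-1}(\psi(\K(X)))$ is precisely what is needed to propagate injectivity across the filtration.
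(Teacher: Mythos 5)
Your ``if'' direction is correct and is essentially the standard argument, which is also how this paper handles the result: the paper does not reprove Katsura's theorem but recovers it as the $d=1$ case of Theorem \ref{T:d GIUT M}, whose proof (Proposition \ref{P:inj rel}) runs as you outline --- equivariance of the canonical surjection with respect to the two gauge actions, reduction to the fixed point algebra via the faithful conditional expectations, and a compression argument in the cores in which the hypothesis $J=\pi^{-1}(\psi(\K(X)))$ is exactly what lets the extracted degree-zero element be rewritten covariantly and absorbed into higher cores. The only organisational difference is that the paper replaces your induction over all core levels by Proposition \ref{P:inj fp}, which reduces injectivity on the whole fixed point algebra to a single core, and then compresses at the \emph{minimal} nonzero degree rather than peeling off the top level; for $d=1$ these are the same computation. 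Your ``only if'' treatment of the gauge action, of injectivity (factoring $A\to\O(J,X)\to\O_X$ through the Cuntz--Pimsner quotient), and of the inclusion $J\subseteq\pi^{-1}(\psi(\K(X)))$ is also correct.

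The genuine gap is the reverse inclusion $\pi^{-1}(\psi(\K(X)))\subseteq J$ in the ``only if'' direction. Pulling $\pi(a)\in\psi(\K(X))$ back through $\Phi^{-1}$ only tells you that the relation $\ol{\pi}(a)\in\ol{\psi}(\K(X))$ holds for the \emph{universal} $J$-covariant representation $(\ol{\pi},\ol{t})$ of $\O(J,X)$; universality provides maps \emph{out of} $\O(J,X)$ and gives no control over which relations hold \emph{inside} it, so nothing at this formal level ``forces $a\in J$''. What is actually needed is the theorem that, for $J\subseteq J_X$, the universal $J$-covariant representation satisfies $\ol{\pi}^{-1}(\ol{\psi}(\K(X)))=J$ exactly --- equivalently, that $J\mapsto\O(J,X)$ is injective on ideals contained in $J_X$. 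This is real content, proved by a Fock-model computation identifying the degree-zero corner of the covariance ideal. In the present paper's language it is precisely the maximality machinery: for $d=1$ the tuple $\{\{0\},J\}$ is an (M)-$2$-tuple for \emph{every} ideal $J\subseteq J_X$ (conditions (i)--(iii) of Theorem \ref{T:m fam v2} are immediate and condition (iv) is vacuous when $d=1$; the converse direction of that theorem rests on Fock compressions such as $\phi_{\un{0}}(a)=\ol{q}_{[d]}\big(\ol{\pi}(a)\ol{q}_{[d]}\big)\ol{q}_{[d]}\in\phi_{\un{0}}(\L_{[d]})$, via Propositions \ref{P: JLF} and \ref{P: cond ex}), while $\L^{(\pi,t)}$ is an (M)-$2$-tuple inducing the same gauge-invariant ideal (Propositions \ref{P:rep m fam} and \ref{P:inj rel}); uniqueness of maximal tuples (Proposition \ref{P:maximal}) then yields $J=\L^{(\pi,t)}_{\{1\}}=\pi^{-1}(\psi(\K(X)))$. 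Supplying this step --- either by Katsura's core analysis or by the $d=1$ specialisation just described --- is necessary to complete your proof; the remainder of the proposal is sound.
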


Let $A$ be a C*-algebra, let $I\subseteq A$ be an ideal and let $X$ be a right Hilbert $A$-module.
Then the set $XI$ is a closed linear subspace of $X$ that is invariant under the right action of $A$, e.g, \cite[p. 576]{FMR03} or \cite[Corollary 1.4]{Kat07}.
In particular, we have $[XI]=XI$. 
Consequently, $XI$ is itself a right Hilbert $A$-module under the operations and $A$-valued inner product inherited from $X$. 
We may also view $XI$ as a right Hilbert $I$-module. 
Due to \cite[Lemma 2.6]{FMR03}, we will identify $\K(XI)$ as an ideal of $\K(X)$ in the following natural way:
\begin{align*}
\K(XI) & =\ol{\spn}\{\Theta_{\xi,\eta}^X \mid \xi,\eta\in XI\}\subseteq\K(X).
\end{align*}
When $X$ is in addition a C*-correspondence over $A$, we may equip $XI$ with a C*-correspondence structure via the left action
\[
\phi_{XI}\colon A\to\L(XI); \phi_{XI}(a)=\phi_X(a)|_{XI} \foral a \in A.
\]
By restricting $\phi_{XI}$ to $I$, we may also view $XI$ as a C*-correspondence over $I$.

Following \cite{Kat07}, and in order to ease notation, we will use the symbol $[ \hspace{1pt} \cdot \hspace{1pt} ]_I$ to denote the quotient maps associated with a right Hilbert $A$-module $X$ and an ideal $I\subseteq A$.
For example, we use it for both the quotient map $A\to A/I\equiv [A]_I$ and the quotient map $X\to X/XI\equiv[X]_I$.
We equip the complex vector space $[X]_I$ with the following right $[A]_I$-module multiplication:
\[ 
[\xi]_I[a]_I=[\xi a]_I \foral a\in A,\xi\in X, 
\]
as well as the following $[A]_I$-valued inner product:
\[ 
\sca{[\xi]_I,[\eta]_I}=[\sca{\xi,\eta}]_I \foral \xi,\eta\in X. 
\]
Consequently $[X]_I$ carries the structure of an inner-product right $[A]_I$-module. 
By \cite[Lemma 1.5]{Kat07}, the canonical norm on $[X]_I$ induced by the $[A]_I$-valued inner product coincides with the usual quotient norm. 
Thus $[X]_I$ is a right Hilbert $[A]_I$-module. 
We may define a $*$-homomorphism $[\hspace{1pt} \cdot \hspace{1pt}]_I\colon \L(X)\to\L([X]_I)$ by
\[ 
[S]_I[\xi]_I=[S\xi ]_I \foral S\in\L(X), \xi\in X. 
\]
We include \cite[Lemma 1.6]{Kat07} in its entirety, as we will be making frequent reference to it.

\begin{lemma}\label{L:Kat07}\cite[Lemma 1.6]{Kat07}
Let $X$ be a right Hilbert module over a C*-algebra $A$ and let $I\subseteq A$ be an ideal.
Then for all $\xi,\eta\in X$, we have $[\Theta_{\xi,\eta}^X]_I=\Theta_{[\xi]_I,[\eta]_I}^{[X]_I}$.
The restriction of the map $[\hspace{1pt} \cdot \hspace{1pt}]_I \colon \L(X)\to\L([X]_I)$ to $\K(X)$ is a surjection onto $\K([X]_I)$ with kernel $\K(XI)$.
\end{lemma}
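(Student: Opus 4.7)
The plan is to handle the three assertions in order, expecting the kernel computation to be the main obstacle; the rank-one identity and the surjectivity onto $\K([X]_I)$ will be formal once the quotient operations are unwound.

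For the rank-one identity, I would compute $[\Theta^X_{\xi, \eta}]_I [\zeta]_I$ for an arbitrary $[\zeta]_I \in [X]_I$. Using the definition of $[\hspace{1pt}\cdot\hspace{1pt}]_I\colon \L(X) \to \L([X]_I)$ together with the compatibility of $[\hspace{1pt}\cdot\hspace{1pt}]_I$ with the right action and the $[A]_I$-valued inner product on $[X]_I$, the expression collapses to $[\xi]_I \langle [\eta]_I, [\zeta]_I \rangle$, i.e., to $\Theta^{[X]_I}_{[\xi]_I, [\eta]_I}[\zeta]_I$. For the surjection onto $\K([X]_I)$, I would note that the restriction $[\hspace{1pt}\cdot\hspace{1pt}]_I|_{\K(X)}$ is a $*$-homomorphism between C*-algebras and hence has closed range; since the previous identity shows this range contains every rank-one operator on $[X]_I$, and these densely span $\K([X]_I)$, surjectivity follows.

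The kernel computation is the main obstacle. The inclusion $\K(XI) \subseteq \ker([\hspace{1pt}\cdot\hspace{1pt}]_I|_{\K(X)})$ is immediate from the rank-one identity applied to the generators $\Theta^X_{\xi, \eta}$ of $\K(XI)$ with $\xi, \eta \in XI$, with linearity and continuity handling the rest. For the reverse inclusion, starting from $k \in \K(X)$ with $[k]_I = 0$---equivalently $k(X) \subseteq XI$---I would first establish the preliminary fact that $\Theta^X_{\xi, \eta} \in \K(XI)$ whenever $\xi \in XI$, with the symmetric statement for $\eta \in XI$ following by taking adjoints. Using an approximate identity $(u_\lambda)$ of $I$, the self-adjointness of $u_\lambda$ gives $\Theta^X_{\xi u_\lambda, \eta} = \Theta^X_{\xi, \eta u_\lambda}$; the right-hand side lies in $\K(XI)$ since both arguments belong to $XI$, while the left-hand side converges to $\Theta^X_{\xi, \eta}$ in norm because $\xi u_\lambda \to \xi$ (as $\xi \in XI$).

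To conclude, I would select an approximate identity $(e_\mu)$ of $\K(X)$ of the form $e_\mu = \sum_i \Theta^X_{\alpha_i, \alpha_i}$, so that $k e_\mu \to k$ in norm. The composition rule $k \Theta^X_{\alpha, \beta} = \Theta^X_{k\alpha, \beta}$ together with $k\alpha_i \in XI$ expresses each $k e_\mu$ as a finite sum of rank-ones of the form analysed above, placing $k e_\mu \in \K(XI)$. Closedness of $\K(XI)$ inside $\K(X)$ then forces $k \in \K(XI)$, completing the kernel characterisation.
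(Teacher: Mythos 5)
Your proof is correct, and there is nothing to compare it against inside the paper itself: Lemma \ref{L:Kat07} is quoted verbatim from \cite[Lemma 1.6]{Kat07} with no proof given, and your argument is precisely the standard one for this result — the rank-one computation $[\Theta^X_{\xi,\eta}]_I[\zeta]_I=[\xi\sca{\eta,\zeta}]_I=\Theta^{[X]_I}_{[\xi]_I,[\eta]_I}[\zeta]_I$, surjectivity from the closed range of a $*$-homomorphism plus density of rank-ones, and the kernel via $\Theta^X_{\xi u_\la,\eta}=\Theta^X_{\xi,\eta u_\la}$ together with an approximate unit of $\K(X)$. One small remark: the special form $e_\mu=\sum_i\Theta^X_{\alpha_i,\alpha_i}$ does exist (e.g.\ via $k_F(1+k_F)^{-1}$ for $k_F=\sum_i\Theta^X_{x_i,x_i}$), but your argument does not need it, since $k\,\Theta^X_{\alpha,\beta}=\Theta^X_{k\alpha,\beta}\in\K(XI)$ for every rank-one operator gives $k\,\K(X)\subseteq\K(XI)$ by density, and then any approximate unit of $\K(X)$ yields $k=\lim_\mu ke_\mu\in\K(XI)$.
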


Therefore, given an ideal $I \subseteq A$, we obtain the surjective maps
\begin{align*}
A \to A/I & \textup{ with kernel $I$}, \\
X \to X/ XI & \textup{ with kernel $XI$}, \\
\K(X) \to \K(X/XI) & \textup{ with kernel $\K(XI)$},
\end{align*}
as well as the map $\L(X) \to \L(X/XI)$ (which may \emph{not} be surjective), all of which will be denoted by the same symbol $[\hspace{1pt} \cdot \hspace{1pt}]_I$.
Lemma \ref{L:Kat07} implies that if $k\in\K(X)$, then
\begin{equation}\label{Eq: comp}
k\in\K(XI) \iff \sca{X, k X} \subseteq I.
\end{equation}
Lemma \ref{L:Kat07} provides a straightforward way of seeing $\K(XI)$ as an ideal in $\K(X)$, and in turn as an ideal in $\L(X)$.
Hence we may consider the quotient C*-algebra $\L(X)/\K(XI)$.

We recall the following result extracted from \cite[Lemma 4.6]{Lan95}, slightly rewritten to match our setting.

\begin{lemma}\label{L:lance} \cite[Lemma 4.6]{Lan95}
Let $X$ and $Y$ be C*-correspondences over a C*-algebra $A$.
For $x\in X$, the equation $\Theta_x(y)=x\otimes y$, for $y\in Y$, defines an element $\Theta_x\in\L(Y,X\otimes_AY)$ satisfying
\begin{align*}
\|\Theta_x\| & =\|\phi_Y(\sca{x,x}^{1/2})\|\leq\|x\| \qand
\Theta_x^*(x'\otimes y)  =\phi_Y(\sca{x,x'})y \foral x' \in X, y \in Y.
 \end{align*}
\end{lemma}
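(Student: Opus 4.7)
The plan is to verify the three assertions in sequence: well-definedness of $\Theta_x$ as a bounded operator $Y \to X \otimes_A Y$, adjointability with the stated formula for $\Theta_x^*$, and the identity for $\|\Theta_x\|$. First I would establish that the assignment $y \mapsto x \otimes y$ extends to a bounded linear map. Using the defining inner product on the $A$-balanced tensor product, one has
\[
\sca{x \otimes y, x \otimes y} = \sca{y, \phi_Y(\sca{x,x}) y} \leq \|\phi_Y(\sca{x,x})\| \sca{y,y}
\]
for every $y \in Y$, where the inequality uses positivity of $\phi_Y(\sca{x,x})$ in $\L(Y)$. This immediately yields the norm estimate $\|\Theta_x(y)\| \leq \|\phi_Y(\sca{x,x}^{1/2})\| \cdot \|y\|$, so $\Theta_x$ is well-defined as a bounded linear map into $X \otimes_A Y$.

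Next I would construct the adjoint. The candidate map is defined on simple tensors by $T(x' \otimes y) := \phi_Y(\sca{x, x'}) y$. To see this is well-defined on the tensor product, I would check that the associated bilinear map $X \times Y \to Y$ is $A$-balanced (which is immediate from $\sca{x, x'a} = \sca{x,x'}a$ on the left and the left action $\phi_Y$ on the right), and bounded via an estimate analogous to the one above. Then a direct computation on simple tensors,
\[
\sca{\Theta_x(y), x' \otimes y'} = \sca{x \otimes y, x' \otimes y'} = \sca{y, \phi_Y(\sca{x, x'}) y'} = \sca{y, T(x' \otimes y')},
\]
extended by linearity and continuity, confirms that $T = \Theta_x^*$ in $\L(Y, X \otimes_A Y)$.

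For the norm identity, I would invoke the C*-identity. Since $\Theta_x^* \Theta_x(y) = \Theta_x^*(x \otimes y) = \phi_Y(\sca{x,x}) y$ for every $y \in Y$, we have $\Theta_x^* \Theta_x = \phi_Y(\sca{x,x}) = \phi_Y(\sca{x,x}^{1/2})^2$ as an element of $\L(Y)$, and so
\[
\|\Theta_x\|^2 = \|\Theta_x^* \Theta_x\| = \|\phi_Y(\sca{x,x}^{1/2})\|^2.
\]
The bound $\|\phi_Y(\sca{x,x}^{1/2})\| \leq \|x\|$ then follows from contractivity of the $*$-homomorphism $\phi_Y$ together with $\|\sca{x,x}^{1/2}\| = \|\sca{x,x}\|^{1/2} = \|x\|$.

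The only genuinely nontrivial step is the boundedness of $y \mapsto x \otimes y$; once the inner product identity $\sca{x \otimes y, x \otimes y} = \sca{y, \phi_Y(\sca{x,x}) y}$ is in hand, everything else is formal. I do not anticipate any real obstacle, as the argument is essentially a packaging of standard facts about C*-correspondences already recalled in Subsection \ref{Ss:C*-cor}.
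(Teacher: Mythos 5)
Your proof is correct, but note that the paper does not actually prove this lemma: it is imported directly from \cite[Lemma 4.6]{Lan95}, and your argument is essentially the standard one given there (boundedness of $\Theta_x$ from the inner-product identity $\sca{x\otimes y,x\otimes y}=\sca{y,\phi_Y(\sca{x,x})y}$, the adjoint formula checked on simple tensors, and the C*-identity $\|\Theta_x\|^2=\|\Theta_x^*\Theta_x\|=\|\phi_Y(\sca{x,x})\|$ for the norm). The one step you state a little loosely is the boundedness of the candidate adjoint $T$ on the algebraic tensor product --- it is not quite an estimate ``analogous to the one above'' but follows most quickly from the Cauchy--Schwarz trick $\|Tz\|^2=\|\sca{\Theta_x(Tz),z}\|\leq\|\Theta_x\|\,\|Tz\|\,\|z\|$, using the adjoint relation you have already verified on simple tensors.
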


As a consequence, we obtain the following lemma and corollary.

\begin{lemma}\label{L:com id}
Let $X$ and $Y$ be C*-correspondences over a C*-algebra $A$.
Let $I\subseteq A$ be an ideal and suppose $a\in A$ satisfies $\phi_Y(a)\in\K(YI)$. 
Then $\Theta_{\xi a, \eta}^X \otimes \id_Y \in \K((X\otimes_A Y)I)$ for all $\xi,\eta\in X$.
\end{lemma}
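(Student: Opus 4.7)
The plan is to isolate $\phi_Y(a)$ inside the operator $\Theta^X_{\xi a,\eta}\otimes\id_Y$ so that the hypothesis can be leveraged. Using Lemma \ref{L:lance}, for each $\xi\in X$ the map $y\mapsto \xi\otimes y$ defines an adjointable operator $\Theta_\xi\in\L(Y,X\otimes_A Y)$ with adjoint $\Theta_\xi^*(\xi'\otimes y)=\phi_Y(\sca{\xi,\xi'})y$. A direct check on elementary tensors, using the $A$-balancing identity $\xi a\otimes y=\xi\otimes\phi_Y(a)y$, yields
\[
\Theta^X_{\xi a,\eta}\otimes\id_Y \;=\; \Theta_\xi\,\phi_Y(a)\,\Theta_\eta^*.
\]

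I would then exploit $\phi_Y(a)\in\K(YI)$ by approximating it in norm by finite sums $\sum_j\Theta^Y_{y_j,z_j}$ with $y_j,z_j\in YI$. By continuity of multiplication in $\L(X\otimes_A Y)$, it is enough to show that each $\Theta_\xi\,\Theta^Y_{y,z}\,\Theta_\eta^*$ lies in $\K((X\otimes_A Y)I)$. A quick calculation gives
\[
\Theta_\xi\,\Theta^Y_{y,z}\,\Theta_\eta^* \;=\; \Theta^{X\otimes_A Y}_{\xi\otimes y,\,\eta\otimes z},
\]
so the problem reduces to showing that this rank-one operator belongs to $\K((X\otimes_A Y)I)$ whenever $y\in YI$.

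For the final step, I would use the Hewitt–Cohen Factorisation Theorem to write $y=y'b$ with $b\in I$ and $y'\in Y$, whence $\xi\otimes y=(\xi\otimes y')b\in(X\otimes_A Y)I$. Since Lemma \ref{L:Kat07} identifies $\K((X\otimes_A Y)I)$ as an ideal of $\K(X\otimes_A Y)$, a rank-one operator with one leg inside the submodule $(X\otimes_A Y)I$ automatically lies in this ideal (which one verifies by the standard approximate-unit trick inside $I$, rewriting both legs into $(X\otimes_A Y)I$ up to a norm-small error). Taking limits closes the argument. The only delicate point I anticipate is the initial identification $\Theta^X_{\xi a,\eta}\otimes\id_Y=\Theta_\xi\phi_Y(a)\Theta_\eta^*$, where one must carefully track the $A$-balancing across $\otimes_A$; once that is in hand, the rest is a routine approximation argument.
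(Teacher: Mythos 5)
Your proof is correct and follows the paper's argument: both hinge on the identity $\Theta^X_{\xi a,\eta}\otimes\id_Y=\Theta_\xi\,\phi_Y(a)\,\Theta_\eta^*$ obtained from Lemma \ref{L:lance}. The only divergence is the finishing step --- the paper verifies membership in $\K((X\otimes_A Y)I)$ directly via the inner-product characterisation (\ref{Eq: comp}), whereas you approximate $\phi_Y(a)$ by rank-one operators with legs in $YI$ and invoke Hewitt--Cohen --- a routine and equally valid alternative.
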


\begin{proof}
One may directly verify that $\Theta_{\xi a,\eta}^X \otimes \text{id}_Y = \Theta_\xi\phi_Y(a)\Theta_\eta^* \in\K(X\otimes_A Y)$, where $\Theta_\xi,\Theta_\eta\in\L(Y,X\otimes_A Y)$ are defined as in Lemma \ref{L:lance}. 
Here we use that $\phi_Y(a)\in\K(YI)\subseteq\K(Y)$ together with \cite[p. 9, (1.6)]{Lan95}.
Hence, for $\zeta,\zeta'\in X\otimes_A Y$, we have
\[
\sca{\zeta, (\Theta_{\xi a,\eta}^X \otimes \text{id}_Y)( \zeta')}
=
\sca{\Theta_\xi^*\zeta,\phi_Y(a)\Theta_\eta^*\zeta'}\in I,
\]
where we have used the forward implication of (\ref{Eq: comp}) applied to the C*-correspondence $Y$ and $k=\phi_Y(a)\in\K(YI)$ to establish the membership to $I$.
An application of the converse implication of (\ref{Eq: comp}) gives that $\Theta_{\xi a, \eta}^X \otimes \id_Y \in \K((X\otimes_A Y)I)$, as required.
\end{proof}

\begin{corollary}\label{C:comp to comp}
Let $X$ and $Y$ be C*-correspondences over a C*-algebra $A$.
Let $I\subseteq A$ be an ideal and suppose that $\phi_Y(I)\subseteq\K(YI)$.
Then $k\otimes\id_Y\in\K((X\otimes_A Y)I)$ for all $k\in\K(XI)$.
\end{corollary}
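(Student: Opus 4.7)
The approach is to reduce to rank-one compact operators on $XI$, exploit the ideal structure to peel off an element of $I$ into the first slot of the rank-one operator, and then invoke Lemma \ref{L:com id} directly. The closedness of $\K((X\otimes_A Y)I)$ and continuity of the ampliation $k\mapsto k\otimes\id_Y$ will then handle the passage from a dense spanning set to all of $\K(XI)$.

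First, I would note that by Lemma \ref{L:Kat07}, we have
\[
\K(XI)=\overline{\spn}\{\Theta_{\xi_1,\xi_2}^X\mid \xi_1,\xi_2\in XI\}.
\]
The ampliation $\L(X)\to\L(X\otimes_A Y)$, $k\mapsto k\otimes\id_Y$, is a $*$-homomorphism and hence contractive, while $\K((X\otimes_A Y)I)$ is norm-closed in $\L(X\otimes_A Y)$. Thus it suffices to prove that $\Theta_{\xi_1,\xi_2}^X\otimes\id_Y\in\K((X\otimes_A Y)I)$ for each $\xi_1,\xi_2\in XI$.

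Second, since $XI$ is the norm-closed linear span of elements of the form $\xi a$ with $\xi\in X$ and $a\in I$, the joint continuity and bilinearity of $(\xi_1,\xi_2)\mapsto\Theta_{\xi_1,\xi_2}^X$ allow a further reduction: it is enough to verify the containment for $\xi_1=\xi a$ and $\xi_2=\eta b$ with $\xi,\eta\in X$ and $a,b\in I$. (Alternatively, one may factor $\xi_1=\xi' a$ with $\xi'\in X$ and $a\in I$ via the Hewitt-Cohen Factorisation Theorem \ref{T:HCFT} applied to the right action of $I$ on $XI$.) For such an $a\in I$, the hypothesis $\phi_Y(I)\subseteq\K(YI)$ gives $\phi_Y(a)\in\K(YI)$. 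Applying Lemma \ref{L:com id} with this $a$ and with $\eta b\in X$ in place of $\eta$ yields
\[
\Theta_{\xi a,\eta b}^X\otimes\id_Y\in\K((X\otimes_A Y)I),
\]
as required. Extending by linearity and taking norm limits completes the proof.

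There is no substantive obstacle here: the corollary is essentially a density-plus-continuity upgrade of Lemma \ref{L:com id}. The only mildly delicate point is the reduction of the second slot of $\Theta_{\xi_1,\xi_2}^X$ to an element of the form $\eta b$ with $b\in I$, but this is subsumed by the same density argument used for the first slot and plays no role in invoking Lemma \ref{L:com id} (which permits arbitrary $\eta\in X$ in its second slot).
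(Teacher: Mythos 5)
Your proof is correct and follows essentially the same route as the paper: the paper's proof is precisely to apply Lemma \ref{L:com id} to finite linear combinations of rank-one operators in $\K(XI)$ and their norm-limits, which is the density-plus-continuity argument you spell out (your factorisation $\xi_1=\xi a$ with $a\in I$, available since $[XI]=XI$, is exactly what makes Lemma \ref{L:com id} applicable, and the second slot indeed needs no special treatment). Your write-up merely makes explicit the contractivity of the ampliation and the closedness of $\K((X\otimes_A Y)I)$, which the paper leaves implicit.
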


\begin{proof}
The claim follows by applying Lemma \ref{L:com id} to finite linear combinations of rank-one operators in $\K(XI)$ and their norm-limits.
\end{proof}

If $X$ is a C*-correspondence over $A$, then we need to make an additional imposition on $I$ in order for $[X]_I$ to carry a canonical structure as a C*-correspondence over $[A]_I$. 
More specifically, we say that $I$ is \emph{positively invariant for $X$} if it satisfies
\[ 
X(I):=[\sca{X,IX}]\subseteq I. 
\]
Notice that the space $X(I)$ is an ideal of $A$ (in fact, this is true even when $I$ is replaced by any subset of $A$). 
When $I$ is positively invariant, we can define a left action on $[X]_I$ by
\[
\phi_{[X]_I} \colon [A]_I \to \L([X]_I); [a]_I \mapsto [\phi_X(a)]_I\foral a\in A.
\] 
To ease notation, we will denote $\phi_{[X]_I}$ by $[\phi_X]_I$.

\begin{lemma}\label{L:IX to XI}
Let $X$ be a C*-correspondence over a C*-algebra $A$ and let $I\subseteq A$ be an ideal.
Then the following are equivalent:
\begin{enumerate}
\item $I$ is positively invariant for $X$;
\item $IX \subseteq XI$;
\item $IXI = IX$.
\end{enumerate}
\end{lemma}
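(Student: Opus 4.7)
My plan is to prove the two pairwise equivalences $(i)\Leftrightarrow(ii)$ and $(ii)\Leftrightarrow(iii)$.

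Three of the four implications are essentially one-line algebraic arguments and will be dispatched first. For $(ii)\Rightarrow(i)$: if $IX\subseteq XI$, then $\sca{X,IX}\subseteq\sca{X,XI}=\sca{X,X}I\subseteq I$, so $X(I)=[\sca{X,IX}]\subseteq I$. For $(iii)\Rightarrow(ii)$: the compatibility $a(\xi b)=(a\xi)b$ of the left and right actions yields $IXI\subseteq XI$, and (iii) therefore gives $IX=IXI\subseteq XI$. The inclusion $IXI\subseteq IX$ (needed for one direction of $(ii)\Leftrightarrow(iii)$) is immediate since $XI\subseteq X$.

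The analytic heart of the lemma is the implication $(i)\Rightarrow(ii)$. The plan is to apply the Hewitt-Cohen Factorization Theorem (Theorem \ref{T:HCFT}) to the bounded homomorphism $\phi_X|_I\colon I\to\L(X)$, which provides a single-product factorisation $\xi=\phi_X(a)\eta$ with $a\in I$, $\eta\in X$ for every $\xi\in IX$. A quick inner-product computation then places
\[
\sca{\xi,\xi}=\sca{\eta,a^*a\eta}\in\sca{X,IX}\subseteq X(I)\subseteq I
\]
by hypothesis $(i)$. With an approximate unit $(e_\la)_{\la\in\La}$ of $I$, the module C*-identity gives
\[
\|\xi-\xi e_\la\|^2=\|(1-e_\la)\sca{\xi,\xi}(1-e_\la)\|\longrightarrow 0,
\]
and since each $\xi e_\la\in XI$, and $XI$ is norm-closed as recorded in Subsection \ref{Ss:C*-cor}, it follows that $\xi\in XI$.

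For the remaining direction $IX\subseteq IXI$ needed in $(ii)\Rightarrow(iii)$, I would again invoke Theorem \ref{T:HCFT} to write $\xi=\phi_X(a)\eta\in IX$, factor $a=bc$ with $b,c\in I$ via Cohen factorisation inside the C*-algebra $I$, and rewrite $\xi=\phi_X(b)(c\eta)$. Since $c\eta\in IX\subseteq XI$ by hypothesis, approximating $c\eta$ by $(c\eta)e_\la$ produces the approximants $\phi_X(b)(c\eta)e_\la=\phi_X(b)(c\eta)\cdot e_\la$, each of which is a product of an element of $I$, an element of $X$, and an element of $I$, hence lies in $IXI$; passing to the limit gives $\xi\in IXI$. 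The main obstacle throughout is $(i)\Rightarrow(ii)$, as it is the only implication that must convert the inner-product condition $\sca{X,IX}\subseteq I$ into a module-level factorisation statement, and this requires the combination of the Hewitt-Cohen theorem with the approximate-unit estimate above; everything else is formal manipulation.
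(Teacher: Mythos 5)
Your proof is correct in substance but arranged differently from the paper's, and it contains one step that is not justified as written. The paper proves the cycle (i)$\Rightarrow$(ii)$\Rightarrow$(iii)$\Rightarrow$(i): for (i)$\Rightarrow$(ii) it invokes \cite[Lemma 4.4]{Lan95} to factor $\phi_X(a)\xi=\eta\sca{\phi_X(a)\xi,\phi_X(a)\xi}^{1/4}$ with the inner product landing in $I$ by positive invariance; (ii)$\Rightarrow$(iii) is the one-liner $IX=IIX\subseteq IXI$; and (iii)$\Rightarrow$(i) is the computation $[\sca{X,IX}]=[\sca{X,IXI}]=[\sca{X,IX}I]\subseteq I$. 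Your (i)$\Rightarrow$(ii) replaces Lance's fractional-power factorisation with the approximate-unit estimate $\|\xi-\xi e_\la\|^2=\|(1-e_\la)\sca{\xi,\xi}(1-e_\la)\|\to 0$, i.e., you reprove the standard criterion that $\sca{\xi,\xi}\in I$ forces $\xi\in XI$ (cf. \cite[Proposition 1.3]{Kat07}); this is a perfectly valid alternative mechanism of comparable length, and your (ii)$\Rightarrow$(i) and (iii)$\Rightarrow$(ii) computations are fine. Your (ii)$\Rightarrow$(iii) rests on the same idea as the paper's (Cohen factorisation $a=bc$ inside $I$; the paper's identity $I=II$ is exactly this), but your execution detours through a limit.

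That detour is the loose end: the conclusion ``passing to the limit gives $\xi\in IXI$'' tacitly assumes that $IXI$ is norm-closed, which is not available at this point in the development --- the paper establishes $IXI=[IXI]$ only later, in Subsection \ref{Ss: IXI}, by applying the Hewitt-Cohen theorem to the left action of $I$ on the Banach space $XI$. The fact is true, so your step does not fail, but it needs either that citation-level argument or, better, no limit at all: since $[XI]=XI$, every element of $XI$ is a \emph{single} product, so write $c\eta=\zeta a$ with $\zeta\in X$ and $a\in I$, and conclude $\xi=\phi_X(b)\zeta a\in IXI$ directly. With that one repair, your two pairwise equivalences constitute a complete proof.
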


\begin{proof}
\noindent
[(i) $\Leftrightarrow$ (ii)]:
This follows from \cite[Proposition 1.3]{Kat07}.

\smallskip

\noindent
[(ii) $\Rightarrow$ (iii)]:
Trivially $IXI \subseteq IX$.
On the other hand, we have $IX = IIX \subseteq IXI$ by assumption, and thus we obtain the required equality.

\smallskip

\noindent
[(iii) $\Rightarrow$ (i)]: 
A direct computation yields $[\sca{X, IX}] = [\sca{X, IXI}] = [\sca{X, IX}I] \subseteq I$, and thus $I$ is positively invariant.
\end{proof}

\begin{corollary}\label{C:compconj}
Let $X$ be a C*-correspondence over a C*-algebra $A$ and let $I\subseteq A$ be an ideal that is positively invariant for $X$.
Then we have
\[
\phi_X(I)\K(X)\phi_X(I)\subseteq\ol{\spn}\{\Theta_{\xi,\eta}^X \mid \xi, \eta \in IXI\}.
\]
\end{corollary}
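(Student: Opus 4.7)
The plan is to exploit the density of rank-one operators in $\K(X)$ together with the characterisation of positive invariance given in Lemma \ref{L:IX to XI}, which equates positive invariance with the algebraic identity $IX = IXI$.

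First, I would reduce to the rank-one case. Since $\K(X) = \ol{\spn}\{\Theta_{\xi,\eta}^X \mid \xi, \eta \in X\}$ and the maps $k \mapsto \phi_X(a) k \phi_X(b)$ are bounded on $\K(X)$ for fixed $a, b \in I$, it suffices to show that $\phi_X(a) \Theta_{\xi,\eta}^X \phi_X(b)$ lies in the set $\ol{\spn}\{\Theta_{\xi',\eta'}^X \mid \xi', \eta' \in IXI\}$ for all $a, b \in I$ and $\xi, \eta \in X$.

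Next, I would carry out the straightforward computation
\[
\phi_X(a) \Theta_{\xi,\eta}^X \phi_X(b) = \Theta_{\phi_X(a)\xi,\, \phi_X(b^*)\eta}^X,
\]
which reduces the problem to showing that the two vectors on the right lie in $IXI$. This is exactly where positive invariance enters: since $a, b^* \in I$, the vectors $\phi_X(a)\xi$ and $\phi_X(b^*)\eta$ belong to $\phi_X(I)X \subseteq IX$, and by the equivalence (ii)$\Leftrightarrow$(iii) of Lemma \ref{L:IX to XI}, positive invariance of $I$ gives $IX = IXI$. Hence both vectors lie in $IXI$, completing the argument.

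The only conceptual step is the passage $IX = IXI$; otherwise the proof is essentially a two-line calculation. There is no genuine obstacle once Lemma \ref{L:IX to XI} is invoked, since the reduction to rank-one operators is automatic from continuity of the left and right multiplications by $\phi_X(a)$ and $\phi_X(b)$ on $\K(X)$.
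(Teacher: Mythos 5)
Your proof is correct and takes essentially the same route as the paper, whose proof is the one-line remark that the claim ``follows by checking on rank-one operators and using that $IXI = IX$ by Lemma \ref{L:IX to XI}.'' The computation $\phi_X(a)\Theta_{\xi,\eta}^X\phi_X(b)=\Theta_{\phi_X(a)\xi,\,\phi_X(b^*)\eta}^X$ and the passage $\phi_X(I)X\subseteq IX=IXI$ that you spell out are exactly the details the paper leaves implicit.
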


\begin{proof}
The claim follows by checking on rank-one operators and using that $IXI = IX$ by Lemma \ref{L:IX to XI}.
\end{proof}

The following lemma will be useful in Section \ref{S:g inv struc}.

\begin{lemma}\label{L:newquot}
Let $X$ be a C*-correspondence over a C*-algebra $A$ such that $\phi_X(A)\subseteq\K(X)$, and let $I,J\subseteq A$ be ideals. 
If $I$ is positively invariant for $X$ and $I\subseteq J$, then
\[
\|[\phi_X]_I([a]_I)+\K([X]_I[J]_I)\|=\|\phi_X(a)+\K(XJ)\|\foral a\in A.
\]
\end{lemma}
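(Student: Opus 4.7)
The plan is to identify both norms with $\|[\phi_X(a)]_J\|$ in $\K([X]_J)$. For the right hand side, since $\phi_X(A)\subseteq\K(X)$ gives $\phi_X(a)\in\K(X)$, Lemma \ref{L:Kat07} (applied to $X$ and the ideal $J$) provides a canonical $*$-isomorphism $\K(X)/\K(XJ)\cong\K([X]_J)$ sending $\phi_X(a)+\K(XJ)$ to $[\phi_X(a)]_J$, so the right hand side equals $\|[\phi_X(a)]_J\|$.

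For the left hand side, first observe that $[J]_I:=J/I$ is a well-defined ideal of $[A]_I$ (since $I\subseteq J$) and that
\[
[X]_I\cdot[J]_I=[XJ]_I,
\]
namely the image of $XJ$ under the quotient $X\to[X]_I$; this holds because $[\xi]_I[a]_I=[\xi a]_I$ and the quotient map is continuous and surjective. Moreover, applying Lemma \ref{L:Kat07} to $X$ and $I$ shows $[\phi_X]_I([A]_I)\subseteq\K([X]_I)$. A second application of Lemma \ref{L:Kat07}, now to the C*-correspondence $[X]_I$ over $[A]_I$ and the ideal $[J]_I$, then identifies the left hand side with $\|[[\phi_X]_I([a]_I)]_{[J]_I}\|$ in $\K([[X]_I]_{[J]_I})$.

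It remains to exhibit a canonical right Hilbert module unitary $\Phi\colon[[X]_I]_{[J]_I}\to[X]_J$ (via the natural $*$-isomorphism $[[A]_I]_{[J]_I}\cong[A]_J$) under which $[[\phi_X]_I([a]_I)]_{[J]_I}$ corresponds to $[\phi_X(a)]_J$. This is constructed by observing that the iterated quotient map $X\to[X]_I\to[[X]_I]_{[J]_I}$ has total kernel equal to the preimage of $[XJ]_I$ under $X\to[X]_I$, which is $XJ$ (since $XI\subseteq XJ$). Hence $\Phi$ is well-defined by $[[\xi]_I]_{[J]_I}\mapsto[\xi]_J$ and is readily checked to be isometric, surjective and to intertwine the relevant compact operators. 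Combining the three identifications yields the desired equality. The main subtlety I expect is the bookkeeping of the three nested quotients and verifying the intertwining property of $\Phi$; positive invariance of $I$ (rather than of $J$) enters precisely to ensure that $[\phi_X]_I$ is defined on $[A]_I$, which is what allows the second application of Lemma \ref{L:Kat07} to go through.
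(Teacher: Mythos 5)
Your proof is correct, but it follows a genuinely different route from the paper's. The paper proves the two inequalities separately: for $N\leq M$ it uses $\K([X]_I[J]_I)=[\K(XJ)]_I$ and contractivity of $[\hspace{1pt}\cdot\hspace{1pt}]_I$; for $M\leq N$ it invokes the isometric $*$-isomorphism $\K(X)/\K(XI)\cong\K([X]_I)$ from Lemma \ref{L:Kat07} to rewrite $\|[\phi_X(a)+k]_I\|$ as $\inf\{\|\phi_X(a)+k+k'\|\mid k'\in\K(XI)\}$, and then uses $\K(XI)\subseteq\K(XJ)$. You instead identify both sides at once with $\|[\phi_X(a)]_J\|$ in $\K([X]_J)$, via two applications of Lemma \ref{L:Kat07} together with a third-isomorphism-theorem unitary $[[X]_I]_{[J]_I}\cong[X]_J$; your kernel computation (the composite $X\to[[X]_I]_{[J]_I}$ has kernel $XJ$ since $XI\subseteq XJ$) and the intertwining $[[\phi_X(a)]_I]_{[J]_I}\leftrightarrow[\phi_X(a)]_J$ both check out, and in fact the iterated-quotient identification $[\hspace{1pt}\cdot\hspace{1pt}]_J=[\hspace{1pt}\cdot\hspace{1pt}]_{[J]_I}\circ[\hspace{1pt}\cdot\hspace{1pt}]_I$ is exactly what the paper later uses in the proof of Theorem \ref{T:NT param} (citing \cite[p.\ 112]{Kat07}). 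Your argument is more conceptual and makes the equality transparent — both sides are the norm of the same element of $\K([X]_J)$ — at the cost of constructing and verifying the unitary $\Phi$; the paper's argument buys economy, using only norm estimates and Lemma \ref{L:Kat07} as stated. One small precision: positive invariance of $I$ is not what makes your second application of Lemma \ref{L:Kat07} go through (that lemma needs only the right Hilbert module structure of $[X]_I$, which exists for any ideal); it is what makes the expression $[\phi_X]_I([a]_I)$ well-defined as a function of $[a]_I$ in the first place — so your closing remark is right in spirit but slightly misattributes where the hypothesis is consumed. Note also that both approaches use $\phi_X(A)\subseteq\K(X)$ in the same essential way: it places all cosets inside quotients of compacts, where Lemma \ref{L:Kat07} gives isometric identifications.
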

\begin{proof}
Fix $a\in A$.
Recall that
\[
M:=\|\phi_X(a)+\K(XJ)\|=\inf\{\|\phi_X(a)+k\|\mid k\in\K(XJ)\},
\]
where the norm on the right hand side is taken in $\K(X)$ since $\phi_X(A)\subseteq\K(X)$.
Likewise, we have
\[
N:=\|[\phi_X]_I([a]_I)+\K([X]_I[J]_I)\|=\inf\{\|[\phi_X]_I([a]_I)+\dot{k}\|\mid \dot{k}\in\K([X]_I[J]_I)\},
\]
where the norm on the right hand side is taken in $\K([X]_I)$ by Lemma \ref{L:Kat07}.
First we show that $N\leq M$.
To this end, it suffices to show that 
\[
N\leq\|\phi_X(a)+k\|\foral k\in\K(XJ).
\]
Accordingly, fix $k\in\K(XJ)$.
Note that 
\[
\K([X]_I[J]_I)=\K([XJ]_I)=[\K(XJ)]_I
\]
for the usual $*$-epimorphism $[ \hspace{1pt} \cdot \hspace{1pt} ]_I\colon\K(X)\to\K([X]_I)$.
Thus $[k]_I\in\K([X]_I[J]_I)$.
It follows that
\[
N\leq\|[\phi_X]_I([a]_I)+[k]_I\|=\|[\phi_X(a)+k]_I\|\leq\|\phi_X(a)+k\|,
\]
using that $[ \hspace{1pt} \cdot \hspace{1pt} ]_I$ is contractive in the second inequality, as required.

Finally, to prove that $M\leq N$ it suffices to show that
\[
M\leq\|[\phi_X]_I([a]_I)+\dot{k}\|\foral \dot{k}\in\K([X]_I[J]_I).
\]
Accordingly, fix $\dot{k}\in\K([X]_I[J]_I)$ and note that $\dot{k}=[k]_I$ for some $k\in\K(XJ)$ since $\K([X]_I[J]_I)=[\K(XJ)]_I$.
By Lemma \ref{L:Kat07}, we have a $*$-isomorphism
\[
j\colon\K(X)/\K(XI)\to\K([X]_I); k'+\K(XI)\mapsto [k']_I\foral k'\in\K(X).
\]
In turn, we deduce that
\begin{align*}
\|[\phi_X]_I([a]_I)+\dot{k}\| & =\|[\phi_X]_I([a]_I)+[k]_I\|=\|[\phi_X(a)+k]_I\| \\
				    & =\|j((\phi_X(a)+k)+\K(XI))\|=\|(\phi_X(a)+k)+\K(XI)\| \\
				    & =\inf\{\|\phi_X(a)+k+k'\|\mid k'\in\K(XI)\}.
\end{align*}
Note that we use the assumption that $\phi_X(A)\subseteq\K(X)$ to pass to the second line.
Thus it suffices to show that 
\[
M\leq\|\phi_X(a)+k+k'\|\foral k'\in\K(XI).
\]
However, this is immediate since $I\subseteq J$ and so $\K(XI)\subseteq\K(XJ)$.
In total, we have $M=N$, finishing the proof.
\end{proof}

Moreover, we define two ideals of $A$ that are related to $I$ and $X$, namely
\[
X^{-1}(I):=\{a\in A \mid \; \sca{X,aX}\subseteq I\},
\]
and
\[
J(I,X):=\{a\in A \mid [\phi_X(a)]_I\in \K([X]_I), aX^{-1}(I)\subseteq I\}.
\]
The use of the ideal $J(I,X)$ is pivotal in the work of Katsura \cite{Kat07} for accounting for $*$-representations of $\T_X$ that may not be injective on $X$.
When $I$ is positively invariant and $J\subseteq A$ is an ideal satisfying $I\subseteq J$, the following lemma, which builds on \cite[Lemma 5.2]{Kat07}, illustrates the relationship between $X^{-1}(J)$ and $[X]_I^{-1}([J]_I)$.

\begin{lemma}\label{L:quotinv}
Let $X$ be a C*-correspondence over a C*-algebra $A$ and let $I,J\subseteq A$ be ideals.
If $I$ is positively invariant for $X$ and $I\subseteq J$, then 
\[
X^{-1}(J)=[ \hspace{1pt} \cdot \hspace{1pt} ]_I^{-1}([X]_I^{-1}([J]_I)).
\]
\end{lemma}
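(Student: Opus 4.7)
The plan is to chase definitions, as the identity essentially expresses that passing to the quotient preserves the inverse-image construction when $I \subseteq J$. The positive invariance of $I$ is used only to guarantee that $[X]_I$ carries the canonical structure of a C*-correspondence over $[A]_I$, so that $[X]_I^{-1}([J]_I)$ and $[\phi_X]_I$ make sense, while the containment $I \subseteq J$ ensures that $[J]_I$ is a genuine ideal of $[A]_I$ and gives the key equivalence $[b]_I \in [J]_I \Leftrightarrow b \in J$ for any $b \in A$.

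Fix $a \in A$. First I would rewrite both sides in a comparable form. By definition,
\[
a \in X^{-1}(J) \iff \sca{\xi, \phi_X(a)\eta} \in J \foral \xi, \eta \in X.
\]
On the other hand, since the quotient map $X \to [X]_I$ is surjective and since the inner product and left action on $[X]_I$ satisfy
\[
\sca{[\xi]_I, [\phi_X]_I([a]_I)[\eta]_I} = [\sca{\xi, \phi_X(a)\eta}]_I \foral \xi, \eta \in X,
\]
the condition $a \in [\hspace{1pt}\cdot\hspace{1pt}]_I^{-1}([X]_I^{-1}([J]_I))$ unfolds to
\[
[\sca{\xi, \phi_X(a)\eta}]_I \in [J]_I \foral \xi, \eta \in X.
\]

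Next I would invoke the elementary fact that, because $I \subseteq J$, an element $b \in A$ satisfies $[b]_I \in [J]_I$ if and only if $b \in J + I = J$. Applying this to $b = \sca{\xi, \phi_X(a)\eta}$ for each pair $\xi, \eta \in X$ identifies the two conditions above, giving the desired equality $X^{-1}(J) = [\hspace{1pt}\cdot\hspace{1pt}]_I^{-1}([X]_I^{-1}([J]_I))$.

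The proof is purely formal, with no genuine obstacle; the only point of attention is to verify that every element of $[X]_I$ arises as $[\xi]_I$ for some $\xi \in X$ (so that it suffices to test the inner-product condition on representatives from $X$) and to correctly invoke positive invariance of $I$ to endow $[X]_I$ with the requisite C*-correspondence structure over $[A]_I$.
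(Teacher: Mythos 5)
Your proof is correct and takes essentially the same approach as the paper's: both reduce the statement to the compatibility of the quotient inner product and left action with the quotient maps, combined with the observation that $[b]_I\in[J]_I$ if and only if $b\in J+I=J$ since $I\subseteq J$. The only difference is presentational --- you argue via a single chain of equivalences, whereas the paper dispatches the forward inclusion as immediate and carries out the same computation for the reverse inclusion.
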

\begin{proof}
The forward inclusion is immediate by definition of the C*-correspondence operations of $[X]_I$.
For the reverse inclusion, take $a\in A$ such that $[a]_I\in[X]_I^{-1}([J]_I)$.
Then we have
\[
[\sca{X,aX}]_I=\sca{[X]_I,[aX]_I}=\sca{[X]_I,[a]_I[X]_I}\subseteq[J]_I.
\]
In turn, we have $\sca{X,aX}\subseteq J+I$, and the fact that $I\subseteq J$ then yields $\sca{X,aX}\subseteq J$.
Hence $a\in X^{-1}(J)$ by definition, completing the proof.
\end{proof}

As per \cite[Definition 5.6, Definition 5.12]{Kat07}, we define a \emph{T-pair} of $X$ to be a pair $\L=\{\L_\mt,\L_{\{1\}}\}$ of ideals of $A$ such that $\L_\mt$ is positively invariant for $X$ and $\L_\mt \subseteq \L_{\{1\}} \subseteq J(\L_\mt,X)$; a T-pair $\L$ that satisfies $J_X \subseteq \L_{\{1\}}$ is called an \emph{O-pair}.
Our choice of notation here will be clarified in the sequel.
Proposition 8.8 of \cite{Kat07} is fundamental to the current work.

\begin{theorem}\label{T:Kat par} \cite[Theorem 8.6, Proposition 8.8]{Kat07}
Let $X$ be a C*-correspondence over a C*-algebra $A$.
Then there is a bijection between the set of T-pairs (resp. O-pairs) of $X$ and the set of gauge-invariant ideals of $\T_X$ (resp. $\O_X$) that preserves inclusions and intersections.
\end{theorem}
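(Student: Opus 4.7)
The plan is to construct explicit maps in both directions and verify they are mutual inverses by invoking the Gauge-Invariant Uniqueness Theorem (Theorem \ref{T:giut}). For the forward map, given a T-pair $\L$, the positive invariance of $\L_\mt$ makes $[X]_{\L_\mt}$ into a C*-correspondence over $[A]_{\L_\mt}$. The condition $\L_{\{1\}} \subseteq J(\L_\mt, X)$ together with Lemma \ref{L:quotinv} (identifying $X^{-1}(\L_\mt)$ with the preimage of the zero annihilator in the quotient) should yield $[\L_{\{1\}}]_{\L_\mt} \subseteq J_{[X]_{\L_\mt}}$. Thus $\O([\L_{\{1\}}]_{\L_\mt}, [X]_{\L_\mt})$ is well-defined, and composing its universal representation with the quotient maps $A \to [A]_{\L_\mt}$ and $X \to [X]_{\L_\mt}$ yields a representation of $X$. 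Universality then gives a canonical $*$-epimorphism $\T_X \to \O([\L_{\{1\}}]_{\L_\mt}, [X]_{\L_\mt})$ whose kernel $\fJ^\L$ is gauge-invariant (since the map intertwines the evident gauge actions).

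For the backward map, given a gauge-invariant ideal $\fJ \subseteq \T_X$ with quotient $q: \T_X \to \T_X/\fJ$ and induced representation $(\pi_\fJ, t_\fJ) := (q \circ \pi_X, q \circ t_X)$, I would define
\[
\L^\fJ_\mt := \ker \pi_\fJ \qand \L^\fJ_{\{1\}} := \pi_\fJ^{-1}(\psi_\fJ(\K(X))).
\]
Positive invariance of $\L^\fJ_\mt$ is a short Fock-type computation: for $a \in \L^\fJ_\mt$ and $\xi, \eta \in X$, one has $\pi_\fJ(\sca{\xi, a\eta}) = t_\fJ(\xi)^* \pi_\fJ(a) t_\fJ(\eta) = 0$. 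The inclusion $\L^\fJ_\mt \subseteq \L^\fJ_{\{1\}}$ is immediate. The delicate part is verifying $\L^\fJ_{\{1\}} \subseteq J(\L^\fJ_\mt, X)$: the compactness condition $[\phi_X(\L^\fJ_{\{1\}})]_{\L^\fJ_\mt} \subseteq \K([X]_{\L^\fJ_\mt})$ follows by passage to the quotient via Lemma \ref{L:Kat07}, while $\L^\fJ_{\{1\}} \cdot X^{-1}(\L^\fJ_\mt) \subseteq \L^\fJ_\mt$ is extracted using gauge-invariance of $\fJ$ to isolate the diagonal terms in the fixed-point algebra and exploit that $\pi_\fJ(a) - \psi_\fJ(k) = 0$ for $a \in \L^\fJ_{\{1\}}$ with matching $k \in \K(X)$.

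For mutual inverseness, the relation $\fJ^{\L^\fJ} = \fJ$ is the decisive application of the GIUT: the induced representation of $[X]_{\L^\fJ_\mt}$ on $\T_X/\fJ$ is injective by construction, admits a gauge action descended from the one on $\T_X$, and its preimage of $\psi(\K([X]_{\L^\fJ_\mt}))$ is precisely $[\L^\fJ_{\{1\}}]_{\L^\fJ_\mt}$, so Theorem \ref{T:giut} gives a canonical isomorphism $\T_X/\fJ \cong \O([\L^\fJ_{\{1\}}]_{\L^\fJ_\mt}, [X]_{\L^\fJ_\mt})$ identifying $\fJ$ with $\fJ^{\L^\fJ}$. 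The converse $\L^{\fJ^\L} = \L$ then follows by unpacking the definitions against this same GIUT-identification. The $\O_X$ case is the restriction to ideals containing the canonical kernel $\ker\{\T_X \to \O_X\}$, which corresponds under the bijection to the T-pair $\{0, J_X\}$; T-pairs dominating this one are exactly the O-pairs.

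Preservation of inclusions is transparent from monotonicity of both assignments, and preservation of intersections follows by noting that the backward map is defined through kernels and preimages, both of which distribute over arbitrary intersections of ideals. I expect the main obstacle to be the containment $\L^\fJ_{\{1\}} \cdot X^{-1}(\L^\fJ_\mt) \subseteq \L^\fJ_\mt$ in the backward construction; this is the crux where gauge-invariance of $\fJ$ must be used nontrivially, since without it there is no mechanism forcing the appropriate cancellations in the core of $\T_X/\fJ$.
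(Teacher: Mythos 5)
Your two maps are precisely Katsura's, which is exactly what the paper records in the discussion following Theorem \ref{T:Kat par} (the paper does not reprove the theorem; it cites \cite{Kat07} and develops the higher-rank generalisation in Sections \ref{S:relcnpalg} and \ref{S:g inv struc}), and the overall architecture is sound: quotient correspondence plus relative Cuntz--Pimsner algebra one way, kernel and preimage of the compacts the other way, the GIUT for mutual inverseness, and O-pairs as the T-pairs dominating $\{\{0\},J_X\}$. One miscalibration worth noting: the containment $\L^\fJ_{\{1\}}\, X^{-1}(\L^\fJ_\mt)\subseteq\L^\fJ_\mt$, which you single out as the crux requiring gauge-invariance, in fact holds for an \emph{arbitrary} representation (this is \cite[Proposition 5.11]{Kat07}). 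Indeed, if $\pi_\fJ(a)=\psi_\fJ(k)$ and $b\in X^{-1}(\L^\fJ_\mt)$, then $\|t_\fJ(b^*\eta)\|^2=\|\pi_\fJ(\sca{\eta,bb^*\eta})\|=0$ for every $\eta\in X$, so $\psi_\fJ(\Theta_{\xi,\eta}\phi_X(b))=t_\fJ(\xi)t_\fJ(b^*\eta)^*=0$, whence $\pi_\fJ(ab)=\psi_\fJ(k\phi_X(b))=0$; the cancellation mechanism is the C*-identity, not the conditional expectation. Gauge-invariance of $\fJ$ enters only where you actually use it, namely in the GIUT step recovering $\fJ=\fJ^{\L^\fJ}$.

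The genuine gap is the intersection statement. The $\{1\}$-component of the backward map is $\L^\fJ_{\{1\}}=\ol{\pi}_X^{-1}\bigl(\ol{\psi}_X(\K(X))+\fJ\bigr)$, a preimage of a set that \emph{varies} with $\fJ$, so ``preimages distribute over intersections'' does not apply: a priori $(\ol{\psi}_X(\K(X))+\fJ_1)\cap(\ol{\psi}_X(\K(X))+\fJ_2)$ may be strictly larger than $\ol{\psi}_X(\K(X))+(\fJ_1\cap\fJ_2)$, since an element $a$ may only admit \emph{different} compact witnesses $k_1\neq k_2$ modulo $\fJ_1$ and $\fJ_2$. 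The inclusion $\L^{\fJ_1}_{\{1\}}\cap\L^{\fJ_2}_{\{1\}}\subseteq\L^{\fJ_1\cap\fJ_2}_{\{1\}}$ therefore needs an argument; the standard one (\cite[Proposition 5.8]{Kat07}, reproduced in higher rank in Proposition \ref{P:NTwedge}) runs as follows: given $\ol{\pi}_X(a)+f\in\fJ_1$ and $\ol{\pi}_X(a)+g\in\fJ_2$ with $f,g\in\ol{\psi}_X(\K(X))$, the element $(\ol{\pi}_X(a)+f)(\ol{\pi}_X(a)+g)^*=\ol{\pi}_X(aa^*)+h$ satisfies $h\in\ol{\psi}_X(\K(X))$ (because $\ol{\pi}_X(A)\ol{\psi}_X(\K(X))\subseteq\ol{\psi}_X(\K(X))\supseteq\ol{\psi}_X(\K(X))\ol{\pi}_X(A)$) and lies in $\fJ_1\cap\fJ_2$, so $aa^*\in\L^{\fJ_1\cap\fJ_2}_{\{1\}}$; since this set is a closed ideal, $a\in\L^{\fJ_1\cap\fJ_2}_{\{1\}}$. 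A milder caveat applies to your one-line dismissal of inclusion-preservation: monotonicity of the forward map $\L\mapsto\fJ^\L$ does not follow formally from monotonicity of the backward map together with bijectivity (an inverse-monotone bijection of posets need not be monotone); it requires either the concrete generator description of $\fJ^\L$ together with the factorisation $[\hspace{1pt}\cdot\hspace{1pt}]_{\L'_\mt}=[\hspace{1pt}\cdot\hspace{1pt}]_{\L'_\mt/\L_\mt}\circ[\hspace{1pt}\cdot\hspace{1pt}]_{\L_\mt}$, as in the proof of Theorem \ref{T:NT param}, or an adjunction-type argument showing $\fJ^\L\subseteq\fJ$ if and only if $\L\subseteq\L^\fJ$.
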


The bijection of Theorem \ref{T:Kat par} restricts appropriately to a parametrisation of the gauge-invariant ideals of any relative Cuntz-Pimsner algebra \cite[Proposition 11.9]{Kat07}.
The parametrisation of the gauge-invariant ideals of $\T_X$ can be implemented as follows.
Firstly, if $\mathfrak{J}\subseteq\T_X$ is a gauge-invariant ideal, then we consider the representation $(Q_\mathfrak{J}\circ\ol{\pi}_X,Q_\mathfrak{J}\circ\ol{t}_X)$, where $Q_\mathfrak{J}\colon\T_X\to\T_X/\mathfrak{J}$ is the quotient map and $(\ol{\pi}_X,\ol{t}_X)$ is the universal representation of $X$.
We set
\[
\L_\mt^{(Q_\mathfrak{J}\circ\ol{\pi}_X,Q_\mathfrak{J}\circ\ol{t}_X)}:=\ker Q_\mathfrak{J}\circ\ol{\pi}_X \qand \L_{\{1\}}^{(Q_\mathfrak{J}\circ\ol{\pi}_X,Q_\mathfrak{J}\circ\ol{t}_X)}:=(Q_\mathfrak{J}\circ\ol{\pi}_X)^{-1}( (Q_\fJ \circ \ol{\psi}_X)(\K(X))).
\]
It follows that the pair
\[
\L^\mathfrak{J}:=\{\L_\mt^{(Q_\mathfrak{J}\circ\ol{\pi}_X,Q_\mathfrak{J}\circ\ol{t}_X)},\L_{\{1\}}^{(Q_\mathfrak{J}\circ\ol{\pi}_X,Q_\mathfrak{J}\circ\ol{t}_X)}\}
\]
is a T-pair of $X$ \cite[Proposition 5.11]{Kat07}.
Next, let $\L=\{\L_\mt,\L_{\{1\}}\}$ be a T-pair of $X$.
Then $[\L_{\{1\}}]_{\L_\mt}$ is an ideal of $[A]_{\L_\mt}$.
Additionally, we have$[\L_{\{1\}}]_{\L_\mt}\subseteq J_{[X]_{\L_\mt}}$ by using \cite[Lemma 5.2]{Kat07}.
Let $(\tilde{\pi},\tilde{t})$ be the universal $[\L_{\{1\}}]_{\L_\mt}$-covariant representation of $[X]_{\L_\mt}$.
Then we may form a representation $(\pi^\L,t^\L)$ of $X$ generating $\O([\L_{\{1\}}]_{\L_\mt},[X]_{\L_\mt})$ via
\[
\pi^\L(a)=\tilde{\pi}([a]_{\L_\mt}) \; \text{and} \; t^\L(\xi)=\tilde{t}([\xi]_{\L_\mt}) \foral a\in A,\xi\in X.
\]
The universal property of $\T_X$ then guarantees a canonical $*$-epimorphism 
\[
\pi^\L\times t^\L\colon\T_X\to\O([\L_{\{1\}}]_{\L_\mt},[X]_{\L_\mt}).
\]
We set 
\[
\mathfrak{J}^\L:=\ker\pi^\L\times t^\L
\]
and observe that $\mathfrak{J}^\L$ is a gauge-invariant ideal of $\T_X$.
By \cite[Proposition 8.8]{Kat07}, the maps 
\begin{align*}
\mathfrak{J}\mapsto\L^\mathfrak{J} & \; \text{for all gauge-invariant ideals $\mathfrak{J}$ of $\T_X$}, \\
\L\mapsto\mathfrak{J}^\L & \; \text{for all T-pairs $\L$ of $X$},
\end{align*}
are mutually inverse.
The notation we have used here will be revisited in the sequel.

\subsection{Product systems}\label{Ss:prod sys}

Let $P$ be a subsemigroup of a discrete group $G$ that contains the identity $e$ of $G$ (i.e., $P$ is \emph{unital}). 
A \emph{product system $X$ over $P$ with coefficients in a C*-algebra $A$} is a family $\{X_p\}_{p\in P}$ of C*-correspondences over $A$ together with multiplication maps $u_{p,q} \colon X_p\otimes_A X_q\to X_{pq}$ for all $p,q\in P$, such that:
\begin{enumerate}
\item $X_e=A$, viewing $A$ as a C*-correspondence over itself; 
\item if $p=e$, then $u_{e,q} \colon A\otimes_A X_q\to[\phi_q(A)X_q]$ is the unitary implementing the left action of $A$ on $X_q$;
\item if $q=e$, then $u_{p,e} \colon X_p\otimes_A A\to X_p$ is the unitary implementing the right action of $A$ on $X_p$;
\item if $p,q\in P\setminus\{e\}$, then $u_{p,q} \colon X_p\otimes_A X_q\to X_{pq}$ is a unitary;
\item the multiplication maps are associative in the sense that
\[ u_{pq,r}(u_{p,q}\otimes\text{id}_{X_r})=u_{p,qr}(\text{id}_{X_p}\otimes u_{q,r})\foral p,q,r\in P. \]
\end{enumerate}

Note that we use $\phi_p$ to denote the left action $\phi_{X_p}$ of $X_p$ for each $p\in P$.
We refer to the C*-correspondences $X_p$ as the \emph{fibres} of $X$.
We do \emph{not} assume that the fibres are non-degenerate.
If $X_p$ is injective (resp. regular) for all $p\in P$, then we say that $X$ is \emph{injective} (resp. \emph{regular}).
Axioms (i) and (ii) imply that the unitary $u_{e,e} \colon A\otimes_A A\to A$ is simply multiplication in $A$.
For brevity, we will write $u_{p,q}(\xi_p\otimes\xi_q)$ as $\xi_p\xi_q$ for all $p, q\in P$, $\xi_p\in X_p$ and $\xi_q\in X_q$, with the understanding that $\xi_p$ and $\xi_q$ are allowed to differ when $p=q$.
Axioms (ii) and (v) imply that
\[
\phi_{pq}(a)(\xi_p\xi_q) = (\phi_p(a)\xi_p)\xi_q \foral \xi_p\in X_p, \xi_q\in X_q, p,q\in P.
\]

For $p\in P\setminus\{e\}$ and $q\in P$, we use the product system structure to define a $*$-homomorphism $\iota_p^{pq} \colon \L(X_p)\to\L(X_{pq})$ by 
\[
\iota_p^{pq}(S)=u_{p,q}(S\otimes\text{id}_{X_q})u_{p,q}^* \foral S \in \L(X_p).
\]
This is also captured by the formula 
\[
\textup{$\iota_p^{pq}(S)(\xi_p\xi_q)=(S\xi_p)\xi_q$ for all $\xi_p\in X_p$ and $\xi_q\in X_q$. }
\]
For each $q\in P$ we also define a $*$-homomorphism $\iota_e^q\colon\K(A)\to\L(X_q)$ by $\iota_e^q( \phi_e(a))=\phi_q(a)$ for all $a\in A$.
Moreover, we have
\[
\iota_p^p=\text{id}_{\L(X_p)} \foral p\in P\setminus\{e\} \qand \iota_e^e=\id_{\K(A)}.
\]

The theory of product systems includes that of C*-correspondences in the sense that every C*-correspondence $X$ over a C*-algebra $A$ can be viewed as the product system $\{X_n\}_{n \in \bZ_+}$ with 
\[
X_0=A \qand X_n = X^{\otimes n} \foral n\in\bN,
\] 
and multiplication maps $u_{n,m}$ for $n,m \neq 0$ given by the natural inclusions.

Let $X$ and $Y$ be product systems over $P$ with coefficients in $A$ and $B$, respectively.
Denote the multiplication maps of $X$ by $\{u_{p,q}^X\}_{p,q\in P}$ and the multiplication maps of $Y$ by $\{u_{p,q}^Y\}_{p,q\in P}$.
We say that $X$ and $Y$ are \emph{unitarily equivalent} (symb. $X\cong Y$) if there exist surjective linear maps $W_p\colon X_p\to Y_p$ for all $p\in P$ with the following properties:
\begin{enumerate}
\item $W_e\colon A\to B$ is a $*$-isomorphism;
\item $\sca{W_p(\xi_p), W_p(\xi_p')} = W_e(\sca{\xi_p,\xi_p'})$ for all $\xi_p,\xi_p'\in X_p$ and $p\in P\setminus\{e\}$;
\item $\phi_{Y_p}(W_e(a)) W_p(\xi_p)=W_p(\phi_{X_p}(a)\xi_p)$ for all $a\in A, \xi_p\in X_p$ and $p\in P\setminus\{e\}$;
\item $W_p(\xi_p) W_e(a) = W_p(\xi_pa)$ for all $a\in A, \xi_p\in X_p$ and $p\in P\setminus\{e\}$;
\item $u_{p,q}^Y \circ ( W_p\otimes W_q) = W_{pq} \circ u_{p,q}^X$ for all $p,q\in P$.
\end{enumerate}
In this case, we say that $\{W_p\}_{p \in P}$ implements a unitary equivalence between $X$ and $Y$.

\begin{remark}\label{R:prodsysisoprop}
Virtually all structural properties of product systems are preserved under unitary equivalence.
Let $\{W_p\}_{p\in P}$ implement a unitary equivalence between product systems $X$ and $Y$.
Item (i) guarantees that items (ii)-(iv) hold when $p=e$.
Likewise, items (i) and (ii) imply that $W_p$ is injective for all $p\in P$.
Hence $W_p$ is bijective for all $p\in P$.
Consequently, the collection $\{W_p^{-1}\}_{p \in P}$ defines a unitary equivalence between $Y$ and $X$.

For all $p,q\in P$, the map $W_p\otimes W_q\in\B(X_p\otimes_A X_q, Y_p\otimes_B Y_q)$ is defined on simple tensors via
\[
(W_p\otimes W_q)(\xi_p\otimes \xi_q)=W_p(\xi_p)\otimes W_q(\xi_q)\foral \xi_p\in X_p, \xi_q\in X_q.
\]
It follows that $(W_p\otimes W_q)^{-1}=W_p^{-1}\otimes W_q^{-1}$, and that
\begin{equation}\label{Eq:unitcomp}
W_p\K(X_p)W_p^{-1}=\K(Y_p).
\end{equation}
Fixing $a\in A$, we also have
\begin{equation}\label{Eq:unitleft}
W_p\phi_{X_p}(a)W_p^{-1}=\phi_{Y_p}(W_e(a)).
\end{equation}
Finally, let $\{\iota_p^{pq}\}_{p,q\in P}$ denote the connecting $*$-homomorphisms of $X$ and let $\{j_p^{pq}\}_{p,q\in P}$ denote the connecting $*$-homomorphisms of $Y$. 
For $p\in P\setminus\{e\}$ and $q\in P$, we have 
\begin{equation}\label{Eq:unitmult}
\iota_p^{pq}(\Theta_{\xi_p,\xi_p'}^{X_p}) = W_{pq}^{-1}j_p^{pq}(\Theta_{W_p(\xi_p),W_p(\xi_p')}^{Y_p})W_{pq}
\foral \xi_p,\xi_p'\in X_p.
\end{equation}
\end{remark}

A \emph{(Toeplitz) representation} $(\pi,t)$ of $X$ on $\B(H)$ consists of a family $\{(\pi,t_p)\}_{p\in P}$, where $(\pi,t_p)$ is a representation of $X_p$ on $\B(H)$ for all $p\in P$, $t_e=\pi$ and
\[ 
t_p(\xi_p)t_q(\xi_q)=t_{pq}(\xi_p\xi_q) \foral \xi_p\in X_p, \xi_q\in X_q, p, q\in P.
\]
We write $\psi_p$ for the induced $*$-homomorphism $\K(X_p) \to \B(H)$ for all $p\in P$. 
We say that $(\pi,t)$ is \emph{injective} if $\pi$ is injective; in this case $t_p$ and $\psi_p$ are isometric for all $p\in P$.
We denote the C*-algebra generated by $\pi(A)$ and every $t_p(X_p)$ by $\ca(\pi,t)$.
We write $\T_X$ for the universal C*-algebra with respect to the Toeplitz representations of $X$, and refer to it as the \emph{Toeplitz algebra of $X$}.
The following proposition 
is well known. 

\begin{proposition}\label{P:T univ}
Let $P$ be a unital subsemigroup of a discrete group $G$. 
Let $X$ be a product system over $P$ with coefficients in a C*-algebra $A$. 
Then there exists a C*-algebra $\T_X$ and a representation $(\pi_X, t_X)$ of $X$ on $\T_X$ such that:
\begin{enumerate}
\item $\T_X=\ca(\pi_X,t_X)$;
\item if $(\pi,t)$ is a representation of $X$, then there exists a canonical $*$-epimorphism $\pi\times t\colon\T_X\to\ca(\pi,t)$, i.e., $(\pi\times t)(t_{X,p}(\xi_p))=t_p(\xi_p)$ for all $\xi_p\in X_p$ and $p\in P$.
\end{enumerate}
The pair $(\T_X,(\pi_X,t_X))$ is unique up to canonical $*$-isomorphism.
\end{proposition}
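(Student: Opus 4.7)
The plan is to construct $\T_X$ as a universal C*-algebra via generators and relations, leveraging a uniform norm bound on representations. First I would record the automatic contractivity of any representation $(\pi,t)$ of $X$ on a Hilbert space: the C*-identity together with the inner-product preserving property of $(\pi,t_p)$ gives $\|t_p(\xi_p)\|^2 = \|t_p(\xi_p)^*t_p(\xi_p)\| = \|\pi(\sca{\xi_p,\xi_p})\| \leq \|\xi_p\|^2$ for all $\xi_p \in X_p$ and $p \in P$, and $\pi$ is contractive as a $*$-homomorphism. This uniform bound is what allows a universal construction to make sense.

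Second, I would exhibit at least one concrete representation, so that the universal object does not collapse. The standard Fock space construction works: let $\F_X := \bigoplus_{p \in P} X_p$ be the right Hilbert $A$-module direct sum, define a $*$-homomorphism $\pi_F \colon A \to \L(\F_X)$ using the left actions, and define creation operators $t_{F,p}(\xi_p) \colon X_q \to X_{pq}$ via the multiplication maps $u_{p,q}$. Axiom (v) gives the requisite multiplicativity $t_{F,p}(\xi_p)t_{F,q}(\xi_q) = t_{F,pq}(\xi_p\xi_q)$, while axioms (ii) and (iii) make $(\pi_F,t_F)$ a representation, which is moreover injective.

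Third, with both a uniform bound and existence in hand, I would form $\T_X$ by the usual recipe. Take the free $*$-algebra $\mathcal{A}$ generated by symbols $\{[a] \mid a\in A\} \cup \{[\xi_p] \mid p \in P, \xi_p \in X_p\}$, quotient by the two-sided $*$-ideal encoding the algebraic relations for a Toeplitz representation (linearity in each fibre, the $*$-algebra relations for $A$, the inner-product and left-action compatibilities, and the fibre-multiplicative relation), and equip the quotient with the seminorm $\|x\| := \sup \|(\rho_\pi\times\rho_t)(x)\|$ over all representations $(\pi,t)$ of $X$ on separable Hilbert space. Contractivity ensures this supremum is finite; completing after modding out by the null ideal yields a C*-algebra $\T_X$, with the images of the generators defining $(\pi_X, t_X)$. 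The existence of the Fock representation ensures this seminorm is nondegenerate on the images of the generators. Item (i) then holds by construction.

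Fourth, the universal property (ii) follows since, by construction, any representation $(\pi,t)$ of $X$ induces a $*$-representation of the free $*$-algebra that descends through both the relation-ideal and the null ideal of the seminorm, yielding a $*$-homomorphism $\pi\times t \colon \T_X \to \ca(\pi,t)$ that sends $t_{X,p}(\xi_p) \mapsto t_p(\xi_p)$ and $\pi_X(a)\mapsto \pi(a)$; it is surjective because its image contains the generators of $\ca(\pi,t)$, and unique because it is prescribed on a generating set. Uniqueness of the pair $(\T_X,(\pi_X,t_X))$ up to canonical $*$-isomorphism is then the usual argument: if $(\T',(\pi',t'))$ is another such pair, applying the universal property to each produces canonical $*$-epimorphisms $\T_X \to \T'$ and $\T' \to \T_X$ whose compositions agree with the identity on the respective generating sets. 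There is no genuine obstacle here — the result is standard and essentially due to Fowler \cite{Fow02}; the only care needed is in checking that the defining relations are consistent, which the Fock representation verifies concretely.
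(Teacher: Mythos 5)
Your proposal is correct, and it supplies exactly the standard argument: the paper states this proposition without proof, labelling it well known, and the intended justification is precisely the generators-and-relations construction you give — uniform contractivity of representations via the C*-identity ($\|t_p(\xi_p)\|^2 = \|\pi(\sca{\xi_p,\xi_p})\| \leq \|\xi_p\|^2$), the Fock representation on $\F X$ to witness existence and non-degeneracy, the universal C*-seminorm on the quotient of the free $*$-algebra by the relation ideal, and the usual pair of canonical $*$-epimorphisms for uniqueness. The only inaccuracy is your restriction of the supremum to representations on \emph{separable} Hilbert spaces: the paper makes no separability assumptions on $A$ or the fibres $X_p$, so a representation on a non-separable space is not obviously dominated by that supremum, and the universal property in item (ii) could fail as stated. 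The routine fix is to take the supremum over (unitary equivalence classes of) cyclic representations, or over representations on Hilbert spaces whose density character is bounded by a cardinal depending on the cardinalities of $A$ and the fibres $X_p$; these form a set, and every representation decomposes into cyclic subrepresentations of this kind, so the supremum dominates all representations and the rest of your argument goes through unchanged.
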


\begin{remark}\label{R:tpiso}
Let $X$ and $Y$ be product systems over $P$ with coefficients in $A$ and $B$, respectively.
Suppose that $X$ and $Y$ are unitarily equivalent by a collection $\{W_p\colon X_p\to Y_p\}_{p \in P}$.
Then it is easy to see that there is a bijection between the sets of their representations, in the sense that $\{(\pi, t_p)\}_{p \in P}$ is a representation of $X$ if and only if $\{(\pi \circ W_e^{-1}, t_p \circ W_p^{-1})\}_{p \in P}$ is a representation of $Y$. 
Therefore $\T_X$ and $\T_Y$ are universal with respect to the \lq\lq same" representations, and hence $\T_X\cong\T_Y$ canonically.
\end{remark}

The quintessential example of a representation that we will be using throughout is the Fock representation.
Here we define this representation and collect its basic properties.
Firstly, we set $\F X := \sum_{p\in P}^\oplus X_p$, as the direct sum of right Hilbert $A$-modules, e.g., \cite[p. 6]{Lan95}.
For each $p\in P$, we identify $X_p$ with the $p$-th direct summand of $\F X$.
For each $p,q\in P$, we define $\iota_{p,q} \colon \L(X_p,X_q)\to\L(\F X)$ by 
\[ 
\iota_{p,q}(S)\xi_r
=
\begin{cases} 
S \xi_p & \text{if } r=p, \\ 
0 & \text{otherwise}, 
\end{cases} 
\]
for all $S \in \L(X_p,X_q), \xi_r\in X_r$ and $r \in P$, and note that $\iota_{p,q}(S)^*= \iota_{q,p}(S^*)$.
It follows that $\iota_{p,q}$ is an isometric linear map, and hence we may identify $\L(X_p,X_q)$ as a subspace of $\L(\F X)$.
When $p=q$, we have that $\iota_{p,p}$ is a $*$-homomorphism, and thus we may identify $\L(X_p)$ as a C*-subalgebra of $\L(\F X)$.
More generally, we may identify $\bigoplus_{p\in Q}\L(X_p)$ as a C*-subalgebra of $\L(\F X)$, for any subset $Q$ of $P$.

The \emph{Fock representation} $(\ol{\pi},\ol{t})$ is the representation of $X$ on $\L(\F X)$ determined by 
\begin{align*}
\ol{\pi}(a)\xi_q=\phi_q(a)\xi_q \qand \ol{t}_p(\xi_p)\xi_q=\xi_p\xi_q,
\end{align*}
for all $a\in A, \xi_p\in X_p, \xi_q\in X_q$ and $p,q\in P$. 
For $\xi_p\in X_p$ with $p \neq e$, we have that $\ol{t}_p(\xi_p)^*$ maps $X_r$ to $0$ whenever $r\notin pP$.
Conversely, if $r=pq$ for some $q\in P$, then
\[
\ol{t}_p(\xi_p)^*(\eta_p\eta_q)=\phi_q(\sca{\xi_p,\eta_p})\eta_q \foral \eta_p\in X_p,\eta_q\in X_q.
\]
Since $u_{p,q}(X_p \odot X_q)$ spans a dense subset of $X_{pq}$, and $X_{pq}$ embeds isometrically in $\F X$, this formula is sufficient to determine $\ol{t}_p(\xi_p)^*$ on all of $X_{pq}$ and hence on all of $\F X$.
The Fock representation is injective, for if $\ol{\pi}(a)=0$ for some $a \in A$ then in particular $a a^* = \ol{\pi}(a) a^*=0$, and thus $a=0$.

Next we illustrate how the quotient construction for C*-correspondences extends to the setting of product systems.
The following definition is motivated by the notion of positive invariance for C*-correspondences.

\begin{definition}\label{D:pos inv}
Let $P$ be a unital subsemigroup of a discrete group $G$. 
Let $X$ be a product system over $P$ with coefficients in a C*-algebra $A$ and let $I\subseteq A$ be an ideal. 
We say that $I$ is \emph{positively invariant for $X$} if it satisfies
\[ X(I):=\ol{\text{span}}\{\sca{X_p, I X_p}\mid p\in P \}\subseteq I.\]
\end{definition}

Notice that the space $X(I)$ is an ideal of $A$ (and in fact, this is true even when $I$ is replaced by any subset of $A$), and that an ideal $I$ is positively invariant for $X$ if and only if it is positively invariant for every fibre of $X$.
This observation lies at the heart of the following proposition.

\begin{proposition}\label{P:qnt}
Let $P$ be a unital subsemigroup of a discrete group $G$. 
Let $X$ be a product system over $P$ with coefficients in a C*-algebra $A$ and let $I\subseteq A$ be an ideal that is positively invariant for $X$.
Set 
\[
[X]_I:=\{[X_p]_I\}_{p\in P},
\; \text{where } \;
[X_p]_I = X_p/X_pI \foral p \in P.
\]
Then $[X]_I$, endowed with the multiplication maps 
\[
[X_p]_I\otimes_{[A]_I}[X_q]_I\to[X_{pq}]_I; [\xi_p]_I \otimes [\xi_q]_I \mapsto [\xi_p \xi_q]_I \foral p,q \in P,
\] 
is a product system over $P$ with coefficients in $[A]_I$.
\end{proposition}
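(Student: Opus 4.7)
The plan is to derive the product system structure on $[X]_I$ by applying the correspondence-level quotient construction from Subsection \ref{Ss:C*-cor} fibrewise, and then to verify axioms (i)--(v) of Subsection \ref{Ss:prod sys} using the multiplication maps of $X$. Since positive invariance of $I$ for $X$ is equivalent to positive invariance of $I$ for each fibre $X_p$ individually, each $[X_p]_I$ automatically inherits the structure of a C*-correspondence over $[A]_I$, with left action determined by $[\phi_p]_I([a]_I)[\xi_p]_I = [\phi_p(a)\xi_p]_I$. Taking $p = e$ recovers $[X_e]_I = [A]_I$ as a C*-correspondence over itself, settling axiom (i).

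Next, I would fix $p,q \in P$ and exhibit the candidate bilinear map $([\xi_p]_I, [\xi_q]_I) \mapsto [\xi_p \xi_q]_I$ on $[X_p]_I \times [X_q]_I$, showing that it descends to a well-defined and $[A]_I$-balanced map. The only nontrivial ingredient is well-definedness, which reduces to verifying that $\xi_p \xi_q \in X_{pq}I$ whenever $\xi_p \in X_p I$ or $\xi_q \in X_q I$. The case $\xi_q = \eta_q a$ with $a \in I$ is immediate since $\xi_p(\eta_q a) = (\xi_p \eta_q) a \in X_{pq}I$. The case $\xi_p \in X_p I$ requires combining the identity $(\xi_p a) \xi_q = \xi_p \phi_q(a) \xi_q$ (arising from $A$-balance of the multiplication $u_{p,q}$) with Lemma \ref{L:IX to XI} applied to $X_q$, which gives $\phi_q(a)\xi_q \in IX_q \subseteq X_qI$; continuity of multiplication in $X$ then propagates this to all of $X_pI$. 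This is precisely where positive invariance of $I$ enters essentially, and I would identify it as the main obstacle of the proof. Balance over $[A]_I$ follows from the identity $(\xi_p a)\xi_q = \xi_p\phi_q(a)\xi_q$ together with the definition of the right action of $[A]_I$ on $[X_p]_I$.

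Once well-definedness and $[A]_I$-balance are secured, the map extends to a linear map $u_{p,q}^{[X]_I} \colon [X_p]_I \otimes_{[A]_I} [X_q]_I \to [X_{pq}]_I$. A direct computation on simple tensors,
\[
\sca{[\xi_p\xi_q]_I, [\eta_p\eta_q]_I} = [\sca{\xi_p\xi_q, \eta_p\eta_q}]_I = [\sca{\xi_q, \phi_q(\sca{\xi_p,\eta_p})\eta_q}]_I = \sca{[\xi_p]_I \otimes [\xi_q]_I, [\eta_p]_I \otimes [\eta_q]_I},
\]
shows that $u_{p,q}^{[X]_I}$ preserves inner products, while surjectivity of $u_{p,q}$ (and the surjectivity of the quotient map) transfers to surjectivity of $u_{p,q}^{[X]_I}$ since $[\xi_p\xi_q]_I$-elements generate $[X_{pq}]_I$. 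For $p,q \in P \setminus \{e\}$ this yields the unitarity required by axiom (iv). For the edge cases $p = e$ or $q = e$, a direct inspection of the formulas shows that $u_{e,q}^{[X]_I}$ and $u_{p,e}^{[X]_I}$ coincide with the canonical left- and right-action unitaries already supplied by the C*-correspondence structure of the quotient fibres, thereby verifying axioms (ii) and (iii). Finally, axiom (v) is obtained by evaluating both sides of the associativity identity on elementary tensors $[\xi_p]_I \otimes [\xi_q]_I \otimes [\xi_r]_I$ and invoking the associativity of the multiplication maps in $X$ together with the continuity of the quotient maps.
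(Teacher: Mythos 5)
Your proposal is correct and follows essentially the same route as the paper's proof: the fibrewise application of the correspondence-level quotient construction, the reduction of well-definedness of the multiplication maps to the containment $X_pI \cdot X_q \subseteq X_{pq}I$ handled via Lemma \ref{L:IX to XI} (i.e., $IX_q \subseteq X_qI$ from positive invariance), the same inner-product computation on simple tensors yielding isometry, and the transfer of surjectivity and associativity from the maps $u_{p,q}$. You correctly isolate the cross term $(\xi_p a)\xi_q = \xi_p(\phi_q(a)\xi_q)$ as the only point where positive invariance is essential, which is exactly where the paper's argument hinges.
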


\begin{proof}
We will denote the multiplication maps of $X$ by $\{u_{p,q}\}_{p,q\in P}$.
The quotient construction for C*-correspondences covered in Subsection \ref{Ss:C*-cor} renders a canonical structure on $[X_p]_I$ as a C*-correspondence over $[A]_I$ for all $p\in P\setminus\{e\}$.
Thus $[X]_I$ constitutes a family of C*-correspondences over $[A]_I$.
It remains to show that $[X]_I$, together with the maps of the statement, satisfies axioms (i)-(v) of a product system.

First we show well-definedness of the multiplication maps for $[X]_I$.
This is immediate for $p=e$ or $q=e$.
So fix $p,q \in P \setminus \{e\}$ and define the map
\[ 
v_{p,q} \colon [X_p]_I\times[X_q]_I\to[X_{pq}]_I; ([\xi_p]_I,[\xi_q]_I) \mapsto [\xi_p\xi_q]_I\equiv [u_{p,q}(\xi_p\otimes\xi_q)]_I \foral \xi_p \in X_p, \xi_q \in X_q.
\] 
To see that $v_{p,q}$ is well-defined, fix $\xi_p\in X_p, \xi_q\in X_q$ and suppose that
\[
[\xi_p]_I=[\eta_p]_I \qand [\xi_q]_I=[\eta_q]_I
\]
for some $\eta_p\in X_p$ and $\eta_q\in X_q$.
Then $\xi_p = \eta_p + \zeta_p a$ and $\xi_q = \eta_q + \zeta_q b$ for some $\zeta_p\in X_p$, $\zeta_q\in X_q$ and $a,b \in I$.
A direct computation yields
\begin{align*}
[\xi_p\xi_q]_I 
=
[(\eta_p+\zeta_p a)(\eta_q+\zeta_q b)]_I
=
[\eta_p\eta_q+\eta_p(\zeta_q b)+(\zeta_p a)\eta_q+(\zeta_p a)(\zeta_q b)]_I.
\end{align*}
Observe that 
\[
\eta_p(\zeta_qb)=(\eta_p\zeta_q) b\in X_{pq}I
\qand
(\zeta_p a)(\zeta_q b)=((\zeta_p a)\zeta_q) b\in X_{pq}I.
\]
Moreover, by Lemma \ref{L:IX to XI} we have
\[
(\zeta_p a)\eta_q = \zeta_p (a \eta_q) \in u_{p,q}(X_p \otimes_A I X_q) \subseteq u_{p,q}(X_p \otimes_A X_q I) = X_{pq}I
\]
using positive invariance of $I$.
Hence 
\[
[\eta_p(\zeta_qb)]_I=[(\zeta_pa)\eta_q]_I=[(\zeta_pa)(\zeta_qb)]_I=0,
\]
and thus $[\xi_p\xi_q]_I=[\eta_p\eta_q]_I$, as required.

Next observe that $v_{p,q}$ is bilinear and $[A]_I$-balanced, and hence it induces a unique linear map
\[
v_{p,q} \colon [X_p]_I\odot_{[A]_I}[X_q]_I \to [X_{pq}]_I; [\xi_p]_I\otimes [\xi_q]_I \mapsto [\xi_p\xi_q]_I \foral \xi_p \in X_p, \xi_q \in X_q.
\]
For $\xi_p,\eta_p\in X_p$ and $\xi_q,\eta_q\in X_q$, we have
\begin{align*}
\sca{v_{p,q}([\xi_p]_I\otimes[\xi_q]_I), v_{p,q}([\eta_p]_I\otimes[\eta_q]_I)} 
& =
\sca{[\xi_p\xi_q]_I,[\eta_p\eta_q]_I} 
=
[\sca{\xi_p\otimes\xi_q,\eta_p\otimes\eta_q}]_I \\
& =
[\sca{\xi_q,\phi_q(\sca{\xi_p,\eta_p})\eta_q}]_I 
=
\sca{[\xi_p]_I\otimes[\xi_q]_I,[\eta_p]_I\otimes[\eta_q]_I},
\end{align*}
and thus $\sca{v_{p,q}(\zeta),v_{p,q}(\zeta')}=\sca{\zeta,\zeta'}$ for all $\zeta, \zeta' \in [X_p]_I\odot_{[A]_I}[X_q]_I$.
In particular, $v_{p,q}$ is bounded with respect to the norm on $[X_p]_I\odot_{[A]_I}[X_q]_I$ induced by the $[A]_I$-valued inner product.
Hence it extends to a bounded linear map given  by
\[ 
v_{p,q} \colon [X_p]_I\otimes_{[A]_I}[X_q]_I\to[X_{pq}]_I; [\xi_p]_I \otimes [\xi_q]_I \mapsto [\xi_p\xi_q]_I \foral \xi_p \in X_p, \xi_q \in X_q.
\]

From the preceding calculations it also follows that each $v_{p,q}$ is isometric, and similar reasoning shows that each $v_{p,q}$ is an $[A]_I$-bimodule map.
Surjectivity of $v_{p,q}$ onto the corresponding space follows from surjectivity of $u_{p,q}$.
Associativity of the multiplication maps $\{v_{p,q}\}_{p,q \in P}$ follows from associativity of $\{u_{p,q}\}_{p,q \in P}$, and the proof is complete.
\end{proof}

We will use the notation $[\hspace{1pt}\cdot\hspace{1pt}]_I \colon X \to [X]_I$ as shorthand for the family of quotient maps $[\hspace{1pt}\cdot\hspace{1pt}]_I \colon X_p \to [X_p]_I$ for all $p \in P$.

\subsection{Product systems over right LCM semigroups}\label{Ss: ca prod sys}

A unital semigroup $P$ is said to be a \emph{right LCM semigroup} if it is left cancellative and satisfies Clifford's condition, that is:
\[
\text{for every} \; p,q\in P \; \text{with} \; pP\cap qP\neq\mt, \; \text{there exists} \; w\in P \; \text{such that} \; pP\cap qP=wP.
\]
The element $w$ is referred to as a \emph{right least common multiple} or \emph{right LCM} of $p$ and $q$.

Group embeddable right LCM semigroups include as a special case the quasi-lattice ordered groups considered in \cite{Nic92}, or more generally the weakly quasi-lattice ordered groups considered in \cite{Exe17}.
Indeed, both cases form right LCM semigroups with the property that the only invertible element in $P$ is the unit.
Further examples include the Artin monoids \cite{BS72}, the Baumslag-Solitar monoids $B(m,n)^+$ \cite{HNSY19, Li20, Spi12}, and the semigroup $R \rtimes R^\times$ of affine transformations of an integral domain $R$ that satisfies the GCD condition \cite{Li13, Nor14}.

Compactly aligned product systems over right LCM semigroups were introduced and studied by Kwa\'{s}niewski and Larsen \cite{KL19a, KL19b}, extending the construction of Fowler \cite{Fow02}.
They have been investigated further in \cite{DKKLL20, KKLL21b}.
The interest lies in that they retain several of the structural properties from the single C*-correspondence case.

Let $A$ be a C*-algebra and let $P$ be a unital right LCM subsemigroup of a discrete group $G$. 
Let $X$ be a product system over $P$ with coefficients in $A$. 
We say that $X$ is \emph{compactly aligned} if, for all $p,q\in P\setminus\{e\}$ with the property that $pP\cap qP=wP$ for some $w\in P$, we have
\[ 
\iota_p^w(\K(X_p)) \iota_q^w(\K(X_q))\subseteq \K(X_w).
\]
This condition is independent of the choice of right LCM $w\in P$ of $p,q\in P$, e.g., \cite[p. 11]{DKKLL20}.
Notice that we disregard the case where $p$ or $q$ equals $e$, as the compact alignment condition holds automatically in this case.

When $(G, P)$ is totally ordered, $X$ is automatically compactly aligned, e.g., when $P = \bZ_+$.
Moreover, it is a standard fact that if $\phi_p(A)\subseteq\K(X_p)$ for all $p\in P$, then $X$ is automatically compactly aligned.
We provide a short proof.

\begin{proposition}\label{P:autcaps}
Let $P$ be a unital subsemigroup of a discrete group $G$.
Let $X$ be a product system over $P$ with coefficients in a C*-algebra $A$.
If $\phi_p(A)\subseteq\K(X_p)$ for all $p\in P$, then $\iota_p^{pq}(\K(X_p))\subseteq\K(X_{pq})$ for all $p,q \in P$, and thus if $P$ is a right LCM semigroup, then $X$ is automatically compactly aligned.
\end{proposition}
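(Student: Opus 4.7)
The plan is to first establish the stronger pointwise statement $\iota_p^{pq}(\K(X_p)) \subseteq \K(X_{pq})$ for all $p, q \in P$, from which the compact alignment will follow immediately via the ideal property of $\K(X_w)$.

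For the pointwise statement, I would apply Corollary \ref{C:comp to comp} with $I = A$ to the pair $(X_p, X_q)$ viewed as C*-correspondences over $A$. Since $X_q A = X_q$ (by an approximate-identity argument), the hypothesis $\phi_{X_q}(I) \subseteq \K(X_q I)$ in that corollary reduces in this case to $\phi_q(A) \subseteq \K(X_q)$, which is exactly what we are assuming. The conclusion then yields $k \otimes \id_{X_q} \in \K(X_p \otimes_A X_q)$ for every $k \in \K(X_p)$. For $p, q \in P \setminus \{e\}$, the multiplication map $u_{p,q}$ is a unitary from $X_p \otimes_A X_q$ onto $X_{pq}$, so conjugation by it preserves compactness and gives $\iota_p^{pq}(k) = u_{p,q}(k \otimes \id_{X_q}) u_{p,q}^* \in \K(X_{pq})$. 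The edge cases are immediate from the definitions: $\iota_p^p = \id_{\L(X_p)}$, and $\iota_e^q(\phi_e(a)) = \phi_q(a) \in \K(X_q)$ by hypothesis.

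To derive compact alignment, fix $p, q \in P \setminus \{e\}$ with $pP \cap qP = wP$ for some $w \in P$, so that $w = p p' = q q'$ for some $p', q' \in P$. Applying the preceding step to the pairs $(p, p')$ and $(q, q')$, both $\iota_p^w(\K(X_p))$ and $\iota_q^w(\K(X_q))$ sit inside the ideal $\K(X_w)$ of $\L(X_w)$, and so their product stays in $\K(X_w)$, as required.

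There is no real obstacle here: the entire argument is a reduction to Corollary \ref{C:comp to comp} together with the observation that $\iota$ is implemented by unitary conjugation on the non-identity fibres. The only mild care required is attending to the two boundary cases where $p$ or $q$ equals $e$, where $\iota$ is defined slightly differently and the assertion is tautological.
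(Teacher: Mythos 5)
Your proof is correct and follows essentially the same route as the paper: the paper also reduces to showing $k_p\otimes\id_{X_q}\in\K(X_p\otimes_A X_q)$ (citing \cite[Proposition 4.7]{Lan95}, which is precisely Corollary \ref{C:comp to comp} specialised to $I=A$, since $X_qA=X_q$ and $\K(X_qA)=\K(X_q)$) and then conjugates by the unitary $u_{p,q}$ to land in $\K(X_{pq})$, with the same treatment of the $p=e$ case and the same implicit use of $\K(X_w)$ being closed under multiplication for the compact-alignment conclusion.
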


\begin{proof}
First note that $\iota_e^q(\phi_e(a))=\phi_q(a)\in\K(X_q)$ for all $q\in P$ and $a\in A$.
Now fix $p\in P\setminus\{e\}, q\in P$ and $k_p\in\K(X_p)$.
Since $\phi_q(A)\subseteq\K(X_q)$ and $k_p\in\K(X_p)$, an application of \cite[Proposition 4.7]{Lan95} gives that $k_p\otimes\id_{X_q}\in\K(X_p\otimes_A X_q)$.
Hence an application of \cite[p. 9, (1.6)]{Lan95} yields $\iota_p^{pq}(k_p)= u_{p,q}(k_p\otimes\id_{X_q})u_{p,q}^* \in\K(X_{pq})$, as required.
\end{proof}

\begin{remark}\label{R:unitca}
Let $X$ and $Y$ be unitarily equivalent product systems over $P$ with coefficients in $A$ and $B$, respectively.
An application of (\ref{Eq:unitcomp}) and (\ref{Eq:unitmult}) then gives that $X$ is compactly aligned if and only if $Y$ is compactly aligned.
\end{remark}

We say that a representation $(\pi,t)$ of a compactly aligned product system $X$ is \emph{Nica-covariant} if for all $p,q\in P\setminus\{e\}, k_p\in\K(X_p)$ and $k_q\in\K(X_q)$ we have
\begin{equation}\label{Eq:nc}
\psi_p(k_p)\psi_q(k_q)
=
\begin{cases} 
\psi_w(\iota_p^w(k_p) \iota_q^w(k_q)) & \text{if } pP\cap qP=wP \text{ for some } w\in P, \\ 
0 & \text{otherwise}. 
\end{cases} 
\end{equation}
This condition is independent of the choice of right LCM $w\in P$ of $p,q\in P$, e.g., \cite[Proposition 2.4]{DKKLL20} or \cite[Lemma 3.10]{KL19a}.
We disregard the case where $p$ or $q$ equals $e$, as (\ref{Eq:nc}) holds automatically in this case.
It is straightforward to see that the Fock representation is Nica-covariant.

The Nica-covariance condition induces a Wick ordering on $\ca(\pi,t)$, e.g, \cite{DKKLL20, Fow02, KL19a, KL19b}.
More specifically, let $p,q\in P$ and suppose firstly that $pP\cap qP=wP$ for some $w\in P$. 
Then
\[ 
t_p(X_p)^*t_q(X_q) \subseteq [t_{p'}(X_{p'})t_{q'}(X_{q'})^*], \; \text{where} \; p'=p^{-1}w, q'=q^{-1}w. 
\]
On the other hand, if $pP\cap qP=\mt$, then $t_p(X_p)^*t_q(X_q) = \{0\}$.
From this it follows that 
\[ 
\ca(\pi,t) = \ol{\text{span}}\{t_p(X_p)t_q(X_q)^* \mid p,q\in P\}. 
\]

We write $\N\T_X$ for the universal C*-algebra with respect to the Nica-covariant representations of $X$, and refer to it as the \emph{Toeplitz-Nica-Pimsner algebra of $X$}.
Since the Nica-covariance relations are graded, the existence of $\N\T_X$ and its universal property follow from Proposition \ref{P:T univ}, see also \cite[Proposition 4.3]{DKKLL20}.
We write $(\ol{\pi}_X, \ol{t}_X)$ for the \emph{universal Nica-covariant representation of $X$}.
If $(\pi,t)$ is a Nica-covariant representation of $X$, we will write (in a slight abuse of notation) $\pi \times t$ for the canonical $*$-epimorphism $\N\T_X \to \ca(\pi,t)$.
Injectivity of the Fock representation $(\ol{\pi},\ol{t})$ implies that $(\ol{\pi}_X,\ol{t}_X)$ is injective.
In fact, when $P$ is contained in an amenable discrete group, the $*$-representation $\ol{\pi} \times \ol{t}$ is faithful, e.g., \cite{KKLL21b}.

\begin{remark}\label{R:ntpiso}
If $X$ and $Y$ are unitarily equivalent product systems, then $\T_X\cong\T_Y$ by Remark \ref{R:tpiso}.
When $X$ or $Y$ is compactly aligned (and hence both are by Remark \ref{R:unitca}), we moreover have that the bijection of Remark \ref{R:tpiso} preserves the Nica-covariant representations. 
This is easily seen by Remark \ref{R:prodsysisoprop}.
Therefore $\N\T_X\cong\N\T_Y$ canonically.
\end{remark}

\begin{proposition}\label{P:qnt ca}
Let $P$ be a unital right LCM subsemigroup of a discrete group $G$.
Let $X$ be a compactly aligned product system over $P$ with coefficients in a C*-algebra $A$ and let $I\subseteq A$ be an ideal that is positively invariant for $X$.
Let  $\{\iota_p^{pq}\}_{p,q\in P}$ denote the connecting $*$-homomorphisms of $X$ and $\{j_p^{pq}\}_{p,q\in P}$ denote the connecting $*$-homomorphisms of $[X]_I$.
Then 
\[
j_e^{q}([\phi_e(a)]_I) = [\iota_e^{q}(\phi_e(a))]_I, \foral a \in A,
\qand
j_p^{pq}([S_p]_I) = [\iota_p^{pq}(S_p)]_I, \foral S_p \in \L(X_p),
\]
for all $p \in P \setminus \{e\}$ and $q \in P$, and thus $[X]_I$ is compactly aligned.
\end{proposition}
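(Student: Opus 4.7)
The plan is to verify the two intertwining identities directly from the definitions of the connecting $*$-homomorphisms in $X$ and $[X]_I$, and to then combine these with Lemma~\ref{L:Kat07} to transfer compact alignment from $X$ to $[X]_I$. The main obstacle is simply bookkeeping: keeping track of which instance of $[\hspace{1pt}\cdot\hspace{1pt}]_I$ is being applied (on $A$, on $X_p$, on $\K(X_p)$, or on $\L(X_p)$) and ensuring each intermediate identity holds on an appropriate spanning set.

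For the first identity, recall $\iota_e^q(\phi_e(a)) = \phi_q(a)$ by definition, and similarly $j_e^q([\phi_e(a)]_I) = \phi_{[X_q]_I}([a]_I)$. By Proposition~\ref{P:qnt}, the left action of $[A]_I$ on $[X_q]_I$ satisfies $\phi_{[X_q]_I}([a]_I) = [\phi_q(a)]_I$ for all $a \in A$, so
\[
j_e^q([\phi_e(a)]_I) = [\phi_q(a)]_I = [\iota_e^q(\phi_e(a))]_I.
\]

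For the second identity, with $p \in P \setminus \{e\}$ and $q \in P$, I would test both sides on elements of the form $[\xi_p]_I [\xi_q]_I$, for $\xi_p \in X_p$ and $\xi_q \in X_q$, since such elements span a total subset of $[X_{pq}]_I$ via the multiplication maps of Proposition~\ref{P:qnt}. Using the definition of $j_p^{pq}$ together with the multiplication on $[X]_I$, one computes
\[
j_p^{pq}([S_p]_I)([\xi_p]_I [\xi_q]_I) = ([S_p]_I [\xi_p]_I) [\xi_q]_I = [S_p \xi_p]_I [\xi_q]_I = [(S_p \xi_p) \xi_q]_I,
\]
while on the other hand
\[
[\iota_p^{pq}(S_p)]_I([\xi_p \xi_q]_I) = [\iota_p^{pq}(S_p)(\xi_p \xi_q)]_I = [(S_p \xi_p) \xi_q]_I.
\]
Thus both sides agree on a total set in $[X_{pq}]_I$, and hence as elements of $\L([X_{pq}]_I)$.

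Finally, compact alignment of $[X]_I$ will follow. Fix $p, q \in P \setminus \{e\}$ with $pP \cap qP = wP$ for some $w \in P$. By Lemma~\ref{L:Kat07} one may identify $\K([X_r]_I) = [\K(X_r)]_I$ for $r \in \{p, q, w\}$, and combining this with the second identity we obtain
\[
j_p^w(\K([X_p]_I)) j_q^w(\K([X_q]_I)) = [\iota_p^w(\K(X_p))]_I [\iota_q^w(\K(X_q))]_I.
\]
Since $[\hspace{1pt}\cdot\hspace{1pt}]_I \colon \L(X_w) \to \L([X_w]_I)$ is a $*$-homomorphism, the right-hand side equals $[\iota_p^w(\K(X_p)) \iota_q^w(\K(X_q))]_I$, which lies in $[\K(X_w)]_I = \K([X_w]_I)$ by compact alignment of $X$ and Lemma~\ref{L:Kat07}. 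This verifies the compact alignment condition for $[X]_I$.
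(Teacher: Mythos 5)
Your proposal is correct and follows essentially the same route as the paper's proof: both verify the intertwining identities by evaluating on the spanning set $[\xi_p]_I[\xi_q]_I$ (the $e$-case via the quotient left action of Proposition \ref{P:qnt}), and both then deduce compact alignment by lifting compacts through the surjection $\K(X_r)\to\K([X_r]_I)$ of Lemma \ref{L:Kat07} and pushing the compact alignment of $X$ through the $*$-homomorphism $[\hspace{1pt}\cdot\hspace{1pt}]_I\colon\L(X_w)\to\L([X_w]_I)$. The only cosmetic difference is that the paper works with individual representatives $\dot{k}_p=[k_p]_I$, $\dot{k}_q=[k_q]_I$ rather than at the level of sets, which changes nothing of substance.
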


\begin{proof}
Fix $q\in P$ and $a\in A$, and observe that
\[ 
j_e^q([\phi_e(a)]_I)=[\phi_q]_I([a]_I)=[\phi_q(a)]_I=[\iota_e^q(\phi_e(a))]_I. 
\]
Next, fix $p\in P\setminus\{e\}, q\in P$ and $S_p\in\L(X_p)$.
For $\xi_p\in X_p$ and $\xi_q\in X_q$, we have
\begin{align*}
j_p^{pq}([S_p]_I)([\xi_p]_I[\xi_q]_I) 
= [(S_p\xi_p)\xi_q]_I 
= [\iota_p^{pq}(S_p)(\xi_p\xi_q)]_I 
= [\iota_p^{pq}(S_p)]_I([\xi_p]_I[\xi_q]_I),
\end{align*}
from which it follows that $j_p^{pq}([S_p]_I)=[\iota_p^{pq}(S_p)]_I$.
This proves the first claim.

Finally, take $p,q\in P\setminus\{e\}$ such that $pP\cap qP=wP$ for some $w\in P$, and fix $\dot{k}_p\in\K([X_p]_I)$ and $\dot{k}_q\in\K([X_q]_I)$.
By Lemma \ref{L:Kat07}, we have
\[
\textup{$\dot{k}_p=[k_p]_I$ and $\dot{k}_q=[k_q]_I$ for some $k_p\in\K(X_p)$ and $k_q\in\K(X_q)$,}
\]
and therefore we obtain
\begin{align*}
j_p^w(\dot{k}_p)j_q^w(\dot{k}_q) & =j_p^w([k_p]_I)j_q^w([k_q]_I) 
					   =[\iota_p^w(k_p)]_I[\iota_q^w(k_q)]_I 
					   =[\iota_p^w(k_p)\iota_q^w(k_q)]_I.
\end{align*}
Since $X$ is compactly aligned, we have $\iota_p^w(k_p)\iota_q^w(k_q)\in\K(X_w)$. 
Another application of Lemma \ref{L:Kat07} then gives that $j_p^w(\dot{k}_p)j_q^w(\dot{k}_q) \in \K([X_w]_I)$. 
Hence $[X]_I$ is compactly aligned and the proof is complete.
\end{proof}

\subsection{Product systems over $\bZ_+^d$}\label{Ss: scaps}

For $d\in\bN$, we write $[d] := \{1, 2, \dots, d\}$. 
We denote the usual free generators of $\bZ_+^d$ by $\un{1}, \dots, \un{d}$, and we set $\un{0} = (0, \dots, 0)$. 
For $\un{n}=(n_1,\dots,n_d)\in\bZ_+^d,$ we define the \emph{length} of $\un{n}$ by $|\un{n}|:= \sum_{i \in [d]} n_i$.
For $\mt\neq F \subseteq [d]$, we write
\[ 
\un{1}_F := \sum_{i \in F} \un{i} \; \text{and} \; \un{1}_\mt:=\un{0}. 
\]
We consider the lattice structure on $\bZ_+^d$ given by
\[ 
\un{n} \vee \un{m} := (\max\{n_i, m_i\})_{i=1}^d \qand \un{n} \wedge \un{m} := (\min\{n_i, m_i\})_{i=1}^d. 
\]
The semigroup $\bZ_+^d$ imposes a partial order on $\bZ^d$ that is compatible with the lattice structure. 
Specifically, we say that $\un{n}\leq\un{m}$ (resp. $\un{n} < \un{m}$) if and only if $n_i\leq m_i$ for all $i\in[d]$ (resp. $\un{n} \leq \un{m}$ and $\un{n} \neq \un{m}$). 
We denote the \emph{support} of $\un{n}$ by 
\[
\supp \un{n} := \{i \in [d] \mid n_i \neq 0\},
\]
and we write
\[ 
\un{n} \perp \un{m} \iff \supp \un{n} \cap \supp \un{m} = \mt.
\]
We write $\un{n} \perp F$ if $\supp \un{n} \cap F = \mt$.
Notice that the set $\{\un{n}\in\bZ_+^d\mid \un{n}\perp F\}$ is directed; indeed, if $\un{n},\un{m}\perp F$ then $\un{n},\un{m}\leq\un{n}\vee\un{m}\perp F$.
Therefore we can make sense of limits with respect to $\un{n} \perp F$.

Many desirable properties of the fibres of a product system $X = \{X_{\un{n}}\}_{\un{n} \in \bZ_+^d}$ are inherited from the corresponding properties of the fibres $X_\un{i}$, where $i\in[d]$.

\begin{proposition}\label{P:inj+comp}
Let $X$ be a product system over $\bZ_+^d$ with coefficients in a C*-algebra $A$.
Then the following hold:
\begin{enumerate}
\item $X_\un{i}$ is injective for all $i\in[d]$ if and only if $X_\un{n}$ is injective for all $\un{n}\in\bZ_+^d$.
\item $\phi_\un{i}(A)\subseteq\K(X_\un{i})$ for all $i\in[d]$ if and only if $\phi_\un{n}(A)\subseteq\K(X_\un{n})$ for all $\un{n}\in\bZ_+^d$.
\end{enumerate}
In particular, $X_\un{i}$ is regular for all $i\in[d]$ if and only if $X$ is regular.
\end{proposition}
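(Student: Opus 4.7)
The reverse implications in both (i) and (ii) are immediate by specialising to the generators $\un{n} = \un{i}$. For the forward directions my plan is to induct on the length $|\un{n}|$, using the product system axioms to factor $X_{\un{n}} \cong X_{\un{n}-\un{i}} \otimes_A X_{\un{i}}$ for any $i \in \supp \un{n}$, which is a unitary equivalence of C*-correspondences via $u_{\un{n}-\un{i}, \un{i}}$ and so intertwines left actions:
\[
\phi_{\un{n}}(a) = u_{\un{n}-\un{i}, \un{i}} (\phi_{\un{n}-\un{i}}(a) \otimes \id_{X_{\un{i}}}) u_{\un{n}-\un{i}, \un{i}}^* \foral a \in A.
\]
The base case $|\un{n}| = 1$ is the hypothesis, so both (i) and (ii) reduce to checking that the asserted property is preserved by tensoring on the right with $X_{\un{i}}$.

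For (i), suppose $\phi_{\un{n}-\un{i}}$ and $\phi_{\un{i}}$ are injective and take $a \in A$ with $\phi_{\un{n}-\un{i}}(a) \otimes \id_{X_{\un{i}}} = 0$. Then for every $\xi \in X_{\un{n}-\un{i}}$ and $\eta \in X_{\un{i}}$ we have
\[
0 = \|(\phi_{\un{n}-\un{i}}(a)\xi) \otimes \eta\|^2 = \sca{\eta, \phi_{\un{i}}(\sca{\phi_{\un{n}-\un{i}}(a)\xi, \phi_{\un{n}-\un{i}}(a)\xi}) \eta}.
\]
Setting $b := \sca{\phi_{\un{n}-\un{i}}(a)\xi, \phi_{\un{n}-\un{i}}(a)\xi} \geq 0$, injectivity of $\phi_{\un{i}}$ forces $b = 0$, whence $\phi_{\un{n}-\un{i}}(a)\xi = 0$. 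As $\xi$ was arbitrary we get $\phi_{\un{n}-\un{i}}(a) = 0$, and injectivity of $\phi_{\un{n}-\un{i}}$ yields $a = 0$.

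For (ii), the inductive hypothesis gives $\phi_{\un{n}-\un{i}}(a) \in \K(X_{\un{n}-\un{i}})$ and the base gives $\phi_{\un{i}}(A) \subseteq \K(X_{\un{i}})$; the standard fact from \cite[Proposition 4.7]{Lan95} (already invoked in the proof of Proposition \ref{P:autcaps}) then guarantees $\phi_{\un{n}-\un{i}}(a) \otimes \id_{X_{\un{i}}} \in \K(X_{\un{n}-\un{i}} \otimes_A X_{\un{i}})$, and conjugating by the unitary $u_{\un{n}-\un{i},\un{i}}$ deposits it inside $\K(X_{\un{n}})$. The \lq\lq In particular" statement on regularity is then immediate by combining (i) and (ii). The main (mild) obstacle is the tensor argument in (i): naively $S \otimes \id_Y$ can fail to be injective on $\L(X)$, so one has to avoid appealing to such a principle and instead exploit the inner-product formula above, together with the specific hypothesis that $\phi_{\un{i}}$ is injective, to extract $a$ directly.
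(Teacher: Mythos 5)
Your proof is correct and follows essentially the same route as the paper: the paper's proof is a two-line induction on $|\un{n}|$ citing \cite[p.\ 42]{Lan95} for item (i) and \cite[Proposition 4.7 and p.\ 9, (1.6)]{Lan95} for item (ii), using exactly the factorisation $\phi_{\un{n}}(a)=u_{\un{n}-\un{i},\un{i}}(\phi_{\un{n}-\un{i}}(a)\otimes\id_{X_{\un{i}}})u_{\un{n}-\un{i},\un{i}}^*$ that you use. The only difference is that you inline the inner-product argument showing that injectivity of $\phi_{\un{i}}$ makes $S\mapsto S\otimes\id_{X_{\un{i}}}$ injective, which is precisely the content of the cited passage of Lance, so your caution about the general failure of injectivity of $S\mapsto S\otimes\id_Y$ is well placed but resolved the same way.
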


\begin{proof}
Both items follow by induction on $|\un{n}|$.
Specifically, item (i) follows by \cite[p. 42]{Lan95}, and item (ii) follows by \cite[Proposition 4.7 and p. 9, (1.6)]{Lan95}.
\end{proof}

The following lemma will be used frequently in the sequel.

\begin{lemma}\label{L:inv comp}
Let $X$ be a product system over $\bZ_+^d$ with coefficients in a C*-algebra $A$.
Fix $F \subseteq [d]$ and let $I \subseteq A$ be an ideal such that
\[
I \subseteq \bigcap \{\phi_{\un{i}}^{-1}(\K(X_{\un{i}}))\mid i\in F^c\}
\qand
\sca{ X_{\un{n}}, I X_{\un{n}} } \subseteq I \foral \un{n} \perp F.
\]
Then
\[
\phi_{\un{n}}(I) \subseteq \K(X_\un{n} I) \foral \un{n}\perp F.
\]
\end{lemma}

\begin{proof}
We will proceed by induction on $|\un{n}|$. 
The cases where $|\un{n}| = 0$ and $F = [d]$ follow trivially.
The statement holds when $|\un{n}| = 1$ by (\ref{Eq: comp}).
Now assume that the claim holds for all $\un{n}\perp F$ with $|\un{n}|=N$, where $N\geq1$. 
Fix $\un{m}\perp F$ with $|\un{m}|=N+1$, so that $\un{m}=\un{n}+\un{i}$ for some $\un{n} \perp F$ and $i\in F^c$ such that $|\un{n}|=N$. 
We must show that $\phi_\un{m}(a)\in\K(X_\un{m}I)$ for all $a \in I$. 
By definition we have
\[
\phi_\un{m}(a)=\phi_{\un{n}+\un{i}}(a)=\iota_\un{n}^{\un{n}+\un{i}}(\phi_\un{n}(a))=u_{\un{n},\un{i}}(\phi_\un{n}(a)\otimes\text{id}_{X_\un{i}})u_{\un{n},\un{i}}^*.
\]

Consider the element $k_\un{n}\otimes\id_{X_\un{i}}$ for some $k_\un{n}\in\K(X_\un{n}I)$.
From the length $1$ case, we have $\phi_\un{i}(I)\subseteq\K(X_\un{i}I)$.
An application of Corollary \ref{C:comp to comp} then yields $k_\un{n}\otimes\id_{X_\un{i}}\in\K((X_\un{n}\otimes_A X_\un{i})I)$, and thus $u_{\un{n},\un{i}}(k_\un{n}\otimes\id_{X_\un{i}})u_{\un{n},\un{i}}^*\in\K(X_\un{m} I)$.

Returning to the proof, by the inductive hypothesis we have $\phi_\un{n}(a)\in\K(X_\un{n}I)$ and hence, applying the preceding comment for $k_\un{n}=\phi_\un{n}(a)$, we deduce that $\phi_\un{m}(a)\in\K(X_\un{m}I)$.
By induction, the proof is complete.
\end{proof}

Let $X$ be a compactly aligned product system over $\bZ_+^d$ with coefficients in a C*-algebra $A$ and let $(\pi,t)$ be a Nica-covariant representation of $X$. 
We say that $(\pi,t)$ \emph{admits a gauge action} $\ga \colon \bT^d\to\text{Aut}(\ca(\pi,t))$ if $\ga$ is a group homomorphism, $\{\ga_\un{z}\}_{\un{z}\in\bT^d}$ is point-norm continuous and
\[ \ga_\un{z}(\pi(a))=\pi(a) \foral a\in A \; \text{and} \; \ga_\un{z}(t_\un{n}(\xi_\un{n}))=\un{z}^\un{n}t_\un{n}(\xi_\un{n}) \foral \xi_\un{n}\in X_\un{n}, \un{n}\in\bZ_+^d\setminus\{\un{0}\}, \]
for each $\un{z}\in\bT^d$.
If $\un{z}=(z_1,\dots,z_d)\in\bT^d$ and $\un{n}=(n_1,\dots,n_d)\in\bZ_+^d$, then we write 
\[
\un{z}^\un{n}:=z_1^{n_1}\dots z_d^{n_d}.
\]
When such a gauge action $\ga$ exists, it is necessarily unique.
The Fock representation and the universal Nica-covariant representation admit gauge actions.
We say that an ideal $\mathfrak{J}\subseteq\ca(\pi,t)$ is \emph{gauge-invariant} or \emph{equivariant} if $\ga_\un{z}(\mathfrak{J})\subseteq\mathfrak{J}$ for all $\un{z}\in\bT^d$ (and so $\ga_\un{z}(\mathfrak{J})=\mathfrak{J}$ for all $\un{z}\in\bT^d$).

Given $\un{m},\un{m}'\in\bZ_+^d$ with $\un{m} \leq \un{m}'$, we write
\begin{align*}
B_{[\un{m}, \un{m}']}^{(\pi,t)} := \spn\{ \psi_{\un{n}}(\K(X_{\un{n}})) \mid \un{m} \leq \un{n} \leq \un{m}'\}
\qand
B_{(\un{m}, \un{m}']}^{(\pi,t)} := \spn\{ \psi_{\un{n}}(\K(X_{\un{n}})) \mid \un{m} < \un{n} \leq \un{m}'\}.
\end{align*}
These spaces are in fact C*-subalgebras of $\ca(\pi,t)$, e.g, \cite{CLSV11}.
By convention we take the linear span of $\mt$ to be $\{0\}$, so that $B^{(\pi,t)}_{(\un{m},\un{m}]}=\{0\}$ for all $\un{m}\in\bZ_+^d$.
We also define
\begin{align*}
B_{[\un{m}, \infty]}^{(\pi,t)} := \ol{\spn} \{ \psi_{\un{n}}(\K(X_{\un{n}})) \mid \un{m} \leq \un{n} \}
\qand
B_{(\un{m}, \infty]}^{(\pi,t)} := \ol{\spn} \{ \psi_{\un{n}}(\K(X_{\un{n}})) \mid \un{m} < \un{n} \}.
\end{align*}
We refer to these C*-algebras as the \emph{cores} of $(\pi,t)$. 
When $(\pi,t)$ admits a gauge action $\ga$, we have that
\[ 
B_{[\un{0},\infty]}^{(\pi,t)}=\ca(\pi,t)^\ga:=\{f\in\ca(\pi,t) \mid \ga_{\un{z}}(f)=f \foral \un{z}\in\bT^d\}
\]
is the \emph{fixed point algebra} of $\ca(\pi,t)$ with respect to $\gamma$.

In the current work we will consider a subclass of compactly aligned product systems over $\bZ_+^d$ introduced by Dor-On and the second-named author \cite{DK18}.
Let $X$ be a product system over $\bZ_+^d$ with coefficients in a C*-algebra $A$.
We say that $X$ is \emph{strong compactly aligned} if it is compactly aligned and satisfies:
\begin{equation}\label{Eq:sca}
\iota_\un{n}^{\un{n}+\un{i}}(\K(X_\un{n}))\subseteq\K(X_{\un{n}+\un{i}}) 
\textup{ whenever $\un{n}\perp\un{i}$, where $i\in[d],\un{n}\in\bZ_+^d\setminus\{\un{0}\}$}.
\end{equation}
We disallow $\un{n}=\un{0}$, as then (\ref{Eq:sca}) would imply that the left action of each fibre of $X$ is by compact operators. 
Note that (\ref{Eq:sca}) does not imply compact alignment (rather, a strong compactly aligned product system is \emph{a priori} assumed to be compactly aligned). 
Any C*-correspondence, when viewed as a product system over $\bZ_+$, is vacuously strong compactly aligned.
Moreover, strong compactly aligned product systems include those product systems over $\bZ_+^d$ whose left actions are by compacts.

\begin{corollary}\label{C:autscaps}
Let $X$ be a product system over $\bZ_+^d$ with coefficients in a C*-algebra $A$.
If $\phi_\un{n}(A)\subseteq\K(X_\un{n})$ for all $\un{n}\in\bZ_+^d$, then $X$ is strong compactly aligned.
\end{corollary}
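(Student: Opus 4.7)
The plan is to show that this corollary follows as an immediate consequence of Proposition \ref{P:autcaps}, which was already established in the excerpt. Recall that Proposition \ref{P:autcaps} gives two conclusions under the hypothesis $\phi_p(A) \subseteq \K(X_p)$ for all $p \in P$: namely, that $\iota_p^{pq}(\K(X_p)) \subseteq \K(X_{pq})$ for all $p, q \in P$, and that $X$ is compactly aligned. The hypothesis of our corollary, combined with Proposition \ref{P:inj+comp}(ii), yields $\phi_{\un{n}}(A) \subseteq \K(X_{\un{n}})$ for all $\un{n} \in \bZ_+^d$ (in fact this is the stated hypothesis), so Proposition \ref{P:autcaps} applies.

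From the first conclusion, specialising to $p = \un{n}$ and $q = \un{i}$ with $i \in [d]$ and $\un{n} \in \bZ_+^d \setminus \{\un{0}\}$ satisfying $\un{n} \perp \un{i}$, we obtain
\[
\iota_{\un{n}}^{\un{n}+\un{i}}(\K(X_{\un{n}})) \subseteq \K(X_{\un{n}+\un{i}}),
\]
which is precisely condition (\ref{Eq:sca}). Notice in fact that the perpendicularity assumption $\un{n} \perp \un{i}$ is not needed here, since Proposition \ref{P:autcaps} holds without restriction on $p$ and $q$; however, it is the perpendicular case which is demanded by the definition of strong compact alignment. Combining with the compact alignment from Proposition \ref{P:autcaps} completes the verification that $X$ is strong compactly aligned.

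There is no real obstacle; the work has already been done in Proposition \ref{P:autcaps}. The corollary is essentially a relabelling that isolates the $\bZ_+^d$-case and matches the terminology introduced in \cite{DK18}. The only thing worth emphasising in the write-up is that we need both conclusions of Proposition \ref{P:autcaps} simultaneously (compact alignment and the inclusion on connecting $*$-homomorphisms), since strong compact alignment is defined as compact alignment \emph{plus} the inclusion (\ref{Eq:sca}).
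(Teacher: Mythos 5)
Your proof is correct and is essentially the paper's own argument: the authors likewise dispose of Corollary \ref{C:autscaps} by noting it is immediate from Proposition \ref{P:autcaps}, whose two conclusions (compact alignment and the inclusion $\iota_p^{pq}(\K(X_p))\subseteq\K(X_{pq})$ for all $p,q$) together give strong compact alignment upon specialising to $p=\un{n}$, $q=\un{i}$. Your side remark invoking Proposition \ref{P:inj+comp}(ii) is superfluous, as you yourself note, since the hypothesis is already stated for all $\un{n}\in\bZ_+^d$ rather than only for the generators.
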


\begin{proof}
This is immediate by Proposition \ref{P:autcaps}.
\end{proof}

\begin{remark}\label{R:unitsca}
Let $X$ and $Y$ be unitarily equivalent product systems over $\bZ_+^d$ with coefficients in C*-algebras $A$ and $B$, respectively.
An application of Remark \ref{R:prodsysisoprop} and Remark \ref{R:unitca} implies that $X$ is strong compactly aligned if and only if $Y$ is strong compactly aligned.
\end{remark}

The quotient construction of Proposition \ref{P:qnt} also preserves strong compact alignment.

\begin{proposition}\label{P:qnt sca}
Let $X$ be a strong compactly aligned product system with coefficients in a C*-algebra $A$ and let $I\subseteq A$ be an ideal that is positively invariant for $X$.
Then $[X]_I$ is strong compactly aligned.
\end{proposition}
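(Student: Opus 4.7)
The plan is to leverage Proposition \ref{P:qnt ca}, which already gives us two essential facts: first, that $[X]_I$ is compactly aligned (so only the strong alignment condition \eqref{Eq:sca} needs checking), and second, the explicit compatibility formula between the connecting $*$-homomorphisms $\iota$ of $X$ and $j$ of $[X]_I$, namely $j_{\un{n}}^{\un{n}+\un{i}}([S_{\un{n}}]_I) = [\iota_{\un{n}}^{\un{n}+\un{i}}(S_{\un{n}})]_I$ for all $S_{\un{n}} \in \L(X_{\un{n}})$.

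First I would fix $i \in [d]$ and $\un{n} \in \bZ_+^d \setminus \{\un{0}\}$ with $\un{n} \perp \un{i}$, and aim to verify the inclusion
\[
j_{\un{n}}^{\un{n}+\un{i}}(\K([X_{\un{n}}]_I)) \subseteq \K([X_{\un{n}+\un{i}}]_I).
\]
By Lemma \ref{L:Kat07} applied to the right Hilbert $A$-module $X_{\un{n}}$, every element of $\K([X_{\un{n}}]_I)$ can be written as $[k_{\un{n}}]_I$ for some $k_{\un{n}} \in \K(X_{\un{n}})$ (the quotient map $[\,\cdot\,]_I \colon \K(X_{\un{n}}) \to \K([X_{\un{n}}]_I)$ is surjective). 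So it suffices to check the inclusion on such elements.

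Next, using the compatibility formula from Proposition \ref{P:qnt ca}, compute
\[
j_{\un{n}}^{\un{n}+\un{i}}([k_{\un{n}}]_I) = [\iota_{\un{n}}^{\un{n}+\un{i}}(k_{\un{n}})]_I.
\]
Now invoke the strong compact alignment hypothesis on $X$: since $\un{n} \perp \un{i}$ and $\un{n} \neq \un{0}$, condition \eqref{Eq:sca} gives $\iota_{\un{n}}^{\un{n}+\un{i}}(k_{\un{n}}) \in \K(X_{\un{n}+\un{i}})$. A second application of Lemma \ref{L:Kat07} (this time for the module $X_{\un{n}+\un{i}}$) then yields $[\iota_{\un{n}}^{\un{n}+\un{i}}(k_{\un{n}})]_I \in \K([X_{\un{n}+\un{i}}]_I)$, completing the verification.

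There is no real obstacle here; the entire argument is essentially a direct unwinding of Proposition \ref{P:qnt ca} combined with Lemma \ref{L:Kat07}. The only subtlety worth highlighting is that the surjectivity statement in Lemma \ref{L:Kat07} is what lets us reduce the verification to elements coming from $\K(X_{\un{n}})$; without it we would only know the inclusion $[\K(X_{\un{n}})]_I \subseteq \K([X_{\un{n}}]_I)$ and would have to work harder to handle arbitrary compacts on $[X_{\un{n}}]_I$.
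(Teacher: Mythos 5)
Your proof is correct and follows exactly the same route as the paper's: reduce to elements of the form $[k_{\un{n}}]_I$ via the surjectivity clause of Lemma \ref{L:Kat07}, push through the compatibility formula $j_{\un{n}}^{\un{n}+\un{i}}([k_{\un{n}}]_I)=[\iota_{\un{n}}^{\un{n}+\un{i}}(k_{\un{n}})]_I$ from Proposition \ref{P:qnt ca}, and conclude with strong compact alignment of $X$ together with a second application of Lemma \ref{L:Kat07}. Your closing remark about why surjectivity is the crucial ingredient is accurate and matches the implicit logic of the paper's argument.
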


\begin{proof}
We use $\{\iota_\un{n}^{\un{n}+\un{m}}\}_{\un{n},\un{m}\in \bZ_+^d}$ (resp. $\{j_\un{n}^{\un{n}+\un{m}}\}_{\un{n},\un{m}\in \bZ_+^d}$) to denote the connecting $*$-homomorphisms of $X$ (resp. $[X]_I$).
Proposition \ref{P:qnt ca} guarantees that $[X]_I$ is compactly aligned, so it remains to check that $[X]_I$ is strong compactly aligned.
To this end, fix $\un{n}\in\bZ_+^d\setminus\{\un{0}\}, i\in[d]$ and $\dot{k}_\un{n}\in\K([X_\un{n}]_I)$, and suppose that $\un{n}\perp\un{i}$.
By Lemma \ref{L:Kat07}, we have $\dot{k}_\un{n}=[k_\un{n}]_I$ for some $k_\un{n}\in\K(X_\un{n})$. Thus, by Proposition \ref{P:qnt ca}, we have 
\[
 j_\un{n}^{\un{n}+\un{i}}(\dot{k}_\un{n})
=
 j_\un{n}^{\un{n}+\un{i}}([k_\un{n}]_I)
=
[\iota_\un{n}^{\un{n}+\un{i}}(k_\un{n})]_I 
\in 
\K([X_{\un{n}+\un{i}}]_I),
\]
where we have used the strong compact alignment of $X$ together with Lemma \ref{L:Kat07} in the final assertion.
Hence $j_\un{n}^{\un{n}+\un{i}}(\K([X_\un{n}]_I))\subseteq\K([X_{\un{n}+\un{i}}]_I)$, completing the proof.
\end{proof}

We will require some notation and results from \cite{DK18}.
Henceforth we assume that $X$ is strong compactly aligned.
For each $\mt\neq F\subseteq[d]$, we define
\[ 
\J_F :=
(\bigcap_{i\in F}\ker\phi_{\un{i}})^\perp\cap(\bigcap_{i\in [d]}\phi_{\un{i}}^{-1}(\K(X_{\un{i}})))\qand \J_\mt:=\{0\},
\]
which are ideals of $A$.
Notice that strong compact alignment of $X$ implies that 
\[ 
\bigcap_{i\in F}\phi_{\un{i}}^{-1}(\K(X_{\un{i}}))=\bigcap\{\phi_{\un{n}}^{-1}(\K(X_{\un{n}}))\mid \un{0}\leq\un{n}\leq\un{1}_F \}\foral \mt\neq F\subseteq[d].
\]
In turn, for each $F\subseteq [d]$, we define
\[ 
\I_F :=
\{a \in A \mid \sca{X_{\un{n}},aX_{\un{n}}}\subseteq \J_F \; \text{for all} \; \un{n}\perp F\}=\bigcap\{X_\un{n}^{-1}(\J_F)\mid \un{n}\perp F\}.
\]
In particular, we have $\I_\mt=\{0\}$ and $\I_F \subseteq \J_F$ for all $F \subseteq [d]$.
The ideal $\I_F$ is the largest ideal in $\J_F$ that is \emph{$F^\perp$-invariant}, i.e, 
\[
\sca{X_\un{n}, \I_F X_\un{n}}\subseteq\I_F
\foral \un{n}\perp F,
\]
see \cite[Proposition 2.7]{DK18}.
To avoid ambiguity, given two strong compactly aligned product systems $X$ and $Y$, we will denote the ideals $\J_F$ for $X$ and $Y$ by $\J_F(X)$ and $\J_F(Y)$, respectively.
We will use the same convention for the ideals $\I_F$.
The ideals $\J_F$ and $\I_F$ are preserved under unitary equivalence.

\begin{remark}\label{R:JIunit}
Let $X$ and $Y$ be strong compactly aligned product systems over $\bZ_+^d$ with coefficients in C*-algebras $A$ and $B$, respectively.
Suppose that $X$ and $Y$ are unitarily equivalent by a collection $\{W_{\un{n}}\colon X_\un{n}\to Y_\un{n}\}_{\un{n} \in \bZ_+^d}$.
The properties of $\{W_\un{n}\}_{\un{n}\in\bZ_+^d}$ from Remark \ref{R:prodsysisoprop} then yield
\[
W_\un{0}(\J_F(X))=\J_F(Y) \qand W_\un{0}(\I_F(X))=\I_F(Y) \foral F\subseteq[d].
\]
\end{remark}

The ideals $\I_F$ emerge naturally when solving polynomial equations, originating in \cite{DFK17} in the case of C*-dynamical systems.
In order to make this precise, we require the following notation.
Following the conventions of \cite[Section 3]{DK18}, we introduce an approximate unit $(k_{\un{i},\la})_{\la\in\La}$ of $\K(X_{\un{i}})$ for each generator $\un{i}$ of $\bZ_+^d$.
Without loss of generality, we may assume that these approximate units are indexed by the same directed set $\La$, by replacing with their product.

\begin{proposition}\label{P:sca ai} \cite[Proposition 2.4]{DK18}
Let $X$ be a strong compactly aligned product system with coefficients in a C*-algebra $A$.
Let $(k_{\un{i},\la})_{\la\in\La}$ be an approximate unit of $\K(X_\un{i})$ for all $i\in[d]$.
Fix $\mt\neq F \subseteq [d]$ and $\un{0} \neq \un{n} \in \bZ_+^d$, and set $\un{m} = \un{n} \vee \un{1}_F$.
Then the net $(e_{F,\la})_{\la\in\La}$ defined by
\[
e_{F, \la}: = \prod \{ \iota_{\un{i}}^{\un{1}_F}(k_{\un{i}, \la}) \mid i \in F \} \foral \la\in\La,
\]
is contained in $\K(X_{\un{1}_F})$, and we have
\begin{equation}\label{eq2-1}
\nor{\cdot}\text{-}\lim_\la \iota_{\un{1}_F}^{\un{m}}(e_{F, \la})\iota_{\un{n}}^{\un{m}}(k_{\un{n}}) 
= 
\iota_{\un{n}}^{\un{m}}(k_{\un{n}})  \foral k_\un{n} \in \K(X_{\un{n}}).
\end{equation}
Moreover, it follows that $\iota_{\un{n}}^{\un{m}}(k_{\un{n}}) \in \K(X_{\un{m}})$ for all $k_\un{n} \in \K(X_{\un{n}})$.
\end{proposition}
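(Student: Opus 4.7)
The plan is to first establish the compactness assertions and then the norm limit. For the first claim, I would fix $i\in F$ and apply strong compact alignment (\ref{Eq:sca}) iteratively along the generators $\un{j}$ for $j\in F\setminus\{i\}$---each perpendicular to $\un{i}$---to conclude that $\iota_{\un{i}}^{\un{1}_F}(\K(X_{\un{i}}))\subseteq\K(X_{\un{1}_F})$, whence $e_{F,\la}\in\K(X_{\un{1}_F})$. For the ``moreover'' assertion, set $F_1:=F\setminus\supp\un{n}$, so that $\un{m}-\un{n}=\un{1}_{F_1}$; iterating strong compact alignment along the generators $\un{j}$ for $j\in F_1$ (each perpendicular to $\un{n}\neq\un{0}$) then gives $\iota_{\un{n}}^{\un{m}}(\K(X_{\un{n}}))\subseteq\K(X_{\un{m}})$, as required.

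For the norm limit, I would split $F=F_0\sqcup F_1$ with $F_0:=F\cap\supp\un{n}$. Using the functorial identities $\iota_{\un{1}_F}^{\un{m}}\circ\iota_{\un{i}}^{\un{1}_F}=\iota_{\un{i}}^{\un{m}}$ (valid for all $i\in F$) and $\iota_{\un{i}}^{\un{m}}=\iota_{\un{n}}^{\un{m}}\circ\iota_{\un{i}}^{\un{n}}$ (valid for $i\in F_0$, since $\un{i}\leq\un{n}$), together with $\iota_{\un{1}_F}^{\un{m}}$ being a $*$-homomorphism and the mutual commutation of the operators $\iota_{\un{i}}^{\un{m}}(k_{\un{i},\la})$ for distinct $i\in F$ (they act nontrivially on distinct tensor components of $X_{\un{m}}$), the product can be rewritten as
\[
\iota_{\un{1}_F}^{\un{m}}(e_{F,\la})\iota_{\un{n}}^{\un{m}}(k_{\un{n}})
=
\bigg(\prod_{i\in F_1}\iota_{\un{i}}^{\un{m}}(k_{\un{i},\la})\bigg)\iota_{\un{n}}^{\un{m}}\bigg(\bigg(\prod_{i\in F_0}\iota_{\un{i}}^{\un{n}}(k_{\un{i},\la})\bigg)k_{\un{n}}\bigg).
\]

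The subsequent analysis rests on the following lemma, which I would verify separately: if $(k_\la)$ is an approximate unit of $\K(X)$ for a right Hilbert $A$-module $X$ and $Y$ is a C*-correspondence over $A$, then $(k_\la\otimes\id_Y)T\to T$ and $T(k_\la\otimes\id_Y)\to T$ in norm for every $T\in\K(X\otimes_A Y)$. This reduces to rank-one operators $\Theta_{\zeta,\eta}$ on $X\otimes_A Y$, and the convergence $(k_\la\otimes\id_Y)\zeta\to\zeta$ is verified on simple tensors via the Hewitt--Cohen Factorisation Theorem (Theorem \ref{T:HCFT}) applied to $X$. Instantiating the lemma with $Y=X_{\un{n}-\un{i}}$ for $i\in F_0$ shows that $(\iota_{\un{i}}^{\un{n}}(k_{\un{i},\la}))_\la$ acts as an approximate unit on $\K(X_{\un{n}})$, so $\prod_{i\in F_0}\iota_{\un{i}}^{\un{n}}(k_{\un{i},\la})\cdot k_{\un{n}}\to k_{\un{n}}$ in norm, and contractivity of $\iota_{\un{n}}^{\un{m}}$ preserves this. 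Instantiating the lemma with $Y=X_{\un{m}-\un{i}}$ for $i\in F_1$ likewise shows that $(\iota_{\un{i}}^{\un{m}}(k_{\un{i},\la}))_\la$ acts as an approximate unit on $\K(X_{\un{m}})$, and combining with the compactness $\iota_{\un{n}}^{\un{m}}(k_{\un{n}})\in\K(X_{\un{m}})$ established in the first paragraph yields $\prod_{i\in F_1}\iota_{\un{i}}^{\un{m}}(k_{\un{i},\la})\cdot\iota_{\un{n}}^{\un{m}}(k_{\un{n}})\to\iota_{\un{n}}^{\un{m}}(k_{\un{n}})$ in norm. A standard three-term $\varepsilon$-argument then combines these two convergences into (\ref{eq2-1}). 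The main technical obstacle is the bookkeeping of two incompatible tensor-product decompositions of $X_{\un{m}}$---namely $X_{\un{n}}\otimes_A X_{\un{1}_{F_1}}$ for $\iota_{\un{n}}^{\un{m}}(k_{\un{n}})$ and $X_{\un{1}_F}\otimes_A X_{\un{m}-\un{1}_F}$ for $\iota_{\un{1}_F}^{\un{m}}(e_{F,\la})$; the rewriting above sidesteps this by breaking $e_{F,\la}$ into its constituent generators and absorbing the $F_0$-factors into $\iota_{\un{n}}^{\un{m}}(k_{\un{n}})$.
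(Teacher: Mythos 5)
Your compactness arguments are fine: iterating (\ref{Eq:sca}) along pairwise orthogonal generators, together with functoriality of the connecting maps, correctly gives $e_{F,\la}\in\K(X_{\un{1}_F})$ and $\iota_{\un{n}}^{\un{m}}(\K(X_{\un{n}}))\subseteq\K(X_{\un{m}})$, and your approximate-unit lemma (via rank-one operators and Theorem \ref{T:HCFT}) is correct. The genuine gap is the commutation claim: the operators $\iota_{\un{i}}^{\un{m}}(k_{\un{i},\la})$ for distinct $i\in F$ do \emph{not} commute in general, and the justification that ``they act nontrivially on distinct tensor components of $X_{\un{m}}$'' is false. The decompositions $X_{\un{m}}\cong X_{\un{i}}\otimes_A X_{\un{m}-\un{i}}$ and $X_{\un{m}}\cong X_{\un{j}}\otimes_A X_{\un{m}-\un{j}}$ are related by a nontrivial flip unitary, so the ``legs'' genuinely interact. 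Concretely, for $d=2$ take $A=\bC$, $X_{\un{1}}=X_{\un{2}}=\bC^2$, and build the product system from the unitary $X_{\un{2}}\otimes X_{\un{1}}\to X_{\un{1}}\otimes X_{\un{2}}$ that is the identity of $\bC^2\otimes\bC^2$; this system is strong compactly aligned (all left actions are by compacts, so Corollary \ref{C:autscaps} applies), yet $\iota_{\un{1}}^{(1,1)}(S)=S\otimes I$ and $\iota_{\un{2}}^{(1,1)}(T)=T\otimes I$ both act on the \emph{first} leg of $X_{(1,1)}$ and commute only when $ST=TS$. The paper itself signals this danger: the note following Proposition \ref{P:sca ai} records order-independence of (\ref{eq2-1}) as a separate observation (which would be vacuous if the factors commuted), and Remark \ref{R:pi proj} needs Nica-covariance of a representation to prove that even the limit projections $p_{\un{i}}$ commute. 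Since your rewriting of $\iota_{\un{1}_F}^{\un{m}}(e_{F,\la})\iota_{\un{n}}^{\un{m}}(k_{\un{n}})$ requires shuffling the $F_0$- and $F_1$-factors past one another, the proof as written fails, or at best establishes the statement for one privileged ordering of the factors in $e_{F,\la}$, whereas the intended statement is order-independent.

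The repair is easy and makes your $F_0/F_1$ split unnecessary: since $\un{i}\leq\un{1}_F\leq\un{m}$ for \emph{every} $i\in F$, your own lemma applied with $Y=X_{\un{m}-\un{i}}$ shows that each net $(\iota_{\un{i}}^{\un{m}}(k_{\un{i},\la}))_\la$ is an asymptotic left unit on all of $\K(X_{\un{m}})$. Combining this with $\iota_{\un{n}}^{\un{m}}(k_{\un{n}})\in\K(X_{\un{m}})$ (your ``moreover'' part, which must therefore be proved first) and the telescoping estimate $T_1\cdots T_rK-K=T_1\cdots T_{r-1}(T_rK-K)+(T_1\cdots T_{r-1}K-K)$, valid since $\|\iota_{\un{i}}^{\un{m}}(k_{\un{i},\la})\|\leq 1$, yields (\ref{eq2-1}) for any ordering of the product, with no commutation used anywhere. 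This is essentially the route of \cite[Proposition 2.4]{DK18}, to which the paper defers for the proof; the ``two incompatible tensor decompositions'' you worry about at the end are a red herring once every factor is treated as an asymptotic unit on $\K(X_{\un{m}})$ directly.
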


Note that we did not specify an order for the product in $e_{F,\la}$ above; that is because (\ref{eq2-1}) holds independently of a chosen order defining the $e_{F,\la}$.
Let $(\pi,t)$ be a Nica-covariant representation of $X$ in some $\B(H)$. 
Fixing $i \in [d]$ and an approximate unit $(k_{\un{i},\la})_{\la\in\La}$ of $\K(X_{\un{i}})$, we define
\begin{equation}\label{eq:pi defn}
p_{\un{i},\la}:=\psi_{\un{i}}(k_{\un{i},\la}) \foral \la\in\La,\; \text{and} \; p_{\un{i}}:=\textup{w*-}\lim_\la p_{\un{i},\la}; 
\end{equation}
i.e., $p_{\un{i}}$ is the projection on the space $[\psi_{\un{i}}(\K(X_{\un{i}}) ) H]$ for the Hilbert space $H$ on which $(\pi,t)$ acts.
In turn, we set
\begin{equation}\label{eq:qi defn}
q_\mt:= I_H,
q_{\un{i}} := I_H - p_{\un{i}},
\text{ and }
q_F:=\prod_{i\in F}(I_H - p_{\un{i}}) \textup{ for all $\mt\neq F\subseteq [d]$}.
\end{equation}

\begin{remark} \label{R:pi proj}
Note that the projections $p_{\un{i}}$ can be defined for a (just) compactly aligned product system $X$.
It is shown in \cite[Proposition 5.6]{Fow02} that if $(\pi,t)$ is Nica-covariant then the projections $p_{\un{i}}$ commute, and so there is no ambiguity regarding the order of the product defining each $q_F$. 
In particular, the operator $p_{\un{i}} p_{\un{j}}$ coincides with the projection on the space $[\psi_{\un{i}+\un{j}}(\K(X_{\un{i} + \un{j}})) H]$ whenever $i \neq j$.
We provide a straightforward proof of this fact for strong compactly aligned product systems for the sake of completeness.

For $i, j \in [d]$ with $i \neq j$, consider the approximate units $(k_{\un{i}, \la})_{\la \in \La}$ and $(k_{\un{j},\la})_{\la \in \La}$ of $\K(X_{\un{i}})$ and $\K(X_{\un{j}})$, respectively. 
We see that the nets
\begin{equation*}
\big( \iota_{\un{i}}^{\un{i} + \un{j}}(k_{\un{i}, \la}) \big)_{\la \in \La}
\qand
\big( \iota_{\un{j}}^{\un{i} + \un{j}}(k_{\un{j}, \la}) \big)_{\la \in \La}
\end{equation*}
are contained in $\K(X_{\un{i} + \un{j}})$ due to strong compact alignment.
In particular, we claim that they provide approximate units for $\K(X_{\un{i} + \un{j}})$.
Indeed, we have $\|\iota_\un{i}^{\un{i}+\un{j}}(k_{\un{i},\la})\|\leq\|k_{\un{i},\la}\|\leq 1$ for all $\la\in\La$, and for $\xi_{\un{i}}, \eta_{\un{i}} \in X_{\un{i}}$ and $\xi_{\un{j}}, \eta_{\un{j}} \in X_{\un{j}}$, we have
\begin{align*}
\| \iota_{\un{i}}^{\un{i} + \un{j}}(k_{\un{i}, \la}) \Theta_{\xi_{\un{i}}\xi_{\un{j}}, \eta_{\un{i}} \eta_{\un{j}}}^{X_{\un{i}+\un{j}}} - \Theta_{\xi_{\un{i}}\xi_{\un{j}}, \eta_{\un{i}} \eta_{\un{j}}}^{X_{\un{i}+\un{j}}} \|
& =
\|\Theta_{(k_{\un{i}, \la} \xi_{\un{i}} - \xi_{\un{i}}) \otimes \xi_{\un{j}}, \eta_{\un{i}} \otimes \eta_{\un{j}} }^{X_\un{i}\otimes_A X_\un{j}} \| 
 \leq
\|k_{\un{i}, \la} \xi_{\un{i}} - \xi_{\un{i}}\| \cdot \|\xi_{\un{j}}\| \cdot \|\eta_{\un{i}}\| \cdot \|\eta_{\un{j}}\| \stackrel{\la}{\to} 0.
\end{align*}
Taking finite linear combinations of rank-one operators and their norm-limits establishes the claim for $\un{i}$, and the case of $\un{j}$ is dealt with by symmetry.
Therefore, due to Nica-covariance, we have
\[
\nor{\cdot}\text{-}\lim_\la \psi_{\un{i}}(k_{\un{i}, \la}) \psi_{\un{i}+ \un{j}}(k_{\un{i} + \un{j}})
=
\nor{\cdot}\text{-}\lim_\la \psi_{\un{i} + \un{j}}( \iota_{\un{i}}^{\un{i} + \un{j}}(k_{\un{i}, \la})k_{\un{i} + \un{j}})
=
\psi_{\un{i}+ \un{j}}(k_{\un{i} + \un{j}}) \foral k_{\un{i}+\un{j}}\in\K(X_{\un{i}+\un{j}}),
\]
and likewise for $\un{j}$.
Thus, for every $h \in H$ and $k_{\un{i} + \un{j}} \in \K(X_{\un{i} + \un{j}})$, we deduce that
\begin{align*}
p_{\un{i}} \psi_{\un{i} + \un{j}}(k_{\un{i} + \un{j}}) h
& =
(\textup{w*-}\lim_{\la} \psi_{\un{i}}(k_{\un{i}, \la}))  \psi_{\un{i} + \un{j}}(k_{\un{i} + \un{j}}) h 
=
\big[\nor{\cdot}\text{-}\lim_{\la} \psi_{\un{i}}(k_{\un{i}, \la})  \psi_{\un{i} + \un{j}}(k_{\un{i} + \un{j}})\big]h 
 =
\psi_{\un{i} + \un{j}}(k_{\un{i} + \un{j}}) h.
\end{align*}
Likewise for $\un{j}$, we have $p_{\un{j}} \psi_{\un{i} + \un{j}}(k_{\un{i} + \un{j}}) h = \psi_{\un{i} + \un{j}}(k_{\un{i} + \un{j}}) h$, and therefore
\[
p_{\un{i}} p_{\un{j}} h = h \foral h \in [\psi_{\un{i} + \un{j}}(\K(X_{\un{i} + \un{j}})) H].
\]
On the other hand, for $h \perp [\psi_{\un{i} + \un{j}}(\K(X_{\un{i} + \un{j}})) H]$ and $h' \in H$, we have
\begin{align*}
\sca{p_{\un{i}} p_{\un{j}} h, h'}
& =
\sca{h, p_{\un{j}} p_{\un{i}} h'} 
=
\lim_\la\lim_{\la'} \sca{h, \psi_{\un{j}}(k_{\un{j}, \la}) \psi_{\un{i}}(k_{\un{i}, \la'}) h'}
=
0,
\end{align*}
where we have used that
\[
\psi_{\un{j}}(k_{\un{j}, \la}) \psi_{\un{i}}(k_{\un{i}, \la'}) h'
=
\psi_{\un{i} + \un{j}}( \iota_{\un{j}}^{\un{i} + \un{j}}(k_{\un{j}, \la})\iota_{\un{i}}^{\un{i} + \un{j}}(k_{\un{i}, \la'}) ) h'
\in 
\psi_{\un{i} + \un{j}}(\K(X_{\un{i} + \un{j}}))H
\foral \la,\la'\in\La,
\]
due to Nica-covariance.
Hence we have
\[
p_{\un{i}} p_{\un{j}} h = 0 \foral h \perp [\psi_{\un{i} + \un{j}}(\K(X_{\un{i} + \un{j}})) H].
\]
Consequently $p_{\un{i}} p_{\un{j}}$ coincides with the projection on the space $[\psi_{\un{i}+\un{j}}(\K(X_{\un{i} + \un{j}})) H]$, as required.
\end{remark}

We gather some useful algebraic relations proved in \cite{DK18}.

\begin{proposition}\label{P:pf reducing} \cite[Proposition 4.4, proof of Proposition 4.6]{DK18}
Let $X$ be a strong compactly aligned product system with coefficients in a C*-algebra $A$.
Let $(\pi,t)$ be a Nica-covariant representation of $X$ and fix $F \subseteq [d]$.
Then for all $\un{m}\in\bZ_+^d$ and $\xi_{\un{m}} \in X_{\un{m}}$, we have
\[
 q_Ft_{\un{m}}(\xi_{\un{m}})
=
\begin{cases}
t_{\un{m}}(\xi_{\un{m}})q_F & \text{ if } \un{m} \perp F, \\
0 & \text{ if } \un{m} \not\perp F,
\end{cases}
\]
so that in particular $q_F \in \pi(A)'$.
Moreover, for the ideal $\I_F$ we have
\[
\pi(\I_F) t_{\un{m}}(X_{\un{m}}) \subseteq t_{\un{m}}(X_{\un{m}}) \pi(\I_F)
\foral
\un{m} \perp F.
\]
\end{proposition}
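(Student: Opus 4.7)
Both claims reduce to commutation properties of the projections $p_{\un{i}}$, which pairwise commute by Remark~\ref{R:pi proj}, so the order in $q_F = \prod_{i \in F}(I - p_{\un{i}})$ is immaterial. The foundational observation I would establish first is $p_{\un{i}} t_{\un{i}}(\xi_{\un{i}}) = t_{\un{i}}(\xi_{\un{i}})$ for every $\xi_{\un{i}} \in X_{\un{i}}$: since $\K(X_{\un{i}}) X_{\un{i}} = X_{\un{i}}$ we have $k_{\un{i},\la}\xi_{\un{i}} \to \xi_{\un{i}}$, hence $\psi_{\un{i}}(k_{\un{i},\la}) t_{\un{i}}(\xi_{\un{i}}) = t_{\un{i}}(k_{\un{i},\la}\xi_{\un{i}}) \to t_{\un{i}}(\xi_{\un{i}})$ in norm, and taking weak-$*$ limits yields the equality.

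\textbf{The case $\un{m} \not\perp F$.} I would pick $i \in F \cap \supp\un{m}$ and exploit $X_{\un{m}} = u_{\un{i},\un{m}-\un{i}}(X_{\un{i}} \otimes_A X_{\un{m}-\un{i}})$ to write every $\xi_{\un{m}}$ as a norm-limit of finite sums $\sum_j \xi_{\un{i},j}\xi_{\un{m}-\un{i},j}$. For each such simple tensor, the foundational observation gives $(I-p_{\un{i}}) t_{\un{m}}(\xi_{\un{i},j}\xi_{\un{m}-\un{i},j}) = (I-p_{\un{i}}) t_{\un{i}}(\xi_{\un{i},j}) t_{\un{m}-\un{i}}(\xi_{\un{m}-\un{i},j}) = 0$. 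Continuity then yields $(I-p_{\un{i}}) t_{\un{m}}(\xi_{\un{m}}) = 0$, and factoring $(I-p_{\un{i}})$ out of $q_F$ gives $q_F t_{\un{m}}(\xi_{\un{m}}) = 0$.

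\textbf{The case $\un{m} \perp F$.} It suffices to show that each $p_{\un{i}}$ with $i \in F$ (hence $\un{i} \perp \un{m}$) commutes with $t_{\un{m}}(\xi_{\un{m}})$, which I would do in three stages. \emph{(i)} $p_{\un{i}} \in \pi(A)'$: both $[\psi_{\un{i}}(\K(X_{\un{i}}))H]$ and its orthogonal complement are $\pi(A)$-invariant, since $\pi(a)\psi_{\un{i}}(k) = \psi_{\un{i}}(\phi_{\un{i}}(a) k) \in \psi_{\un{i}}(\K(X_{\un{i}}))$ (the complement follows via the adjoint). \emph{(ii)} $p_{\un{i}}$ commutes with $\psi_{\un{m}}(\K(X_{\un{m}}))$: by Nica-covariance,
\[
\psi_{\un{i}}(k_{\un{i},\la}) \psi_{\un{m}}(k_{\un{m}}) = \psi_{\un{i}+\un{m}}\big(\iota_{\un{i}}^{\un{i}+\un{m}}(k_{\un{i},\la})\, \iota_{\un{m}}^{\un{i}+\un{m}}(k_{\un{m}})\big),
\]
with $\iota_{\un{m}}^{\un{i}+\un{m}}(k_{\un{m}}) \in \K(X_{\un{i}+\un{m}})$ by strong compact alignment and $(\iota_{\un{i}}^{\un{i}+\un{m}}(k_{\un{i},\la}))_\la$ an approximate unit for $\K(X_{\un{i}+\un{m}})$ by Remark~\ref{R:pi proj}; both one-sided products thus converge in norm to $\psi_{\un{i}+\un{m}}(\iota_{\un{m}}^{\un{i}+\un{m}}(k_{\un{m}}))$, and weak-$*$ limits give commutation. \emph{(iii)} Extension to $t_{\un{m}}(X_{\un{m}})$: Cohen factorization $\xi_{\un{m}} = k_{\un{m}} \eta_{\un{m}}$ gives $t_{\un{m}}(\xi_{\un{m}}) = \psi_{\un{m}}(k_{\un{m}}) t_{\un{m}}(\eta_{\un{m}})$, and the norm convergence from (ii) yields
\[
p_{\un{i}} t_{\un{m}}(\xi_{\un{m}}) = \psi_{\un{i}+\un{m}}(\iota_{\un{m}}^{\un{i}+\un{m}}(k_{\un{m}}))\, t_{\un{m}}(\eta_{\un{m}}).
\]
A symmetric computation, exploiting the swap isomorphism $X_{\un{m}} \otimes_A X_{\un{i}} \cong X_{\un{i}} \otimes_A X_{\un{m}}$ from commutativity of $\bZ_+^d$ together with right Cohen factorization and the relation $\pi(A)' \ni p_{\un{i}}$, would show that $t_{\un{m}}(\xi_{\un{m}}) p_{\un{i}}$ equals the same element of $\ca(\pi,t)$.

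\textbf{The ideal claim and the main obstacle.} For the second assertion, fix $a \in \I_F$, $\un{m} \perp F$, and $\xi_{\un{m}} \in X_{\un{m}}$. By \cite[Lemma 4.4]{Lan95}, factor $\phi_{\un{m}}(a)\xi_{\un{m}} = \eta_{\un{m}} c$ with $c = \sca{\xi_{\un{m}}, \phi_{\un{m}}(a^*a) \xi_{\un{m}}}^{1/4}$. Since $a^*a \in \I_F$ and $\I_F$ is $F^\perp$-invariant, the bracket lies in $\I_F$, and being an ideal $\I_F$ is closed under the continuous functional calculus for positive elements, so $c \in \I_F$. Consequently $\pi(a) t_{\un{m}}(\xi_{\un{m}}) = t_{\un{m}}(\eta_{\un{m}}) \pi(c) \in t_{\un{m}}(X_{\un{m}}) \pi(\I_F)$. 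The \emph{main obstacle} is stage (iii) above: because $\psi_{\un{i}}(k_{\un{i},\la}) \to p_{\un{i}}$ holds only in strong operator topology and not in norm, the symmetric expression for $t_{\un{m}}(\xi_{\un{m}}) p_{\un{i}}$ is not accessible by a direct norm-convergence argument, and producing it cleanly requires careful bookkeeping with the swap unitary so that both $p_{\un{i}} t_{\un{m}}(\xi_{\un{m}})$ and $t_{\un{m}}(\xi_{\un{m}}) p_{\un{i}}$ can be identified with the explicit element of $\ca(\pi,t)$ exhibited above.
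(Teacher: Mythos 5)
Most of your proposal is correct and, where it is complete, it runs along the same lines as the proof this paper imports from \cite[Proposition 4.4 and proof of Proposition 4.6]{DK18}: the case $\un{m}\not\perp F$ via the observation $p_{\un{i}}t_{\un{i}}(\xi_{\un{i}})=t_{\un{i}}(\xi_{\un{i}})$ and the factorisation $X_{\un{m}}\cong X_{\un{i}}\otimes_A X_{\un{m}-\un{i}}$; stage (i); stage (ii), where I note that Remark \ref{R:pi proj} literally treats only $\un{i}+\un{j}$ for generators, but Proposition \ref{P:sca ai} with $F=\{i\}$ and $\un{n}=\un{m}$ supplies exactly the approximate-unit property you invoke; and the ideal claim, where your use of \cite[Lemma 4.4]{Lan95} with exponent $1/4$, the $F^\perp$-invariance of $\I_F$, and closure of a closed ideal under the functional calculus $t\mapsto t^{1/4}$ is precisely the argument of the cited source (compare the proof of Lemma \ref{L:IX to XI} in this paper).

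The genuine gap is the one you half-acknowledge in stage (iii): the ``symmetric computation'' for $t_{\un{m}}(\xi_{\un{m}})p_{\un{i}}$ does not exist in the form you describe. There is no compact operator sitting to the right of $t_{\un{m}}(\xi_{\un{m}})$ to absorb the approximate unit, so no norm-convergence statement analogous to stage (ii) is available, and Cohen factorisation $\xi_{\un{m}}=k_{\un{m}}\eta_{\un{m}}$ merely transfers the problem to $t_{\un{m}}(\eta_{\un{m}})p_{\un{i}}$, which is circular. The correct completion abandons explicit formulas in favour of invariance of $\ran p_{\un{i}}=[\psi_{\un{i}}(\K(X_{\un{i}}))H]$ under both $t_{\un{m}}(\xi_{\un{m}})$ and $t_{\un{m}}(\xi_{\un{m}})^*$. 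For the first: $t_{\un{m}}(\xi_{\un{m}})\psi_{\un{i}}(\Theta_{\xi_{\un{i}},\eta_{\un{i}}})=t_{\un{m}+\un{i}}(\xi_{\un{m}}\xi_{\un{i}})t_{\un{i}}(\eta_{\un{i}})^*$, and since $X_{\un{m}+\un{i}}=u_{\un{i},\un{m}}(X_{\un{i}}\otimes_A X_{\un{m}})$ this lies in $[t_{\un{i}}(X_{\un{i}})t_{\un{m}}(X_{\un{m}})t_{\un{i}}(\eta_{\un{i}})^*]$, whose elements have range in $\ran p_{\un{i}}$ (Cohen factorisation in the $\un{i}$-leg gives $t_{\un{i}}(\zeta_{\un{i}})=\psi_{\un{i}}(k)t_{\un{i}}(\zeta')$). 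For the second: Nica-covariance yields the Wick relation $t_{\un{m}}(X_{\un{m}})^*t_{\un{i}}(X_{\un{i}})\subseteq[t_{\un{i}}(X_{\un{i}})t_{\un{m}}(X_{\un{m}})^*]$ (here $\un{m}\vee\un{i}=\un{m}+\un{i}$), so $t_{\un{m}}(\xi_{\un{m}})^*\psi_{\un{i}}(\K(X_{\un{i}}))H\subseteq\ran p_{\un{i}}$ by the same Cohen trick. These give $p_{\un{i}}t_{\un{m}}(\xi_{\un{m}})p_{\un{i}}=t_{\un{m}}(\xi_{\un{m}})p_{\un{i}}$ and $p_{\un{i}}t_{\un{m}}(\xi_{\un{m}})^*p_{\un{i}}=t_{\un{m}}(\xi_{\un{m}})^*p_{\un{i}}$, and taking the adjoint of the second identity yields $p_{\un{i}}t_{\un{m}}(\xi_{\un{m}})=p_{\un{i}}t_{\un{m}}(\xi_{\un{m}})p_{\un{i}}=t_{\un{m}}(\xi_{\un{m}})p_{\un{i}}$; no norm convergence (and, for this particular step, no strong compact alignment) is needed. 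With this replacement your explicit stage-(ii) formula becomes superfluous for the statement itself (it is, however, what underlies Proposition \ref{P:prod cai}), and the remainder of your proposal stands.
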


\begin{proposition}\label{P:prod cai} \cite[Section 3]{DK18}
Let $X$ be a strong compactly aligned product system with coefficients in a C*-algebra $A$ and let $(\pi,t)$ be a Nica-covariant representation of $X$.
Let $p_{\un{i}, \la}$, $p_{\un{i}}$ and $q_F$ be the associated operators of (\ref{eq:pi defn}) and (\ref{eq:qi defn}), and fix $\mt \neq F \subseteq [d]$.
Then
\[
\nor{\cdot}\text{-}\lim_\la  \psi_{\un{n}}(k_{\un{n}}) \prod_{i \in F} p_{\un{i}, \la}
=
\psi_{\un{n}}(k_{\un{n}}) \prod_{i \in F} p_{\un{i}}
\foral \un{n}\in\bZ_+^d\setminus\{\un{0}\}, k_{\un{n}} \in \K(X_{\un{n}}).
\]
If $a \in \bigcap\{ \phi_{\un{i}}^{-1}( \K(X_{\un{i}})) \mid i \in F \}$, then 
\[
\pi(a) \prod_{i \in D} p_{\un{i}} = \psi_{\un{1}_D}(\phi_{\un{1}_D}(a)) \foral \mt\neq D \subseteq F,
\]
and so
\[
\pi(a)q_F
=
\pi(a) + \sum \{ (-1)^{|\un{n}|} \psi_{\un{n}}(\phi_{\un{n}}(a)) \mid \un{0} \neq \un{n} \leq \un{1}_F \}
\in \ca(\pi,t).
\]
\end{proposition}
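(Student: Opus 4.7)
The plan is to prove the three assertions sequentially, each combining Nica-covariance with the approximate identity behaviour of Proposition \ref{P:sca ai}.

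For the first assertion, fix $\un{n} \in \bZ_+^d \setminus \{\un{0}\}$ and $k_\un{n} \in \K(X_\un{n})$, and set $\un{m} := \un{n} \vee \un{1}_F$. Iterating Nica-covariance on the operators $p_{\un{i}, \la}$ gives $\prod_{i \in F} p_{\un{i}, \la} = \psi_{\un{1}_F}(e_{F, \la})$, and a further application yields
\[
\psi_\un{n}(k_\un{n}) \prod_{i \in F} p_{\un{i}, \la} = \psi_\un{m}\bigl( \iota_\un{n}^\un{m}(k_\un{n}) \iota_{\un{1}_F}^\un{m}(e_{F, \la}) \bigr).
\]
Choosing the $k_{\un{i}, \la}$ to be self-adjoint (permissible for an approximate unit), Proposition \ref{P:sca ai} directly supplies the left approximate identity behaviour of $(\iota_{\un{1}_F}^\un{m}(e_{F, \la}))_\la$ on $\iota_\un{n}^\un{m}(k_\un{n}) \in \K(X_\un{m})$; the right-hand version follows by replacing $k_\un{n}$ with $k_\un{n}^*$ in Proposition \ref{P:sca ai} and taking adjoints. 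Hence the displayed expression converges in norm to $\psi_\un{m}(\iota_\un{n}^\un{m}(k_\un{n}))$. On the other hand, WOT-continuity of left multiplication against the bounded net $\prod_{i \in F} p_{\un{i}, \la}$ forces WOT convergence to $\psi_\un{n}(k_\un{n}) \prod_{i \in F} p_\un{i}$. Since norm convergence of bounded nets implies WOT convergence, the two limits must coincide.

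For the second assertion, fix $\mt \neq D \subseteq F$ and $a \in \bigcap_{i \in F} \phi_\un{i}^{-1}(\K(X_\un{i}))$; this hypothesis combined with Proposition \ref{P:sca ai} ensures $\phi_{\un{1}_D}(a) \in \K(X_{\un{1}_D})$. The Toeplitz relation gives $\pi(a) \psi_{\un{1}_D}(k) = \psi_{\un{1}_D}(\phi_{\un{1}_D}(a) k)$ for $k \in \K(X_{\un{1}_D})$, so applied with $k = e_{D, \la}$ it converts $\pi(a) \prod_{i \in D} p_{\un{i}, \la}$ into $\psi_{\un{1}_D}(\phi_{\un{1}_D}(a) e_{D, \la})$. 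A second application of Proposition \ref{P:sca ai} (in the adjoint form as in the previous paragraph) shows $\phi_{\un{1}_D}(a) e_{D, \la} \to \phi_{\un{1}_D}(a)$ in norm, and matching with the WOT limit $\pi(a) \prod_{i \in D} p_\un{i}$ as before yields the identity.

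For the final assertion, expand $q_F = \prod_{i \in F}(I - p_\un{i}) = \sum_{D \subseteq F} (-1)^{|D|} \prod_{i \in D} p_\un{i}$, multiply on the left by $\pi(a)$, separate the $D = \mt$ term (which contributes $\pi(a)$), and apply the second assertion to each $D \neq \mt$; the bijection $D \leftrightarrow \un{1}_D$ between nonempty subsets of $F$ and nonzero vectors $\un{n} \leq \un{1}_F$, under which $|D| = |\un{n}|$, then rewrites the sum in the required form. Membership in $\ca(\pi, t)$ is immediate since each summand is a $\psi_\un{n}$-image. The main obstacle throughout is supplying norm convergence from both sides in the approximate unit step, as Proposition \ref{P:sca ai} is stated only one-sidedly; this is handled by the adjoint manipulation above together with the self-adjointness of the chosen $k_{\un{i}, \la}$.
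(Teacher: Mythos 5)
Your proof is correct and follows essentially the same route as the paper's treatment (which cites \cite[Section 3]{DK18} for the proposition and elaborates the second assertion in the text): Nica-covariance collapses $\prod_{i\in F}p_{\un{i},\la}$ into $\psi_{\un{1}_F}(e_{F,\la})$, Proposition \ref{P:sca ai} supplies the norm limit, this is matched against the w*-limit $\prod_{i\in F}p_{\un{i}}$ exactly as in the discussion following the proposition, and inclusion--exclusion over $\mt\neq D\subseteq F$ yields the final formula. One small caveat in your adjoint manoeuvre: even with self-adjoint $k_{\un{i},\la}$, the element $e_{F,\la}$ need not be self-adjoint, since the factors $\iota_{\un{i}}^{\un{1}_F}(k_{\un{i},\la})$ need not commute and $e_{F,\la}^{*}$ is the reverse-ordered product; the trick still closes because, as noted after Proposition \ref{P:sca ai}, the convergence (\ref{eq2-1}) holds independently of the order of the product defining $e_{F,\la}$.
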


Note that the product of the $p_{\un{i},\la}$ in the first statement of Proposition \ref{P:prod cai} can be taken in any order.
To elaborate on the second point, let $a\in\bigcap\{\phi_\un{i}^{-1}(\K(X_\un{i}))\mid i\in F\}$.
Then $a \in \bigcap\{\phi_\un{i}^{-1}(\K(X_\un{i}))\mid i\in D\}$ for a fixed $\mt\neq D \subseteq F$, and so we have
\[
\pi(a)\prod_{i\in D}p_{\un{i},\la}=\pi(a)\prod_{i\in D}\psi_\un{i}(k_{\un{i},\la})=\pi(a)\psi_{\un{1}_D}(e_{D,\la})=\psi_{\un{1}_D}(\phi_{\un{1}_D}(a)e_{D,\la}) \foral\la\in\La,
\]
using Nica-covariance in the second equality, as well as the notation of Proposition \ref{P:sca ai}.
We have
\[
\nor{\cdot}\text{-}\lim_\la\psi_{\un{1}_D}(\phi_{\un{1}_D}(a)e_{D,\la})=\psi_{\un{1}_D}(\nor{\cdot}\text{-}\lim_\la\phi_{\un{1}_D}(a)e_{D,\la})=\psi_{\un{1}_D}(\phi_{\un{1}_D}(a)),
\]
by \cite[(2.4)]{DK18}, where we are now using the fact that $\phi_{\un{1}_D}(a)\in\K(X_{\un{1}_D})$ by strong compact alignment.
Note that the w*-limit of the net $(\prod_{i\in D}p_{\un{i},\la})_{\la\in\La}$ is the projection on the space $[\psi_{\un{1}_D}(\K(X_{\un{1}_D}))H]$ and coincides with $\prod_{i\in D}p_\un{i}$, as in Remark \ref{R:pi proj}.
Hence we obtain
\begin{align*}
\pi(a)\prod_{i\in D}p_\un{i}
& =
\textup{w*-}\lim_\la \pi(a)\prod_{i\in D}p_{\un{i},\la}
=
\textup{w*-}\lim_\la \psi_{\un{1}_D}(\phi_{\un{1}_D}(a)e_{D,\la}) \\
& =
\nor{\cdot}\text{-}\lim_\la \psi_{\un{1}_D}(\phi_{\un{1}_D}(a)e_{D,\la})
=
\nor{\cdot}\text{-}\lim_\la\pi(a)\prod_{i\in D}p_{\un{i},\la}
=
\psi_{\un{1}_D}(\phi_{\un{1}_D}(a)),
\end{align*}
as claimed.

Let $(\pi,t)$ be a Nica-covariant representation of $X$. 
In the process of studying the kernel of $\pi \times t$, one needs to solve equations of the form
\begin{equation}\label{eq:in com}
\pi(a) \in B_{(\un{0}, \un{m}]}^{(\pi,t)} \text{ for } a\in A, \un{m} \in \bZ_+^d \setminus \{\un{0}\}.
\end{equation}
Due to the structure of the cores, an element $\pi(a)$ satisfies (\ref{eq:in com}) if and only if there are $k_{\un{n}} \in \K(X_{\un{n}})$ for all $\un{0}\neq\un{n}\leq \un{m}$ such that
\begin{equation}\label{eq:in}
\pi(a) + \sum \{ \psi_{\un{n}}(k_{\un{n}}) \mid \un{0} \neq \un{n} \leq \un{m} \} = 0.
\end{equation}
Recall that $\psi_{\un{n}}(k_{\un{n}}) p_{\un{i}} = \psi_{\un{n}}(k_{\un{n}})$ when $i \in \supp \un{n}$.
Let $F := \supp \un{m}$.
Since the $p_{\un{i}}$ for $i \in [d]$ are commuting projections, we have $\psi_{\un{n}}(k_{\un{n}}) q_F = 0$ for all $\un{0} \neq \un{n} \leq \un{m}$, and therefore (\ref{eq:in}) implies that
\begin{equation}\label{eq:out}
\pi(a)q_F=0.
\end{equation}
Moreover, by \cite[Proposition 3.2]{DK18}, if $(\pi,t)$ is injective then (under the assumption of strong compact alignment) (\ref{eq:in}) implies that
\begin{equation}\label{eq:comp}
\phi_{\un{n}}(a) \in \K(X_{\un{n}}) \foral \un{0}\leq\un{n}\leq \un{1}_{[d]}.
\end{equation}
Conversely, if (\ref{eq:out}) and (\ref{eq:comp}) hold for some $F \subseteq [d]$ and $a \in A$, then Proposition \ref{P:prod cai} gives that
\begin{equation}\label{eq:expphi}
\pi(a) + \sum \{ (-1)^{|\un{n}|} \psi_{\un{n}}(\phi_{\un{n}}(a)) \mid \un{0} \neq \un{n} \leq \un{1}_F \} = \pi(a) q_F = 0,
\end{equation}
and so $\pi(a) \in B_{(\un{0}, \un{m}]}^{(\pi,t)}$ for any $\un{m} \geq \un{1}_F$.
The following proposition justifies the usage of $\{\I_F\}_{F \subseteq [d]}$.

\begin{proposition}\label{P:in IF} \cite[Proposition 3.4]{DK18}
Let $X$ be a strong compactly aligned product system with coefficients in a C*-algebra $A$.
Suppose that $(\pi,t)$ is an injective Nica-covariant representation of $X$ and fix $a\in A$.
If $\pi(a) \in B_{(\un{0}, \un{m}]}^{(\pi,t)}$ for some $\un{m}\in\bZ_+^d$, then $a \in \I_F$ for $F = \supp \un{m}$.
\end{proposition}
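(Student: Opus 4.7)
Set $F = \supp \un{m}$. As discussed preceding the statement, the hypothesis yields both $\pi(a)q_F = 0$ via (\ref{eq:out}) and $\phi_\un{n}(a) \in \K(X_\un{n})$ for every $\un{n} \leq \un{1}_{[d]}$ via (\ref{eq:comp}). To conclude $a \in \I_F$, I would fix $\un{r} \perp F$ together with $\xi_\un{r}, \eta_\un{r} \in X_\un{r}$, set $b := \sca{\xi_\un{r}, a\eta_\un{r}}$, and show that $b \in \J_F$; linearity and continuity then give $\sca{X_\un{r}, aX_\un{r}} \subseteq \J_F$, and the arbitrariness of $\un{r}$ yields $a \in \I_F$.

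The crux is to transfer the core-membership property from $a$ to $b$. Writing $\pi(a) = -\sum_{\un{0} \neq \un{n} \leq \un{m}}\psi_\un{n}(k_\un{n})$ with $k_\un{n} \in \K(X_\un{n})$, and using $t_\un{r}(\xi_\un{r})^* \pi(a) t_\un{r}(\eta_\un{r}) = \pi(\sca{\xi_\un{r}, a\eta_\un{r}}) = \pi(b)$, the right-hand side becomes $-\sum t_\un{r}(\xi_\un{r})^* \psi_\un{n}(k_\un{n}) t_\un{r}(\eta_\un{r})$. Each $\un{n} \leq \un{m}$ satisfies $\supp \un{n} \subseteq F$ and hence $\un{n} \perp \un{r}$, so the Nica-covariant Wick-ordering relations $t_\un{r}(X_\un{r})^* t_\un{n}(X_\un{n}) \subseteq [t_\un{n}(X_\un{n}) t_\un{r}(X_\un{r})^*]$ and $t_\un{n}(X_\un{n})^*t_\un{r}(X_\un{r}) \subseteq [t_\un{r}(X_\un{r})t_\un{n}(X_\un{n})^*]$ apply. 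On a rank-one approximation $k_\un{n} = \lim\sum_l \Theta_{\mu_l, \nu_l}$, these relations let me shuffle the leading $t_\un{r}(\xi_\un{r})^*$ past $t_\un{n}(\mu_l)$ and the trailing $t_\un{n}(\nu_l)^*$ past $t_\un{r}(\eta_\un{r})$; the resulting middle $t_\un{r}(\cdot)^* t_\un{r}(\cdot)$-products collapse to $\pi(\sca{X_\un{r}, X_\un{r}})$-factors that get absorbed into the adjacent $t_\un{n}^*$ via the right action of $A$ on $X_\un{n}$, leaving each summand in $\psi_\un{n}(\K(X_\un{n}))$. Consequently $\pi(b) \in B_{(\un{0}, \un{m}]}^{(\pi,t)}$, and (\ref{eq:comp}) applied to $b$ delivers $\phi_\un{n}(b) \in \K(X_\un{n})$ for every $\un{n} \leq \un{1}_{[d]}$; in particular $b \in \bigcap_{i \in [d]}\phi_\un{i}^{-1}(\K(X_\un{i}))$.

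It remains to verify $b \in (\bigcap_{i \in F} \ker \phi_\un{i})^\perp$. For $c \in \bigcap_{i \in F} \ker \phi_\un{i}$ and $i \in F$, the identity $\pi(c) t_\un{i}(X_\un{i}) = t_\un{i}(\phi_\un{i}(c)X_\un{i}) = 0$ forces $\pi(c) p_\un{i} = 0$, so $\pi(c) q_F = \pi(c)$. Meanwhile Proposition \ref{P:pf reducing} gives $q_F t_\un{r}(\eta_\un{r}) = t_\un{r}(\eta_\un{r}) q_F$, so $\pi(b) q_F = t_\un{r}(\xi_\un{r})^* \pi(a) q_F t_\un{r}(\eta_\un{r}) = 0$. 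Using $q_F \in \pi(A)'$, we get $\pi(bc) = \pi(b)\pi(c) = \pi(b) q_F \pi(c) = 0$, and injectivity of $\pi$ forces $bc = 0$. Combined with the compactness property, this places $b$ in $\J_F$.

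The principal obstacle I anticipate is the careful bookkeeping in the Wick-ordering computation of the second paragraph, in particular tracking the $A$-bimodule twists that appear when swapping $t_\un{r}$'s past $t_\un{n}$'s and $t_\un{n}^*$'s within a rank-one approximation, and ensuring the resulting operator lies in $\psi_\un{n}(\K(X_\un{n}))$ rather than only in the larger core. Once the lift $\pi(b) \in B_{(\un{0}, \un{m}]}^{(\pi,t)}$ is secured, the remainder is a routine combination of (\ref{eq:comp}), Proposition \ref{P:pf reducing}, and injectivity of $\pi$.
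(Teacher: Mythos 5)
Your proof is correct and follows essentially the same route as the cited argument of \cite[Proposition 3.4]{DK18}, which this paper itself replays in the proofs of Propositions \ref{P:inj fp} and \ref{P:inj rel}: compress by $t_{\un{r}}(\xi_{\un{r}})^*\,\cdot\,t_{\un{r}}(\eta_{\un{r}})$ for $\un{r}\perp F$ to transfer core-membership to $b=\sca{\xi_{\un{r}},a\eta_{\un{r}}}$, then combine (\ref{eq:out}), (\ref{eq:comp}) and the facts $q_F\in\pi(A)'$, $q_Ft_{\un{r}}(\eta_{\un{r}})=t_{\un{r}}(\eta_{\un{r}})q_F$ from Proposition \ref{P:pf reducing} with injectivity of $\pi$ to place $b$ in $\J_F$. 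The Wick-ordering bookkeeping you flagged as the main obstacle goes through exactly as you sketched: since $\un{n}\leq\un{m}$ forces $\un{n}\perp\un{r}$, the relations $t_{\un{r}}(X_{\un{r}})^*t_{\un{n}}(X_{\un{n}})\subseteq[t_{\un{n}}(X_{\un{n}})t_{\un{r}}(X_{\un{r}})^*]$ and $t_{\un{n}}(X_{\un{n}})^*t_{\un{r}}(X_{\un{r}})\subseteq[t_{\un{r}}(X_{\un{r}})t_{\un{n}}(X_{\un{n}})^*]$ keep each compressed summand inside the norm-closed set $\psi_{\un{n}}(\K(X_{\un{n}}))$, so no drift between degrees occurs.
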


We define the \emph{ideal of the CNP-relations with respect to $(\pi,t)$} as follows:
\[ 
\mathfrak{J}_\I^{(\pi,t)}:=\sca{\pi(\I_F)q_F\mid F\subseteq[d]}\subseteq\ca(\pi,t). 
\]
Following \cite[Definition 2.8]{DK18}, we will say that $(\pi,t)$ is a \emph{CNP-representation of $X$} if it satisfies 
\[
\pi(a)q_F = \pi(a) + \sum \{ (-1)^{|\un{n}|} \psi_{\un{n}}(\phi_{\un{n}}(a)) \mid \un{0} \neq \un{n} \leq \un{1}_F \}=0  \foral a\in\I_F, F\subseteq[d].
\]
It follows that $\mathfrak{J}_\I^{(\pi,t)}=\{0\}$ if and only if $(\pi,t)$ is a CNP-representation.
We write $\N\O_X$ for the universal C*-algebra with respect to the CNP-representations, and refer to it as the \emph{Cuntz-Nica-Pimsner algebra of $X$}, i.e.,
\[
\N\O_X \equiv \N\T_X/\mathfrak{J}_\I^{(\ol{\pi}_X,\ol{t}_X)}.
\] 
We write $(\pi_X^\I,t_X^\I)$ for the \emph{universal CNP-representation of $X$}, i.e., $(\pi_X^\I,t_X^\I) = (Q\circ\ol{\pi}_X,Q\circ\ol{t}_X)$, where $Q \colon \N\T_X\to\N\O_X$ is the canonical quotient map.
Since $\N\O_X$ is an equivariant quotient of $\N\T_X$, the representation $(\pi_X^\I,t_X^\I)$ admits a gauge action.

In \cite{DK18} it is shown that $\N\O_X$ coincides with the Cuntz-Nica-Pimsner algebra of Sims and Yeend \cite{SY10}, and thus with the strong covariance algebra of Sehnem \cite{Seh18}.
In particular, $(\pi_X^\I,t_X^\I)$ is injective by \cite[Theorem 4.1]{SY10}, since $(\bZ^d,\bZ_+^d)$ satisfies \cite[Condition (3.5)]{SY10}.
Moreover, $\N\O_X$ is co-universal with respect to the injective Nica-covariant representations of $X$ that admit a gauge action \cite{SY10}.
The co-universal property of $\N\O_X$ has been verified in several works \cite{CLSV11, DK18, DKKLL20, Seh21} in more general contexts.

\begin{remark}\label{R:noiso}
Let $X$ and $Y$ be unitarily equivalent strong compactly aligned product systems with coefficients in C*-algebras $A$ and $B$, respectively.
By Remarks \ref{R:ntpiso}, \ref{R:JIunit} and \ref{R:prodsysisoprop}, we have that the bijection of Remark \ref{R:tpiso} preserves the CNP-representa\-tions.
Therefore $\N\O_X\cong\N\O_Y$ canonically.
\end{remark}

We finish the subsection with a proposition concerning canonical $*$-epimorphisms arising from injective Nica-covariant representations.
This trick was used implicitly in \cite{DK18, KKLL21b}.

\begin{proposition}\label{P:inj fp}
Let $X$ be a strong compactly aligned product system with coefficients in a C*-algebra $A$.
Let $(\tilde{\pi}, \tilde{t})$ and $(\pi,t)$ be Nica-covariant representations of $X$ such that $(\pi,t)$ is injective, and suppose there exists a canonical $*$-epimorphism $\Phi \colon \ca(\tilde{\pi}, \tilde{t}) \to \ca(\pi,t)$.
Then the following are equivalent:

\vspace{1pt}

\begin{enumerate}
\item $\ker\Phi \cap B_{[\un{0}, \infty]}^{(\tilde{\pi}, \tilde{t})} = \{0\}$;

\vspace{2pt}

\item $\ker\Phi \cap B_{[\un{0}, \un{m}]}^{(\tilde{\pi}, \tilde{t})} = \{0\}$ for all $\un{m} \in \bZ_+^d$;

\vspace{2pt}

\item $\ker\Phi \cap B_{[\un{0}, \un{1}_{[d]}]}^{(\tilde{\pi}, \tilde{t})} = \{0\}$;

\vspace{2pt}

\item $\ker\Phi \cap B_{[\un{0}, \un{1}_F]}^{(\tilde{\pi}, \tilde{t})} = \{0\}$ for all $F \subseteq [d]$.
\end{enumerate}
\end{proposition}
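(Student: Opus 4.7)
The implications (i) $\Rightarrow$ (ii), (ii) $\Rightarrow$ (iv), (iv) $\Rightarrow$ (iii), and (iii) $\Rightarrow$ (iv) are immediate from the inclusions among the cores, using respectively $B_{[\un{0},\un{m}]}\subseteq B_{[\un{0},\infty]}$, $\un{m}=\un{1}_F$, $F=[d]$, and $\un{1}_F\leq\un{1}_{[d]}$. Thus (ii), (iii), (iv) are equivalent and are all implied by (i). For (ii) $\Rightarrow$ (i), the plan is to use that an injective $*$-homomorphism between C*-algebras is isometric, so that (ii) delivers isometricity of $\Phi$ on every $B_{[\un{0},\un{m}]}^{(\tilde\pi,\tilde t)}$; the identity $B_{[\un{0},\infty]}^{(\tilde\pi,\tilde t)}=\ol{\bigcup_{\un{m}\in\bZ_+^d}B_{[\un{0},\un{m}]}^{(\tilde\pi,\tilde t)}}$ then extends this isometricity to the closure by continuity.

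The real content is (iv) $\Rightarrow$ (ii), which I will prove by induction on $|\un{m}|$. The base case $\un{m}=\un{0}$ reduces to the injectivity of $\tilde\pi$, which holds since $\Phi$ is canonical and $\pi$ is injective. For the inductive step take $\un{m}\neq\un{0}$ with $F:=\supp\un{m}$ and $x=\tilde\pi(a)+\sum_{\un{0}\neq\un{n}\leq\un{m}}\tilde\psi_{\un{n}}(k_{\un{n}})\in B_{[\un{0},\un{m}]}^{(\tilde\pi,\tilde t)}\cap\ker\Phi$. Multiplying $\Phi(x)=0$ on the right by $q_F$ annihilates every $\psi_{\un{n}}$-term (Proposition \ref{P:pf reducing}, as $\supp\un{n}\cap F\neq\mt$) and yields $\pi(a)q_F=0$, so Proposition \ref{P:in IF} together with injectivity of $(\pi,t)$ gives $a\in\I_F\subseteq\bigcap_{i\in F}\phi_{\un{i}}^{-1}(\K(X_{\un{i}}))$. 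Proposition \ref{P:prod cai} then places $\tilde\pi(a)\tilde q_F=\tilde\pi(a)+\sum_{\un{0}\neq\un{n}\leq\un{1}_F}(-1)^{|\un{n}|}\tilde\psi_{\un{n}}(\phi_{\un{n}}(a))$ in $B_{[\un{0},\un{1}_F]}^{(\tilde\pi,\tilde t)}\cap\ker\Phi$, which is $\{0\}$ by (iv). Substituting the resulting identity for $\tilde\pi(a)$ into $x$ rewrites it as an element of $B_{(\un{0},\un{m}]}^{(\tilde\pi,\tilde t)}$ with no $\tilde\pi(A)$-summand.

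Next I will reduce length by sandwiching: for each $\mt\neq F'\subseteq F$ and $\xi,\eta\in X_{\un{1}_{F'}}$, the Nica-covariance relation $t_p(\xi)^*t_q(\eta)\in[t_{p^{-1}w}(X)t_{q^{-1}w}(X)^*]$ lets me expand $\tilde t_{\un{1}_{F'}}(\xi)^*\tilde\psi_{\un{n}}(k_{\un{n}}')\tilde t_{\un{1}_{F'}}(\eta)$ term-by-term, and a short case analysis (splitting on whether $\supp\un{n}\cap F'=\mt$) shows each contribution lands in $\tilde\psi_{\un{n}'}(\K(X_{\un{n}'}))$ for some $\un{n}'\leq\un{m}-\un{1}_{F'}$. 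Hence the whole sandwich lies in $B_{[\un{0},\un{m}-\un{1}_{F'}]}^{(\tilde\pi,\tilde t)}\cap\ker\Phi$, and since $|\un{m}-\un{1}_{F'}|<|\un{m}|$ the inductive hypothesis forces it to vanish for all $\xi,\eta$. Plugging in an approximate unit of $\K(X_{\un{1}_{F'}})$, as in Proposition \ref{P:prod cai}, then promotes this to $p_{\un{1}_{F'}}xp_{\un{1}_{F'}}=0$ in $\B(\widetilde H)$ for every $\mt\neq F'\subseteq F$.

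The final step is to promote these spatial vanishings to $x=0$, using the pairwise orthogonal projections $P_G:=p_{\un{1}_G}q_{\un{1}_{F\setminus G}}$ for $G\subseteq F$. Since every term $\tilde\psi_{\un{n}}(k_{\un{n}}')$ in $x$ satisfies $\supp\un{n}\cap F\neq\mt$, one has $xq_F=q_Fx=0$, so $x=\sum_{\mt\neq G,G'\subseteq F}P_GxP_{G'}$. I will show that $P_GxP_{G'}=0$ whenever $G\neq G'$ by picking $i_0\in G\triangle G'$ and invoking the universal identity $p_{\un{i}_0}\tilde\psi_{\un{n}}(k_{\un{n}}')q_{\un{i}_0}=0$ (valid either because $i_0\in\supp\un{n}$ forces $\tilde\psi_{\un{n}}(k_{\un{n}}')q_{\un{i}_0}=0$, or because $i_0\notin\supp\un{n}$ makes $p_{\un{i}_0}$ commute with $\tilde\psi_{\un{n}}(k_{\un{n}}')$ via push-up). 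Together with the decomposition $p_{\un{1}_{F'}}=\sum_{G\supseteq F',\, G\subseteq F}P_G$ and mutual orthogonality, the relations $p_{\un{1}_{F'}}xp_{\un{1}_{F'}}=\sum_{G\supseteq F'}P_GxP_G=0$ for $\mt\neq F'\subseteq F$ yield $P_GxP_G=0$ for all $G$ via a downward induction on $|F'|$ starting from $F'=F$, whence $x=0$. The hard part will be this last combinatorial step, namely establishing the off-diagonal vanishing cleanly and unwinding the downward induction.
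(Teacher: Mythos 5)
Your proposal is correct, and for the main implication (iv) $\Rightarrow$ (ii) it takes a genuinely different route from the paper's. Both arguments share the pivotal move: from $\Phi(x)=0$ one compresses by $q_F$ (with $F=\supp\un{m}$) to get $\pi(a)q_F=0$ for the $\un{0}$-coefficient $a$, upgrades $a$ via injectivity of $(\pi,t)$ so that $\phi_{\un{n}}(a)\in\K(X_{\un{n}})$ (you route this through Proposition \ref{P:in IF} and $\I_F\subseteq\J_F$; the paper cites (\ref{eq:comp}) directly), and then uses Proposition \ref{P:prod cai} to place $\tilde{\pi}(a)\tilde{q}_F$ in $\ker\Phi\cap B_{[\un{0},\un{1}_F]}^{(\tilde{\pi},\tilde{t})}=\{0\}$. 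After that the strategies diverge. The paper avoids induction on $|\un{m}|$: it takes a nonzero element written \emph{irreducibly}, picks the minimal $\un{r}$ with $k_{\un{r}}\neq 0$, performs a single sandwich $\tilde{t}_{\un{r}}(\xi_{\un{r}})^*f\,\tilde{t}_{\un{r}}(\eta_{\un{r}})$ (minimality making the new $\un{0}$-coefficient $\sca{\xi_{\un{r}},k_{\un{r}}\eta_{\un{r}}}$), applies the pivotal move to that coefficient, and absorbs $\tilde{\psi}_{\un{r}}(k_{\un{r}})$ into $B_{(\un{r},\un{m}]}^{(\tilde{\pi},\tilde{t})}$ via an approximate unit of $\tilde{\psi}_{\un{r}}(\K(X_{\un{r}}))$, contradicting irreducibility — everything stays algebraic inside $\ca(\tilde{\pi},\tilde{t})$. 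You instead eliminate the $\tilde{\pi}(A)$-summand first, induct on $|\un{m}|$ with sandwiches at all degrees $\un{1}_{F'}$, $\mt\neq F'\subseteq F$, and close with the Boolean decomposition $x=\sum_{G,G'}P_GxP_{G'}$ over the commuting projections $P_G=\prod_{i\in G}\tilde{p}_{\un{i}}\prod_{i\in F\setminus G}\tilde{q}_{\un{i}}$ in $\B(H)$. I checked the steps you flagged as hard, and they go through: the sandwich case analysis lands each term in $\tilde{\psi}_{\un{n}'}(\K(X_{\un{n}'}))$ with $\un{n}'\leq\un{m}-\un{1}_{F'}$ (take $\un{n}'=\un{n}$ when $\un{n}\perp F'$, and $\un{n}'=\un{n}\vee\un{1}_{F'}-\un{1}_{F'}$, possibly $\un{0}$, otherwise); the off-diagonal vanishing is valid because for $\un{n}\perp\un{i}_0$ the push-up $\tilde{\psi}_{\un{n}}(k)\tilde{p}_{\un{i}_0}=\tilde{\psi}_{\un{n}+\un{i}_0}(\iota_{\un{n}}^{\un{n}+\un{i}_0}(k))=\tilde{p}_{\un{i}_0}\tilde{\psi}_{\un{n}}(k)$ holds by strong compact alignment together with Proposition \ref{P:sca ai} and Remark \ref{R:pi proj} — note this is exactly where the standing hypothesis enters your combinatorics, just as it enters the paper through (\ref{eq:comp}); and the downward induction via $\prod_{i\in F'}\tilde{p}_{\un{i}}=\sum_{F'\subseteq G\subseteq F}P_G$ is sound. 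What each buys: the paper's minimal-degree contradiction is shorter and never leaves the C*-algebra generated by the representation, while your version is longer but constructive (no irreducibility normalisation needed) and the projection decomposition makes transparent exactly which spatial compressions force each graded piece of $x$ to vanish; your (i) $\Leftrightarrow$ (ii) via isometricity on a directed union of C*-subalgebras is the same standard inductive-limit argument the paper invokes from the literature.
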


\begin{proof}
First note that $(\tilde{\pi},\tilde{t})$ is injective since $(\pi,t)$ is injective.
By definition we see that $B_{[\un{0},\infty]}^{(\tilde{\pi},\tilde{t})}$ is the inductive limit of the cores $B_{[\un{0}, \un{m}]}^{(\tilde{\pi},\tilde{t})}$ for $\un{m} \in \bZ_+^d$, and thus \cite[Lemma III.4.1]{KD96} yields the equivalence of item (i) with item (ii).

If item (ii) holds then it implies item (iii) by applying for $\un{m} = \un{1}_{[d]}$.
If item (iii) holds then it implies item (iv) since $B_{[\un{0}, \un{1}_F]}^{(\tilde{\pi}, \tilde{t})} \subseteq B_{[\un{0}, \un{1}_{[d]}]}^{(\tilde{\pi}, \tilde{t})}$ for all $F \subseteq [d]$.

It now suffices to show that item (iv) implies item (ii), so fix $\un{m}\in\bZ_+^d$. 
Without loss of generality, assume that $\un{m}$ has at least one entry greater than $1$ (the assumption deals with the case where $\un{m}$ has no entries greater than $1$). 
To reach contradiction, assume that $\ker\Phi\cap B_{[\un{0},\un{m}]}^{(\tilde{\pi},\tilde{t})}\neq\{0\}$. 
Take $0\neq f\in\ker\Phi\cap B_{[\un{0},\un{m}]}^{(\tilde{\pi},\tilde{t})}$, so that we may write
\[
f=\tilde{\pi}(a)+\sum\{\tilde{\psi}_\un{n}(k_\un{n})\mid \un{0}\neq\un{n}\leq\un{m}\},
\]
for some $a\in A$ and $k_\un{n}\in\K(X_\un{n})$ for all $\un{0}\neq\un{n}\leq\un{m}$.
Note that we can write $\tilde{\pi}(a)=\tilde{\psi}_\un{0}(k_\un{0})$ for $k_\un{0}:=\phi_\un{0}(a)$.
Without loss of generality, we may assume that $f$ is written irreducibly, so that we may choose a minimal $\un{0}\leq\un{r}\leq\un{m}$ such that $k_\un{r}\neq 0$, and $\tilde{\psi}_\un{r}(k_\un{r})\notin B_{(\un{r},\un{m}]}^{(\tilde{\pi},\tilde{t})}$.
The minimality of $\un{r}$ means that if we have $\un{0}\leq\un{n}\leq\un{m}$ such that $k_\un{n}\neq 0$ and $\un{n}\leq\un{r}$, then $\un{n}=\un{r}$.
If $\un{r}=\un{m}$, then $f=\tilde{\psi}_\un{m}(k_\un{m})$ and $\Phi(f)=\psi_\un{m}(k_\un{m})=0$.
Injectivity of $(\pi,t)$ then implies that $k_\un{m}=0$ and hence $f=0$, a contradiction.
So without loss of generality assume that $\un{r} < \un{m}$.
Fixing $\xi_\un{r},\eta_\un{r}\in X_\un{r}$, we have
\[
\tilde{t}_\un{r}(\xi_\un{r})^*f\tilde{t}_\un{r}(\eta_\un{r})=\tilde{\pi}(b)+\sum\{\tilde{\psi}_\un{n}(k_\un{n}')\mid \un{0}\neq\un{n}\leq\un{m}-\un{r}\},
\]
where $k_\un{n}'$ is a suitably defined element of $\K(X_\un{n})$ for all $\un{0}\neq\un{n}\leq\un{m}-\un{r}$ and $b := \sca{\xi_\un{r},k_\un{r}\eta_\un{r}}$, due to the minimality of $\un{r}$.
Applying $\Phi$, we obtain
\[
\pi(b)+\sum\{\psi_\un{n}(k_\un{n}')\mid \un{0}\neq\un{n}\leq\un{m}-\un{r}\}=0,
\]
which in turn implies that $b\in\pi^{-1}(B_{(\un{0},\un{m}-\un{r}]}^{(\pi,t)})$. 
Let $F := \supp(\un{m}-\un{r})$, which is non-empty since $\un{m}\neq\un{r}$. 
An application of (\ref{eq:out}) then yields $\pi(b)q_F=0$. 
Note also that $\phi_{\un{n}}(b)\in \K(X_\un{n})$ for all $\un{0}\leq\un{n}\leq\un{1}_{[d]}$ using (\ref{eq:comp}), which applies since $(\pi,t)$ is injective. 
Therefore we obtain
\[
\tilde{\pi}(b)\tilde{q}_F 
= 
\tilde{\pi}(b)+\sum\{(-1)^{|\un{n}|}\tilde{\psi}_\un{n}(\phi_\un{n}(b))\mid \un{0}\neq\un{n}\leq\un{1}_F\}
\in
B_{[\un{0},\un{1}_F]}^{(\tilde{\pi},\tilde{t})}.
\]
It then follows that
\begin{align*}
\Phi(\tilde{\pi}(b)\tilde{q}_F) & =\Phi(\tilde{\pi}(b)+\sum\{(-1)^{|\un{n}|}\tilde{\psi}_\un{n}(\phi_\un{n}(b))\mid \un{0}\neq\un{n}\leq\un{1}_F\}) \\
						     & =\pi(b)+\sum\{(-1)^{|\un{n}|}\psi_\un{n}(\phi_\un{n}(b))\mid \un{0}\neq\un{n}\leq\un{1}_F\} 
						     =\pi(b)q_F=0.
\end{align*}
Hence $\tilde{\pi}(b)\tilde{q}_F\in\ker\Phi\cap B_{[\un{0},\un{1}_F]}^{(\tilde{\pi},\tilde{t})}$ and so $\tilde{\pi}(b)\tilde{q}_F = 0$ by assumption. 
So we have
\[
\tilde{t}_\un{r}(\xi_\un{r})^*\tilde{\psi}_\un{r}(k_\un{r})\tilde{t}_\un{r}(\eta_\un{r})=\tilde{\pi}(b)\in B_{(\un{0},\un{1}_F]}^{(\tilde{\pi},\tilde{t})}
\]
for all $\xi_\un{r},\eta_\un{r}\in X_\un{r}$. 
Consequently, we have
\begin{align*}
\tilde{\psi}_\un{r}(\K(X_\un{r}))\tilde{\psi}_\un{r}(k_\un{r})\tilde{\psi}_\un{r}(\K(X_\un{r}))
\subseteq
[\tilde{t}_\un{r}(X_\un{r})B_{(\un{0},\un{1}_F]}^{(\tilde{\pi},\tilde{t})}\tilde{t}_\un{r}(X_\un{r})^*]
\subseteq 
[\tilde{t}_\un{r}(X_\un{r})B_{(\un{0},\un{m}-\un{r}]}^{(\tilde{\pi},\tilde{t})}\tilde{t}_\un{r}(X_\un{r})^*]
\subseteq 
B_{(\un{r},\un{m}]}^{(\tilde{\pi},\tilde{t})}.
\end{align*}
By using an approximate unit of $\tilde{\psi}_\un{r}(\K(X_\un{r}))$ and the fact that $B_{(\un{r},\un{m}]}^{(\tilde{\pi},\tilde{t})}$ is closed in $\ca(\tilde{\pi}, \tilde{t})$, we deduce that $\tilde{\psi}_\un{r}(k_\un{r})\in B_{(\un{r},\un{m}]}^{(\tilde{\pi},\tilde{t})}$, which contradicts the irreducibility assumption.
Therefore we obtain $\ker\Phi\cap B_{[\un{0},\un{m}]}^{(\tilde{\pi},\tilde{t})}=\{0\}$ for all $\un{m}\in\bZ_+^d$, as required.
\end{proof}

\subsection{The product system $IXI$}\label{Ss: IXI}

In this subsection we present a product system construction that will be useful in Section \ref{S:app}, and which extends the results of \cite[Section 9]{Kat07}.

Let $X$ be a C*-correspondence over a C*-algebra $A$ and let $I\subseteq A$ be an ideal.
Consider $IXI\subseteq XI$ and recall that $XI$ is closed.
An application of the Hewitt-Cohen Factorisation Theorem gives that $IXI$ is a closed linear subspace of $XI$, i.e., $IXI=[IXI]$.
It is clear that $IXI$ is also closed under the right action of $A$, and hence $IXI$ carries the structure of a right Hilbert $A$-module.
Next, we define a left action $\phi_{IXI}$ of $A$ on $IXI$ by
\[
\phi_{IXI}\colon A\to\L(IXI); \phi_{IXI}(a)=\phi_X(a)|_{IXI} \foral a \in A.
\]
Hence $IXI$ inherits the structure of a C*-correspondence over $A$ from $X$.
By restricting $\phi_{IXI}$ to $I$, we may view $IXI$ as a C*-correspondence over $I$.

As was the case with $\K(XI)$, we have a natural embedding of $\K(IXI)$ in $\K(X)$.
The proof of the following result follows an almost identical trajectory to that of \cite[Lemma 2.6 (1)]{FMR03}, and it is left to the reader.

\begin{lemma}\label{L:comp IXI embed}
Let $X$ be a C*-correspondence over a C*-algebra $A$ and let $I\subseteq A$ be an ideal.
Then there exists an embedding $\iota\colon \K(IXI)\to\K(X)$ such that
\[
\iota(\Theta_{\xi, \eta}^{IXI})=\Theta_{\xi, \eta}^X\foral \xi, \eta \in IXI.
\]
Consequently $\K(IXI)$ is $*$-isomorphic to $\ol{\spn}\{\Theta_{\xi, \eta}^X\mid \xi, \eta \in IXI\}$ via the map $\iota$, with
\[
\iota^{-1}(k)=k|_{IXI}\foral k\in\ol{\spn}\{\Theta_{\xi, \eta}^X\mid \xi, \eta \in IXI\}.
\]
\end{lemma}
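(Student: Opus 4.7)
The plan is to build the inverse of the desired map and recognise $\iota$ as its inverse. Set
\[
E := \ol{\spn}\{\Theta_{\xi,\eta}^X \mid \xi, \eta \in IXI\} \subseteq \K(X),
\]
and consider the candidate restriction map $j \colon E \to \K(IXI)$, $k \mapsto k|_{IXI}$. I will show $j$ is a $*$-isomorphism, after which setting $\iota := j^{-1}$ produces the required embedding and the identification $\iota^{-1}(k) = k|_{IXI}$ is immediate.

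The first step is well-definedness on finite linear combinations of rank-one operators. The key observation is that for $\xi, \eta \in IXI$ and $\zeta \in IXI$, the inner product $\sca{\eta,\zeta}$ lies in $I$ (since $\eta, \zeta \in XI$; factor $\eta = \eta_0 b$ with $b \in I$ via Theorem \ref{T:HCFT}), so $\Theta_{\xi,\eta}^X(\zeta) = \xi\sca{\eta,\zeta} \in IXI$. Hence the restriction of $\Theta_{\xi,\eta}^X$ to $IXI$ lands in $IXI$ and visibly coincides with $\Theta_{\xi,\eta}^{IXI}$. Since rank-one operators multiply and adjoint by identical formulas in $\K(X)$ and $\K(IXI)$, the restriction map is $*$-algebraic on the span.

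The main obstacle is the isometry of $j$ on this dense subspace; while $\|j(k)\| \leq \|k\|$ is trivial, the reverse requires approximating arbitrary elements of $X$ by elements of $IXI$. Fix $k = \sum_i c_i \Theta_{\xi_i,\eta_i}^X$ with $\xi_i, \eta_i \in IXI$, an approximate unit $(e_\lambda)$ of $I$, and $\zeta \in X$ with $\|\zeta\| \leq 1$, and consider the net
\[
\zeta_\lambda := \phi_X(e_\lambda)\,\zeta\,e_\lambda \in IXI, \quad \|\zeta_\lambda\| \leq 1.
\]
Using that $\eta_i \in IX$ gives $\phi_X(e_\lambda)\eta_i \to \eta_i$ (by Theorem \ref{T:HCFT}), and that $\sca{\eta_i,\zeta} \in I$ gives $\sca{\eta_i,\zeta}\,e_\lambda \to \sca{\eta_i,\zeta}$, a short double-limit argument then shows $k(\zeta_\lambda) \to k(\zeta)$ in norm. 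Taking norms yields $\|k(\zeta)\| \leq \|j(k)\|_{\K(IXI)}$, hence $\|k\|_{\K(X)} \leq \|j(k)\|_{\K(IXI)}$.

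Once isometry is established, $j$ extends by continuity to a $*$-isometric map $E \to \K(IXI)$, surjective because its image contains every $\Theta_{\xi,\eta}^{IXI}$ and is complete. Defining $\iota := j^{-1}$ completes the proof, with $\iota(\Theta_{\xi,\eta}^{IXI}) = \Theta_{\xi,\eta}^X$ by construction and $\iota^{-1}(k) = k|_{IXI}$ for all $k$ in the image.
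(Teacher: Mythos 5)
Your proof is correct and follows essentially the same route as the paper, which leaves the argument to the reader as an adaptation of \cite[Lemma 2.6 (1)]{FMR03}: the mathematical core there is precisely your two-sided approximate-unit compression $\zeta_\la = \phi_X(e_\la)\zeta e_\la$, which shows that restriction to $IXI$ is isometric on the span of rank-one operators. Organising the argument through the restriction map $j$ and setting $\iota := j^{-1}$ is a harmless reversal of direction that also neatly disposes of the well-definedness issue one would otherwise face when defining $\iota$ directly on $\K(IXI)$.
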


Now let $X$ be a product system over $P$.
Recall that if $I$ is positively invariant for $X$, then $IX_pI = IX_p$ for all $p \in P$ by Lemma \ref{L:IX to XI}.
In this case we obtain a product system $IXI$, with a construction that is compatible with the product system structure of $X$.

\begin{proposition}\label{P:IXIprodsys}
Let $P$ be a unital subsemigroup of a discrete group $G$.
Let $X$ be a product system over $P$ with coefficients in a C*-algebra $A$ and let $I\subseteq A$ be an ideal that is positively invariant for $X$.
Then $IXI:=\{IX_pI\}_{p\in P}$ inherits from $X$ a canonical structure as a product system over $P$ with coefficients in $I$, identifying each $I X_p I$ as a sub-C*-correspondence of $X_p$.
\end{proposition}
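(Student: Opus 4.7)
The plan is to define the multiplication maps $v_{p,q}$ for $IXI$ as restrictions of the multiplication maps $u_{p,q}$ of $X$, and then verify axioms (i)--(v) of a product system. First I would set up each $IX_pI = IX_p$ (equality by Lemma~\ref{L:IX to XI}(iii)) as a C*-correspondence over $I$. The right $A$-action restricts to a right $I$-action, and the left action $\phi_{X_p}$ restricts to $\phi_{X_p}|_I \colon I \to \L(IX_pI)$, which is well defined since $I$ is a two-sided ideal. The key point is that the $A$-valued inner product on $X_p$ takes values in $I$ when restricted to $IX_pI$: writing $\xi = a\xi'$ with $a \in I$ and $\xi' \in X_p$ (by an application of Theorem~\ref{T:HCFT}), we have
\[
\langle \xi, \eta \rangle = \langle \xi', \phi_{X_p}(a^*)\eta \rangle \in \langle X_p, IX_p \rangle \subseteq I
\]
by positive invariance of $I$. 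Since $IX_eI = IAI = I$, axiom~(i) holds.

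Next, for $p,q \in P$, I would define $v_{p,q}(\xi \otimes \eta) := u_{p,q}(\xi \otimes \eta) = \xi\eta$ for $\xi \in IX_pI$ and $\eta \in IX_qI$. The image lies in $IX_{pq}I$ since $\xi\eta = a(\xi'\eta) \in IX_{pq}$ when $\xi = a\xi'$. Bilinearity, $I$-balance, and inner-product preservation of $v_{p,q}$ are inherited from the corresponding properties of $u_{p,q}$, using the previous paragraph to identify the $A$-valued and $I$-valued inner products on $IX_pI$. Hence $v_{p,q}$ extends uniquely to an isometric linear map $IX_pI \otimes_I IX_qI \to IX_{pq}I$.

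The main obstacle I anticipate is surjectivity of $v_{p,q}$ for $p,q \neq e$; the strategy is to apply Theorem~\ref{T:HCFT} twice to transfer an $I$-factor across the tensor. Take $\zeta \in IX_{pq}I = IX_{pq}$. Hewitt--Cohen applied to the left $I$-action on $IX_{pq}$ gives $\zeta = a\omega$ for some $a \in I$ and $\omega \in X_{pq}$. Since $u_{p,q}$ is a unitary, I would approximate $\omega$ by finite sums of simple tensors $\sum_k \xi_k^{(n)}\eta_k^{(n)}$. Each $a\xi_k^{(n)} \in IX_p$, and another application of Theorem~\ref{T:HCFT} to the right $I$-action on $IX_p = [IX_p \cdot I]$ (equality by Lemma~\ref{L:IX to XI}(iii)) factors $a\xi_k^{(n)} = \zeta_k^{(n)}b_k^{(n)}$ with $\zeta_k^{(n)} \in IX_p$ and $b_k^{(n)} \in I$. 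The $A$-balance of $u_{p,q}$ then yields
\[
(a\xi_k^{(n)})\eta_k^{(n)} = u_{p,q}(\zeta_k^{(n)} b_k^{(n)} \otimes \eta_k^{(n)}) = u_{p,q}(\zeta_k^{(n)} \otimes b_k^{(n)}\eta_k^{(n)}) = v_{p,q}(\zeta_k^{(n)} \otimes b_k^{(n)}\eta_k^{(n)}),
\]
where $b_k^{(n)}\eta_k^{(n)} \in IX_q = IX_qI$. Thus $\zeta$ lies in the (closed) range of $v_{p,q}$. For $p = e$ or $q = e$ a single application of Hewitt--Cohen suffices, identifying $v_{e,q}$ (resp.\ $v_{p,e}$) as the left (resp.\ right) action unitary on $IX_qI$ (resp.\ $IX_pI$).

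With these steps in hand, axioms (ii)--(v) follow directly. Axioms (ii) and (iii) amount to the identification just noted of $v_{e,q}$ and $v_{p,e}$ with the appropriate action unitaries, which are restrictions of $u_{e,q}$ and $u_{p,e}$; axiom (iv) is the previous two paragraphs; and associativity (axiom (v)) is inherited immediately from the associativity of $\{u_{p,q}\}$, since each $v_{p,q}$ is merely a restriction of $u_{p,q}$.
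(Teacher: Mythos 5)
Your proposal is correct and follows essentially the same route as the paper: both define the multiplication maps of $IXI$ as restrictions of the $u_{p,q}$ and obtain surjectivity onto $IX_{pq}I$ by combining Lemma \ref{L:IX to XI} with Hewitt--Cohen factorisation to shuttle an $I$-factor across the balanced tensor product. The only cosmetic difference is that the paper packages this shuttle once and for all as the canonical identification $IX_pI \otimes_I IX_qI \cong IX_p \otimes_A X_qI$ and then invokes $u_{p,q}^X(IX_p \otimes_A X_qI) = IX_{pq}I$, whereas you carry it out elementwise on approximating sums of simple tensors.
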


\begin{proof}
We will denote the multiplication maps of $X$ by $\{u_{p,q}^X\}_{p,q\in P}$.
The discussion preceding Lemma \ref{L:comp IXI embed} gives that $IX_pI$ is a C*-correspondence over $I$ for all $p\in P$.
For $p,q \in P$, let the unitary map $\iota_{p,q}$ be induced by the canonical identifications
\[
IX_pI \otimes_I IX_qI \cong IX_pI \otimes_A IX_qI \cong IX_p \otimes_A X_qI,
\]
where we use Lemma \ref{L:IX to XI} in the second identification and write (in an abuse of notation) $IX_pI \otimes_A IX_qI$ for the closed linear space generated by $IX_pI \odot_A IX_qI$ in $X_p \otimes_A X_q$.
We then define the multiplication map $u_{p,q}^{IXI}$ for $IXI$ by $u_{p,q}^{IXI} := u_{p,q}^X \circ \iota_{p,q}$.
It is clear that $u_{p,q}^{IXI}$ is a linear isometry and an $I$-bimodule map onto $u_{p,q}^X(IX_p \otimes_A X_qI) = I X_{pq} I$.
It follows that $IXI$ together with the multiplication maps $\{u_{p,q}^{IXI}\}_{p,q\in P}$ becomes a product system, using the product system axioms of $X$.
\end{proof}

\begin{proposition}\label{P:IXIiota}
Let $P$ be a unital subsemigroup of a discrete group $G$.
Let $X$ be a product system over $P$ with coefficients in a C*-algebra $A$ and let $I\subseteq A$ be an ideal that is positively invariant for $X$.
Let $\{\iota_p^{pq}\}_{p,q\in P}$ denote the connecting $*$-homomorphisms of $X$ and let $\{j_p^{pq}\}_{p,q\in P}$ denote the connecting $*$-homomorphisms of $IXI$.
Then
\[
j_p^{pq}(k_p|_{IX_pI})=\iota_p^{pq}(k_p)|_{IX_{pq}I}
\foral  
p,q\in P\; \text{and} \; k_p\in\ol{\spn}\{\Theta_{\xi_p,\eta_p}^{X_p}\mid \xi_p,\eta_p\in IX_pI\}.
\]
\end{proposition}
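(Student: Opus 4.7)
The plan is to establish the identity first on rank-one operators $k_p = \Theta_{\xi_p,\eta_p}^{X_p}$ with $\xi_p, \eta_p \in IX_pI$, and then extend by linearity and continuity to the entire norm-closed span. Via Lemma \ref{L:comp IXI embed}, the restriction of such a rank-one operator is $\Theta_{\xi_p,\eta_p}^{X_p}|_{IX_pI} = \Theta_{\xi_p,\eta_p}^{IX_pI} \in \K(IX_pI)$, so the task reduces to verifying that $j_p^{pq}(\Theta_{\xi_p,\eta_p}^{IX_pI})$ and $\iota_p^{pq}(\Theta_{\xi_p,\eta_p}^{X_p})|_{IX_{pq}I}$ agree as adjointable operators on $IX_{pq}I$.

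First, I would check that the right-hand side is a well-defined operator on $IX_{pq}I$, i.e., that $\iota_p^{pq}(\Theta_{\xi_p,\eta_p}^{X_p})$ leaves $IX_{pq}I$ invariant. By Proposition \ref{P:IXIprodsys}, the subspace $IX_{pq}I$ is the closed linear span of elements $u_{p,q}^X(\xi_p' \otimes \eta_q) = \xi_p'\eta_q$ with $\xi_p' \in IX_pI$ and $\eta_q \in IX_qI$. For such a generator,
\[
\iota_p^{pq}\big(\Theta_{\xi_p,\eta_p}^{X_p}\big)(\xi_p'\eta_q) = \big(\Theta_{\xi_p,\eta_p}^{X_p}\xi_p'\big)\eta_q = \big(\xi_p\sca{\eta_p,\xi_p'}\big)\eta_q,
\]
which again has the form (element of $IX_pI$)(element of $IX_qI$) and thus lies in $IX_{pq}I$. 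At the same time, by the definition of the connecting $*$-homomorphism for the product system $IXI$ (whose multiplication map $u_{p,q}^{IXI}$ coincides with $u_{p,q}^X$ on simple tensors, as per Proposition \ref{P:IXIprodsys}),
\[
j_p^{pq}\big(\Theta_{\xi_p,\eta_p}^{IX_pI}\big)(\xi_p'\eta_q) = \big(\Theta_{\xi_p,\eta_p}^{IX_pI}\xi_p'\big)\eta_q = \big(\xi_p\sca{\eta_p,\xi_p'}\big)\eta_q.
\]
Hence the two sides coincide on a total subset of $IX_{pq}I$, and therefore on all of $IX_{pq}I$ by linearity and continuity.

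Finally, the identity extends to arbitrary $k_p \in \ol{\spn}\{\Theta_{\xi_p,\eta_p}^{X_p} \mid \xi_p,\eta_p \in IX_pI\}$ because both maps $k_p \mapsto j_p^{pq}(k_p|_{IX_pI})$ and $k_p \mapsto \iota_p^{pq}(k_p)|_{IX_{pq}I}$ are norm-continuous (the first being the composition of the isometric identification of Lemma \ref{L:comp IXI embed} with the $*$-homomorphism $j_p^{pq}$, the second being the composition of the $*$-homomorphism $\iota_p^{pq}$ with a contractive restriction). The main obstacle is essentially bookkeeping: one needs the explicit description of $IX_{pq}I$ from Proposition \ref{P:IXIprodsys} both to verify that the right-hand side is well defined as an operator on $IX_{pq}I$ and to identify a total subset on which the two sides can be directly compared; once that description is in hand, the computation on simple tensors is immediate.
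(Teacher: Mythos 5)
Your proposal is correct and follows essentially the same route as the paper's proof: both verify the identity on the total set of simple products $\xi_p'\eta_q$ with $\xi_p'\in IX_pI$, $\eta_q\in IX_qI$, using the compatibility of $u_{p,q}^{IXI}$ with $u_{p,q}^X$ from Proposition \ref{P:IXIprodsys} and the identification of Lemma \ref{L:comp IXI embed}. The only cosmetic difference is that the paper runs the computation directly for a general $k_p$ in the closed span (since $k_p\xi_p\in IX_pI$ for $\xi_p\in IX_pI$), whereas you reduce to rank-one operators and extend by linearity and norm-continuity, which is an immaterial variation.
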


\begin{proof}
Without loss of generality we may assume that $p \neq e$ and $q\neq e$, as the claim is straightforward otherwise.
By Lemma \ref{L:comp IXI embed} we have $k_p|_{IX_pI}\in\K(IX_pI)$.
Recalling that $IX_{pq}I$ is a closed linear subspace of $X_{pq}$, it suffices to show that
\[
j_p^{pq}(k_p|_{IX_pI})u_{p,q}^{IXI}(\xi_p\otimes\xi_q)=\iota_p^{pq}(k_p)u_{p,q}^{IXI}(\xi_p\otimes\xi_q),
\foral
\xi_p\in IX_pI, \xi_q\in IX_qI.
\]
Indeed, fixing $\xi_p\in IX_pI$ and $\xi_q\in IX_qI$, we have
\begin{align*}
j_p^{pq}(k_p|_{IX_pI})u_{p,q}^{IXI}(\xi_p\otimes\xi_q) 
& =
u_{p,q}^{IXI}((k_p\xi_p)\otimes\xi_q) 
=
u_{p,q}^X((k_p\xi_p)\otimes\xi_q) \\
& =
\iota_p^{pq}(k_p)u_{p,q}^X(\xi_p\otimes\xi_q)
=
\iota_p^{pq}(k_p)u_{p,q}^{IXI}(\xi_p\otimes\xi_q),
\end{align*}
as required.
\end{proof}

Next we show that the properties of compact alignment and strong compact alignment of $IXI$ are inherited from $X$.

\begin{proposition}\label{P:IXIca}
Let $P$ be a unital right LCM subsemigroup of a discrete group $G$.
Let $X$ be a compactly aligned product system over $P$ with coefficients in a C*-algebra $A$ and let $I\subseteq A$ be an ideal that is positively invariant for $X$.
Then the product system $IXI$ is compactly aligned.
\end{proposition}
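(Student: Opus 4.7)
The plan is to verify the compact alignment condition for $IXI$ by reducing it to the compact alignment condition for $X$, using Proposition \ref{P:IXIiota} to transfer between the connecting maps, and Corollary \ref{C:compconj} (together with Lemma \ref{L:comp IXI embed}) to handle the \emph{a priori} non-trivial issue that elements of $\K(X_w)$ need not restrict to elements of $\K(IX_wI)$.

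First I would fix $p, q \in P \setminus \{e\}$ with $pP \cap qP = wP$ for some $w \in P$, and take $k_p \in \K(IX_pI)$ and $k_q \in \K(IX_qI)$. By Lemma \ref{L:comp IXI embed}, there are unique lifts $\tilde{k}_p \in \ol{\spn}\{\Theta_{\xi,\eta}^{X_p} \mid \xi, \eta \in IX_pI\}$ and $\tilde{k}_q \in \ol{\spn}\{\Theta_{\xi,\eta}^{X_q} \mid \xi, \eta \in IX_qI\}$ satisfying $k_p = \tilde{k}_p|_{IX_pI}$ and $k_q = \tilde{k}_q|_{IX_qI}$. Using the identity $\Theta_{a\xi',\eta}^{X_p} = \phi_p(a)\Theta_{\xi',\eta}^{X_p}$ (and its right-hand analogue), each such $\tilde{k}_p$ is seen to lie in $[\phi_p(I)\K(X_p)]$, and Theorem \ref{T:HCFT} then provides a factorisation $\tilde{k}_p = \phi_p(a)s_p$ with $a \in I$ and $s_p \in \K(X_p)$. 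Symmetrically, $\tilde{k}_q = s_q \phi_q(b)$ with $b \in I$ and $s_q \in \K(X_q)$.

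Next, using that $\iota_p^w(\phi_p(a)) = \phi_w(a)$ and that $\iota_p^w, \iota_q^w$ are $*$-homomorphisms, I compute
\[
\iota_p^w(\tilde{k}_p)\iota_q^w(\tilde{k}_q) = \phi_w(a)\iota_p^w(s_p)\iota_q^w(s_q)\phi_w(b).
\]
By compact alignment of $X$, the middle factor $\iota_p^w(s_p)\iota_q^w(s_q)$ lies in $\K(X_w)$, so the whole product lies in $\phi_w(I)\K(X_w)\phi_w(I)$. Applying Corollary \ref{C:compconj} to $X_w$, this product belongs to $\ol{\spn}\{\Theta_{\xi,\eta}^{X_w} \mid \xi, \eta \in IX_wI\}$. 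By another application of Lemma \ref{L:comp IXI embed}, restriction to $IX_wI$ then lands in $\K(IX_wI)$.

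Finally I invoke Proposition \ref{P:IXIiota} to identify the restriction as the correct product of connecting maps: since restriction commutes with composition for operators preserving $IX_wI$, we have
\[
j_p^w(k_p)j_q^w(k_q) = \bigl(\iota_p^w(\tilde{k}_p)\bigr)\bigl|_{IX_wI} \cdot \bigl(\iota_q^w(\tilde{k}_q)\bigr)\bigl|_{IX_wI} = \bigl(\iota_p^w(\tilde{k}_p)\iota_q^w(\tilde{k}_q)\bigr)\bigl|_{IX_wI} \in \K(IX_wI),
\]
which yields the required compact alignment condition for $IXI$. The only mild subtlety I anticipate is the careful bookkeeping between the embedding $\iota \colon \K(IX_pI) \to \K(X_p)$ of Lemma \ref{L:comp IXI embed} and the restriction operation used in Proposition \ref{P:IXIiota}, but this is essentially algebraic once Cohen--Hewitt is used to produce the factorisation that feeds into Corollary \ref{C:compconj}.
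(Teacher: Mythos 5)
Your proof is correct and follows the paper's argument in all essentials: both reduce via Proposition \ref{P:IXIiota} to the ambient connecting maps, invoke compact alignment of $X$, and land in $\K(IX_wI)$ through Corollary \ref{C:compconj} and Lemma \ref{L:comp IXI embed}. The only deviation is in one step: where the paper checks rank-one operators and uses an approximate unit $(u_\la)$ of $I$ to realise the product as a norm-limit of elements of $\phi_w(I)\K(X_w)\phi_w(I)$, you lift general compacts and apply Cohen--Hewitt to obtain exact factorisations $\tilde{k}_p=\phi_p(a)s_p$ and $\tilde{k}_q=s_q\phi_q(b)$, which, via the identity $\iota_p^w(\phi_p(a))=\phi_w(a)$, places the product in $\phi_w(I)\K(X_w)\phi_w(I)$ outright --- a valid shortcut that bypasses the limiting argument.
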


\begin{proof}
Let $\{\iota_p^{pq}\}_{p,q\in P}$ (resp. $\{j_p^{pq}\}_{p,q\in P}$) denote the connecting $*$-homomorphisms of $X$ (resp. $IXI$).
It suffices to show that the compact alignment condition holds for rank-one operators.
To this end, let $p,q\in P\setminus\{e\}$ be such that $pP\cap qP=wP$ for some $w\in P$, and fix $\xi_p,\eta_p\in IX_pI$ and $\xi_q,\eta_q\in IX_qI$.
We must show that
\[
j_p^w(\Theta_{\xi_p,\eta_p}^{IX_pI})j_q^w(\Theta_{\xi_q,\eta_q}^{IX_qI})\in\K(IX_wI).
\]
Recall that 
\[
\Theta_{\xi_p,\eta_p}^{IX_pI}=\Theta_{\xi_p,\eta_p}^{X_p}|_{IX_pI}
\qand
\Theta_{\xi_q,\eta_q}^{IX_qI}=\Theta_{\xi_q,\eta_q}^{X_q}|_{IX_qI},
\]
by Lemma \ref{L:comp IXI embed}.
An application of Proposition \ref{P:IXIiota} then gives that
\begin{equation}\label{Eq:IXIca 1}
j_p^w(\Theta_{\xi_p,\eta_p}^{IX_pI})j_q^w(\Theta_{\xi_q,\eta_q}^{IX_qI})=\iota_p^w(\Theta_{\xi_p,\eta_p}^{X_p})|_{IX_wI}\iota_q^w(\Theta_{\xi_q,\eta_q}^{X_q})|_{IX_wI}=[\iota_p^w(\Theta_{\xi_p,\eta_p}^{X_p})\iota_q^w(\Theta_{\xi_q,\eta_q}^{X_q})]|_{IX_wI}.
\end{equation}
By Lemma \ref{L:comp IXI embed}, it suffices to show that
\[
\iota_p^w(\Theta_{\xi_p,\eta_p}^{X_p})\iota_q^w(\Theta_{\xi_q,\eta_q}^{X_q})\in\ol{\spn}\{\Theta_{\xi_w,\eta_w}^{X_w}\mid \xi_w,\eta_w\in IX_wI\}.
\]
Compact alignment of $X$ gives that
\[
\iota_p^w(\Theta_{\xi_p,\eta_p}^{X_p})\iota_q^w(\Theta_{\xi_q,\eta_q}^{X_q})\in\K(X_w).
\]
Next, let $(u_\la)_{\la\in\La}$ denote an approximate unit of $I$.
For each $\la\in\La$, we have
\[
\phi_w(u_\la)\iota_p^w(\Theta_{\xi_p,\eta_p}^{X_p})=\iota_p^w(\phi_p(u_\la)\Theta_{\xi_p,\eta_p}^{X_p})=\iota_p^w(\Theta_{\phi_p(u_\la)\xi_p,\eta_p}^{X_p}).
\]
Using this and the fact that $\nor{\cdot}\text{-}\lim_\la \phi_p(u_\la)\xi_p=\xi_p$ (as $\xi_p \in IX_p I$), we obtain
\[
\nor{\cdot}\text{-}\lim_\la\phi_w(u_\la)\iota_p^w(\Theta_{\xi_p,\eta_p}^{X_p})=\iota_p^w(\Theta_{\xi_p,\eta_p}^{X_p}).
\]
By analogous reasoning, we have
\[
\nor{\cdot}\text{-}\lim_\la\iota_q^w(\Theta_{\xi_q,\eta_q}^{X_q})\phi_w(u_\la)=\iota_q^w(\Theta_{\xi_q,\eta_q}^{X_q}).
\]
Therefore, we obtain
\[
\iota_p^w(\Theta_{\xi_p,\eta_p}^{X_p})\iota_q^w(\Theta_{\xi_q,\eta_q}^{X_q})
=
\nor{\cdot}\text{-}\lim_\la \phi_w(u_\la)\iota_p^w(\Theta_{\xi_p,\eta_p}^{X_p})\iota_q^w(\Theta_{\xi_q,\eta_q}^{X_q})\phi_w(u_\la).
\]
Consequently, we have that $\iota_p^w(\Theta_{\xi_p,\eta_p}^{X_p})\iota_q^w(\Theta_{\xi_q,\eta_q}^{X_q})$ is expressible as the norm-limit of a net that is contained in $\phi_w(I)\K(X_w)\phi_w(I)$.
We also have 
\[
\phi_w(I)\K(X_w)\phi_w(I)\subseteq\ol{\spn}\{\Theta_{\xi_w,\eta_w}^{X_w}\mid \xi_w,\eta_w\in IX_wI\}
\]
by Corollary \ref{C:compconj}.
Thus 
\begin{equation}\label{Eq:IXIca 2}
\iota_p^w(\Theta_{\xi_p,\eta_p}^{X_p})\iota_q^w(\Theta_{\xi_q,\eta_q}^{X_q})\in\ol{\spn}\{\Theta_{\xi_w,\eta_w}^{X_w}\mid \xi_w,\eta_w\in IX_wI\},
\end{equation}
as required.
\end{proof}

\begin{proposition}\label{P:IXIsca}
Let $X$ be a strong compactly aligned product system with coefficients in a C*-algebra $A$ and let $I\subseteq A$ be an ideal that is positively invariant for $X$.
Then the product system $IXI$ is strong compactly aligned.
\end{proposition}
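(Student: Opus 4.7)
The plan is to verify the strong compact alignment condition directly, reusing the techniques from the proof of Proposition \ref{P:IXIca}. Since Proposition \ref{P:IXIca} already gives that $IXI$ is compactly aligned, it remains only to show that
\[
j_{\un{n}}^{\un{n}+\un{i}}(\K(IX_{\un{n}}I)) \subseteq \K(IX_{\un{n}+\un{i}}I)
\]
whenever $\un{n}\perp\un{i}$ with $i\in[d]$ and $\un{n}\in\bZ_+^d\setminus\{\un{0}\}$, where $\{j_{\un{n}}^{\un{n}+\un{m}}\}$ denotes the connecting $*$-homomorphisms of $IXI$. By continuity and the density of finite sums of rank-one operators, I would reduce to checking the claim on an arbitrary rank-one operator $\Theta_{\xi_{\un{n}},\eta_{\un{n}}}^{IX_{\un{n}}I}$ with $\xi_{\un{n}},\eta_{\un{n}}\in IX_{\un{n}}I$.

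By Lemma \ref{L:comp IXI embed}, we have $\Theta_{\xi_{\un{n}},\eta_{\un{n}}}^{IX_{\un{n}}I}=\Theta_{\xi_{\un{n}},\eta_{\un{n}}}^{X_{\un{n}}}|_{IX_{\un{n}}I}$, and Proposition \ref{P:IXIiota} then gives
\[
j_{\un{n}}^{\un{n}+\un{i}}(\Theta_{\xi_{\un{n}},\eta_{\un{n}}}^{IX_{\un{n}}I}) = \iota_{\un{n}}^{\un{n}+\un{i}}(\Theta_{\xi_{\un{n}},\eta_{\un{n}}}^{X_{\un{n}}})|_{IX_{\un{n}+\un{i}}I}.
\]
Strong compact alignment of $X$ ensures that $\iota_{\un{n}}^{\un{n}+\un{i}}(\Theta_{\xi_{\un{n}},\eta_{\un{n}}}^{X_{\un{n}}})\in\K(X_{\un{n}+\un{i}})$. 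The task is then to identify this element, once restricted to $IX_{\un{n}+\un{i}}I$, as a member of $\K(IX_{\un{n}+\un{i}}I)$; by Lemma \ref{L:comp IXI embed} this is equivalent to showing that $\iota_{\un{n}}^{\un{n}+\un{i}}(\Theta_{\xi_{\un{n}},\eta_{\un{n}}}^{X_{\un{n}}})$ lies in $\ol{\spn}\{\Theta_{\xi,\eta}^{X_{\un{n}+\un{i}}}\mid \xi,\eta\in IX_{\un{n}+\un{i}}I\}$.

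For this final step I would mimic the approximate unit trick from the proof of Proposition \ref{P:IXIca}. Taking an approximate unit $(u_\la)_{\la\in\La}$ of $I$, using that $\xi_{\un{n}},\eta_{\un{n}}\in IX_{\un{n}}I$, and exploiting that $\phi_{\un{n}+\un{i}}(u_\la)\iota_{\un{n}}^{\un{n}+\un{i}}(\Theta_{\xi_{\un{n}},\eta_{\un{n}}}^{X_{\un{n}}})=\iota_{\un{n}}^{\un{n}+\un{i}}(\Theta_{\phi_{\un{n}}(u_\la)\xi_{\un{n}},\eta_{\un{n}}}^{X_{\un{n}}})$, I would deduce that
\[
\iota_{\un{n}}^{\un{n}+\un{i}}(\Theta_{\xi_{\un{n}},\eta_{\un{n}}}^{X_{\un{n}}}) = \nor{\cdot}\text{-}\lim_\la \phi_{\un{n}+\un{i}}(u_\la)\, \iota_{\un{n}}^{\un{n}+\un{i}}(\Theta_{\xi_{\un{n}},\eta_{\un{n}}}^{X_{\un{n}}})\, \phi_{\un{n}+\un{i}}(u_\la),
\]
which places $\iota_{\un{n}}^{\un{n}+\un{i}}(\Theta_{\xi_{\un{n}},\eta_{\un{n}}}^{X_{\un{n}}})$ in the closure of $\phi_{\un{n}+\un{i}}(I)\K(X_{\un{n}+\un{i}})\phi_{\un{n}+\un{i}}(I)$. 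An application of Corollary \ref{C:compconj} then yields membership in $\ol{\spn}\{\Theta_{\xi,\eta}^{X_{\un{n}+\un{i}}}\mid \xi,\eta\in IX_{\un{n}+\un{i}}I\}$, completing the argument.

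There is no real obstacle here; the proof is essentially a reprise of the compact-alignment argument of Proposition \ref{P:IXIca} applied to the single connecting map $\iota_{\un{n}}^{\un{n}+\un{i}}$ rather than to a product of two. The only point that requires care is the reduction to rank-one operators, which is justified by the $*$-isomorphism supplied by Lemma \ref{L:comp IXI embed} and the continuity of the connecting maps.
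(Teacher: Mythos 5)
Your proposal is correct and takes essentially the same approach as the paper's own proof: both reduce to rank-one operators, identify $j_{\un{n}}^{\un{n}+\un{i}}(\Theta_{\xi_{\un{n}},\eta_{\un{n}}}^{IX_{\un{n}}I})$ with $\iota_{\un{n}}^{\un{n}+\un{i}}(\Theta_{\xi_{\un{n}},\eta_{\un{n}}}^{X_{\un{n}}})|_{IX_{\un{n}+\un{i}}I}$ via Lemma \ref{L:comp IXI embed} and Proposition \ref{P:IXIiota}, invoke strong compact alignment of $X$, and then use an approximate unit of $I$ together with Corollary \ref{C:compconj} to place the operator in $\ol{\spn}\{\Theta_{\xi,\eta}^{X_{\un{n}+\un{i}}}\mid \xi,\eta\in IX_{\un{n}+\un{i}}I\}$. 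There are no gaps.
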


\begin{proof}
Let $\{\iota_\un{n}^{\un{n}+\un{m}}\}_{\un{n},\un{m}\in \bZ_+^d}$ (resp. $\{j_\un{n}^{\un{n}+\un{m}}\}_{\un{n},\un{m}\in \bZ_+^d}$) denote the connecting $*$-homomorphisms of $X$ (resp. $IXI$).
Proposition \ref{P:IXIca} asserts that $IXI$ is compactly aligned, so it remains to check that $IXI$ satisfies
\[
j_\un{n}^{\un{n}+\un{i}}(\K(IX_\un{n}I))\subseteq\K(IX_{\un{n}+\un{i}}I)
\textup{ whenever $\un{n}\perp\un{i}$, where $i\in[d],\un{n}\in\bZ_+^d\setminus\{\un{0}\}$}.
\]
It suffices to show that this holds for rank-one operators.
To this end, fix $\xi_\un{n},\eta_\un{n}\in IX_\un{n}I$.
An application of Proposition \ref{P:IXIiota} then yields
\[
j_\un{n}^{\un{n}+\un{i}}(\Theta_{\xi_\un{n},\eta_\un{n}}^{IX_\un{n}I})
=
j_\un{n}^{\un{n}+\un{i}}(\Theta_{\xi_\un{n},\eta_\un{n}}^{X_\un{n}} |_{IX_{\un{n}}I} )
=
\iota_\un{n}^{\un{n}+\un{i}}(\Theta_{\xi_\un{n},\eta_\un{n}}^{X_\un{n}})|_{IX_{\un{n}+\un{i}}I}.
\]
By strong compact alignment of $X$, we have $\iota_\un{n}^{\un{n}+\un{i}}(\Theta_{\xi_\un{n},\eta_\un{n}}^{X_\un{n}}) \in \K( X_{\un{n} + \un{i}})$.
By using an approximate unit $(u_\la)_{\la \in \La}$ of $I$ and Corollary \ref{C:compconj}, we then obtain
\begin{align*}
\iota_\un{n}^{\un{n}+\un{i}}(\Theta_{\xi_\un{n},\eta_\un{n}}^{X_\un{n}})
& =
\nor{\cdot}\text{-}\lim_\la \phi_{\un{n} + \un{i}}(u_\la) \iota_\un{n}^{\un{n}+\un{i}}(\Theta_{\xi_\un{n},\eta_\un{n}}^{X_\un{n}}) \phi_{\un{n} + \un{i}}(u_\la) \\
& \in
[\phi_{\un{n} + \un{i}}(I) \K( X_{\un{n} + \un{i}}) \phi_{\un{n} + \un{i}}(I)] \\
& \subseteq
\ol{\spn}\{\Theta_{\xi_{\un{n}+\un{i}},\eta_{\un{n}+\un{i}}}^{X_{\un{n}+\un{i}}}\mid \xi_{\un{n}+\un{i}},\eta_{\un{n}+\un{i}}\in IX_{\un{n}+\un{i}}I\}.
\end{align*}
An application of Lemma \ref{L:comp IXI embed} finishes the proof.
\end{proof}

Next we study the representations of $IXI$.
Given a representation $(\pi,t)$ of $X$,
we write $(\pi|_I,t|_{IXI})$ for the family $\{(\pi|_I, t_p|_{IX_pI})\}_{p\in P}$.
It is routine to check that $(\pi|_I,t|_{IXI})$ is a representation of $IXI$.
For each $p\in P$, let 
\[
\tilde{\psi}_p \colon \K(IX_pI)\to\ca(\pi,t)
\]
be the $*$-homomorphism induced by $(\pi|_I,t_p|_{IX_pI})$.
Then for all $\xi_p,\eta_p\in IX_pI$, we have
\[
\tilde{\psi}_p(\Theta_{\xi_p,\eta_p}^{X_p}|_{IX_pI})
=
\tilde{\psi}_p(\Theta_{\xi_p,\eta_p}^{IX_pI})
=
t_p|_{IX_pI}(\xi_p)t_p|_{IX_pI}(\eta_p)^*
=
t_p(\xi_p)t_p(\eta_p)^*
=
\psi_p(\Theta_{\xi_p,\eta_p}^{X_p}),
\]
from which it follows that
\begin{equation}\label{Eq:IXIcomprest}
\tilde{\psi}_p(k_p|_{IX_pI})=\psi_p(k_p)\foral k_p\in\ol{\spn}\{\Theta_{\xi_p,\eta_p}^{X_p}\mid \xi_p,\eta_p\in IX_pI\}.
\end{equation}
When $P$ is a right LCM semigroup and $X$ is compactly aligned (and thus $IXI$ is compactly aligned by Proposition \ref{P:IXIca}), Nica-covariance of $(\pi,t)$ is inherited by $(\pi|_I,t|_{IXI})$.

\begin{proposition}\label{P:IXIrepnNC}
Let $P$ be a unital right LCM subsemigroup of a discrete group $G$.
Let $X$ be a compactly aligned product system over $P$ with coefficients in a C*-algebra $A$ and let $I\subseteq A$ be an ideal that is positively invariant for $X$.
Let $(\pi,t)$ be a Nica-covariant representation of $X$.
Then $(\pi|_I,t|_{IXI})$ is a Nica-covariant representation of $IXI$, and
\begin{equation}\label{Eq:restrepran}
\begin{aligned}
\ca(\pi|_I,t|_{IXI})
& = \ol{\spn}\{\pi(I)t_p(X_p)\pi(I)t_q(X_q)^*\pi(I)\mid p,q\in P\} \\
& = \ol{\spn}\{\pi(I)t_p(X_p)t_q(X_q)^*\pi(I)\mid p,q\in P\}.
\end{aligned}
\end{equation}
\end{proposition}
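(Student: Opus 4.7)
The assertion that $(\pi|_I, t|_{IXI})$ is a Toeplitz representation of $IXI$ is stated as routine in the paragraph preceding the proposition, so my plan splits into two parts: verifying Nica-covariance of $(\pi|_I, t|_{IXI})$, and then establishing the formula for $\ca(\pi|_I, t|_{IXI})$.

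For Nica-covariance, I would reduce to checking the defining relation on rank-one operators and then extend by linearity and continuity. Fix $p, q \in P \setminus \{e\}$ with $pP \cap qP = wP$ for some $w \in P$, and take $\xi_p, \eta_p \in IX_pI$ and $\xi_q, \eta_q \in IX_qI$. By equation (\ref{Eq:IXIcomprest}), I have $\tilde{\psi}_p(\Theta_{\xi_p, \eta_p}^{IX_pI}) = \psi_p(\Theta_{\xi_p, \eta_p}^{X_p})$ and similarly for $q$, so Nica-covariance of $(\pi,t)$ yields $\tilde{\psi}_p(\Theta_{\xi_p, \eta_p}^{IX_pI}) \tilde{\psi}_q(\Theta_{\xi_q, \eta_q}^{IX_qI}) = \psi_w(\iota_p^w(\Theta_{\xi_p, \eta_p}^{X_p}) \iota_q^w(\Theta_{\xi_q, \eta_q}^{X_q}))$. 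The computation in the proof of Proposition \ref{P:IXIca} (specifically equations (\ref{Eq:IXIca 1}) and (\ref{Eq:IXIca 2})) shows that this product lies in $\ol{\spn}\{\Theta_{\xi_w, \eta_w}^{X_w} \mid \xi_w, \eta_w \in IX_wI\}$ and, via Lemma \ref{L:comp IXI embed} and Proposition \ref{P:IXIiota}, identifies with $j_p^w(\Theta_{\xi_p, \eta_p}^{IX_pI}) j_q^w(\Theta_{\xi_q, \eta_q}^{IX_qI}) \in \K(IX_wI)$. Applying (\ref{Eq:IXIcomprest}) in reverse completes the relation. The case $pP \cap qP = \mt$ is handled the same way: (\ref{Eq:IXIcomprest}) transports the vanishing from $(\pi, t)$ to $(\pi|_I, t|_{IXI})$.

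For the C*-algebra description, since $(\pi|_I, t|_{IXI})$ is Nica-covariant on a right LCM semigroup, the standard Wick ordering gives
\[
\ca(\pi|_I, t|_{IXI}) = \ol{\spn}\{ t_p(IX_pI)\, t_q(IX_qI)^* \mid p, q \in P \},
\]
where we include $p = e$ (so that $t_e(IX_eI) = \pi(I)$) to account for the generators of $\pi|_I(I)$. So it suffices to show that $t_p(IX_pI) = [\pi(I)\, t_p(X_p)\, \pi(I)]$ for each $p \in P \setminus \{e\}$. This follows from the compatibility of $t_p$ with both actions, namely $t_p(a \xi_p b) = \pi(a) t_p(\xi_p) \pi(b)$ for $a, b \in A$ and $\xi_p \in X_p$, together with Hewitt-Cohen factorization applied to $IX_pI = [\phi_p(I)X_pI]$. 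The desired formula then follows from multiplying out $[\pi(I) t_p(X_p) \pi(I)][\pi(I) t_q(X_q) \pi(I)]^*$ and absorbing $I \cdot I = I$ (again by Hewitt-Cohen) into a single $\pi(I)$ factor in the middle.

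The main bookkeeping hurdle is the translation, via Lemma \ref{L:comp IXI embed} and Proposition \ref{P:IXIiota}, between the compacts of $X_p$ that happen to lie in the image of $\K(IX_pI)$ and the intrinsic compacts on $IX_pI$ themselves, as well as between the connecting maps $\iota_p^w$ for $X$ and $j_p^w$ for $IXI$; once this identification is made explicit, each side of the Nica-covariance relation for $IXI$ matches the corresponding side of the relation for $X$ restricted to elements supported on $IX_\bullet I$.
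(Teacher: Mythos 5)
Your proof is correct and follows essentially the same route as the paper: the Nica-covariance verification reduces to rank-one operators and transports the relation through (\ref{Eq:IXIcomprest}), (\ref{Eq:IXIca 1}) and (\ref{Eq:IXIca 2}) exactly as in the paper's argument. For the formula (\ref{Eq:restrepran}) the paper simply says it is ``a direct consequence of Nica-covariance,'' and your expansion via the Wick ordering, the identity $t_p(IX_pI)=[\pi(I)t_p(X_p)\pi(I)]$, and Hewitt--Cohen factorisation is precisely the intended elaboration.
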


\begin{proof}
Let $\{\iota_p^{pq}\}_{p,q\in P}$ (resp. $\{j_p^{pq}\}_{p,q\in P}$) denote the connecting $*$-homomorphisms of $X$ (resp. $IXI$).
For each $p\in P$, let 
\[
\tilde{\psi}_p \colon \K(IX_pI)\to\ca(\pi,t)
\]
be the $*$-homomorphism induced by $(\pi|_I,t_p|_{IX_pI})$.
Now fix $p,q\in P\setminus\{e\}, k_p\in\K(IX_pI)$ and $k_q\in\K(IX_qI)$.
We must show that
\[
\tilde{\psi}_p(k_p)\tilde{\psi}_q(k_q)=\begin{cases} \tilde{\psi}_w(j_p^w(k_p)j_q^w(k_q)) & \text{if} \; pP\cap qP=wP \; \text{for some} \; w\in P, \\ 0 & \text{otherwise}.\end{cases}
\]
It suffices to show this for $k_p=\Theta_{\xi_p,\eta_p}^{IX_pI}$ and $k_q=\Theta_{\xi_q,\eta_q}^{IX_qI}$ for some $\xi_p,\eta_p\in IX_pI$ and $\xi_q,\eta_q\in IX_qI$.
By (\ref{Eq:IXIcomprest}) and Nica-covariance of $(\pi,t)$, we have
\begin{align*}
\tilde{\psi}_p(\Theta_{\xi_p,\eta_p}^{IX_pI})\tilde{\psi}_q(\Theta_{\xi_q,\eta_q}^{IX_qI}) & =\psi_p(\Theta_{\xi_p,\eta_p}^{X_p})\psi_q(\Theta_{\xi_q,\eta_q}^{X_q}) \\
																		     & =\begin{cases} \psi_w(\iota_p^w(\Theta_{\xi_p,\eta_p}^{X_p})\iota_q^w(\Theta_{\xi_q,\eta_q}^{X_q})) & \text{if} \; pP\cap qP=wP \; \text{for some} \; w\in P, \\ 0 & \text{otherwise}. \end{cases}
\end{align*}
Suppose $pP\cap qP=wP$ for some $w\in P$.
Since the assumptions of Proposition \ref{P:IXIca} are satisfied, by (\ref{Eq:IXIca 1}) we have
\[
[\iota_p^w(\Theta_{\xi_p,\eta_p}^{X_p})\iota_q^w(\Theta_{\xi_q,\eta_q}^{X_q})]|_{IX_wI}=j_p^w(\Theta_{\xi_p,\eta_p}^{IX_pI})j_q^w(\Theta_{\xi_q,\eta_q}^{IX_qI}),
\]
and by (\ref{Eq:IXIca 2}) we have
\[
\iota_p^w(\Theta_{\xi_p,\eta_p}^{X_p})\iota_q^w(\Theta_{\xi_q,\eta_q}^{X_q})\in\ol{\spn}\{\Theta_{\xi_w,\eta_w}^{X_w}\mid \xi_w,\eta_w\in IX_wI\}.
\]
Another application of (\ref{Eq:IXIcomprest}) then yields
\[
\tilde{\psi}_p(\Theta_{\xi_p,\eta_p}^{IX_pI})\tilde{\psi}_q(\Theta_{\xi_q,\eta_q}^{IX_qI})=\begin{cases} \tilde{\psi}_w(j_p^w(\Theta_{\xi_p,\eta_p}^{IX_pI})j_q^w(\Theta_{\xi_q,\eta_q}^{IX_qI})) & \text{if} \; pP\cap qP=wP \; \text{for some} \; w\in P, \\ 0 & \text{otherwise}, \end{cases}
\]
as required.
The second claim is a direct consequence of Nica-covariance.
\end{proof}

The structure of $\ca(\pi|_I,t|_{IXI})$ is linked with that of $\sca{\pi(I)}\subseteq\ca(\pi,t)$.

\begin{proposition}\label{P:IXIherfull}
Let $P$ be a unital right LCM subsemigroup of a discrete group $G$.
Let $X$ be a compactly aligned product system over $P$ with coefficients in a C*-algebra $A$ and let $I\subseteq A$ be an ideal that is positively invariant for $X$. 
Let $(\pi,t)$ be a Nica-covariant representation of $X$.
Then the following hold:
\begin{enumerate}
\item $\sca{\pi(I)}=\ol{\spn}\{t_p(X_p)\pi(I)t_q(X_q)^*\mid p,q\in P\}$;
\item $\ca(\pi|_I,t|_{IXI})=[\pi(I)\sca{\pi(I)}\pi(I)]$, and thus $\ca(\pi|_I,t|_{IXI})$ is a hereditary, full C*-subalgebra of $\sca{\pi(I)}$.
\end{enumerate}
\end{proposition}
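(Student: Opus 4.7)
The plan is to prove (i) first, using the Wick ordering afforded by Nica-covariance together with positive invariance of $I$. Set $D := \ol{\spn}\{t_p(X_p) \pi(I) t_q(X_q)^* \mid p, q \in P\}$. The inclusion $D \subseteq \sca{\pi(I)}$ is immediate since $\pi(I) \subseteq \sca{\pi(I)}$ and the latter is a two-sided ideal of $\ca(\pi,t)$. For the reverse inclusion, I would show that $D$ is itself a closed two-sided ideal of $\ca(\pi,t)$ that contains $\pi(I)$. Containment $\pi(I) \subseteq D$ follows by taking $p = q = e$ and exploiting that $I$ has an approximate unit, so that $\pi(A)\pi(I)\pi(A) = \pi(I)$. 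The key tool for the ideal property is Lemma \ref{L:IX to XI}, which gives $IX_p \subseteq X_p I$ for every $p \in P$, and which translates at the representation level into
\[
\pi(I) t_p(X_p) \subseteq t_p(X_p) \pi(I)
\qand
t_p(X_p)^* \pi(I) \subseteq \pi(I) t_p(X_p)^*,
\]
the second assertion by taking adjoints. Combining these with the Wick reduction $t_p(X_p)^* t_q(X_q) \subseteq [t_{p'}(X_{p'}) t_{q'}(X_{q'})^*]$ (or $= \{0\}$ when $pP \cap qP = \mt$) permits a direct check that both $\ca(\pi,t) \cdot D \subseteq D$ and $D \cdot \ca(\pi,t) \subseteq D$, finishing (i).

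For (ii), I would combine (i) with the description of $\ca(\pi|_I, t|_{IXI})$ from (\ref{Eq:restrepran}) in Proposition \ref{P:IXIrepnNC}: the right-hand side of (\ref{Eq:restrepran}) equals the closed sandwich $\pi(I) \cdot \ol{\spn}\{t_p(X_p)\pi(I)t_q(X_q)^* \mid p,q \in P\} \cdot \pi(I)$, which by (i) is precisely $[\pi(I)\sca{\pi(I)}\pi(I)]$, yielding the asserted equality. Heredity in the ambient C*-algebra $\sca{\pi(I)}$ then follows from the direct computation
\[
[\pi(I)\sca{\pi(I)}\pi(I)] \cdot \sca{\pi(I)} \cdot [\pi(I)\sca{\pi(I)}\pi(I)]\subseteq[\pi(I)\sca{\pi(I)}\pi(I)],
\]
which uses only that $\sca{\pi(I)}$ is closed under multiplication by itself.

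For fullness, the plan is to first observe that $\pi(I) \subseteq \ca(\pi|_I, t|_{IXI})$: for $a \in I$ and an approximate unit $(e_\la)_{\la \in \La}$ of $I$, we have $\pi(a) = \lim_\la \pi(e_\la) \pi(a) \pi(e_\la) \in [\pi(I)^3] \subseteq [\pi(I) \sca{\pi(I)} \pi(I)]$. It therefore suffices to show that the closed ideal of $\sca{\pi(I)}$ generated by $\pi(I)$ is all of $\sca{\pi(I)}$. The key point is that the candidate ideal $[\sca{\pi(I)} \pi(I) \sca{\pi(I)}]$ is actually absorbed under multiplication by all of $\ca(\pi,t)$ — because $\sca{\pi(I)}$ is itself an ideal of $\ca(\pi,t)$ — so it is a closed two-sided ideal of $\ca(\pi,t)$ that contains $\pi(I)$, and minimality of $\sca{\pi(I)}$ then forces the equality. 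The main obstacle is the careful bookkeeping of the Wick manipulations in part (i); the rest reduces to standard ideal-theoretic arguments.
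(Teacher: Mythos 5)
Your proposal is correct and takes essentially the same approach as the paper, whose proof consists precisely of the two remarks that item (i) follows by Nica-covariance and item (ii) is a standard C*-result. Your write-up is a faithful expansion of exactly these points: the Wick reduction together with the commutation $\pi(I)t_p(X_p)\subseteq t_p(X_p)\pi(I)$ coming from positive invariance via Lemma \ref{L:IX to XI} for (i), and for (ii) the standard facts that $[\pi(I)\sca{\pi(I)}\pi(I)]$ satisfies the hereditarity criterion $BCB\subseteq B$ and that $\pi(I)$ (hence the subalgebra containing it) generates $\sca{\pi(I)}$ as an ideal.
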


\begin{proof}
Item (i) follows by Nica-covariance of $(\pi,t)$.
The first part of item (ii) follows from (\ref{Eq:restrepran}).
The second part of item (ii) is a standard C*-result.
\end{proof}

Suppose now that $P=\bZ_+^d$ and $X$ is strong compactly aligned.
An application of the Gauge-Invariant Uniqueness Theorem permits the identification of $\N\T_{IXI}$ and $\N\O_{IXI}$ with C*-subalgebras of $\N\T_X$ and $\N\O_X$, respectively.
We note that the Toeplitz-Nica-Pimsner case extends to right LCM semigroups using \cite[Theorem 6.4]{KKLL21b} (but we will not require it here).

\begin{proposition}\label{P:NTIXIembed}
Let $X$ be a strong compactly aligned product system with coefficients in a C*-algebra $A$ and let $I\subseteq A$ be an ideal that is positively invariant for $X$.
Then the inclusion $IXI \hookrightarrow X$ induces a canonical embedding
\[
\N\T_{IXI}\cong\ca(\ol{\pi}_X|_I,\ol{t}_X|_{IXI})\subseteq\N\T_X,
\]
and thus $\N\T_{IXI}$ is a hereditary, full C*-subalgebra of $\sca{\ol{\pi}_X(I)}$.
\end{proposition}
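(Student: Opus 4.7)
The plan is to produce a canonical $*$-epimorphism $\Phi \colon \N\T_{IXI} \to \ca(\ol{\pi}_X|_I, \ol{t}_X|_{IXI})$ and then show that it is injective. By Proposition \ref{P:IXIsca} the product system $IXI$ is strong compactly aligned, and by Proposition \ref{P:IXIrepnNC} the pair $(\ol{\pi}_X|_I, \ol{t}_X|_{IXI})$ is a Nica-covariant representation of $IXI$. The universal property of $\N\T_{IXI}$ then yields $\Phi$. Since the gauge action on $\N\T_X$ fixes $\ol{\pi}_X(I)$ pointwise and scales each $\ol{t}_{X,\un{n}}(IX_\un{n}I)$ by $\un{z}^\un{n}$, it restricts to an action on $\ca(\ol{\pi}_X|_I, \ol{t}_X|_{IXI})$, and $\Phi$ intertwines this with the gauge action of $\N\T_{IXI}$ by construction.

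To upgrade $\Phi$ to an isomorphism I will use a GIUT-style argument. Because $\ker \Phi$ is gauge-invariant, a faithful conditional expectation onto the fixed point algebra $B_{[\un{0}, \infty]}^{(\ol{\pi}_{IXI}, \ol{t}_{IXI})}$ (obtained by averaging over $\bT^d$) reduces the task to showing that $\ker \Phi \cap B_{[\un{0}, \infty]}^{(\ol{\pi}_{IXI}, \ol{t}_{IXI})} = \{0\}$. The representation $(\ol{\pi}_X|_I, \ol{t}_X|_{IXI})$ is injective on $I$ since $\ol{\pi}_X$ is injective, so Proposition \ref{P:inj fp} further reduces the task to verifying that
\[
\ker \Phi \cap B_{[\un{0}, \un{1}_F]}^{(\ol{\pi}_{IXI}, \ol{t}_{IXI})} = \{0\} \foral F \subseteq [d].
\]

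The core-level check is the main technical point. Fix $F \subseteq [d]$ and suppose $x = \ol{\pi}_{IXI}(a) + \sum_{\un{0} < \un{n} \leq \un{1}_F} \ol{\psi}_{IXI, \un{n}}(k_\un{n})$ lies in $\ker \Phi$, with $a \in I$ and $k_\un{n} \in \K(IX_\un{n}I)$. Applying $\Phi$ and using (\ref{Eq:IXIcomprest}) yields the identity
\[
\ol{\pi}_X(a) + \sum_{\un{0} < \un{n} \leq \un{1}_F} \ol{\psi}_{X, \un{n}}(\iota_\un{n}(k_\un{n})) = 0
\]
in $\N\T_X$, where $\iota_\un{n} \colon \K(IX_\un{n}I) \hookrightarrow \K(X_\un{n})$ is the embedding of Lemma \ref{L:comp IXI embed}. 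I will evaluate this identity in the (injective) Fock representation $(\ol{\pi}, \ol{t})$ of $X$ on $\F X = \bigoplus_{\un{r}} X_\un{r}$: restriction to the vacuum summand $X_\un{0} = A$ forces $a=0$, since $\ol{\psi}_{X,\un{n}}(\iota_\un{n}(k_\un{n}))$ vanishes on $X_{\un{0}}$ whenever $\un{n}>\un{0}$. Proceeding inductively on $|\un{n}|$, and restricting at each stage to the summand $X_{\un{n}_0}$ for a minimal $\un{n}_0$ with $k_{\un{n}_0}\neq 0$ (noting that all competing terms $\ol{\psi}_{X,\un{n}}(\iota_\un{n}(k_\un{n}))$ with $\un{n}\neq\un{n}_0$ act as zero on $X_{\un{n}_0}$ by the minimality), one obtains $\iota_{\un{n}_0}(k_{\un{n}_0})=0$ and hence $k_{\un{n}_0}=0$, a contradiction. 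Therefore $x=0$.

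The hard part is the peeling-off argument together with the careful bookkeeping of the embedding $\iota_\un{n}$ between compacts; once injectivity is established, the \emph{hereditary and full} claim follows directly from Proposition \ref{P:IXIherfull}(ii).
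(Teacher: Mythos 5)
Your proof is correct and follows essentially the same route as the paper's: obtain the canonical $*$-epimorphism from the universal property via Propositions \ref{P:IXIsca} and \ref{P:IXIrepnNC}, reduce injectivity to the finite cores via the gauge action and Proposition \ref{P:inj fp}, and then kill a putative nonzero kernel element by compressing in the Fock representation at the summand $X_{\un{r}}$ for a minimal degree $\un{r}$ with $k_{\un{r}} \neq 0$, concluding hereditarity and fullness from Proposition \ref{P:IXIherfull}. The only cosmetic difference is that the paper treats $\ol{\pi}_{IXI}(a)$ uniformly as the term $\ol{\psi}_{IXI,\un{0}}(k_{\un{0}})$ and compresses once at the minimal summand, whereas you first dispose of $a$ at the vacuum summand and then peel off the remaining terms.
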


\begin{proof}
Note that $IXI$ is strong compactly aligned by Proposition \ref{P:IXIsca}, and $(\ol{\pi}_X|_I,\ol{t}_X|_{IXI})$ is a Nica-covariant representation of $IXI$ by Proposition \ref{P:IXIrepnNC}.
Injectivity of $(\ol{\pi}_X|_I,\ol{t}_X|_{IXI})$ is inherited from $(\ol{\pi}_X,\ol{t}_X)$, and $(\ol{\pi}_X|_I,\ol{t}_X|_{IXI})$ admits a gauge action by restriction.
Hence it suffices to show that $\ol{\pi}_X|_I\times\ol{t}_X|_{IXI}\colon \N\T_{IXI}\to\ca(\ol{\pi}_X|_I,\ol{t}_X|_{IXI})$ is injective on the fixed point algebra; equivalently on the $[\un{0}, \un{1}_{[d]}]$-core by Proposition \ref{P:inj fp}.

To this end, first note that we may take $(\ol{\pi}_X,\ol{t}_X)$ to be the Fock representation without loss of generality. 
Let $\psi_\un{n} \colon \K(IX_\un{n}I)\to\N\T_X$ be the $*$-homomorphism induced by $(\ol{\pi}_X|_I,\ol{t}_{X,\un{n}}|_{IX_{\un{n}}I})$ for all $\un{n}\in\bZ_+^d$.
Take $f\in\ker\ol{\pi}_X|_I\times\ol{t}_X|_{IXI}\cap B_{[\un{0},\un{1}_{[d]}]}^{(\ol{\pi}_{IXI},\ol{t}_{IXI})}$, so that we may write
\[
f=\ol{\pi}_{IXI}(a)+\sum\{\ol{\psi}_{IXI,\un{n}}(k_\un{n})\mid \un{0}\neq\un{n}\leq\un{1}_{[d]}\},
\]
for some $a\in I$ and $k_\un{n}\in\K(IX_\un{n}I)$ for all $\un{0}\neq\un{n}\leq\un{1}_{[d]}$.
Recall that we write $\ol{\pi}_{IXI}(a)=\ol{\psi}_{IXI,\un{0}}(k_\un{0})$ for $k_\un{0}:=\phi_{IX_\un{0}I}(a)$, and in turn $\ol{\pi}_X|_I(a)=\psi_\un{0}(k_\un{0})$.
Towards contradiction, suppose that $f\neq 0$.
Then we may choose $\un{0}\leq\un{r} \leq \un{1}_{[d]}$ minimal such that $k_{\un{r}}\neq 0$.
Let $P_{\un{r}} \colon \F X \to X_{\un{r}}$ be the canonical projection.
Then
\[
\iota(k_{\un{r}}) = P_{\un{r}} \left(\sum\{\psi_\un{n}(k_\un{n})\mid \un{0}\leq\un{n}\leq\un{1}_{[d]}\} \right) P_{\un{r}} = 0,
\]
where $\iota \colon \K(I X_{\un{r}} I) \to \K(X_{\un{r}})$ is the embedding guaranteed by Lemma \ref{L:comp IXI embed}.
Hence $k_{\un{r}} = 0$, which is a contradiction.
We conclude that $\ol{\pi}_X|_I\times\ol{t}_X|_{IXI}$ is injective on the $[\un{0},\un{1}_{[d]}]$-core, as required.
Proposition \ref{P:IXIherfull} then completes the proof.
\end{proof}

To establish the corresponding result for $\N\O_{IXI}$, we first generalise \cite[Proposition 9.2]{Kat07}.

\begin{lemma}\label{L:IXIJandI}
Let $X$ be a strong compactly aligned product system with coefficients in a C*-algebra $A$ and let $I\subseteq A$ be an ideal that is positively invariant for $X$.
Then, for all $F\subseteq[d]$, we have:
\begin{enumerate}
\item $\J_F(IXI)=I\cap\J_F(X)$;
\item $\I_F(IXI)=I\cap\I_F(X)$.
\end{enumerate}
\end{lemma}

\begin{proof}
(i) The claim holds trivially for $F=\mt$, so assume that $F\neq\mt$.
Let $a\in\J_F(IXI) \subseteq I$.
It suffices to show that $a\in\J_F(X)$.
To this end, fix $i\in[d]$.
By definition, we have $a\in(\phi_{I X_\un{i} I})^{-1}(\K(IX_\un{i}I))$.
Since $I$ is positively invariant for $X_\un{i}$, we may apply Lemma \ref{L:IX to XI} to deduce that $IX_\un{i}I=IX_\un{i}$, and an application of \cite[Lemma 9.1]{Kat07} then yields
\[
(\phi_{I X_\un{i} I})^{-1}(\K(IX_\un{i}I))=I\cap\phi_\un{i}^{-1}(\K(X_\un{i})).
\]
Hence $a\in\phi_\un{i}^{-1}(\K(X_\un{i}))$ for all $i\in[d]$, as required.
Moreover, since $\ker \phi_{\un{i}} \cap I \subseteq \ker \phi_{I X_\un{i} I}$ for every $i \in [d]$, we have $(\bigcap_{i\in F}\ker\phi_{I X_\un{i} I})^\perp \subseteq (\bigcap_{i\in F}\ker\phi_\un{i})^\perp \cap I$.
Hence $a\in(\bigcap_{i\in F}\ker\phi_\un{i})^\perp$ and thus $a\in\J_F(X)$, as required.

For the reverse inclusion, let $a\in I\cap\J_F(X)$.
For each $i\in[d]$, we have
\[
a\in I\cap\phi_\un{i}^{-1}(\K(X_\un{i}))=(\phi_{I X_\un{i} I})^{-1}(\K(IX_\un{i}I))
\]
by Lemma \ref{L:IX to XI} and \cite[Lemma 9.1]{Kat07}.
Therefore, it suffices to show that $a\in(\bigcap_{i\in F}\ker\phi_{I X_\un{i} I})^\perp$.
To this end, fix $b\in\bigcap_{i\in F}\ker\phi_{I X_\un{i} I}$.
Another application of \cite[Lemma 9.1]{Kat07} gives that
\[
b\in I\cap(\bigcap_{i\in F}\ker\phi_\un{i}).
\]
Since $a\in(\bigcap_{i\in F}\ker\phi_\un{i})^\perp$, we therefore have $ab=0$, as required.

\smallskip

\noindent
(ii) Once again, we may assume without loss of generality that $F\neq\mt$.
Take $a\in\I_F(IXI)\subseteq I$ and fix $\un{n}\perp F$.
We have
\begin{align*}
\sca{X_\un{n},aX_\un{n}} 
& \subseteq
[\sca{X_\un{n},IaIX_\un{n}}] 
\subseteq
[\sca{IX_\un{n},aIX_\un{n}}] \\
& \subseteq
[\sca{IX_\un{n}I,\phi_{IX_\un{n}I}(a)(IX_\un{n}I)}] 
\subseteq
\J_F(IXI) \subseteq \J_F(X),
\end{align*}
by using Lemma \ref{L:IX to XI} and item (i).
Thus $a\in I\cap\I_F(X)$, as required.

For the reverse inclusion, fix $a\in I\cap\I_F(X)$ and $\un{n}\perp F$. 
We have
\[
\sca{IX_\un{n}I,\phi_{IX_\un{n}I}(a)(IX_\un{n}I)}\subseteq\sca{X_\un{n},aX_\un{n}}\subseteq I\cap\J_F(X)=\J_F(IXI),
\]
using positive invariance of $I$, the fact that $a\in\I_F(X)$ and item (i).
Thus $a\in\I_F(IXI)$, completing the proof.
\end{proof}

\begin{proposition}\label{P:NOIXIembed}
Let $X$ be a strong compactly aligned product system with coefficients in a C*-algebra $A$ and let $I\subseteq A$ be an ideal that is positively invariant for $X$.
Then the inclusion $IXI \hookrightarrow X$ induces a canonical embedding
\[
\N\O_{IXI}\cong\ca(\pi_X^\I|_I,t_X^\I|_{IXI})\subseteq\N\O_X,
\]
and thus $\N\O_{IXI}$ is a hereditary, full C*-subalgebra of $\sca{\pi_X^\I(I)}$.
\end{proposition}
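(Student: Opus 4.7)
The plan is to mirror the proof of Proposition \ref{P:NTIXIembed} but work at the level of Cuntz-Nica-Pimsner algebras. Concretely, I will show that $(\pi_X^\I|_I,t_X^\I|_{IXI})$ is a CNP-representation of $IXI$, so that the universal property of $\N\O_{IXI}$ yields a canonical $*$-epimorphism $\Psi\colon\N\O_{IXI}\to\ca(\pi_X^\I|_I,t_X^\I|_{IXI})$. The injectivity of $\Psi$ will then be obtained from the co-universal property of $\N\O_{IXI}$ mentioned in Subsection \ref{Ss: scaps}.

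The preliminaries are routine: Proposition \ref{P:IXIsca} gives that $IXI$ is strong compactly aligned, and Proposition \ref{P:IXIrepnNC} gives that $(\pi_X^\I|_I,t_X^\I|_{IXI})$ is a Nica-covariant representation. Injectivity of $\pi_X^\I$ on $A$ passes to injectivity of $\pi_X^\I|_I$ on $I$, while the gauge action on $\N\O_X$ restricts to a gauge action on $\ca(\pi_X^\I|_I,t_X^\I|_{IXI})$. The principal step, and the main obstacle, is verifying the CNP-relations for $(\pi_X^\I|_I,t_X^\I|_{IXI})$. For fixed $F\subseteq[d]$ and $a\in\I_F(IXI)$, Proposition \ref{P:IXIJandI} gives $a\in I\cap\I_F(X)$, and in particular $\phi_{\un{n}}(a)\in\K(X_{\un{n}})$ for $\un{0}\leq\un{n}\leq\un{1}_F$ by strong compact alignment together with $a\in\J_F(X)$. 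Using an approximate unit $(u_\la)_{\la\in\La}$ of $I$ in conjunction with Corollary \ref{C:compconj}, I expect to deduce that
\[
\phi_{\un{n}}(a)\in [\phi_{\un{n}}(I)\K(X_{\un{n}})\phi_{\un{n}}(I)]\subseteq\ol{\spn}\{\Theta_{\xi,\eta}^{X_{\un{n}}}\mid \xi,\eta\in IX_{\un{n}}I\}.
\]
Consequently, (\ref{Eq:IXIcomprest}) applied to $(\pi_X^\I,t_X^\I)$ will give that the $*$-homomorphism on $\K(IX_{\un{n}}I)$ induced by $(\pi_X^\I|_I,t_X^\I|_{IXI})$ agrees at $\phi_{IX_{\un{n}}I}(a)$ with the value of the $*$-homomorphism on $\K(X_{\un{n}})$ induced by $(\pi_X^\I,t_X^\I)$ at $\phi_{\un{n}}(a)$. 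Summing over $\un{0}\neq\un{n}\leq\un{1}_F$ and invoking that $(\pi_X^\I,t_X^\I)$ is itself a CNP-representation of $X$ will yield the CNP-relation for $IXI$ at $a$.

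With $\Psi$ in hand, the final step is to produce its inverse. Since $(\pi_X^\I|_I,t_X^\I|_{IXI})$ is an injective Nica-covariant representation of $IXI$ admitting a gauge action, the co-universal property of $\N\O_{IXI}$ yields a canonical $*$-epimorphism $\Phi\colon\ca(\pi_X^\I|_I,t_X^\I|_{IXI})\to\N\O_{IXI}$. Both $\Psi$ and $\Phi$ are canonical, so their compositions fix the natural generators and must coincide with the identities on the respective algebras. This upgrades $\Psi$ to a canonical $*$-isomorphism $\N\O_{IXI}\cong\ca(\pi_X^\I|_I,t_X^\I|_{IXI})\subseteq\N\O_X$. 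Hereditariness and fullness of $\N\O_{IXI}$ in $\sca{\pi_X^\I(I)}$ will then follow immediately from Proposition \ref{P:IXIherfull} applied to $(\pi_X^\I,t_X^\I)$.
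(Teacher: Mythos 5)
Your proposal is correct and takes essentially the same approach as the paper: the paper likewise reduces everything to showing that $(\pi_X^\I|_I,t_X^\I|_{IXI})$ is a CNP-representation of $IXI$, verified exactly as you do via Proposition \ref{P:IXIJandI}, the approximate-unit argument with Corollary \ref{C:compconj}, the identity (\ref{Eq:IXIcomprest}), and the CNP-relations of $(\pi_X^\I,t_X^\I)$, before concluding with Proposition \ref{P:IXIherfull}. Your injectivity step, obtained by composing the universal $*$-epimorphism with the one from co-universality and using canonicity, is just an unwound form of the Gauge-Invariant Uniqueness Theorem \cite[Theorem 4.2]{DK18} that the paper invokes directly, so the two arguments coincide in substance.
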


\begin{proof}
Being the restriction of an injective Nica-covariant representation of $X$ that admits a gauge action, $(\pi_X^\I|_I,t_X^\I|_{IXI})$ is an injective Nica-covariant representation of $IXI$ that admits a gauge action.
Thus it suffices to show that $(\pi_X^\I|_I,t_X^\I|_{IXI})$ is a CNP-representation of $IXI$ by \cite[Theorem 4.2]{DK18}.

To this end, fix $\mt\neq F\subseteq[d]$ and $a\in\I_F(IXI)$.
For each $\un{n}\in\bZ_+^d$, let $\tilde{\psi}_\un{n} \colon \K(IX_\un{n}I)\to\N\O_X$ be the $*$-homomorphism induced by $(\pi_X^\I|_I,t_{X,\un{n}}^\I|_{IX_\un{n}I})$.
We must show that
\[
\pi_X^\I|_I(a)+\sum\{(-1)^{|\un{n}|}\tilde{\psi}_\un{n}(\phi_{IX_\un{n}I}(a))\mid \un{0}\neq\un{n}\leq\un{1}_F\}=0.
\]
Fix $\un{0}\neq\un{n}\leq\un{1}_F$.
We claim that 
\[
\phi_\un{n}(a)\in\ol{\spn}\{\Theta_{\xi_\un{n},\eta_\un{n}}^{X_\un{n}}\mid \xi_\un{n},\eta_\un{n}\in IX_\un{n}I\}.
\]
To see this, first note that $a\in I\cap\I_F(X)$ by item (ii) of Lemma \ref{L:IXIJandI}, and so $\phi_\un{n}(a)\in\K(X_\un{n})$.
Let $(u_\la)_{\la\in\La}$ be an approximate unit of $I$.
Then an application of Corollary \ref{C:compconj} yields
\[
\phi_\un{n}(u_\la)\phi_\un{n}(a)\phi_\un{n}(u_\la)\in\ol{\spn}\{\Theta_{\xi_\un{n},\eta_\un{n}}^{X_\un{n}}\mid \xi_\un{n},\eta_\un{n}\in IX_\un{n}I\}\foral \la\in\La.
\]
Since $a\in I$, we also have
\[
\nor{\cdot}\text{-}\lim_\la \phi_\un{n}(u_\la)\phi_\un{n}(a)\phi_\un{n}(u_\la)=\phi_\un{n}(a),
\]
and consequently 
\[
\phi_\un{n}(a)\in\ol{\spn}\{\Theta_{\xi_\un{n},\eta_\un{n}}^{X_\un{n}}\mid \xi_\un{n},\eta_\un{n}\in IX_\un{n}I\},
\]
as claimed.
Note also that $\phi_\un{n}(a)|_{IX_\un{n}I}=\phi_{IX_\un{n}I}(a)$ by definition.
An application of (\ref{Eq:IXIcomprest}) then gives that
\[
\tilde{\psi}_\un{n}(\phi_{IX_\un{n}I}(a))=\psi_{X,\un{n}}^\I(\phi_\un{n}(a))\foral\un{0}\neq\un{n}\leq\un{1}_F.
\]
From this we obtain
\begin{align*}
\pi_X^\I|_I(a)+\sum & \{(-1)^{|\un{n}|}\tilde{\psi}_\un{n}(\phi_{IX_\un{n}I}(a))\mid \un{0}\neq\un{n}\leq\un{1}_F\} = \\
& =
\pi_X^\I(a)+\sum\{(-1)^{|\un{n}|}\psi_{X,\un{n}}^\I(\phi_\un{n}(a))\mid \un{0}\neq\un{n}\leq\un{1}_F\}=0,
\end{align*}
where the final equality follows from the fact that $a\in\I_F(X)$, together with the fact that $(\pi_X^\I,t_X^\I)$ is a CNP-representation of $X$.
Thus $(\pi_X^\I|_I,t_X^\I|_{IXI})$ is a CNP-representation of $IXI$, as required.
Proposition \ref{P:IXIherfull} then completes the proof.
\end{proof}

\section{Relative $2^d$-tuples and relative Cuntz-Nica-Pimsner algebras}\label{S:relcnpalg}

Parametrising the gauge-invariant ideals of the Toeplitz-Pimsner algebra of a C*-correspon\-dence is facilitated using relative Cuntz-Pimsner algebras.
Hence our first goal is to suitably adapt the relative Cuntz-Pimsner algebra construction for a strong compactly aligned product system $X$.
We do this by mimicking the construction of $\N\O_X$, and exploiting strong compact alignment in order to define our relative Cuntz-Nica-Pimsner algebras in terms of simple algebraic relations induced by finitely many subsets of the coefficient algebra.
As an intermediate step towards the parametrisation, we study further quotients in-between $\N\T_X$ and $\N\O_X$.

\subsection{Relative $2^d$-tuples and induced ideals}

We start by paring down the properties of the family $\I:=\{\I_F\}_{F\subseteq[d]}$ and the CNP-relations of \cite{DK18} covered in Subsection \ref{Ss: scaps}. 

\begin{definition}\label{D:CNP rel}
Let $X$ be a strong compactly aligned product system with coefficients in a C*-algebra $A$. 
A \emph{$2^d$-tuple of $X$} is a family $\L := \{\L_F\}_{F \subseteq [d]}$ such that $\L_F$ is a non-empty subset of $A$ for all $F\subseteq[d]$.
A $2^d$-tuple $\L$ of $X$ is called \emph{relative} if
\[
\L_F\subseteq \bigcap\{\phi_{\un{i}}^{-1}(\K(X_{\un{i}}))\mid i\in F\} 
\foral
\mt\neq F\subseteq [d].
\]
\end{definition}

\begin{remark}\label{R:relrem}
We stipulate that the sets $\L_F$ are non-empty for convenience.
More specifically, the sets $\L_F$ are designed to generate certain ``relative CNP-relations", and so $\L_F=\{0\}$ plays the same role as $\L_F=\mt$.
In the former case the relative CNP-relations are satisfied trivially, and in the latter case they are satisfied vacuously.
Functionally there is no difference, and it is more convenient to take $\L_F=\{0\}$ as in this case $\L_F$ is an ideal.
\end{remark}

We write $\L\subseteq\L'$ for $2^d$-tuples $\L$ and $\L'$ if and only if $\L_F\subseteq\L_F'$ for all $F\subseteq[d]$.
This defines a partial order on the set of $2^d$-tuples of $X$.
We say that $\L=\L'$ if and only if $\L_F = \L'_F$ for all $F \subseteq [d]$.

Let $(\pi,t)$ be a Nica-covariant representation of $X$.
The crucial property of a relative $2^d$-tuple $\L$ is that
\[
\pi(a)q_F=\pi(a)+\sum\{(-1)^{|\un{n}|}\psi_\un{n}(\phi_\un{n}(a))\mid \un{0}\neq\un{n}\leq\un{1}_F\}\in\ca(\pi,t)\foral a\in\L_F, F\subseteq[d],
\]
using Proposition \ref{P:prod cai}.
This allows us to extend the ideas of \cite{DK18} in a natural way.

\begin{definition}\label{D:LCNP}
Let $X$ be a strong compactly aligned product system with coefficients in a C*-algebra $A$. 
Let $\L$ be a relative $2^d$-tuple of $X$ and let $(\pi,t)$ be a Nica-covariant representation of $X$. 
We define the \emph{ideal of the $\L$-relative CNP-relations with respect to $(\pi,t)$} to be
\[
\fJ_{\L}^{(\pi,t)} := \sum\{\fJ_{\L, F}^{(\pi,t)}\mid F\subseteq[d]\} \subseteq \ca(\pi,t),
\; \text{where} \;
\mathfrak{J}_{\L,F}^{(\pi,t)}:=\sca{\pi(\L_F)q_F}\foral F\subseteq[d].
\]
We say that \emph{$\L$ induces $\fJ_{\L}^{(\pi,t)}$}.
\end{definition}

Being an algebraic sum of ideals in $\ca(\pi,t)$, the space $\mathfrak{J}_\L^{(\pi,t)}$ is itself an ideal in $\ca(\pi,t)$.
When $(\pi,t)$ admits a gauge action, the ideal $\mathfrak{J}_\L^{(\pi,t)}$ is gauge-invariant.
By setting $\L=\I$, we recover $\mathfrak{J}_\I^{(\pi,t)}$ as defined in \cite{DK18}.
In many cases, we may assume that $\L$ is a family of ideals without loss of generality.

\begin{lemma}\label{L: ideal}
Let $X$ be a strong compactly aligned product system with coefficients in a C*-algebra $A$.
Let $\L$ be a relative $2^d$-tuple of $X$ and let $(\pi,t)$ be a Nica-covariant representation of $X$.
Then $\sca{\L}:=\{\sca{\L_F}\}_{F\subseteq[d]}$ is a relative $2^d$-tuple of $X$ such that $\L\subseteq\sca{\L}$ and $\mathfrak{J}_{\L,F}^{(\pi,t)}=\mathfrak{J}_{\sca{\L},F}^{(\pi,t)}$ for all $F\subseteq[d]$.
In particular, we have $\mathfrak{J}_\L^{(\pi,t)}=\mathfrak{J}_{\sca{\L}}^{(\pi,t)}$.
\end{lemma}

\begin{proof}
It is immediate that $\L\subseteq\sca{\L}$, and that $\sca{\L}$ is a relative $2^d$-tuple of $X$. 
It remains to see that $\mathfrak{J}_{\L,F}^{(\pi,t)}=\mathfrak{J}_{\sca{\L},F}^{(\pi,t)}$ for all $F\subseteq[d]$, as the final claim follows as an immediate consequence.
To this end, fix $F\subseteq[d]$. 
Since $\L_F\subseteq\sca{\L_F}$, we have $\mathfrak{J}_{\L,F}^{(\pi,t)}\subseteq \mathfrak{J}_{\sca{\L},F}^{(\pi,t)}$. 
For the reverse inclusion, for $b,c\in A$ and $a\in\L_F$, we have
\[
\pi(bac)q_F=\pi(b)\pi(a)\pi(c)q_F=\pi(b)(\pi(a)q_F)\pi(c) \in \mathfrak{J}_{\L,F}^{(\pi,t)},
\] 
using Proposition \ref{P:pf reducing} in the second equality. 
It follows that $\mathfrak{J}_{\sca{\L},F}^{(\pi,t)}\subseteq \mathfrak{J}_{\L,F}^{(\pi,t)}$, as required.
\end{proof}

Let $\L_1$ and $\L_2$ be relative $2^d$-tuples of $X$.
It is straightforward to see that their sum $\L_1+\L_2$, defined by $(\L_1 + \L_2)_F := \L_{1,F}+\L_{2,F}$ for all $F \subseteq [d]$, is also a relative $2^d$-tuple.
When $\L_1$ and $\L_2$ consist of ideals, summing respects the induced ideals.

\begin{lemma}\label{L:sumofe}
Let $X$ be a strong compactly aligned product system with coefficients in a C*-algebra $A$.
Let $\L_1$ and $\L_2$ be relative $2^d$-tuples of $X$ that consist of ideals and let $(\pi,t)$ be a Nica-covariant representation of $X$.
Then
\[
\fJ_{\L_1+\L_2, F}^{(\pi,t)}=\fJ_{\L_1, F}^{(\pi,t)}+\fJ_{\L_2, F}^{(\pi,t)} \foral F \subseteq [d],
\]
and thus $\fJ_{\L_1+\L_2}^{(\pi,t)}=\fJ_{\L_1}^{(\pi,t)}+\fJ_{\L_2}^{(\pi,t)}$.
\end{lemma}

\begin{proof}
This is immediate since $0 \in \L_{1,F}, \L_{2,F}$ for all $F \subseteq [d]$.
\end{proof}

A first approach towards the parametrisation of the gauge-invariant ideals of $\N\T_X$ would be to establish a correspondence between the relative $2^d$-tuples of $X$ and the gauge-invariant ideals of $\N\T_X$ that they induce.
However, this is insufficient as different relative $2^d$-tuples may induce the same gauge-invariant ideal of $\N\T_X$.
We provide an example to this effect in Remark \ref{R:max} by using the next lemma.

\begin{lemma}\label{L:max}
Let $X$ be a strong compactly aligned product system with coefficients in a C*-algebra $A$.
Let $\L$ be a relative $2^d$-tuple of $X$ and let the relative $2^d$-tuple $\L'$ be defined by 
\[
\L'_F := 
\begin{cases}
\L_\mt & \text{if } F= \mt, \\
\L_F + \L_\mt \cap (\bigcap \{\phi_{\un{i}}^{-1}(\K(X_{\un{i}}))\mid i\in F\}) & \text{if } \mt \neq F \subseteq [d].
\end{cases}
\]
Then we have $\fJ_{\L'}^{(\pi,t)} = \fJ_{\L}^{(\pi,t)}$ for every Nica-covariant representation $(\pi,t)$ of $X$.
\end{lemma}

\begin{proof}
On one hand we have $\mathfrak{J}_\L^{(\pi,t)}\subseteq\mathfrak{J}_{\L'}^{(\pi,t)}$ since $\L \subseteq \L'$.
On the other hand, fix $\mt\neq F\subseteq[d]$ and $a \in \L_\mt$ satisfying $\phi_{\un{i}}(a) \in \K(X_{\un{i}})$ for all $i \in F$.
We claim that
\[
\psi_{\un{n}}(\phi_{\un{n}}(a)) \in \sca{\pi(\L_\mt)} = \fJ_{\L, \mt}^{(\pi,t)} \foral \un{0}\neq\un{n} \leq \un{1}_F.
\]
We will prove this by induction on $|\un{n}|$.
When $|\un{n}|=1$, we must have $\un{n}=\un{i}$ for some $i\in F$.
In turn, for an approximate unit $(k_{\un{i}, \la})_{\la\in\La} \subseteq \K(X_{\un{i}})$, we have
\[
\psi_\un{i}(\phi_\un{i}(a))
=
\nor{\cdot}\text{-}\lim_\la \psi_\un{i}(\phi_\un{i}(a)k_{\un{i},\la})
=
\nor{\cdot}\text{-}\lim_\la \pi(a)\psi_\un{i}(k_{\un{i},\la})
\in 
\sca{\pi(\L_\mt)},
\]
as required.
Now suppose that the claim holds for all $\un{n}$ such that $\un{0}\neq\un{n}\leq\un{1}_F$ and $|\un{n}|=N$ for some $1\leq N<|F|$.
Fix $\un{m}$ such that $\un{0}\neq\un{m}\leq\un{1}_F$ and $|\un{m}|=N+1$.
We may express $\un{m}$ in the form $\un{m}=\un{n}+\un{i}$ for some $\un{0}\neq\un{n}\leq\un{1}_F$ and $i\in F$ such that $\un{n}\perp\un{i}$ and $|\un{n}|=N$.
We have 
\begin{align*}
\psi_\un{m}(\phi_\un{m}(a))
& =
\nor{\cdot}\text{-}\lim_\la \psi_\un{m}(\iota_\un{n}^\un{m}(\phi_\un{n}(a))\iota_\un{i}^\un{m}(k_{\un{i},\la})) 
=
\nor{\cdot}\text{-}\lim_\la \psi_\un{n}(\phi_\un{n}(a))\psi_\un{i}(k_{\un{i},\la})
\in \sca{\pi(\L_\mt)},
\end{align*}
by the inductive hypothesis, Proposition \ref{P:sca ai}, and Nica-covariance (noting that $\un{n}\vee\un{i}=\un{n}+\un{i}=\un{m}$ since $\un{n}\perp\un{i}$).
This finishes the proof of the claim.
Therefore, we have
\[
\pi(a) q_{F} = \pi(a) + \sum \{ (-1)^{|\un{n}|} \psi_{\un{n}}(\phi_{\un{n}}(a)) \mid \un{0} \neq \un{n} \leq \un{1}_F \}
\in 
\sca{\pi(\L_\mt)}
\subseteq
\mathfrak{J}_\L^{(\pi,t)}.
\]
Hence
\[
\pi \left( \L_\mt \cap (\bigcap \{\phi_{\un{i}}^{-1}(\K(X_{\un{i}}))\mid i\in F\}) \right) q_F
\subseteq
\mathfrak{J}_\L^{(\pi,t)},
\]
from which it follows that $\mathfrak{J}_{\L'}^{(\pi,t)} \subseteq \mathfrak{J}_\L^{(\pi,t)}$, as required.
\end{proof}

\begin{remark}\label{R:max}
Notice that $\L$ and $\L'$ in Lemma \ref{L:max} may differ in general.
For example, assume that the left actions of the fibres of $X$ are by compact operators.
Then any $2^d$-tuple of $X$ is automatically relative.
Let $\L_\mt$ be a non-zero ideal and set $\L_F=\{0\}$ for all $\mt\neq F\subseteq[d]$. 
Then $\L \neq \L'$, as claimed.
\end{remark}

To remedy this issue, we will instead look for the largest relative $2^d$-tuple inducing a fixed gauge-invariant ideal of $\N\T_X$.

\begin{definition}\label{D:maximal}
Let $X$ be a strong compactly aligned product system with coefficients in a C*-algebra $A$.
Let $\M$ be a relative $2^d$-tuple of $X$ and let $(\pi,t)$ be a Nica-covariant representation of $X$.
We say that $\M$ is a \emph{maximal $2^d$-tuple of $X$ with respect to $(\pi,t)$} if whenever $\L$ is a relative $2^d$-tuple of $X$ such that $\mathfrak{J}_\L^{(\pi,t)}=\mathfrak{J}_\M^{(\pi,t)}$ and $\M\subseteq\L$, we have $\L=\M$.
When we replace $(\pi,t)$ by $(\ol{\pi}_X,\ol{t}_X)$, we will refer to $\M$ simply as a \emph{maximal $2^d$-tuple of $X$}.
\end{definition}

The following proposition establishes existence and uniqueness of maximal $2^d$-tuples.

\begin{proposition} \label{P:maximal}
Let $X$ be a strong compactly aligned product system with coefficients in a C*-algebra $A$. 
Let $\L$ be a relative $2^d$-tuple of $X$ and let $(\pi,t)$ be a Nica-covariant representation of $X$.
Then there exists a unique relative $2^d$-tuple $\M$ of $X$ such that:
\begin{enumerate}
\item $\fJ_{\L}^{(\pi,t)} = \fJ_{\M}^{(\pi,t)}$, and
\item $\L' \subseteq \M$ for every relative $2^d$-tuple $\L'$ of $X$ satisfying $\fJ_{\L'}^{(\pi,t)} = \fJ_{\L}^{(\pi,t)}$.
\end{enumerate}
In particular, $\M$ is the largest $2^d$-tuple of $X$ with respect to $(\pi,t)$ which consists of ideals.
\end{proposition}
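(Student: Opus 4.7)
The plan is to construct $\M$ explicitly as the natural candidate: the largest set of elements $a$ (in the appropriate compact pre-image) for which $\pi(a)q_F$ already lies inside $\fJ_{\L}^{(\pi,t)}$. First I would set
\[
\M_F := \{a \in \bigcap\{\phi_{\un{i}}^{-1}(\K(X_{\un{i}}))\mid i\in F\} \mid \pi(a) q_F \in \fJ_{\L}^{(\pi,t)}\} \foral F\subseteq [d],
\]
with the convention that for $F=\mt$ the compact intersection is $A$ and $q_\mt = I$, so $\M_\mt = \pi^{-1}(\fJ_{\L}^{(\pi,t)})$. By Proposition \ref{P:prod cai}, the element $\pi(a)q_F$ makes sense in $\ca(\pi,t)$ precisely when $a$ lies in the compact intersection, so $\M_F$ is well-defined, non-empty (it contains $0$), and by construction a relative $2^d$-tuple.

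Next I would verify that each $\M_F$ is an ideal of $A$. The compact intersection $\bigcap\{\phi_{\un{i}}^{-1}(\K(X_{\un{i}}))\mid i\in F\}$ is itself an ideal of $A$, so closure under addition and scalar multiplication reduces to linearity of the map $a\mapsto \pi(a)q_F$ together with the fact that $\fJ_{\L}^{(\pi,t)}$ is norm-closed. For closure under multiplication by elements of $A$, I would invoke Proposition \ref{P:pf reducing}, which gives $q_F\in\pi(A)'$: then for $a\in\M_F$ and $b\in A$,
\[
\pi(ab)q_F = \pi(a)q_F\,\pi(b) \in \fJ_{\L}^{(\pi,t)} \qand \pi(ba)q_F = \pi(b)\pi(a)q_F \in \fJ_{\L}^{(\pi,t)}.
\]
Norm-closure of $\M_F$ follows because $\|\pi(a_n)q_F - \pi(a)q_F\|\leq \|a_n-a\|$ and $\fJ_{\L}^{(\pi,t)}$ is closed.

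Then I would check the two advertised properties. For $\L\subseteq \M$: relativity of $\L$ places $\L_F$ inside the compact intersection, and $\pi(\L_F)q_F \subseteq \fJ_{\L,F}^{(\pi,t)} \subseteq \fJ_{\L}^{(\pi,t)}$ by definition, so $\L_F\subseteq \M_F$; this immediately yields $\fJ_{\L}^{(\pi,t)}\subseteq \fJ_{\M}^{(\pi,t)}$. For the reverse inclusion of ideals, by construction $\pi(\M_F)q_F\subseteq \fJ_{\L}^{(\pi,t)}$ for every $F\subseteq[d]$, so $\fJ_{\M,F}^{(\pi,t)}=\sca{\pi(\M_F)q_F}\subseteq \fJ_{\L}^{(\pi,t)}$, giving $\fJ_{\M}^{(\pi,t)}=\fJ_{\L}^{(\pi,t)}$. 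For property (ii), if $\L'$ is any relative $2^d$-tuple with $\fJ_{\L'}^{(\pi,t)}=\fJ_{\L}^{(\pi,t)}$, then for $a\in \L'_F$ we have $a\in \bigcap\{\phi_{\un{i}}^{-1}(\K(X_{\un{i}}))\mid i\in F\}$ by relativity, and $\pi(a)q_F \in \fJ_{\L',F}^{(\pi,t)}\subseteq \fJ_{\L}^{(\pi,t)}$; hence $a\in \M_F$.

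Finally, uniqueness follows from (ii): any two relative $2^d$-tuples with properties (i) and (ii) must be contained in each other and therefore coincide, and $\M$ is visibly maximal in the sense of Definition \ref{D:maximal}. I don't expect a genuine obstacle here; the only subtle point is making sure $\pi(a)q_F$ lives in $\ca(\pi,t)$ so that the membership condition defining $\M_F$ is meaningful, which is exactly what the compact intersection is there to guarantee via Proposition \ref{P:prod cai}.
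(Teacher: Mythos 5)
Your proof is correct, but it takes a genuinely different route from the paper's. The paper obtains $\M$ non-constructively, as the union $\M_F=\bigcup\{\L'_F\mid \fJ_{\L'}^{(\pi,t)}=\fJ_{\L}^{(\pi,t)}\}$ over all relative $2^d$-tuples inducing the same ideal; with that definition, properties (i) and (ii) are essentially immediate, but the fact that $\M$ consists of ideals is not, and the paper deduces it indirectly by applying Proposition \ref{P: ideal} to get $\fJ_{\sca{\M}}^{(\pi,t)}=\fJ_{\M}^{(\pi,t)}$ and then invoking the just-established maximality to conclude $\M=\sca{\M}$. You instead define $\M_F$ intrinsically, as the set of $a$ in the ideal $\bigcap\{\phi_{\un{i}}^{-1}(\K(X_{\un{i}}))\mid i\in F\}$ with $\pi(a)q_F\in\fJ_{\L}^{(\pi,t)}$, and verify idealness directly from $q_F\in\pi(A)'$ (Proposition \ref{P:pf reducing}), linearity of $a\mapsto\pi(a)q_F$, and norm-closedness of $\fJ_{\L}^{(\pi,t)}$. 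The two constructions yield the same tuple: your $\M$ contains every relative tuple inducing $\fJ_{\L}^{(\pi,t)}$ and is itself such a tuple, so it coincides with the paper's union. What your version buys is an explicit membership criterion for $\M_F$ — which is in fact the description the paper exploits later anyway (compare Remark \ref{R:max I} and the proof of Proposition \ref{P:rep m fam}, where maximal tuples are identified via exactly the condition $\pi(a)q_F\in\fJ$) — whereas the paper's union argument is quicker for bare existence and uniqueness but gives no formula. One cosmetic correction: $\pi(a)q_F$ always makes sense as an operator on the ambient Hilbert space, so your defining condition is meaningful for any $a\in A$; the role of the compact intersection is to guarantee $\pi(a)q_F\in\ca(\pi,t)$ via Proposition \ref{P:prod cai} and hence relativity of $\M$, not well-definedness of the condition, so your ``precisely when'' overstates matters slightly — but nothing in the argument depends on it.
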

\begin{proof}
For each $F\subseteq[d]$, define
\[
\M_F:=\bigcup\{\L_F'\mid \L' \; \text{is a relative $2^d$-tuple of $X$ such that} \; \mathfrak{J}_{\L'}^{(\pi,t)}=\mathfrak{J}_\L^{(\pi,t)}\}.
\]
The union is well-defined, since it includes $\L_F$ by assumption, and the index takes values in the set $\mathcal{P}(A)^{2^d}$.
Each $\M_F$ is a non-empty subset of $A$ by construction, so $\M$ is a $2^d$-tuple of $X$.
Fix $\mt\neq F\subseteq[d]$ and take $a\in\M_F$.
Then $a\in\L_F'$ for some relative $2^d$-tuple $\L'$, and hence $\phi_\un{i}(a)\in\K(X_\un{i})$ for all $i\in F$. 
Therefore, setting $\M:=\{\M_F\}_{F\subseteq[d]}$, we have that $\M$ is a relative $2^d$-tuple of $X$.

Because $\L\subseteq\M$, we have $\mathfrak{J}_\L^{(\pi,t)}\subseteq\mathfrak{J}_\M^{(\pi,t)}$ trivially.
For the other inclusion, fix $F\subseteq[d]$ and $a\in\M_F$.
It suffices to show that $\pi(a)q_F\in\mathfrak{J}_\L^{(\pi,t)}$.
To this end, we have $a\in\L_F'$ for some relative $2^d$-tuple $\L'$ with the property that $\mathfrak{J}_{\L'}^{(\pi,t)}=\mathfrak{J}_\L^{(\pi,t)}$.
By definition, we have $\pi(a)q_F\in\mathfrak{J}_{\L'}^{(\pi,t)}=\mathfrak{J}_\L^{(\pi,t)}$, as required.

By construction, we have $\L'\subseteq\M$ for every relative $2^d$-tuple $\L'$ satisfying $\mathfrak{J}_{\L'}^{(\pi,t)}=\mathfrak{J}_\L^{(\pi,t)}$. 
It is immediate that $\M$ is unique and maximal with respect to $(\pi,t)$, being the largest relative $2^d$-tuple inducing $\fJ_\L^{(\pi,t)}$.

Finally, by Lemma \ref{L: ideal} we have that $\sca{\M}:=\{\sca{\M_F}\}_{F\subseteq[d]}$ is a relative $2^d$-tuple such that $\M\subseteq\sca{\M}$ and $\mathfrak{J}_\M^{(\pi,t)}=\mathfrak{J}_{\sca{\M}}^{(\pi,t)}$.
Applying maximality of $\M$, we have $\M=\sca{\M}$.
This shows that $\M$ consists of ideals, finishing the proof.
\end{proof}

\begin{remark}\label{R:max I}
The motivating example of a maximal relative $2^d$-tuple is the family $\I$.
To see this, first note that $\I$ is a relative $2^d$-tuple by definition.
Let $\M$ be the maximal $2^d$-tuple inducing $\fJ_\I^{(\ol{\pi}_X, \ol{t}_X)}$, so that $\I\subseteq\M$.
If $a \in \M_F$ for $F \subseteq [d]$, then 
\[
\ol{\pi}_X(a) \ol{q}_{X,F} \in \fJ_\M^{(\ol{\pi}_X, \ol{t}_X)}=\fJ_\I^{(\ol{\pi}_X, \ol{t}_X)},
\]
and thus we have $a \in \I_F$ by \cite[Proposition 3.4]{DK18}.
Therefore $\M \subseteq \I$, showing that $\M = \I$.
\end{remark}

We wish to ascertain the conditions under which a relative $2^d$-tuple is promoted to a maximal one.
Using $\I$ as a prototype, we abstract two of its properties.

\begin{definition}\label{D:inv po}
Let $X$ be a strong compactly aligned product system with coefficients in a C*-algebra $A$. 
Let $\L$ be a $2^d$-tuple of $X$.
\begin{enumerate}
\item  We say that $\L$ is $X$-\emph{invariant} if $\left[\sca{X_\un{n},\L_F X_\un{n}}\right]\subseteq\L_F$, for all $\un{n}\perp F$, $F\subseteq[d]$.
\item We say that $\L$ is \emph{partially ordered} if $\L_{F_1} \subseteq \L_{F_2}$ whenever  $F_1 \subseteq F_2 \subseteq [d]$.
\end{enumerate}
\end{definition}

When the underlying product system $X$ is clear from the context, we will abbreviate ``$X$-invariant" as simply ``invariant".
Notice that when we take $F=\mt$, condition (i) implies that $\L_\mt$ is positively invariant for $X$ (provided that $\L_\mt$ is an ideal).
If $\L_F$ is an ideal, then we may drop the closed linear span in condition (i).
If $\L$ is a partially ordered relative $2^d$-tuple, then
\[
\L_F\subseteq\L_{[d]}\subseteq\bigcap\{\phi_\un{i}^{-1}(\K(X_\un{i}))\mid i\in[d]\}\foral F\subseteq[d].
\]
In particular, we have $\pi(a)q_F\in\ca(\pi,t)$ for all $a\in\L_D$ and $D,F\subseteq[d]$, where $(\pi,t)$ is a Nica-covariant representation.

The $2^d$-tuple $\I$ is invariant, partially ordered and consists of ideals.
Since $\I$ is maximal by Remark \ref{R:max I}, one may be tempted to assert that invariance and partial ordering suffice to capture maximality.
However, this is not true in general, as the following counterexample shows.

\begin{example} \label{E:not en}
Let $B$ be a non-zero C*-algebra and let $A=B \oplus \bC$ be its unitisation. 
Consider the semigroup action $\al\colon\bZ_+^2\to\text{End}(A)$ such that 
\[
\al_{(1,0)}=\text{id}_A
\qand 
\al_{(0,1)}(b,\la)=(0,\la) 
\foral b\in B,\la\in\bC.
\]
Applying the construction of Subsection \ref{Ss:dynsys} to the C*-dynamical system $(A,\al,\bZ_+^2)$, we obtain a strong compactly aligned product system $X_\al$ over $\bZ_+^2$ with coefficients in $A$.
In particular, the left action of every fibre is by compacts, and so any $2^2$-tuple of $X_\al$ is automatically a relative $2^2$-tuple.

Next we define the relative $2^2$-tuples $\L$ and $\L'$ of $X_\al$ by
\begin{center}
\begin{tikzcd}
\L_{\{2\}}=\{0\} \arrow[dash]{r}\arrow[dash]{d} & \L_{\{1,2\}}=B\oplus\{0\}\arrow[dash]{d} \\
\L_\mt=\{0\}\arrow[dash]{r} & \L_{\{1\}}=\{0\}
\end{tikzcd}
and
\begin{tikzcd}
\L_{\{2\}}'=\{0\} \arrow[dash]{r}\arrow[dash]{d} & \L_{\{1,2\}}'=B\oplus\{0\}\arrow[dash]{d} \\
\L_\mt'=\{0\}\arrow[dash]{r} & \L_{\{1\}}'=B\oplus\{0\}.
\end{tikzcd}
\end{center}
Notice that $\L$ and $\L'$ are invariant, partially ordered, consist of ideals and satisfy $\L \subsetneq \L'$.
Let $(\pi,t)$ be a Nica-covariant representation.
Since $\L\subseteq\L'$, it is clear that $\mathfrak{J}_\L^{(\pi,t)}\subseteq\mathfrak{J}_{\L'}^{(\pi,t)}$.
It is immediate that $\pi(\L_F')q_F\subseteq\mathfrak{J}_\L^{(\pi,t)}$ for all $F\in\{\mt,\{2\},\{1,2\}\}$.
Fix $(b,0)\in \L_{\{1\}}'$, and observe that
\begin{align*}
\mathfrak{J}_\L^{(\pi,t)}\ni\pi(b,0)q_{\{1,2\}} 
& =
\pi(b,0)-\psi_{(1,0)}(\phi_{(1,0)}(b,0))-\psi_{(0,1)}(\phi_{(0,1)}(b,0))+\psi_{(1,1)}(\phi_{(1,1)}(b,0)) \\
& =
\pi(b,0)-\psi_{(1,0)}(\phi_{(1,0)}(b,0))
=
\pi(b,0)q_{\{1\}},
\end{align*}
using the fact that $\phi_{(0,1)}(b,0)=\phi_{(1,1)}(b,0)=0$ by definition.
Therefore we conclude that $\pi(\L_{\{1\}}')q_{\{1\}}\subseteq\mathfrak{J}_\L^{(\pi,t)}$, and thus $\mathfrak{J}_{\L'}^{(\pi,t)}\subseteq\mathfrak{J}_\L^{(\pi,t)}$. 
Hence $\L$ is not maximal with respect to $(\pi,t)$.
\end{example}

To better understand maximality, we need to explore $2^d$-tuples induced by Nica-covariant representations, demonstrating the conditions of Definition \ref{D:inv po} in action.

\begin{definition}\label{D:rep fam}
Let $X$ be a strong compactly aligned product system with coefficients in a C*-algebra $A$ and let $(\pi,t)$ be a Nica-covariant representation of $X$.
We define $\L^{(\pi,t)}$ to be the $2^d$-tuple of $X$ given by
\[
\L_\mt^{(\pi,t)}:=\ker\pi \qand \L_F^{(\pi,t)}:=\pi^{-1}(B_{(\un{0},\un{1}_F]}^{(\pi,t)}) \foral \mt\neq F\subseteq[d].
\]
\end{definition}

\begin{proposition}\label{P:inj e fam}
Let $X$ be a strong compactly aligned product system with coefficients in a C*-algebra $A$ and let $(\pi,t)$ be a Nica-covariant representation of $X$.
Then $\L^{(\pi,t)}$ is invariant, partially ordered and consists of ideals. 

If $(\pi,t)$ is in addition injective, then $\L^{(\pi,t)} \subseteq \I$, and thus $\L^{(\pi,t)}$ is relative.
\end{proposition}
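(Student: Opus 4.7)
Plan: I will verify the four requirements, namely that each $\L_F^{(\pi,t)}$ is an ideal of $A$, that the tuple is partially ordered, that it is $X$-invariant, and (when $(\pi,t)$ is injective) that $\L^{(\pi,t)} \subseteq \I$—from which relativity follows automatically, since $\I_F \subseteq \bigcap\{\phi_{\un{i}}^{-1}(\K(X_{\un{i}}))\mid i\in F\}$ by the definition of $\I$.

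The three ``easy'' claims use only the core structure and Proposition \ref{P:in IF}. For the ideal property, $\L_\mt^{(\pi,t)} = \ker\pi$ is immediate; for $\mt \neq F \subseteq [d]$, the core $B_{(\un{0},\un{1}_F]}^{(\pi,t)}$ is a norm-closed C*-subalgebra of $\ca(\pi,t)$, and the identities $\pi(a) \psi_{\un{n}}(k) = \psi_{\un{n}}(\phi_{\un{n}}(a)\, k)$ and $\psi_{\un{n}}(k) \pi(c) = \psi_{\un{n}}(k\, \phi_{\un{n}}(c))$ combine to show that $\pi(A)\, B_{(\un{0},\un{1}_F]}^{(\pi,t)}\, \pi(A) \subseteq B_{(\un{0},\un{1}_F]}^{(\pi,t)}$, so $\L_F^{(\pi,t)} = \pi^{-1}(B_{(\un{0},\un{1}_F]}^{(\pi,t)})$ is a closed two-sided ideal of $A$. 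Partial ordering follows from $F_1 \subseteq F_2 \Rightarrow \un{1}_{F_1} \leq \un{1}_{F_2} \Rightarrow B_{(\un{0},\un{1}_{F_1}]}^{(\pi,t)} \subseteq B_{(\un{0},\un{1}_{F_2}]}^{(\pi,t)}$, together with $\L_\mt^{(\pi,t)} = \ker\pi \subseteq \L_F^{(\pi,t)}$ (since $0$ lies in every core). For the containment $\L^{(\pi,t)} \subseteq \I$ in the injective case, $\L_\mt^{(\pi,t)} = \{0\} = \I_\mt$ is clear, and for $\mt \neq F \subseteq [d]$ with $a \in \L_F^{(\pi,t)}$ the membership $\pi(a) \in B_{(\un{0},\un{1}_F]}^{(\pi,t)}$ together with Proposition \ref{P:in IF} (applied with $\un{m} = \un{1}_F$) forces $a \in \I_F$.

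The main obstacle is invariance. Fix $F \subseteq [d]$, $\un{m} \perp F$, $\xi, \eta \in X_{\un{m}}$, and $a \in \L_F^{(\pi,t)}$; then $\pi(\sca{\xi, a\eta}) = t_{\un{m}}(\xi)^* \pi(a) t_{\un{m}}(\eta)$, which vanishes when $F = \mt$. For $\mt \neq F$, by linearity and density it suffices to establish
\[
t_{\un{m}}(X_{\un{m}})^*\, \psi_{\un{n}}(\K(X_{\un{n}}))\, t_{\un{m}}(X_{\un{m}}) \subseteq \psi_{\un{n}}(\K(X_{\un{n}}))
\]
for every $\un{0} < \un{n} \leq \un{1}_F$, which forces $\un{n} \perp \un{m}$ because $\supp \un{n} \subseteq F$ and $\supp \un{m} \cap F = \mt$. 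The Wick-ordering consequence of Nica-covariance (applied with $\un{m}\bZ_+^d \cap \un{n}\bZ_+^d = (\un{n}+\un{m})\bZ_+^d$, since $\un{n} \perp \un{m}$) provides the inclusion $t_{\un{m}}(X_{\un{m}})^* t_{\un{n}}(X_{\un{n}}) \subseteq [t_{\un{n}}(X_{\un{n}}) t_{\un{m}}(X_{\un{m}})^*]$. Working on rank-one operators $\Theta^{X_{\un{n}}}_{\xi', \eta'}$, substituting a net of finite sums $\sum_{i} t_{\un{n}}(\alpha_i) t_{\un{m}}(\beta_i)^*$ approximating $t_{\un{m}}(\xi)^* t_{\un{n}}(\xi')$, applying the product system identity $t_{\un{n}}(\eta') t_{\un{m}}(\beta_i) = t_{\un{n}+\un{m}}(u_{\un{n}, \un{m}}(\eta' \otimes \beta_i))$, and then using the commutation unitary $V = u_{\un{m}, \un{n}}^{-1} \circ u_{\un{n}, \un{m}} \colon X_{\un{n}} \otimes_A X_{\un{m}} \to X_{\un{m}} \otimes_A X_{\un{n}}$ (available precisely because $\un{n} \perp \un{m}$) to factor $u_{\un{n}, \un{m}}(\eta' \otimes \beta_i)$ through $X_{\un{m}} \cdot X_{\un{n}}$, the resulting expression telescopes via $t_{\un{m}}(\mu)^* t_{\un{m}}(\eta) = \pi(\sca{\mu, \eta})$ and $t_{\un{n}}(\nu)^* \pi(b) = t_{\un{n}}(b^* \nu)^*$ into a norm-limit of finite sums of rank-one operators $\psi_{\un{n}}(\Theta^{X_{\un{n}}}_{\alpha, \beta})$ in $\psi_{\un{n}}(\K(X_{\un{n}}))$, which is norm-closed. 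This yields the required invariance.
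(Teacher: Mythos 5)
Your proof is correct and follows the paper's argument essentially verbatim: the ideal property via the $\pi(A)$-bimodule property of the cores, partial ordering via the inclusions $B_{(\un{0},\un{1}_{F_1}]}^{(\pi,t)} \subseteq B_{(\un{0},\un{1}_{F_2}]}^{(\pi,t)}$, invariance via $t_{\un{m}}(X_{\un{m}})^*\, B_{(\un{0},\un{1}_F]}^{(\pi,t)}\, t_{\un{m}}(X_{\un{m}}) \subseteq B_{(\un{0},\un{1}_F]}^{(\pi,t)}$, and the injective case via Proposition \ref{P:in IF}. The only divergence is that you unpack the Wick-ordering computation behind the invariance inclusion (which the paper attributes to Nica-covariance in a single line), and that elaboration is correct rather than a genuinely different route.
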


\begin{proof}
It is clear that $\L^{(\pi,t)}$ consists of ideals by construction and the properties of the representation $(\pi,t)$. 
For invariance, fix $F \subseteq [d], a\in\L_F^{(\pi,t)}$, and $\un{m} \perp F$.
Due to Nica-covariance, we have
\[
\pi(\sca{X_{\un{m}}, a X_{\un{m}}}) 
=
t_{\un{m}}(X_{\un{m}})^* \pi(a) t_{\un{m}}(X_{\un{m}})
\subseteq
t_{\un{m}}(X_{\un{m}})^* B_{(\un{0}, \un{1}_F]}^{(\pi,t)} t_{\un{m}}(X_{\un{m}}) \subseteq B_{(\un{0}, \un{1}_F]}^{(\pi,t)}.
\]
Hence $\L^{(\pi,t)}$ is invariant, as required.
To see that $\L^{(\pi,t)}$ is partially ordered, first note that the inclusion $\L_\mt^{(\pi,t)}\subseteq\L_F^{(\pi,t)}$ is immediate for all $F\subseteq[d]$.
Likewise, whenever $\mt\neq F\subseteq D\subseteq[d]$, we have $B_{(\un{0},\un{1}_F]}^{(\pi,t)}\subseteq B_{(\un{0},\un{1}_D]}^{(\pi,t)}$ and therefore $\L_F^{(\pi,t)}\subseteq\L_D^{(\pi,t)}$, as required.
The final claim follows by \cite[Proposition 3.4]{DK18} (see Proposition \ref{P:in IF}).
\end{proof}

In order to make further progress with maximality, we use relative $2^d$-tuples to define relative Cuntz-Nica-Pimsner algebras.

\begin{definition}\label{D:LCNP}
Let $X$ be a strong compactly aligned product system with coefficients in a C*-algebra $A$. 
Let $\L$ be a relative $2^d$-tuple of $X$ and let $(\pi,t)$ be a Nica-covariant representation of $X$. 
We say that $(\pi,t)$ is an \emph{$\L$-relative CNP-representation of $X$} if it satisfies
\[
\pi(\L_F)q_F = \{0\}
\foral
F \subseteq [d].
\]
The universal C*-algebra with respect to the $\L$-relative CNP-representations of $X$ is denoted by $\N\O(\L,X)$, and we refer to it as the \emph{$\L$-relative Cuntz-Nica-Pimsner algebra of $X$}.
\end{definition}

We have that $(\pi,t)$ is an $\L$-relative CNP-representation if and only if $\mathfrak{J}_\L^{(\pi,t)}=\{0\}$; equivalently if $\pi \times t$ factors through $\N\O(\L, X)$.
The following proposition is the analogue of Proposition \ref{P:T univ} for relative Cuntz-Nica-Pimsner algebras, and shows that $\N\O(\L, X)$ exists.

\begin{proposition}\label{P:NO}
Let $X$ be a strong compactly aligned product system with coefficients in a C*-algebra $A$ and let $\L$ be a relative $2^d$-tuple of $X$. 
Then there exists a C*-algebra $\N\O(\L, X)$ and an $\L$-relative CNP-representation $(\pi_X^\L,t_X^\L)$ of $X$ on $\N\O(\L,X)$ such that:
\begin{enumerate}
\item $\N\O(\L,X)=\ca(\pi_X^\L,t_X^\L)$;
\item if $(\pi,t)$ is an $\L$-relative CNP-representation of $X$, then there exists a canonical $*$-epimorphism $\Phi \colon \N\O(\L,X)\to\ca(\pi,t)$.
\end{enumerate}
The pair $(\N\O(\L,X),(\pi_X^\L,t_X^\L))$ is unique up to canonical $*$-isomorphism.
\end{proposition}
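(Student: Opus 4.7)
The plan is to realise $\N\O(\L, X)$ as an equivariant quotient of $\N\T_X$ by the ideal encoding the $\L$-relative CNP-relations, and then transfer the universal property of $\N\T_X$ (Proposition \ref{P:T univ}) across the quotient map in the standard way.

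First I would form the ideal $\fJ_\L^{(\ol{\pi}_X, \ol{t}_X)} \subseteq \N\T_X$ from Definition \ref{D:LCNP}, applied to the universal Nica-covariant representation $(\ol{\pi}_X, \ol{t}_X)$. This is well-defined because $\L$ is relative, so by Proposition \ref{P:prod cai} each element $\ol{\pi}_X(a) \ol{q}_{X, F}$ genuinely lies in $\N\T_X$ for $a \in \L_F$. I would then set
\[
\N\O(\L, X) := \N\T_X / \fJ_\L^{(\ol{\pi}_X, \ol{t}_X)}
\]
and write $Q \colon \N\T_X \to \N\O(\L, X)$ for the quotient $*$-epimorphism. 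Define $(\pi_X^\L, t_X^\L) := (Q \circ \ol{\pi}_X, Q \circ \ol{t}_X)$. This is Nica-covariant because $(\ol{\pi}_X, \ol{t}_X)$ is, and it generates $\N\O(\L, X)$, giving (i). To see that it is an $\L$-relative CNP-representation, I would note that its projections $q_F$ in $\N\O(\L,X)$ are precisely $Q$-images of the universal $\ol{q}_{X,F}$ (using that $Q$ applied to the approximate unit formula of equation (\ref{eq:pi defn}) gives the corresponding approximate unit computation downstairs), and by construction $Q(\ol{\pi}_X(a) \ol{q}_{X, F}) = 0$ for all $a \in \L_F$, $F \subseteq [d]$.

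For (ii), given any $\L$-relative CNP-representation $(\pi, t)$ of $X$, the universal property of $\N\T_X$ (Proposition \ref{P:T univ}) yields a canonical $*$-epimorphism $\pi \times t \colon \N\T_X \to \ca(\pi, t)$ that carries $\ol{\pi}_X \mapsto \pi$ and $\ol{t}_{X, \un{n}} \mapsto t_{\un{n}}$. Since $(\pi, t)$ satisfies $\pi(\L_F) q_F = \{0\}$ for all $F \subseteq [d]$, and the images of the generators of $\fJ_\L^{(\ol{\pi}_X, \ol{t}_X)}$ under $\pi \times t$ are precisely the elements $\pi(a) q_F$ with $a \in \L_F$, the ideal $\fJ_\L^{(\ol{\pi}_X, \ol{t}_X)}$ lies in $\ker(\pi \times t)$. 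The universal property of the quotient therefore produces the required canonical $*$-epimorphism $\Phi \colon \N\O(\L, X) \to \ca(\pi, t)$, and uniqueness of $\Phi$ is automatic since it is prescribed on the generators.

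Uniqueness of the pair $(\N\O(\L,X), (\pi_X^\L, t_X^\L))$ up to canonical $*$-isomorphism is the standard argument comparing two objects with the same universal property: if $(B, (\rho, s))$ were another such pair, apply (ii) to each to get mutually inverse canonical $*$-epimorphisms. I do not anticipate a major obstacle here, since the result is really a matter of carefully assembling existing pieces; the only delicate point is verifying that the projections $q_F$ in the quotient correspond correctly to $Q(\ol{q}_{X,F})$, which follows from the approximate-unit definition of these projections and the continuity of $Q$.
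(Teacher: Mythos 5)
Your proposal is correct and follows exactly the paper's (one-line) proof, which defines $\N\O(\L,X)$ as the quotient $\N\T_X/\mathfrak{J}_{\L}^{(\ol{\pi}_X,\ol{t}_X)}$ and reads off both the universal property and uniqueness in the standard way. Your one delicate point is also handled correctly: although $q_F$ itself need not lie in $\ca(\pi,t)$, the relevant elements $\pi(a)q_F$ for $a\in\L_F$ are given by the alternating-sum formula of Proposition \ref{P:prod cai}, which any canonical $*$-epimorphism (in particular the quotient map $Q$) preserves, so $\pi_X^\L(a)q_{X,F}^\L = Q(\ol{\pi}_X(a)\ol{q}_{X,F}) = 0$ as you claim.
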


\begin{proof}
This follows immediately by considering the quotient of $\N\T_X$ by $\mathfrak{J}_{\L}^{(\ol{\pi}_X,\ol{t}_X)}$.
\end{proof}

We will refer to the representation $(\pi_X^\L,t_X^\L)$ as the \emph{universal $\L$-relative CNP-representation of $X$}.
Since $\N\O(\L,X)$ is an equivariant quotient of $\N\T_X$, the representation $(\pi_X^\L,t_X^\L)$ admits a gauge action.
Notice that $\{\{0\}\}_{F\subseteq[d]}$ and $\I$ both constitute relative $2^d$-tuples of $X$ and in particular 
\[
\N\O(\{\{0\}\}_{F\subseteq[d]},X) = \N\T_X
\qand 
\N\O(\I,X) = \N\O_X.
\] 
Likewise, when $X$ is a C*-correspondence we recover the relative Cuntz-Pimsner algebras.
Note that $\N\O(\L,X)=\N\O(\sca{\L},X)$ since the covariance is described C*-algebraically.

If $(\pi,t)$ is an injective Nica-covariant representation, then $\L^{(\pi,t)}$ is a relative $2^d$-tuple by Proposition \ref{P:inj e fam}, and thus we can make sense of $\mathfrak{J}_{\L^{(\pi,t)}}^{(\ol{\pi}_X,\ol{t}_X)}$.
The next proposition shows that $\mathfrak{J}_{\L^{(\pi,t)}}^{(\ol{\pi}_X,\ol{t}_X)}$ arises as a kernel when $(\pi,t)$ admits a gauge action.

\begin{proposition}\label{P:inj rel}
Let $X$ be a strong compactly aligned product system with coefficients in a C*-algebra $A$ and let $(\pi,t)$ be an injective Nica-covariant representation of $X$. 
Then $(\pi,t)$ is an $\L^{(\pi,t)}$-relative CNP-representation of $X$, and consequently there exists a canonical $*$-epimorphism
\[
\Phi \colon \N\O(\L^{(\pi,t)},X)\to\ca(\pi,t).
\]
Moreover, $\Phi$ is injective if and only if $(\pi,t)$ admits a gauge action, in which case 
\[
\ker \pi\times t = \mathfrak{J}_{\L^{(\pi,t)}}^{(\ol{\pi}_X,\ol{t}_X)}.
\]
\end{proposition}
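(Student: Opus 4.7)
My plan is to establish the three claims sequentially: that $(\pi,t)$ satisfies the $\L^{(\pi,t)}$-relative CNP-relations (yielding the canonical $*$-epimorphism $\Phi$), that injectivity of $\Phi$ is characterised by the presence of a gauge action, and that the kernel of $\pi \times t$ coincides with $\mathfrak{J}_{\L^{(\pi,t)}}^{(\ol{\pi}_X,\ol{t}_X)}$ in that case.

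For the CNP-relations, the case $F = \mt$ is immediate as $\L_\mt^{(\pi,t)} = \ker \pi$. For $\mt \neq F \subseteq [d]$ and $a \in \L_F^{(\pi,t)}$, injectivity of $(\pi,t)$ together with relation (\ref{eq:comp}) yields $\phi_\un{i}(a) \in \K(X_\un{i})$ for all $i \in F$, so Proposition \ref{P:prod cai} places $\pi(a)q_F$ inside $\ca(\pi,t)$; then (\ref{eq:out}) applied to $\pi(a) \in B_{(\un{0},\un{1}_F]}^{(\pi,t)}$ forces $\pi(a)q_F = 0$. The canonical $*$-epimorphism $\Phi\colon\N\O(\L^{(\pi,t)},X)\to\ca(\pi,t)$ then arises from Proposition \ref{P:NO}. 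For the forward direction of the characterisation, if $\Phi$ is injective then the gauge action of $\N\O(\L^{(\pi,t)},X)$ (inherited from $\N\T_X$, since $\mathfrak{J}_{\L^{(\pi,t)}}^{(\ol{\pi}_X,\ol{t}_X)}$ is gauge-invariant) is transported to $\ca(\pi,t)$ via $\Phi$.

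The converse direction is the crux. Assuming $(\pi,t)$ admits a gauge action, $\Phi$ is equivariant, so the standard averaging argument reduces injectivity of $\Phi$ to injectivity on the fixed point algebra of $\N\O(\L^{(\pi,t)},X)$. Setting $(\tilde{\pi},\tilde{t}) := (\pi_X^{\L^{(\pi,t)}}, t_X^{\L^{(\pi,t)}})$, the factorisation $\pi = \Phi \circ \tilde{\pi}$ together with the CNP-relation giving $\tilde{\pi}(a) = 0$ for $a \in \ker\pi$ yields $\ker \tilde{\pi} = \ker \pi$, so $(\tilde{\pi},\tilde{t})$ is injective. Proposition \ref{P:inj fp} then reduces the task to checking that $\ker \Phi \cap B_{[\un{0},\un{1}_F]}^{(\tilde{\pi},\tilde{t})} = \{0\}$ for every $F \subseteq [d]$, which I would establish by induction on $|F|$. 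The base case $F = \mt$ was handled above. For the inductive step, take $f = \tilde{\pi}(a) + \sum_{\un{0}\neq\un{n}\leq\un{1}_F}\tilde{\psi}_{X,\un{n}}(k_\un{n})$ with $\Phi(f) = 0$; then $\pi(a) \in B_{(\un{0},\un{1}_F]}^{(\pi,t)}$, so $a \in \L_F^{(\pi,t)}$, and the relation $\tilde{\pi}(a)\tilde{q}_F = 0$ (whose expansion uses strong compact alignment via Proposition \ref{P:sca ai}) allows one to rewrite $f = \sum_{\un{0}\neq\un{n}\leq\un{1}_F}\tilde{\psi}_{X,\un{n}}(k_\un{n}')$ with $k_\un{n}' := k_\un{n} - (-1)^{|\un{n}|}\phi_\un{n}(a)$. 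If $f \neq 0$, expressing $f$ irreducibly we pick $\un{0}\neq\un{r}\leq\un{1}_F$ minimal such that $k_\un{r}' \neq 0$ and $\tilde{\psi}_{X,\un{r}}(k_\un{r}')\notin B_{(\un{r},\un{1}_F]}^{(\tilde{\pi},\tilde{t})}$. The compression $\tilde{t}_{X,\un{r}}(\xi_\un{r})^* f\, \tilde{t}_{X,\un{r}}(\eta_\un{r})$ lies in $\ker \Phi \cap B_{[\un{0},\un{1}_{F\setminus\supp\un{r}}]}^{(\tilde{\pi},\tilde{t})}$, which vanishes by the inductive hypothesis (since $\supp\un{r}\neq\mt$ forces $F\setminus\supp\un{r}\subsetneq F$). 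Sandwiching $f$ between members of an approximate unit $(\tilde{\psi}_{X,\un{r}}(k_{\un{r},\la}))_\la$ of $\tilde{\psi}_{X,\un{r}}(\K(X_\un{r}))$ and invoking Nica-covariance to place every non-$\un{r}$ contribution into $B_{(\un{r},\un{1}_F]}^{(\tilde{\pi},\tilde{t})}$, we deduce that $\tilde{\psi}_{X,\un{r}}(k_\un{r}') \in B_{(\un{r},\un{1}_F]}^{(\tilde{\pi},\tilde{t})}$, contradicting irreducibility.

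Finally, when $\Phi$ is injective, the identity $\pi \times t = \Phi \circ Q$ (with $Q\colon \N\T_X \to \N\O(\L^{(\pi,t)},X)$ the quotient map of kernel $\mathfrak{J}_{\L^{(\pi,t)}}^{(\ol{\pi}_X,\ol{t}_X)}$) yields $\ker(\pi \times t) = \ker Q = \mathfrak{J}_{\L^{(\pi,t)}}^{(\ol{\pi}_X,\ol{t}_X)}$. The principal obstacle will be the inductive compression argument in the converse direction, particularly the careful bookkeeping via Nica-covariance required to ensure that all cross terms in the sandwich computation fall into the higher core $B_{(\un{r},\un{1}_F]}^{(\tilde{\pi},\tilde{t})}$, mirroring the technique used in the proof of Proposition \ref{P:inj fp}.
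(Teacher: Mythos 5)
Your proposal is correct and follows essentially the same route as the paper: both directions hinge on transporting the gauge action through the equivariant map $\Phi$, reducing injectivity to the cores via Proposition \ref{P:inj fp}, and running a minimal-index compression argument finished by an approximate-unit sandwich that places $\tilde{\psi}_{\un{r}}(k_{\un{r}}')$ in a higher core $B_{(\un{r},\,\cdot\,]}^{(\tilde{\pi},\tilde{t})}$, contradicting irreducibility. The only (harmless) variation is organisational: you induct on $|F|$ over the $[\un{0},\un{1}_F]$-cores (item (iv) of Proposition \ref{P:inj fp}) and strip the $\tilde{\pi}(a)$-term up front using the relative CNP-relation $\tilde{\pi}(a)\tilde{q}_F=0$, whereas the paper argues directly on the $[\un{0},\un{1}_{[d]}]$-core (item (iii)) and invokes the CNP-relation only after compressing, via $\sca{X_{\un{r}}, k_{\un{r}} X_{\un{r}}} \subseteq \L^{(\pi,t)}_{\supp(\un{1}_{[d]}-\un{r})}$.
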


\begin{proof}
Fix $F \subseteq [d]$ and $a \in \L_F^{(\pi,t)}=\pi^{-1}(B_{(\un{0},\un{1}_F]}^{(\pi,t)})$.
Then $\pi(a)q_F=0$ by (\ref{eq:out}).
This shows that $(\pi,t)$ is an $\L^{(\pi,t)}$-relative CNP-representation. 
Thus universality of $\N\O(\L^{(\pi,t)}, X)$ guarantees the existence of $\Phi$. 

For the second claim, we write $(\tilde{\pi},\tilde{t})$ for $(\pi_X^{\L^{(\pi,t)}},t_X^{\L^{(\pi,t)}})$, for notational convenience.
If $\Phi$ is injective, then $(\pi,t)$ inherits the gauge action $\be$ of $(\tilde{\pi},\tilde{t})$.
Conversely, suppose that $(\pi,t)$ admits a gauge action $\ga$, and note that $\Phi$ intertwines the gauge actions (i.e., $\Phi$ is equivariant).
Thus it suffices to show that $\Phi$ is injective on the fixed point algebra, and by Proposition \ref{P:inj fp} this amounts to showing that $\Phi$ is injective on the $[\un{0},\un{1}_{[d]}]$-core.
To reach contradiction, suppose there exists $0\neq f \in \ker\Phi\cap B_{[\un{0},\un{1}_{[d]}]}^{(\tilde{\pi},\tilde{t})}$, so that we may write
\[
f = \tilde{\pi}(a) + \sum \{ \tilde{\psi}_{\un{n}}(k_{\un{n}}) \mid \un{0}\neq\un{n} \leq \un{1}_{[d]} \},
\]
for some $a\in A$ and $k_\un{n}\in\K(X_\un{n})$ for all $\un{0}\neq\un{n}\leq\un{1}_{[d]}$.
Recall that we write $\tilde{\pi}(a)=\tilde{\psi}_\un{0}(k_\un{0})$ for $k_\un{0}:=\phi_\un{0}(a)$.
Without loss of generality, we may assume that $f$ is written irreducibly, so that we may choose $\un{0}\leq\un{r}\leq\un{1}_{[d]}$ minimal such that $k_\un{r}\neq 0$, and $\tilde{\psi}_{\un{r}}(k_{\un{r}}) \notin B_{(\un{r}, \un{1}_{[d]}]}^{(\tilde{\pi},\tilde{t})}$.
If $\un{r}=\un{1}_{[d]}$, then $f=\tilde{\psi}_{\un{1}_{[d]}}(k_{\un{1}_{[d]}})$ and $\Phi(f)=\psi_{\un{1}_{[d]}}(k_{\un{1}_{[d]}})=0$.
Injectivity of $(\pi,t)$ then implies that $k_{\un{1}_{[d]}}=0$ and hence $f=0$, a contradiction.
So without loss of generality assume that $\un{r}<\un{1}_{[d]}$.
Then for every $\xi_{\un{r}}, \eta_{\un{r}} \in X_{\un{r}}$, we have
\[
0 
= 
t_{\un{r}}(\xi_{\un{r}})^* \Phi(f) t_{\un{r}}(\eta_{\un{r}}) 
= 
\pi(\sca{\xi_{\un{r}}, k_{\un{r}} \eta_{\un{r}}})
+
\sum \{ \psi_{\un{n}}(k_{\un{n}}') \mid \un{0}\neq\un{n} \leq \un{1}_{[d]} - \un{r} \},
\]
where each $k_\un{n}'$ is a suitably defined element of $\K(X_\un{n})$ for all $\un{0}\neq\un{n}\leq\un{1}_{[d]} - \un{r}$.
Hence we have $\sca{X_{\un{r}}, k_{\un{r}} X_{\un{r}}} \subseteq \L_{F}^{(\pi,t)}$ for $F := \supp (\un{1}_{[d]} - \un{r})$ (which is non-empty since $\un{1}_{[d]}\neq\un{r}$), and thus we obtain $\tilde{\pi}(\sca{X_{\un{r}}, k_{\un{r}} X_{\un{r}}}) \subseteq B_{(\un{0}, \un{1}_{[d]} - \un{r}]}^{(\tilde{\pi},\tilde{t})}$ since $(\tilde{\pi},\tilde{t})$ is an $\L^{(\pi,t)}$-relative CNP-representation.
In particular, we have
\begin{align*}
\tilde{\psi}_{\un{r}}(\K(X_{\un{r}})) \tilde{\psi}_{\un{r}}(k_{\un{r}}) \tilde{\psi}_{\un{r}}(\K(X_{\un{r}}))
& \subseteq
[\tilde{t}_{\un{r}}(X_{\un{r}}) \tilde{\pi}(\sca{X_{\un{r}}, k_{\un{r}} X_{\un{r}}}) \tilde{t}_{\un{r}}(X_{\un{r}})^*] \\
& \subseteq
[\tilde{t}_{\un{r}}(X_{\un{r}}) B_{(\un{0}, \un{1}_{[d]} - \un{r}]}^{(\tilde{\pi},\tilde{t})} \tilde{t}_{\un{r}}(X_{\un{r}})^*]
\subseteq
B_{(\un{r}, \un{1}_{[d]}]}^{(\tilde{\pi},\tilde{t})},
\end{align*}
and by taking an approximate unit of $\tilde{\psi}_{\un{r}}(\K(X_{\un{r}}))$ we derive that $\tilde{\psi}_{\un{r}}(k_{\un{r}}) \in B_{(\un{r}, \un{1}_{[d]}]}^{(\tilde{\pi},\tilde{t})}$, which is a contradiction.
Thus $\Phi$ is injective on the $[\un{0},\un{1}_{[d]}]$-core, as required.

For the last assertion, we have $\pi\times t=\Phi\circ Q$ and thus in particular
\[
\ker \pi \times t = \ker \Phi \circ Q = \ker Q = \mathfrak{J}_{\L^{(\pi,t)}}^{(\ol{\pi}_X,\ol{t}_X)}
\]
for the quotient map $Q\colon\N\T_X\to\N\O(\L^{(\pi,t)},X)$, and the proof is complete.
\end{proof}

Injective Nica-covariant representations admitting a gauge action provide an ample supply of maximal relative $2^d$-tuples.

\begin{proposition}\label{P:rep m fam}
Let $X$ be a strong compactly aligned product system with coefficients in a C*-algebra $A$.
Let $(\pi,t)$ be an injective Nica-covariant representation of $X$ that admits a gauge action.
Then $\L^{(\pi,t)}$ is a maximal $2^d$-tuple of $X$ that is contained in $\I$.
\end{proposition}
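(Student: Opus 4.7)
The claim that $\L^{(\pi,t)} \subseteq \I$ is already established in Proposition \ref{P:inj e fam}, so in particular $\L^{(\pi,t)}$ is a relative $2^d$-tuple. The plan, therefore, is to focus on maximality. Let $\L$ be an arbitrary relative $2^d$-tuple of $X$ satisfying $\L^{(\pi,t)} \subseteq \L$ and $\mathfrak{J}_\L^{(\ol{\pi}_X, \ol{t}_X)} = \mathfrak{J}_{\L^{(\pi,t)}}^{(\ol{\pi}_X, \ol{t}_X)}$; I aim to show the reverse inclusion $\L \subseteq \L^{(\pi,t)}$, which will force equality and hence maximality by definition.

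The key input is Proposition \ref{P:inj rel}, which (because $(\pi,t)$ is injective and admits a gauge action) identifies
\[
\ker(\pi \times t) = \mathfrak{J}_{\L^{(\pi,t)}}^{(\ol{\pi}_X, \ol{t}_X)} = \mathfrak{J}_{\L}^{(\ol{\pi}_X, \ol{t}_X)}.
\]
I then fix $F \subseteq [d]$ and $a \in \L_F$, so that $\ol{\pi}_X(a) \ol{q}_{X,F} \in \mathfrak{J}_{\L}^{(\ol{\pi}_X, \ol{t}_X)} = \ker(\pi \times t)$. Applying $\pi \times t$ therefore yields $\pi(a) q_F = 0$ in $\ca(\pi,t)$; here I use that $\L$ is relative, so $a \in \bigcap_{i \in F} \phi_{\un{i}}^{-1}(\K(X_{\un{i}}))$, and Proposition \ref{P:prod cai} guarantees that $\pi(a) q_F$ genuinely sits inside $\ca(\pi,t)$.

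If $F = \mt$, this simply says $\pi(a) = 0$, so $a \in \ker \pi = \L_\mt^{(\pi,t)}$. For $\mt \neq F \subseteq [d]$, Proposition \ref{P:prod cai} expands
\[
0 = \pi(a) q_F = \pi(a) + \sum \{ (-1)^{|\un{n}|} \psi_{\un{n}}(\phi_{\un{n}}(a)) \mid \un{0} \neq \un{n} \leq \un{1}_F \},
\]
and rearranging puts $\pi(a) \in B_{(\un{0}, \un{1}_F]}^{(\pi,t)}$, hence $a \in \pi^{-1}(B_{(\un{0}, \un{1}_F]}^{(\pi,t)}) = \L_F^{(\pi,t)}$. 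This gives $\L \subseteq \L^{(\pi,t)}$ as required, completing the proof of maximality.

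No serious obstacle is anticipated: the argument is essentially a bookkeeping exercise built on top of two earlier results. The only delicate point is ensuring we are allowed to pass $\ol{\pi}_X(a) \ol{q}_{X,F}$ through $\pi \times t$ to obtain the concrete Wick expansion in $\ca(\pi,t)$, which is handled precisely by the relativity hypothesis on $\L$ combined with Proposition \ref{P:prod cai}.
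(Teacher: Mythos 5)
Your proof is correct and follows essentially the same route as the paper: both reduce maximality to showing $\L \subseteq \L^{(\pi,t)}$ for any relative $2^d$-tuple $\L \supseteq \L^{(\pi,t)}$ inducing the same ideal, invoke Proposition \ref{P:inj rel} to identify $\ker(\pi \times t)$ with $\mathfrak{J}_{\L^{(\pi,t)}}^{(\ol{\pi}_X,\ol{t}_X)}$, and then use Proposition \ref{P:prod cai} to pass $\ol{\pi}_X(a)\ol{q}_{X,F}$ through $\pi \times t$ and conclude $\pi(a) \in B_{(\un{0},\un{1}_F]}^{(\pi,t)}$. Your explicit attention to the relativity hypothesis (ensuring $\pi(a)q_F \in \ca(\pi,t)$) matches the paper's use of the same proposition in its displayed computation.
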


\begin{proof}
By Proposition \ref{P:inj e fam}, we have $\L^{(\pi,t)} \subseteq \I$.
For maximality, let $\L$ be a relative $2^d$-tuple of $X$ such that $\mathfrak{J}_\L^{(\ol{\pi}_X,\ol{t}_X)}=\mathfrak{J}_{\L^{(\pi,t)}}^{(\ol{\pi}_X,\ol{t}_X)}$ and $\L^{(\pi,t)}\subseteq\L$.
It suffices to show that $\L \subseteq \L^{(\pi,t)}$.
Fix $F\subseteq[d]$ and $a\in\L_F$. 
Then 
\[
\ol{\pi}_X(a)\ol{q}_{X,F} \in \mathfrak{J}_\L^{(\ol{\pi}_X,\ol{t}_X)}=\mathfrak{J}_{\L^{(\pi,t)}}^{(\ol{\pi}_X,\ol{t}_X)}
\]
by definition.
Since $\mathfrak{J}_{\L^{(\pi,t)}}^{(\ol{\pi}_X,\ol{t}_X)}=\ker \pi\times t$ by Proposition \ref{P:inj rel}, we obtain
\begin{align*}
\pi(a)q_F
& =
\pi(a) + \sum \{ (-1)^{|\un{n}|} \psi_{\un{n}}(\phi_{\un{n}}(a)) \mid \un{0} \neq \un{n} \leq \un{1}_F \} \\
& =
(\pi\times t) (\ol{\pi}_X(a) + \sum \{ (-1)^{|\un{n}|} \ol{\psi}_{X,\un{n}}(\phi_{\un{n}}(a)) \mid \un{0} \neq \un{n} \leq \un{1}_F \} ) \\
& =
(\pi\times t)(\ol{\pi}_X(a)\ol{q}_{X,F})
=
0,
\end{align*}
using Proposition \ref{P:prod cai} in the first and third equalities.
Thus $a\in\pi^{-1}(B_{(\un{0},\un{1}_F]}^{(\pi,t)})=\L_F^{(\pi,t)}$ and hence $\L\subseteq\L^{(\pi,t)}$, as required.
\end{proof}

\subsection{(E)-$2^d$-tuples}\label{S:e fam}

We are interested in a special class of $2^d$-tuples that describes the relative Cuntz-Nica-Pimsner algebras $\N\O(\L,X)$ in-between $\N\T_X$ and $\N\O_X$.

\begin{proposition}\label{P:NO inj}
Let $X$ be a strong compactly aligned product system with coefficients in a C*-algebra $A$.
Let $\L$ be a relative $2^d$-tuple of $X$ and let $(\pi,t)$ be an $\L$-relative CNP-representation of $X$.
If $(\pi,t)$ is injective, then $\L \subseteq \I$.
Conversely, if $\L \subseteq \I$ then the universal $\L$-relative CNP-representation $(\pi_X^\L, t_X^\L)$ is injective.
\end{proposition}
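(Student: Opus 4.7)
The plan is to treat the two implications separately.

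For the forward direction, I fix $F\subseteq[d]$ and $a\in\L_F$ and aim to show that $a\in\I_F$. Since $\L$ is relative, $a$ lies in $\bigcap\{\phi_{\un{i}}^{-1}(\K(X_{\un{i}}))\mid i\in F\}$, so by the second part of Proposition \ref{P:prod cai} we have the identity
\[
\pi(a)q_F = \pi(a) + \sum\{(-1)^{|\un{n}|}\psi_{\un{n}}(\phi_{\un{n}}(a))\mid \un{0}\neq\un{n}\leq\un{1}_F\}.
\]
Since $(\pi,t)$ is an $\L$-relative CNP-representation, the left-hand side vanishes, so $\pi(a) \in B_{(\un{0},\un{1}_F]}^{(\pi,t)}$. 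Because $(\pi,t)$ is in addition assumed to be injective, Proposition \ref{P:in IF} applies and yields $a\in\I_F$. This gives $\L\subseteq\I$.

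For the converse, assume $\L\subseteq\I$. The key observation is that the universal CNP-representation $(\pi_X^\I,t_X^\I)$ is automatically an $\L$-relative CNP-representation, because $\mathfrak{J}_\L^{(\ol{\pi}_X,\ol{t}_X)}\subseteq\mathfrak{J}_\I^{(\ol{\pi}_X,\ol{t}_X)}$ when $\L \subseteq \I$. Hence, by the universal property of $\N\O(\L,X)$ established in Proposition \ref{P:NO}, there exists a canonical $*$-epimorphism
\[
\Phi\colon\N\O(\L,X)\to\N\O_X
\]
satisfying $\Phi\circ\pi_X^\L=\pi_X^\I$ and $\Phi\circ t_X^\L=t_X^\I$. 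As recalled in Subsection \ref{Ss: scaps} (via \cite[Theorem 4.1]{SY10}), the representation $(\pi_X^\I,t_X^\I)$ is injective. The factorisation $\pi_X^\I=\Phi\circ\pi_X^\L$ then forces $\pi_X^\L$ to be injective, completing the proof.

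Neither direction poses a serious obstacle; the forward direction is essentially a one-line consequence of Proposition \ref{P:prod cai} combined with Proposition \ref{P:in IF}, and the converse is a standard universal-property argument that reduces to known injectivity of the universal CNP-representation. The only subtlety is checking that the algebraic identity of Proposition \ref{P:prod cai} is available, which requires the relativity of $\L$, and noting that nothing in the argument uses a gauge action on $(\pi,t)$ — only injectivity, which is precisely the hypothesis.
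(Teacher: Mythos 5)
Your proof is correct and follows essentially the same route as the paper: the forward direction is exactly the paper's argument (the paper's ``hence $\pi(a)\in B_{(\un{0},\un{1}_F]}^{(\pi,t)}$'' tacitly uses the expansion from Proposition \ref{P:prod cai}, which you make explicit, before invoking Proposition \ref{P:in IF}), and your converse via the canonical $*$-epimorphism $\N\O(\L,X)\to\N\O_X$ and injectivity of $(\pi_X^\I,t_X^\I)$ is precisely the paper's factorisation of the quotient map $\N\T_X\to\N\O_X$ through $\N\O(\L,X)$.
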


\begin{proof}
First assume that $(\pi, t)$ is injective.
Fix $F\subseteq[d]$ and $a\in\L_F$.
Since $(\pi,t)$ is an $\L$-relative CNP-representation, we have $\pi(a)q_{F} = 0$ and hence $\pi(a) \in B_{(\un{0},\un{1}_F]}^{(\pi,t)}$.
Since $(\pi,t)$ is assumed to be injective, an application of \cite[Proposition 3.4]{DK18} gives that $a\in\I_F$.
Hence $\L\subseteq\I$.

Conversely, assume that $\L\subseteq\I$.
Then the canonical quotient map $\N\T_X \to \N\O_X$ (which is injective on $X$) factors through the canonical $*$-epimorphism $\N\O(\L,X)\to\N\O_X$.
Hence $(\pi_X^\L,t_X^\L)$ is injective, finishing the proof.
\end{proof}

We see that the $2^d$-tuples $\L\subseteq\I$ are special, and we give them their own name to reflect this.

\begin{definition}\label{D:e fam}
Let $X$ be a strong compactly aligned product system with coefficients in a C*-algebra $A$.
We say that a $2^d$-tuple $\L$ of $X$ is an \emph{(E)-$2^d$-tuple of $X$} if $\L \subseteq \I$.
\end{definition}

Every (E)-$2^d$-tuple is automatically a relative $2^d$-tuple with $\L_\mt = \{0\} = \I_\mt$.
The ``E" of ``(E)-$2^d$-tuple" stands for ``Embedding", since $X \hookrightarrow \N\O(\L,X)$ isometrically for a relative $2^d$-tuple $\L$ if and only if $\L \subseteq \I$, by Proposition \ref{P:NO inj}.
By Proposition \ref{P:inj e fam}, injective Nica-covariant representations provide the quintessential supply of (E)-$2^d$-tuples.
It is straightforward to check that the sum of two (E)-$2^d$-tuples is an (E)-$2^d$-tuple.
We also make the following observation.

\begin{lemma}\label{L:J+inv}
Let $X$ be a strong compactly aligned product system with coefficients in a C*-algebra $A$.
Let $\L$ be a $2^d$-tuple of $X$. 
If $\L$ is invariant and satisfies $\L\subseteq\J$, then $\L$ is an (E)-$2^d$-tuple of $X$.
\end{lemma}

\begin{proof}
Fix $F \subseteq [d]$ and $a\in\L_F$.
By assumption we have
\[
\sca{X_\un{n},aX_\un{n}}\subseteq\left[\sca{X_\un{n}, \L_F X_\un{n}}\right]\subseteq\L_F\subseteq\J_F\foral\un{n}\perp F.
\]
By definition this means that $a\in\I_F$ and hence $\L\subseteq\I$, as required.
\end{proof}

Let $(\pi,t)$ be a Nica-covariant representation and $\mathfrak{J}_\L^{(\pi,t)}$ be the ideal induced by an (E)-$2^d$-tuple $\L$.
Our goal is to show that we can choose $\L$ to be in addition invariant and partially ordered, and to consist of ideals.

\begin{lemma}\label{L:e inv}
Let $X$ be a strong compactly aligned product system with coefficients in a C*-algebra $A$. 
Let $\L$ be an (E)-$2^d$-tuple of $X$ and let $(\pi,t)$ be a Nica-covariant representation of $X$.
Then the $2^d$-tuple $\Inv(\L)$ of $X$ defined by
\[
\Inv(\L)_F:= \ol{\spn} \{X_\un{n}(\L_F)\mid \un{n}\perp F\} \foral F \subseteq [d]
\]
is an invariant (E)-$2^d$-tuple consisting of ideals, and is such that 
\[
\L\subseteq\Inv(\L) 
\qand 
\mathfrak{J}_{\L,F}^{(\pi,t)}=\mathfrak{J}_{\Inv(\L),F}^{(\pi,t)} \foral F\subseteq[d].
\]
In particular, we have $\mathfrak{J}_\L^{(\pi,t)}=\mathfrak{J}_{\Inv(\L)}^{(\pi,t)}$.
\end{lemma}

\begin{proof}
Recall that $X_\un{n}(\L_F) \equiv \left[\sca{X_\un{n},\L_F X_\un{n}}\right]$ for all $\un{n}\in\bZ_+^d$ and $F \subseteq [d]$.
By considering $\un{n} = \un{0}$, we see that $\L \subseteq \Inv(\L)$.

Fix $F \subseteq [d]$.
Then $X_\un{n}(\L_F)$ is an ideal of $A$ for all $\un{n}\perp F$, and hence $\Inv(\L)_F$ is itself an ideal of $A$.
Since $\L$ is an (E)-$2^d$-tuple, we have $\L_F\subseteq\I_F$ and thus $X_\un{n}(\L_F)\subseteq\I_F$ for all $\un{n}\perp F$ because $\I$ is invariant \cite[Proposition 2.7]{DK18}.
Thus $\Inv(\L)_F\subseteq\I_F$, and so $\Inv(\L)$ is an (E)-$2^d$-tuple that consists of ideals.

To see that $\Inv(\L)$ is invariant, fix $F\subseteq[d],\un{m}\perp F$ and $a\in\Inv(\L)_F$.
Since $\Inv(\L)_F$ is an ideal, it suffices to show that $\sca{X_\un{m},aX_\un{m}}\subseteq\Inv(\L)_F$.
Assume that $a=\sca{\xi_\un{n},b\eta_\un{n}}$ for some $\un{n}\perp F,\xi_\un{n},\eta_\un{n}\in X_\un{n}$ and $b\in\L_F$.
Then we have
\begin{align*}
\sca{X_\un{m},aX_\un{m}} 
& =
\sca{X_\un{m},\sca{\xi_\un{n},b\eta_\un{n}}X_\un{m}} 
\subseteq
\sca{X_{\un{n} + \un{m}}, b X_{\un{n} + \un{m}}}
\subseteq
X_{\un{n}+\un{m}}(\L_F)
\subseteq
\Inv(\L)_F,
\end{align*}
using that $\un{n}+\un{m}\perp F$ in the final inclusion.
It follows that $\sca{X_\un{m},X_\un{n}(\L_F)X_\un{m}}\subseteq\Inv(\L)_F$ for all $\un{n}\perp F$.
We obtain $\sca{X_\un{m}, \Inv(\L)_F X_\un{m}} \subseteq \Inv(\L)_F$ by linearity and continuity of the inner product, as required.

Since $\L\subseteq\Inv(\L)$, we have $\mathfrak{J}_{\L,F}^{(\pi,t)}\subseteq\mathfrak{J}_{\Inv(\L),F}^{(\pi,t)}$ for all $F\subseteq[d]$.
For the reverse inclusion, fix $F\subseteq[d]$ and $a\in\Inv(\L)_F$. 
Assume that $a=\sca{\xi_\un{n},b\eta_\un{n}}$ for some $\un{n}\perp F,\xi_\un{n},\eta_\un{n}\in X_\un{n}$ and $b\in\L_F$.
We have
\[
\pi(a)q_F=t_\un{n}(\xi_\un{n})^*\pi(b)t_\un{n}(\eta_\un{n})q_F=t_\un{n}(\xi_\un{n})^*(\pi(b)q_F)t_\un{n}(\eta_\un{n})\in\mathfrak{J}_{\L,F}^{(\pi,t)},
\]
where we have used Proposition \ref{P:pf reducing} in the second equality.
Therefore $\pi(X_\un{n}(\L_F))q_F\subseteq\mathfrak{J}_{\L,F}^{(\pi,t)}$ for all $\un{n}\perp F$, from which it follows that $\pi(\Inv(\L)_F)q_F \subseteq \mathfrak{J}_{\L,F}^{(\pi,t)}$.
In total we have $\mathfrak{J}_{\L,F}^{(\pi,t)}=\mathfrak{J}_{\Inv(\L),F}^{(\pi,t)}$ for all $F\subseteq[d]$, and thus in particular $\mathfrak{J}_\L^{(\pi,t)}=\mathfrak{J}_{\Inv(\L)}^{(\pi,t)}$, finishing the proof.
\end{proof}

In order to choose $\L$ to be partially ordered, we need the following auxiliary lemma.

\begin{lemma}\label{L:JL rising}
Let $X$ be a strong compactly aligned product system with coefficients in a C*-algebra $A$.
Let $\L$ be a relative $2^d$-tuple of $X$ and let $(\pi,t)$ be a Nica-covariant representation of $X$.
Fix $D \subseteq F \subseteq[d]$.
If $a\in\bigcap\{\phi_\un{i}^{-1}(\K(X_\un{i}))\mid i\in F\}$ and $\pi(a)q_D\in\mathfrak{J}_\L^{(\pi,t)}$, then $\pi(a)q_F\in\mathfrak{J}_\L^{(\pi,t)}$.
\end{lemma}

\begin{proof}
Without loss of generality, we may assume that $D=[k]$ (with the convention that if $k=0$ then $D=\mt$) and $F=[\ell]$ for some $0\leq k\leq\ell\leq d$.
Note that we have
\[
\pi(a)q_D=\pi(a)+\sum\{(-1)^{|\un{n}|}\psi_\un{n}(\phi_\un{n}(a))\mid \un{0}\neq\un{n}\leq\un{1}_D\}
\]
because $a\in\bigcap\{\phi_\un{i}^{-1}(\K(X_\un{i}))\mid i\in F\}$, and hence in particular $a\in\bigcap\{\phi_\un{i}^{-1}(\K(X_\un{i}))\mid i\in D\}$ since $D\subseteq F$.
If $k = \ell$, then there is nothing to show.
If $k < \ell$, then we have 
\begin{align*}
\pi(a)q_Dp_\un{k+1} 
& =
\pi(a)p_\un{k+1}+\sum\{(-1)^{|\un{n}|}\psi_\un{n}(\phi_\un{n}(a))p_{\un{k+1}}\mid \un{0}\neq\un{n}\leq\un{1}_D\} \\
& =
\nor{\cdot}\text{-}\lim_\la\pi(a)p_{\un{k+1},\la}+\sum\{(-1)^{|\un{n}|}(\nor{\cdot}\text{-}\lim_\la\psi_\un{n}(\phi_\un{n}(a))p_{\un{k+1},\la})\mid\un{0}\neq\un{n}\leq\un{1}_D\} \\
& =
\nor{\cdot}\text{-}\lim_\la(\pi(a)+\sum\{(-1)^{|\un{n}|}\psi_\un{n}(\phi_\un{n}(a))\mid \un{0}\neq\un{n}\leq\un{1}_D\})p_{\un{k+1},\la} \\
& =
\nor{\cdot}\text{-}\lim_\la\pi(a)q_Dp_{\un{k+1},\la}\in \mathfrak{J}_\L^{(\pi,t)},
\end{align*}
where in the second line we have used Proposition \ref{P:prod cai} and in the last line we have used that $\pi(a)q_D\in\mathfrak{J}_\L^{(\pi,t)}$ by assumption.
To express $\pi(a)p_{\un{k+1}}$ as a norm-limit, we use that $a\in\phi_\un{k+1}^{-1}(\K(X_{\un{k+1}}))$, and so
\[
\pi(a)p_{\un{k+1}} 
= \psi_{\un{k+1}}(\phi_{\un{k+1}}(a)) 
= \nor{\cdot}\text{-}\lim_\la \pi(a) p_{\un{k+1}, \la},
\]
by the comments succeeding Proposition \ref{P:prod cai}.
Hence we have
\[
\pi(a)q_{D\cup\{k+1\}} = \pi(a)q_D - \pi(a)q_Dp_{\un{k+1}} \in \mathfrak{J}_\L^{(\pi,t)}.
\]
Since $a\in\bigcap\{\phi_\un{i}^{-1}(\K(X_\un{i}))\mid i\in D\cup\{k+1\}\}$, we can express $\pi(a)q_{D\cup\{k+1\}}$ as an alternating sum by Proposition \ref{P:prod cai}.
Consequently, we may apply the preceding argument for $k+2$. 
The assumption that $a\in\bigcap\{\phi_\un{i}^{-1}(\K(X_\un{i}))\mid i\in F\}$ ensures that we may argue in this way until $\{k+1,\dots, \ell\}$ has been exhausted, and we deduce that $\pi(a)q_F\in \mathfrak{J}_\L^{(\pi,t)}$, as required. 
\end{proof}

\begin{lemma}\label{L:e po}
Let $X$ be a strong compactly aligned product system with coefficients in a C*-algebra $A$. 
Let $\L$ be an $(E)$-$2^d$-tuple of $X$ and let $(\pi,t)$ be a Nica-covariant representation of $X$.
Then the $2^d$-tuple $\PO(\L)$ of $X$ defined by
\[
\PO(\L)_F:= \sum\{\sca{\L_D} \mid D\subseteq F\} \foral F \subseteq [d]
\]
is a partially ordered (E)-$2^d$-tuple consisting of ideals, and is such that 
\[
\L\subseteq\PO(\L)
\qand 
\fJ_{\L}^{(\pi,t)} = \fJ_{\PO(\L)}^{(\pi,t)}.
\]
If $\L$ is in addition invariant, then so is $\PO(\L)$.
\end{lemma}

\begin{proof}
Every $\PO(\L)_F$ is an ideal, being a finite sum of ideals in a C*-algebra. 
Fix $F \subseteq [d]$.
For each $D\subseteq F$, we have $\sca{\L_D}\subseteq\I_D\subseteq\I_F$, using that $\L$ is an (E)-$2^d$-tuple (and that $\I_D$ is an ideal) in the first inclusion, and the fact that $\I$ is partially ordered in the second. 
Hence $\PO(\L)_F\subseteq\I_F$ for all $F\subseteq[d]$, and thus $\PO(\L)$ is an (E)-$2^d$-tuple.
It is clear that $\PO(\L)$ is partially ordered, and $\L\subseteq\PO(\L)$ by construction.

Next we check that $\mathfrak{J}_{\L}^{(\pi,t)}=\mathfrak{J}_{\PO(\L)}^{(\pi,t)}$.
Since $\L\subseteq\PO(\L)$, we have $\mathfrak{J}_{\L}^{(\pi,t)}\subseteq \mathfrak{J}_{\PO(\L)}^{(\pi,t)}$. 
For the reverse inclusion, fix $F\subseteq[d]$ and $a\in\PO(\L)_F$. 
It suffices to show that $\pi(a)q_F\in\mathfrak{J}_{\L}^{(\pi,t)}$.
Let $a = \sum\{a_D\mid D\subseteq F\}$, where each $a_D\in\sca{\L_D}$, so that $\pi(a_D)q_D\in\mathfrak{J}_{\sca{\L}}^{(\pi,t)}=\fJ_{\L}^{(\pi,t)}$ for all $D\subseteq F$ by Lemma \ref{L: ideal}.
For each $D\subseteq F$, an application of Lemma \ref{L:JL rising} gives that $\pi(a_D)q_F\in\mathfrak{J}_{\L}^{(\pi,t)}$ and hence $\pi(a)q_F\in\fJ_{\L}^{(\pi,t)}$, as required.

Now suppose that $\L$ is in addition invariant.
To establish invariance of the family $\PO(\L)$, fix $F\subseteq[d], \un{n}\perp F$ and $a\in\PO(\L)_F$. 
By definition, we can write $a=\sum\{a_D\mid D\subseteq F\}$ for some $a_D\in\sca{\L_D}$, for each $D\subseteq F$. 
Since $\L$ is invariant, it follows that $\sca{X_\un{n},\sca{\L_D}X_\un{n}}\subseteq \sca{\L_D}$ and hence in particular $\sca{X_\un{n},a_DX_\un{n}}\subseteq\sca{\L_D}$ for each $D\subseteq F$. 
Consequently, we have
\[
\sca{X_\un{n}, a X_\un{n}}
\subseteq \sum\{\sca{X_\un{n},a_D X_\un{n}}\mid D\subseteq F\}
\subseteq \sum\{\sca{\L_D}\mid D\subseteq F\}=\PO(\L)_F,
\]
and therefore $\PO(\L)$ is invariant, finishing the proof.
\end{proof}

Letting $\L$ be an (E)-$2^d$-tuple and $(\pi,t)$ be a Nica-covariant representation, Lemma \ref{L:e inv} and Lemma \ref{L:e po} combine to give that $\PO(\Inv(\L))$ is an (E)-$2^d$-tuple that is invariant, partially ordered, consists of ideals and satisfies $\L\subseteq\PO(\Inv(\L))$ and $\fJ_{\L}^{(\pi,t)}=\fJ_{\PO(\Inv(\L))}^{(\pi,t)}$.

Next we focus on the maximal (E)-$2^d$-tuples.

\begin{definition}\label{D:m fam}
Let $X$ be a strong compactly aligned product system with coefficients in a C*-algebra $A$ and let $(\pi,t)$ be a Nica-covariant representation of $X$.
An \emph{(M)-$2^d$-tuple $\M$ of $X$ with respect to $(\pi,t)$} is a maximal $2^d$-tuple of $X$ with respect to $(\pi,t)$ that is also an (E)-$2^d$-tuple of $X$.
When we replace $(\pi,t)$ by $(\ol{\pi}_X,\ol{t}_X)$, we will refer to $\M$ simply as an \emph{(M)-$2^d$-tuple of $X$}.
\end{definition}

The ``M" of ``(M)-$2^d$-tuple" stands for ``Maximal".
Note that (M)-$2^d$-tuples (with respect to $(\pi,t)$) maximalise collections of (E)-$2^d$-tuples, instead of just relative $2^d$-tuples.
Indeed, if $\M$ is an (M)-$2^d$-tuple with respect to $(\pi,t)$, then it contains every relative $2^d$-tuple $\L$ of $X$ satisfying $\fJ_\L^{(\pi,t)}=\fJ_\M^{(\pi,t)}$ by Proposition \ref{P:maximal}.
Each such $\L$ satisfies $\L\subseteq\M\subseteq\I$ and is therefore an (E)-$2^d$-tuple, as claimed.

\begin{remark}\label{R:0+I}
Both $\L := \{\{0\}\}_{F\subseteq[d]}$ and $\I$ are (M)-$2^d$-tuples.
For $\I$ this is shown in Remark \ref{R:max I}.
On the other hand, $\L$ is clearly an (E)-$2^d$-tuple.
To see that $\L$ is maximal, let $\L'$ be a relative $2^d$-tuple such that $\mathfrak{J}_{\L'}^{(\ol{\pi}_X,\ol{t}_X)} = \mathfrak{J}_\L^{(\ol{\pi}_X,\ol{t}_X)} = \{0\}$ and $\L\subseteq\L'$. 
We may replace $(\ol{\pi}_X, \ol{t}_X)$ by the Fock representation $(\ol{\pi}, \ol{t})$.
For the projection $P_{\un{0}} \colon \F X \to X_{\un{0}}$, we have
\[
\phi_{\un{0}}(\L'_F) = P_{\un{0}} \big(\ol{\pi}(\L'_F) \ol{q}_F \big) P_{\un{0}} \subseteq P_{\un{0}} \big( \mathfrak{J}_{\L'}^{(\ol{\pi},\ol{t})} \big) P_{\un{0}} = \{0\},
\]
for all $F \subseteq [d]$, and thus $\L=\L'$ as required.

Moreover, in Proposition \ref{P:rep m fam} we have shown that $\L^{(\pi,t)}$ is an (M)-$2^d$-tuple whenever $(\pi,t)$ is injective and admits a gauge action.
\end{remark}

The (M)-$2^d$-tuples of $X$ parametrise the gauge-invariant ideals $\mathfrak{J}$ of $\N\T_X$ for which $\N\T_X/\mathfrak{J}$ lies between $\N\T_X$ and $\N\O_X$.
To prove this, we will require the following proposition.

\begin{proposition}\label{P:inj on J}
Let $X$ be a strong compactly aligned product system with coefficients in a C*-algebra $A$ and let $\mathfrak{J}\subseteq\N\T_X$ be a gauge-invariant ideal.
Then the following are equivalent:
\begin{enumerate}
\item $\ol{\pi}_X^{-1}(\mathfrak{J})=\{0\}$;
\item there exists a unique (M)-$2^d$-tuple of $X$ inducing $\mathfrak{J}$;
\item there exists an (E)-$2^d$-tuple of $X$ inducing $\mathfrak{J}$.
\end{enumerate}
\end{proposition}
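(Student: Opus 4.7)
The plan is to prove the equivalences via the cycle (i) $\Rightarrow$ (ii) $\Rightarrow$ (iii) $\Rightarrow$ (i). Each implication is short once the machinery of Propositions \ref{P:maximal}, \ref{P:inj rel}, \ref{P:rep m fam}, and \ref{P:NO inj} is in place.

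For (i) $\Rightarrow$ (ii), assume $\ol{\pi}_X^{-1}(\fJ) = \{0\}$ and consider the Nica-covariant representation $(\pi, t) := (Q_\fJ \circ \ol{\pi}_X, Q_\fJ \circ \ol{t}_X)$, where $Q_\fJ \colon \N\T_X \to \N\T_X / \fJ$ is the quotient map. Then $\pi$ is injective by hypothesis, and $(\pi,t)$ inherits a gauge action from $\N\T_X$ because $\fJ$ is gauge-invariant. By Proposition \ref{P:rep m fam}, the induced $2^d$-tuple $\L^{(\pi,t)}$ is an (M)-$2^d$-tuple contained in $\I$. By Proposition \ref{P:inj rel}, we have
\[
\fJ_{\L^{(\pi,t)}}^{(\ol{\pi}_X, \ol{t}_X)} = \ker (\pi \times t) = \ker Q_\fJ = \fJ,
\]
so $\L^{(\pi,t)}$ induces $\fJ$. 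Uniqueness is immediate from Proposition \ref{P:maximal}, which ensures that there is at most one maximal $2^d$-tuple inducing $\fJ$.

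The implication (ii) $\Rightarrow$ (iii) is immediate from the definition, since every (M)-$2^d$-tuple is, by definition, an (E)-$2^d$-tuple.

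For (iii) $\Rightarrow$ (i), suppose $\L$ is an (E)-$2^d$-tuple inducing $\fJ$. Then $\N\T_X / \fJ \cong \N\O(\L, X)$ via the canonical quotient, and under this identification the restriction of $Q_\fJ \circ \ol{\pi}_X$ corresponds to $\pi_X^\L$. Because $\L \subseteq \I$, Proposition \ref{P:NO inj} guarantees that $(\pi_X^\L, t_X^\L)$ is injective, and therefore $\ol{\pi}_X^{-1}(\fJ) = \ker \pi_X^\L = \{0\}$, as required.

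There is no serious obstacle: the only subtle point is recognising that an arbitrary (E)-$2^d$-tuple inducing $\fJ$ need not itself be maximal, but its very existence combined with Proposition \ref{P:NO inj} forces injectivity of $\ol{\pi}_X$ modulo $\fJ$, which then feeds back into the construction of the unique (M)-$2^d$-tuple via Proposition \ref{P:rep m fam}. This circular flow is what makes the three conditions coalesce.
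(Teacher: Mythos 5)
Your proposal is correct and follows essentially the same route as the paper's own proof: both establish (i) $\Rightarrow$ (ii) by applying Propositions \ref{P:inj rel} and \ref{P:rep m fam} to the quotient representation $(Q_\fJ\circ\ol{\pi}_X, Q_\fJ\circ\ol{t}_X)$ with uniqueness from Proposition \ref{P:maximal}, note (ii) $\Rightarrow$ (iii) is trivial, and deduce (iii) $\Rightarrow$ (i) from injectivity of $(\pi_X^\L, t_X^\L)$ via Proposition \ref{P:NO inj}. The only cosmetic difference is your closing remark about the ``circular flow,'' which the paper does not need to comment on.
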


\begin{proof}
First assume that $\ol{\pi}_X^{-1}(\mathfrak{J})=\{0\}$.
Let $Q_\mathfrak{J}\colon\N\T_X\to\N\T_X/\mathfrak{J}$ denote the quotient map.
Then the Nica-covariant representation $(Q_\mathfrak{J}\circ\ol{\pi}_X,Q_\mathfrak{J}\circ\ol{t}_X)$ is injective and admits a gauge action.
By Proposition \ref{P:inj rel}, we have
\[
\fJ = \ker Q_{\fJ} = \ker(Q_\mathfrak{J}\circ\ol{\pi}_X)\times(Q_\mathfrak{J}\circ\ol{t}_X)=\mathfrak{J}_{\L^{(Q_\mathfrak{J}\circ\ol{\pi}_X,Q_\mathfrak{J}\circ\ol{t}_X)}}^{(\ol{\pi}_X,\ol{t}_X)},
\]
since $(Q_\mathfrak{J}\circ\ol{\pi}_X)\times(Q_\mathfrak{J}\circ\ol{t}_X)=Q_\mathfrak{J}$.
Thus $\L^{(Q_\mathfrak{J}\circ\ol{\pi}_X,Q_\mathfrak{J}\circ\ol{t}_X)}$ induces $\mathfrak{J}$, and by Proposition \ref{P:rep m fam} it is an (M)-$2^d$-tuple of $X$.
By Proposition \ref{P:maximal} it is also unique, proving that (i) implies (ii).

Item (ii) implies item (iii) trivially. 
So assume that there exists an (E)-$2^d$-tuple $\L$ such that $\mathfrak{J}_{\L}^{(\ol{\pi}_X, \ol{t}_X)} = \mathfrak{J}$.
In particular $(\pi_X^\L, t_X^\L)$ is injective by Proposition \ref{P:NO inj}.
Hence, letting 
\[
Q_\fJ\colon \N\T_X\to\N\O(\L,X)=\N\T_X/ \fJ
\]
denote the quotient map, we have
\begin{align*}
\{0\}=\ker\pi_X^\L 
& = \ker Q_\fJ\circ\ol{\pi}_X 
= \{a\in A\mid \ol{\pi}_X(a)\in\mathfrak{J}\} 
= \ol{\pi}_X^{-1}(\mathfrak{J}),
\end{align*}
showing that item (iii) implies item (i), finishing the proof.
\end{proof}

\begin{remark}
Proposition \ref{P:inj on J} implies that the mapping
\begin{align*}
\{\M\mid \M \; \text{is an (M)-$2^d$-tuple of} \; X\} & \to \{\mathfrak{J}\subseteq\N\T_X\mid \mathfrak{J} \; \text{is a gauge-invariant ideal,} \; \ol{\pi}_X^{-1}(\mathfrak{J})=\{0\}\} \\
									    \M & \mapsto\mathfrak{J}_\M^{(\ol{\pi}_X,\ol{t}_X)}
\end{align*}
is a bijection.
\noindent
More specifically, the mappings
\begin{align*}
\M\mapsto\mathfrak{J}_\M^{(\ol{\pi}_X,\ol{t}_X)} & \; \text{for all (M)-$2^d$-tuples $\M$ of $X$}, \\
\mathfrak{J}\mapsto\L^{(Q_\mathfrak{J}\circ\ol{\pi}_X,Q_\mathfrak{J}\circ\ol{t}_X)} & \; \text{for all gauge-invariant ideals $\mathfrak{J}\subseteq\N\T_X$ satisfying $\ol{\pi}_X^{-1}(\mathfrak{J})=\{0\}$},
\end{align*}
are mutually inverse, where $Q_\mathfrak{J}\colon\N\T_X\to\N\T_X/\mathfrak{J}$ is the quotient map.
\end{remark}

The following Gauge-Invariant Uniqueness Theorem recovers \cite[Corollary 11.8]{Kat07} when $d=1$, and \cite[Theorem 4.2]{DK18} when $\L = \I$.

\begin{theorem}[$\bZ_+^d$-GIUT for (M)-$2^d$-tuples]\label{T:d GIUT M}
Let $X$ be a strong compactly aligned product system with coefficients in a C*-algebra $A$. 
Let $\L$ be an (M)-$2^d$-tuple of $X$ and $(\pi,t)$ be a Nica-covariant representation of $X$. 
Then $\N\O(\L,X)\cong\ca(\pi,t)$ via a canonical $*$-isomorphism if and only if $(\pi,t)$ admits a gauge action and $\L^{(\pi,t)}=\L$.
\end{theorem}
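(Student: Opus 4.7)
The plan is to deduce the theorem as a direct consequence of the infrastructure already developed, namely Propositions \ref{P:NO inj}, \ref{P:inj rel}, \ref{P:rep m fam}, and \ref{P:inj on J}. The core observation is that (M)-$2^d$-tuples are, by construction, the unique maximal representatives of their induced gauge-invariant ideals, and Proposition \ref{P:inj rel} characterises precisely when an injective Nica-covariant representation with a gauge action produces a faithful representation of the associated $\L^{(\pi,t)}$-relative Cuntz-Nica-Pimsner algebra. The two directions then essentially amount to a bookkeeping argument relating $\L$ and $\L^{(\pi,t)}$ through the ideal they both induce in $\N\T_X$.

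For the forward direction, I would assume that $\Phi \colon \N\O(\L,X) \to \ca(\pi,t)$ is a canonical $*$-isomorphism. Since $\L$ is an (E)-$2^d$-tuple, Proposition \ref{P:NO inj} gives injectivity of $(\pi_X^\L, t_X^\L)$, and transporting through $\Phi$ yields that $(\pi,t)$ is injective and inherits the gauge action from $(\pi_X^\L, t_X^\L)$. Proposition \ref{P:rep m fam} then ensures that $\L^{(\pi,t)}$ is itself an (M)-$2^d$-tuple of $X$. Identifying $\pi \times t$ with the composition $\Phi \circ Q$ (for $Q \colon \N\T_X \to \N\O(\L,X)$ the canonical quotient map), I would compute $\ker(\pi \times t) = \ker Q = \fJ_{\L}^{(\ol{\pi}_X,\ol{t}_X)}$, while Proposition \ref{P:inj rel} simultaneously identifies $\ker(\pi\times t) = \fJ_{\L^{(\pi,t)}}^{(\ol{\pi}_X,\ol{t}_X)}$. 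The two (M)-$2^d$-tuples $\L$ and $\L^{(\pi,t)}$ therefore induce the same gauge-invariant ideal of $\N\T_X$, and the uniqueness statement in Proposition \ref{P:inj on J} forces $\L = \L^{(\pi,t)}$.

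For the backward direction, assuming $(\pi,t)$ admits a gauge action and $\L^{(\pi,t)} = \L$, I would first note that $\L_\mt \subseteq \I_\mt = \{0\}$ forces $\L_\mt = \{0\}$, whence $\ker \pi = \L_\mt^{(\pi,t)} = \L_\mt = \{0\}$ and $(\pi,t)$ is injective. Proposition \ref{P:inj rel} then provides a canonical $*$-epimorphism $\Phi \colon \N\O(\L^{(\pi,t)}, X) \to \ca(\pi,t)$ that is \emph{injective} precisely because $(\pi,t)$ admits a gauge action. Substituting $\L^{(\pi,t)} = \L$ yields the desired canonical $*$-isomorphism $\N\O(\L,X) \cong \ca(\pi,t)$. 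Uniqueness of the isomorphism is immediate from the universal property of $\N\O(\L,X)$ stated in Proposition \ref{P:NO}.

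There is no serious obstacle here, because the hard technical work (injectivity on cores, solution of polynomial equations via $\I_F$, the GIUT-style injectivity criterion of Proposition \ref{P:inj rel}) has all been done upstream. The one subtle point to get right is the correct invocation of maximality: Proposition \ref{P:maximal} alone says that a maximal $2^d$-tuple exists, but to conclude $\L = \L^{(\pi,t)}$ from equality of induced ideals I need the sharper uniqueness statement in Proposition \ref{P:inj on J} (namely that there is a \emph{unique} (M)-$2^d$-tuple inducing a gauge-invariant ideal $\fJ$ with $\ol{\pi}_X^{-1}(\fJ) = \{0\}$), combined with the fact that injectivity of $(\pi,t)$ forces $\ol{\pi}_X^{-1}(\ker(\pi\times t)) = \ker \pi = \{0\}$.
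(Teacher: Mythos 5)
Your proposal is correct and follows essentially the same route as the paper's own proof: both directions run through Propositions \ref{P:NO inj}, \ref{P:inj rel}, \ref{P:rep m fam} and the uniqueness clause of item (ii) of Proposition \ref{P:inj on J}, with the backward direction being the identical one-line application of Proposition \ref{P:inj rel}. The only cosmetic difference is in the forward direction, where the paper verifies $\fJ_{\L^{(\pi,t)}}^{(\ol{\pi}_X,\ol{t}_X)}=\fJ_{\L}^{(\ol{\pi}_X,\ol{t}_X)}$ by two element-wise inclusions (pushing $\pi(a)q_F=0$ back and forth through $\Phi$), whereas you obtain the same equality in one step from the factorisation $\pi\times t=\Phi\circ Q$ together with the kernel identification $\ker\pi\times t=\fJ_{\L^{(\pi,t)}}^{(\ol{\pi}_X,\ol{t}_X)}$ already recorded in the last assertion of Proposition \ref{P:inj rel} — a legitimate streamlining of the same argument rather than a different proof.
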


\begin{proof}
We start with the following remark.
If $\N\O(\L,X)\cong\ca(\pi,t)$ canonically, then $(\pi,t)$ is injective as $X$ embeds isometrically in $\N\O(\L,X)$.
On the other hand, if $\L^{(\pi,t)}=\L$ then $\ker \pi \equiv \L_\mt^{(\pi,t)} = \L_\mt = \{0\}$, as $\L$ is in particular an (E)-$2^d$-tuple.
Hence in both directions we may use that $(\pi,t)$ is injective.

Assume that $\N\O(\L,X)\cong\ca(\pi,t)$ via a canonical $*$-isomorphism $\Phi\colon \N\O(\L,X)\to\ca(\pi,t)$. 
For notational convenience, we will abbreviate $(\pi_X^\L,t_X^\L)$ as $(\tilde{\pi},\tilde{t})$.
Let $\be$ denote the gauge action of $(\tilde{\pi},\tilde{t})$. 
By defining $\gamma_{\un{z}}:=\Phi\circ\beta_{\un{z}}\circ\Phi^{-1}$ for each $\un{z}\in\bT^d$, we obtain a gauge action $\ga$ of $(\pi,t)$.

Next we show that $\L^{(\pi,t)}=\L$.
To this end, it suffices to show that $\mathfrak{J}_{\L^{(\pi,t)}}^{(\ol{\pi}_X,\ol{t}_X)}=\mathfrak{J}_\L^{(\ol{\pi}_X,\ol{t}_X)}$.
Then we can apply maximality of $\L^{(\pi,t)}$ (which holds by Proposition \ref{P:rep m fam}) and of $\L$ (which holds by assumption), together with uniqueness from item (ii) of Proposition \ref{P:inj on J}, to obtain the result.
To this end, fix $F\subseteq[d]$ and $a\in\L_F$.
Then $\tilde{\pi}(a)\tilde{q}_F=0$ since $(\tilde{\pi},\tilde{t})$ is an $\L$-relative CNP-representation.
Applying $\Phi$, we obtain $\pi(a)q_F = 0$, and therefore $a\in\L_F^{(\pi,t)}$ by definition.
Hence $\mathfrak{J}_\L^{(\ol{\pi}_X,\ol{t}_X)}\subseteq\mathfrak{J}_{\L^{(\pi,t)}}^{(\ol{\pi}_X,\ol{t}_X)}$.
Next, fix $F\subseteq[d]$ and $a\in\L_F^{(\pi,t)}$.
An application of (\ref{eq:out}) yields $\pi(a)q_F=0$.
Recall that $\tilde{\pi}=Q\circ\ol{\pi}_X$ and $\tilde{t}_\un{n}=Q\circ\ol{t}_{X,\un{n}}$ for all $\un{n}\in\bZ_+^d\setminus\{\un{0}\}$, where $Q\colon \N\T_X\to\N\O(\L,X)$ is the quotient map.
Hence, applying $\Phi^{-1}$ to $\pi(a)q_F=0$, we obtain $Q(\ol{\pi}_X(a)\ol{q}_{X,F}) = 0$, and therefore 
\[
\ol{\pi}_X(a)\ol{q}_{X,F}\in\ker Q=\mathfrak{J}_\L^{(\ol{\pi}_X,\ol{t}_X)}.
\]
Hence we have $\mathfrak{J}_{\L^{(\pi,t)}}^{(\ol{\pi}_X,\ol{t}_X)}\subseteq\mathfrak{J}_\L^{(\ol{\pi}_X,\ol{t}_X)}$, and thus $\mathfrak{J}_{\L^{(\pi,t)}}^{(\ol{\pi}_X,\ol{t}_X)}=\mathfrak{J}_\L^{(\ol{\pi}_X,\ol{t}_X)}$, as required.

The converse is an immediate application of Proposition \ref{P:inj rel}.
Indeed, since $(\pi,t)$ is injective and admits a gauge action, we obtain $\N\O(\L^{(\pi,t)}, X) \cong \ca(\pi,t)$ canonically, and the fact that $\L^{(\pi,t)}=\L$ then gives that $\N\O(\L^{(\pi,t)}, X) = \N\O(\L,X)$, finishing the proof.
\end{proof}

\subsection{Properties of ideals induced by relative $2^d$-tuples}\label{S:ideal rel fam}

The definition of (M)-$2^d$-tuples itself depends extensively on the associated gauge-invariant ideals of $\N\T_X$.
Hence, to obtain a genuinely useful parametrisation, we must characterise (M)-$2^d$-tuples in a way that does not (explicitly) depend on any gauge-invariant ideal of $\N\T_X$.
To this end, we need to study the properties of the gauge-invariant ideals arising from relative $2^d$-tuples.

When a relative $2^d$-tuple $\L$ consists of ideals and is invariant, the ideals $\mathfrak{J}_{\L,F}^{(\pi,t)}$ take on a particularly simple form akin to the one obtained in \cite[Proposition 4.6]{DK18}.

\begin{proposition}\label{P: JLF}
Let $X$ be a strong compactly aligned product system with coefficients in a C*-algebra $A$. 
Let $\L$ be a relative $2^d$-tuple of $X$ and let $(\pi,t)$ be a Nica-covariant representation of $X$.
If $\L$ is invariant and consists of ideals, then
\begin{equation} \label{Eq: JLF}
\fJ_{\L, F}^{(\pi,t)}
= 
\ol{\spn} \{t_{\un{n}}(X_\un{n}) \pi(\L_F) q_F t_{\un{m}}(X_\un{m})^* \mid \un{n}, \un{m} \in \bZ_+^d\}
\foral
F \subseteq [d].
\end{equation}
\end{proposition}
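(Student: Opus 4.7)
The inclusion $\supseteq$ is immediate from the definition of $\fJ_{\L,F}^{(\pi,t)} = \sca{\pi(\L_F) q_F}$, since each spanning element on the right-hand side lies in this ideal. For the converse, denote the right-hand side by $S$. The plan is to show that $S$ is a closed two-sided ideal of $\ca(\pi,t)$ containing $\pi(\L_F) q_F$; the latter follows by taking $\un{n} = \un{m} = \un{0}$ and using $X_{\un{0}} = A$ together with an approximate unit argument. Closure of $S$ under the adjoint is automatic: since $\L_F = \L_F^*$ (as an ideal) and $q_F$ commutes with $\pi(A)$ by Proposition \ref{P:pf reducing}, we have
\[
(t_{\un{n}}(\xi) \pi(a) q_F t_{\un{m}}(\eta)^*)^* = t_{\un{m}}(\eta) \pi(a^*) q_F t_{\un{n}}(\xi)^* \in S.
\]

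It thus remains to verify $\ca(\pi,t) \cdot S \subseteq S$, for then $S \cdot \ca(\pi,t) = (\ca(\pi,t) \cdot S^*)^* \subseteq S^* = S$. Using the Wick-ordering decomposition $\ca(\pi,t) = \ol{\spn}\{t_{\un{p}}(X_{\un{p}}) t_{\un{q}}(X_{\un{q}})^*\}$, the task reduces to analysing products of the form
\[
t_{\un{p}}(X_{\un{p}}) t_{\un{q}}(X_{\un{q}})^* \cdot t_{\un{n}}(X_{\un{n}}) \pi(a) q_F t_{\un{m}}(X_{\un{m}})^*, \qquad a \in \L_F.
\]
Applying Nica-covariance to the middle factor $t_{\un{q}}(X_{\un{q}})^* t_{\un{n}}(X_{\un{n}})$ with right LCM $\un{w} = \un{q} \vee \un{n}$, and absorbing the outer $t_{\un{p}}(X_{\un{p}})$, the product is contained in
\[
\ol{\spn}\{t_{\un{r}}(X_{\un{r}}) t_{\un{q}_1}(X_{\un{q}_1})^* \pi(a) q_F t_{\un{m}}(X_{\un{m}})^*\},
\]
where $\un{q}_1 := \un{q} - \un{q} \wedge \un{n}$ and $\un{r} := \un{p} + \un{n} - \un{q} \wedge \un{n}$.

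The crux of the argument is commuting $t_{\un{q}_1}(X_{\un{q}_1})^*$ past $\pi(a) q_F$. If $\un{q}_1 \not\perp F$, then $q_F t_{\un{q}_1}(X_{\un{q}_1}) = \{0\}$ by Proposition \ref{P:pf reducing}, and taking adjoints shows $t_{\un{q}_1}(X_{\un{q}_1})^* \pi(a) q_F = \{0\}$, so the product vanishes. If $\un{q}_1 \perp F$, invariance of $\L$ gives $\sca{X_{\un{q}_1}, \L_F X_{\un{q}_1}} \subseteq \L_F$, i.e., $\L_F$ is positively invariant for the C*-correspondence $X_{\un{q}_1}$; by Lemma \ref{L:IX to XI} this is equivalent to $\L_F X_{\un{q}_1} \subseteq X_{\un{q}_1} \L_F$. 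Combining the identity $t_{\un{q}_1}(\zeta)^* \pi(a) = t_{\un{q}_1}(\phi_{\un{q}_1}(a^*) \zeta)^*$ (valid since $(\pi,t)$ preserves the left action) with a Hewitt-Cohen factorisation of $\phi_{\un{q}_1}(a^*) \zeta \in X_{\un{q}_1} \L_F$ yields
\[
t_{\un{q}_1}(X_{\un{q}_1})^* \pi(\L_F) \subseteq [\pi(\L_F) t_{\un{q}_1}(X_{\un{q}_1})^*].
\]
Since $\un{q}_1 \perp F$ also forces $t_{\un{q}_1}(X_{\un{q}_1})^*$ to commute with $q_F$, substituting back produces an element of
\[
\ol{\spn}\{t_{\un{r}}(X_{\un{r}}) \pi(\L_F) q_F t_{\un{q}_1 + \un{m}}(X_{\un{q}_1 + \un{m}})^*\} \subseteq S,
\]
completing the verification. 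The main obstacle is the disciplined bookkeeping of the Nica-covariance moves combined with careful tracking of which multi-indices lie in $F^\perp$; once positive invariance of $\L_F$ for each $X_{\un{q}_1}$ with $\un{q}_1 \perp F$ is extracted from the invariance hypothesis, all remaining manipulations are standard C*-correspondence identities.
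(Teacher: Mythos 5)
Your proposal is correct and takes essentially the same route as the paper's proof: both verify that the right-hand side is a selfadjoint closed subspace absorbing one-sided multiplication, via the Wick-ordering consequence of Nica covariance, the relations $q_F t_{\un{n}}(\xi_{\un{n}}) = 0$ (for $\un{n} \not\perp F$) or $q_F t_{\un{n}}(\xi_{\un{n}}) = t_{\un{n}}(\xi_{\un{n}}) q_F$ (for $\un{n} \perp F$) from Proposition \ref{P:pf reducing}, and positive invariance of $\L_F$ combined with Lemma \ref{L:IX to XI} to commute $\pi(\L_F)$ past creation operators in directions perpendicular to $F$. The only cosmetic difference is that you check left multiplication by full Wick words $t_{\un{p}}(X_{\un{p}})t_{\un{q}}(X_{\un{q}})^*$, whereas the paper checks right multiplication by single generators $t_{\un{r}}(X_{\un{r}})$; the two computations are adjoint-mirror images of one another.
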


\begin{proof}
The proof follows the arguments of \cite[Proposition 4.6]{DK18}.
In short, fix $F \subseteq [d]$ and denote the right hand side of (\ref{Eq: JLF}) by $\mathfrak{J}_F$. 
The fact that $\mathfrak{J}_F\subseteq\mathfrak{J}_{\L,F}^{(\pi,t)}$ is immediate by definition.
To prove that $\mathfrak{J}_{\L,F}^{(\pi,t)}\subseteq \mathfrak{J}_F$, it suffices to show that $\mathfrak{J}_F$ is an ideal of $\ca(\pi,t)$, as $\fJ_F$ contains the generators of $\mathfrak{J}_{\L,F}^{(\pi,t)}$.
Since $\mathfrak{J}_F$ is selfadjoint, it suffices to show that $\mathfrak{J}_F\ca(\pi,t)\subseteq \mathfrak{J}_F$. 

It is clear that $\mathfrak{J}_Ft_\un{r}(X_\un{r})^*\subseteq \mathfrak{J}_F$ for all $\un{r}\in\bZ_+^d$.
Hence it remains to show that $\mathfrak{J}_F t_\un{r}(X_\un{r}) \subseteq \mathfrak{J}_F$ for all $\un{r}\in\bZ_+^d$.
For $\un{n},\un{m},\un{r}\in\bZ_+^d$, we have
\[
t_\un{n}(X_\un{n})\pi(\L_F)q_Ft_\un{m}(X_\un{m})^*t_\un{r}(X_\un{r})
\subseteq
\left[t_\un{n}(X_\un{n})\pi(\L_F)q_F t_{\un{m}'}(X_{\un{m}'})t_{\un{r}'}(X_{\un{r}'})^*\right],
\]
where $\un{m}'=-\un{m}+\un{m}\vee\un{r}$ and $\un{r}'=-\un{r}+\un{m}\vee\un{r}$, due to Nica-covariance. 
If $\un{m}'\not\perp F$, then
\[
t_\un{n}(X_\un{n})\pi(\L_F)q_F t_{\un{m}'}(X_{\un{m}'}) t_{\un{r}'}(X_{\un{r}'})^* = \{0\}
\subseteq
\mathfrak{J}_F
\]
by Proposition \ref{P:pf reducing}.
If $\un{m}'\perp F$, then positive invariance of $\L_F$ for $X_{\un{m}'}$ and Lemma \ref{L:IX to XI} together yield
\[
\pi(\L_F)t_{\un{m}'}(X_{\un{m}'}) = t_{\un{m}'}( \L_F X_{\un{m}'}) \subseteq t_{\un{m}'}(X_{\un{m}'} \L_F) =  t_{\un{m}'}(X_{\un{m}'})\pi(\L_F).
\]
Then we have
\begin{align*}
\left[t_\un{n}(X_\un{n})\pi(\L_F)q_Ft_{\un{m}'}(X_{\un{m}'})t_{\un{r}'}(X_{\un{r}'})^*\right] 
& \subseteq
\left[t_\un{n}(X_\un{n})\pi(\L_F)t_{\un{m}'}(X_{\un{m}'})q_Ft_{\un{r}'}(X_{\un{r}'})^*\right] \\
& \subseteq
\left[t_{\un{n}+\un{m}'}(X_{\un{n}+\un{m}'})\pi(\L_F)q_Ft_{\un{r}'}(X_{\un{r}'})^*\right]\subseteq \mathfrak{J}_F,
\end{align*}
where we have used Proposition \ref{P:pf reducing} in the first inclusion.
Hence in all cases we have 
\[
t_\un{n}(X_\un{n}) \pi(\L_F)q_F t_\un{m}(X_\un{m})^*t_\un{r}(X_\un{r}) \subseteq \mathfrak{J}_F,
\]
and thus $\mathfrak{J}_Ft_\un{r}(X_\un{r})\subseteq \mathfrak{J}_F$ for all $\un{r}\in\bZ_+^d$, as required.
\end{proof}

By requiring that $\L$ is partially ordered, we can say more. 

\begin{proposition}\label{P: cond ex}
Let $X$ be a strong compactly aligned product system with coefficients in a C*-algebra $A$.
Let $\L$ be a relative $2^d$-tuple of $X$ that is invariant, partially ordered, and consists of ideals.
Let $(\pi,t)$ be a Nica-covariant representation of $X$.
Then
\begin{align*}
q_F\mathfrak{J}_{\L,D}^{(\pi,t)}q_F 
\subseteq 
\ol{\spn}\{t_\un{n}(X_\un{n})\pi(\L_{F \cup D})q_{F\cup D}t_\un{m}(X_\un{m})^*\mid \un{n},\un{m}\perp F\}
\subseteq 
\mathfrak{J}_{\L,F\cup D}^{(\pi,t)}
\foral F, D \subseteq [d].
\end{align*}
Moreover, we have
\[
q_F \fJ_{\L}^{(\pi,t)} q_F
\subseteq
\sum\{\fJ_{\L, D}^{(\pi, t)} \mid F\subseteq D\subseteq[d]\} \foral F\subseteq[d].
\]
\end{proposition}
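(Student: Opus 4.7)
The plan is to apply Proposition \ref{P: JLF} to write $\fJ_{\L, D}^{(\pi,t)}$ as the closed span of the generators $t_\un{n}(X_\un{n}) \pi(\L_D) q_D t_\un{m}(X_\un{m})^*$ for $\un{n}, \un{m} \in \bZ_+^d$, and then analyse how $q_F(\,\cdot\,)q_F$ acts on such a generator. First, by Proposition \ref{P:pf reducing}, conjugation by $q_F$ annihilates the generator unless both $\un{n} \perp F$ and $\un{m} \perp F$; when this occurs, the same result allows me to commute $q_F$ past $t_\un{n}(X_\un{n})$ and $t_\un{m}(X_\un{m})^*$ to obtain $t_\un{n}(X_\un{n}) q_F \pi(\L_D) q_D q_F t_\un{m}(X_\un{m})^*$. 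Using that $q_F \in \pi(A)'$ (again from Proposition \ref{P:pf reducing}) and the direct identity
\[
q_D q_F = \prod_{i \in F \cup D}(I - p_\un{i}) = q_{F \cup D},
\]
which follows because the $p_\un{i}$ are commuting idempotents, the generator reduces to $t_\un{n}(X_\un{n}) \pi(\L_D) q_{F \cup D} t_\un{m}(X_\un{m})^*$ for $\un{n}, \un{m} \perp F$.

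Here partial ordering enters: since $D \subseteq F \cup D$, we have $\L_D \subseteq \L_{F \cup D}$, so this element lies in the middle set of the claimed inclusion. Passing to finite linear combinations and norm-limits establishes the first inclusion. For the second inclusion, I will apply Proposition \ref{P: JLF} at the subset $F \cup D$ (which uses only the invariance and ideal conditions, both of which $\L_{F \cup D}$ inherits from $\L$) to conclude that every generator of the middle set already lies in $\fJ_{\L, F \cup D}^{(\pi,t)}$.

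For the ``moreover" statement, I will use that $\fJ_\L^{(\pi,t)} = \sum_{D \subseteq [d]} \fJ_{\L, D}^{(\pi,t)}$ is a \emph{finite} sum of closed ideals in a C*-algebra and hence is itself automatically closed; in particular every $f \in \fJ_\L^{(\pi,t)}$ decomposes as $f = \sum_{D \subseteq [d]} f_D$ with $f_D \in \fJ_{\L, D}^{(\pi,t)}$. Applying the first part of the proposition term-by-term yields
\[
q_F f q_F = \sum_{D \subseteq [d]} q_F f_D q_F \in \sum_{D \subseteq [d]} \fJ_{\L, F \cup D}^{(\pi,t)} \subseteq \sum\{\fJ_{\L, D'}^{(\pi,t)} \mid F \subseteq D' \subseteq [d]\},
\]
as each $F \cup D$ contains $F$. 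There is no serious obstacle; the computations are routine applications of Propositions \ref{P:pf reducing} and \ref{P: JLF}, and the main bookkeeping concern is simply tracking where each hypothesis (invariance, the ideal condition, and partial ordering) enters.
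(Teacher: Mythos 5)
Your proposal is correct and follows essentially the same route as the paper's proof: expand $\fJ_{\L,D}^{(\pi,t)}$ via Proposition \ref{P: JLF}, use Proposition \ref{P:pf reducing} to kill or commute $q_F$ past the generators, collapse $q_Fq_Dq_F$ to $q_{F\cup D}$ via the commuting projections, invoke partial ordering for $\L_D\subseteq\L_{F\cup D}$, and handle the ``moreover'' statement by summing the first part over all $D\subseteq[d]$. All hypotheses are used where the paper uses them, so nothing further is needed.
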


\begin{proof}
For the first part, fix $F, D \subseteq[d]$. 
Then we have 
\begin{align*}
q_F\mathfrak{J}_{\L,D}^{(\pi,t)}q_F 
& \subseteq
\ol{\spn}\{q_Ft_\un{n}(X_\un{n})\pi(\L_D)q_Dt_\un{m}(X_\un{m})^*q_F\mid \un{n},\un{m}\in\bZ_+^d\} \\
& =\ol{\spn}\{t_\un{n}(X_\un{n})\pi(\L_D)q_{F\cup D}t_\un{m}(X_\un{m})^*\mid \un{n},\un{m}\perp F\}\\
& \subseteq 
\ol{\spn}\{t_\un{n}(X_\un{n})\pi(\L_{F \cup D})q_{F\cup D}t_\un{m}(X_\un{m})^*\mid \un{n},\un{m}\perp F\}
\subseteq 
\mathfrak{J}_{\L,F\cup D}^{(\pi,t)},
\end{align*}
where we have used Propositions \ref{P:pf reducing} and \ref{P: JLF}, that $q_F$ and $q_D$ are commuting projections, and that $\L$ is partially ordered (so $\L_D\subseteq\L_{F\cup D}$).

For the second part, fix $F\subseteq[d]$. 
Then applying the first part for all $D \subseteq [d]$ yields
\begin{align*}
q_F\mathfrak{J}_\L^{(\pi,t)}q_F
& =
\sum\{q_F\mathfrak{J}_{\L,D}^{(\pi,t)}q_F\mid D\subseteq[d]\}
\subseteq
\sum\{\mathfrak{J}_{\L,F\cup D}^{(\pi,t)}\mid D\subseteq[d]\}
=
\sum\{\mathfrak{J}_{\L,D}^{(\pi,t)}\mid F\subseteq D\subseteq[d]\},
\end{align*}
and the proof is complete.
\end{proof}

Let $I$ be an ideal of $A$ and fix $F \subseteq [d]$.
For $a\in A$, we write 
\[
\lim_{\un{m}\perp F}\|\phi_\un{m}(a)+\K(X_\un{m} I)\|=0
\]
if for any $\varepsilon>0$ there exists $\un{m}\perp F$ such that
\[
\|\phi_\un{n}(a)+\K(X_\un{n} I)\|<\varepsilon \foral \un{n}\geq\un{m} \; \text{satisfying} \; \un{n}\perp F.
\]

When working with a $2^d$-tuple $\L$ satisfying certain compatibility relations, the limit condition can be recast in a simpler form.

\begin{lemma}\label{L:lim perp}
Let $X$ be a strong compactly aligned product system with coefficients in a C*-algebra $A$.
Let $\L$ be an invariant $2^d$-tuple of $X$ which consists of ideals and satisfies
\[
\L_F\subseteq\bigcap\{\phi_\un{i}^{-1}(\K(X_\un{i}))\mid i\in[d]\} \foral F\subseteq[d].
\]
Then, for each $F\subseteq[d]$ and $a\in A$, we have that $\lim_{\un{m}\perp F}\|\phi_\un{m}(a)+\K(X_\un{m}\L_F)\|=0$ if and only if for any $\varepsilon>0$, there exists $\un{m}\perp F$ and $k_\un{m} \in\K(X_\un{m}\L_F)$ such that $\|\phi_\un{m}(a)+k_\un{m}\|<\varepsilon$.
\end{lemma}

\begin{proof}
The forward implication is immediate by definition. 
So assume that for any $\varepsilon>0$ there exists $\un{m}\perp F$ and $k_\un{m} \in \K(X_\un{m}\L_F)$ such that $\|\phi_\un{m}(a)+k_\un{m}\|<\varepsilon$. 
Fix $\varepsilon>0$ and a corresponding $\un{m}\perp F$ and $k_\un{m}\in\K(X_\un{m}\L_F)$. 
If $F=[d]$, then $\un{m} \perp F$ implies that $\un{m}=\un{0}$, and there is nothing to show. 
So assume that $F\subsetneq [d]$. 
Without loss of generality, we may assume that $\un{m}\neq\un{0}$. 
Indeed, if $\un{m}=\un{0}$ then $k_\un{0} \in\K(A\L_F)$ and hence $k_\un{0} = \phi_\un{0}(b)$ for some $b\in \L_F$.
Note that
\[
\|a+b\| = \|\phi_\un{0}(a)+\phi_\un{0}(b)\|=\|\phi_\un{0}(a)+k_\un{0}\|<\varepsilon. 
\]
Fix $i\in F^c$.
Since $b\in\L_F$, we have $\phi_\un{i}(b)\in\K(X_\un{i})$ by assumption, and $\sca{X_\un{i},bX_\un{i}}\subseteq\L_F$ by invariance of $\L$. 
Thus an application of (\ref{Eq: comp}) gives that $\phi_\un{i}(b)\in\K(X_\un{i}\L_F)$, and 
\[
\|\phi_\un{i}(a)+\phi_\un{i}(b)\|=\|\phi_\un{i}(a+b)\|\leq\|a+b\|<\varepsilon.
\]
So we may assume that $\un{m}\neq\un{0}$.

Next, take $\un{n}\geq\un{m}$ with $\un{n}\perp F$. 
We will show that $\|\phi_\un{n}(a)+\K(X_\un{n}\L_F)\|<\varepsilon$. 
We write $\un{n}=\un{m}+\un{r}$ for some $\un{r}\perp F$, and without loss of generality we may assume that $\un{r}\neq\un{0}$. 
By Lemma \ref{L:inv comp}, we have $\phi_\un{r}(\L_F)\subseteq\K(X_\un{r}\L_F)$.
An application of Corollary \ref{C:comp to comp} then yields $k_\un{m}\otimes\text{id}_{X_\un{r}}\in\K((X_\un{m}\otimes_AX_\un{r})\L_F)$, and so $\iota_\un{m}^\un{n}(k_\un{m})\in\K(X_\un{n} \L_F)$. 
We then obtain
\begin{align*}
\|\phi_\un{n}(a)+\K(X_\un{n}\L_F)\| 
& \leq 
\|\phi_\un{n}(a)+\iota_\un{m}^\un{n}(k_\un{m})\| 
=
\|\iota_\un{m}^\un{n}(\phi_\un{m}(a)+k_\un{m})\| 
\leq
\|\phi_\un{m}(a)+k_\un{m}\|<\varepsilon,
\end{align*}
from which it follows that $\lim_{\un{m}\perp F}\|\phi_\un{m}(a)+\K(X_\un{m}\L_F)\|=0$, as required.
\end{proof}

The limit condition appears naturally in the study of ideals of $\N\T_X$ that are induced by invariant, partially ordered relative $2^d$-tuples that consist of ideals.

\begin{proposition}\label{P:JL lim}
Let $X$ be a strong compactly aligned product system with coefficients in a C*-algebra $A$. 
Let $\L$ be a relative $2^d$-tuple of $X$ that is invariant, partially ordered and consists of ideals.
Fix $F\subsetneq[d]$ and let $a\in \bigcap\{\phi_{\un{i}}^{-1}(\K(X_{\un{i}})) \mid i\in[d] \}$.
Then the following are equivalent:
\begin{enumerate}
\item $\ol{\pi}_X(a)\ol{q}_{X,F}\in \fJ_\L^{(\ol{\pi}_X, \ol{t}_X)}$; 
\item  $\ol{\pi}_X(a)\ol{q}_{X,F}\in\sum\{\fJ_{\L, D}^{(\ol{\pi}_X, \ol{t}_X)}\mid F\subseteq D\subseteq[d]\}$.
\end{enumerate}
Furthermore, if one (and hence both) of (i) and (ii) holds, then
\[
\lim_{\un{m}\perp F}\|\phi_\un{m}(a)+\K(X_\un{m}\L_F)\|=0.
\]
\end{proposition}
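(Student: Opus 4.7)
The equivalence of (i) and (ii) is immediate from Propositions~\ref{P:pf reducing} and~\ref{P: cond ex}. Indeed, (ii) $\Rightarrow$ (i) is trivial, while for (i) $\Rightarrow$ (ii) the fact that $\ol q_{X,F}$ is a projection commuting with $\ol\pi_X(a)$ by Proposition~\ref{P:pf reducing} gives
\[
\ol\pi_X(a)\ol q_{X,F}=\ol q_{X,F}\bigl(\ol\pi_X(a)\ol q_{X,F}\bigr)\ol q_{X,F}\in \ol q_{X,F}\fJ_\L^{(\ol\pi_X,\ol t_X)}\ol q_{X,F}\subseteq\sum_{F\subseteq D\subseteq[d]}\fJ_{\L,D}^{(\ol\pi_X,\ol t_X)},
\]
where the last inclusion is Proposition~\ref{P: cond ex}.

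For the limit condition, I pass to the Fock representation (which is canonically identified with $(\ol\pi_X,\ol t_X)$ since $\bZ^d$ is amenable). Given $\varepsilon>0$, I invoke Proposition~\ref{P: JLF} to approximate $\ol\pi_X(a)\ol q_{X,F}$ within $\varepsilon$ by a finite sum $\sum_{F\subseteq D\subseteq[d]}\tilde f_D$, where each
\[
\tilde f_D=\sum_{j\in J_D}\ol t_{X,\un{n}_{D,j}}(\xi_{D,j})\,\ol\pi_X(b_{D,j})\,\ol q_{X,D}\,\ol t_{X,\un{m}_{D,j}}(\eta_{D,j})^\ast,\qquad b_{D,j}\in\L_D,
\]
is a finite sum indexed by some finite set $J_D$. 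For any $\un{m}\perp F$, letting $P_{\un{m}}$ denote the projection of $\F X$ onto the summand $X_{\un{m}}$, we have $P_{\un{m}}\ol\pi_X(a)\ol q_{X,F}P_{\un{m}}=\phi_{\un{m}}(a)$, and compression yields $\|\phi_{\un{m}}(a)-P_{\un{m}}(\sum_D\tilde f_D)P_{\un{m}}\|<\varepsilon$.

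The crucial step is a judicious choice of $\un{m}$. A direct Fock-space calculation shows that a summand indexed by $j\in J_D$ contributes to the compression only if $\un{n}_{D,j}=\un{m}_{D,j}$, $\un{n}_{D,j}\le\un{m}$, and $\un{m}-\un{n}_{D,j}\perp D$. For each $D\supsetneq F$ pick $k_D\in D\setminus F$ and let $N_D$ be the maximum of the $k_D$-th coordinate of $\un{n}_{D,j}$ over $j\in J_D$. I choose $\un{m}\perp F$ so that $m_{k_D}>N_D$ for every such $D$; this imposes finitely many lower bounds on coordinates of $\un{m}$ indexed by $F^c$ and so is achievable. The condition $\un{m}-\un{n}_{D,j}\perp D$ forces $m_{k_D}=(\un{n}_{D,j})_{k_D}\le N_D<m_{k_D}$, a contradiction, so $P_{\un{m}}\tilde f_DP_{\un{m}}=0$ for all $D\supsetneq F$.

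It remains to check that $P_{\un{m}}\tilde f_FP_{\un{m}}\in\K(X_{\un{m}}\L_F)$. Each surviving summand takes the form $\Theta^{X_{\un{n}_{F,j}}}_{\xi_{F,j},\eta_{F,j}}\otimes\phi_{\un{m}-\un{n}_{F,j}}(b_{F,j})$ on $X_{\un{n}_{F,j}}\otimes X_{\un{m}-\un{n}_{F,j}}$, where $b_{F,j}\in\L_F$ and $\un{m}-\un{n}_{F,j}\perp F$ (since $\un{n}_{F,j}\le\un{m}$ and $\un{m}\perp F$). Because $\L$ is relative and partially ordered, $\L_F\subseteq\bigcap_{i\in[d]}\phi_{\un{i}}^{-1}(\K(X_{\un{i}}))$, and invariance of $\L_F$ brings us into the hypotheses of Proposition~\ref{P:inv comp}, giving $\phi_{\un{m}-\un{n}_{F,j}}(b_{F,j})\in\K(X_{\un{m}-\un{n}_{F,j}}\L_F)$. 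Approximating in norm by rank-one operators with endpoints in $X_{\un{m}-\un{n}_{F,j}}\L_F$ and using the Hewitt-Cohen Factorisation Theorem~\ref{T:HCFT} to absorb $\L_F$-scalars, each tensor product summand lies in $\K(X_{\un{m}}\L_F)$. This produces $k_{\un{m}}\in\K(X_{\un{m}}\L_F)$ with $\|\phi_{\un{m}}(a)+k_{\un{m}}\|<\varepsilon$, and Proposition~\ref{P:lim perp} delivers the desired limit. The main obstacle is the bookkeeping of the Fock-space compression: tracking which summands survive and verifying membership in $\K(X_{\un{m}}\L_F)$ for the surviving ones.
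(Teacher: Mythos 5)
Your proof is correct, and for the equivalence of (i) and (ii) it is verbatim the paper's argument (Proposition~\ref{P:pf reducing} plus the second part of Proposition~\ref{P: cond ex}). For the limit condition you follow the paper's core strategy---compress in the Fock representation onto a single far-out summand of degree perpendicular to $F$, observe that the contributions of $\fJ_{\L,D}^{(\ol{\pi},\ol{t})}$ for $F\subsetneq D$ vanish, identify the surviving $F$-part as an element of $\K(X_\un{m}\L_F)$ via Proposition~\ref{P:inv comp}, and finish with Proposition~\ref{P:lim perp}---but your execution differs in two genuine respects. First, the paper passes through Proposition~\ref{P: cond ex} (which restricts the spanning elements to indices $\un{n},\un{m}\perp F$) and then applies the faithful conditional expectation $E_\ga$ to reduce to diagonal terms before compressing onto $X_{(m+1)\cdot\un{1}_{F^c}}$; you instead work directly with the general spanning elements from Proposition~\ref{P: JLF} and let the single-summand compression $P_\un{m}(\cdot)P_\un{m}$ annihilate the off-diagonal terms automatically, so the conditional expectation is never needed. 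This is a mild but real streamlining. Second, the mechanism killing the terms with $F\subsetneq D$ differs: the paper arranges the degree remaining after $\ol{t}_\un{n}(\eta)^*P$ to have full support $F^c$, so that $\ol{q}_D$ kills it since $D\cap F^c\neq\mt$; you instead impose the per-$D$ coordinate bound $m_{k_D}>N_D$ at a chosen $k_D\in D\setminus F$, contradicting the survival condition $\un{m}-\un{n}_{D,j}\perp D$. Your survival criterion ($\un{n}_{D,j}=\un{m}_{D,j}$, $\un{n}_{D,j}\leq\un{m}$, $\un{m}-\un{n}_{D,j}\perp D$) is correct, and the coordinate argument also disposes of the $\un{n}_{D,j}=\un{0}$ terms since $N_D\geq 0$. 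Both mechanisms exploit the same phenomenon, and either choice of $\un{m}$ suffices thanks to Proposition~\ref{P:lim perp}, whose hypotheses you rightly verify from relativity, partial ordering and invariance of $\L$.

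One notational wrinkle in the last step: the expression $\Theta^{X_{\un{n}_{F,j}}}_{\xi_{F,j},\eta_{F,j}}\otimes\phi_{\un{m}-\un{n}_{F,j}}(b_{F,j})$ is not a well-defined operator on the balanced tensor product, since $\phi_{\un{m}-\un{n}_{F,j}}(b_{F,j})$ need not commute with the left action. The compression is actually $\iota_{\un{n}_{F,j}}^{\un{m}}\big(\Theta_{\xi_{F,j}b_{F,j},\,\eta_{F,j}}\big)$, i.e.\ $\Theta_{\xi_{F,j}b_{F,j},\,\eta_{F,j}}\otimes\id$, which equals $\Theta_{\xi_{F,j}}\phi_{\un{m}-\un{n}_{F,j}}(b_{F,j})\Theta_{\eta_{F,j}}^*$ in the notation of Lemma~\ref{L:lance}. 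With that correction, membership in $\K(X_\un{m}\L_F)$ is exactly Lemma~\ref{L:com id} (applied with $Y=X_{\un{m}-\un{n}_{F,j}}$ and $I=\L_F$), which you could cite in place of the hand-rolled rank-one approximation; this is precisely how the paper concludes, so the argument stands.
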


\begin{proof}
Without loss of generality, we may replace $(\ol{\pi}_X,\ol{t}_X)$ by the Fock representation $(\ol{\pi},\ol{t})$ and write $\mathfrak{J}_\L \equiv \fJ_\L^{(\ol{\pi},\ol{t})}$, and $\mathfrak{J}_{\L,F} \equiv \fJ_{\L,F}^{(\ol{\pi},\ol{t})}$ for all $F \subseteq [d]$.
The implication [(ii)$\Rightarrow$(i)] is immediate.
Conversely, if $\ol{\pi}(a)\ol{q}_{F}\in \fJ_\L$, then we have 
\[
\ol{\pi}(a)\ol{q}_F = \ol{q}_F(\ol{\pi}(a)\ol{q}_F) \ol{q}_F \in \ol{q}_F \fJ_{\L} \ol{q}_F \subseteq \sum\{\mathfrak{J}_{\L,D}\mid F\subseteq D\subseteq[d]\},
\]
using Proposition \ref{P:pf reducing} (and the fact that $\ol{q}_F$ is a projection) in the first equality, and Proposition \ref{P: cond ex} in the final inclusion.
This establishes the equivalence of (i) and (ii).

Now suppose that one (and hence both) of (i) and (ii) holds.
Condition (ii) then yields
\begin{align*}
\ol{\pi}(a)\ol{q}_F
& \in
\sum\{\ol{q}_F \fJ_{\L, D} \ol{q}_F \mid F\subseteq D\subseteq[d]\} \\
& \subseteq
\ol{\spn}\{\ol{t}_\un{n}(X_\un{n})\ol{\pi}(\L_D)\ol{q}_D\ol{t}_\un{m}(X_\un{m})^*\mid \un{n},\un{m}\perp F, F\subseteq D\subseteq[d]\},
\end{align*}
using Proposition \ref{P: cond ex} for the containment.
Let $\ga$ be the gauge action of $(\ol{\pi},\ol{t})$ and let $E_\ga\colon \ca(\ol{\pi},\ol{t})\to\ca(\ol{\pi},\ol{t})^\ga$ be the associated faithful conditional expectation.
Therefore, since $\ol{\pi}(a)\ol{q}_F\in\ca(\ol{\pi},\ol{t})^\ga$, applying $E_\ga$ yields
\begin{equation}\label{Eq:fix perp}
\ol{\pi}(a)\ol{q}_F\in\ol{\spn}\{\ol{t}_\un{n}(X_\un{n})\ol{\pi}(\L_D)\ol{q}_D\ol{t}_\un{n}(X_\un{n})^*\mid \un{n}\perp F, F\subseteq D\subseteq[d]\}.
\end{equation}
Fix $\varepsilon>0$. 
By Lemma \ref{L:lim perp}, it is sufficient to find $\un{m}\perp F$ and $k_\un{m} \in \K(X_\un{m}\L_F)$ such that $\|\phi_\un{m}(a)+k_\un{m}\|<\varepsilon$.
By (\ref{Eq:fix perp}), for each $F\subseteq D\subseteq[d]$ there exists 
\[
c_D
\in
\spn\{\ol{t}_\un{n}(X_\un{n})\ol{\pi}(\L_D)\ol{q}_D\ol{t}_\un{n}(X_\un{n})^*\mid \un{n} \perp F\}
\]
such that
\[
\|\ol{\pi}(a)\ol{q}_F-\sum\{c_D\mid F\subseteq D\subseteq[d]\}\|<\varepsilon.
\]
Moreover, we may assume that
\[
c_D=\sum\{\sum_{j=1}^N \ol{t}_\un{n}(\xi_{\un{n},D}^j)\ol{\pi}(b_{\un{n}, D}^j)\ol{q}_D\ol{t}_\un{n}(\eta_{\un{n},D}^j)^* \mid \un{0}\leq\un{n}\leq m\cdot\un{1}_{F^c}\},
\]
for some $m,N\in\bN$, $\xi_{\un{n},D}^j,\eta_{\un{n},D}^j\in X_\un{n}$ and $b_{\un{n}, D}^j\in\L_D$, for all $j\in [N]$ and $\un{0}\leq\un{n}\leq m\cdot\un{1}_{F^c}$.
We may use the same $m$ and $N$ for every $c_D$ by padding with zeros, if necessary.
Fix the projection
\[
P \colon \F X \to X_{(m+1)\cdot\un{1}_{F^c}}.
\]

First we claim that $c_DP=0$ whenever $F\subsetneq D\subseteq[d]$, so that
\begin{align*}
\|\ol{\pi}(a)\ol{q}_FP-c_FP\| 
& =
\|(\ol{\pi}(a)\ol{q}_F-\sum\{c_D\mid F\subseteq D\subseteq[d]\})P\| \\
& \leq
\|\ol{\pi}(a)\ol{q}_F-\sum\{c_D\mid F\subseteq D\subseteq[d]\}\|<\varepsilon.
\end{align*}
Indeed, for each $\un{0}\leq\un{n}\leq m\cdot\un{1}_{F^c}$ and each $j\in[N]$, we have that $\ol{t}_\un{n}(\eta_{\un{n},D}^j)^*P$ has image in $X_\un{r}$ for some $\un{r} \neq \un{0}$ with $\supp\un{r}=F^c$, since $m<m+1$.
Note also that $D\cap F^c\neq\mt$ since $F \subsetneq D$, and thus
\[
\ol{q}_D\ol{t}_\un{n}(\eta_{\un{n},D}^j)^*P (\F X) \subseteq \ol{q}_D \sum \{ X_\un{r} \mid \supp \un{r} = F^c \}= \{0\}.
\]
Consequently 
\[
\ol{t}_\un{n}(\xi_{\un{n},D}^j)\ol{\pi}(b_{\un{n}, D}^j)\ol{q}_D\ol{t}_\un{n}(\eta_{\un{n},D}^j)^*P
=
0,
\foral \un{0}\leq\un{n}\leq m\cdot\un{1}_{F^c}, j\in[N],
\]
and hence $c_DP=0$.

Next we examine the terms of $c_FP$.
To this end, fix $\un{0}\neq\un{n}\leq m\cdot\un{1}_{F^c}$ and $j\in[N]$.
Take $\ze_{(m+1)\cdot\un{1}_{F^c}} \in X_{(m+1)\cdot\un{1}_{F^c}}$ such that 
\[
\ze_{(m+1)\cdot\un{1}_{F^c}}=\ze_\un{n} \ze_{(m+1)\cdot\un{1}_{F^c}-\un{n}}
\textup{ for some }
\ze_\un{n}\in X_\un{n},
\ze_{(m+1)\cdot\un{1}_{F^c}-\un{n}}\in X_{(m+1)\cdot\un{1}_{F^c}-\un{n}}.
\]
Note that $\supp((m+1)\cdot\un{1}_{F^c} - \un{n}) = F^c$, and so we have
\begin{align*}
\ol{t}_\un{n}(\xi_{\un{n},F}^j)\ol{\pi}(b_{\un{n},F}^j)\ol{q}_F\ol{t}_\un{n}(\eta_{\un{n},F}^j)^*P\ze_{(m+1)\cdot\un{1}_{F^c}} 
& =
\ol{t}_\un{n}(\xi_{\un{n},F}^j)\ol{\pi}(b_{\un{n},F}^j)\ol{q}_F\ol{t}_\un{n}(\eta_{\un{n},F}^j)^*\ze_{(m+1)\cdot\un{1}_{F^c}} \\
& =
\xi_{\un{n},F}^j \big(\phi_{(m+1)\cdot\un{1}_{F^c}-\un{n}}(b_{\un{n},F}^j\langle \eta_{\un{n},F}^j,\ze_\un{n} \rangle)\ze_{(m+1)\cdot\un{1}_{F^c}-\un{n}} \big) \\
& =
\big(\Theta_{\xi_{\un{n},F}^j b_{\un{n},F}^j, \eta_{\un{n},F}^j} (\ze_\un{n}) \big)\ze_{(m+1)\cdot\un{1}_{F^c}-\un{n}} \\
& =
\iota_\un{n}^{(m+1)\cdot\un{1}_{F^c}}(\Theta_{\xi_{\un{n},F}^j b_{\un{n},F}^j,\eta_{\un{n},F}^j})\ze_{(m+1)\cdot\un{1}_{F^c}}.
\end{align*}
We deduce that
\[
\ol{t}_\un{n}(\xi_{\un{n},F}^j)\ol{\pi}(b_{\un{n},F}^j)\ol{q}_F\ol{t}_\un{n}(\eta_{\un{n},F}^j)^*P
=
\iota_\un{n}^{(m+1)\cdot\un{1}_{F^c}}(\Theta_{\xi_{\un{n},F}^j b_{\un{n},F}^j,\eta_{\un{n},F}^j}),
\]
where we view the right hand side as an element of $\L(\F X)$ in the way described in Subsection \ref{Ss:prod sys}.
Arguing in a similar (and simpler) way when $\un{n}=\un{0}$, we deduce that
\[
c_FP=\phi_{(m+1)\cdot\un{1}_{F^c}}(b)+\sum\{\sum_{j=1}^N\iota_\un{n}^{(m+1)\cdot\un{1}_{F^c}}(\Theta_{\xi_{\un{n},F}^j b_{\un{n},F}^j,\eta_{\un{n},F}^j})\mid \un{0}\neq\un{n}\leq m\cdot\un{1}_{F^c}\},
\]
for some $b\in\L_F$.
By Lemma \ref{L:inv comp}, we have
\[
\phi_{(m+1)\cdot\un{1}_{F^c}}(b)\in\K(X_{(m+1)\cdot\un{1}_{F^c}}\L_F)
\]
and 
\[
\phi_{(m+1)\cdot\un{1}_{F^c}-\un{n}}(b_{\un{n},F}^j)\in\K(X_{(m+1)\cdot\un{1}_{F^c}-\un{n}}\L_F) \foral \un{0}\neq\un{n}\leq m\cdot\un{1}_{F^c}, j\in[N].
\]
An application of Lemma \ref{L:com id} gives that all of the summands of $c_FP$ belong to $\K(X_{(m+1)\cdot\un{1}_{F^c}}\L_F)$.
Therefore we have $c_FP \in \K(X_{(m+1)\cdot\un{1}_{F^c}}\L_F) \hookrightarrow \L(\F X)$, which satisfies
\[
\|\phi_{(m+1)\cdot\un{1}_{F^c}}(a)-c_FP\|=\|\ol{\pi}(a)\ol{q}_FP-c_FP\|<\varepsilon,
\]
where we use that 
\[
\phi_{(m+1)\cdot\un{1}_{F^c}}(a) = \ol{\pi}(a)P  = \ol{\pi}(a)\ol{q}_FP
\]
in the identification $\L(X_{(m+1)\cdot\un{1}_{F^c}}) \hookrightarrow \L(\F X)$.
Hence we have found $\un{m}:=(m+1)\cdot\un{1}_{F^c}\perp F$ and $k_\un{m}:=-c_FP \in \K(X_{\un{m}}\L_F)$ such that $\|\phi_{\un{m}}(a)+k_\un{m}\|<\varepsilon$, as required.
\end{proof}

\begin{remark}
Proposition \ref{P:JL lim} holds for $F=[d]$ as well, though the second claim requires a different argument.
Specifically, we compress $\ol{\pi}(a)\ol{q}_{[d]}\in\fJ_{\L,[d]}$ at the $\un{0}$-th entry to deduce that $a\in\L_{[d]}$.
Then $\phi_\un{0}(a)\in\K(A\L_{[d]})$, and so $\lim_{\un{m}\perp[d]}\|\phi_\un{m}(a)+\K(X_\un{m}\L_{[d]})\|=0$, as required.
\end{remark}

\subsection{Constructing (M)-$2^d$-tuples}

In this subsection we show how the maximal $2^d$-tuple inducing $\fJ_{\L}^{(\ol{\pi}_X,\ol{t}_X)}$, for $\L$ an (E)-$2^d$-tuple, is constructed.
The following definition is motivated in part by Proposition \ref{P:JL lim}.

\begin{definition}\label{D:L1}
Let $X$ be a strong compactly aligned product system with coefficients in a C*-algebra $A$ and let $\L$ be a $2^d$-tuple of $X$ that consists of ideals.
Fixing $\mt \neq F \subsetneq [d]$, we define
\[
\L_{\inv, F} := \bigcap_{\un{m}\perp F}X_\un{m}^{-1}(\cap_{F\subsetneq D}\L_D)
\qand
\L_{\lim, F} := \{a\in A \mid \lim_{\un{m}\perp F}\|\phi_\un{m}(a)+\K(X_\un{m}\L_F)\|=0\}.
\]
If $\L$ is in addition an (E)-$2^d$-tuple of $X$, then we define the $2^d$-tuple $\L^{(1)}$ of $X$ by
\[
\L_F^{(1)} 
:=
\begin{cases}
\{0\} & \text{ if } F = \mt, \\
\I_F \cap \L_{\inv, F} \cap \L_{\lim, F} & \text{ if } \mt\neq F \subsetneq [d], \\
\L_{[d]} & \text{ if } F = [d].
\end{cases}
\]
\end{definition}

Observe that each $\L_F^{(1)}$ is non-empty, as $0\in\L_F^{(1)}$.
Note also that $\L^{(1)}$ is an (E)-$2^d$-tuple by construction.
We will show that $\L^{(1)}$ consists of ideals, and so we can write $\L^{(k+1)} := (\L^{(k)})^{(1)}$ for $k \in \bN$ (by convention we set $\L^{(0)} := \L$).
To this end, first we note that $\L_{\inv, F}$ is an intersection of ideals and is thus an ideal itself.
To address $\L_{\lim, F}$, we have the following proposition.

\begin{proposition}\label{P:lim ideal}
Let $X$ be a strong compactly aligned product system with coefficients in a C*-algebra $A$. 
Let $\L$ be a $2^d$-tuple of $X$ that consists of ideals and fix $\mt \neq F \subsetneq [d]$.
Then we have that $\L_{\lim, F}$ is an ideal.
\end{proposition}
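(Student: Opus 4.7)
The plan is to verify that $\L_{\lim,F}$ is closed under the standard ideal operations: scalar multiplication, addition, two-sided $A$-multiplication, and taking norm-limits. The crucial algebraic input is that, for each $\un{m}\perp F$, the set $\K(X_\un{m}\L_F)$ is an ideal of $\L(X_\un{m})$: indeed by Lemma \ref{L:Kat07} it sits inside $\K(X_\un{m})$ (which is an ideal of $\L(X_\un{m})$), and by the identification (\ref{Eq: comp}) it is stable under left and right multiplication by elements of $\L(X_\un{m})$. In particular, for $S\in\L(X_\un{m})$ and $k\in\K(X_\un{m}\L_F)$ we have $Sk,kS\in\K(X_\un{m}\L_F)$ with norm at most $\|S\|\cdot\|k\|$, so the quotient norm on $\L(X_\un{m})/\K(X_\un{m}\L_F)$ is $\ast$-algebra submultiplicative.

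First I would handle the linear structure. Given $a,b\in\L_{\lim,F}$ and $\la\in\bC$, the triangle inequality applied to $\phi_\un{m}(\la a+b)+\K(X_\un{m}\L_F)$ together with the directedness of $\{\un{m}:\un{m}\perp F\}$ (so that one can combine two eventual smallness thresholds into a single common $\un{m}_0\perp F$) yields $\la a+b\in\L_{\lim,F}$. Next, for the module structure, fix $a\in\L_{\lim,F}$ and $b\in A$. Because $\K(X_\un{m}\L_F)$ is an ideal of $\L(X_\un{m})$, we obtain
\[
\|\phi_\un{m}(ba)+\K(X_\un{m}\L_F)\|=\|\phi_\un{m}(b)\phi_\un{m}(a)+\K(X_\un{m}\L_F)\|\leq\|b\|\cdot\|\phi_\un{m}(a)+\K(X_\un{m}\L_F)\|,
\]
which tends to $0$ as $\un{m}\perp F$; the same computation with multiplication on the right gives $ab\in\L_{\lim,F}$.

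Finally, for norm-closedness, let $(a_n)\subseteq\L_{\lim,F}$ with $a_n\to a$ in $A$, and fix $\eps>0$. Choose $n$ with $\|a-a_n\|<\eps/2$, and then pick $\un{m}_0\perp F$ so that $\|\phi_\un{m}(a_n)+\K(X_\un{m}\L_F)\|<\eps/2$ whenever $\un{m}\perp F$ and $\un{m}\geq\un{m}_0$. Since $\|\phi_\un{m}(a-a_n)\|\leq\|a-a_n\|<\eps/2$, the triangle inequality in the quotient $\L(X_\un{m})/\K(X_\un{m}\L_F)$ gives $\|\phi_\un{m}(a)+\K(X_\un{m}\L_F)\|<\eps$ for all such $\un{m}$, so $a\in\L_{\lim,F}$.

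There is no real obstacle here; the only subtlety is to recognise $\K(X_\un{m}\L_F)$ as an ideal of $\L(X_\un{m})$ (not merely of $\K(X_\un{m})$), which is what licenses the submultiplicativity step used for left and right multiplication. Everything else is a routine $\eps$-argument exploiting the directedness of $\{\un{m}\in\bZ_+^d:\un{m}\perp F\}$.
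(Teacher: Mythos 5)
Your proof is correct, but it takes a more elementary route than the paper's. The paper proves the proposition in one stroke by realising $\L_{\lim,F}$ as the kernel of a $*$-homomorphism: it forms $B_F:=\bigoplus_{\un{m}\perp F}\L(X_\un{m})/\K(X_\un{m}\L_F)$, observes that $c_0(B_F):=\{(S_\un{m})_{\un{m}\perp F}\in B_F\mid \lim_{\un{m}\perp F}\|S_\un{m}\|=0\}$ is an ideal of $B_F$ (this is where the directedness of $\{\un{m}\in\bZ_+^d\mid\un{m}\perp F\}$ enters), and notes that $\psi_F\colon A\to B_F/c_0(B_F)$, $a\mapsto(\phi_\un{m}(a)+\K(X_\un{m}\L_F))_{\un{m}\perp F}+c_0(B_F)$, is a $*$-homomorphism with $\ker\psi_F=\L_{\lim,F}$; being such a kernel, $\L_{\lim,F}$ is automatically a norm-closed, two-sided, selfadjoint ideal. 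Your direct verification unpacks exactly what this construction conceals, and the key input is the same in both arguments: $\K(X_\un{m}\L_F)$ is a closed two-sided ideal of $\L(X_\un{m})$ --- via Lemma \ref{L:Kat07} and (\ref{Eq: comp}) as you argue (note that stability under left multiplication, $Sk\in\K(X_\un{m}\L_F)$, needs the adjoint trick $\sca{\xi,Sk\eta}=\sca{S^*\xi,k\eta}$ before applying (\ref{Eq: comp}), which is implicit in your write-up; alternatively, a closed ideal of the ideal $\K(X_\un{m})$ is automatically an ideal of $\L(X_\un{m})$) --- so that the quotient norm satisfies the submultiplicative estimate you use. Your $\eps$-arguments for linearity, the two-sided $A$-module property and norm-closedness are all sound. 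The one axiom you do not address is selfadjointness, but under the paper's conventions this costs nothing, since a norm-closed two-sided ideal of a C*-algebra is automatically selfadjoint; the paper's kernel formulation delivers it for free. In short, the paper's packaging buys brevity and eliminates the bookkeeping, while your version makes transparent exactly where directedness and the ideal property of $\K(X_\un{m}\L_F)$ are used.
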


\begin{proof}
Firstly, we define
\[
B_F :=\bigoplus_{\un{m}\perp F}\L(X_\un{m})/\K(X_\un{m}\L_F)
\qand
c_0(B_F):=\{(S_{\un{m}})_{\un{m} \perp F}\in B_F\mid \lim_{\un{m} \perp F}\|S_{\un{m}}\|=0\},
\]
and observe that $c_0(B_F)$ is an ideal in $B_F$.
Consider the $*$-homomorphism $\psi_F$ defined by
\begin{gather*}
\psi_F \colon A\to\bigoplus_{\un{m}\perp F}\L(X_\un{m})\to B_F\to B_F/c_0(B_F); \\
\psi_F \colon a\mapsto (\phi_\un{m}(a))_{\un{m}\perp F}\mapsto(\phi_\un{m}(a)+\K(X_\un{m}\L_F))_{\un{m}\perp F}\mapsto(\phi_\un{m}(a)+\K(X_\un{m}\L_F))_{\un{m}\perp F}+c_0(B_F).
\end{gather*}
It is now a standard C*-result that
\[
\ker \psi_F = \{a\in A\mid \lim_{\un{m}\perp F}\|\phi_\un{m}(a)+\K(X_\un{m}\L_F)\|=0\},
\]
and thus $\L_{\lim, F} = \ker \psi_F$ is an ideal, as required.
\end{proof}

\begin{lemma}\label{L:L1 ideal inv}
Let $X$ be a strong compactly aligned product system with coefficients in a C*-algebra $A$. 
Let $\L$ be an $(E)$-$2^d$-tuple of $X$ that consists of ideals.
Then $\L^{(1)}$ is an (E)-$2^d$-tuple of $X$ that consists of ideals.
If $\L$ is invariant, then so is $\L^{(1)}$.
\end{lemma}

\begin{proof}
We have already remarked that $\L^{(1)}$ is an (E)-$2^d$-tuple.
The fact that $\L^{(1)}$ consists of ideals follows from Proposition \ref{P:lim ideal} and the discussion preceding it.

Now assume that $\L$ is invariant.
The invariance condition for $\L^{(1)}$ clearly holds when $F=\mt$ or $F=[d]$, so assume that $\mt\neq F\subsetneq[d]$.
Fix $\un{n} \perp F$, $\xi_\un{n},\eta_\un{n}\in X_\un{n}$, and $a \in \L_F^{(1)}$.
It suffices to show that 
\[
\sca{\xi_\un{n},\phi_\un{n}(a)\eta_\un{n}} \in \L_F^{(1)} = \I_F \cap \L_{\inv, F} \cap \L_{\lim, F}.
\]

If $\un{n}=\un{0}$ there is nothing to show, so assume that $\un{n}\neq\un{0}$. 
First we have $\sca{\xi_\un{n},\phi_\un{n}(a)\eta_\un{n}}\in\I_F$ by \cite[Proposition 2.7]{DK18}, since $a \in \I_F$. 
Now fix $\un{m}\perp F$ and $\xi_\un{m},\eta_\un{m}\in X_\un{m}$. 
Then 
\begin{align*}
\sca{\xi_\un{m},\phi_\un{m}(\sca{\xi_\un{n},\phi_\un{n}(a)\eta_\un{n}})\eta_\un{m}} 
& =
\sca{\xi_\un{n}\xi_\un{m},(\phi_\un{n}(a)\eta_\un{n})\eta_\un{m}} 
=\sca{\xi_\un{n}\xi_\un{m},\phi_{\un{n}+\un{m}}(a)(\eta_\un{n}\eta_\un{m})}
\in\cap_{F\subsetneq D}\L_D,
\end{align*} 
since $a\in\L_F^{(1)}$ and thus in particular $a\in \L_{\inv, F}$, noting that $\un{n}+\un{m}\perp F$. 
This proves that $\sca{\xi_\un{n},\phi_\un{n}(a)\eta_\un{n}}\in \L_{\inv, F}$. 

It remains to check that $\text{lim}_{\un{m}\perp F}\|\phi_\un{m}(\sca{\xi_\un{n},\phi_\un{n}(a)\eta_\un{n}})+\K(X_\un{m}\L_F)\|=0$. 
To this end, fix $\varepsilon>0$. 
Since $a\in \L_F^{(1)} \subseteq \L_{\lim, F}$, there exist $\un{m}\perp F$ and $k_{\un{m}}\in\K(X_\un{m}\L_F)$ such that 
\[
\|\phi_\un{m}(a)+k_{\un{m}}\|<\frac{\varepsilon}{\|\xi_\un{n}\|\cdot\|\eta_\un{n}\|}.
\] 
Without loss of generality, we may assume that $\un{m}\neq\un{0}$ (if $\un{m}=\un{0}$, then argue as in the proof of Lemma \ref{L:lim perp} to replace $\un{m}$ by $\un{i}$ for some $i\in F^c$). 
We have 
\[
\iota_\un{m}^{\un{m}+\un{n}}(k_{\un{m}})=u_{\un{m},\un{n}}(k_{\un{m}} \otimes\text{id}_{X_\un{n}})u_{\un{m},\un{n}}^*,
\]
and $k_{\un{m}} \otimes\text{id}_{X_\un{n}}\in\K((X_\un{m}\otimes_A X_\un{n})\L_F)$ by Corollary \ref{C:comp to comp}, noting that $\phi_\un{n}(\L_F)\subseteq\K(X_\un{n}\L_F)$ by Lemma \ref{L:inv comp}. 
It follows that 
\[
\iota_\un{m}^{\un{m}+\un{n}}(k_{\un{m}})\in\K(X_{\un{m}+\un{n}}\L_F).
\]
Next, define $\tau(\xi_\un{n}) \in\L(X_\un{m},X_{\un{m}+\un{n}})$ and $\tau(\eta_\un{n}) \in\L(X_\un{m},X_{\un{m}+\un{n}})$ by
\[
\tau(\xi_\un{n}) \xi_{\un{m}} = \xi_{\un{n}} \xi_{\un{m}}
\qand
\tau(\eta_{\un{n}}) \xi_{\un{m}} = \eta_{\un{n}} \xi_{\un{m}}
\]
for all $\xi_\un{m} \in X_{\un{m}}$, and observe that $\|\tau(\xi_\un{n})\|\leq\|\xi_\un{n}\|$ and $\|\tau(\eta_\un{n})\|\leq\|\eta_\un{n}\|$.
We then have 
\begin{equation} \label{eq:k claim}
\tau(\xi_\un{n})^*\iota_{\un{m}}^{\un{m}+\un{n}}(k_{\un{m}})\tau(\eta_\un{n})\in\K(X_\un{m}\L_F).
\end{equation}
Indeed, taking $\ze_\un{n},\ze_\un{n}'\in X_\un{n}, \ze_\un{m},\ze_\un{m}'\in X_\un{m}$ and $b\in\L_F$, we compute
\begin{align*}
\tau(\xi_{\un{n}})^* \Theta_{\ze_{\un{n}} \ze_{\un{m}} b, \ze'_{\un{n}} \ze'_{\un{m}}}^{X_{\un{m}+\un{n}}} \tau(\eta_{\un{n}})
& =
\Theta_{\phi_{\un{m}}(\sca{\xi_{\un{n}}, \ze_{\un{n}}}) \ze_{\un{m}} b, \phi_{\un{m}}(\sca{\eta_{\un{n}}, \ze'_{\un{n}}}) \ze'_{\un{m}}}^{X_\un{m}}
\in
\K(X_{\un{m}} \L_F).
\end{align*}
By taking finite linear combinations and their norm-limits, we conclude that 
\[
\tau(\xi_{\un{n}})^* \K(X_{\un{m} + \un{n}} \L_F) \tau(\eta_{\un{n}})
\subseteq
\K(X_{\un{m}} \L_F),
\]
which implies (\ref{eq:k claim}).
Moreover, a direct computation on the elements of $X_{\un{m}}$ yields
\[
\tau(\xi_\un{n})^*\phi_{\un{m}+\un{n}}(a)\tau(\eta_\un{n})=\phi_\un{m}(\sca{\xi_\un{n},\phi_\un{n}(a)\eta_\un{n}}).
\]
We then have 
\begin{align*}
\|\phi_\un{m}(\sca{\xi_\un{n},\phi_\un{n}(a)\eta_\un{n}})+\tau(\xi_\un{n})^*\iota_{\un{m}}^{\un{m}+\un{n}}(k_{\un{m}})\tau(\eta_\un{n})\|
& =
\|\tau(\xi_\un{n})^*(\phi_{\un{m}+\un{n}}(a)+\iota_\un{m}^{\un{m}+\un{n}}(k_{\un{m}}))\tau(\eta_\un{n})\| \\
&\leq
\|\xi_\un{n}\|\cdot\|\iota_\un{m}^{\un{m}+\un{n}}(\phi_\un{m}(a))+\iota_\un{m}^{\un{m}+\un{n}}(k_{\un{m}})\|\cdot\|\eta_\un{n}\| \\
& \leq
\|\xi_\un{n}\|\cdot\|\phi_\un{m}(a)+k_{\un{m}}\|\cdot\|\eta_\un{n}\| < \varepsilon.
\end{align*}
This shows that $\sca{\xi_\un{n},\phi_\un{n}(a)\eta_\un{n}}\in \L_{\lim, F}$ by (\ref{eq:k claim}) and Lemma \ref{L:lim perp} (since $\L$ is assumed to be invariant), and the proof is complete.
\end{proof}

Next we explore the interaction of the $\L^{(1)}$ construction with the partial ordering property.
To this end, we have the following auxiliary proposition.

\begin{proposition}\label{P:L1 inc}
Let $X$ be a strong compactly aligned product system with coefficients in a C*-algebra $A$.
Let $\L$ be an (E)-$2^d$-tuple of $X$ that is invariant and consists of ideals.
Then for each $F\subseteq[d]$, we have 
\[
\ol{\pi}_X(\L_F^{(1)})\ol{q}_{X,F}
\subseteq
\sum\{\mathfrak{J}_{\L,D}^{(\ol{\pi}_X,\ol{t}_X)}\mid F\subseteq D\subseteq[d]\}
\subseteq 
\mathfrak{J}_\L^{(\ol{\pi}_X,\ol{t}_X)}.
\]
\end{proposition}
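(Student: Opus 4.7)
The cases $F = \mt$ and $F = [d]$ are immediate: the first since $\L_\mt^{(1)} = \{0\}$, and the second since $\L_{[d]}^{(1)} = \L_{[d]}$ so that $\ol{\pi}_X(\L_{[d]})\ol{q}_{X,[d]} \subseteq \fJ_{\L, [d]}^{(\ol{\pi}_X, \ol{t}_X)}$ holds by definition. The substance of the proof lies in the case $\mt \neq F \subsetneq [d]$, so fix such an $F$ and a single element $a \in \L_F^{(1)} = \I_F \cap \L_{\inv, F} \cap \L_{\lim, F}$.

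The first key observation is that $\L_{\inv, F}$ is defined as an intersection over \emph{all} $\un{m} \perp F$, and in particular over $\un{m} = \un{0}$. Since $\bigcap_{F \subsetneq D}\L_D$ is an ideal, an application of an approximate unit of $A$ gives $X_\un{0}^{-1}(\bigcap_{F \subsetneq D}\L_D) = \bigcap_{F \subsetneq D}\L_D$, and therefore $a \in \L_D$ for every $D \supsetneq F$. Consequently, $\ol{\pi}_X(a)\ol{q}_{X, D} \in \fJ_{\L, D}^{(\ol{\pi}_X, \ol{t}_X)}$ for every $D$ strictly containing $F$, which already accounts for all of the ``strict superset'' contributions.

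Next, I would exploit that $a \in \I_F \subseteq \J_F$ implies $\phi_\un{i}(a) \in \K(X_\un{i})$ for all $i \in [d]$, and hence by strong compact alignment $\phi_\un{n}(a) \in \K(X_\un{n})$ for all $\un{n}$. Working in the Fock representation and using that the projections $\{\ol{p}_\un{i}\}_{i \in [d]}$ commute and that $\ol{p}_\un{m}$ commutes with $\ol{q}_{X, F}$ for $\un{m} \perp F$, I would decompose
\[
\ol{q}_{X, F} = \ol{q}_{X, F}\ol{p}_{\un{1}_{F^c}} + \ol{q}_{X, F}(I - \ol{p}_{\un{1}_{F^c}}),
\]
and apply the inclusion-exclusion identity (the dual of Proposition~\ref{P:prod cai}) to obtain $\ol{q}_{X, F}(I - \ol{p}_{\un{1}_{F^c}}) = \sum_{\mt \neq T \subseteq F^c}(-1)^{|T|+1}\ol{q}_{X, F \cup T}$. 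Multiplying on the left by $\ol{\pi}_X(a)$ and using the previous step for each summand, the problem reduces to showing that the ``top piece''
\[
T_a := \ol{\pi}_X(a)\ol{q}_{X, F}\ol{p}_{\un{1}_{F^c}} = \ol{\psi}_{X, \un{1}_{F^c}}(\phi_{\un{1}_{F^c}}(a))\ol{q}_{X, F}
\]
lies in $\fJ_{\L, F}^{(\ol{\pi}_X, \ol{t}_X)}$.

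The core of the proof is this last step, and this is where the conditions $a \in \L_{\lim, F}$ and the invariance of $\L$ meet. I would first establish the auxiliary inclusion $\ol{\psi}_{X, \un{m}}(\K(X_\un{m}\L_F))\ol{q}_{X, F} \subseteq \fJ_{\L, F}$ for every $\un{m} \perp F$: by density, this reduces to rank-one operators $\Theta_{\xi_\un{m} b, \eta_\un{m}}$ with $b \in \L_F$, for which commuting $\ol{q}_{X, F}$ past $\ol{t}_{X, \un{m}}(\eta_\un{m})^*$ via Proposition~\ref{P:pf reducing} exhibits the generator of $\fJ_{\L, F}$. Next, for each $\un{m} \geq \un{1}_{F^c}$ with $\un{m} \perp F$ I would use the Nica-covariance identity $\ol{\psi}_{X, \un{1}_{F^c}}(k)\ol{p}_\un{m} = \ol{\psi}_{X, \un{m}}(\iota_{\un{1}_{F^c}}^\un{m}(k))$ together with $\iota_{\un{1}_{F^c}}^\un{m}(\phi_{\un{1}_{F^c}}(a)) = \phi_\un{m}(a)$ to rewrite $T_a\ol{p}_\un{m} = \ol{\psi}_{X, \un{m}}(\phi_\un{m}(a))\ol{q}_{X, F}$. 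Given $\varepsilon > 0$, the condition $a \in \L_{\lim, F}$ produces $\un{m}$ and $k_\un{m} \in \K(X_\un{m}\L_F)$ with $\|\phi_\un{m}(a) + k_\un{m}\| < \varepsilon$, and hence $T_a\ol{p}_\un{m}$ is within $\varepsilon$ of $-\ol{\psi}_{X, \un{m}}(k_\un{m})\ol{q}_{X, F} \in \fJ_{\L, F}$.

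The hard part, and the main obstacle, will be controlling the residue $T_a(I - \ol{p}_\un{m})$: a naive norm bound does not go to zero as $\un{m}$ grows. My plan is to decompose this residue using the projections $P_j := \ol{p}_\un{j} - \ol{p}_{m_j \un{j}}$ for $j \in F^c$ (each dominated by $\ol{p}_\un{j}$) and then use that for each nonempty $T \subseteq F^c$ the compressed operator factors through $\ol{p}_{\un{1}_T}$, so that $a \in \L_{F \cup T}$ (obtained in the first step) can be used to identify each piece as lying in $\fJ_{\L, F \cup T} \subseteq \sum_{D \supsetneq F}\fJ_{\L, D}$. Combining this with the $\varepsilon$-approximation of $T_a\ol{p}_\un{m}$ and using that the finite sum $\sum_{D \supseteq F}\fJ_{\L, D}$ of closed ideals is closed, I will conclude $T_a \in \sum_{D \supseteq F}\fJ_{\L, D}$, which completes the proof. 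The final inclusion in the statement of the proposition is trivial.
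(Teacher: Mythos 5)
Your proposal is correct and follows essentially the same route as the paper's proof: working in the Fock representation, you split $\ol{\pi}_X(a)\ol{q}_{X,F}$ into a deep corner handled by $\L_{\lim,F}$ (via an $\varepsilon$-approximation by elements of $\fJ_{\L,F}^{(\ol{\pi}_X,\ol{t}_X)}$, using the identity realising $\ol{\psi}_{X,\un{m}}(\Theta_{\xi_\un{m}b,\eta_\un{m}})\ol{q}_{X,F}=\ol{t}_{X,\un{m}}(\xi_\un{m})\ol{\pi}_X(b)\ol{q}_{X,F}\ol{t}_{X,\un{m}}(\eta_\un{m})^*$ for $b\in\L_F$) and finitely many shallow bands sent into $\sum\{\fJ_{\L,D}^{(\ol{\pi}_X,\ol{t}_X)}\mid F\subsetneq D\}$ via $\L_F^{(1)}\subseteq\L_{\inv,F}\subseteq\cap_{F\subsetneq D}\L_D$, concluding by closedness of the finite sum of ideals; the paper organises the identical idea with the band projections $W_i\colon\F X\to\sum\{X_\un{r}\mid r_i\leq m\}$ and inclusion--exclusion rather than your preliminary peeling-off of $I-\ol{p}_{\un{1}_{F^c}}$, a purely cosmetic difference in bookkeeping.

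One justification in your sketch should be repaired, though it is not a structural gap: strong compact alignment does \emph{not} give $\phi_\un{n}(a)\in\K(X_\un{n})$ for all $\un{n}$ --- it only controls $\iota_\un{n}^{\un{n}+\un{i}}$ for $\un{n}\perp\un{i}$, so for instance $\phi_{2\un{i}}(a)$ is not covered. The compactness you actually use, namely $\phi_\un{m}(a)\in\K(X_\un{m}\L_F^{(1)})\subseteq\K(X_\un{m})$ for $\un{m}\perp F$, comes instead from invariance of $\L^{(1)}$ (Proposition \ref{P:L1 ideal inv}) combined with Proposition \ref{P:inv comp}; this is exactly the mechanism the paper invokes, and it is also what your residue step needs in order to place each $\phi_{\un{r}}(a)$ in $\K(X_\un{r}\L_{F\cup T})$ so that each band-tail can be exhibited in the form $\ol{t}_{X,\un{r}}(\xi_\un{r})\ol{\pi}_X(\L_{F\cup T})\ol{q}_{X,F\cup T}\ol{t}_{X,\un{r}}(\eta_\un{r})^*$ (after re-indexing the tails with full support in $F^c\setminus T$ so that they become full sums over $\un{s}\perp F\cup T$). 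With that citation corrected, your plan goes through exactly as the paper's Claims 1 and 2.
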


\begin{proof}
It suffices to show the first inclusion of the statement, as the second holds by definition.
Without loss of generality, we may replace $(\ol{\pi}_X,\ol{t}_X)$ by the Fock representation $(\ol{\pi},\ol{t})$, and write $\mathfrak{J}_\L \equiv \fJ_\L^{(\ol{\pi},\ol{t})}$ and $\mathfrak{J}_{\L,F} \equiv \fJ_{\L,F}^{(\ol{\pi},\ol{t})}$ for all $F \subseteq [d]$.
The claim holds trivially when $F=\mt$ or $F=[d]$, so fix $\mt\neq F\subsetneq[d]$ and $a\in\L_F^{(1)}$.
We must show that $\ol{\pi}(a)\ol{q}_F\in\sum\{\mathfrak{J}_{\L,D}\mid F\subseteq D\subseteq[d]\}$.

To this end, fix $\varepsilon>0$. 
Since $a \in \L_F^{(1)} \subseteq \L_{\lim, F}$, there exists $\un{m} \perp F$ such that 
\[
\|\phi_\un{n}(a)+ \K(X_{\un{n}} \L_F)\|<\varepsilon \textup{ for every $\un{n} \geq \un{m}$ with $\un{n} \perp F$}.
\]  
Consider a suitable $m\in\bN$ such that
\[
\un{n} := (m+1) \cdot \un{1}_{F^c}\geq\un{m},
\]
and take $k_\un{n} \in \K(X_{\un{n}} \L_F)$ such that $\|\phi_\un{n}(a)+ k_{\un{n}}\|<\varepsilon$.
Next, for each $i\in F^c$ we define the projection
\[
W_i \colon \F X \to \sum \{X_\un{r}\mid \un{r}=(r_1,\dots,r_d)\in\bZ_+^d,r_i\leq m\},
\]
and consider the operator
\[
\ol{\pi}(a)\ol{q}_F\prod_{i\in F^c}(I-W_i)=\ol{\pi}(a)\ol{q}_F+\sum_{\mt\neq D\subseteq F^c}(-1)^{|D|}\ol{\pi}(a)\ol{q}_F\prod_{i\in D}W_i.
\]
The products can be taken in any order, since the projections $W_i$ commute. 
For each element $\un{r}=(r_1,\dots,r_d)\in\bZ_+^d$ and $\ze_\un{r}\in X_\un{r}$, we have 
\[
\ol{\pi}(a)\ol{q}_F\prod_{i\in F^c}(I-W_i)\ze_\un{r}=\begin{cases} \phi_\un{r}(a)\ze_\un{r} & \text{if} \; r_i\geq m+1 \; \text{for all} \; i\in F^c \; \text{and} \; \un{r}\perp F, \\ 0 & \text{if} \; r_i\leq m \; \text{for some} \; i\in F^c \; \text{or} \; \un{r}\not\perp F.\end{cases}
\]
It follows that 
\[
\ol{\pi}(a)\ol{q}_F\prod_{i\in F^c}(I-W_i)=\sum_{\un{r}\geq\un{n}, \un{r}\perp F}\phi_\un{r}(a).
\]
Then we have 
\begin{align*}
\|\sum_{\un{r}\geq\un{n}, \un{r}\perp F}\phi_\un{r}(a)+\sum_{\un{r}\geq\un{n},\un{r}\perp F}\iota_\un{n}^\un{r}(k_{\un{n}})\| 
& =\|\sum_{\un{r}\geq\un{n},\un{r}\perp F}\iota_\un{n}^\un{r}(\phi_\un{n}(a)+k_{\un{n}})\| \\
& =\text{sup}\{\|\iota_\un{n}^\un{r}(\phi_\un{n}(a)+k_{\un{n}})\|\mid \un{r}\geq\un{n}, \un{r}\perp F\} \\
& \leq \|\phi_\un{n}(a)+k_{\un{n}}\|
< \varepsilon,
\end{align*}
using that each $\iota_\un{n}^\un{r}$ is contractive.
Therefore, we deduce that
\begin{equation}\label{Eq: eps close}
\|\ol{\pi}(a)\ol{q}_F\prod_{i\in F^c}(I-W_i)+\sum_{\un{r}\geq\un{n},\un{r}\perp F}\iota_\un{n}^\un{r}(k_{\un{n}})\| < \varepsilon.
\end{equation}
We will prove two claims before proceeding to the completion of the proof.

\bigskip

\noindent 
\emph{Claim 1. The element $x_F:=\sum_{\un{r}\geq\un{n},\un{r}\perp F}\iota_\un{n}^\un{r}(k_{\un{n}})$ belongs to $\mathfrak{J}_{\L,F}$.}

\smallskip

\noindent 
\emph{Proof of Claim 1.}
First suppose $k_{\un{n}}=\nor{\cdot}\text{-}\lim_{n}k_{\un{n}, n}$, where each $k_{\un{n}, n}$ is a finite sum of rank-one operators in $\K(X_{\un{n}} \L_F)$. 
Then $x_F$ is the norm-limit of the elements $\sum_{\un{r}\geq\un{n},\un{r}\perp F}\iota_\un{n}^\un{r}(k_{\un{n}, n})$. 
Thus it suffices to show that each operator $\sum_{\un{r}\geq\un{n},\un{r}\perp F}\iota_\un{n}^\un{r}(k_{\un{n}, n})$ belongs to $\mathfrak{J}_{\L,F}$.
In turn, it suffices to show that
\begin{equation}\label{eq: xf}
\sum_{\un{r}\geq\un{n},\un{r}\perp F}\iota_\un{n}^\un{r}( \Theta_{\xi_\un{n}b,\eta_\un{n}} )=\ol{t}_\un{n}(\xi_\un{n})\ol{\pi}(b)\ol{q}_F\ol{t}_\un{n}(\eta_\un{n})^*,
\end{equation}
where $\xi_\un{n},\eta_\un{n}\in X_\un{n}$ and $b\in\L_F$.
First note that if $\un{s}\not\geq\un{n}$ or $\un{s}\not\perp F$, then
\[
\ol{t}_\un{n}(\xi_\un{n})\ol{\pi}(b)\ol{q}_F\ol{t}_\un{n}(\eta_\un{n})^*\ze_\un{s}=0 \foral \ze_\un{s}\in X_\un{s}.
\]
Indeed, if $\un{s}\not\geq\un{n}$, then $\ol{t}_\un{n}(\eta_\un{n})^*\ze_\un{s}=0$ by definition.
If $\un{s}\geq\un{n}$ but $\un{s}\not\perp F$, then $\supp(\un{s}-\un{n})\cap F\neq\mt$, and hence $\ol{q}_F\ol{t}_\un{n}(\eta_\un{n})^*\ze_\un{s}=0$.
Thus both sides of (\ref{eq: xf}) map $X_\un{s}$ to 0 when $\un{s}\not\geq\un{n}$ or $\un{s}\not\perp F$. 
Next suppose that $\un{s}\geq\un{n}$ and $\un{s}\perp F$, and let $\ze_\un{s}=\zeta_\un{n}\zeta_{\un{s}-\un{n}}$ for some $\zeta_\un{n}\in X_\un{n}$ and $\zeta_{\un{s}-\un{n}}\in X_{\un{s}-\un{n}}$.
Then we have 
\begin{align*}
\ol{t}_\un{n}(\xi_\un{n})\ol{\pi}(b)\ol{q}_F\ol{t}_\un{n}(\eta_\un{n})^*\ze_\un{s}
& =
\xi_\un{n}(\phi_{\un{s}-\un{n}}(b\sca{\eta_\un{n},\zeta_\un{n}})\zeta_{\un{s}-\un{n}}) 
=
\big(\Theta_{\xi_\un{n}b,\eta_\un{n}} (\zeta_\un{n}) \big) \zeta_{\un{s}-\un{n}} \\
& =
\iota_\un{n}^\un{s}(\Theta_{\xi_\un{n}b,\eta_\un{n}})(\zeta_\un{n}\zeta_{\un{s}-\un{n}}) 
= 
\sum_{\un{r}\geq\un{n},\un{r}\perp F}\iota_\un{n}^{\un{r}}( \Theta_{\xi_\un{n}b,\eta_\un{n}} ) \ze_\un{s},
\end{align*}
noting that $\un{s} - \un{n} \perp F$.
It follows that (\ref{eq: xf}) holds, finishing the proof of Claim 1.
\hfill{$\Box$}

\bigskip

\noindent 
\emph{Claim 2. The element $x_{F\cup D}:=\ol{\pi}(a)\ol{q}_F\prod_{i\in D}W_i$ belongs to $\mathfrak{J}_{\L,F\cup D}$ for each $\mt\neq D\subseteq F^c$.}

\smallskip

\noindent 
\emph{Proof of Claim 2.}
Let us first consider the case where $D=F^c$. 
For $\un{r}=(r_1,\dots,r_d)\in\bZ_+^d$ and $\ze_\un{r}\in X_\un{r}$, we have 
\[
\ol{\pi}(a)\ol{q}_F\prod_{i\in F^c}W_i\ze_\un{r}=\begin{cases} \phi_\un{r}(a)\ze_\un{r} & \text{if} \; \un{0} \leq \un{r}\leq m\cdot\un{1}_{F^c}, \\ 0 & \text{if} \; r_i\geq m+1 \; \text{for some} \; i\in F^c \; \text{or} \; \un{r}\not\perp F, \end{cases}
\]
from which it follows that
\[
\ol{\pi}(a)\ol{q}_F\prod_{i\in F^c}W_i=\sum_{\un{0}\leq\un{r}\leq m\cdot\un{1}_{F^c}}\phi_\un{r}(a),
\]
noting that the sum is finite.

Fix $\un{0}\leq\un{r}\leq m\cdot\un{1}_{F^c}$ and note that $\un{r}\perp F$. 
Since $a\in\L_F^{(1)}$ and $\L^{(1)}$ is an (E)-$2^d$-tuple that is invariant and consists of ideals by Lemma \ref{L:L1 ideal inv}, an application of Lemma \ref{L:inv comp} gives that $\phi_\un{r}(a)\in\K(X_\un{r}\L_F^{(1)})$. 
Additionally, $\L_F^{(1)}\subseteq \L_{\inv, F}$ by definition, and so in particular $\L_F^{(1)}\subseteq\L_{[d]}$. 
Consequently $\phi_\un{r}(a)\in\K(X_\un{r}\L_{[d]})$. 

We claim that $\phi_\un{r}(a)\in \mathfrak{J}_{\L,[d]}$ when viewed as an operator in $\L(\F X)$. 
First note that 
\[
\phi_\un{0}(a)=\ol{\pi}(a)\ol{q}_{[d]}\in\mathfrak{J}_{\L,[d]},
\]
as $a \in \L_F^{(1)} \subseteq \L_{[d]}$, so we may assume that $\un{r}\neq\un{0}$.
We see that 
\begin{equation}\label{eq:com r} 
\K(X_\un{r}\L_{[d]})
\subseteq 
\mathfrak{J}_{\L,[d]},
\end{equation} 
when identifying $\K(X_\un{r}\L_{[d]})$ within $\L(\F X)$.
Indeed, it suffices to show this for rank-one operators.
To this end, take $k_{\un{r}}=\Theta_{\xi_\un{r}b,\eta_\un{r}}$ for some $\xi_\un{r},\eta_\un{r}\in X_\un{r}$ and $b\in\L_{[d]}$.
We claim that 
\begin{equation}\label{eq:ffc}
\Theta_{\xi_\un{r}b,\eta_\un{r}}=\ol{t}_\un{r}(\xi_\un{r})\ol{\pi}(b)\ol{q}_{[d]}\ol{t}_\un{r}(\eta_\un{r})^*,
\end{equation}
when we view $\Theta_{\xi_\un{r}b,\eta_\un{r}}$ as an operator in $\L(\F X)$.
Notice that both sides of (\ref{eq:ffc}) map every summand $X_\un{s}$ for which $\un{s}\neq\un{r}$ to $0$.
On the other hand, arguing as we did with $x_F$, we deduce that the right hand side of (\ref{eq:ffc}) coincides with $\Theta_{\xi_\un{r}b,\eta_\un{r}}$ on the $X_\un{r}$ summand.
Hence (\ref{eq:ffc}) and in turn (\ref{eq:com r}) hold, and applying for $\phi_\un{r}(a)\in\K(X_\un{r}\L_{[d]})$ gives that $\phi_\un{r}(a)\in\mathfrak{J}_{\L,[d]}$.
We conclude that 
\[
x_{[d]}=\ol{\pi}(a)\ol{q}_F\prod_{i\in F^c}W_i\in \mathfrak{J}_{\L,[d]},
\]
being a finite sum of elements of $\mathfrak{J}_{\L,[d]}$. 

Now take $\mt\neq D\subsetneq F^c$. 
For $\un{r}=(r_1,\dots,r_d)\in\bZ_+^d$ and $\ze_\un{r}\in X_\un{r}$, we have 
\[
\ol{\pi}(a)\ol{q}_F\prod_{i\in D}W_i\ze_\un{r}=\begin{cases} \phi_\un{r}(a)\ze_\un{r} & \text{if} \; r_i\leq m \; \foral i\in D \; \text{and} \; \un{r}\perp F, \\ 0 & \text{if} \; r_i\geq m+1 \; \text{for some} \; i\in D \; \text{or} \; \un{r}\not\perp F. \end{cases}
\]
It follows that
\[
\ol{\pi}(a)\ol{q}_F\prod_{i\in D}W_i=\sum_{\un{0}\leq\un{r}\leq m\cdot\un{1}_D} \bigg(\sum_{\un{s}\perp F\cup D}\phi_{\un{r}+\un{s}}(a) \bigg),
\]
noting that the sum over $\un{r}$ is finite.

Fix $\un{0}\leq\un{r}\leq m\cdot\un{1}_D$ and consider $\sum_{\un{s}\perp F\cup D}\phi_{\un{r}+\un{s}}(a)$. 
Since $a\in\L_F^{(1)}$ and $\L^{(1)}$ is an (E)-$2^d$-tuple that is invariant and consists of ideals by Lemma \ref{L:L1 ideal inv}, an application of Lemma \ref{L:inv comp} gives that $\phi_\un{r}(a)\in\K(X_\un{r}\L_F^{(1)})$, noting that $\un{r}\perp F$.
Additionally, by definition we have 
\[
\L_F^{(1)} \subseteq \L_{\inv, F} \subseteq \cap_{F\subsetneq G}\L_G.
\]
In particular, notice that $D\cap F^c\neq\mt$ and hence $F\subsetneq F\cup D$.
Thus $\L_F^{(1)}\subseteq\L_{F\cup D}$, and consequently $\phi_\un{r}(a)\in\K(X_\un{r}\L_{F\cup D})$.
Notice that
\[
\sum_{\un{s}\perp F\cup D}\phi_{\un{r}+\un{s}}(a)=\sum_{\un{s}\perp F\cup D}\iota_{\un{r}}^{\un{r}+\un{s}}(\phi_\un{r}(a)).
\]
We claim that $\sum_{\un{s}\perp F\cup D}\iota_{\un{r}}^{\un{r}+\un{s}}(\phi_\un{r}(a))\in \mathfrak{J}_{\L,F\cup D}$.
First note that if $\un{r}=\un{0}$, then 
\[
\sum_{\un{s}\perp F\cup D}\iota_\un{0}^\un{s}(\phi_\un{0}(a))=\sum_{\un{s}\perp F\cup D}\phi_\un{s}(a)=\ol{\pi}(a)\ol{q}_{F\cup D}\in\mathfrak{J}_{\L,F\cup D},
\]
using that $a\in\L_F^{(1)}\subseteq\L_{F\cup D}$ in the final membership.
For $\un{r}\neq\un{0}$, we see that 
\begin{equation}\label{eq:com s}
\sum_{\un{s}\perp F\cup D}\iota_{\un{r}}^{\un{r}+\un{s}}(k_{\un{r}})\in\mathfrak{J}_{\L,F\cup D}
\foral k_{\un{r}}\in\K(X_\un{r}\L_{F\cup D}).
\end{equation}
Indeed, it suffices to show this for $k_{\un{r}}=\Theta_{\xi_\un{r}b,\eta_\un{r}}$ for some $\xi_\un{r},\eta_\un{r}\in X_\un{r}$ and $b\in\L_{F\cup D}$.
Arguing as in the proof of Claim 1 for (\ref{eq: xf}), we have 
\[
\sum_{\un{s}\perp F\cup D}\iota_{\un{r}}^{\un{r}+\un{s}}(\Theta_{\xi_\un{r}b,\eta_\un{r}})
=
\ol{t}_\un{r}(\xi_\un{r})\ol{\pi}(b)\ol{q}_{F\cup D}\ol{t}_\un{r}(\eta_\un{r})^*\in \mathfrak{J}_{\L,F\cup D}.
\]
Hence (\ref{eq:com s}) holds, and applying for $k_{\un{r}}=\phi_\un{r}(a)\in\K(X_\un{r}\L_{F\cup D})$ yields
\[
\sum_{\un{s}\perp F\cup D}\iota_{\un{r}}^{\un{r}+\un{s}}(\phi_\un{r}(a))\in\mathfrak{J}_{\L,F\cup D}.
\]
Therefore, we conclude that 
\[
x_{F\cup D}=\ol{\pi}(a)\ol{q}_F\prod_{i\in D}W_i\in \mathfrak{J}_{\L,F\cup D},
\]
being a finite sum of elements of $\mathfrak{J}_{\L,F\cup D}$. 
This finishes the proof of Claim 2. 
\hfill{$\Box$}

\bigskip

We now conclude the proof of the proposition.
By Claim 2, we have
\[
x:=\sum_{\mt\neq D\subseteq F^c}(-1)^{|D|}\ol{\pi}(a)\ol{q}_F\prod_{i\in D}W_i=\sum_{\mt\neq D\subseteq F^c}(-1)^{|D|}x_{F\cup D}\in\sum_{F\subsetneq D\subseteq[d]}\mathfrak{J}_{\L,D}.
\]
In total, we have
\[
\|\ol{\pi}(a)\ol{q}_F+(x+x_F)\|=\|\ol{\pi}(a)\ol{q}_F\prod_{i\in F^c}(I-W_i)+\sum_{\un{r}\geq\un{n},\un{r}\perp F}\iota_\un{n}^\un{r}(k_{\un{n}})\| < \varepsilon,
\]
using (\ref{Eq: eps close}) in the final inequality.
So, employing Claims 1 and 2 in tandem, we have shown that for every $\varepsilon >0$ the element $\ol{\pi}(a)\ol{q}_F$ is $\varepsilon$-close to an element of $\sum_{F\subseteq D\subseteq[d]}\mathfrak{J}_{\L,D}$. 
Hence $\ol{\pi}(a)\ol{q}_F\in\sum_{F\subseteq D\subseteq[d]}\mathfrak{J}_{\L,D}$, as required.
\end{proof}

We are now ready to prove that when $\L$ is in addition partially ordered, the (E)-$2^d$-tuple $\L^{(1)}$ is partially ordered, contains $\L$ and induces the same gauge-invariant ideal of $\N\T_X$.

\begin{proposition}\label{P:L1 po}
Let $X$ be a strong compactly aligned product system with coefficients in a C*-algebra $A$.
Let $\L$ be an (E)-$2^d$-tuple of $X$ that is invariant, partially ordered and consists of ideals.
Then $\L^{(1)}$ is an (E)-$2^d$-tuple of $X$ that is invariant, partially ordered, consists of ideals, and is such that $\L\subseteq\L^{(1)}$ and $\mathfrak{J}_\L^{(\ol{\pi}_X,\ol{t}_X)}=\mathfrak{J}_{\L^{(1)}}^{(\ol{\pi}_X,\ol{t}_X)}$.
\end{proposition}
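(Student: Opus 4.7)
The plan is to deduce several of the claims about $\L^{(1)}$ directly from Proposition \ref{P:L1 ideal inv}, leaving three assertions to verify: $\L\subseteq\L^{(1)}$, partial ordering of $\L^{(1)}$, and the equality $\mathfrak{J}_\L^{(\ol{\pi}_X,\ol{t}_X)}=\mathfrak{J}_{\L^{(1)}}^{(\ol{\pi}_X,\ol{t}_X)}$. I would carry out the three verifications in this order, since each one bootstraps from the previous.

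First I would establish $\L\subseteq\L^{(1)}$. The cases $F=\mt$ (where $\L_\mt\subseteq\I_\mt=\{0\}$ forces $\L_\mt=\{0\}=\L_\mt^{(1)}$) and $F=[d]$ (where $\L_{[d]}^{(1)}=\L_{[d]}$) are immediate, so fix $\mt\neq F\subsetneq[d]$ and $a\in\L_F$. Containment in $\I_F$ follows from the (E)-$2^d$-tuple assumption. Containment in $\L_{\inv,F}$ follows from invariance of $\L$ combined with partial ordering: for $\un{m}\perp F$, one has $\sca{X_\un{m},aX_\un{m}}\subseteq\L_F\subseteq\bigcap_{F\subsetneq D}\L_D$. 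Containment in $\L_{\lim,F}$ follows from Proposition \ref{P:inv comp} applied to the ideal $\L_F$; its hypotheses are secured by invariance of $\L$ and by the bound $\L_F\subseteq\L_{[d]}\subseteq\bigcap_{i\in[d]}\phi_\un{i}^{-1}(\K(X_\un{i}))$ afforded by partial ordering of $\L$. The conclusion that $\phi_\un{m}(a)\in\K(X_\un{m}\L_F)$ for every $\un{m}\perp F$ forces the defining limit to be zero.

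Next I would establish partial ordering of $\L^{(1)}$. The key observation is that for $\mt\neq F\subsetneq[d]$, setting $\un{m}=\un{0}$ in the definition of $\L_{\inv,F}$ yields the inclusion $\L_F^{(1)}\subseteq\bigcap_{F\subsetneq D}\L_D$. Given $F_1\subseteq F_2\subseteq[d]$, the cases $F_1=\mt$ and $F_1=F_2$ are trivial. For the remaining cases $F_1\subsetneq F_2$, the displayed inclusion applied with $D=F_2$ yields $\L_{F_1}^{(1)}\subseteq\L_{F_2}$, and then the already-established $\L_{F_2}\subseteq\L_{F_2}^{(1)}$ completes the argument (with the convention that $\L_{[d]}^{(1)}=\L_{[d]}$ handles the case $F_2=[d]$ automatically). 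This is the step where the bootstrapping is essential: partial ordering of $\L^{(1)}$ is reduced entirely to the already-proven $\L\subseteq\L^{(1)}$ together with the internal structure of $\L_{\inv,F}$.

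Finally, for the equality of induced ideals, the forward inclusion $\mathfrak{J}_\L^{(\ol{\pi}_X,\ol{t}_X)}\subseteq\mathfrak{J}_{\L^{(1)}}^{(\ol{\pi}_X,\ol{t}_X)}$ is immediate from $\L\subseteq\L^{(1)}$. For the reverse inclusion, I would invoke Proposition \ref{P:L1 inc}, whose hypotheses match ours exactly; it gives $\ol{\pi}_X(\L_F^{(1)})\ol{q}_{X,F}\subseteq\mathfrak{J}_\L^{(\ol{\pi}_X,\ol{t}_X)}$ for every $F\subseteq[d]$, whence $\mathfrak{J}_{\L^{(1)}, F}^{(\ol{\pi}_X,\ol{t}_X)}\subseteq\mathfrak{J}_\L^{(\ol{\pi}_X,\ol{t}_X)}$, and summing over $F$ closes the argument. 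I do not expect any substantial obstacle in this proof: Propositions \ref{P:L1 ideal inv}, \ref{P:inv comp}, and \ref{P:L1 inc} have already absorbed the analytic content, and the remaining task is merely to arrange the set-theoretic inclusions in the correct order so that each one justifies the next.
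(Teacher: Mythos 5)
Your proof is correct, and it departs from the paper's in one genuine respect: the order of the middle two steps, and consequently the mechanism behind partial ordering. The paper establishes partial ordering of $\L^{(1)}$ \emph{before} the containment $\L\subseteq\L^{(1)}$, and so for $\mt\neq F\subseteq F'\subsetneq[d]$ it must verify membership of each $a\in\L_F^{(1)}$ in $\L_{\lim,F'}$ directly; this forces an appeal to Propositions \ref{P:L1 inc}, \ref{P:JL rising} and \ref{P:JL lim}, i.e.\ a detour through the induced ideals $\fJ_\L^{(\ol{\pi}_X,\ol{t}_X)}$ and the Fock-space analysis they encode. You instead prove $\L\subseteq\L^{(1)}$ first (essentially as the paper does, via invariance and partial ordering of $\L$ together with Proposition \ref{P:inv comp}; your direct observation that $\phi_\un{m}(a)\in\K(X_\un{m}\L_F)$ for \emph{all} $\un{m}\perp F$ makes the defining limit vanish is, if anything, cleaner than the paper's citation of Proposition \ref{P:lim perp}), and then obtain partial ordering purely algebraically: taking $\un{m}=\un{0}$ in the definition of $\L_{\inv,F_1}$ gives $\L_{F_1}^{(1)}\subseteq X_\un{0}^{-1}(\cap_{F_1\subsetneq D}\L_D)=\cap_{F_1\subsetneq D}\L_D\subseteq\L_{F_2}$ for $F_1\subsetneq F_2$, and bootstrapping through the already-proven $\L_{F_2}\subseteq\L_{F_2}^{(1)}$ closes the chain — the paper uses this very trick, but only to get $\L_F^{(1)}\subseteq\L_{[d]}^{(1)}$, reverting to the componentwise verification for intermediate $F'$. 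Your reordering buys a shorter argument whose partial-ordering step carries no analytic content at all, while the paper's heavier route proves nothing stronger. The remaining ingredients — Proposition \ref{P:L1 ideal inv} for the (E)/invariance/ideal claims, and Proposition \ref{P:L1 inc} (whose hypotheses indeed do not include partial ordering, so your invocation is legitimate) for the reverse inclusion $\fJ_{\L^{(1)}}^{(\ol{\pi}_X,\ol{t}_X)}\subseteq\fJ_{\L}^{(\ol{\pi}_X,\ol{t}_X)}$ — coincide with the paper's.
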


\begin{proof}
By Lemma \ref{L:L1 ideal inv}, the family $\L^{(1)}$ is an (E)-$2^d$-tuple that is invariant and consists of ideals.
For the partial ordering property, it is immediate that $\{0\}=\L_\mt^{(1)}\subseteq\L_F^{(1)}$ for all $F\subseteq[d]$.
Likewise, we have 
\[
\L_F^{(1)}\subseteq X_\un{0}^{-1}(\cap_{F\subsetneq D}\L_D)=\cap_{F\subsetneq D}\L_D\subseteq\L_{[d]}=\L_{[d]}^{(1)}
\]
for all $\mt\neq F\subsetneq[d]$.
Next fix $\mt\neq F \subseteq F' \subsetneq[d]$, and let $a\in\L_F^{(1)}$.
Notice that $a\in \L_F^{(1)} \subseteq \I_F \subseteq \I_{F'}$ because $\I$ is partially ordered.
Next fix $\un{m}\perp F'$.
Since $F\subseteq F'$, we have $\un{m}\perp F$ and thus
\[
\sca{X_\un{m},aX_\un{m}}
\subseteq 
\cap_{F\subsetneq D}\L_D\subseteq\cap_{F'\subsetneq D}\L_D,
\]
using that $a \in \L_{\inv, F}$.
It follows that $a \in \L_{\inv, F'}$.
In order to prove that $a \in \L_{\lim, F'}$, we resort to Proposition \ref{P:JL lim}.
More specifically, it suffices to show that $\ol{\pi}_X(a)\ol{q}_{X,F'}\in\mathfrak{J}_\L^{(\ol{\pi}_X,\ol{t}_X)}$.
To this end, note that $a\in\bigcap\{\phi_\un{i}^{-1}(\K(X_\un{i}))\mid i\in F'\}$, since $\L^{(1)}$ is an (E)-$2^d$-tuple, and $\ol{\pi}_X(a)\ol{q}_{X,F}\in\mathfrak{J}_\L^{(\ol{\pi}_X,\ol{t}_X)}$ by Proposition \ref{P:L1 inc}.
An application of Lemma \ref{L:JL rising} then gives that $\ol{\pi}_X(a)\ol{q}_{X,F'}\in\mathfrak{J}_\L^{(\ol{\pi}_X,\ol{t}_X)}$, as required.
Hence $a\in\L_{F'}^{(1)}$ and we conclude that $\L^{(1)}$ is partially ordered.

Next we prove that $\L\subseteq\L^{(1)}$.
Notice that $\L_\mt\subseteq\L_\mt^{(1)}$ and $\L_{[d]}\subseteq\L_{[d]}^{(1)}$ trivially, so fix $\mt\neq F\subsetneq [d]$ and $a\in\L_F$.
Since $\L$ is an (E)-$2^d$-tuple, we have $a\in\I_F$.
Since $\L$ is invariant and partially ordered, we have 
\[
\sca{X_\un{m},aX_\un{m}} \subseteq \L_F \subseteq\cap_{F\subsetneq D}\L_D
\]
for all $\un{m} \perp F$, and thus $a \in \L_{\inv, F}$.
Note that $\phi_\un{m}(a) \in \K(X_\un{m}\L_F)$ for all $\un{m} \perp F$, by Lemma \ref{L:inv comp}.
Therefore, by choosing any $\un{m} \perp F$ and $k_{\un{m}} = -\phi_\un{m}(a) \in \K(X_\un{m}\L_F)$, we obtain $\|\phi_\un{m}(a)+k_{\un{m}}\| = 0$, and thus $a \in \L_{\lim, F}$ by Lemma \ref{L:lim perp}.
Hence $\L_F\subseteq\L_F^{(1)}$, as required.

Finally, we show that $\mathfrak{J}_\L^{(\ol{\pi}_X,\ol{t}_X)}=\mathfrak{J}_{\L^{(1)}}^{(\ol{\pi}_X,\ol{t}_X)}$.
The forward inclusion is immediate since $\L\subseteq\L^{(1)}$.
On the other hand, we have $\ol{\pi}_X(\L^{(1)}_F)\ol{q}_{X,F} \subseteq \mathfrak{J}_\L^{(\ol{\pi}_X,\ol{t}_X)}$ for all $F\subseteq[d]$ by Proposition \ref{P:L1 inc}, giving the reverse inclusion and completing the proof.
\end{proof}

We are now ready to provide a full characterisation of (M)-$2^d$-tuples.

\begin{theorem}\label{T:m fam v2}
Let $X$ be a strong compactly aligned product system with coefficients in a C*-algebra $A$ and suppose that $\L$ is a $2^d$-tuple of $X$.
Then $\L$ is an (M)-$2^d$-tuple of $X$ if and only if $\L$ satisfies the following four conditions:
\begin{enumerate}
\item $\L$ consists of ideals and $\L \subseteq \J$,
\item $\L$ is invariant,
\item $\L$ is partially ordered,
\item $\L^{(1)} \subseteq \L$.
\end{enumerate}
\end{theorem}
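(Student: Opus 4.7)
For the forward direction, assume $\L$ is an (M)-$2^d$-tuple. Proposition \ref{P:maximal} guarantees that the maximum inducing $\fJ_\L^{(\ol{\pi}_X,\ol{t}_X)}$ consists of ideals, and since $\L$ is an (E)-$2^d$-tuple we have $\L\subseteq\I\subseteq\J$, giving (i). For (ii) and (iii), Propositions \ref{P:e inv} and \ref{P:e po} produce enlargements $\Inv(\L)$ and $\PO(\L)$, each an (E)-$2^d$-tuple containing $\L$ and inducing the same gauge-invariant ideal, so maximality forces $\L=\Inv(\L)=\PO(\L)$. Finally, Proposition \ref{P:L1 po} shows that $\L^{(1)}$ is an (E)-$2^d$-tuple containing $\L$ and inducing the same gauge-invariant ideal of $\N\T_X$, so maximality gives $\L=\L^{(1)}$, which in particular yields (iv).

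For the converse, suppose (i)-(iv). Proposition \ref{P:J+inv} combined with invariance and $\L\subseteq\J$ shows $\L\subseteq\I$, so $\L$ is an (E)-$2^d$-tuple. Let $\L'$ be a relative $2^d$-tuple with $\L\subseteq\L'$ and $\fJ_{\L'}^{(\ol{\pi}_X,\ol{t}_X)}=\fJ_\L^{(\ol{\pi}_X,\ol{t}_X)}$; we must establish $\L'\subseteq\L$. Since the universal $\L$-relative CNP-representation is injective on $X$ by Proposition \ref{P:NO inj} and $\L'$ induces the same quotient of $\N\T_X$, the universal $\L'$-relative CNP-representation is also injective, whence Proposition \ref{P:NO inj} forces $\L'$ to be an (E)-$2^d$-tuple. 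Replacing $\L'$ by $\PO(\Inv(\L'))$ via Propositions \ref{P:e inv} and \ref{P:e po}, we may assume additionally that $\L'$ is invariant, partially ordered, and consists of ideals, while still containing $\L$ and inducing the same ideal.

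It suffices to prove by descending induction on $F\subseteq[d]$ that $\L'_F\subseteq\L_F^{(1)}$, since (iv) then gives $\L'_F\subseteq\L_F$. The base case $F=\mt$ is immediate, as $\L'_\mt\subseteq\I_\mt=\{0\}=\L_\mt^{(1)}$. For $F=[d]$, an element $a\in\L'_{[d]}$ satisfies $\ol{\pi}_X(a)\ol{q}_{X,[d]}\in\fJ_{\L,[d]}^{(\ol{\pi}_X,\ol{t}_X)}$ by Proposition \ref{P: cond ex}; passing to the Fock representation and compressing onto the $X_{\un{0}}$-summand of $\F X$, the explicit description in Proposition \ref{P: JLF} yields $a\in\L_{[d]}=\L_{[d]}^{(1)}$. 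For the inductive step with $\mt\neq F\subsetneq[d]$, assume $\L'_D\subseteq\L_D$ whenever $D\supsetneq F$, and fix $a\in\L'_F$. Then $a\in\I_F$ from $\L'\subseteq\I$; invariance and partial ordering of $\L'$ combined with the inductive hypothesis give $\sca{X_{\un{m}},aX_{\un{m}}}\subseteq\L'_F\subseteq\bigcap_{F\subsetneq D}\L'_D\subseteq\bigcap_{F\subsetneq D}\L_D$ for every $\un{m}\perp F$, hence $a\in\L_{\inv,F}$.

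The hard part is the remaining claim $a\in\L_{\lim,F}$: one must translate the abstract membership $\ol{\pi}_X(a)\ol{q}_{X,F}\in\fJ_\L^{(\ol{\pi}_X,\ol{t}_X)}$ into the concrete asymptotic compactness condition defining $\L_{\lim,F}$. This is precisely the content of Proposition \ref{P:JL lim}, which applies because $\L$ satisfies (i)-(iii) and $a\in\I_F\subseteq\bigcap_{i\in[d]}\phi_{\un{i}}^{-1}(\K(X_{\un{i}}))$. Thus $a\in\L_F^{(1)}$, closing the induction and completing the proof.
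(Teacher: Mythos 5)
Your proof is correct and takes essentially the same route as the paper's: the forward direction via maximality against the enlargements $\Inv(\L)$, $\PO(\L)$ and $\L^{(1)}$, and the converse via strong downward induction on $|F|$, compressing at the $X_{\un{0}}$-summand for $F=[d]$ (Propositions \ref{P: JLF} and \ref{P: cond ex}) and placing each element of the competing tuple in $\I_F\cap\L_{\inv,F}\cap\L_{\lim,F}=\L_F^{(1)}$ via Proposition \ref{P:JL lim}, then closing with condition (iv). The only (harmless) organisational difference is that the paper compares $\L$ with the canonical maximal tuple $\M$ of Proposition \ref{P:inj on J} and applies the already-proved forward implication to $\M$ to obtain its invariance and partial ordering, whereas you regularise an arbitrary competitor $\L'$ by passing to $\PO(\Inv(\L'))$, which achieves the same thing without that appeal.
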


\begin{proof}
Assume that $\L$ is an (M)-$2^d$-tuple.
First we use Lemma \ref{L:e inv} and Lemma \ref{L:e po} together with maximality of $\L$ to deduce that $\L$ consists of ideals, that $\L \subseteq \I \subseteq \J$, and that $\L$ is invariant and partially ordered.
Proposition \ref{P:L1 po} together with maximality of $\L$ gives that $\L^{(1)}\subseteq\L$, proving the forward implication.

Now suppose that $\L$ is a $2^d$-tuple that satisfies conditions (i)-(iv).
Conditions (i) and (ii) imply that $\L$ is an (E)-$2^d$-tuple by Lemma \ref{L:J+inv}, and so we can consider $\L^{(1)}$.
It remains to check that $\L$ is maximal.
To this end, let $\M$ be the (M)-$2^d$-tuple such that $\mathfrak{J}_\L^{(\ol{\pi}_X,\ol{t}_X)} = \mathfrak{J}_\M^{(\ol{\pi}_X,\ol{t}_X)}$, as guaranteed by Proposition \ref{P:inj on J}.
Note also that $\L \subseteq \M$ by Proposition \ref{P:maximal}.
It suffices to show that $\M\subseteq\L$.

On one hand, we have $\M_\mt=\L_\mt=\{0\}$, since both $\M$ and $\L$ are (E)-$2^d$-tuples.
In order to show that $\M_F\subseteq\L_F$ for all $\mt\neq F\subseteq[d]$, we apply strong (downward) induction on $|F|$.

For the base case, take $a\in\M_{[d]}$.
Then $\ol{\pi}_X(a)\ol{q}_{X,[d]}\in\mathfrak{J}_\M^{(\ol{\pi}_X,\ol{t}_X)}=\mathfrak{J}_\L^{(\ol{\pi}_X,\ol{t}_X)}$.
Using the fact that $\N\T_X\cong\ca(\ol{\pi},\ol{t})$ canonically for the Fock representation $(\ol{\pi},\ol{t})$, we have $\ol{\pi}(a)\ol{q}_{[d]}\in\mathfrak{J}_\L^{(\ol{\pi},\ol{t})}$.
Conditions (i)-(iii) together with Propositions \ref{P: JLF} and \ref{P: cond ex} give that
\begin{align*}
\phi_{\un{0}}(a)
& =
\ol{q}_{[d]} \left( \ol{\pi}(a)\ol{q}_{[d]} \right) \ol{q}_{[d]} 
\in
\ol{q}_{[d]}^2 \left( \mathfrak{J}_\L^{(\ol{\pi},\ol{t})} \right) \ol{q}_{[d]}^2 
\subseteq
\ol{q}_{[d]} \left( \mathfrak{J}_{\L,[d]}^{(\ol{\pi},\ol{t})} \right) \ol{q}_{[d]} 
= 
\ol{q}_{[d]} \ol{\pi}(\L_{[d]}) \ol{q}_{[d]}
= 
\phi_{\un{0}}(\L_{[d]}),
\end{align*}
and thus $a\in\L_{[d]}$.
This shows that $\M_{[d]} \subseteq \L_{[d]}$.

Next, suppose that $\M_F\subseteq\L_F$ for all $\mt\neq F\subseteq[d]$ with $|F|\geq n+1$, where $1\leq n\leq d-1$.
Fix $\mt\neq F\subsetneq[d]$ with $|F|=n$.
Since $\M$ is an (E)-$2^d$-tuple, we have $ \M_F \subseteq \I_F$.
Note that $\M$, being an (M)-$2^d$-tuple, satisfies conditions (i)-(iv) by the forward implication.
In particular, we have that $\M$ is invariant, so 
\[
\sca{X_\un{m}, \M_F X_\un{m}} \subseteq \M_F \foral \un{m} \perp F.
\]
Likewise, we have that $\M$ is partially ordered, so $\sca{X_\un{m},\M_F X_\un{m}} \subseteq \cap_{F\subsetneq D}\M_D$ for all $\un{m} \perp F$.
By the inductive hypothesis, we have $\M_D=\L_D$ whenever $F\subsetneq D$, as $|D|\geq n+1$.
Hence 
\[
\sca{X_\un{m}, \M_F X_\un{m}} \subseteq \cap_{F\subsetneq D}\L_D \foral \un{m} \perp F.
\]
Thus $\M_F \subseteq \L_{\inv, F}$.
Moreover, by definition we have
\[
\ol{\pi}_X(\M_F)\ol{q}_{X,F} \subseteq \mathfrak{J}_\M^{(\ol{\pi}_X,\ol{t}_X)}=\mathfrak{J}_\L^{(\ol{\pi}_X,\ol{t}_X)}.
\]
Hence, by applying Proposition \ref{P:JL lim}, we deduce that $\M_F \subseteq \L_{\lim, F}$.
In total, we have
\[
\M_F \subseteq \I_F \cap \L_{\inv, F} \cap \L_{\lim, F} = \L^{(1)}_F,
\]
and thus $\M_F \subseteq \L_F$ by condition (iv), as required.
Induction then completes the proof.
\end{proof}

Iteration of the $\L^{(1)}$ construction constitutes the final ingredient for attaining maximality.
For if we start with an (E)-$2^d$-tuple $\L$ that is invariant, partially ordered and consists of ideals, then iterative applications of Proposition \ref{P:L1 po} produce a sequence of (E)-$2^d$-tuples that are invariant, partially ordered, consist of ideals and satisfy
\[
\L \subseteq\L^{(1)} \subseteq \dots \subseteq \L^{(k)}\subseteq\dots
\qand
\mathfrak{J}_{\L}^{(\ol{\pi}_X,\ol{t}_X)}=\mathfrak{J}_{\L^{(k)}}^{(\ol{\pi}_X,\ol{t}_X)}
\foral k \in \bN.
\]
Since each $\L^{(k)}$ induces the same gauge-invariant ideal of $\N\T_X$, if the sequence eventually stabilises then it must stabilise to an (M)-$2^d$-tuple by Theorem \ref{T:m fam v2}.

\begin{theorem}\label{T:d-1 m fam}
Let $X$ be a strong compactly aligned product system with coefficients in a C*-algebra $A$.
Let $\L$ be an (E)-$2^d$-tuple of $X$ that is invariant, partially ordered, and consists of ideals.
Fix $0\leq k \leq d$.
Then whenever $F\subseteq[d]$ satisfies $|F|=d-k$, we have $\L_F^{(m)} = \L_F^{(k)}$ for all $m\geq k$.
Consequently, $\L^{(d-1)}$ is the (M)-$2^d$-tuple that induces $\mathfrak{J}_\L^{(\ol{\pi}_X,\ol{t}_X)}$.
\end{theorem}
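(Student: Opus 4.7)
The plan is to establish the first assertion by induction on $k$ and then derive the consequence from the algebraic characterisation in Theorem \ref{T:m fam v2}. Throughout, I will use that iterating Proposition \ref{P:L1 po} guarantees that each $\L^{(j)}$ is an invariant, partially ordered (E)-$2^d$-tuple of ideals, satisfies $\L^{(j)} \subseteq \L^{(j+1)}$, and induces the same gauge-invariant ideal $\mathfrak{J}_{\L^{(j)}}^{(\ol{\pi}_X,\ol{t}_X)} = \mathfrak{J}_{\L}^{(\ol{\pi}_X,\ol{t}_X)}$. Consequently the reverse inclusion in the desired equality is automatic, and for each $F$ with $|F| = d-k$ it suffices to establish $\L^{(m+1)}_F \subseteq \L^{(m)}_F$ for every $m \geq k$.

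The base case $k=0$ is immediate because the $(\cdot)^{(1)}$ operation fixes the $[d]$-entry, so $\L^{(m)}_{[d]} = \L_{[d]}$ for all $m \geq 0$. For the inductive step, fix $1 \leq k \leq d$ and $F$ with $|F| = d-k$. The cases $F = \mt$ (giving $\L^{(m)}_\mt = \{0\}$ for $m \geq 1$) and $F = [d]$ are trivial, so assume $\mt \neq F \subsetneq [d]$. Take $m \geq k$ and $a \in \L^{(m+1)}_F = \I_F \cap (\L^{(m)})_{\inv,F} \cap (\L^{(m)})_{\lim,F}$. For any $D \supsetneq F$ one has $|D| > d-k$, hence $d - |D| < k$, and the outer inductive hypothesis yields $\L^{(m)}_D = \L^{(d-|D|)}_D = \L^{(m-1)}_D$, so $(\L^{(m)})_{\inv,F} = (\L^{(m-1)})_{\inv,F}$, placing $a$ in the latter.

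The key step is handling the limit piece. Proposition \ref{P:L1 inc} applied to $\L^{(m)}$ yields $\ol{\pi}_X(a)\ol{q}_{X,F} \in \mathfrak{J}_{\L^{(m)}}^{(\ol{\pi}_X,\ol{t}_X)}$, and iterating Proposition \ref{P:L1 po} identifies this with $\mathfrak{J}_{\L^{(m-1)}}^{(\ol{\pi}_X,\ol{t}_X)}$. Since $(\cdot)^{(1)}$ leaves the $[d]$-entry unchanged and $\L^{(m+1)}$ is partially ordered, $a \in \L^{(m+1)}_F \subseteq \L^{(m+1)}_{[d]} = \L_{[d]} \subseteq \J_{[d]} \subseteq \bigcap_{i \in [d]} \phi_{\un{i}}^{-1}(\K(X_{\un{i}}))$. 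Proposition \ref{P:JL lim} applied to $\L^{(m-1)}$ then gives $a \in (\L^{(m-1)})_{\lim,F}$, and combining with the previous paragraph one has $a \in \I_F \cap (\L^{(m-1)})_{\inv,F} \cap (\L^{(m-1)})_{\lim,F} = \L^{(m)}_F$, closing the induction.

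For the consequence, one verifies the four conditions of Theorem \ref{T:m fam v2} for $\L^{(d-1)}$. Conditions (i)--(iii) are supplied by Proposition \ref{P:L1 po}, and condition (iv) asserts that $(\L^{(d-1)})^{(1)} = \L^{(d)} \subseteq \L^{(d-1)}$: for $F \neq \mt$, the first assertion with $m = d$ gives $\L^{(d)}_F = \L^{(d-|F|)}_F \subseteq \L^{(d-1)}_F$ since $d - |F| \leq d - 1$, while $\L^{(d)}_\mt = \{0\} = \L^{(d-1)}_\mt$. Theorem \ref{T:m fam v2} then certifies $\L^{(d-1)}$ as the (M)-$2^d$-tuple, and iterated Proposition \ref{P:L1 po} identifies its induced ideal with $\mathfrak{J}_\L^{(\ol{\pi}_X,\ol{t}_X)}$. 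The principal difficulty is the self-referential character of $\L_{\lim,F}$, which depends on the very fibre $\L_F$ one is trying to pin down; the crucial device is Proposition \ref{P:JL lim}, which translates the limit condition into membership in a gauge-invariant ideal, and since the induced ideal is constant under iteration the limit condition at level $m$ can be transferred back to level $m-1$, breaking the circularity.
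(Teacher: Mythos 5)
Your proposal is correct and follows essentially the same route as the paper: strong induction on $k$, with the inductive hypothesis (applied to the strictly larger sets $D \supsetneq F$) handling the $\L_{\inv,F}$-component, and Proposition \ref{P:JL lim} together with the constancy of the induced ideal $\mathfrak{J}_{\L^{(j)}}^{(\ol{\pi}_X,\ol{t}_X)}$ under iteration handling the $\L_{\lim,F}$-component, before invoking Theorem \ref{T:m fam v2} for the final claim. The only cosmetic differences are that you descend one level at a time ($\L_F^{(m+1)} \subseteq \L_F^{(m)}$) and route the membership $\ol{\pi}_X(a)\ol{q}_{X,F} \in \mathfrak{J}_{\L^{(m)}}^{(\ol{\pi}_X,\ol{t}_X)}$ through Proposition \ref{P:L1 inc}, whereas the paper collapses directly to level $N+1$ and reads the ideal membership off the defining generators of a relative $2^d$-tuple.
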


\begin{proof}
We proceed by applying strong induction on $k$.
For the base case, if $F = [d]$ then $\L_{[d]}^{(m)}=\L_{[d]}=\L_{[d]}^{(0)}$ for all $m\geq 0$ by construction.
Next, fix $0\leq N\leq d-1$ and suppose that whenever $0\leq n\leq N$ and $D\subseteq[d]$ satisfies $|D|=d-n$, we have $\L_D^{(m)}=\L_D^{(n)}$ for all $m\geq n$.

Now we fix $F\subseteq[d]$ such that $|F|=d-(N+1)$.
We must show that $\L_F^{(m)}=\L_F^{(N+1)}$ for all $m\geq N+1$.
First note that this is clear if $F=\mt$ (i.e., if $N=d-1$), as each $\L_\mt^{(m)}=\{0\}$.
So, without loss of generality, we may exclude the case where $N=d-1$ and consider $\mt\neq F\subsetneq[d]$.

Fix $m\geq N+1$.
We have already argued that $\L_F^{(N+1)}\subseteq\L_F^{(m)}$, so it remains to verify the reverse inclusion.
Take $a\in\L_F^{(m)}$.
In particular, we have $a\in\I_F$. 
Fix $\un{m}\perp F$ and note that $\sca{X_\un{m},aX_\un{m}} \subseteq \cap_{F\subsetneq D}\L_D^{(m-1)}$ by definition.
Notice that whenever $F\subsetneq D$, we must have $|D|=d-k$ for some $0\leq k\leq N$.
By the inductive hypothesis, we have $\L_D^{(k)}=\L_D^{(N)}=\L_D^{(m-1)}$ and hence $\sca{X_\un{m},aX_\un{m}} \subseteq \cap_{F\subsetneq D}\L_D^{(N)}$.
In turn, we have $a \in \L^{(N)}_{\inv, F}$.

Notice also that $\ol{\pi}_X(a)\ol{q}_{X,F}\in\mathfrak{J}_{\L^{(m)}}^{(\ol{\pi}_X,\ol{t}_X)}=\mathfrak{J}_{\L^{(N)}}^{(\ol{\pi}_X,\ol{t}_X)}$, and hence an application of Proposition \ref{P:JL lim} yields $a \in \L_{\lim, F}^{(N)}$.
In total, we have $a \in \I_F \cap \L_{\inv, F}^{(N)} \cap \L_{\lim, F}^{(N)} = \L_{F}^{(N+1)}$.
This proves that $\L_F^{(m)}=\L_F^{(N+1)}$ for all $m\geq N+1$.
By induction we obtain $\L_F^{(m)} = \L_F^{(k)}$ for all $m\geq k$, as required.

Finally, we have $\L^{(d-1)} = \L^{(d)} = (\L^{(d-1)})^{(1)}$ by the first claim.
Hence, by Proposition \ref{P:L1 po} and Theorem \ref{T:m fam v2}, we conclude that $\L^{(d-1)}$ is the (M)-$2^d$-tuple inducing $\mathfrak{J}_\L^{(\ol{\pi}_X,\ol{t}_X)}$, finishing the proof.
\end{proof}

\begin{remark}\label{R:m alg}
Given any (E)-$2^d$-tuple $\L$, we now have an algorithm for computing the (M)-$2^d$-tuple that induces $\mathfrak{J}_\L^{(\ol{\pi}_X,\ol{t}_X)}$.
Specifically, we apply Lemma \ref{L:e inv} and Lemma \ref{L:e po} to pass from $\L$ to the (E)-$2^d$-tuple $\PO(\Inv(\L))$, which is invariant, partially ordered, consists of ideals and satisfies \
\[
\L\subseteq\PO(\Inv(\L))
\qand
\mathfrak{J}_\L^{(\ol{\pi}_X,\ol{t}_X)}=\mathfrak{J}_{\PO(\Inv(\L))}^{(\ol{\pi}_X,\ol{t}_X)}.
\]
We then apply the $(\PO(\Inv(\L)))^{(1)}$ construction iteratively, and use Theorem \ref{T:d-1 m fam} to deduce that $(\PO(\Inv(\L)))^{(d-1)}$ is the (M)-$2^d$-tuple that induces $\mathfrak{J}_\L^{(\ol{\pi}_X,\ol{t}_X)}$.
Accordingly, we can modify Theorem \ref{T:d GIUT M} to account for all (E)-$2^d$-tuples.
\end{remark}

\begin{theorem}[$\bZ_+^d$-GIUT for (E)-$2^d$-tuples]\label{T:d GIUT E}
Let $X$ be a strong compactly aligned product system with coefficients in a C*-algebra $A$. 
Let $\L$ be an (E)-$2^d$-tuple of $X$ and $(\pi,t)$ be a Nica-covariant representation of $X$. 
Then $\N\O(\L,X)\cong\ca(\pi,t)$ via a canonical $*$-isomorphism if and only if $(\pi,t)$ admits a gauge action and 
\[
\L^{(\pi,t)}=\bigg(\PO \big(\Inv(\L) \big) \bigg)^{(d-1)}.
\]
\end{theorem}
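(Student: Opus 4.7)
The plan is to deduce this result directly from Theorem \ref{T:d GIUT M} by replacing the (E)-$2^d$-tuple $\L$ with an (M)-$2^d$-tuple that induces the same gauge-invariant ideal of $\N\T_X$. The key observation is that Remark \ref{R:m alg} has already identified $(\PO(\Inv(\L)))^{(d-1)}$ as such an (M)-$2^d$-tuple, and the conclusion of Theorem \ref{T:d GIUT M} applied to this (M)-$2^d$-tuple matches the conclusion we seek.

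First, I would argue that $\N\O(\L,X) \cong \N\O \big((\PO(\Inv(\L)))^{(d-1)}, X \big)$ via a canonical $*$-isomorphism. To establish this, I would apply Proposition \ref{P:e inv} to obtain the invariant (E)-$2^d$-tuple $\Inv(\L)$ consisting of ideals and satisfying
\[
\mathfrak{J}_{\L}^{(\ol{\pi}_X,\ol{t}_X)} = \mathfrak{J}_{\Inv(\L)}^{(\ol{\pi}_X,\ol{t}_X)},
\]
then apply Proposition \ref{P:e po} to pass to the partially ordered (E)-$2^d$-tuple $\PO(\Inv(\L))$, which remains invariant, consists of ideals and still induces the same ideal. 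By Proposition \ref{P:L1 po} (iterated) and Theorem \ref{T:d-1 m fam}, the $(d-1)$-fold iterate $(\PO(\Inv(\L)))^{(d-1)}$ is an (M)-$2^d$-tuple with
\[
\mathfrak{J}_{\L}^{(\ol{\pi}_X,\ol{t}_X)} = \mathfrak{J}_{(\PO(\Inv(\L)))^{(d-1)}}^{(\ol{\pi}_X,\ol{t}_X)}.
\]
Taking quotients of $\N\T_X$ by this common ideal yields the desired canonical $*$-isomorphism of relative Cuntz-Nica-Pimsner algebras.

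Second, I would invoke Theorem \ref{T:d GIUT M} applied to the (M)-$2^d$-tuple $\M := (\PO(\Inv(\L)))^{(d-1)}$ and the Nica-covariant representation $(\pi,t)$. This yields the equivalence: $\N\O(\M,X)\cong\ca(\pi,t)$ via a (unique) canonical $*$-isomorphism if and only if $(\pi,t)$ admits a gauge action and $\L^{(\pi,t)}=\M$. Composing with the isomorphism from the previous step gives the stated equivalence for $\L$ itself, and uniqueness of the isomorphism follows from the universal property of $\N\O(\L,X)$ together with the uniqueness asserted in Theorem \ref{T:d GIUT M}.

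There is no real obstacle here, as all the heavy lifting has been carried out already: the algebraic characterisation of (M)-$2^d$-tuples in Theorem \ref{T:m fam v2}, the stabilisation in Theorem \ref{T:d-1 m fam}, and the GIUT for (M)-$2^d$-tuples in Theorem \ref{T:d GIUT M}. The only care required is to verify that $\PO$ and $\Inv$ may indeed be composed (noting that Proposition \ref{P:e po} preserves invariance, so that $\PO(\Inv(\L))$ remains invariant as required by Proposition \ref{P:L1 po}), and that the iteration procedure applies in this setting — both are direct consequences of the cited propositions, so the proof reduces to a short assembly of existing results.
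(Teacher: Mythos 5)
Your proposal is correct and follows essentially the same route as the paper: the paper's proof simply invokes Remark \ref{R:m alg} (which encapsulates exactly your assembly of Propositions \ref{P:e inv}, \ref{P:e po}, \ref{P:L1 po} and Theorem \ref{T:d-1 m fam}) to identify $(\PO(\Inv(\L)))^{(d-1)}$ as the (M)-$2^d$-tuple inducing $\fJ_{\L}^{(\ol{\pi}_X,\ol{t}_X)}$, concludes $\N\O((\PO(\Inv(\L)))^{(d-1)},X)=\N\O(\L,X)$, and applies Theorem \ref{T:d GIUT M}. Your version merely unpacks that remark in more detail, including the correct observation that $\PO$ preserves invariance so the iteration machinery applies.
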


\begin{proof}
We have that $(\PO(\Inv(\L)))^{(d-1)}$ is the (M)-$2^d$-tuple that induces $\fJ_{\L}^{(\ol{\pi}_X,\ol{t}_X)}$ by Remark \ref{R:m alg}.
In particular, we have $\N\O((\PO(\Inv(\L)))^{(d-1)},X)=\N\O(\L,X)$, and Theorem \ref{T:d GIUT M} finishes the proof.
\end{proof}

\section{NT-$2^d$-tuples and gauge-invariant ideals of $\N\T_X$}\label{S:g inv struc}

The (M)-$2^d$-tuples of $X$ parametrise the equivariant quotients that lie in-between $\N\T_X$ and $\N\O_X$.
We now pass to the parametrisation of the quotients that may not be injective on $X$.
We will circumvent this by ``deleting the kernel", i.e., by utilising the quotient product system construction explored in Section \ref{S:prod sys}. 

\subsection{NT-$2^d$-tuples}\label{Ss:NT tuple}

We begin by defining some auxiliary objects.

\begin{definition}\label{D:JF}
Let $X$ be a strong compactly aligned product system with coefficients in a C*-algebra $A$.
Fix $\mt\neq F\subseteq[d]$ and let $I\subseteq A$ be an ideal. 
We define the following subsets of $A$:
\begin{enumerate}
\item $X_F^{-1}(I):=\bigcap\{X_\un{n}^{-1}(I)\mid \un{0}\neq\un{n}\leq\un{1}_F\} = \{a \in A \mid \sca{X_{\un{n}}, a X_{\un{n}}} \subseteq I \foral \un{0} \neq \un{n} \leq \un{1}_F\}$,
\item $J_F(I,X):=\{a\in A\mid [\phi_\un{i}(a)]_I\in\K([X_\un{i}]_I)\foral i\in[d], aX_F^{-1}(I)\subseteq I\}$.
\end{enumerate}
\end{definition}

Notice that both $X_F^{-1}(I)$ and $J_F(I,X)$ are ideals of $A$.
These objects will play similar roles to the ideals $X^{-1}(I)$ and $J(I,X)$ for a C*-correspondence $X$ over $A$.
Let us collect some properties of $X_F^{-1}(I)$ and $J_F(I,X)$.

\begin{lemma}\label{L:quo ker}
Let $X$ be a strong compactly aligned product system with coefficients in a C*-algebra $A$.
Let $I\subseteq A$ be an ideal that is positively invariant for $X$.
Fix $\mt\neq F\subseteq[d]$ and $a\in A$.
Then the following are equivalent:
\begin{enumerate}
\item $[a]_I\in\bigcap\{\ker[\phi_\un{i}]_I\mid i\in F\}$;
\item $\sca{X_\un{i},aX_\un{i}}\subseteq I\foral i\in F$;
\item $\sca{X_\un{n},aX_\un{n}}\subseteq I\foral \un{n}\neq\un{0}$ satisfying $\supp\un{n}\subseteq F$.
\end{enumerate}
Consequently, we have $X_F^{-1}(I) = \bigcap \{ X_{\un{i}}^{-1}(I) \mid i \in F\}$.
\end{lemma}

\begin{proof}
We will prove [(i)$\Rightarrow$(ii)$\Rightarrow$(iii)$\Rightarrow$(ii)$\Rightarrow$(i)].
First assume that $[a]_I\in\bigcap\{\ker[\phi_\un{i}]_I\mid i\in F\}$.
Fixing $i\in F$, we have
\[
[\phi_\un{i}(a)\xi_\un{i}]_I=[\phi_\un{i}]_I ([a]_I) [\xi_\un{i}]_I=0 \foral\xi_\un{i}\in X_\un{i},
\]
which in turn yields $a X_\un{i} \subseteq X_\un{i}I$.
An application of \cite[Proposition 1.3]{Kat07} then gives that $\sca{X_\un{i},aX_\un{i}}\subseteq I$, as required.

Next, assume that $\sca{X_\un{i},aX_\un{i}}\subseteq I$ for all $i\in F$.
We must prove that $\sca{X_\un{n},aX_\un{n}}\subseteq I$ for all $\un{n}\neq\un{0}$ satisfying $\supp\un{n}\subseteq F$.
We proceed by induction on $|\un{n}|$.
If $|\un{n}|=1$, then $\un{n}=\un{i}$ for some $i\in F$, in which case $\sca{X_\un{i},aX_\un{i}}\subseteq I$ by assumption.
Now suppose the claim holds for all $\un{n}\neq\un{0}$ satisfying $\supp\un{n}\subseteq F$ and $|\un{n}|=N$ for some $N\in\bN$.
Fix $\un{m}\neq\un{0}$ satisfying $\supp\un{m}\subseteq F$ and $|\un{m}|=N+1$.
We may write $\un{m}$ in the form $\un{m}=\un{n}+\un{i}$ for some $i\in F$ and some $\un{n}\neq\un{0}$ satisfying $\supp\un{n}\subseteq F$ and $|\un{n}|=N$.
We then have
\begin{align*}
\sca{X_\un{m}, aX_\un{m}} 
& =
\sca{X_{\un{n}} \otimes_A X_{\un{i}}, a X_{\un{n}} \otimes_A X_{\un{i}}}
\subseteq 
[\sca{X_\un{i}, \phi_\un{i}(\sca{X_\un{n}, a X_\un{n}}) X_\un{i}}]
\subseteq 
[\sca{X_{\un{i}}, I X_{\un{i}}}] 
\subseteq I,
\end{align*}
using the inductive hypothesis for $\un{n}$ and positive invariance of $I$. 
Hence $\sca{X_\un{m},aX_\un{m}}\subseteq I$, and by induction we are done.

Finally, assume that $\sca{X_\un{n},aX_\un{n}}\subseteq I$ for all $\un{n}\neq\un{0}$ satisfying $\supp\un{n}\subseteq F$.
In particular, we have $\sca{X_\un{i},aX_\un{i}}\subseteq I$ for all $i\in F$.
Fixing $i \in F$, we have $a X_{\un{i}} \subseteq X_{\un{i}} I$ by \cite[Proposition 1.3]{Kat07}, and therefore $[\phi_{\un{i}}]_I ([a]_I) [\xi_{\un{i}}]_I = [\phi_{\un{i}}(a) \xi_{\un{i}}]_I = 0$ for all $\xi_{\un{i}} \in X_{\un{i}}$.
Hence $[a]_I \in \ker [\phi_{\un{i}}]_I$, from which it follows that $[a]_I\in\bigcap\{\ker[\phi_\un{i}]_I\mid i\in F\}$, finishing the proof of the equivalences.

The last claim follows from the equivalence of items (ii) and (iii).
\end{proof}

The next proposition relates the ideals $X_F^{-1}(I)$ and $J_F(I,X)$ to ideals of $[A]_I$, when $I$ is positively invariant.
This result is the higher-rank analogue of \cite[Lemma 5.2]{Kat07}.

\begin{lemma}\label{L:quo ideal}
Let $X$ be a strong compactly aligned product system with coefficients in a C*-algebra $A$.
Let $I\subseteq A$ be an ideal that is positively invariant for $X$.
Then the following hold for all $\mt\neq F\subseteq[d]$:
\begin{enumerate}
\item $X_F^{-1}(I)=[ \hspace{1pt} \cdot \hspace{1pt} ]_I^{-1}(\bigcap\{\ker[\phi_\un{i}]_I\mid i\in F\})$.
\item $J_F(I,X)=[ \hspace{1pt} \cdot \hspace{1pt} ]_I^{-1}(\J_F([X]_I))$.
\item $X_F^{-1}(I)\cap J_F(I,X)=I$.
\end{enumerate}
\end{lemma}

\begin{proof}
Fix $\mt\neq F\subseteq[d]$.
Firstly, note that $a\in[ \hspace{1pt} \cdot \hspace{1pt} ]_I^{-1}(\bigcap\{\ker[\phi_\un{i}]_I\mid i\in F\})$ if and only if $[a]_I\in\bigcap\{\ker[\phi_\un{i}]_I\mid i\in F\}$, which in turn is equivalent to having that $a\in X_F^{-1}(I)$ by Lemma \ref{L:quo ker}.
Thus $X_F^{-1}(I)=[ \hspace{1pt} \cdot \hspace{1pt} ]_I^{-1}(\bigcap\{\ker[\phi_\un{i}]_I\mid i\in F\})$, proving item (i).

Next, assume that $a\in J_F(I,X)$.
We must show that
\[
[a]_I\in\J_F([X]_I)=(\bigcap_{i\in F}\ker[\phi_\un{i}]_I)^\perp\cap(\bigcap_{i\in[d]}[\phi_\un{i}]_I^{-1}(\K([X_\un{i}]_I))).
\]
By definition, we have $[\phi_\un{i}]_I[a]_I=[\phi_\un{i}(a)]_I\in\K([X_\un{i}]_I)$ for all $i\in[d]$ and $aX_F^{-1}(I)\subseteq I$.
In particular, we have $[a]_I\in\bigcap_{i\in[d]}[\phi_\un{i}]_I^{-1}(\K([X_\un{i}]_I))$.
Now take $[b]_I\in\bigcap_{i\in F}\ker[\phi_\un{i}]_I$.
By item (i), we have 
$
b\in [ \hspace{1pt} \cdot \hspace{1pt} ]_I^{-1}(\bigcap_{i\in F}\ker[\phi_\un{i}]_I)=X_F^{-1}(I),
$
and so $ab\in I$.
In particular, we have $[a]_I[b]_I=[ab]_I=0$, which in turn implies that $[a]_I\in(\bigcap_{i\in F}\ker[\phi_\un{i}]_I)^\perp$, as required.

Now assume that $a\in[ \hspace{1pt} \cdot \hspace{1pt} ]_I^{-1}(\J_F([X]_I))$.
Then $[\phi_\un{i}]_I[a]_I=[\phi_\un{i}(a)]_I\in\K([X_\un{i}]_I)$ for all $i\in[d]$ by definition.
Take $b\in X_F^{-1}(I)$.
By item (i), we have $[b]_I\in\bigcap_{i\in F}\ker[\phi_\un{i}]_I$.
By definition, we have $[a]_I\in(\bigcap_{i\in F}\ker[\phi_\un{i}]_I)^\perp$, so $[ab]_I=[a]_I[b]_I=0$ and hence $ab\in I$.
Thus $aX_F^{-1}(I)\subseteq I$ and hence in total $a\in J_F(I,X)$, proving item (ii).

Using items (i) and (ii), and that $\J_F([X]_I) \subseteq (\bigcap_{i\in F}\ker[\phi_\un{i}]_I)^\perp$, we obtain
\begin{align*}
X_F^{-1}(I)\cap J_F(I,X) 
& =
[ \hspace{1pt} \cdot \hspace{1pt} ]_I^{-1} \bigg((\bigcap_{i\in F}\ker[\phi_\un{i}]_I)\cap\J_F([X]_I) \bigg) 
=
[ \hspace{1pt} \cdot \hspace{1pt} ]_I^{-1}(\{0\}) 
=I,
\end{align*}
proving item (iii).
\end{proof}

We are now ready to introduce the objects that will implement the parametrisation of the gauge-invariant ideals of $\N\T_X$.

\begin{definition}\label{D:NT tuple}
Let $X$ be a strong compactly aligned product system with coefficients in a C*-algebra $A$ and let $\L$ be a $2^d$-tuple of $X$.
We say that $\L$ is an \emph{NT-$2^d$-tuple of $X$} if the following four conditions hold:
\begin{enumerate}
\item $\L$ consists of ideals and $\L_F\subseteq J_F(\L_\mt,X)$ for all $\mt\neq F\subseteq[d]$,
\item $\L$ is $X$-invariant,
\item $\L$ is partially ordered,
\item $ [ \hspace{1pt} \cdot \hspace{1pt} ]_{\L_\mt}^{-1} \big( [\L_F]_{\L_\mt}^{(1)} \big) \subseteq \L_F$ for all $F\subseteq[d]$, where $[\L_F]_{\L_\mt}=\L_F/\L_\mt\subseteq [A]_{\L_\mt}$.
\end{enumerate}
\end{definition}

To make sense of condition (iv), first note that conditions (i) and (ii) imply that $\L_\mt$ is an ideal of $A$ that is positively invariant for $X$.
Hence we can make sense of $[X]_{\L_\mt}$ as a strong compactly aligned product system with coefficients in $[A]_{\L_\mt}$ by Proposition \ref{P:qnt sca}.
Condition (iii) implies in particular that $\L_\mt\subseteq\L_F$ for all $F\subseteq[d]$, and so by condition (i) we have $[\L]_{\L_\mt}:=\{[\L_F]_{\L_\mt}\}_{F\subseteq[d]}$ is a $2^d$-tuple of $[X]_{\L_\mt}$ that consists of ideals.
An application of Lemma \ref{L:quo ideal} gives that $[\L]_{\L_\mt}\subseteq\J([X]_{\L_\mt})$, while item (ii) implies that $[\L]_{\L_\mt}$ is $[X]_{\L_\mt}$-invariant.
Hence we have that $[\L]_{\L_\mt}$ is an (E)-$2^d$-tuple by Lemma \ref{L:J+inv}, and so we can consider the family $[\L]_{\L_\mt}^{(1)}$.
Note also that condition (iv) holds automatically for $F= \mt$ and $F=[d]$.

When the left action of each fibre of $X$ is by compacts, condition (iv) simplifies as follows.

\begin{proposition}\label{P:NTcom}
Let $X$ be a product system over $\bZ_+^d$ with coefficients in a C*-algebra $A$ and suppose that $\phi_\un{n}(A)\subseteq\K(X_\un{n})$ for all $\un{n}\in\bZ_+^d$.
Then a $2^d$-tuple $\L$ of $X$ is an NT-$2^d$-tuple of $X$ if and only if it satisfies conditions (i)-(iii) of Definition \ref{D:NT tuple}
and
\[
\bigg(\bigcap_{\un{n}\perp F}X_\un{n}^{-1}(J_F(\L_\mt,X))\bigg)\cap\L_{\inv,F}\cap\L_{\lim,F}\subseteq\L_F\foral\mt\neq F\subsetneq[d].
\]
\end{proposition}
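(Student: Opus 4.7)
The plan is to show that, under assumptions (i)--(iii) of Definition \ref{D:NT tuple}, condition (iv) is equivalent to the displayed inclusion. Since $[\L_\mt]_{\L_\mt}^{(1)} = \{0\}$ and $[\L_{[d]}]_{\L_\mt}^{(1)} = [\L_{[d]}]_{\L_\mt}$, condition (iv) holds trivially when $F = \mt$ or $F = [d]$, so the equivalence reduces to the case $\mt \neq F \subsetneq [d]$. First I would verify the quotient setup: (i)--(ii) ensure that $\L_\mt$ is positively invariant for $X$, and (iii) gives $\L_\mt \subseteq \L_F$ for all $F$, so that $[\L]_{\L_\mt}$ is a well-defined $2^d$-tuple of ideals of $[X]_{\L_\mt}$. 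Applying Proposition \ref{P:quo ideal}(ii) to condition (i) yields $[\L_F]_{\L_\mt} \subseteq \J_F([X]_{\L_\mt})$, and combined with the fact that $[\L]_{\L_\mt}$ inherits $[X]_{\L_\mt}$-invariance from (ii), Proposition \ref{P:J+inv} shows that $[\L]_{\L_\mt}$ is an (E)-$2^d$-tuple, so that $[\L]_{\L_\mt}^{(1)}$ is defined.

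The main task is to identify the preimage $[\hspace{1pt}\cdot\hspace{1pt}]_{\L_\mt}^{-1}([\L_F]_{\L_\mt}^{(1)})$ with the stated intersection. By definition
\[
[\L_F]_{\L_\mt}^{(1)} = \I_F([X]_{\L_\mt}) \cap [\L]_{\L_\mt, \inv, F} \cap [\L]_{\L_\mt, \lim, F},
\]
so I would treat the three factors separately. For the first, using Lemma \ref{L:quotinv} fibrewise with $I = \L_\mt$ and $J = J_F(\L_\mt, X)$ (noting $\L_\mt \subseteq J_F(\L_\mt, X)$ as one checks directly using positive invariance and the ideal property), together with Proposition \ref{P:quo ideal}(ii) to rewrite $[J_F(\L_\mt, X)]_{\L_\mt} = \J_F([X]_{\L_\mt})$, gives
\[
[\hspace{1pt}\cdot\hspace{1pt}]_{\L_\mt}^{-1}(\I_F([X]_{\L_\mt})) = \bigcap_{\un{n}\perp F} X_\un{n}^{-1}(J_F(\L_\mt, X)).
\]
For the second, partial ordering implies $\L_\mt \subseteq \cap_{F\subsetneq D}\L_D$, so $\cap_{F\subsetneq D}[\L_D]_{\L_\mt} = [\cap_{F\subsetneq D}\L_D]_{\L_\mt}$, and a second application of Lemma \ref{L:quotinv} yields $[\hspace{1pt}\cdot\hspace{1pt}]_{\L_\mt}^{-1}([\L]_{\L_\mt, \inv, F}) = \L_{\inv, F}$. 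For the third, the compactness hypothesis $\phi_\un{m}(A) \subseteq \K(X_\un{m})$ allows me to invoke Lemma \ref{L:newquot} (with $I = \L_\mt$, $J = \L_F$, which contain one another by partial ordering) to obtain
\[
\|[\phi_\un{m}]_{\L_\mt}([a]_{\L_\mt}) + \K([X_\un{m}]_{\L_\mt}[\L_F]_{\L_\mt})\| = \|\phi_\un{m}(a) + \K(X_\un{m}\L_F)\|
\]
for every $\un{m} \perp F$, hence $[\hspace{1pt}\cdot\hspace{1pt}]_{\L_\mt}^{-1}([\L]_{\L_\mt, \lim, F}) = \L_{\lim, F}$ (noting $\L_\mt \subseteq \L_{\lim, F}$ by Proposition \ref{P:inv comp} applied with $F = \mt$). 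Intersecting the three identities and taking preimages produces the equivalence.

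The main subtlety is the treatment of the limit condition: Lemma \ref{L:newquot} expresses the quotient-algebra norm in $\K([X_\un{m}]_{\L_\mt})$ directly in terms of the norm in $\K(X_\un{m})$, but this identification requires $\phi_\un{m}(A) \subseteq \K(X_\un{m})$, so that the computation takes place inside $\K(X_\un{m})$ rather than the larger algebra $\L(X_\un{m})$. Without the compactness assumption, the limit condition for $[\L]_{\L_\mt, \lim, F}$ does not reduce so cleanly to a condition on $A$, which is precisely why the simpler characterisation in the proposition is valid only in the compact-left-action setting. The remaining two identities rely only on Lemma \ref{L:quotinv} and the compatibility of intersections with the quotient map, both of which hold in general.
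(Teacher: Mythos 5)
Your proof is correct and follows essentially the same route as the paper's: after reducing condition (iv) of Definition \ref{D:NT tuple} to the cases $\mt\neq F\subsetneq[d]$, you identify the preimage under $[\hspace{1pt}\cdot\hspace{1pt}]_{\L_\mt}$ of each of the three factors of $[\L_F]_{\L_\mt}^{(1)}$ exactly as the paper does, invoking Lemma \ref{L:quotinv} together with Proposition \ref{P:quo ideal} for $\I_F([X]_{\L_\mt})$ and for $[\L]_{\L_\mt,\inv,F}$, and Lemma \ref{L:newquot} (where the compact-left-action hypothesis is used) for $[\L]_{\L_\mt,\lim,F}$. Your closing observation on why the limit condition is the sole obstruction without compact left actions matches the paper's Remark \ref{R:onlylim}.
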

\begin{proof}
Note that $X$ is automatically strong compactly aligned by Corollary \ref{C:autscaps}.
Without loss of generality, we may assume that $\L$ satisfies conditions (i)-(iii) of Definition \ref{D:NT tuple}.
Since condition (iv) of Definition \ref{D:NT tuple} holds automatically for $F=\mt$ and $F=[d]$, and intersections are preserved under pre-images, it suffices to show that the following items hold for fixed $\mt\neq F\subsetneq[d]$:
\begin{itemize}
\item[(i)] $[ \hspace{1pt} \cdot \hspace{1pt} ]_{\L_\mt}^{-1}(\I_F([X]_{\L_\mt}))=\bigcap_{\un{n}\perp F}X_\un{n}^{-1}(J_F(\L_\mt,X))$,
\item[(ii)] $[ \hspace{1pt} \cdot \hspace{1pt} ]_{\L_\mt}^{-1}([\L]_{\L_\mt,\inv,F})=\L_{\inv,F}$,
\item[(iii)] $[ \hspace{1pt} \cdot \hspace{1pt} ]_{\L_\mt}^{-1}([\L]_{\L_\mt,\lim,F})=\L_{\lim,F}$.
\end{itemize}
For the first item, recall that 
\[
\I_F([X]_{\L_\mt})=\bigcap_{\un{n}\perp F}[X_\un{n}]_{\L_\mt}^{-1}(\J_F([X]_{\L_\mt}))\qand \J_F([X]_{\L_\mt})=[J_F(\L_\mt,X)]_{\L_\mt},
\]
where the latter holds by Lemma \ref{L:quo ideal}.
Note that $\L_\mt\subseteq J_F(\L_\mt,X)$ by positive invariance of $\L_\mt$.
Item (i) now follows as a consequence of Lemma \ref{L:quotinv}, taking $I:=\L_\mt$ and $J:=J_F(\L_\mt,X)$.

For the second item, recall that
\[
[\L]_{\L_\mt,\inv,F}:=\bigcap_{\un{m}\perp F}[X_\un{m}]_{\L_\mt}^{-1}(\cap_{F\subsetneq D}[\L_D]_{\L_\mt}).
\]
It is routine to check that $\cap_{F\subsetneq D}[\L_D]_{\L_\mt}=[\cap_{F\subsetneq D}\L_D]_{\L_\mt}$.
By the partial ordering property of $\L$, we have $\L_\mt\subseteq\cap_{F\subsetneq D}\L_D$.
Consequently, item (ii) follows as a consequence of Lemma \ref{L:quotinv}, taking $I:=\L_\mt$ and $J:=\cap_{F\subsetneq D}\L_D$.

Finally, item (iii) follows by a direct application of Lemma \ref{L:newquot}, which applies since $\phi_\un{n}(A)\subseteq\K(X_\un{n})$ for all $\un{n}\in\bZ_+^d$.
This completes the proof.
\end{proof}

\begin{remark}\label{R:onlylim}
Let $X$ be as in Proposition \ref{P:NTcom}, and let $\L$ be a $2^d$-tuple of $X$ that satisfies conditions (i)-(iii) of Definition \ref{D:NT tuple}.
Note that the assumption that $\phi_\un{n}(A)\subseteq\K(X_\un{n})$ for all $\un{n}\in\bZ_+^d$ is only used to obtain $[ \hspace{1pt} \cdot \hspace{1pt} ]_{\L_\mt}^{-1}([\L]_{\L_\mt,\lim,F})=\L_{\lim,F}$.
In other words, it is true that 
\[
[ \hspace{1pt} \cdot \hspace{1pt} ]_{\L_\mt}^{-1}(\I_F([X]_{\L_\mt}))=\bigcap_{\un{n}\perp F}X_\un{n}^{-1}(J_F(\L_\mt,X))
\qand
[ \hspace{1pt} \cdot \hspace{1pt} ]_{\L_\mt}^{-1}([\L]_{\L_\mt,\inv,F})=\L_{\inv,F},
\]
even when $X$ is (just) strong compactly aligned.
\end{remark}

NT-$2^d$-tuples represent the higher-rank analogue of Katsura's T-pairs.

\begin{proposition}\label{P:NT+T}
Let $X=\{X_n\}_{n\in\bZ_+}$ be a product system with coefficients in a C*-algebra $A$.
Then the NT-$2$-tuples of $X$ are exactly the T-pairs of $X_1$.
\end{proposition}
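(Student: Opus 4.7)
The plan is to unpack the definition of an NT-$2$-tuple in the special case $d=1$ and show that it collapses precisely to Katsura's definition of a T-pair of $X_1$. First I would observe that for $d=1$ the index set of $F\subseteq[d]$ has only the two members $\mt$ and $\{1\}$, which will let me check each of the four defining conditions individually.

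Next I would dispose of condition (iv) of Definition \ref{D:NT tuple}. For $F=\mt$ we have $[\L_\mt]_{\L_\mt}=\{0\}$ and hence $[\L_\mt]_{\L_\mt}^{(1)}=\{0\}$ by construction; for $F=[d]=\{1\}$ the definition of $\L^{(1)}$ sets $\L_{[d]}^{(1)}=\L_{[d]}$ and the same holds after passing to $[\L]_{\L_\mt}$. Thus in both cases $[\,\cdot\,]_{\L_\mt}^{-1}([\L_F]_{\L_\mt}^{(1)})\subseteq\L_F$ is automatic, so condition (iv) imposes nothing when $d=1$.

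I would then translate conditions (ii) and (iii). Condition (iii) is simply $\L_\mt\subseteq\L_{\{1\}}$. Condition (ii) for $F=\{1\}$ forces $[\sca{X_\un{0},\L_{\{1\}}X_\un{0}}]\subseteq\L_{\{1\}}$, which is automatic because $X_\un{0}=A$ and $\L_{\{1\}}$ is an ideal by (i). For $F=\mt$ condition (ii) says $[\sca{X_n,\L_\mt X_n}]\subseteq\L_\mt$ for every $n\in\bZ_+$, i.e., $\L_\mt$ is positively invariant for the whole product system $X$. By the comment following Definition \ref{D:pos inv} this is equivalent to positive invariance fibrewise; combined with the identifications $X_n\cong X_1^{\otimes n}$ and an easy induction using Lemma \ref{L:IX to XI}, positive invariance for $X_1$ propagates up the tower, so condition (ii) reduces exactly to: $\L_\mt$ is positively invariant for $X_1$.

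Finally I would rewrite condition (i) for the single non-trivial case $F=\{1\}$. Here $X_{\{1\}}^{-1}(\L_\mt)=\bigcap\{X_n^{-1}(\L_\mt)\mid \un{0}\neq n\leq\un{1}\}=X_1^{-1}(\L_\mt)$, and the single-index compactness condition $[\phi_\un{i}(a)]_{\L_\mt}\in\K([X_\un{i}]_{\L_\mt})$ ranges only over $i=1$, so $J_{\{1\}}(\L_\mt,X)=J(\L_\mt,X_1)$ verbatim. Hence condition (i) becomes $\L_{\{1\}}\subseteq J(\L_\mt,X_1)$, and combined with (iii) we get $\L_\mt\subseteq\L_{\{1\}}\subseteq J(\L_\mt,X_1)$, which together with the positive invariance of $\L_\mt$ from (ii) is exactly the T-pair definition. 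The converse direction—verifying each of (i)--(iv) from a T-pair—follows by running the same identifications backwards. I do not expect a serious obstacle here; the only mild subtlety is keeping straight that positive invariance of $\L_\mt$ for $X$ is automatic from positive invariance for $X_1$, which is handled by the Lemma \ref{L:IX to XI} argument above.
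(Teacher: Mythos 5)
Your proposal is correct and follows essentially the same route as the paper's proof: specialise the four conditions of Definition \ref{D:NT tuple} to $d=1$, note that condition (iv) is vacuous (no $F$ with $\mt\neq F\subsetneq\{1\}$), identify $J_{\{1\}}(\L_\mt,X)$ with Katsura's $J(\L_\mt,X_1)$, read off the partial ordering as $\L_\mt\subseteq\L_{\{1\}}$, and reduce invariance to positive invariance for $X_1$ via the induction $\sca{X_n,\L_\mt X_n}\subseteq\L_\mt$. The only difference is cosmetic: you spell out the inductive propagation through $X_n\cong X_1^{\otimes n}$ using Lemma \ref{L:IX to XI}, where the paper simply declares it immediate by induction on $n$.
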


\begin{proof}
First let $\L=\{\L_\mt,\L_{\{1\}}\}$ be an NT-$2$-tuple of $X$.
Then $\L$ consists of ideals.
Since $\L$ is partially ordered, we have $\L_\mt\subseteq\L_{\{1\}}$.
Invariance of $\L$ for $X$ gives in particular that $\left[\sca{X_1,\L_\mt X_1}\right]\subseteq\L_\mt$ and hence $\L_\mt$ is positively invariant for $X_1$.
Lastly, we have
\[
\L_{\{1\}}\subseteq J_{\{1\}}(\L_\mt,X) \equiv J(\L_\mt,X_1).
\]
We conclude that $\L$ is a T-pair of $X_1$, as required.

Now suppose that $\L=\{\L_\mt,\L_{\{1\}}\}$ is a T-pair of $X_1$.
By definition, this means that $\L$ consists of ideals, $\L_\mt$ is positively invariant for $X_1$ and $\L_\mt\subseteq\L_{\{1\}}\subseteq J(\L_\mt,X_1)$.
From this it is clear that $\L$ is partially ordered, so condition (iii) of Definition \ref{D:NT tuple} holds.
Likewise, we have
\[
\L_{\{1\}}\subseteq J(\L_\mt,X_1) \equiv J_{\{1\}}(\L_\mt,X),
\]
so condition (i) of Definition \ref{D:NT tuple} holds.
To show that $\L$ is $X$-invariant, it suffices to show that $\sca{X_n,\L_\mt X_n}\subseteq\L_\mt$ for all $n\in\bZ_+$.
This is immediate by inducting on $n$.
Lastly, note that condition (iv) of Definition \ref{D:NT tuple} holds trivially, since there are no proper, non-empty subsets of $\{1\}$.
Thus $\L$ is an NT-$2$-tuple, completing the proof.
\end{proof}

The following proposition links Definition \ref{D:NT tuple} and Theorem \ref{T:m fam v2}.
Moreover, it shows that (M)-$2^d$-tuples form a subclass of NT-$2^d$-tuples. 

\begin{proposition}\label{P:NT tuple + m fam}
Let $X$ be a strong compactly aligned product system with coefficients in a C*-algebra $A$.
Let $\L$ be a $2^d$-tuple of $X$ consisting of ideals satisfying $\L_\mt\subseteq\L_F$ for all $F\subseteq[d]$, and assume that $\L_\mt$ is positively invariant for $X$.
Then $\L$ is an NT-$2^d$-tuple of $X$ if and only if $[\L]_{\L_\mt}$ is an (M)-$2^d$-tuple of $[X]_{\L_\mt}$.
\end{proposition}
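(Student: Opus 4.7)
The plan is to show that the four defining conditions of an NT-$2^d$-tuple for $\L$ translate, under the quotient map $[\hspace{1pt}\cdot\hspace{1pt}]_{\L_\mt}$, precisely into the four conditions that characterise an (M)-$2^d$-tuple for $[\L]_{\L_\mt}$ as provided by Theorem \ref{T:m fam v2}. Under the standing hypotheses, the ideal $\L_\mt$ is positively invariant for $X$, so $[X]_{\L_\mt}$ is a well-defined strong compactly aligned product system over $[A]_{\L_\mt}$ by Proposition \ref{P:qnt sca}, and $[\L]_{\L_\mt}$ is a $2^d$-tuple of $[X]_{\L_\mt}$ consisting of ideals with $[\L_\mt]_{\L_\mt}=\{0\}$.

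First I would handle conditions (i)--(iii). For condition (i), Proposition \ref{P:quo ideal}(ii) gives $J_F(\L_\mt,X)=[\hspace{1pt}\cdot\hspace{1pt}]_{\L_\mt}^{-1}(\J_F([X]_{\L_\mt}))$, and since $\L_\mt\subseteq\L_F\subseteq J_F(\L_\mt,X)$, pushing forward and pulling back through $[\hspace{1pt}\cdot\hspace{1pt}]_{\L_\mt}$ shows that $\L_F\subseteq J_F(\L_\mt,X)$ for every $\mt\neq F\subseteq[d]$ is equivalent to $[\L_F]_{\L_\mt}\subseteq \J_F([X]_{\L_\mt})$, i.e.\ $[\L]_{\L_\mt}\subseteq \J([X]_{\L_\mt})$. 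For condition (ii), the identity
\[
\sca{[X_\un{n}]_{\L_\mt},[\L_F]_{\L_\mt}[X_\un{n}]_{\L_\mt}}=[\sca{X_\un{n},\L_F X_\un{n}}]_{\L_\mt}
\]
shows that $X$-invariance of $\L$ corresponds to $[X]_{\L_\mt}$-invariance of $[\L]_{\L_\mt}$; for the reverse direction one uses the assumption $\L_\mt\subseteq\L_F$ so that $[\hspace{1pt}\cdot\hspace{1pt}]_{\L_\mt}^{-1}([\L_F]_{\L_\mt})=\L_F$. Condition (iii) is similarly immediate: partial ordering is preserved by $[\hspace{1pt}\cdot\hspace{1pt}]_{\L_\mt}$, and conversely $[\L_{F_1}]_{\L_\mt}\subseteq[\L_{F_2}]_{\L_\mt}$ combined with $\L_\mt\subseteq \L_{F_1},\L_{F_2}$ yields $\L_{F_1}\subseteq\L_{F_2}$.

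At this point, conditions (i)--(iii) in both characterisations are in place. By Proposition \ref{P:J+inv}, this already forces $[\L]_{\L_\mt}$ to be an (E)-$2^d$-tuple, so $[\L]_{\L_\mt}^{(1)}$ is well-defined; likewise condition (iv) of Definition \ref{D:NT tuple} is meaningful. It remains to check that condition (iv) of Definition \ref{D:NT tuple},
\[
[\hspace{1pt}\cdot\hspace{1pt}]_{\L_\mt}^{-1}\bigl([\L_F]_{\L_\mt}^{(1)}\bigr)\subseteq \L_F\foral F\subseteq[d],
\]
is equivalent to $[\L]_{\L_\mt}^{(1)}\subseteq[\L]_{\L_\mt}$. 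Applying $[\hspace{1pt}\cdot\hspace{1pt}]_{\L_\mt}$ to the former inclusion yields $[\L_F]_{\L_\mt}^{(1)}\subseteq[\L_F]_{\L_\mt}$; conversely, if $[\L_F]_{\L_\mt}^{(1)}\subseteq [\L_F]_{\L_\mt}$ then taking pre-images and using $\L_\mt\subseteq \L_F$ gives $[\hspace{1pt}\cdot\hspace{1pt}]_{\L_\mt}^{-1}([\L_F]_{\L_\mt}^{(1)})\subseteq[\hspace{1pt}\cdot\hspace{1pt}]_{\L_\mt}^{-1}([\L_F]_{\L_\mt})=\L_F$.

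The only subtlety is book-keeping in the above equivalences and making sure to invoke Proposition \ref{P:quo ideal}(ii) correctly for condition (i); there is no genuine obstacle, as the construction of $\L^{(1)}$ in Definition \ref{D:L1} is carried out entirely inside $[A]_{\L_\mt}$, so the conditions descend perfectly through the quotient. Combining the four equivalences with the characterisation of (M)-$2^d$-tuples in Theorem \ref{T:m fam v2} applied to $[\L]_{\L_\mt}$ completes the proof.
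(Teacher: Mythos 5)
Your proposal is correct and follows essentially the same route as the paper: both proofs translate the four conditions of Definition \ref{D:NT tuple} for $\L$ into the four conditions of Theorem \ref{T:m fam v2} for $[\L]_{\L_\mt}$, using Proposition \ref{P:quo ideal} to identify $J_F(\L_\mt,X)$ with $[\hspace{1pt}\cdot\hspace{1pt}]_{\L_\mt}^{-1}(\J_F([X]_{\L_\mt}))$ and the standing hypothesis $\L_\mt\subseteq\L_F$ (so that $[\hspace{1pt}\cdot\hspace{1pt}]_{\L_\mt}^{-1}([\L_F]_{\L_\mt})=\L_F+\L_\mt=\L_F$) to pull the quotient conditions back to $A$. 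Your observation that conditions (i)--(iii) make $[\L]_{\L_\mt}$ an (E)-$2^d$-tuple via Proposition \ref{P:J+inv}, so that $[\L]_{\L_\mt}^{(1)}$ is well-defined before condition (iv) is addressed, matches the paper's own treatment.
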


\begin{proof}
Since $\L_\mt$ is an ideal that is positively invariant for $X$, we can make sense of the strong compactly aligned product system $[X]_{\L_\mt}$.
Moreover, the fact that $\L$ consists of ideals satisfying $\L_\mt\subseteq\L_F$ for all $F\subseteq[d]$ gives that $[\L]_{\L_\mt}$ is a $2^d$-tuple of $[X]_{\L_\mt}$ that consists of ideals.

The forward implication follows by applying Lemma \ref{L:quo ideal} and resorting to the characterisation of (M)-$2^d$-tuples obtained in Theorem \ref{T:m fam v2}.
For the converse, assume that $[\L]_{\L_\mt}$ is an (M)-$2^d$-tuple of $[X]_{\L_\mt}$.
It suffices to show that $\L$ satisfies the four conditions of Definition \ref{D:NT tuple}.

Firstly, we have that $\L$ consists of ideals by assumption.
Fix $\mt\neq F\subseteq[d]$.
Since $\L_\mt$ is positively invariant for $X$, we have 
\[
J_F(\L_\mt,X)=[\hspace{1pt}\cdot\hspace{1pt}]_{\L_\mt}^{-1}(\J_F([X]_{\L_\mt}))
\]
by Lemma \ref{L:quo ideal}.
Since $[\L]_{\L_\mt}$ is an (M)-$2^d$-tuple of $[X]_{\L_\mt}$, an application of Theorem \ref{T:m fam v2} gives that $[\L_F]_{\L_\mt}\subseteq\J_F([X]_{\L_\mt})$, from which it follows that 
\[
\L_F\subseteq[\hspace{1pt}\cdot\hspace{1pt}]_{\L_\mt}^{-1}(\J_F([X]_{\L_\mt}))=J_F(\L_\mt,X),
\]
proving that condition (i) of Definition \ref{D:NT tuple} holds.

To see that $\L$ is $X$-invariant, fix $F\subseteq[d]$ and $\un{n}\perp F$.
We must show that $\sca{X_\un{n},\L_FX_\un{n}}\subseteq\L_F$.
By Theorem \ref{T:m fam v2}, we have that $[\L]_{\L_\mt}$ is $[X]_{\L_\mt}$-invariant, and therefore
\[
[\sca{X_\un{n},\L_FX_\un{n}}]_{\L_\mt}=\sca{[X_\un{n}]_{\L_\mt},[\L_F]_{\L_\mt}[X_\un{n}]_{\L_\mt}} \subseteq [\L_F]_{\L_\mt}.
\]
Hence $\sca{X_\un{n},\L_FX_\un{n}} \subseteq \L_F+\L_\mt$.
However, recall that $\L_F$ is an ideal that contains $\L_\mt$, so $\L_F+\L_\mt=\L_F$ and therefore $\sca{X_\un{n},\L_FX_\un{n}} \subseteq \L_F$.
Thus condition (ii) of Definition \ref{D:NT tuple} holds.

Next we check that $\L$ is partially ordered. 
To this end, fix $F\subseteq D\subseteq[d]$.
By Theorem \ref{T:m fam v2}, we have that $[\L]_{\L_\mt}$ is partially ordered and therefore $[\L_F]_{\L_\mt}\subseteq[\L_D]_{\L_\mt}$.
Since $\L_\mt \subseteq \L_D$, we obtain $\L_F \subseteq \L_D$, showing that condition (iii) of Definition \ref{D:NT tuple} holds.

Finally, condition (iv) of Definition \ref{D:NT tuple} follows from condition (iv) of Theorem \ref{T:m fam v2} applied to $[\L]_{\L_\mt}$, and the proof is complete.
\end{proof}

The interplay between NT-$2^d$-tuples and (M)-$2^d$-tuples allows the transferal of properties of (M)-$2^d$-tuples to the general setting, towards the complete parametrisation of the gauge-invariant ideals of $\N\T_X$.
To explore this further, we examine the interaction between NT-$2^d$-tuples and Nica-covariant representations.
The following lemma extends \cite[Lemma 5.10]{Kat07}.

\begin{lemma}\label{L:rep quo}
Let $X$ be a strong compactly aligned product system with coefficients in a C*-algebra $A$. 
Let $(\pi,t)$ be a Nica-covariant representation of $X$ and let $\L^{(\pi,t)}$ be the associated $2^d$-tuple of $X$ (see Definition \ref{D:rep fam}).
Then the following hold:
\begin{enumerate}
\item $\L_\mt^{(\pi,t)}$ is positively invariant for $X$.
\item $\ker t_{\un{n}}=X_{\un{n}}\L_\mt^{(\pi,t)}$ for all $\un{n}\in\bZ_+^d$.
\item There exists an injective Nica-covariant representation $(\dot{\pi},\dot{t})$ of $[X]_{\L_\mt^{(\pi,t)}}$ in $\ca(\pi,t)$ such that $\pi=\dot{\pi}\circ [\hspace{1pt}\cdot \hspace{1pt}]_{\L_\mt^{(\pi,t)}}, t_{\un{n}}=\dot{t}_{\un{n}}\circ[\hspace{1pt}\cdot\hspace{1pt}]_{\L_\mt^{(\pi,t)}}$ and $\psi_\un{n}=\dot{\psi}_\un{n}\circ [\hspace{1pt}\cdot\hspace{1pt}]_{\L_\mt^{(\pi,t)}}|_{\K(X_\un{n})}$ for all $\un{n} \in \bZ_+^d$, and therefore $\ca(\dot{\pi},\dot{t})=\ca(\pi,t)$.
If $(\pi,t)$ admits a gauge action, then so does $(\dot{\pi},\dot{t})$.
\item For each $\mt\neq F\subseteq [d]$, if $a\in\L_F^{(\pi,t)}$ then $[\phi_{\un{i}}(a)]_{\L_\mt^{(\pi,t)}}\in\K([X_{\un{i}}]_{\L_\mt^{(\pi,t)}})$ for all $i\in[d]$, and
\[
\dot{\pi}([a]_{\L_\mt^{(\pi,t)}})+\sum\{(-1)^{|\un{n}|}\dot{\psi}_{\un{n}}([\phi_{\un{n}}(a)]_{\L_\mt^{(\pi,t)}})\mid \un{0}\neq\un{n}\leq\un{1}_F\}=0.
\]
\item For each $\mt\neq F\subseteq[d]$, we have that $a\in\L_F^{(\pi,t)}$ if and only if for every $\un{0}\neq\un{n}\leq\un{1}_F$ there exists $k_\un{n}\in\K(X_\un{n})$ such that $[\phi_\un{n}(a)]_{\L_\mt^{(\pi,t)}}=[k_\un{n}]_{\L_\mt^{(\pi,t)}}$, satisfying
\[
\pi(a)+\sum\{(-1)^{|\un{n}|}\psi_\un{n}(k_\un{n})\mid \un{0}\neq\un{n}\leq\un{1}_F\}=0.
\]
\item For each $F\subseteq[d]$, we have $[\L_F^{(\pi,t)}]_{\L_\mt^{(\pi,t)}}=\L_F^{(\dot{\pi},\dot{t})}$.
\end{enumerate}
\end{lemma}

\begin{proof}
(i) We have that $\L^{(\pi,t)}$ is $X$-invariant by Proposition \ref{P:inj e fam}, and thus in particular $\L_\mt^{(\pi,t)}$ is positively invariant for $X$.

\smallskip

\noindent
(ii) This follows by \cite[Lemma 5.10 (ii)]{Kat07}, since $(\pi, t_{\un{n}})$ is a representation of $X_{\un{n}}$ for all $\un{n} \in \bZ_+^d$.

\smallskip

\noindent
(iii) Existence and injectivity of $(\dot{\pi}, \dot{t})$ follow by \cite[Lemma 5.10 (iii)]{Kat07}, applied to each $(\pi, t_\un{n})$.
By the form of $\dot{t}_\un{n}$ and Lemma \ref{L:Kat07}, it also follows that $\psi_\un{n}=\dot{\psi}_\un{n}\circ [\hspace{1pt}\cdot\hspace{1pt}]_{\L_\mt^{(\pi,t)}}|_{\K(X_\un{n})}$ for all $\un{n}\in\bZ_+^d$.
To see that $(\dot{\pi},\dot{t})$ is Nica-covariant, fix $\un{n},\un{m}\in\bZ_+^d \setminus \{\un{0}\}$.
By Lemma \ref{L:Kat07}, it suffices to check (\ref{Eq:nc}) for $[k_\un{n}]_{\L_\mt^{(\pi,t)}}$ and $[k_\un{m}]_{\L_\mt^{(\pi,t)}}$, where $k_\un{n} \in\K(X_\un{n})$ and $k_\un{m} \in \K(X_\un{m})$. 
Letting $\{\iota_{\un{n}}^{\un{n}+\un{m}}\}_{\un{n}, \un{m} \in \bZ_+^d}$ denote the connecting $*$-homomorphisms of $X$ and letting $\{j_{\un{n}}^{\un{n}+\un{m}}\}_{\un{n}, \un{m} \in \bZ_+^d}$ denote those of $[X]_{\L_\mt^{(\pi,t)}}$, by Proposition \ref{P:qnt ca} and Nica-covariance of $(\pi,t)$ we have
\begin{align*}
\dot{\psi}_\un{n}([k_\un{n}]_{\L_\mt^{(\pi,t)}})\dot{\psi}_\un{m}([k_\un{m}]_{\L_\mt^{(\pi,t)}}) 
& =
\psi_\un{n}(k_\un{n})\psi_\un{m}(k_\un{m}) \\
& =
\psi_{\un{n}\vee\un{m}}(\iota_\un{n}^{\un{n}\vee\un{m}}(k_\un{n})\iota_\un{m}^{\un{n}\vee\un{m}}(k_\un{m})) \\
& =
\dot{\psi}_{\un{n}\vee\un{m}}([\iota_\un{n}^{\un{n}\vee\un{m}}(k_\un{n})\iota_\un{m}^{\un{n}\vee\un{m}}(k_\un{m})]_{\L_\mt^{(\pi,t)}}) \\
& =
\dot{\psi}_{\un{n}\vee\un{m}}(j_\un{n}^{\un{n}\vee\un{m}}([k_\un{n}]_{\L_\mt^{(\pi,t)}})j_\un{m}^{\un{n}\vee\un{m}}([k_\un{m}]_{\L_\mt^{(\pi,t)}})),
\end{align*}
as required.

If $(\pi,t)$ admits a gauge action, then this is inherited by $(\dot{\pi},\dot{t})$ since $\ca(\dot{\pi},\dot{t})=\ca(\pi,t)$, finishing the proof of item (iii).

\smallskip

\noindent
(iv) Fix $\mt\neq F\subseteq[d]$ and $a\in\L_F^{(\pi,t)}$. 
Then $\pi(a) \in B_{(\un{0}, \un{1}_F]}^{(\pi,t)}$ by definition, and thus $\dot{\pi}([a]_{\L_\mt^{(\pi,t)}}) \in B_{(\un{0},\un{1}_F]}^{(\dot{\pi},\dot{t})}$ by item (iii).
In turn, we have $\dot{\pi}([a]_{\L_\mt^{(\pi,t)}})\dot{q}_F=0$ by (\ref{eq:out}).
An application of (\ref{eq:comp}) gives that 
\[
[\phi_\un{i}(a)]_{\L_\mt^{(\pi,t)}}
=
[\phi_\un{i}]_{\L_\mt^{(\pi,t)}}([a]_{\L_\mt^{(\pi,t)}})
\in
\K([X_\un{i}]_{\L_\mt^{(\pi,t)}}) \foral i\in[d],
\]
using the fact that $(\dot{\pi},\dot{t})$ is injective and Nica-covariant by item (iii).
Applying (\ref{eq:expphi}) for $(\dot{\pi},\dot{t})$, we obtain
\[
\dot{\pi}([a]_{\L_\mt^{(\pi,t)}})+\sum\{(-1)^{|\un{n}|}\dot{\psi}_{\un{n}}([\phi_{\un{n}}(a)]_{\L_\mt^{(\pi,t)}})\mid\un{0}\neq\un{n}\leq\un{1}_F\}=0,
\]
showing that item (iv) holds.

\smallskip

\noindent
(v) Fix $\mt\neq F\subseteq[d]$.
The reverse implication is immediate, so assume that $a\in\L_F^{(\pi,t)}$.
By item (iv), we have 
\[
\dot{\pi}([a]_{\L_\mt^{(\pi,t)}})+\sum\{(-1)^{|\un{n}|}\dot{\psi}_{\un{n}}([\phi_{\un{n}}(a)]_{\L_\mt^{(\pi,t)}})\mid \un{0}\neq\un{n}\leq\un{1}_F\}=0.
\]
An application of Lemma \ref{L:Kat07} gives that $[\phi_\un{n}(a)]_{\L_\mt^{(\pi,t)}}=[k_\un{n}]_{\L_\mt^{(\pi,t)}}$ for some $k_\un{n}\in\K(X_\un{n})$, for all $\un{0}\neq\un{n}\leq\un{1}_F$.
By item (iii), we have
\begin{align*}
0
&=
\dot{\pi}([a]_{\L_\mt^{(\pi,t)}}) +\sum\{(-1)^{|\un{n}|}\dot{\psi}_{\un{n}}([\phi_{\un{n}}(a)]_{\L_\mt^{(\pi,t)}})\mid\un{0}\neq\un{n}\leq\un{1}_F\} \\
& =
\dot{\pi}([a]_{\L_\mt^{(\pi,t)}})+\sum\{(-1)^{|\un{n}|}\dot{\psi}_{\un{n}}([k_\un{n}]_{\L_\mt^{(\pi,t)}})\mid\un{0}\neq\un{n}\leq\un{1}_F\} \\
&=
\pi(a)+\sum\{(-1)^{|\un{n}|}\psi_\un{n}(k_\un{n})\mid \un{0}\neq\un{n}\leq\un{1}_F\},
\end{align*}
showing that item (v) holds.

\smallskip

\noindent
(vi) First recall that $\L_F^{(\pi,t)}$ is an ideal satisfying $\L_\mt^{(\pi,t)}\subseteq\L_F^{(\pi,t)}$ for all $F\subseteq[d]$ by Proposition \ref{P:inj e fam}.
Thus we can make sense of the $2^d$-tuple $[\L^{(\pi,t)}]_{\L_\mt^{(\pi,t)}}$ of $[X]_{\L_\mt^{(\pi,t)}}$.
The claim holds trivially when $F=\mt$ (because $(\dot{\pi},\dot{t})$ is injective), so fix $\mt\neq F\subseteq [d]$ and take $a\in\L_F^{(\pi,t)}$.
Then item (iv) yields $[a]_{\L_\mt^{(\pi,t)}}\in\L_F^{(\dot{\pi},\dot{t})}$.
This shows that $[\L_F^{(\pi,t)}]_{\L_\mt^{(\pi,t)}}\subseteq\L_F^{(\dot{\pi},\dot{t})}$.

Now take $[a]_{\L_\mt^{(\pi,t)}}\in\L_F^{(\dot{\pi},\dot{t})}$.
Then by definition and Lemma \ref{L:Kat07}, for every $\un{0}\neq\un{n}\leq\un{1}_F$ there exists $k_\un{n}\in\K(X_{\un{n}})$ such that
\[
\dot{\pi}([a]_{\L_\mt^{(\pi,t)}})=\sum\{\dot{\psi}_\un{n}([k_\un{n}]_{\L_\mt^{(\pi,t)}})\mid \un{0}\neq\un{n}\leq\un{1}_F\}.
\]
Using item (iii), we obtain
\begin{align*}
\pi(a)
=
\dot{\pi}([a]_{\L_\mt^{(\pi,t)}})
=
\sum\{\dot{\psi}_\un{n}([k_\un{n}]_{\L_\mt^{(\pi,t)}})\mid \un{0}\neq\un{n}\leq\un{1}_F\}
=
\sum\{\psi_\un{n}(k_\un{n})\mid\un{0}\neq\un{n}\leq\un{1}_F\}.
\end{align*}
This shows that $a\in\L_F^{(\pi,t)}$, and hence $[a]_{\L_\mt^{(\pi,t)}}\in[\L_F^{(\pi,t)}]_{\L_\mt^{(\pi,t)}}$.
Consequently $\L_F^{(\dot{\pi},\dot{t})}\subseteq[\L_F^{(\pi,t)}]_{\L_\mt^{(\pi,t)}}$ and hence $[\L_F^{(\pi,t)}]_{\L_\mt^{(\pi,t)}}=\L_F^{(\dot{\pi},\dot{t})}$ for all $F\subseteq[d]$, finishing the proof.
\end{proof}

\begin{remark}\label{R:rep quo}
Let $\L^{(\pi,t)}$ be the $2^d$-tuple of $X$ associated with a Nica-covariant representation $(\pi,t)$ of $X$, and let $(\dot{\pi},\dot{t})$ be the injective Nica-covariant representation of $[X]_{\L_\mt^{(\pi,t)}}$ defined in item (iii) of Lemma \ref{L:rep quo}.
By Proposition \ref{P:inj rel}, we have that $(\dot{\pi},\dot{t})$ is an $\L^{(\dot{\pi},\dot{t})}$-relative CNP-representation of $[X]_{\L_\mt^{(\pi,t)}}$, giving the following commutative diagram 
\[
\xymatrix{
\N\T_{[X]_{\L_\mt^{(\pi,t)}}} \ar[dr] \ar[rr]^{\dot{\pi} \times \dot{t}} & & \ca(\dot{\pi},\dot{t}) \\
 & \N\O(\L^{(\dot{\pi},\dot{t})},[X]_{\L_\mt^{(\pi,t)}}) \ar[ur] &
}
\]
of canonical $*$-epimorphisms.
On the other hand, since $\L_\mt^{(\pi,t)}$ is positively invariant we obtain a canonical $*$-epimorphism $\N\T_X \to \N\T_{[X]_{\L_\mt^{(\pi,t)}}}$ that lifts the quotient map $X \to [X]_{\L_\mt^{(\pi,t)}}$.
We have $[\L^{(\pi,t)}]_{\L_\mt^{(\pi,t)}}=\L^{(\dot{\pi},\dot{t})}$ by item (vi) of Lemma \ref{L:rep quo}, and we have $\ca(\pi,t) = \ca(\dot{\pi}, \dot{t})$ by item (iii) of Lemma \ref{L:rep quo}.
Hence we obtain the following commutative diagram
\[
\xymatrix{
\N\T_X \ar[d] \ar[rr]^{\pi \times t} & & \ca(\pi,t) \\
\N\T_{[X]_{\L_\mt^{(\pi,t)}}} \ar[rr] \ar[urr]^{\dot{\pi}\times\dot{t}}& & \N\O([\L^{(\pi,t)}]_{\L_\mt^{(\pi,t)}},[X]_{\L_\mt^{(\pi,t)}}) \ar[u]
}
\]
of canonical $*$-epimorphisms.
\end{remark}

By Proposition \ref{P:rep m fam} and Theorem \ref{T:d GIUT M}, (M)-$2^d$-tuples are exactly of the form $\L^{(\pi,t)}$ for some injective Nica-covariant representation $(\pi,t)$ that admits a gauge action.
This is extended to NT-$2^d$-tuples by allowing $(\pi,t)$ to be non-injective.
We introduce the following notation.

\begin{definition}\label{D:piLtL}
Let $X$ be a strong compactly aligned product system with coefficients in a C*-algebra $A$.
For an NT-$2^d$-tuple $\L$ of $X$, we define the maps
\begin{align*}
\pi^\L & \colon A\to\N\O([\L]_{\L_\mt},[X]_{\L_\mt}); \pi^\L(a)=\pi_{[X]_{\L_\mt}}^{[\L]_{\L_\mt}}([a]_{\L_\mt})\foral a\in A, \\
t^\L_\un{n} & \colon X_\un{n}\to\N\O([\L]_{\L_\mt},[X]_{\L_\mt}); t^\L_\un{n}(\xi_\un{n})=t_{[X]_{\L_\mt},\un{n}}^{[\L]_{\L_\mt}}([\xi_\un{n}]_{\L_\mt})\foral\xi_\un{n}\in X_\un{n},\un{n}\in\bZ_+^d\setminus\{\un{0}\},
\end{align*}
where $(\pi_{[X]_{\L_\mt}}^{[\L]_{\L_\mt}},t_{[X]_{\L_\mt}}^{[\L]_{\L_\mt}})$ denotes the universal $[\L]_{\L_\mt}$-relative CNP-representation of $[X]_{\L_\mt}$.
\end{definition}

Checking that $(\pi^\L,t^\L)$ is a Nica-covariant representation is routine, as it is obtained from the canonical $*$-epimorphism
\[
\N\T_X \to \N\T_{[X]_{\L_\mt}} \to \N\O([\L]_{\L_\mt},[X]_{\L_\mt}),
\]
where we use that $\L_\mt$ is positively invariant for the existence of the first map, and that $[\L]_{\L_\mt}$ is an (M)-$2^d$-tuple (and thus in particular relative) of $[X]_{\L_\mt}$ by Proposition \ref{P:NT tuple + m fam} for the existence of the second map.
Additionally, notice that $(\pi^\L,t^\L)$ admits a  gauge action and that
\[
\ca(\pi^\L,t^\L) = \ca(\pi_{[X]_{\L_\mt}}^{[\L]_{\L_\mt}},t_{[X]_{\L_\mt}}^{[\L]_{\L_\mt}}) = \N\O([\L]_{\L_\mt},[X]_{\L_\mt}).
\]

\begin{proposition}\label{P:NT tuple all rep}
Let $X$ be a strong compactly aligned product system with coefficients in a C*-algebra $A$.
If $\L$ is an NT-$2^d$-tuple of $X$, then $\L^{(\pi^\L,t^\L)}=\L$.

Consequently, a $2^d$-tuple $\L$ of $X$ is an NT-$2^d$-tuple if and only if $\L = \L^{(\pi,t)}$ for some Nica-covariant representation $(\pi,t)$ of $X$ that admits a gauge action.
\end{proposition}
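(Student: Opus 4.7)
The plan is to reduce everything to the (M)-$2^d$-tuple setting, where the Gauge-Invariant Uniqueness Theorem (Theorem \ref{T:d GIUT M}) gives an exact match between families and representations, and then to lift the identification back via the quotient product system machinery of Proposition \ref{P:rep quo} and Proposition \ref{P:NT tuple + m fam}.

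First I would handle the main claim that $\L^{(\pi^\L, t^\L)} = \L$ for an NT-$2^d$-tuple $\L$. By definition the representation $(\pi^\L, t^\L)$ factors as $\pi^\L = \pi_{[X]_{\L_\mt}}^{[\L]_{\L_\mt}} \circ [\hspace{1pt}\cdot\hspace{1pt}]_{\L_\mt}$, $t^\L_{\un{n}} = t_{[X]_{\L_\mt}, \un{n}}^{[\L]_{\L_\mt}} \circ [\hspace{1pt}\cdot\hspace{1pt}]_{\L_\mt}$. By Proposition \ref{P:NT tuple + m fam}, $[\L]_{\L_\mt}$ is an (M)-$2^d$-tuple of $[X]_{\L_\mt}$, and in particular an (E)-$2^d$-tuple, so the universal $[\L]_{\L_\mt}$-relative CNP-representation is injective (Proposition \ref{P:NO inj}) and admits a gauge action. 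Therefore $\ker \pi^\L = \ker [\hspace{1pt}\cdot\hspace{1pt}]_{\L_\mt} = \L_\mt$, which settles the case $F = \mt$.

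For $\mt \neq F \subseteq [d]$, I would invoke Proposition \ref{P:rep quo}(iii) applied to $(\pi^\L, t^\L)$: the resulting injective Nica-covariant representation $(\dot{\pi}^\L, \dot{t}^\L)$ of $[X]_{\L_\mt}$ must coincide with $(\pi_{[X]_{\L_\mt}}^{[\L]_{\L_\mt}}, t_{[X]_{\L_\mt}}^{[\L]_{\L_\mt}})$, because both factor the same $\pi^\L$ and $t^\L$ through $[\hspace{1pt}\cdot\hspace{1pt}]_{\L_\mt}$, and the quotient maps are surjective. Proposition \ref{P:rep quo}(vi) then gives
\[
[\L_F^{(\pi^\L, t^\L)}]_{\L_\mt} = \L_F^{(\dot{\pi}^\L, \dot{t}^\L)} = \L_F^{(\pi_{[X]_{\L_\mt}}^{[\L]_{\L_\mt}}, t_{[X]_{\L_\mt}}^{[\L]_{\L_\mt}})}.
\]
Since $[\L]_{\L_\mt}$ is an (M)-$2^d$-tuple and $(\pi_{[X]_{\L_\mt}}^{[\L]_{\L_\mt}}, t_{[X]_{\L_\mt}}^{[\L]_{\L_\mt}})$ is the universal $[\L]_{\L_\mt}$-relative CNP-representation, Theorem \ref{T:d GIUT M} yields $\L^{(\pi_{[X]_{\L_\mt}}^{[\L]_{\L_\mt}}, t_{[X]_{\L_\mt}}^{[\L]_{\L_\mt}})} = [\L]_{\L_\mt}$, and thus $[\L_F^{(\pi^\L, t^\L)}]_{\L_\mt} = [\L_F]_{\L_\mt}$. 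Both $\L_F^{(\pi^\L, t^\L)}$ and $\L_F$ contain $\L_\mt$ (by partial ordering of $\L^{(\pi^\L,t^\L)}$ from Proposition \ref{P:inj e fam} and by the NT-condition, respectively), so this equality of quotients upgrades to $\L_F^{(\pi^\L, t^\L)} = \L_F$.

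For the consequence, the forward direction uses $(\pi, t) = (\pi^\L, t^\L)$, which admits a gauge action by construction. For the converse, given $\L = \L^{(\pi, t)}$ with $(\pi, t)$ admitting a gauge action, Proposition \ref{P:rep quo}(i),(iii) produces an injective Nica-covariant representation $(\dot{\pi}, \dot{t})$ of $[X]_{\L_\mt}$ that inherits the gauge action. Proposition \ref{P:rep m fam} then makes $\L^{(\dot{\pi}, \dot{t})}$ an (M)-$2^d$-tuple of $[X]_{\L_\mt}$, while Proposition \ref{P:rep quo}(vi) identifies it with $[\L]_{\L_\mt}$. Together with the ideal/partial ordering properties of $\L = \L^{(\pi,t)}$ supplied by Proposition \ref{P:inj e fam}, Proposition \ref{P:NT tuple + m fam} concludes that $\L$ is an NT-$2^d$-tuple. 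The main subtlety throughout is ensuring that the injective representation $(\dot{\pi}^\L, \dot{t}^\L)$ produced generically by Proposition \ref{P:rep quo} really is the universal $[\L]_{\L_\mt}$-relative CNP-representation, so that Theorem \ref{T:d GIUT M} can be applied; this is the identification on which the whole argument hinges.
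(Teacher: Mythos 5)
Your proposal is correct and follows essentially the same route as the paper: establish $\L_\mt^{(\pi^\L,t^\L)}=\L_\mt$ via injectivity of the universal $[\L]_{\L_\mt}$-relative CNP-representation, use Proposition \ref{P:NT tuple + m fam} and Theorem \ref{T:d GIUT M} to get $\L^{(\tilde{\pi},\tilde{t})}=[\L]_{\L_\mt}$, lift back to $X$, and handle the converse via Propositions \ref{P:rep quo} and \ref{P:rep m fam}. The only cosmetic difference is that for $\mt\neq F$ the paper runs the chain of equivalences by hand using the factorisation $\pi^\L=\tilde{\pi}\circ[\hspace{1pt}\cdot\hspace{1pt}]_{\L_\mt}$, whereas you package the same content by identifying $(\dot{\pi}^\L,\dot{t}^\L)$ with $(\tilde{\pi},\tilde{t})$ (valid, by surjectivity of the quotient maps) and citing Proposition \ref{P:rep quo}(vi), then upgrading $[\L_F^{(\pi^\L,t^\L)}]_{\L_\mt}=[\L_F]_{\L_\mt}$ to an equality of ideals since both sides contain $\L_\mt$.
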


\begin{proof}
For the first claim, we denote the universal $[\L]_{\L_\mt}$-relative CNP-representation of $[X]_{\L_\mt}$ by $(\tilde{\pi},\tilde{t})$.
First we show that $\L_\mt^{(\pi^\L,t^\L)}=\L_\mt$. 
We have that
\[
a\in\L_\mt^{(\pi^\L,t^\L)}\iff\pi^\L(a)=0\iff\tilde{\pi}([a]_{\L_\mt})=0\iff[a]_{\L_\mt}=0\iff a\in\L_\mt,
\]
using that $(\tilde{\pi},\tilde{t})$ is injective by Proposition \ref{P:NO inj}, since $[\L]_{\L_\mt}$ is an (M)-$2^d$-tuple of $[X]_{\L_\mt}$ by Proposition \ref{P:NT tuple + m fam}.
Hence $\L_\mt^{(\pi^\L,t^\L)}=\L_\mt$.

Recall that $\ca(\tilde{\pi},\tilde{t})=\N\O([\L]_{\L_\mt},[X]_{\L_\mt})$ by definition, and thus by applying Theorem \ref{T:d GIUT M} for the (M)-$2^d$-tuple $[\L]_{\L_\mt}$ of $[X]_{\L_\mt}$ and the Nica-covariant representation $(\tilde{\pi},\tilde{t})$ of $[X]_{\L_\mt}$, we obtain $\L^{(\tilde{\pi},\tilde{t})}=[\L]_{\L_\mt}$.
Hence, for $\mt \neq F \subseteq [d]$, we have
\begin{align*}
a\in\L_F^{(\pi^\L,t^\L)}
& \iff
\pi^\L(a) = \sum\{\psi^\L_\un{n}(k_\un{n})\mid\un{0}\neq\un{n}\leq\un{1}_F\} \text{ for some $k_{\un{n}} \in \K(X_{\un{n}})$} \\
& \iff
\tilde{\pi}([a]_{\L_\mt}) = \sum\{\tilde{\psi}_\un{n}([k_\un{n}]_{\L_\mt})\mid \un{0}\neq\un{n}\leq\un{1}_F\} \text{ for some $k_{\un{n}} \in \K(X_{\un{n}})$} \\
& \iff
[a]_{\L_\mt} \in \L_F^{(\tilde{\pi},\tilde{t})} = [\L_F]_{\L_\mt} \\
& \iff
a \in \L_F + \L_\mt = \L_F,
\end{align*}
and so $\L_F^{(\pi^\L,t^\L)}=\L_F$, completing the proof of the first part.

For the second part, if $\L$ is an NT-$2^d$-tuple of $X$ then $\L = \L^{(\pi,t)}$ for $(\pi,t) := (\pi^\L, t^\L)$.
Conversely, if $\L = \L^{(\pi,t)}$ for some Nica-covariant representation $(\pi,t)$ of $X$ that admits a gauge action, let $(\dot{\pi},\dot{t})$ be the injective Nica-covariant representation of $[X]_{\L_\mt}$ guaranteed by item (iii) of Lemma \ref{L:rep quo}.
Then $[\L]_{\L_\mt} = [\L^{(\pi,t)}]_{\L_\mt^{(\pi,t)}} = \L^{(\dot{\pi}, \dot{t})}$ by item (vi) of Lemma \ref{L:rep quo}.
However, we have that $\L^{(\dot{\pi}, \dot{t})}$ is an (M)-$2^d$-tuple of $[X]_{\L_\mt}$ by Proposition \ref{P:rep m fam}, and thus $\L$ is an NT-$2^d$-tuple of $X$ by Proposition \ref{P:NT tuple + m fam}.
\end{proof}

Consequently, we have an extension of Proposition \ref{P:inj rel} for describing the kernels of induced $*$-representations that may not be injective on $X$.

\begin{proposition}\label{P:int ker gen}
Let $X$ be a strong compactly aligned product system with coefficients in a C*-algebra $A$.
Let $(\pi,t)$ be a Nica-covariant representation of $X$ that admits a gauge action, and let $\L^{(\pi,t)}$ be the associated NT-$2^d$-tuple of $X$.
Then
\begin{align*}
\ker\pi\times t
=
\langle \ol{\pi}_X(a) +\sum_{\un{0}\neq\un{n}\leq\un{1}_F}(-1)^{|\un{n}|}\ol{\psi}_{X,\un{n}}(k_\un{n}) \mid
& F\subseteq[d], a\in\L_F^{(\pi,t)}, k_\un{n}\in\K(X_\un{n}), \\ 
& [\phi_\un{n}(a)]_{\L_\mt^{(\pi,t)}}=[k_\un{n}]_{\L_\mt^{(\pi,t)}} \; \textup{for all} \; \un{0}\neq\un{n}\leq\un{1}_F \rangle.
\end{align*}

\begin{proof}
We denote the ideal on the right hand side by $\mathfrak{J}$.
For notational convenience, we drop the superscript $(\pi,t)$ and write $\L := \L^{(\pi,t)}$.

We begin by proving that $\mathfrak{J}\subseteq\ker\pi\times t$.
To this end, it suffices to show that $\ker\pi\times t$ contains the generators of $\mathfrak{J}$.
The generators of $\mathfrak{J}$ that are indexed by $F = \mt$ have the form $\ol{\pi}_X(a)$ for some $a\in\L_\mt\equiv\ker\pi$.
In this case we have $(\pi\times t)(\ol{\pi}_X(a))=\pi(a)=0$ trivially, so $\ol{\pi}_X(a)\in\ker \pi\times t$, as required.

Next, fix $\mt\neq F\subseteq[d], a\in\L_F$ and $k_\un{n}\in\K(X_\un{n})$ such that $[\phi_\un{n}(a)]_{\L_\mt}=[k_\un{n}]_{\L_\mt}$ for all $\un{0}\neq\un{n}\leq\un{1}_F$.
Let $(\dot{\pi},\dot{t})$ be the injective Nica-covariant representation of $[X]_{\L_\mt}$ admitting a gauge action, as guaranteed by item (iii) of Lemma \ref{L:rep quo}.
We then have
\begin{align*}
(\pi\times t)(\ol{\pi}_X(a)+\sum\{(-1)^{|\un{n}|}\ol{\psi}_{X,\un{n}}(k_\un{n})\mid\un{0}\neq\un{n}\leq\un{1}_F\})
& = \\
& \hspace{-4cm} =
\pi(a)+\sum\{(-1)^{|\un{n}|}\psi_\un{n}(k_\un{n})\mid \un{0}\neq\un{n}\leq\un{1}_F\} \\
& \hspace{-4cm}  =
\dot{\pi}([a]_{\L_\mt})+\sum\{(-1)^{|\un{n}|}\dot{\psi}_\un{n}([k_\un{n}]_{\L_\mt})\mid \un{0}\neq\un{n}\leq\un{1}_F\} \\
& \hspace{-4cm}  =
\dot{\pi}([a]_{\L_\mt})+\sum\{(-1)^{|\un{n}|}\dot{\psi}_\un{n}([\phi_\un{n}(a)]_{\L_\mt})\mid \un{0}\neq\un{n}\leq\un{1}_F\} = 0,
\end{align*}
where we have used item (iv) of Lemma \ref{L:rep quo} to obtain the final equality.
This shows that $\mathfrak{J}\subseteq\ker\pi\times t$.

We have $[\L]_{\L_\mt}=\L^{(\dot{\pi},\dot{t})}$ by item (vi) of Lemma \ref{L:rep quo}, and thus by applying Proposition \ref{P:inj rel} for $(\dot{\pi}, \dot{t})$ we obtain a canonical $*$-isomorphism
\[
\Phi \colon \N\O([\L]_{\L_\mt},[X]_{\L_\mt}) \to \ca(\dot{\pi},\dot{t}) = \ca(\pi,t).
\] 
By considering the representation $(\pi^\L,t^\L)$ of Definition \ref{D:piLtL}, and the canonical quotient map $Q_\mathfrak{J} \colon \N\T_X \to \N\T_X/\fJ$, we obtain the following commutative diagram
\begin{center}
\begin{tikzcd}
\N\T_X
	\arrow[d, "Q_\mathfrak{J}" swap]
	 \arrow[dr, "\pi^\L\times t^\L"]
	 \arrow[r,"\pi\times t"] &
\ca(\pi,t) \\
\N\T_X/\mathfrak{J}
	\arrow[r, "\exists!\Psi" swap, dashed] &
\N\O([\L]_{\L_\mt},[X]_{\L_\mt})
	\arrow[u, "\Phi" swap, "\cong"]
\end{tikzcd}
\end{center}
of $*$-epimorphisms.
Note that $\Psi$ exists because
\[
\mathfrak{J}\subseteq\ker\pi\times t=\ker \Phi \circ (\pi^\L\times t^\L) = \ker \pi^\L\times t^\L.
\]
Therefore, to prove that $\ker\pi\times t\subseteq\fJ$, it suffices to show that $\Psi$ is injective.

To this end, we define maps $\tilde{\pi}$ and $\tilde{t}_\un{n}$ by
\begin{align*}
\tilde{\pi} & \colon[A]_{\L_\mt}\to\N\T_X/\mathfrak{J}; \tilde{\pi}([a]_{\L_\mt})=Q_\mathfrak{J}(\ol{\pi}_X(a)), \\
\tilde{t}_\un{n} & \colon[X_\un{n}]_{\L_\mt}\to\N\T_X/\mathfrak{J}; \tilde{t}_\un{n}([\xi_\un{n}]_{\L_\mt})=Q_\mathfrak{J}(\ol{t}_{X,\un{n}}(\xi_\un{n})),
\end{align*}
for all $a\in A, \xi_\un{n}\in X_\un{n}$ and $\un{n}\in\bZ_+^d\setminus\{\un{0}\}$.
These maps are well-defined because $\ol{\pi}_X(\L_\mt) \subseteq\mathfrak{J}$ and $\ol{t}_{X, \un{n}}(X_\un{n} \L_\mt) \subseteq\mathfrak{J}$.
It is routine to check that $(\tilde{\pi},\tilde{t})$ is a Nica-covariant representation of $[X]_{\L_\mt}$, since  $\tilde{\psi}_\un{n}\circ[\hspace{1pt}\cdot\hspace{1pt}]_{\L_\mt}|_{\K(X_\un{n})}=Q_\mathfrak{J}\circ\ol{\psi}_{X,\un{n}}$ for all $\un{n}\in\bZ_+^d$ by definition of $(\tilde{\pi}, \tilde{t})$.
Additionally, we have $\ca(\tilde{\pi},\tilde{t})=\N\T_X/\mathfrak{J}$.

We claim that $(\tilde{\pi},\tilde{t})$ is an $[\L]_{\L_\mt}$-relative CNP representation.
To see this, fix $\mt\neq F\subseteq[d]$ and $a\in\L_F$.
For each $\un{0}\neq\un{n}\leq\un{1}_F$, there exists $k_\un{n}\in\K(X_\un{n})$ such that $[\phi_\un{n}(a)]_{\L_\mt}=[k_\un{n}]_{\L_\mt}$ by item (iv) of Lemma \ref{L:rep quo} and Lemma \ref{L:Kat07}.
Hence we obtain
\begin{align*}
\tilde{\pi}([a]_{\L_\mt})\tilde{q}_F
& =\tilde{\pi}([a]_{\L_\mt})+\sum\{(-1)^{|\un{n}|}\tilde{\psi}_\un{n}([\phi_\un{n}(a)]_{\L_\mt})\mid \un{0}\neq\un{n}\leq\un{1}_F\} \\
& =
Q_\mathfrak{J}(\ol{\pi}_X(a))+\sum\{(-1)^{|\un{n}|}Q_\mathfrak{J}(\ol{\psi}_{X,\un{n}}(k_\un{n}))\mid\un{0}\neq\un{n}\leq\un{1}_F\} \\
& =
Q_\mathfrak{J}(\ol{\pi}_X(a)+\sum\{(-1)^{|\un{n}|}\ol{\psi}_{X,\un{n}}(k_\un{n})\mid \un{0}\neq\un{n}\leq\un{1}_F\}) 
= 0,
\end{align*}
using that $\ol{\pi}_X(a)+\sum\{(-1)^{|\un{n}|}\ol{\psi}_{X,\un{n}}(k_\un{n})\mid \un{0}\neq\un{n}\leq\un{1}_F\}\in\mathfrak{J}$ in the final equality.
This shows that $(\tilde{\pi},\tilde{t})$ is an $[\L]_{\L_\mt}$-relative CNP-representation, and hence universality of $\N\O([\L]_{\L_\mt},[X]_{\L_\mt})$ guarantees a canonical $*$-epimorphism 
\[
\tilde{\Phi} \colon \N\O([\L]_{\L_\mt},[X]_{\L_\mt})\to\N\T_X/\mathfrak{J} = \ca(\tilde{\pi},\tilde{t}).
\]
It is routine to check that $\tilde{\Phi} \circ \Psi=\id_{\N\T_X/\mathfrak{J}}$ and thus $\Psi$ is injective, as required.
\end{proof}
\end{proposition}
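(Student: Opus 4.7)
The proof plan splits along the two inclusions, with the forward containment being elementary and the reverse containment requiring a universality argument. Denote the ideal on the right-hand side by $\fJ$ and abbreviate $\L := \L^{(\pi,t)}$.

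For $\fJ \subseteq \ker(\pi \times t)$, I would check directly that $\pi \times t$ annihilates each generator. The $F = \mt$ case is immediate since $\L_\mt = \ker \pi$ by definition. For $\mt \neq F \subseteq [d]$ with $a \in \L_F$, let $(\dot{\pi}, \dot{t})$ be the injective Nica-covariant representation of $[X]_{\L_\mt}$ produced in item (iii) of Proposition \ref{P:rep quo}, so that $\pi = \dot{\pi} \circ [\,\cdot\,]_{\L_\mt}$ and $\psi_{\un{n}} = \dot{\psi}_{\un{n}} \circ [\,\cdot\,]_{\L_\mt}|_{\K(X_{\un{n}})}$. Item (iv) of the same proposition gives that $\dot{\pi}([a]_{\L_\mt}) + \sum_{\un{0} \neq \un{n} \leq \un{1}_F}(-1)^{|\un{n}|} \dot{\psi}_{\un{n}}([\phi_{\un{n}}(a)]_{\L_\mt}) = 0$, and the compatibility $[\phi_{\un{n}}(a)]_{\L_\mt} = [k_{\un{n}}]_{\L_\mt}$ then lets the $\dot{\psi}_{\un{n}}([\phi_{\un{n}}(a)]_{\L_\mt})$ be rewritten as $\dot{\psi}_{\un{n}}([k_{\un{n}}]_{\L_\mt}) = \psi_{\un{n}}(k_{\un{n}})$.

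For the reverse inclusion, the strategy is to build a canonical $*$-isomorphism $\N\T_X/\fJ \cong \N\O([\L]_{\L_\mt}, [X]_{\L_\mt})$ through which $\pi \times t$ factors, identifying it with $\pi^\L \times t^\L$. On one side, Theorem \ref{T:d GIUT M} applied to the (M)-$2^d$-tuple $[\L]_{\L_\mt}$ of $[X]_{\L_\mt}$ (which equals $\L^{(\dot{\pi}, \dot{t})}$ by item (vi) of Proposition \ref{P:rep quo}) and the gauge-equivariant injective representation $(\dot{\pi}, \dot{t})$ yields a canonical $*$-isomorphism $\Phi \colon \N\O([\L]_{\L_\mt}, [X]_{\L_\mt}) \to \ca(\dot{\pi}, \dot{t}) = \ca(\pi,t)$ such that $\Phi \circ (\pi^\L \times t^\L) = \pi \times t$. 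Combined with the first inclusion $\fJ \subseteq \ker(\pi \times t) = \ker(\pi^\L \times t^\L)$, this furnishes a canonical $*$-epimorphism $\Psi \colon \N\T_X/\fJ \to \N\O([\L]_{\L_\mt}, [X]_{\L_\mt})$.

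The crux of the argument is constructing a canonical inverse $\tilde{\Phi}$ to $\Psi$. Since $\ol{\pi}_X(\L_\mt)$ and $\ol{t}_{X, \un{n}}(X_{\un{n}} \L_\mt)$ are contained in $\fJ$ (using Lemma \ref{L:Kat07} and positive invariance of $\L_\mt$), the quotient $Q_\fJ \colon \N\T_X \to \N\T_X/\fJ$ descends to a Nica-covariant representation $(\tilde{\pi}, \tilde{t})$ of $[X]_{\L_\mt}$ with $\tilde{\psi}_{\un{n}} \circ [\,\cdot\,]_{\L_\mt}|_{\K(X_{\un{n}})} = Q_\fJ \circ \ol{\psi}_{X, \un{n}}$. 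The key step is verifying that $(\tilde{\pi}, \tilde{t})$ is an $[\L]_{\L_\mt}$-relative CNP-representation: given $\mt \neq F \subseteq [d]$ and $a \in \L_F$, item (iv) of Proposition \ref{P:rep quo} together with Lemma \ref{L:Kat07} produces $k_{\un{n}} \in \K(X_{\un{n}})$ with $[\phi_{\un{n}}(a)]_{\L_\mt} = [k_{\un{n}}]_{\L_\mt}$ for each $\un{0} \neq \un{n} \leq \un{1}_F$, and then $\tilde{\pi}([a]_{\L_\mt}) \tilde{q}_F$ equals the image under $Q_\fJ$ of $\ol{\pi}_X(a) + \sum(-1)^{|\un{n}|} \ol{\psi}_{X, \un{n}}(k_{\un{n}})$, which by construction lies in $\fJ$. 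Universality of $\N\O([\L]_{\L_\mt}, [X]_{\L_\mt})$ then yields the desired canonical $\tilde{\Phi}$, and since $\tilde{\Phi} \circ \Psi$ fixes the generators of $\N\T_X/\fJ$, the map $\Psi$ is injective, giving $\ker(\pi \times t) = \ker(\pi^\L \times t^\L) = \fJ$.

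The main obstacle I anticipate is bookkeeping: the proof involves four different representations ($(\pi,t)$, $(\dot{\pi}, \dot{t})$, $(\pi^\L, t^\L)$, $(\tilde{\pi}, \tilde{t})$) living on three different algebras, tied together by canonical maps in a square diagram, and one must repeatedly invoke Proposition \ref{P:rep quo} to translate between them. In particular, the selection of the witnesses $k_{\un{n}}$ via Lemma \ref{L:Kat07} is what bridges the abstract CNP-relation for $[X]_{\L_\mt}$ with the concrete generators of $\fJ$ in $\N\T_X$, and that translation is the only genuinely non-formal step.
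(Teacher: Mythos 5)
Your proposal is correct and follows essentially the same route as the paper's proof: the same generator check for $\fJ\subseteq\ker\pi\times t$ via items (iii) and (iv) of Proposition \ref{P:rep quo}, the same factorisation through $\N\O([\L]_{\L_\mt},[X]_{\L_\mt})$, and the same construction of the inverse $\tilde{\Phi}$ by verifying that the quotient representation $(\tilde{\pi},\tilde{t})$ is an $[\L]_{\L_\mt}$-relative CNP-representation. The only cosmetic difference is that you obtain $\Phi$ from Theorem \ref{T:d GIUT M} applied to the (M)-$2^d$-tuple $[\L]_{\L_\mt}$, whereas the paper cites Proposition \ref{P:inj rel} directly, but these amount to the same mechanism since the relevant direction of the GIUT is itself an immediate application of that proposition.
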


\subsection{Parametrising the gauge-invariant ideals of $\N\T_X$}\label{Ss:g inv param}

We can now pass to the parametrisation of gauge-invariant ideals by NT-$2^d$-tuples.
For an NT-$2^d$-tuple $\L$ of $X$, we write
\[
\mathfrak{J}^\L:=\ker\pi^\L\times t^\L,
\; \text{for} \;
\pi^\L\times t^\L\colon\N\T_X\to\N\O([\L]_{\L_\mt},[X]_{\L_\mt}).
\]
It is clear that $\mathfrak{J}^\L$ is a gauge-invariant ideal of $\N\T_X$, since $(\pi^\L, t^\L)$ admits a gauge action and $\pi^\L\times t^\L$ is equivariant.
On the other hand, for a gauge-invariant ideal $\mathfrak{J}$ of $\N\T_X$, we write
\[
\L^\mathfrak{J}:=\L^{(Q_\mathfrak{J}\circ\ol{\pi}_X, Q_\mathfrak{J}\circ\ol{t}_X)},
\; \text{for} \;
Q_\mathfrak{J}\colon\N\T_X\to\N\T_X/\mathfrak{J}.
\]
The $2^d$-tuple $\L^\fJ$ is an NT-$2^d$-tuple of $X$ by Proposition \ref{P:NT tuple all rep}.
The following proposition shows that these correspondences are mutually inverse.

\begin{proposition}\label{P:tuple ideal}
Let $X$ be a strong compactly aligned product system with coefficients in a C*-algebra $A$.
Then the following hold:
\begin{enumerate}
\item If $\L$ is an NT-$2^d$-tuple of $X$, then 
\[
\N\T_X/\mathfrak{J^\L}\cong\N\O([\L]_{\L_\mt},[X]_{\L_\mt})
\qand
\L^{\mathfrak{J}^\L}=\L.
\]

\item If $\mathfrak{J}\subseteq\N\T_X$ is a gauge-invariant ideal, then
\[
\N\T_X/\mathfrak{J}\cong\N\O([\L^\mathfrak{J}]_{\L_\mt^{\mathfrak{J}}},[X]_{\L_\mt^{\mathfrak{J}}})
\qand
\mathfrak{J}^{\L^\mathfrak{J}}=\mathfrak{J}.
\]
\end{enumerate}
\end{proposition}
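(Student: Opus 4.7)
The plan is to unwind definitions and piggy-back on the machinery already developed for (M)-$2^d$-tuples through the quotient construction $X \mapsto [X]_{\L_\mt}$.

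For part (i), I would start by noting that $\mathfrak{J}^\L = \ker \pi^\L \times t^\L$ by definition, where $\pi^\L \times t^\L$ is the canonical $*$-epimorphism onto $\N\O([\L]_{\L_\mt},[X]_{\L_\mt})$ (this is the content of the discussion just after Definition \ref{D:piLtL}). The first isomorphism is then immediate from the First Isomorphism Theorem. Under this identification, the quotient map $Q_{\mathfrak{J}^\L}$ is canonically equivalent to $\pi^\L \times t^\L$, so the induced Nica-covariant representation $(Q_{\mathfrak{J}^\L}\circ \ol{\pi}_X, Q_{\mathfrak{J}^\L}\circ \ol{t}_X)$ agrees (up to canonical $*$-isomorphism) with $(\pi^\L, t^\L)$. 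Since the construction of the $2^d$-tuple $\L^{(\pi,t)}$ depends only on this data, we deduce $\L^{\mathfrak{J}^\L} = \L^{(\pi^\L, t^\L)}$. Proposition \ref{P:NT tuple all rep} then gives $\L^{(\pi^\L, t^\L)} = \L$, completing part (i).

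For part (ii), let $\mathfrak{J} \subseteq \N\T_X$ be a gauge-invariant ideal and set $(\pi, t) := (Q_\mathfrak{J} \circ \ol{\pi}_X, Q_\mathfrak{J} \circ \ol{t}_X)$. Then $(\pi, t)$ inherits a gauge action from $(\ol{\pi}_X, \ol{t}_X)$ since $\mathfrak{J}$ is gauge-invariant, and by definition $\L^\mathfrak{J} = \L^{(\pi,t)}$, which is an NT-$2^d$-tuple by Proposition \ref{P:NT tuple all rep}. Apply Proposition \ref{P:rep quo} (and the diagram in Remark \ref{R:rep quo}) to $(\pi, t)$ to obtain the injective Nica-covariant representation $(\dot{\pi}, \dot{t})$ of $[X]_{\L_\mt^\mathfrak{J}}$ on $\ca(\pi, t) = \N\T_X / \mathfrak{J}$, which admits a gauge action and satisfies $\L^{(\dot{\pi}, \dot{t})} = [\L^\mathfrak{J}]_{\L_\mt^\mathfrak{J}}$. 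By Proposition \ref{P:rep m fam}, $[\L^\mathfrak{J}]_{\L_\mt^\mathfrak{J}}$ is an (M)-$2^d$-tuple of $[X]_{\L_\mt^\mathfrak{J}}$, so Proposition \ref{P:inj rel} applied to $(\dot{\pi}, \dot{t})$ yields a canonical $*$-isomorphism
\[
\N\O([\L^\mathfrak{J}]_{\L_\mt^\mathfrak{J}}, [X]_{\L_\mt^\mathfrak{J}}) \cong \ca(\dot{\pi}, \dot{t}) = \N\T_X/\mathfrak{J},
\]
giving the required quotient presentation.

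To conclude that $\mathfrak{J}^{\L^\mathfrak{J}} = \mathfrak{J}$, I would trace through the commutative diagram of Remark \ref{R:rep quo}: the composition $\N\T_X \to \N\T_{[X]_{\L_\mt^\mathfrak{J}}} \to \N\O([\L^\mathfrak{J}]_{\L_\mt^\mathfrak{J}}, [X]_{\L_\mt^\mathfrak{J}})$ is by definition $\pi^{\L^\mathfrak{J}} \times t^{\L^\mathfrak{J}}$, and it agrees with $Q_\mathfrak{J}: \N\T_X \to \N\T_X/\mathfrak{J}$ after the canonical identification of the codomains. Therefore $\mathfrak{J}^{\L^\mathfrak{J}} = \ker \pi^{\L^\mathfrak{J}} \times t^{\L^\mathfrak{J}} = \ker Q_\mathfrak{J} = \mathfrak{J}$. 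The main obstacle is precisely this last verification: one must carefully reconcile the two canonical $*$-epimorphisms on $\N\T_X$, ensuring they agree on the universal Nica-covariant generators. This relies on the factorisation through $\N\T_{[X]_{\L_\mt^\mathfrak{J}}}$ established in Remark \ref{R:rep quo} and on the universal property of $\N\O([\L^\mathfrak{J}]_{\L_\mt^\mathfrak{J}}, [X]_{\L_\mt^\mathfrak{J}})$ as a relative Cuntz-Nica-Pimsner algebra.
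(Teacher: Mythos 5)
Your proposal is correct and follows essentially the same route as the paper: part (i) is verbatim the paper's argument, and in part (ii) you apply Proposition \ref{P:rep quo} and Remark \ref{R:rep quo} to $(Q_\fJ\circ\ol{\pi}_X,Q_\fJ\circ\ol{t}_X)$, identify $[\L^\fJ]_{\L_\mt^\fJ}=\L^{(\dot{\pi},\dot{t})}$, and deduce both the isomorphism and $\fJ^{\L^\fJ}=\fJ$ by tracing the same commutative diagram. The only cosmetic difference is that where the paper invokes Theorem \ref{T:d GIUT M} to see that the canonical map $\N\O([\L^\fJ]_{\L_\mt^\fJ},[X]_{\L_\mt^\fJ})\to\N\T_X/\fJ$ is injective, you cite Proposition \ref{P:inj rel} directly for the injective gauge-equivariant representation $(\dot{\pi},\dot{t})$ --- but this is precisely how that direction of the theorem is proved, so the arguments coincide.
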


\begin{proof}
(i) Since $\fJ^\L\equiv\ker \pi^\L \times t^\L$, we have $\N\T_X/\fJ^\L \cong \N\O([\L]_{\L_\mt},[X]_{\L_\mt})$ by a canonical $*$-isomorphism $\Phi$.
We then have
\[
\L^{\fJ^\L} \equiv
\L^{(Q_{\mathfrak{J}^\L} \circ\ol{\pi}_X, Q_{\mathfrak{J}^\L} \circ \ol{t}_X)} =
\L^{(\Phi^{-1} \circ \pi^\L, \Phi^{-1} \circ t^\L)} =
\L^{(\pi^\L, t^\L)} = \L ,
\]
using Proposition \ref{P:NT tuple all rep} in the last equality.

\smallskip

\noindent
(ii) Note that $(Q_\mathfrak{J}\circ\ol{\pi}_X,Q_\mathfrak{J}\circ\ol{t}_X)$ is a Nica-covariant representation of $X$ that admits a gauge action and satisfies $\ca(Q_\mathfrak{J}\circ\ol{\pi}_X,Q_\mathfrak{J}\circ\ol{t}_X)=\N\T_X/\mathfrak{J}$.
Hence, applying Remark \ref{R:rep quo} for the representation $(\pi,t):=(Q_\mathfrak{J}\circ\ol{\pi}_X,Q_\mathfrak{J}\circ\ol{t}_X)$, we obtain the following commutative diagram
\[
\xymatrix@C=2cm{
\N\T_X \ar[drr]^{\pi^{\L^\mathfrak{J}}\times t^{\L^\mathfrak{J}}} \ar[rr]^{Q_\mathfrak{J}} \ar[d] & & \N\T_X/\fJ \\
\N\T_{[X]_{\L_\mt^\fJ}} \ar[rr]_Q & & \N\O([\L^{\fJ}]_{\L_\mt^{\fJ}}, [X]_{\L_\mt^{\fJ}}) \ar[u]_\Psi
}
\]
of canonical $*$-epimorphisms.
We have $\dot{\pi} \times \dot{t} = \Psi \circ Q$, where $(\dot{\pi},\dot{t})$ is defined as in item (iii) of Lemma \ref{L:rep quo}, again taking $(\pi,t):=(Q_\fJ\circ\ol{\pi}_X,Q_\fJ\circ\ol{t}_X)$.
Since $\L^{\fJ}$ is an NT-$2^d$-tuple of $X$, we have that $[\L^{\fJ}]_{\L_\mt^{\fJ}}$ is an (M)-$2^d$-tuple of $[X]_{\L_\mt^\fJ}$ by Proposition \ref{P:NT tuple + m fam}.
We have $[\L^{\fJ}]_{\L_\mt^{\fJ}}=\L^{(\dot{\pi},\dot{t})}$ by item (vi) of Lemma \ref{L:rep quo}, and hence an application of Theorem \ref{T:d GIUT M} yields that $\Psi$ is a $*$-isomorphism. 

By definition we have $\mathfrak{J}^{\L^\mathfrak{J}}\equiv\ker\pi^{\L^\mathfrak{J}}\times t^{\L^\mathfrak{J}}$.
Therefore, we obtain
\[
\mathfrak{J} 
= \ker Q_{\fJ} 
= \ker \Psi \circ (\pi^{\L^\mathfrak{J}} \times t^{\L^\mathfrak{J}})
= \ker \pi^{\L^\mathfrak{J}} \times t^{\L^\mathfrak{J}}
\equiv \mathfrak{J}^{\L^\mathfrak{J}},
\]
and the proof is complete.
\end{proof}

Using Propositions \ref{P:NT tuple all rep} and \ref{P:int ker gen}, we arrive at a concrete description of $\mathfrak{J}^\L$.

\begin{proposition}\label{P:JL conc}
Let $X$ be a strong compactly aligned product system with coefficients in a C*-algebra $A$.
If $\L$ is an NT-$2^d$-tuple of $X$, then we have
\begin{align*}
\mathfrak{J}^\L
=
\langle\ol{\pi}_X(a) +\sum_{\un{0}\neq\un{n}\leq\un{1}_F}(-1)^{|\un{n}|}\ol{\psi}_{X,\un{n}}(k_\un{n}) \mid 
& F\subseteq[d], a\in\L_F,  k_\un{n}\in\K(X_\un{n}), \\
& [\phi_\un{n}(a)]_{\L_\mt}=[k_\un{n}]_{\L_\mt} \; \textup{for all} \; \un{0}\neq\un{n}\leq\un{1}_F \rangle.
\end{align*}
If in addition $\L$ is a relative $2^d$-tuple of $X$, then $\mathfrak{J}^\L=\mathfrak{J}_\L^{(\ol{\pi}_X,\ol{t}_X)}$.
\end{proposition}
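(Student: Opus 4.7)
The plan is to derive this proposition essentially as a corollary of Proposition \ref{P:int ker gen} combined with Proposition \ref{P:NT tuple all rep}, and then verify the extra identification for relative $\L$ by a short calculation using Proposition \ref{P:prod cai} together with Lemma \ref{L:Kat07}.

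For the first (main) assertion, I would apply Proposition \ref{P:int ker gen} to the Nica-covariant representation $(\pi^\L, t^\L)$ from Definition \ref{D:piLtL}. As noted right after that definition, $(\pi^\L, t^\L)$ admits a gauge action, so Proposition \ref{P:int ker gen} produces the description of $\ker \pi^\L \times t^\L$ in terms of the NT-$2^d$-tuple $\L^{(\pi^\L, t^\L)}$. By Proposition \ref{P:NT tuple all rep}, $\L^{(\pi^\L, t^\L)} = \L$, so the indexing data in the generating set is exactly $\{a \in \L_F, k_{\un{n}} \in \K(X_{\un{n}})\}$ with $[\phi_{\un{n}}(a)]_{\L_\mt} = [k_{\un{n}}]_{\L_\mt}$. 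Since $\fJ^\L := \ker \pi^\L \times t^\L$ by definition, this yields exactly the description claimed.

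For the second assertion, assume in addition that $\L$ is a relative $2^d$-tuple and write $\fJ_\L := \fJ_\L^{(\ol{\pi}_X, \ol{t}_X)}$. To see $\fJ_\L \subseteq \fJ^\L$, fix $F \subseteq [d]$ and $a \in \L_F$. Relativity gives $\phi_{\un{i}}(a) \in \K(X_{\un{i}})$ for all $i \in F$, and strong compact alignment together with Proposition \ref{P:sca ai} upgrades this to $\phi_{\un{n}}(a) \in \K(X_{\un{n}})$ for all $\un{0} \neq \un{n} \leq \un{1}_F$. Taking $k_{\un{n}} := \phi_{\un{n}}(a)$ trivially satisfies $[k_{\un{n}}]_{\L_\mt} = [\phi_{\un{n}}(a)]_{\L_\mt}$, and by Proposition \ref{P:prod cai},
\[
\ol{\pi}_X(a) + \sum_{\un{0} \neq \un{n} \leq \un{1}_F} (-1)^{|\un{n}|} \ol{\psi}_{X, \un{n}}(\phi_{\un{n}}(a)) = \ol{\pi}_X(a) \ol{q}_{X,F},
\]
so every generator of $\fJ_\L$ is one of the generators appearing in the description of $\fJ^\L$.

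For the reverse inclusion $\fJ^\L \subseteq \fJ_\L$, it suffices to show that each generator of $\fJ^\L$ lies in $\fJ_\L$. Fix such a generator $\ol{\pi}_X(a) + \sum_{\un{0} \neq \un{n} \leq \un{1}_F} (-1)^{|\un{n}|} \ol{\psi}_{X,\un{n}}(k_{\un{n}})$. Subtracting $\ol{\pi}_X(a) \ol{q}_{X,F} \in \fJ_\L$ (which makes sense by the same relativity argument above), the difference is
\[
\sum_{\un{0} \neq \un{n} \leq \un{1}_F} (-1)^{|\un{n}|} \ol{\psi}_{X, \un{n}}\bigl(k_{\un{n}} - \phi_{\un{n}}(a)\bigr).
\]
The condition $[k_{\un{n}}]_{\L_\mt} = [\phi_{\un{n}}(a)]_{\L_\mt}$ together with Lemma \ref{L:Kat07} yields $k_{\un{n}} - \phi_{\un{n}}(a) \in \K(X_{\un{n}} \L_\mt)$. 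Hence it remains to show $\ol{\psi}_{X,\un{n}}(\K(X_{\un{n}} \L_\mt)) \subseteq \fJ_\L$. By Lemma \ref{L:Kat07}, $\K(X_{\un{n}} \L_\mt)$ is the closed linear span of rank-one operators $\Theta_{\xi b, \eta}$ with $\xi, \eta \in X_{\un{n}}$ and $b \in \L_\mt$, and
\[
\ol{\psi}_{X, \un{n}}(\Theta_{\xi b, \eta}) = \ol{t}_{X, \un{n}}(\xi) \ol{\pi}_X(b) \ol{t}_{X, \un{n}}(\eta)^{*} \in \sca{\ol{\pi}_X(\L_\mt)} = \fJ_{\L, \mt}^{(\ol{\pi}_X, \ol{t}_X)} \subseteq \fJ_\L,
\]
since $\ol{q}_{X, \mt} = I$. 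This completes the proof.

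There is no serious obstacle: the proposition is essentially a packaging of previously established facts. The only point requiring care is the verification of the reverse inclusion in the second assertion, where one must check that the extra slack allowed by the equivalence modulo $\L_\mt$ is absorbed by the $\L_\mt$-component $\fJ_{\L, \mt}^{(\ol{\pi}_X, \ol{t}_X)}$ of $\fJ_\L$.
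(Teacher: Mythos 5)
Your proposal is correct and follows essentially the same route as the paper: the first assertion is obtained by applying Proposition \ref{P:int ker gen} to $(\pi^\L,t^\L)$ together with $\L^{(\pi^\L,t^\L)}=\L$ from Proposition \ref{P:NT tuple all rep}, and the second by writing $k_\un{n}-\phi_\un{n}(a)\in\K(X_\un{n}\L_\mt)$ via Lemma \ref{L:Kat07} and absorbing $\ol{\psi}_{X,\un{n}}(\K(X_\un{n}\L_\mt))$ into $\sca{\ol{\pi}_X(\L_\mt)}\subseteq\fJ_\L^{(\ol{\pi}_X,\ol{t}_X)}$. Your only cosmetic deviation is subtracting $\ol{\pi}_X(a)\ol{q}_{X,F}$ from the generator rather than adding the correction term to it, which is the same algebraic decomposition.
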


\begin{proof}
For the first part we apply Proposition \ref{P:int ker gen} for $(\pi,t):=(\pi^\L,t^\L)$, noting that $\L^{(\pi^\L, t^\L)}=\L$ by Proposition \ref{P:NT tuple all rep}.

Now assume that $\L$ is in addition a relative $2^d$-tuple of $X$, and recall that
\[
\mathfrak{J}_\L^{(\ol{\pi}_X,\ol{t}_X)}=\langle\ol{\pi}_X(a)+\sum_{\un{0}\neq\un{n}\leq\un{1}_F}(-1)^{|\un{n}|}\ol{\psi}_{X,\un{n}}(\phi_\un{n}(a))\mid a\in\L_F, F\subseteq[d]\rangle.
\]
It is clear that $\mathfrak{J}_\L^{(\ol{\pi}_X,\ol{t}_X)}\subseteq\mathfrak{J}^\L$ by the first part.
For the reverse inclusion, first note that 
\[
\ol{\pi}_X(\L_\mt)\subseteq\mathfrak{J}_\L^{(\ol{\pi}_X,\ol{t}_X)}
\]
by definition.
Next fix $\mt\neq F\subseteq[d], a\in\L_F$ and $k_\un{n}\in\K(X_\un{n})$ satisfying $[\phi_\un{n}(a)]_{\L_\mt}=[k_\un{n}]_{\L_\mt}$ for all $\un{0}\neq\un{n}\leq\un{1}_F$.
By the first part, it suffices to show that
\[
\ol{\pi}_X(a)+\sum_{\un{0}\neq\un{n}\leq\un{1}_F}(-1)^{|\un{n}|}\ol{\psi}_{X,\un{n}}(k_\un{n})\in\mathfrak{J}_\L^{(\ol{\pi}_X,\ol{t}_X)}.
\]
Fixing $\un{0}\neq\un{n}\leq\un{1}_F$, we have $[\phi_\un{n}(a)-k_\un{n}]_{\L_\mt}=0$ by assumption.
Since $\L$ is a relative $2^d$-tuple of $X$, we also have $\phi_\un{n}(a)\in\K(X_\un{n})$ and hence $\phi_\un{n}(a)-k_\un{n}\in\K(X_\un{n})$.
By Lemma \ref{L:Kat07} we have
\[
\ker \{ [\hspace{1pt} \cdot \hspace{1pt}]_{\L_\mt} \colon \K(X_{\un{n}})\to\K([X_{\un{n}}]_{\L_\mt}) \} =  \K(X_{\un{n}} \L_\mt),
\]
and therefore $\phi_\un{n}(a)-k_\un{n} = k_\un{n}'$ for some $k_\un{n}' \in \K(X_\un{n}\L_\mt)$.
Notice that 
\[
\ol{\psi}_{X,\un{n}}(\K(X_\un{n}\L_\mt))\subseteq\sca{\ol{\pi}_X(\L_\mt)}\subseteq\mathfrak{J}_\L^{(\ol{\pi}_X,\ol{t}_X)}
\]
for all $\un{0}\neq\un{n}\leq\un{1}_F$.
In total, we have
\begin{align*}
\ol{\pi}_X(a)+\sum_{\un{0}\neq\un{n}\leq\un{1}_F}(-1)^{|\un{n}|} \ol{\psi}_{X,\un{n}}(k_\un{n})
& = \\
& \hspace{-3cm} =
\bigg( \ol{\pi}_X(a)+\sum_{\un{0}\neq\un{n}\leq\un{1}_F}(-1)^{|\un{n}|}\ol{\psi}_{X,\un{n}}(\phi_\un{n}(a)) \bigg)
- \sum_{\un{0}\neq\un{n}\leq\un{1}_F}(-1)^{|\un{n}|}\ol{\psi}_{X,\un{n}}(k_\un{n}') \\
& \hspace{-3cm} \in
\mathfrak{J}_\L^{(\ol{\pi}_X,\ol{t}_X)}+\mathfrak{J}_\L^{(\ol{\pi}_X,\ol{t}_X)} 
=
\mathfrak{J}_\L^{(\ol{\pi}_X,\ol{t}_X)},
\end{align*}
as required.
\end{proof}

We now present the main theorem.

\begin{theorem}\label{T:NT param}
Let $X$ be a strong compactly aligned product system with coefficients in a C*-algebra $A$.
Then the set of NT-$2^d$-tuples of $X$ corresponds bijectively to the set of gauge-invariant ideals of $\N\T_X$ by the mutually inverse maps $\L\mapsto\mathfrak{J}^\L$ and $\mathfrak{J}\mapsto\L^\mathfrak{J}$, for all NT-$2^d$-tuples $\L$ of $X$ and all gauge-invariant ideals $\mathfrak{J}$ of $\N\T_X$.
Moreover, these maps respect inclusions.
\end{theorem}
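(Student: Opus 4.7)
The bijection is essentially already in hand from the preceding development, so the plan is to assemble what has been proved and then verify the inclusion-preserving property by unpacking the explicit formulas.

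First, for the bijection itself, my plan is to observe that well-definedness and inversion are already handled: the representation $(\pi^\L, t^\L)$ of Definition \ref{D:piLtL} admits a gauge action, so $\fJ^\L = \ker (\pi^\L \times t^\L)$ is automatically gauge-invariant; conversely, for a gauge-invariant ideal $\fJ$, the Nica-covariant representation $(Q_\fJ\circ\ol{\pi}_X, Q_\fJ\circ\ol{t}_X)$ admits the induced gauge action, so $\L^\fJ$ is an NT-$2^d$-tuple by Proposition \ref{P:NT tuple all rep}. The two identities $\L^{\fJ^\L} = \L$ and $\fJ^{\L^\fJ} = \fJ$ are exactly the content of items (i) and (ii) of Proposition \ref{P:tuple ideal}. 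Hence the maps are mutually inverse bijections.

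The remaining task is preservation of inclusions in both directions. For $\L \mapsto \fJ^\L$: suppose $\L_1 \subseteq \L_2$ are NT-$2^d$-tuples, so in particular $\L_{1,\mt} \subseteq \L_{2,\mt}$ and $\L_{1,F} \subseteq \L_{2,F}$ for every $F$. I would invoke the explicit description from Proposition \ref{P:JL conc} and argue that every generator of $\fJ^{\L_1}$ is already a generator of $\fJ^{\L_2}$: given $a \in \L_{1,F}$ and $k_\un{n} \in \K(X_\un{n})$ with $[\phi_\un{n}(a)]_{\L_{1,\mt}} = [k_\un{n}]_{\L_{1,\mt}}$, the inclusion $\L_{1,\mt} \subseteq \L_{2,\mt}$ ensures that $\phi_\un{n}(a) - k_\un{n}$, which lies in $\K(X_\un{n} \L_{1,\mt})$, also lies in $\K(X_\un{n} \L_{2,\mt})$, so $[\phi_\un{n}(a)]_{\L_{2,\mt}} = [k_\un{n}]_{\L_{2,\mt}}$. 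Since $a \in \L_{2,F}$ as well, the same generator belongs to $\fJ^{\L_2}$, so $\fJ^{\L_1} \subseteq \fJ^{\L_2}$.

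For $\fJ \mapsto \L^\fJ$: suppose $\fJ_1 \subseteq \fJ_2$ are gauge-invariant ideals. Then there is a canonical equivariant $*$-epimorphism $\Psi \colon \N\T_X/\fJ_1 \to \N\T_X/\fJ_2$ with $\Psi \circ Q_{\fJ_1} = Q_{\fJ_2}$, so that $\Psi$ sends the generators of $\ca(Q_{\fJ_1}\circ \ol{\pi}_X, Q_{\fJ_1}\circ\ol{t}_X)$ to the corresponding generators of $\ca(Q_{\fJ_2}\circ \ol{\pi}_X, Q_{\fJ_2}\circ\ol{t}_X)$. It follows that $\Psi$ carries each core $B_{(\un{0}, \un{1}_F]}^{(Q_{\fJ_1}\circ\ol{\pi}_X, Q_{\fJ_1}\circ\ol{t}_X)}$ into $B_{(\un{0}, \un{1}_F]}^{(Q_{\fJ_2}\circ\ol{\pi}_X, Q_{\fJ_2}\circ\ol{t}_X)}$, and similarly $\Psi\circ(Q_{\fJ_1}\circ\ol{\pi}_X) = Q_{\fJ_2}\circ\ol{\pi}_X$ gives the kernel inclusion for $F = \mt$. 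Reading off the definition of $\L^{(\pi,t)}$ from Definition \ref{D:rep fam} then yields $\L^{\fJ_1}_F \subseteq \L^{\fJ_2}_F$ for every $F \subseteq [d]$.

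There is no real obstacle in this final theorem; the hard work has been absorbed into the $\bZ_+^d$-GIUT (Theorem \ref{T:d GIUT M}), the reduction to the quotient product system via Proposition \ref{P:rep quo}, and the identification $\L^{(\pi^\L, t^\L)} = \L$ in Proposition \ref{P:NT tuple all rep}. The only subtlety worth flagging during writing is that for the inclusion $\fJ^{\L_1} \subseteq \fJ^{\L_2}$ one should either appeal to Proposition \ref{P:JL conc} directly on generators (as above), or equivalently note that the identity map on $A$ together with the inclusions $\L_{1,\mt} \subseteq \L_{2,\mt}$ and $[\L_1]_{\L_{1,\mt}} \subseteq [\cdot]_{\L_{1,\mt}}^{-1}([\cdot]_{\L_{2,\mt}}^{-1}([\L_2]_{\L_{2,\mt}}))/\L_{1,\mt}$ produces, via the universal property of relative Cuntz-Nica-Pimsner algebras, a canonical equivariant $*$-epimorphism $\N\O([\L_1]_{\L_{1,\mt}}, [X]_{\L_{1,\mt}}) \to \N\O([\L_2]_{\L_{2,\mt}}, [X]_{\L_{2,\mt}})$ that intertwines $\pi^{\L_1} \times t^{\L_1}$ with $\pi^{\L_2} \times t^{\L_2}$.
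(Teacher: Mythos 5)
Your overall route is exactly the paper's: well-definedness and the two inverse identities are quoted from Propositions \ref{P:NT tuple all rep} and \ref{P:tuple ideal}, inclusion-preservation for $\L\mapsto\fJ^\L$ is checked on the generators of Proposition \ref{P:JL conc}, and inclusion-preservation for $\fJ\mapsto\L^\fJ$ follows from the fact that $Q_{\fJ_2}$ factors through $Q_{\fJ_1}$ and the quotient maps preserve the indexing of the cores. So there is nothing structurally different here.

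There is, however, one incorrect intermediate step in your first inclusion argument. You assert that $\phi_\un{n}(a)-k_\un{n}$ ``lies in $\K(X_\un{n}\L_{1,\mt})$''. This is not available: Lemma \ref{L:Kat07} identifies $\K(X_\un{n}\L_{1,\mt})$ as the kernel of $[\hspace{1pt}\cdot\hspace{1pt}]_{\L_{1,\mt}}$ only \emph{restricted to} $\K(X_\un{n})$, whereas the full kernel of $[\hspace{1pt}\cdot\hspace{1pt}]_{\L_{1,\mt}}\colon\L(X_\un{n})\to\L([X_\un{n}]_{\L_{1,\mt}})$ consists of all $T$ with $TX_\un{n}\subseteq X_\un{n}\L_{1,\mt}$ and is in general strictly larger. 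For a general NT-$2^d$-tuple the hypothesis only gives $[\phi_\un{n}(a)]_{\L_{1,\mt}}\in\K([X_\un{n}]_{\L_{1,\mt}})$, \emph{not} $\phi_\un{n}(a)\in\K(X_\un{n})$ (that stronger statement holds only when $\L_1$ is in addition relative, as in the second part of Proposition \ref{P:JL conc}), so $\phi_\un{n}(a)-k_\un{n}$ need not be a compact operator at all. The repair is immediate and is what the paper does: since $\L_{1,\mt}\subseteq\L_{2,\mt}$, one has the identification $\L([[X_\un{n}]_{\L_{1,\mt}}]_{\L_{2,\mt}/\L_{1,\mt}})\cong\L([X_\un{n}]_{\L_{2,\mt}})$ under which $[\hspace{1pt}\cdot\hspace{1pt}]_{\L_{2,\mt}}=[\hspace{1pt}\cdot\hspace{1pt}]_{\L_{2,\mt}/\L_{1,\mt}}\circ[\hspace{1pt}\cdot\hspace{1pt}]_{\L_{1,\mt}}$, so applying $[\hspace{1pt}\cdot\hspace{1pt}]_{\L_{2,\mt}/\L_{1,\mt}}$ to the equality $[\phi_\un{n}(a)]_{\L_{1,\mt}}=[k_\un{n}]_{\L_{1,\mt}}$ yields $[\phi_\un{n}(a)]_{\L_{2,\mt}}=[k_\un{n}]_{\L_{2,\mt}}$ directly, with no compactness claim about $\phi_\un{n}(a)-k_\un{n}$. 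With that substitution your argument coincides with the paper's proof; the rest of your write-up, including the treatment of $\fJ_1\subseteq\fJ_2$ via the equivariant epimorphism $\Psi$, is correct.
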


\begin{proof}
The fact that the maps are well-defined follows from the discussion preceding Proposition \ref{P:tuple ideal}.
The fact that the maps are mutual inverses is guaranteed by Proposition \ref{P:tuple ideal}.
It remains to see that the maps preserve inclusions.

To this end, first take NT-$2^d$-tuples $\L_1$ and $\L_2$ of $X$ and suppose that $\L_1\subseteq\L_2$.
We must show that $\fJ^{\L_1}\subseteq \fJ^{\L_2}$.
It suffices to show that $\fJ^{\L_2}$ contains the generators of $\fJ^{\L_1}$, recalling their form from Proposition \ref{P:JL conc}.
Firstly, we have 
\[
\ol{\pi}_X(\L_{1,\mt})\subseteq\ol{\pi}_X(\L_{2,\mt})\subseteq\fJ^{\L_2}
\]
by definition.
Next, fix $\mt\neq F\subseteq[d], a\in\L_{1,F}$ and $k_\un{n}\in\K(X_\un{n})$ such that $[k_\un{n}]_{\L_{1,\mt}}=[\phi_\un{n}(a)]_{\L_{1,\mt}}$ for all $\un{0}\neq\un{n}\leq\un{1}_F$.
For each $\un{0}\neq\un{n}\leq\un{1}_F$, we make the identification 
\[
\L([[X_\un{n}]_{\L_{1,\mt}}]_{\L_{2,\mt}/\L_{1,\mt}}) \cong \L([X_\un{n}]_{\L_{2,\mt}}),
\]
so that 
\[
[\hspace{1pt}\cdot\hspace{1pt}]_{\L_{2,\mt}}=[\hspace{1pt}\cdot\hspace{1pt}]_{\L_{2,\mt}/\L_{1,\mt}}\circ[\hspace{1pt}\cdot\hspace{1pt}]_{\L_{1,\mt}},
\]
e.g., \cite[p. 112]{Kat07}.
Under this identification, we obtain
\[
[k_\un{n}]_{\L_{2,\mt}}=[\phi_\un{n}(a)]_{\L_{2,\mt}}  \foral \un{0}\neq\un{n}\leq\un{1}_F.
\]
Since $a\in\L_{1,F}\subseteq\L_{2,F}$, it then follows that
\[
\ol{\pi}_X(a)+\sum_{\un{0}\neq\un{n}\leq\un{1}_F}(-1)^{|\un{n}|}\ol{\psi}_{X,\un{n}}(k_\un{n})\in\fJ^{\L_2},
\]
as required.

Finally, fix gauge-invariant ideals $\fJ_1$ and $\fJ_2$ of $\N\T_X$ such that $\fJ_1\subseteq\fJ_2$.
It then follows that
\[
\L^{\fJ_1}_F 
\equiv 
\L_F^{(Q_{\fJ_1} \circ \ol{\pi}_X, Q_{\fJ_1} \circ \ol{t}_X)}
\subseteq
\L_F^{(Q_{\fJ_2} \circ \ol{\pi}_X, Q_{\fJ_2} \circ \ol{t}_X)}
\equiv
\L^{\fJ_2}_F
\foral F \subseteq [d],
\]
since $Q_{\fJ_1}$ and $Q_{\fJ_2}$ preserve the indices of the cores and $Q_{\fJ_2}$ factors through $Q_{\fJ_1}$.
\end{proof}

\begin{remark}\label{R:param}
To summarise, we have that the mappings
\begin{align*}
\L\mapsto\ker\pi^\L\times t^\L & \; \text{for all NT-$2^d$-tuples $\L$ of $X$}, \\
\mathfrak{J}\mapsto\L^{(Q_\mathfrak{J}\circ\ol{\pi}_X,Q_\mathfrak{J}\circ\ol{t}_X)} & \; \text{for all gauge-invariant ideals $\mathfrak{J}\subseteq\N\T_X$},
\end{align*}
are mutual inverses and respect inclusions, where $Q_\mathfrak{J}\colon\N\T_X\to\N\T_X/\mathfrak{J}$ is the quotient map.
The gauge-invariant ideal $\ker\pi^\L\times t^\L$ of $\N\T_X$ is given by
\begin{align*}
\ker\pi^\L\times t^\L
=
\langle\ol{\pi}_X(a) +\sum_{\un{0}\neq\un{n}\leq\un{1}_F}(-1)^{|\un{n}|}\ol{\psi}_{X,\un{n}}(k_\un{n})\mid
& F\subseteq[d], a\in\L_F, k_\un{n}\in\K(X_\un{n}), \\
& [\phi_\un{n}(a)]_{\L_\mt}=[k_\un{n}]_{\L_\mt} \; \textup{for all} \; \un{0}\neq\un{n}\leq\un{1}_F\rangle,
\end{align*}
by Proposition \ref{P:JL conc}, where $\L$ is an NT-$2^d$-tuple of $X$.
The NT-$2^d$-tuple $\L^{(Q_\mathfrak{J}\circ\ol{\pi}_X,Q_\mathfrak{J}\circ\ol{t}_X)}$ of $X$ is given by \[
\L_F^{(Q_\mathfrak{J}\circ\ol{\pi}_X,Q_\mathfrak{J}\circ\ol{t}_X)}
=
\begin{cases}
\ker Q_\mathfrak{J}\circ\ol{\pi}_X & \text{ if } F = \mt, \\
(Q_\mathfrak{J}\circ\ol{\pi}_X)^{-1}(B_{(\un{0},\un{1}_F]}^{(Q_\mathfrak{J}\circ\ol{\pi}_X,Q_\mathfrak{J}\circ\ol{t}_X)}) & \text{ if } \mt \neq F\subseteq[d],
\end{cases}
\]
where $\mathfrak{J}$ is a gauge-invariant ideal of $\N\T_X$.
\end{remark}

Note that the set of gauge-invariant ideals of $\N\T_X$ carries a canonical lattice structure, determined by the operations
\begin{align*}
\mathfrak{J}_1\vee\mathfrak{J}_2 :=\fJ_1+\fJ_2 \qand
\fJ_1\wedge\fJ_2 :=\fJ_1\cap\fJ_2,
\end{align*}
for all gauge-invariant ideals $\fJ_1,\fJ_2\subseteq\N\T_X$.
This, in tandem with Theorem \ref{T:NT param}, allows us to impose a canonical lattice structure on the set of NT-$2^d$-tuples of $X$, promoting the bijection to a lattice isomorphism.

\begin{definition}\label{D:NTlattice}
Let $X$ be a strong compactly aligned product system with coefficients in a C*-algebra $A$.
We equip the set of NT-$2^d$-tuples of $X$ with the lattice structure determined by the operations
\begin{align*}
\L_1\vee\L_2  :=\L^{\fJ^{\L_1}+\fJ^{\L_2}} \qand
\L_1\wedge\L_2  :=\L^{\fJ^{\L_1}\cap\fJ^{\L_2}},
\end{align*}
for all NT-$2^d$-tuples $\L_1$ and $\L_2$ of $X$.
\end{definition}

Next we describe the operations $\wedge$ and $\vee$ on the set of NT-$2^d$-tuples of $X$.
The operation $\wedge$ is intersection, in accordance with \cite[Proposition 5.8]{Kat07}.

\begin{proposition}\label{P:NTwedge}
Let $X$ be a strong compactly aligned product system with coefficients in a C*-algebra $A$ and let $\L_1$ and $\L_2$ be NT-$2^d$-tuples of $X$.
Then
\[
(\L_1\wedge\L_2)_F=\L_{1,F}\cap\L_{2,F}\foral F\subseteq[d].
\]
\end{proposition}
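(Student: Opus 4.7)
The plan is to exploit the lattice isomorphism of Theorem \ref{T:NT param} (which preserves inclusions) together with Proposition \ref{P:rep quo} to establish both inclusions componentwise. Set $\fJ := \fJ^{\L_1}\cap\fJ^{\L_2}$, so that $\L_1\wedge\L_2 = \L^{\fJ}$ by Definition \ref{D:NTlattice}, and let $Q\colon\N\T_X\to\N\T_X/\fJ$ and $Q_i\colon\N\T_X\to\N\T_X/\fJ^{\L_i}$ be the quotient maps, with associated Nica-covariant representations $(\pi,t)$ and $(\pi_i,t_i)$. The inclusion $(\L_1\wedge\L_2)_F\subseteq\L_{1,F}\cap\L_{2,F}$ will follow immediately from inclusion-preservation in Theorem \ref{T:NT param}: since $\fJ\subseteq\fJ^{\L_i}$, we have $\L_1\wedge\L_2=\L^{\fJ}\subseteq\L^{\fJ^{\L_i}}=\L_i$ componentwise. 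For the reverse inclusion, the case $F=\mt$ is a direct computation using Remark \ref{R:param}: $\L^{\fJ}_\mt=\ol{\pi}_X^{-1}(\fJ^{\L_1}\cap\fJ^{\L_2})=\ol{\pi}_X^{-1}(\fJ^{\L_1})\cap\ol{\pi}_X^{-1}(\fJ^{\L_2})=\L_{1,\mt}\cap\L_{2,\mt}$.

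The substantive part concerns $\mt\neq F\subseteq[d]$: given $a\in\L_{1,F}\cap\L_{2,F}$, I will show $\pi(a)\in B_{(\un{0},\un{1}_F]}^{(\pi,t)}$ by producing, for each $\un{0}\neq\un{n}\leq\un{1}_F$, a single compact lift $k_\un{n}\in\K(X_\un{n})$ that witnesses membership in both $\L_{1,F}$ and $\L_{2,F}$ simultaneously via Proposition \ref{P:rep quo}(iv). By that proposition applied to each $(\pi_i,t_i)$, the classes $[\phi_\un{n}(a)]_{\L_{i,\mt}}$ lie in $\K([X_\un{n}]_{\L_{i,\mt}})$, so Lemma \ref{L:Kat07} furnishes individual lifts $k^{(i)}_\un{n}\in\K(X_\un{n})$ with $[k^{(i)}_\un{n}]_{\L_{i,\mt}}=[\phi_\un{n}(a)]_{\L_{i,\mt}}$. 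The synchronization step observes that $(k^{(1)}_\un{n}-k^{(2)}_\un{n})X_\un{n}\subseteq X_\un{n}\L_{1,\mt}+X_\un{n}\L_{2,\mt}=X_\un{n}(\L_{1,\mt}+\L_{2,\mt})$, so that $k^{(1)}_\un{n}-k^{(2)}_\un{n}$ lies in $\K(X_\un{n}(\L_{1,\mt}+\L_{2,\mt}))=\K(X_\un{n}\L_{1,\mt})+\K(X_\un{n}\L_{2,\mt})$; decomposing $k^{(1)}_\un{n}-k^{(2)}_\un{n}=s_{1,\un{n}}+s_{2,\un{n}}$ with $s_{j,\un{n}}\in\K(X_\un{n}\L_{j,\mt})$ and setting $k_\un{n}:=k^{(1)}_\un{n}-s_{1,\un{n}}=k^{(2)}_\un{n}+s_{2,\un{n}}$ yields the desired common lift.

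With these common lifts, Proposition \ref{P:rep quo}(iv) (combined with item (iii) to convert $\dot\psi_i$ back to $\psi_i$) gives $\pi_i(a)+\sum(-1)^{|\un{n}|}\psi_{i,\un{n}}(k_\un{n})=0$ in $\N\T_X/\fJ^{\L_i}$ for both $i$. Lifting back to $\N\T_X$ shows that $\ol{\pi}_X(a)+\sum(-1)^{|\un{n}|}\ol{\psi}_{X,\un{n}}(k_\un{n})$ lies in $\fJ^{\L_1}\cap\fJ^{\L_2}=\fJ$, so $\pi(a)\in B_{(\un{0},\un{1}_F]}^{(\pi,t)}$ and hence $a\in\L^{\fJ}_F=(\L_1\wedge\L_2)_F$ by Remark \ref{R:param}. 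The main obstacle will be the synchronization argument, which hinges on the identity $\K(X_\un{n}\L_{1,\mt})+\K(X_\un{n}\L_{2,\mt})=\K(X_\un{n}(\L_{1,\mt}+\L_{2,\mt}))$ — both sides are ideals of $\K(X_\un{n})$ generated by the same set of rank-one operators, so they coincide — together with the additivity $X_\un{n}(\L_{1,\mt}+\L_{2,\mt})=X_\un{n}\L_{1,\mt}+X_\un{n}\L_{2,\mt}$, which follows from the Hewitt-Cohen Factorisation Theorem (Theorem \ref{T:HCFT}) applied to an approximate unit of $\L_{1,\mt}+\L_{2,\mt}$.
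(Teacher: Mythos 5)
Your proof is correct, but it takes a genuinely different route from the paper's for the substantive inclusion $\L_{1,F}\cap\L_{2,F}\subseteq(\L_1\wedge\L_2)_F$ (the $F=\mt$ case and the forward inclusion coincide with the paper's). You synchronize compact lifts: starting from individual lifts $k_\un{n}^{(i)}$ furnished by Proposition \ref{P:rep quo} and Lemma \ref{L:Kat07}, you observe that $k^{(1)}_\un{n}-k^{(2)}_\un{n}\in\K\big(X_\un{n}(\L_{1,\mt}+\L_{2,\mt})\big)=\K(X_\un{n}\L_{1,\mt})+\K(X_\un{n}\L_{2,\mt})$ and correct by the summands to obtain a single $k_\un{n}$ whose class agrees with $[\phi_\un{n}(a)]$ modulo \emph{both} $\L_{1,\mt}$ and $\L_{2,\mt}$; items (iii) and (iv) of Proposition \ref{P:rep quo} then show that $\ol{\pi}_X(a)+\sum(-1)^{|\un{n}|}\ol{\psi}_{X,\un{n}}(k_\un{n})$ lies in $\fJ^{\L_1}\cap\fJ^{\L_2}$, so $a\in(\L_1\wedge\L_2)_F$ directly. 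The paper avoids lifts altogether: it chooses core elements $f,g\in B_{(\un{0},\un{1}_F]}^{(\ol{\pi}_X,\ol{t}_X)}$ with $\ol{\pi}_X(a)+f\in\fJ^{\L_1}$ and $\ol{\pi}_X(a)+g\in\fJ^{\L_2}$, notes that the product $(\ol{\pi}_X(a)+f)(\ol{\pi}_X(a)+g)^*$ lies in the intersection since each factor lies in one of the (self-adjoint) ideals, expands it as $\ol{\pi}_X(aa^*)+h$ with $h$ in the core, and concludes $aa^*\in(\L_1\wedge\L_2)_F$; the fact that $(\L_1\wedge\L_2)_F$ is an ideal then yields $a$ itself. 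The paper's trick is shorter and needs only that the cores are C*-algebras invariant under multiplication by $\ol{\pi}_X(A)$, plus the standard fact that $aa^*\in I$ forces $a\in I$ for a closed ideal $I$; your argument is more constructive, exhibiting an explicit generator of $\fJ^{\L_1}\cap\fJ^{\L_2}$ of the form appearing in Proposition \ref{P:JL conc}, at the cost of the kernel-additivity identity. On that identity, your one-line justification (``generated by the same rank-one operators'') is looser than it should be: the clean verification writes $\xi c\in X_\un{n}(\L_{1,\mt}+\L_{2,\mt})$ with $c=c_1+c_2$, $c_i\in\L_{i,\mt}$ (the algebraic sum of two closed ideals is closed), so that $\Theta_{\xi c,\eta}=\Theta_{\xi c_1,\eta}+\Theta_{\xi c_2,\eta}$ with $\Theta_{\xi c_i,\eta}=\Theta_{\xi c_i',\eta (c_i'')^*}\in\K(X_\un{n}\L_{i,\mt})$ after factorising $c_i=c_i'c_i''$ in $\L_{i,\mt}$ — but this is routine and does not affect correctness. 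Note also that both $\L_{i,\mt}$ are positively invariant since the $\L_i$ are NT-$2^d$-tuples, so all quotient constructions you invoke are available.
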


\begin{proof}
For notational convenience we set $\fJ:=\fJ^{\L_1}\cap\fJ^{\L_2}$, so that $\L^\fJ \equiv \L_1\wedge\L_2$.
For $F = \mt$ and $a \in A$, we have that
\[
a\in\L_\mt^\fJ\iff\ol{\pi}_X(a)\in\fJ\equiv\fJ^{\L_1}\cap\fJ^{\L_2}\iff a\in\L_\mt^{\fJ^{\L_1}}\cap\L_\mt^{\fJ^{\L_2}}=\L_{1,\mt}\cap\L_{2,\mt},
\]
using item (i) of Proposition \ref{P:tuple ideal} in the final equality.
Hence $(\L_1\wedge\L_2)_\mt=\L_{1,\mt}\cap\L_{2,\mt}$.

Next, fix $\mt\neq F\subseteq[d]$.
Since the parametrisation of Theorem \ref{T:NT param} preserves inclusions, we have $(\L_1\wedge\L_2)_F\subseteq\L_{1,F}\cap\L_{2,F}$.
For the reverse inclusion, take $a\in\L_{1,F}\cap\L_{2,F}$.
Since $(\L_1\wedge\L_2)_F$ is an ideal, it suffices to show that $aa^* \in (\L_1\wedge\L_2)_F$.
Since $a\in\L_{1,F}\cap\L_{2,F}$, there exist $f, g \in B_{(\un{0},\un{1}_F]}^{(\ol{\pi}_X,\ol{t}_X)}$ such that 
\[
\ol{\pi}_X(a) + f \in\fJ^{\L_1}
\qand
\ol{\pi}_X(a) + g \in\fJ^{\L_2}.
\]
Consider the element
\[
h := \ol{\pi}_X(a)g^* + f \ol{\pi}_X(a)^* + fg^*.
\]
Note that 
\[
\ol{\pi}_X(A)B_{(\un{0},\un{1}_F]}^{(\ol{\pi}_X,\ol{t}_X)}\subseteq B_{(\un{0},\un{1}_F]}^{(\ol{\pi}_X,\ol{t}_X)}
\qand
B_{(\un{0},\un{1}_F]}^{(\ol{\pi}_X,\ol{t}_X)}\ol{\pi}_X(A) \subseteq B_{(\un{0},\un{1}_F]}^{(\ol{\pi}_X,\ol{t}_X)},
\]
and recall that $B_{(\un{0},\un{1}_F]}^{(\ol{\pi}_X,\ol{t}_X)}$ is a C*-algebra. 
Hence $h \in B_{(\un{0},\un{1}_F]}^{(\ol{\pi}_X,\ol{t}_X)}$, and we obtain
\[
\ol{\pi}_X(aa^*) + h = (\ol{\pi}_X(a) + f) (\ol{\pi}_X(a) + g)^* \in \fJ^{\L_1} \cap \fJ^{\L_2} \equiv \fJ.
\]
By definition this means that $aa^* \in\L_F^\fJ \equiv (\L_1\wedge\L_2)_F$, as required.
\end{proof}

We have the following characterisation of the operation $\vee$.

\begin{proposition}\label{P:NTvee}
Let $X$ be a strong compactly aligned product system with coefficients in a C*-algebra $A$ and let $\L_1$ and $\L_2$ be NT-$2^d$-tuples of $X$.
Then
\begin{align*}
(\L_1\vee\L_2)_F
=
\begin{cases}
\ol{\pi}_X^{-1}(\fJ^{\L_1}+\fJ^{\L_2}) & \text{ if } F= \mt, \\
[\hspace{1pt}\cdot\hspace{1pt}]_{(\L_1\vee\L_2)_\mt}^{-1} \left[ \big( \left( \L_{1,F}+\L_{2,F}+(\L_1\vee\L_2)_\mt \right) / (\L_1\vee\L_2)_\mt \big)^{(d-1)} \right] & \text{ if } \mt\neq F\subseteq[d].
\end{cases}
\end{align*}
\end{proposition}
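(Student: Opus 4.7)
First, the $F = \mt$ case follows immediately from the definition: by Remark \ref{R:param} one has $(\L_1\vee\L_2)_\mt = \L^{\fJ^{\L_1}+\fJ^{\L_2}}_\mt = \ol{\pi}_X^{-1}(\fJ^{\L_1}+\fJ^{\L_2})$. Writing $I := (\L_1\vee\L_2)_\mt$ and $\fJ := \fJ^{\L_1}+\fJ^{\L_2}$, the plan is to first record two observations: $\L_{j,\mt} \subseteq I$ for $j=1,2$ (because $\ol{\pi}_X(\L_{j,\mt}) \subseteq \fJ^{\L_j} \subseteq \fJ$), and $I$ is positively invariant for $X$ by Proposition \ref{P:rep quo}(i) applied to $(Q_\fJ\circ\ol{\pi}_X, Q_\fJ\circ\ol{t}_X)$. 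Positive invariance produces a canonical $*$-epimorphism $Q_I\colon \N\T_X \to \N\T_{[X]_I}$ with $\ker Q_I \subseteq \fJ$, so $\widetilde{\fJ} := Q_I(\fJ)$ satisfies $\N\T_X/\fJ \cong \N\T_{[X]_I}/\widetilde{\fJ}$. Because $[\L_1\vee\L_2]_{I,\mt}=0$, Proposition \ref{P:NT tuple + m fam} shows that $[\L_1\vee\L_2]_I$ is an (M)-$2^d$-tuple of $[X]_I$, and Proposition \ref{P:tuple ideal} identifies it as the NT-$2^d$-tuple of $[X]_I$ corresponding to $\widetilde{\fJ}$.

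Next I would introduce the candidate $\widetilde{\L}_F := (\L_{1,F} + \L_{2,F} + I)/I$ for $F \subseteq [d]$. A routine verification---using $X$-invariance of $\L_1, \L_2$ together with positive invariance of $I$ for the invariance property---shows that $\widetilde{\L}$ is an invariant, partially ordered $2^d$-tuple of $[X]_I$ consisting of ideals with $\widetilde{\L}_\mt=0$. To promote $\widetilde{\L}$ to an (E)-$2^d$-tuple, I would invoke Theorem \ref{T:NT param}: since $\fJ^{\L_j} \subseteq \fJ$, the inclusion-preserving bijection gives $\L_j \subseteq \L_1\vee\L_2$, hence $\widetilde{\L} \subseteq [\L_1\vee\L_2]_I \subseteq \I([X]_I)$.

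The technical crux is to prove $\widetilde{\fJ} = \fJ_{\widetilde{\L}}^{(\ol{\pi}_{[X]_I}, \ol{t}_{[X]_I})}$. Set $[\L_j]_I := \{[\L_{j,F}]_I\}_F$; the inclusion $\L_{j,F} \subseteq J_F(\L_{j,\mt}, X)$ combined with Lemma \ref{L:Kat07} shows that $[\L_j]_I$ is a relative $2^d$-tuple of $[X]_I$, and $\widetilde{\L} = [\L_1]_I + [\L_2]_I$. Using Proposition \ref{P:JL conc} to enumerate the generators of $\fJ^{\L_j}$ (each has the form $\ol{\pi}_X(a) + \sum_{\un{0}\neq\un{n}\leq\un{1}_F} (-1)^{|\un{n}|}\ol{\psi}_{X,\un{n}}(k_\un{n})$ with $a \in \L_{j,F}$ and $[\phi_\un{n}(a)]_{\L_{j,\mt}} = [k_\un{n}]_{\L_{j,\mt}}$), the containment $\L_{j,\mt} \subseteq I$ forces $[\hspace{1pt}\cdot\hspace{1pt}]_I$ to factor through $[\hspace{1pt}\cdot\hspace{1pt}]_{\L_{j,\mt}}$, so $[\phi_\un{n}(a)]_I = [k_\un{n}]_I$ and $Q_I$ collapses the generator to $\ol{\pi}_{[X]_I}([a]_I)\ol{q}_{[X]_I,F} \in \fJ_{[\L_j]_I}^{(\ol{\pi}_{[X]_I}, \ol{t}_{[X]_I})}$. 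Conversely, Lemma \ref{L:Kat07} lifts elements of $\K([X_\un{n}]_{\L_{j,\mt}})$ to $\K(X_\un{n})$, producing generators of $\fJ^{\L_j}$ whose $Q_I$-image equals any prescribed generator of $\fJ_{[\L_j]_I}^{(\ol{\pi}_{[X]_I}, \ol{t}_{[X]_I})}$. Surjectivity of $Q_I$ then yields $Q_I(\fJ^{\L_j}) = \fJ_{[\L_j]_I}^{(\ol{\pi}_{[X]_I}, \ol{t}_{[X]_I})}$, and Proposition \ref{P:sumofe} delivers $\widetilde{\fJ} = \fJ_{\widetilde{\L}}^{(\ol{\pi}_{[X]_I}, \ol{t}_{[X]_I})}$.

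To conclude, I would apply Theorem \ref{T:d-1 m fam} to $\widetilde{\L}$, which gives that $\widetilde{\L}^{(d-1)}$ is the (M)-$2^d$-tuple of $[X]_I$ inducing $\widetilde{\fJ}$; uniqueness of (M)-$2^d$-tuples inducing a fixed ideal (Propositions \ref{P:maximal} and \ref{P:inj on J}) then forces $[\L_1\vee\L_2]_I = \widetilde{\L}^{(d-1)}$. For $\mt \neq F \subseteq [d]$, partial ordering of $\L_1\vee\L_2$ gives $I \subseteq (\L_1\vee\L_2)_F$, so $(\L_1\vee\L_2)_F = [\hspace{1pt}\cdot\hspace{1pt}]_I^{-1}([\L_1\vee\L_2]_{I,F}) = [\hspace{1pt}\cdot\hspace{1pt}]_I^{-1}(\widetilde{\L}_F^{(d-1)})$, which is the stated formula. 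The main obstacle will be the identification $Q_I(\fJ^{\L_j}) = \fJ_{[\L_j]_I}^{(\ol{\pi}_{[X]_I}, \ol{t}_{[X]_I})}$: the description of $\fJ^{\L_j}$ goes through NT-relations referenced to $\L_{j,\mt}$, while the target ideal uses relative CNP-relations referenced to $I$, and negotiating the compatibility of compacts across the two quotients demands careful use of Lemma \ref{L:Kat07} in both directions.
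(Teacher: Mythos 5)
Your proposal is correct and takes essentially the same route as the paper's proof: both pass to the quotient product system over $I:=(\L_1\vee\L_2)_\mt$, identify $Q_I(\fJ^{\L_1}+\fJ^{\L_2})$ with the ideal induced by the invariant, partially ordered (E)-$2^d$-tuple $\widetilde{\L}_F=(\L_{1,F}+\L_{2,F}+I)/I$ (your computation $Q_I(\fJ^{\L_j})=\fJ_{[\L_j]_I}^{(\ol{\pi}_{[X]_I},\ol{t}_{[X]_I})}$ combined with Proposition \ref{P:sumofe} is precisely the content of the paper's Claims 2 and 3), and conclude via Theorem \ref{T:d-1 m fam} together with uniqueness of (M)-$2^d$-tuples. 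The only cosmetic differences are that the paper packages the endgame by constructing the candidate NT-$2^d$-tuple $\L'$ on $X$ and verifying $\fJ=\fJ^{\L'}$ through a commutative diagram, and that in the reverse inclusion of the crux it invokes item (v) of Proposition \ref{P:rep quo} where you lift compacts through $\L_{j,\mt}$ via Lemma \ref{L:Kat07} --- which is valid, granted the (implicit) observation that $[\phi_{\un{n}}(a)]_{\L_{j,\mt}}\in\K([X_{\un{n}}]_{\L_{j,\mt}})$ for all $\un{0}\neq\un{n}\leq\un{1}_F$, available from $\L_{j,F}\subseteq J_F(\L_{j,\mt},X)$ and strong compact alignment of $[X]_{\L_{j,\mt}}$.
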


\begin{proof}
For notational convenience, we set $\L := \L_1 \vee \L_2$ and $\fJ:=\fJ^{\L_1}+\fJ^{\L_2}$, so that $\L \equiv \L^\fJ$.
For $F = \mt$ and $a\in A$, we have that
\[
a\in\L_\mt\iff Q_\fJ(\ol{\pi}_X(a))=0\iff \ol{\pi}_X(a)\in\fJ\iff a\in\ol{\pi}_X^{-1}(\fJ)=\ol{\pi}_X^{-1}(\fJ^{\L_1}+\fJ^{\L_2}).
\]
Consequently, we obtain $\L_\mt=\ol{\pi}_X^{-1}(\fJ^{\L_1}+\fJ^{\L_2})$, as required.

Next, in a slight abuse of notation, we denote by $\L_{1}+\L_{2}+\L_\mt$ the $2^d$-tuple of $X$ defined by
\[
(\L_{1}+\L_{2}+\L_\mt)_F := \L_{1,F}+\L_{2,F}+\L_\mt \foral F\subseteq[d].
\]
Note that $\L_{1}+\L_{2}+\L_\mt$ consists of ideals and that $\L_\mt\subseteq(\L_1+\L_2+\L_\mt)_F$ for all $F\subseteq[d]$, so we can make sense of the $2^d$-tuple $[\L_1+\L_2+\L_\mt]_{\L_\mt}$ of $[X]_{\L_\mt}$.
First we check that the family of ideals $[\L_{1}+\L_{2}+\L_\mt]_{\L_\mt}$ is an (E)-$2^d$-tuple of $[X]_{\L_\mt}$ that is invariant and partially ordered.

The fact that $[\L_{1}+\L_{2}+\L_\mt]_{\L_\mt}$ is invariant and partially ordered follows from the corresponding properties of the NT-$2^d$-tuples $\L_1$ and $\L_2$, as well as the fact that $\L_\mt$ is positively invariant for $X$.
Next recall that $[\L]_{\L_\mt}$ is an (M)-$2^d$-tuple, and thus it is contained in $\I([X]_{\L_\mt})$.
Hence, in order to show that $[\L_{1}+\L_{2}+\L_\mt]_{\L_\mt}$ is an (E)-$2^d$-tuple, it suffices to show that
\[
[\L_{1}+\L_{2}+\L_\mt]_{\L_\mt}\subseteq [\L]_{\L_\mt}.
\]
In turn, it suffices to show that
\begin{equation}\label{eq:inc l}
\L_{1,F}+\L_{2,F}+\L_\mt\subseteq\L_F \foral F\subseteq[d].
\end{equation}
This is immediate, since $\fJ^{\L_1}, \fJ^{\L_2} \subseteq \fJ$ and the parametrisation of Theorem \ref{T:NT param} respects inclusions.
Thus $\L_{1}, \L_{2} \subseteq \L$, and by definition $\L_\mt \subseteq \L_F$, showing that (\ref{eq:inc l}) holds.

Hence we may consider the $(d-1)$-iteration $[\L_{1}+\L_{2}+\L_\mt]_{\L_\mt}^{(d-1)}$.
For notational convenience, let $\L'$ be the $2^d$-tuple of $X$ defined by
\[
\L_F'
:=
\begin{cases} 
\L_\mt & \text{if } F=\mt, \\
[\hspace{1pt}\cdot\hspace{1pt}]_{\L_\mt}^{-1} \big([\L_{1,F}+\L_{2,F}+\L_\mt]_{\L_\mt}^{(d-1)}\big) & \text{if } \mt \neq F\subseteq[d],
\end{cases}
\]
as per the statement of the proposition.
Note that $\L'$ consists of ideals and satisfies $\L_\mt'\subseteq\L_F'$ for all $F\subseteq[d]$.
Moreover, we have $[\L']_{\L_\mt}=[\L_1+\L_2+\L_\mt]_{\L_\mt}^{(d-1)}$ is the (M)-$2^d$-tuple of $[X]_{\L_\mt}$ that induces $\fJ_{[\L_{1}+\L_{2}+\L_\mt]_{\L_\mt}}^{(\ol{\pi}_{[X]_{\L_\mt}},\ol{t}_{[X]_{\L_\mt}})}$ by Theorem \ref{T:d-1 m fam}.
In particular, $\L'$ is an NT-$2^d$-tuple of $X$ by an application of Proposition \ref{P:NT tuple + m fam}.

It now suffices to show that $\fJ=\fJ^{\L'}$, as the parametrisation of Theorem \ref{T:NT param} then yields
\[
\L \equiv \L^\fJ = \L^{\fJ^{\L'}}=\L',
\]
as required.
To this end, we construct the following commutative diagram
\begin{center}
\begin{tikzcd}
& & & & \N\T_X/\fJ \\
\N\T_X
	\arrow[urrrr,"Q_\fJ"] 
	\arrow[rrr,"\phantom{ooo} Q"]
	\arrow[drrrr,"Q_{\fJ^{\L'}}", swap]
& & & \N\T_{[X]_{\L_\mt}}
	\arrow[ur,"\Phi_\fJ", swap]
	\arrow[dr,"\Phi_{\fJ^{\L'}}"] & \\
& & & & \N\T_X/\fJ^{\L'}
\end{tikzcd}
\end{center}
of $*$-epimorphisms, where $Q$ is the usual lift of $X \to [X]_{\L_\mt}$.
The maps 
\[
Q_{\fJ} \equiv (Q_\fJ\circ\ol{\pi}_X)\times(Q_\fJ\circ\ol{t}_X)
\qand
Q_{\fJ^{\L'}} \equiv (Q_{\fJ^{\L'}}\circ\ol{\pi}_X)\times(Q_{\fJ^{\L'}}\circ\ol{t}_X)
\]
are the canonical quotient maps.
The map $\Phi_\fJ$ is induced by the injective Nica-covariant representation of $[X]_{\L_\mt}$ obtained by applying item (iii) of Lemma \ref{L:rep quo} for $(\pi,t):=(Q_\fJ\circ\ol{\pi}_X,Q_\fJ\circ\ol{t}_X)$.
The map $\Phi_{\fJ^{\L'}}$ is obtained analogously, using $(Q_{\fJ^{\L'}}\circ\ol{\pi}_X,Q_{\fJ^{\L'}}\circ\ol{t}_X)$ in place of $(Q_\fJ\circ\ol{\pi}_X,Q_\fJ\circ\ol{t}_X)$ and the fact that $\L_\mt' = \L_\mt$.
Since each map is canonical, it suffices to show that the kernel $\ker \Phi_{\fJ} = Q(\fJ)$ coincides with the kernel $\ker \Phi_{\fJ^{\L'}} = Q(\fJ^{\L'})$.
Then $\N\T_X/\fJ \cong \N\T_X/\fJ^{\L'}$ by the map $f + \fJ \mapsto f + \fJ^{\L'}$ for all $f \in \N\T_X$.
To this end, we have the following three claims.

\smallskip

\noindent
\textit{Claim 1. With the aforementioned notation, we have
\[
Q(\fJ^{\L'}) = \fJ_{[\L']_{\L_\mt}}^{(\ol{\pi}_{[X]_{\L_\mt}},\ol{t}_{[X]_{\L_\mt}})}.
\]
}

\noindent
\textit{Proof of Claim 1.}
By an application of item (i) of Proposition \ref{P:tuple ideal}, we have a canonical $*$-isomorphism
\[
\N\T_X/\mathfrak{J^{\L'}}
\cong
\N\O([\L']_{\L_\mt'},[X]_{\L_\mt'})
=
\N\T_{[X]_{\L_\mt}}/ \fJ_{[\L']_{\L_\mt}}^{(\ol{\pi}_{[X]_{\L_\mt}},\ol{t}_{[X]_{\L_\mt}})},
\]
using that $\L'$ is an NT-$2^d$-tuple and that $\L_\mt' = \L_\mt$.
This $*$-isomorphism is induced by $Q$  in the sense that it maps $f + \fJ^{\L'}$ to $Q(f) +  \fJ_{[\L']_{\L_\mt}}^{(\ol{\pi}_{[X]_{\L_\mt}},\ol{t}_{[X]_{\L_\mt}})}$ for all $f \in \N\T_X$.
It follows that $Q(\fJ^{\L'})=\fJ_{[\L']_{\L_\mt}}^{(\ol{\pi}_{[X]_{\L_\mt}},\ol{t}_{[X]_{\L_\mt}})}$, as required.
\hfill{$\Box$}

\smallskip

\noindent
\textit{Claim 2. Let $\L_1'$ and $\L_2'$ be the $2^d$-tuples of $X$ defined by $\L_{1,F}':= \L_{1,F}+\L_\mt$ and $\L_{2,F}':=\L_{2,F}+\L_\mt$ for all $F\subseteq[d]$.
Then $[\L_1']_{\L_\mt}$ and $[\L_2']_{\L_\mt}$ are (E)-$2^d$-tuples of $[X]_{\L_\mt}$ that consist of ideals, and
\[
\fJ_{[\L']_{\L_\mt}}^{(\ol{\pi}_{[X]_{\L_\mt}},\ol{t}_{[X]_{\L_\mt}})}
=
\fJ_{[\L_1']_{\L_{\mt}}}^{(\ol{\pi}_{[X]_{\L_\mt}},\ol{t}_{[X]_{\L_\mt}})}+\fJ_{[\L_2']_{\L_{\mt}}}^{(\ol{\pi}_{[X]_{\L_\mt}},\ol{t}_{[X]_{\L_\mt}})}.
\]
}

\noindent
\textit{Proof of Claim 2.}
Both $[\L_1']_{\L_{\mt}}$ and $[\L_2']_{\L_{\mt}}$ consist of ideals of $[A]_{\L_\mt}$ and are (E)-$2^d$-tuples of $[X]_{\L_\mt}$, since both are contained in the (E)-$2^d$-tuple $[\L_1+\L_2+\L_\mt]_{\L_\mt}$ of $[X]_{\L_\mt}$.
Moreover, it is routine to check that $[\L_1']_{\L_{\mt}} + [\L_2']_{\L_{\mt}} = [\L_1+\L_2+\L_\mt]_{\L_\mt}$.
Hence we obtain
\[
\fJ_{[\L']_{\L_\mt}}^{(\ol{\pi}_{[X]_{\L_\mt}},\ol{t}_{[X]_{\L_\mt}})}
=
\fJ_{[\L_1+\L_2+\L_\mt]_{\L_\mt}}^{(\ol{\pi}_{[X]_{\L_\mt}},\ol{t}_{[X]_{\L_\mt}})}
=
\fJ_{[\L_1']_{\L_{\mt}} + [\L_2']_{\L_{\mt}}}^{(\ol{\pi}_{[X]_{\L_\mt}},\ol{t}_{[X]_{\L_\mt}})}
=
\fJ_{[\L_1']_{\L_{\mt}}}^{(\ol{\pi}_{[X]_{\L_\mt}},\ol{t}_{[X]_{\L_\mt}})} + \fJ_{[\L_2']_{\L_{\mt}}}^{(\ol{\pi}_{[X]_{\L_\mt}},\ol{t}_{[X]_{\L_\mt}})},
\]
using Lemma \ref{L:sumofe} in the final equality.
\hfill{$\Box$}

\smallskip

\noindent
\textit{Claim 3. With the aforementioned notation, we have
\[
Q(\fJ)
=
\fJ_{[\L_1']_{\L_{\mt}}}^{(\ol{\pi}_{[X]_{\L_\mt}},\ol{t}_{[X]_{\L_\mt}})}+\fJ_{[\L_2']_{\L_{\mt}}}^{(\ol{\pi}_{[X]_{\L_\mt}},\ol{t}_{[X]_{\L_\mt}})}.
\]
}

\noindent
\textit{Proof of Claim 3.}
For notational convenience, we will denote the right hand side by $\fJ'$.
For the forward inclusion, we show that $Q(f)\in\fJ'$ for all generators $f$ of $\fJ^{\L_1}$.
The same holds for $\fJ^{\L_2}$ by symmetry, giving that $Q(\fJ) \subseteq \fJ'$.
To this end, we resort to Proposition \ref{P:JL conc}.
Note that
\[
Q(\ol{\pi}_X(\L_{1,\mt}))=\ol{\pi}_{[X]_{\L_\mt}}(\{0\})\subseteq\fJ'
\]
since $\L_{1,\mt}\subseteq\L_\mt$.
Now fix $\mt\neq F\subseteq[d], a\in\L_{1,F}$ and $k_\un{n}\in\K(X_\un{n})$ such that $[\phi_\un{n}(a)]_{\L_{1,\mt}}=[k_\un{n}]_{\L_{1,\mt}}$ for all $\un{0}\neq\un{n}\leq\un{1}_F$.
Identifying $\L([[X_\un{n}]_{\L_{1,\mt}}]_{\L_\mt/\L_{1,\mt}})$ with $\L([X_\un{n}]_{\L_\mt})$ for all $\un{0}\neq\un{n}\leq\un{1}_F$, we can write $[\hspace{1pt}\cdot\hspace{1pt}]_{\L_\mt}=[\hspace{1pt}\cdot\hspace{1pt}]_{\L_\mt/\L_{1,\mt}}\circ[\hspace{1pt}\cdot\hspace{1pt}]_{\L_{1,\mt}}$ and deduce that
\[
[\phi_\un{n}(a)]_{\L_\mt}
=
[k_\un{n}]_{\L_\mt}
\foral\un{0}\neq\un{n}\leq\un{1}_F.
\]
Consequently, we obtain
\begin{align*}
Q(\ol{\pi}_X(a)+\sum_{\un{0}\neq\un{n}\leq\un{1}_F}(-1)^{|\un{n}|}\ol{\psi}_{X,\un{n}}(k_\un{n})) 
& =
\ol{\pi}_{[X]_{\L_\mt}}([a]_{\L_\mt})+\sum_{\un{0}\neq\un{n}\leq\un{1}_F}(-1)^{|\un{n}|}\ol{\psi}_{[X]_{\L_\mt},\un{n}}([k_\un{n}]_{\L_\mt}) \\
& =
\ol{\pi}_{[X]_{\L_\mt}}([a]_{\L_\mt})+\sum_{\un{0}\neq\un{n}\leq\un{1}_F}(-1)^{|\un{n}|}\ol{\psi}_{[X]_{\L_\mt},\un{n}}([\phi_\un{n}(a)]_{\L_\mt}) \\
& =
\ol{\pi}_{[X]_{\L_\mt}}([a]_{\L_\mt})\ol{q}_{[X]_{\L_\mt},F} 
\in\fJ_{[\L_1']_{\L_{\mt}}}^{(\ol{\pi}_{[X]_{\L_\mt}},\ol{t}_{[X]_{\L_\mt}})}\subseteq\fJ',
\end{align*}
using that $a\in\L_{1,F} \subseteq \L_{1,F}'$.

For the reverse inclusion, it suffices to show that $Q(\fJ)$ contains the generators of $\fJ_{[\L_1']_{\L_\mt}}^{(\ol{\pi}_{[X]_{\L_\mt}},\ol{t}_{[X]_{\L_\mt}})}$.
The same holds for $\fJ_{[\L_2']_{\L_\mt}}^{(\ol{\pi}_{[X]_{\L_\mt}},\ol{t}_{[X]_{\L_\mt}})}$ by symmetry, concluding the proof of the claim.
Fix $\mt\neq F\subseteq[d]$ and $a\in \L_{1,F}' \equiv \L_{1,F}+\L_\mt$.
Then in particular $a \in \L_F$ by (\ref{eq:inc l}).
By item (v) of Lemma \ref{L:rep quo}, for each $\un{0}\neq\un{n}\leq\un{1}_F$ there exists $k_\un{n}\in\K(X_\un{n})$ such that $[\phi_\un{n}(a)]_{\L_\mt}=[k_\un{n}]_{\L_\mt}$ and
\[
\ol{\pi}_X(a)+\sum_{\un{0}\neq\un{n}\leq\un{1}_F}(-1)^{|\un{n}|}\ol{\psi}_{X,\un{n}}(k_\un{n})
\in \fJ \equiv \fJ^\L.
\]
Thus we have
\begin{align*}
\ol{\pi}_{[X]_{\L_\mt}}([a]_{\L_\mt})\ol{q}_{[X]_{\L_\mt},F} 
& =
\ol{\pi}_{[X]_{\L_\mt}}([a]_{\L_\mt})+\sum_{\un{0}\neq\un{n}\leq\un{1}_F}(-1)^{|\un{n}|}\ol{\psi}_{[X]_{\L_\mt},\un{n}}([\phi_\un{n}(a)]_{\L_\mt}) \\
& =
Q\big( \ol{\pi}_X(a)+\sum_{\un{0}\neq\un{n}\leq\un{1}_F}(-1)^{|\un{n}|}\ol{\psi}_{X,\un{n}}(k_\un{n}) \big)
\in
Q(\fJ),
\end{align*}
as required.
\hfill{$\Box$}

\smallskip

Using Claims 1, 2 and 3 (and adopting the nomenclature therein), we conclude that
\begin{align*}
\ker \Phi_{\fJ^{\L'}}
=
Q(\fJ^{\L'}) 
= 
\fJ_{[\L']_{\L_\mt}}^{(\ol{\pi}_{[X]_{\L_\mt}},\ol{t}_{[X]_{\L_\mt}})} 
=
\fJ_{[\L_1']_{\L_{\mt}}}^{(\ol{\pi}_{[X]_{\L_\mt}},\ol{t}_{[X]_{\L_\mt}})}+\fJ_{[\L_2']_{\L_{\mt}}}^{(\ol{\pi}_{[X]_{\L_\mt}},\ol{t}_{[X]_{\L_\mt}})}
=
Q(\fJ)
=
\ker \Phi_{\fJ},
\end{align*}
and the proof is complete.
\end{proof}

By making minor changes to Theorem \ref{T:NT param}, we can parametrise the gauge-invariant ideals of $\N\O(\K,X)$ for any relative $2^d$-tuple $\K$ of $X$.
In particular, we can parametrise the gauge-invariant ideals of $\N\O_X$.
We begin with a definition.

\begin{definition}\label{D:rel NO tuple}
Let $X$ be a strong compactly aligned product system with coefficients in a C*-algebra $A$.
Let $\K$ be a relative $2^d$-tuple of $X$ and let $\L$ be a  $2^d$-tuple of $X$.
We say that $\L$ is a \emph{$\K$-relative NO-$2^d$-tuple of $X$} if $\L$ is an NT-$2^d$-tuple of $X$ and $\K \subseteq \L$.
We refer to the $\I$-relative NO-$2^d$-tuples of $X$ simply as \emph{NO-$2^d$-tuples of $X$}.
\end{definition}

It will follow from Theorem \ref{T:NO param} that the set of $\K$-relative NO-$2^d$-tuples of $X$ is non-empty.
As we have seen in Proposition \ref{P:NT+T}, NT-$2^d$-tuples constitute the higher-rank analogue of Katsura's T-pairs, and the same holds for NO-$2^d$-tuples and Katsura's O-pairs.

\begin{proposition}\label{P:NO+O}
Let $X=\{X_n\}_{n\in\bZ_+}$ be a product system with coefficients in a C*-algebra $A$.
Then the NO-$2$-tuples of $X$ are exactly the O-pairs of $X_1$.
\end{proposition}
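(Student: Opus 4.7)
The plan is to combine Proposition \ref{P:NT+T} with a direct unwinding of the definitions in the case $d=1$. By Proposition \ref{P:NT+T}, the NT-$2$-tuples of $X$ coincide with the T-pairs of $X_1$, so it suffices to show that the additional condition $\I \subseteq \L$ for an NO-$2$-tuple reduces exactly to the O-pair condition $J_{X_1} \subseteq \L_{\{1\}}$.

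First I would compute $\I$ explicitly in the case $d=1$. By definition, $\I_\mt = \{0\}$, and
\[
\I_{\{1\}} = \bigcap\{X_\un{n}^{-1}(\J_{\{1\}}) \mid \un{n} \perp \{1\}\} = X_\un{0}^{-1}(\J_{\{1\}}) = \J_{\{1\}},
\]
since the only $\un{n} \in \bZ_+$ with $\un{n} \perp \{1\}$ is $\un{n} = \un{0}$, and $X_\un{0}^{-1}(I) = I$ for any ideal $I$. Moreover,
\[
\J_{\{1\}} = (\ker \phi_{\un{1}})^\perp \cap \phi_{\un{1}}^{-1}(\K(X_{\un{1}})) = (\ker \phi_{X_1})^\perp \cap \phi_{X_1}^{-1}(\K(X_1)) = J_{X_1},
\]
using the definition of Katsura's ideal for the C*-correspondence $X_1$.

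Now let $\L = \{\L_\mt, \L_{\{1\}}\}$ be a $2$-tuple of $X$. By Definition \ref{D:rel NO tuple}, $\L$ is an NO-$2$-tuple if and only if $\L$ is an NT-$2$-tuple of $X$ and $\I \subseteq \L$. By Proposition \ref{P:NT+T}, $\L$ is an NT-$2$-tuple of $X$ if and only if it is a T-pair of $X_1$. The condition $\I \subseteq \L$ amounts to $\{0\} \subseteq \L_\mt$ (which is automatic) and $J_{X_1} = \I_{\{1\}} \subseteq \L_{\{1\}}$. Hence $\L$ is an NO-$2$-tuple of $X$ if and only if $\L$ is a T-pair of $X_1$ satisfying $J_{X_1} \subseteq \L_{\{1\}}$, which is precisely the definition of an O-pair of $X_1$, completing the proof.

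There is no serious obstacle here; the argument is purely a translation of definitions once $\I$ has been identified with $\{\{0\}, J_{X_1}\}$ in the rank-one case.
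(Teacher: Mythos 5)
Your proposal is correct and follows exactly the paper's route: the paper's proof invokes Proposition \ref{P:NT+T} together with the identity $\I_{\{1\}}=\J_{\{1\}}=J_{X_1}$, which is precisely the identification you verify. You merely spell out the short computation (that $\un{n}\perp\{1\}$ forces $\un{n}=\un{0}$, so $\I_{\{1\}}=X_{\un{0}}^{-1}(\J_{\{1\}})=\J_{\{1\}}=J_{X_1}$) that the paper states without proof.
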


\begin{proof}
This is immediate by Proposition \ref{P:NT+T}, since $\I_{\{1\}}=\J_{\{1\}}=J_{X_1}$.
\end{proof}

The lattice operations of Definition \ref{D:NTlattice} restrict to the set of $\K$-relative NO-$2^d$-tuples of $X$.

\begin{proposition}\label{P:Krellattice}
Let $X$ be a strong compactly aligned product system with coefficients in a C*-algebra $A$.
Let $\K$ be a relative $2^d$-tuple of $X$ and let $\L_1$ and $\L_2$ be $\K$-relative NO-$2^d$-tuples of $X$.
Then $\L_1\vee\L_2$ and $\L_1\wedge\L_2$ are $\K$-relative NO-$2^d$-tuples of $X$.
\end{proposition}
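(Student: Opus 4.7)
The plan is to work directly from Definition \ref{D:rel NO tuple}, which unfolds a $\K$-relative NO-$2^d$-tuple as precisely an NT-$2^d$-tuple $\L$ satisfying $\K\subseteq\L$; there is no additional structural requirement beyond these two conditions. Consequently the proof splits into two independent verifications: (a) that $\L_1\vee\L_2$ and $\L_1\wedge\L_2$ are NT-$2^d$-tuples, and (b) that each of them contains $\K$.

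Item (a) is essentially built into the lattice structure. By Definition \ref{D:NTlattice} we have $\L_1\vee\L_2=\L^{\fJ^{\L_1}+\fJ^{\L_2}}$ and $\L_1\wedge\L_2=\L^{\fJ^{\L_1}\cap\fJ^{\L_2}}$. Both $\fJ^{\L_1}+\fJ^{\L_2}$ and $\fJ^{\L_1}\cap\fJ^{\L_2}$ are gauge-invariant ideals of $\N\T_X$, since the gauge-invariant ideals form a sublattice of the ideal lattice of $\N\T_X$. Theorem \ref{T:NT param} then guarantees that $\L^\fJ$ is an NT-$2^d$-tuple whenever $\fJ\subseteq\N\T_X$ is gauge-invariant, delivering item (a) for both operations.

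For item (b), I handle the meet using the explicit formula of Proposition \ref{P:NTwedge}, namely $(\L_1\wedge\L_2)_F=\L_{1,F}\cap\L_{2,F}$ for every $F\subseteq[d]$. Since $\K\subseteq\L_1$ and $\K\subseteq\L_2$ by assumption, we immediately obtain $\K_F\subseteq\L_{1,F}\cap\L_{2,F}=(\L_1\wedge\L_2)_F$, and therefore $\K\subseteq\L_1\wedge\L_2$. For the join, I invoke the fact that the bijection of Theorem \ref{T:NT param} respects inclusions in both directions (being mutually inverse). Applying this to the trivial containment $\fJ^{\L_1}\subseteq\fJ^{\L_1}+\fJ^{\L_2}$, together with item (i) of Proposition \ref{P:tuple ideal} which provides the identity $\L^{\fJ^{\L_1}}=\L_1$, yields
\[
\L_1 = \L^{\fJ^{\L_1}}\subseteq\L^{\fJ^{\L_1}+\fJ^{\L_2}}=\L_1\vee\L_2,
\]
so that $\K\subseteq\L_1\subseteq\L_1\vee\L_2$.

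No serious obstacle arises, since the substantive work has already been carried out in Theorem \ref{T:NT param}, Proposition \ref{P:NTwedge}, Proposition \ref{P:tuple ideal}, and the discussion surrounding the lattice structure imposed in Definition \ref{D:NTlattice}. The only point that warrants emphasis in the write-up is that the definition of a $\K$-relative NO-$2^d$-tuple imposes no condition beyond NT-membership and containment of $\K$, so exhibiting these two properties is sufficient; in particular, the proposition is genuinely a statement about the sublattice of NT-$2^d$-tuples lying above $\K$.
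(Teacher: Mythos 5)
Your proof is correct, and the meet half coincides with the paper's: both use the explicit formula $(\L_1\wedge\L_2)_F=\L_{1,F}\cap\L_{2,F}$ of Proposition \ref{P:NTwedge} (note that monotonicity alone cannot work here, since $\fJ^{\L_1}\cap\fJ^{\L_2}\subseteq\fJ^{\L_i}$ only gives $\L_1\wedge\L_2\subseteq\L_i$, the wrong direction). For the join, however, you take a genuinely different and slicker route. The paper verifies $\K\subseteq\L_1\vee\L_2$ directly from the explicit description in Proposition \ref{P:NTvee}: it checks $\K_\mt\subseteq\ol{\pi}_X^{-1}(\fJ^{\L_1}+\fJ^{\L_2})=(\L_1\vee\L_2)_\mt$, and for $\mt\neq F\subseteq[d]$ it pushes $\K_F$ into the quotient by $(\L_1\vee\L_2)_\mt$ and uses that the class of $\K_F$ sits inside the $(d-1)$-iterated tuple, so that $\K_F\subseteq(\L_1\vee\L_2)_F$. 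You instead bypass the formula entirely: from $\fJ^{\L_1}\subseteq\fJ^{\L_1}+\fJ^{\L_2}$, inclusion-preservation of $\fJ\mapsto\L^\fJ$ (Theorem \ref{T:NT param}) together with $\L^{\fJ^{\L_1}}=\L_1$ (Proposition \ref{P:tuple ideal}~(i)) gives $\L_1\subseteq\L_1\vee\L_2$, whence $\K\subseteq\L_1\subseteq\L_1\vee\L_2$. This is a valid argument with no circularity (everything you cite precedes the proposition), and in fact the same monotonicity step appears inside the paper's own proof of Proposition \ref{P:NTvee} when establishing $\L_1,\L_2\subseteq\L_1\vee\L_2$; what your approach buys is independence from the explicit join formula, so the proposition would survive even without Proposition \ref{P:NTvee}, while the paper's version has the mild virtue of exercising that formula directly. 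You are also right, and slightly more explicit than the paper, that NT-membership of both operations is automatic from Definition \ref{D:NTlattice}, since $\fJ^{\L_1}+\fJ^{\L_2}$ and $\fJ^{\L_1}\cap\fJ^{\L_2}$ are gauge-invariant and $\L^\fJ$ is an NT-$2^d$-tuple for any gauge-invariant $\fJ$ (Proposition \ref{P:NT tuple all rep}, subsumed in the well-definedness of the parametrisation of Theorem \ref{T:NT param}).
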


\begin{proof}
For $F\subseteq[d]$, we have $\K_F\subseteq\L_{1,F}$ and $\K_F\subseteq\L_{2,F}$ by definition.
Hence 
\[
\K_F\subseteq\L_{1,F}\cap\L_{2,F}=(\L_1\wedge\L_2)_F,
\] 
using Proposition \ref{P:NTwedge} in the equality.
Hence $\L_1\wedge\L_2$ is a $\K$-relative NO-$2^d$-tuple of $X$.

Next, note that 
\[
\K_\mt\subseteq\L_{1,\mt}\subseteq\ol{\pi}_X^{-1}(\fJ^{\L_1}+\fJ^{\L_2})=(\L_1\vee\L_2)_\mt
\]
by Proposition \ref{P:NTvee}.
Now fix $\mt \neq F \subseteq [d]$.
Since $\K_F \subseteq \L_{1,F}+\L_{2,F}+(\L_1\vee\L_2)_\mt$, we obtain
\begin{align*}
[\K_F]_{(\L_1\vee\L_2)_\mt}
& \subseteq
[\L_{1,F}+\L_{2,F}+(\L_1\vee\L_2)_\mt]_{(\L_1\vee\L_2)_\mt} 
\subseteq
[\L_{1,F}+\L_{2,F}+(\L_1\vee\L_2)_\mt]_{(\L_1\vee\L_2)_\mt}^{(d-1)},
\end{align*}
and by Proposition \ref{P:NTvee} we have $\K_F\subseteq(\L_1\vee\L_2)_F$, completing the proof.
\end{proof}

For a relative $2^d$-tuple $\K$ of $X$, we write $Q_\K\colon\N\T_X\to\N\O(\K,X)$ for the canonical quotient map.
Equivariance of $Q_\K$ gives that $\fJ$ is a gauge-invariant ideal of $\N\O(\K,X)$ if and only if $\fJ = Q_\K(\fJ')$ for the gauge-invariant ideal $\fJ' : = Q_\K^{-1}(\fJ)$ of $\N\T_X$.
With this we can adapt the parametrisation of Theorem \ref{T:NT param} to account for $\N\O(\K,X)$.

\begin{theorem}\label{T:NO param}
Let $X$ be a strong compactly aligned product system with coefficients in a C*-algebra $A$ and let $\K$ be a relative $2^d$-tuple of $X$.
Equip the set of $\K$-relative NO-$2^d$-tuples of $X$ with the lattice structure of Definition \ref{D:NTlattice} (suitably restricted) and equip the set of gauge-invariant ideals of $\N\O(\K,X)$ with the usual lattice structure.
Then these sets are isomorphic as lattices via the map
\begin{equation}\label{Eq:NO param}
\L\mapsto Q_\K(\mathfrak{J}^\L),
\text{ for the canonical quotient map }
Q_\K \colon \N\T_X \to \N\O(\K,X),
\end{equation}
for all $\K$-relative NO-$2^d$-tuples $\L$ of $X$.
Moreover, this map preserves inclusions.
\end{theorem}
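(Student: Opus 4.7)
The plan is to realise the lattice of gauge-invariant ideals of $\N\O(\K,X)$ as the sublattice of gauge-invariant ideals of $\N\T_X$ that contain $\fJ_\K^{(\ol{\pi}_X,\ol{t}_X)}$, and to match this sublattice with the $\K$-relative NO-$2^d$-tuples via the bijection of Theorem \ref{T:NT param}. The crucial intermediate claim is the following compatibility: \emph{if $\L$ is an NT-$2^d$-tuple, then $\fJ_\K^{(\ol{\pi}_X,\ol{t}_X)} \subseteq \fJ^\L$ if and only if $\K \subseteq \L$.} Granted this, the proof proceeds in three steps.

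First, I will establish the compatibility claim. For the reverse direction, assume $\K \subseteq \L$, fix $F \subseteq [d]$ and $a \in \K_F$. When $F = \mt$, we have $a \in \K_\mt \subseteq \L_\mt$ and hence $\ol{\pi}_X(a) \in \fJ^\L$ by item (ii) of Proposition~\ref{P:tuple ideal} (applied via $\L = \L^{\fJ^\L}$). When $\mt \neq F \subseteq [d]$, I apply items (iv) and (v) of Proposition~\ref{P:rep quo} to the Nica-covariant representation $(\pi,t) := (Q_{\fJ^\L}\circ\ol{\pi}_X, Q_{\fJ^\L}\circ\ol{t}_X)$: since $a \in \L_F = \L_F^{(\pi,t)}$ by Proposition~\ref{P:tuple ideal}, and since relativity of $\K$ gives $\phi_{\un{n}}(a) \in \K(X_{\un{n}})$ for every $\un{0} \neq \un{n} \leq \un{1}_F$, we obtain
\[
Q_{\fJ^\L}(\ol{\pi}_X(a)\ol{q}_{X,F})
=
\pi(a) + \sum_{\un{0}\neq\un{n}\leq\un{1}_F} (-1)^{|\un{n}|} \psi_{\un{n}}(\phi_{\un{n}}(a))
=
0,
\]
so that $\ol{\pi}_X(a)\ol{q}_{X,F} \in \fJ^\L$, giving $\fJ_\K^{(\ol{\pi}_X,\ol{t}_X)} \subseteq \fJ^\L$. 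For the forward direction, assume $\fJ_\K^{(\ol{\pi}_X,\ol{t}_X)} \subseteq \fJ^\L$ and fix $a \in \K_F$. If $F = \mt$, then $\ol{\pi}_X(a) \in \fJ^\L$ forces $a \in \L_\mt$. If $\mt \neq F$, then by relativity of $\K$ and Proposition~\ref{P:prod cai}, $\pi(a)q_F$ coincides with the expansion above in $\N\T_X/\fJ^\L$, and it equals $0$ since $\ol{\pi}_X(a)\ol{q}_{X,F} \in \fJ^\L$; hence $\pi(a) \in B_{(\un{0},\un{1}_F]}^{(\pi,t)}$, whence $a \in \L_F^{(\pi,t)} = \L_F$.

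Second, I will use this compatibility, together with the one-to-one correspondence between gauge-invariant ideals of $\N\O(\K,X)$ and gauge-invariant ideals of $\N\T_X$ containing $\fJ_\K^{(\ol{\pi}_X,\ol{t}_X)}$ (obtained from equivariance of $Q_\K$ and $\N\O(\K,X) = \N\T_X/\fJ_\K^{(\ol{\pi}_X,\ol{t}_X)}$), to lift the bijection of Theorem~\ref{T:NT param} to the claimed bijection $\L \mapsto Q_\K(\fJ^\L)$. The inverse sends a gauge-invariant ideal $\fJ$ of $\N\O(\K,X)$ to the NT-$2^d$-tuple $\L^{Q_\K^{-1}(\fJ)}$, which contains $\K$ by the compatibility claim applied to $Q_\K^{-1}(\fJ) \supseteq \fJ_\K^{(\ol{\pi}_X,\ol{t}_X)}$. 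Preservation of inclusions is then immediate from the corresponding statement in Theorem~\ref{T:NT param}, together with monotonicity of $Q_\K$.

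Third, I will verify that the bijection is a lattice isomorphism, using Propositions~\ref{P:NTwedge} and~\ref{P:NTvee} together with standard properties of quotient lattices. Given $\K$-relative NO-$2^d$-tuples $\L_1,\L_2$, the join on the right-hand side is
\[
Q_\K(\fJ^{\L_1}) + Q_\K(\fJ^{\L_2}) = Q_\K(\fJ^{\L_1} + \fJ^{\L_2}) = Q_\K(\fJ^{\L_1 \vee \L_2}),
\]
where the first equality is automatic for the $*$-epimorphism $Q_\K$ and the second is Definition~\ref{D:NTlattice}. For the meet, since $\fJ_\K^{(\ol{\pi}_X,\ol{t}_X)} \subseteq \fJ^{\L_1} \cap \fJ^{\L_2}$ by the compatibility claim, the third isomorphism theorem gives
\[
Q_\K(\fJ^{\L_1} \cap \fJ^{\L_2}) = Q_\K(\fJ^{\L_1}) \cap Q_\K(\fJ^{\L_2}),
\]
and the right-hand side equals $Q_\K(\fJ^{\L_1 \wedge \L_2})$ by Definition~\ref{D:NTlattice}. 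Proposition~\ref{P:Krellattice} ensures that $\L_1 \vee \L_2$ and $\L_1 \wedge \L_2$ remain $\K$-relative NO-$2^d$-tuples, so the lattice structures match. The main obstacle is the compatibility claim above; once that is in hand, everything else is a routine translation through the existing parametrisation.
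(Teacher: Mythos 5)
Your proposal is correct and takes essentially the same route as the paper: both reduce the statement to Theorem \ref{T:NT param} via the correspondence between gauge-invariant ideals of $\N\O(\K,X)$ and gauge-invariant ideals of $\N\T_X$ containing $\fJ_\K^{(\ol{\pi}_X,\ol{t}_X)}$, with your compatibility claim ($\fJ_\K^{(\ol{\pi}_X,\ol{t}_X)}\subseteq\fJ^\L$ if and only if $\K\subseteq\L$) appearing in the paper split between the well-definedness step (proved there directly from Proposition \ref{P:JL conc} with $k_\un{n}=\phi_\un{n}(a)$, which is equivalent to your use of items (iii)--(v) of Proposition \ref{P:rep quo}) and the surjectivity step (handled there via the auxiliary tuple $\L'$ with $\fJ^{\L'}=\fJ_\K^{(\ol{\pi}_X,\ol{t}_X)}$ and monotonicity, while your Step 3 makes the lattice compatibility more explicit than the paper's closing remark). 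One cosmetic slip: for the case $F=\mt$ you should cite item (i) of Proposition \ref{P:tuple ideal}, which gives $\L^{\fJ^\L}=\L$ and hence $\L_\mt=\ker(Q_{\fJ^\L}\circ\ol{\pi}_X)$, rather than item (ii).
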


\begin{proof}
First note that the proposed lattice structure on the set of $\K$-relative NO-$2^d$-tuples of $X$ is well-defined by Proposition \ref{P:Krellattice}.
The comments preceding the statement of the theorem show that (\ref{Eq:NO param}) constitutes a well-defined map.

Next we check that the mapping is injective and surjective.
To this end, first we show that $\mathfrak{J}_\K^{(\ol{\pi}_X,\ol{t}_X)}\subseteq\mathfrak{J}^\L$ whenever $\L$ is a $\K$-relative NO-$2^d$-tuple of $X$.
Fix $F\subseteq[d]$ and $a\in\K_F$.
It suffices to show that $\ol{\pi}_X(a)\ol{q}_{X,F}\in\mathfrak{J}^\L$.
Since $\L$ is a $\K$-relative NO-$2^d$-tuple of $X$, we have $a\in\K_F\subseteq\L_F$.
Likewise, because $\phi_\un{n}(a)\in\K(X_\un{n})$ for all $\un{0}\neq\un{n}\leq\un{1}_F$, by Proposition \ref{P:JL conc} we obtain
\[
\ol{\pi}_X(a)\ol{q}_{X,F}=\ol{\pi}_X(a)+\sum\{(-1)^{|\un{n}|}\ol{\psi}_{X,\un{n}}(\phi_\un{n}(a))\mid \un{0}\neq\un{n}\leq\un{1}_F\}\in\mathfrak{J}^\L,
\]
as required.
Consequently, we have the following commutative diagram
\begin{center}
\begin{tikzcd}
\N\T_X
	\arrow[r, "Q_{\mathfrak{J}^\L}"]
	\arrow[d, "Q_\K" swap]
& \N\T_X/\mathfrak{J}^\L \\
\N\O(\K,X)
	\arrow[ur, "\exists!\Phi", swap, dashed]
\end{tikzcd}
\end{center}
of canonical $*$-epimorphisms, so that $\ker\Phi=Q_\K(\mathfrak{J}^\L)$.
Thus we obtain a $*$-isomorphism
\begin{equation}\label{Eq:isoL}
\tilde{\Phi}\colon \N\O(\K,X)/Q_\K(\mathfrak{J}^\L)\to\N\T_X/\mathfrak{J}^\L; \tilde{\Phi}(f+Q_\K(\mathfrak{J}^\L))=\Phi(f)\foral f\in\N\O(\K,X).
\end{equation}

For injectivity of the map (\ref{Eq:NO param}), suppose we have $\K$-relative NO-$2^d$-tuples $\L$ and $\L'$ of $X$ such that $Q_\K(\mathfrak{J}^\L)=Q_\K(\mathfrak{J}^{\L'})$.
Applying (\ref{Eq:isoL}) for $\L$ and $\L'$, we obtain a $*$-isomorphism
\[
\N\T_X/\mathfrak{J}^\L\to\N\T_X/\mathfrak{J}^{\L'}; Q_{\mathfrak{J}^\L}(f) \mapsto Q_{\mathfrak{J}^{\L'}}(f)\foral f\in\N\T_X.
\]
In turn, it follows that $\mathfrak{J}^\L=\mathfrak{J}^{\L'}$ and hence $\L=\L'$ by Theorem \ref{T:NT param}.

For surjectivity of the map (\ref{Eq:NO param}), let $\fJ$ be a gauge-invariant ideal of $\N\O(\K,X)$.
Then $Q_\K^{-1}(\fJ)$ is a gauge-invariant ideal of $\N\T_X$ and thus $Q_\K^{-1}(\fJ) = \fJ^\L$ for a unique NT-$2^d$-tuple $\L$ of $X$ by Theorem \ref{T:NT param}.
It suffices to show that $\K \subseteq \L$.
To this end, let $\L'$ be the NT-$2^d$-tuple so that
\[
\fJ^{\L'} = \mathfrak{J}_\K^{(\ol{\pi}_X,\ol{t}_X)}.
\]
We have $\K \subseteq \L'$ by definition of $\L'$.
Moreover, we have
\[
\fJ^{\L'} = \mathfrak{J}_\K^{(\ol{\pi}_X,\ol{t}_X)} \subseteq Q_\K^{-1}(\fJ) = \fJ^\L,
\]
and so $\L' \subseteq \L$ since the parametrisation of Theorem \ref{T:NT param} respects inclusions.
Thus $\K \subseteq \L' \subseteq \L$, as required.

The map (\ref{Eq:NO param}) respects inclusions and the lattice structure since it is a restriction of the first parametrisation map of Theorem \ref{T:NT param} (followed by the $*$-homomorphism $Q_\K$), which satisfies these properties.
\end{proof}

A direct consequence of Theorem \ref{T:NO param} is that, if $\fJ$ is a gauge-invariant ideal of $\N\T_X$, then $\L^\fJ$ is a $\K$-relative NO-$2^d$-tuple if and only if the quotient map $Q_\fJ \colon \N\T_X \to \N\T_X/\fJ$ factors through the quotient map $Q_\K \colon \N\T_X \to \N\O(\K,X)$.

Applying Theorem \ref{T:NO param} for $\K=\I$ provides the parametrisation of the gauge-invariant ideals of $\N\O_X$.

\begin{corollary}\label{C:CNP param}
Let $X$ be a strong compactly aligned product system with coefficients in a C*-algebra $A$.
Equip the set of NO-$2^d$-tuples of $X$ with the lattice structure of Definition \ref{D:NTlattice} (suitably restricted) and equip the set of gauge-invariant ideals of $\N\O_X$ with the usual lattice structure.
Then these sets are isomorphic as lattices via the map
\begin{equation}\label{Eq:NO param 2}
\L\mapsto Q_\I(\mathfrak{J}^\L),
\text{ for the canonical quotient map }
Q_\I \colon \N\T_X \to \N\O_X,
\end{equation}
for all NO-$2^d$-tuples $\L$ of $X$.
Moreover, this map respects inclusions.
\end{corollary}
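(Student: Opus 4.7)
The plan is to simply specialise Theorem \ref{T:NO param} to the case $\K=\I$. Three observations make this immediate. First, $\I$ is a relative $2^d$-tuple of $X$: this is noted in Subsection \ref{Ss: scaps}, where $\I_F \subseteq \bigcap\{\phi_{\un{i}}^{-1}(\K(X_{\un{i}}))\mid i\in F\}$ holds by definition of the ideals $\J_F$ and $\I_F \subseteq \J_F$. Second, by the convention $\N\O(\I, X) = \N\O_X$ recorded after Proposition \ref{P:NO}, the quotient map $Q_\I \colon \N\T_X \to \N\O(\I,X)$ is precisely the canonical quotient $\N\T_X \to \N\O_X$. Third, by Definition \ref{D:rel NO tuple}, the NO-$2^d$-tuples of $X$ are exactly the $\I$-relative NO-$2^d$-tuples of $X$.

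With these identifications in place, Theorem \ref{T:NO param} applied to $\K=\I$ yields a lattice isomorphism, preserving inclusions, between the set of NO-$2^d$-tuples of $X$ (equipped with the restricted lattice structure of Definition \ref{D:NTlattice}, which restricts well by Proposition \ref{P:Krellattice} in the case $\K = \I$) and the set of gauge-invariant ideals of $\N\O(\I, X) = \N\O_X$ (with its usual lattice structure), given by the assignment $\L \mapsto Q_\I(\mathfrak{J}^\L)$. This is exactly the statement of the corollary, so no further argument is needed beyond unpacking the definitions.

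There is no real obstacle here; the entire content of the corollary has already been absorbed into Theorem \ref{T:NO param} and the surrounding structural results (notably Proposition \ref{P:Krellattice}, which ensures that $\vee$ and $\wedge$ of NO-$2^d$-tuples remain NO-$2^d$-tuples, and Theorem \ref{T:NT param}, which underpins the bijection). The proof is thus a one-line invocation of Theorem \ref{T:NO param} with $\K = \I$.
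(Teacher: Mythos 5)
Your proposal is correct and matches the paper exactly: the corollary is stated immediately after Theorem \ref{T:NO param} with the remark that it follows by applying that theorem with $\K=\I$, which is precisely your argument. The supporting identifications you cite ($\I$ being a relative $2^d$-tuple, $\N\O(\I,X)=\N\O_X$, and NO-$2^d$-tuples being the $\I$-relative NO-$2^d$-tuples of Definition \ref{D:rel NO tuple}) are the same ones the paper relies on.
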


Theorem \ref{T:NT param} recaptures the parametrisation of Katsura \cite{Kat07}, as presented in Theorem \ref{T:Kat par}.
More generally, Theorem \ref{T:NO param} recaptures \cite[Proposition 11.9]{Kat07}.

\section{Applications}\label{S:app}

The applications that we consider in this section pertain to product systems over $\bZ_+^d$ that are regular, or arise from C*-dynamical systems, or from strong finitely aligned higher-rank graphs, or whose fibres (apart from the coefficient algebra) admit finite frames.
We begin by exploring the situation where an ideal can be placed as the $\L_\mt$-member of an NO-$2^d$-tuple $\L$, which will be helpful for further examples.

\subsection{Participation in an NO-$2^d$-tuple}\label{Ss:part}

If $I\subseteq A$ is an ideal that is positively invariant for $X$, then the quotient map $X \to [X]_I$ induces a canonical $*$-epimorphism
\[
[\hspace{1pt}\cdot\hspace{1pt}]_I \colon \N\T_X \to \N\T_{[X]_I},
\]
due to the universality of $\N\T_X$.
It is well known that this map does not in general descend to a canonical $*$-epimorphism $\N\O_X\to\N\O_{[X]_I}$, even for $d=1$ (see Example \ref{E:surjcounter} for a counterexample).
However, using the NO-$2^d$-tuple machinery, we can determine precisely when this occurs.
To this end, we introduce the following definition, modelled after \cite[Definition 4.8]{Kat07}.

\begin{definition}\label{D:neginv}
Let $X$ be a strong compactly aligned product system with coefficients in a C*-algebra $A$.
We say that an ideal $I\subseteq A$ is \emph{negatively invariant for $X$} if
\[
\I_F\cap X_F^{-1}(I)\subseteq I\foral \mt\neq F\subseteq[d].
\]
\end{definition}

Definition \ref{D:neginv} leads to the following natural extension of \cite[Proposition 5.3]{Kat07}.

\begin{lemma}\label{L:neginvchar}
Let $X$ be a strong compactly aligned product system with coefficients in a C*-algebra $A$ and let $I\subseteq A$ be an ideal.
Then $I$ is negatively invariant for $X$ if and only if $\I_F\subseteq J_F(I,X)$ for all $\mt\neq F\subseteq[d]$.
\end{lemma}

\begin{proof}
Assume that $I$ is negatively invariant for $X$.
Fix $\mt \neq F\subseteq[d]$ and take $a\in\I_F$.
Then $\phi_\un{i}(a)\in\K(X_\un{i})$ for all $i\in[d]$ and so $[\phi_\un{i}(a)]_I\in\K([X_\un{i}]_I)$ for all $i\in[d]$ by Lemma \ref{L:Kat07}.
Moreover, we have
\[
aX_F^{-1}(I)\subseteq \I_F\cap X_F^{-1}(I)\subseteq I.
\]
Hence $a \in J_F(I, X)$ and we conclude that $\I_F \subseteq J_F(I,X)$ for all $\mt\neq F\subseteq[d]$, as required.

Now assume that $\I_F\subseteq J_F(I,X)$ for all $\mt\neq F\subseteq[d]$.
Fix $\mt\neq F\subseteq[d]$ and take an element $a\in\I_F\cap X_F^{-1}(I)$.
We have $a\in J_F(I,X)$ by assumption, so $aX_F^{-1}(I)\subseteq I$.
Since $a \in X_F^{-1}(I)$, we obtain $a \in I$.
It follows that $I$ is negatively invariant, completing the proof.
\end{proof}

In order to motivate further the importance of the $\L_\mt$-member of an NO-$2^d$-tuple $\L$, we give the following proposition.

\begin{proposition}\label{P:inj phi}
Let $X$ be a strong compactly aligned product system with coefficients in a C*-algebra $A$ and let $\L$ be an NO-$2^d$-tuple of $X$.
If $\L$ satisfies 
\[
\I_F([X]_{\L_\mt})\subseteq(\I_F(X)+\L_\mt)/\L_\mt
\foral F\subseteq[d],
\]
then $[\L]_{\L_\mt} = \I([X]_{\L_\mt})$ and thus $\N\O([\L]_{\L_\mt},[X]_{\L_\mt}) = \N\O_{[X]_{\L_\mt}}$.
\end{proposition}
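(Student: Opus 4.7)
The plan is to establish the two inclusions that together give the equality $[\L]_{\L_\mt} = \I([X]_{\L_\mt})$; once this equality is in hand, the identity $\N\O([\L]_{\L_\mt},[X]_{\L_\mt}) = \N\O_{[X]_{\L_\mt}}$ is immediate from the definition $\N\O_{[X]_{\L_\mt}} \equiv \N\O(\I([X]_{\L_\mt}),[X]_{\L_\mt})$.

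For the forward inclusion $[\L]_{\L_\mt} \subseteq \I([X]_{\L_\mt})$, I would invoke Proposition \ref{P:NT tuple + m fam}: since $\L$ is an NT-$2^d$-tuple (as NO-$2^d$-tuples are a subclass), the family $[\L]_{\L_\mt}$ is an (M)-$2^d$-tuple of $[X]_{\L_\mt}$, hence in particular an (E)-$2^d$-tuple, and therefore contained in $\I([X]_{\L_\mt})$ by Definition \ref{D:e fam}.

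For the reverse inclusion, I would fix $F \subseteq [d]$ and combine the hypothesis with the defining property of an NO-$2^d$-tuple. Namely, the hypothesis provides $\I_F([X]_{\L_\mt}) \subseteq (\I_F(X) + \L_\mt)/\L_\mt$. Since $\L$ is an NO-$2^d$-tuple, we have $\I_F(X) \subseteq \L_F$ by Definition \ref{D:rel NO tuple}, and since $\L$ is partially ordered (condition (iii) of Definition \ref{D:NT tuple}), we have $\L_\mt \subseteq \L_F$, so that $\I_F(X) + \L_\mt \subseteq \L_F$. Passing to the quotient yields $(\I_F(X) + \L_\mt)/\L_\mt \subseteq \L_F/\L_\mt = [\L_F]_{\L_\mt}$, which chains together to $\I_F([X]_{\L_\mt}) \subseteq [\L_F]_{\L_\mt}$, as required.

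There is no serious obstacle here; the proof is a direct bookkeeping argument in which the only subtlety is keeping track of where each containment comes from (the hypothesis, the NO-condition $\I \subseteq \L$, and the partial ordering of $\L$). The final sentence of the proof simply records that $\N\O([\L]_{\L_\mt},[X]_{\L_\mt}) = \N\O(\I([X]_{\L_\mt}),[X]_{\L_\mt}) = \N\O_{[X]_{\L_\mt}}$.
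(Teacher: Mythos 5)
Your proof is correct and follows essentially the same route as the paper, which establishes the equality via the single chain $\I_F([X]_{\L_\mt})\subseteq(\I_F(X)+\L_\mt)/\L_\mt\subseteq(\L_F+\L_\mt)/\L_\mt=[\L_F]_{\L_\mt}\subseteq\I_F([X]_{\L_\mt})$, using the NO-condition $\I(X)\subseteq\L$ for the middle step and Proposition \ref{P:NT tuple + m fam} (that $[\L]_{\L_\mt}$ is an (M)-$2^d$-tuple, hence an (E)-$2^d$-tuple) for the final step. Your only cosmetic difference is splitting the chain into two separately stated inclusions and making the use of the partial ordering $\L_\mt\subseteq\L_F$ explicit, which the paper absorbs into the identity $(\L_F+\L_\mt)/\L_\mt=[\L_F]_{\L_\mt}$.
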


\begin{proof}
By assumption, for all $F \subseteq [d]$ we have
\begin{align*}
\I_F([X]_{\L_\mt})
\subseteq
(\I_F(X)+\L_\mt)/\L_\mt
\subseteq 
(\L_F + \L_\mt)/\L_\mt
=
[\L_F]_{\L_\mt}
\subseteq
\I_F([X]_{\L_\mt}),
\end{align*}
using that $\L$ is an NO-$2^d$-tuple in the second inclusion, and that $[\L]_{\L_{\mt}}$ is an (M)-$2^d$-tuple of $[X]_{\L_\mt}$ by Proposition \ref{P:NT tuple + m fam} in the final inclusion.
Hence 
\[
[\L]_{\L_\mt} = \I([X]_{\L_\mt}),
\]
and thus by definition 
\[
\N\O([\L]_{\L_\mt},[X]_{\L_\mt}) = \N\O_{[X]_{\L_\mt}},
\] 
as required.
\end{proof}

\begin{definition}\label{D:NO part}
Let $X$ be a strong compactly aligned product system with coefficients in a C*-algebra $A$ and let $I\subseteq A$ be an ideal.
We say that $I$ \emph{participates in an NO-$2^d$-tuple of $X$} if there exists an NO-$2^d$-tuple $\L$ of $X$ such that $\L_\mt = I$.
\end{definition}

If $I$ is a positively invariant ideal for $X$, then negative invariance of $I$ is necessary and sufficient for the quotient map $X \to [X]_I$ to induce a canonical $*$-epimorphism between the corresponding Cuntz-Nica-Pimsner algebras.

\begin{proposition}\label{P:NO can surj}
Let $X$ be a strong compactly aligned product system with coefficients in a C*-algebra $A$ and let $I\subseteq A$ be an ideal.
Then the following are equivalent:
\begin{enumerate}
\item $I$ participates in an NO-$2^d$-tuple of $X$;
\item $I$ is positively and negatively invariant for $X$;
\item $I$ is positively invariant for $X$ and the quotient map $X \to [X]_I$ lifts to a canonical $*$-epimorphism $\N\O_X\to\N\O_{[X]_I}$.
\end{enumerate}
\end{proposition}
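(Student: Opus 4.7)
The plan is to establish the cycle (i) $\Rightarrow$ (ii) $\Rightarrow$ (iii) $\Rightarrow$ (i), trading on Proposition \ref{P:NT tuple + m fam} to convert NT-$2^d$-tuples into (M)-$2^d$-tuples of the quotient and on Proposition \ref{P:quo ideal} to translate between $J_F(I,X)$ and $\J_F([X]_I)$.

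For (i) $\Rightarrow$ (ii), I would fix an NO-$2^d$-tuple $\L$ with $\L_\mt = I$. Positive invariance of $I$ is immediate from $X$-invariance of $\L$ at $F=\mt$. For negative invariance, fix $\mt \neq F \subseteq [d]$ and $a\in\I_F\cap X_F^{-1}(I)$. Since $\I\subseteq\L$ we have $a\in\L_F$, and the NT-condition gives $\L_F\subseteq J_F(I,X)$, so an appeal to Proposition \ref{P:quo ideal}(iii) yields $a\in J_F(I,X)\cap X_F^{-1}(I) = I$.

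For (ii) $\Rightarrow$ (iii), I would define a candidate $2^d$-tuple $\L$ by $\L_\mt := I$ and $\L_F := [\hspace{1pt}\cdot\hspace{1pt}]_I^{-1}(\I_F([X]_I))$ for $\mt\neq F\subseteq[d]$; positive invariance ensures $[X]_I$ is a strong compactly aligned product system by Proposition \ref{P:qnt sca}. Since $[\L]_I = \I([X]_I)$ (which is an (M)-$2^d$-tuple of $[X]_I$ by Remark \ref{R:max I}), Proposition \ref{P:NT tuple + m fam} delivers that $\L$ is an NT-$2^d$-tuple of $X$. The main step is verifying $\I(X)\subseteq\L$: for $a\in\I_F(X)$ and $\un{n}\perp F$, invariance of $\I$ gives $\sca{X_\un{n}, a X_\un{n}}\subseteq\I_F(X)$, and by Proposition \ref{P:neginvchar} combined with negative invariance and Proposition \ref{P:quo ideal}(ii) we obtain
\[
\sca{X_\un{n}, a X_\un{n}}\subseteq\I_F(X)\subseteq J_F(I,X)=[\hspace{1pt}\cdot\hspace{1pt}]_I^{-1}(\J_F([X]_I)).
\]
Quotienting shows $\sca{[X_\un{n}]_I,[a]_I[X_\un{n}]_I}\subseteq\J_F([X]_I)$ for all $\un{n}\perp F$, so $[a]_I\in\I_F([X]_I)$ and hence $a\in\L_F$. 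Thus $\L$ is an NO-$2^d$-tuple. With $[\L]_I=\I([X]_I)$, we have $\N\O([\L]_I,[X]_I)=\N\O_{[X]_I}$, so the canonical map $\pi^\L\times t^\L\colon\N\T_X\to\N\O_{[X]_I}$ lifts the quotient $X\to[X]_I$ and, by Theorem \ref{T:NO param} applied to the NO-$2^d$-tuple $\L$, it factors through $\N\O_X$ to yield the desired canonical $*$-epimorphism.

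For (iii) $\Rightarrow$ (i), I would compose the canonical map $\rho\colon\N\O_X\to\N\O_{[X]_I}$ with the universal CNP-representation of $X$ to obtain a Nica-covariant representation $(\pi,t)=(\rho\circ\pi_X^\I,\rho\circ t_X^\I)$ of $X$ that admits a gauge action and is a CNP-representation of $X$. Since $\rho$ realises the canonical factorisation through the quotient $[\hspace{1pt}\cdot\hspace{1pt}]_I$ and the universal CNP-representation of $[X]_I$ is injective, $\ker\pi = I$. The NT-$2^d$-tuple $\L^{(\pi,t)}$ produced by Proposition \ref{P:NT tuple all rep} then satisfies $\L_\mt^{(\pi,t)} = I$, and for $a\in\I_F$ the CNP relation $\pi(a)q_F=0$, expressed as the alternating sum in Proposition \ref{P:prod cai}, places $\pi(a)\in B^{(\pi,t)}_{(\un{0},\un{1}_F]}$, so $a\in\L_F^{(\pi,t)}$. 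Hence $\I\subseteq\L^{(\pi,t)}$ and $\L^{(\pi,t)}$ is an NO-$2^d$-tuple with $\L_\mt^{(\pi,t)}=I$. The principal obstacle is the verification $\I(X)\subseteq\L$ in step (ii) $\Rightarrow$ (iii); this is where negative invariance is leveraged through Proposition \ref{P:quo ideal}, and it is the only nontrivial NO-property to verify since the NT-properties are handed to us for free by the reduction to the (M)-$2^d$-tuple $\I([X]_I)$.
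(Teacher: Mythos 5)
Your proof is correct, but in two of the three implications it takes a genuinely different route from the paper. For [(i)$\Rightarrow$(ii)] you argue essentially as the paper does (the paper phrases negative invariance through Proposition \ref{P:neginvchar}, you use Proposition \ref{P:quo ideal}(iii) directly; these are the same computation). For [(ii)$\Rightarrow$(iii)] the paper works with the tuple $\L_F := \I_F(X) + I$, shows only that $[\L]_I$ is an (E)-$2^d$-tuple of $[X]_I$, builds the chain $\N\T_X \to \N\T_{[X]_I} \to \N\O([\L]_I,[X]_I) \to \N\O_{[X]_I}$ using the \emph{co-universality} of $\N\O_{[X]_I}$ for the last arrow, and then verifies the kernel containment $\ker Q_{\I(X)} \subseteq \ker (Q \circ [\hspace{1pt}\cdot\hspace{1pt}]_I)$ by a direct computation on the generators $\ol{\pi}_X(a)\ol{q}_{X,F}$. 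You instead take the \emph{maximal} pullback $\L_F := [\hspace{1pt}\cdot\hspace{1pt}]_I^{-1}(\I_F([X]_I))$, so that $[\L]_I = \I([X]_I)$ on the nose and $\N\O([\L]_I,[X]_I) = \N\O_{[X]_I}$ by definition; the only substantive check becomes $\I(X)\subseteq\L$, which negative invariance delivers exactly as you say, and the factorisation through $\N\O_X$ is then a formal consequence of Theorem \ref{T:NO param} (whose proof contains the inclusion $\fJ_\I^{(\ol{\pi}_X,\ol{t}_X)}\subseteq\fJ^\L$, so no circularity arises — that theorem precedes this proposition). Your route avoids both the co-universality appeal and the generator-chasing, at the cost of invoking the heavier parametrisation machinery; the paper's choice $\I_F(X)+I$ has the side benefit of producing the explicit tuple reused in Corollary \ref{C:NO source}, where the extra hypothesis $\I_F([X]_I) = (\I_F(X)+I)/I$ is needed precisely because that smaller tuple need not be maximal, whereas your pullback tuple requires no such hypothesis. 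For [(iii)$\Rightarrow$(i)] the paper pulls $\ker\Phi$ back to the gauge-invariant ideal $\fJ = Q_{\I(X)}^{-1}(\ker\Phi)$ of $\N\T_X$, notes $\fJ^{\I(X)}\subseteq\fJ$ to get NO-ness from inclusion-preservation of the parametrisation, and extracts $\L_\mt^\fJ = I$ by restricting a chain of canonical $*$-isomorphisms to the coefficient algebras; you work directly with the representation $(\rho\circ\pi_X^\I,\rho\circ t_X^\I)$, read off $\ker\pi = I$ from injectivity of the universal CNP-representation of $[X]_I$, and verify $\I\subseteq\L^{(\pi,t)}$ by pushing the CNP-relations of Proposition \ref{P:prod cai} through $\rho$ — the same underlying tuple, obtained more concretely. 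Both arguments are sound.
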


\begin{proof}
\noindent
[(i)$\Rightarrow$(ii)]: Assume that $I$ participates in an NO-$2^d$-tuple $\L$ of $X$.
Then in particular $I = \L_\mt$ is positively invariant for $X$.
On the other hand, since $\I(X) \subseteq \L$ we have
\[
\I_F(X) \subseteq\L_F\subseteq J_F(\L_\mt,X)=J_F(I,X) \foral \mt \neq F \subseteq [d].
\]
Lemma \ref{L:neginvchar} then gives that $I$ is negatively invariant for $X$.

\smallskip

\noindent
[(ii)$\Rightarrow$(iii)]: Assume that $I$ is positively and negatively invariant for $X$ and let $\L$ be the $2^d$-tuple of $X$ (consisting of ideals) defined by 
\[
\L_F := \I_F(X) + I
\foral 
F \subseteq [d].
\]
Since $\I(X)$ is invariant and $I$ is positively invariant for $X$, we have that $\L$ is invariant.
Since $\I(X)$ is partially ordered, we also have that $\L$ is partially ordered.
Moreover, by Lemma \ref{L:neginvchar} we have 
\[
I \subseteq \L_F \equiv \I_F(X) + I \subseteq J_F(I,X) \foral \mt \neq F \subseteq [d],
\]
where we also use that $I$ is positively invariant and so $I \subseteq J_F(I,X)$.
Therefore, the family $[\L]_I$ is an invariant and partially ordered $2^d$-tuple of $[X]_I$ that consists of ideals and satisfies $[\L]_I \subseteq \J([X]_I)$ by Lemma \ref{L:quo ideal}.
Hence $[\L]_I$ is an (E)-$2^d$-tuple of $[X]_I$ by Lemma \ref{L:J+inv}.
Consequently we have the canonical $*$-epimorphisms
\[
\N\T_X \stackrel{{[\hspace{1pt}\cdot\hspace{1pt}]_I}}{\longrightarrow} \N\T_{[X]_I} \stackrel{Q}{\longrightarrow} \N\O([\L]_I, [X]_I) \longrightarrow \N\O_{[X]_I},
\]
where the final $*$-epimorphism follows from the co-universal property of $\N\O_{[X]_I}$.

In order to deduce the required $*$-epimorphism, it suffices to close the following diagram
\[
\xymatrix{
\N\T_X \ar[rr]^{[\hspace{1pt}\cdot\hspace{1pt}]_I} \ar[d]_{Q_{\I(X)}} & & \N\T_{[X]_I} \ar[d]^Q & & \\
\N\O_X \ar@{-->}[rr] & & \N\O([\L]_I, [X]_I) \ar[rr] & & \N\O_{[X]_I}
}
\]
by a canonical $*$-epimorphism.
Hence it suffices to show that the kernel $\ker Q_{\I(X)} = \fJ_{\I(X)}^{(\ol{\pi}_X, \ol{t}_X)}$ is contained in the kernel $\ker Q \circ [\hspace{1pt}\cdot\hspace{1pt}]_I$.

To this end, recall that the generators of $\ker Q_{\I(X)} = \fJ_{\I(X)}^{(\ol{\pi}_X, \ol{t}_X)}$ are of the form $\ol{\pi}_X(a) \ol{q}_{X,F}$, where $a \in \I_F(X)$ and $F \subseteq [d]$.
Fix $F \subseteq [d]$ and $a \in \I_F(X)\subseteq\L_F$, so that $[a]_I \in [\L_F]_I$.
Then by definition we have
\[
[\ol{\pi}_X(a) \ol{q}_{X,F}]_I = \ol{\pi}_{[X]_I}([a]_I) \ol{q}_{[X]_I, F} \in \fJ_{[\L]_I}^{(\ol{\pi}_{[X]_I}, \ol{t}_{[X]_I})} = \ker Q,
\]
and thus $\ol{\pi}_X(a) \ol{q}_{X,F} \in \ker Q \circ [\hspace{1pt}\cdot\hspace{1pt}]_I$.
It follows that $\ker Q_{\I(X)} \subseteq \ker Q \circ [\hspace{1pt}\cdot\hspace{1pt}]_I$, as required.

\smallskip

\noindent
[(iii)$\Rightarrow$(i)]: Assume that $I$ is positively invariant for $X$ and that the quotient map $X \to [X]_I$ lifts to a canonical $*$-epimorphism $\Phi \colon \N\O_X\to\N\O_{[X]_I}$.
Then $\ker\Phi\subseteq\N\O_X$ is a gauge-invariant ideal, and we can consider the gauge-invariant ideal
\[
\mathfrak{J}:=Q_{\I(X)}^{-1}(\ker\Phi)\subseteq\N\T_X.
\]
In particular we have $\mathfrak{J}^{\I(X)} \subseteq\mathfrak{J}$, and so $\L^\fJ$ is an NO-$2^d$-tuple of $X$ since the parametrisation of Theorem \ref{T:NT param} respects inclusions.
We obtain a sequence of canonical $*$-isomorphisms
\[
\N\O([\L^\mathfrak{J}]_{\L_\mt^\mathfrak{J}},[X]_{\L_\mt^\mathfrak{J}})\cong\N\T_X/\mathfrak{J}\cong\N\O_X/Q_{\I(X)}(\mathfrak{J}) \equiv \N\O_X/ \ker \Phi \cong\N\O_{[X]_I},
\]
where the first $*$-isomorphism is given by item (ii) of Proposition \ref{P:tuple ideal}.
By restricting to the coefficient algebras, we have $[A]_{\L_{\mt}^\fJ} \cong [A]_I$ by the map $a + \L_\mt^\fJ \mapsto a + I$ for all $a \in A$.
Thus $\L_\mt^\mathfrak{J}=I$, as required.
\end{proof}

We now collect in one place the main results of this subsection and their consequences.

\begin{corollary}\label{C:NO source}
Let $X$ be a strong compactly aligned product system with coefficients in a C*-algebra $A$.
Let $I\subseteq A$ be an ideal that is positively invariant for $X$ and satisfies 
\[
\I_F([X]_I) = (\I_F(X)+I)/I \foral F\subseteq[d].
\]
Let the $2^d$-tuple $\L$ of $X$ be defined by $\L_F:=\I_F(X)+I$ for all $F\subseteq[d]$.
Then the following hold:
\begin{enumerate}
\item The $2^d$-tuple $\L$ is an NO-$2^d$-tuple of $X$.
\item $\N\O_X/Q_\I(\mathfrak{J}^\L)\cong\N\O_{[X]_I}$ canonically, where $Q_\I\colon\N\T_X\to\N\O_X$ is the quotient map.
\item If $I\subseteq\bigcap\{\phi_\un{i}^{-1}(\K(X_\un{i}))\mid i\in[d]\}$, then 
\[
Q_\I(\mathfrak{J}^\L) = \sca{Q_\I(\ol{\pi}_X(I))}=\ol{\spn}\{t_{X,\un{n}}^\I(X_\un{n})\pi_X^\I(I)t_{X,\un{m}}^\I(X_\un{m})^*\mid \un{n},\un{m}\in\bZ_+^d\},
\]
and thus $\N\O_{IXI}$ is a hereditary, full C*-subalgebra of $Q_\I(\fJ^\L)$.
\end{enumerate}
\end{corollary}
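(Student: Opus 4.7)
The strategy is to verify item (i) directly from Definition \ref{D:NT tuple}, obtain item (ii) by combining item (i) of Proposition \ref{P:tuple ideal} with the factorisation of $Q_\I$ through the quotient by $\fJ^\L$, and establish item (iii) by reducing the generators of $\fJ^\L$ (as described in Proposition \ref{P:JL conc}) modulo the strong covariance relations.

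For item (i), it suffices to check the four conditions of Definition \ref{D:NT tuple} and that $\I(X) \subseteq \L$. The latter inclusion and the fact that $\L$ consists of ideals are immediate from $\L_F := \I_F(X) + I$, which also gives $\L_\mt = I$. Invariance of $\L$ follows from invariance of $\I(X)$ and positive invariance of $I$; partial ordering of $\L$ is inherited from that of $\I(X)$. For the relativity condition $\L_F \subseteq J_F(I, X)$, I would use the hypothesis $\I_F([X]_I) = (\I_F(X)+I)/I$ together with $\I_F([X]_I) \subseteq \J_F([X]_I)$, and then apply Proposition \ref{P:quo ideal}(ii), which gives $J_F(I,X) = [\hspace{1pt}\cdot\hspace{1pt}]_I^{-1}(\J_F([X]_I))$. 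Finally, condition (iv) of Definition \ref{D:NT tuple} reduces, via the hypothesis, to showing $\I([X]_I)^{(1)} \subseteq \I([X]_I)$, which is immediate since $\I([X]_I)$ is an (M)-$2^d$-tuple by Remark \ref{R:max I} and Theorem \ref{T:m fam v2} then applies.

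For item (ii), since $\L$ is an NO-$2^d$-tuple we have $\fJ^{\I} \subseteq \fJ^\L$ by Theorem \ref{T:NT param}, so $Q_\I$ factors through $\N\T_X \to \N\T_X/\fJ^\L$, giving a canonical $*$-isomorphism $\N\O_X/Q_\I(\fJ^\L) \cong \N\T_X/\fJ^\L$. Using item (i) of Proposition \ref{P:tuple ideal} together with the identification $[\L]_I = \I([X]_I)$ from the hypothesis, the latter is canonically $*$-isomorphic to $\N\O([\L]_I, [X]_I) = \N\O_{[X]_I}$.

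For item (iii), the inclusion $\sca{Q_\I(\ol{\pi}_X(I))} \subseteq Q_\I(\fJ^\L)$ is immediate from $I = \L_\mt$ and Proposition \ref{P:JL conc}. For the reverse inclusion, a generator of $\fJ^\L$ has the form $\ol{\pi}_X(a) + \sum_{\un{0}\neq\un{n}\leq\un{1}_F}(-1)^{|\un{n}|}\ol{\psi}_{X,\un{n}}(k_\un{n})$ with $a = a_0 + a_1 \in \I_F(X) + I$ and $[\phi_\un{n}(a)]_I = [k_\un{n}]_I$. Applying $Q_\I$ and using the CNP-relation $\pi_X^\I(a_0) + \sum (-1)^{|\un{n}|}\psi_{X,\un{n}}^\I(\phi_\un{n}(a_0)) = 0$, this image reduces to $\pi_X^\I(a_1) + \sum (-1)^{|\un{n}|}\psi_{X,\un{n}}^\I(k_\un{n}')$ with $k_\un{n}' := k_\un{n} - \phi_\un{n}(a_0)$. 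Here the key observation (and main technical point) is that $k_\un{n}' \in \K(X_\un{n} I)$: indeed, $k_\un{n} - \phi_\un{n}(a) \in \K(X_\un{n} I)$ by Lemma \ref{L:Kat07} and the relation $[\phi_\un{n}(a)]_I = [k_\un{n}]_I$, while $\phi_\un{n}(a_1) \in \K(X_\un{n} I)$ by Proposition \ref{P:inv comp} applied with $F = \mt$ to the positively invariant $I$, using the hypothesis $I \subseteq \bigcap_i \phi_\un{i}^{-1}(\K(X_\un{i}))$ of item (iii). Approximating by rank-one operators $\Theta_{\xi_\un{n} b, \eta_\un{n}}$ with $b \in I$ and using Nica-covariance places each $\psi_{X,\un{n}}^\I(k_\un{n}')$ in $[t_{X,\un{n}}^\I(X_\un{n}) \pi_X^\I(I) t_{X,\un{n}}^\I(X_\un{n})^*]$, while $\pi_X^\I(a_1) \in \pi_X^\I(I)$. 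Hence $Q_\I(\fJ^\L) \subseteq \sca{Q_\I(\ol{\pi}_X(I))}$, establishing the first equality. The second equality is then Proposition \ref{P:IXIherfull}(i) applied to the Nica-covariant representation $(\pi_X^\I, t_X^\I)$, and the final claim about $\N\O_{IXI}$ follows directly from Proposition \ref{P:NOIXIembed}. The only delicate step is the generator reduction just described, which is routine once Proposition \ref{P:inv comp} is in place.
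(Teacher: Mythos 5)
Your proposal is correct, and its overall skeleton matches the paper's: item (ii) is argued the same way (factor $Q_\I$ through $\N\T_X\to\N\T_X/\fJ^\L$ and apply item (i) of Proposition \ref{P:tuple ideal} with the identification $[\L]_I=\I([X]_I)$), and the closing steps of item (iii) via Propositions \ref{P:IXIherfull} and \ref{P:NOIXIembed} are identical. The differences are local. For item (i), the paper simply notes that $[\L]_I=\I([X]_I)$ is an (M)-$2^d$-tuple of $[X]_I$ by Remark \ref{R:max I} and cites Proposition \ref{P:NT tuple + m fam}; your hands-on verification of the four conditions of Definition \ref{D:NT tuple} (using Proposition \ref{P:quo ideal} for relativity and Theorem \ref{T:m fam v2} for condition (iv)) in effect unrolls the proof of that proposition --- correct, just longer. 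For item (iii), the paper first uses the hypothesis $I\subseteq\bigcap\{\phi_{\un{i}}^{-1}(\K(X_{\un{i}}))\mid i\in[d]\}$ to make $\L$ relative, so that $\fJ^\L=\fJ_\L^{(\ol{\pi}_X,\ol{t}_X)}=\sca{\ol{\pi}_X(\L_F)\ol{q}_{X,F}\mid F\subseteq[d]}$ by the second part of Proposition \ref{P:JL conc}; the CNP-relations kill the $\I_F(X)$-part, and the surviving elements $\pi_X^\I(a)q_{X,F}^\I$ with $a\in I$ are absorbed into $\sca{\pi_X^\I(I)}$ by expanding via Proposition \ref{P:prod cai} and the approximate-unit identity $\psi_{X,\un{1}_D}^\I(\phi_{\un{1}_D}(a))=\nor{\cdot}\text{-}\lim_\la\pi_X^\I(a)\psi_{X,\un{1}_D}^\I(e_{D,\la})$. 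You instead keep the general NT-generators from the first part of Proposition \ref{P:JL conc}, split $a=a_0+a_1\in\I_F(X)+I$, cancel the $a_0$-part against the CNP-relation, and localise the residual compacts $k_{\un{n}}'=k_{\un{n}}-\phi_{\un{n}}(a_0)$ in $\K(X_{\un{n}}I)$ via Lemma \ref{L:Kat07} together with Proposition \ref{P:inv comp} applied with $F=\mt$ (the compactness of $\phi_{\un{n}}(a)$ needed for Lemma \ref{L:Kat07} is correctly sourced from $a_0\in\I_F(X)\subseteq\J_F(X)$, the item-(iii) hypothesis on $I$, and strong compact alignment), whence $\psi_{X,\un{n}}^\I(k_{\un{n}}')\in\sca{\pi_X^\I(I)}$. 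Both routes are sound: yours effectively inlines the compact-localisation argument from the proof of Proposition \ref{P:JL conc} and thereby bypasses the approximate-unit computation, while the paper's buys a cleaner generating set up front at the cost of that extra limit argument.
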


\begin{proof}
(i) By assumption we have $[\L]_I=\I([X]_I)$ and $\L_\mt= I$.
Thus $[\L]_I$ is an (M)-$2^d$-tuple of $[X]_I$, and in turn $\L$ is an NT-$2^d$-tuple of $X$ by Proposition \ref{P:NT tuple + m fam}.
Moreover, $\I(X) \subseteq \L$ by definition and thus $\L$ is an NO-$2^d$-tuple, proving item (i).

\smallskip

\noindent
(ii) By item (i) of Proposition \ref{P:tuple ideal}, we have the canonical $*$-isomorphisms
\[
\N\O_X/Q_\I(\mathfrak{J}^\L)\cong\N\T_X/\mathfrak{J}^\L\cong\N\O([\L]_I,[X]_I) = \N\O_{[X]_I},
\]
proving item (ii).

\smallskip

\noindent
(iii) Assume that $I\subseteq\bigcap\{\phi_\un{i}^{-1}(\K(X_\un{i}))\mid i\in[d]\}$.
This ensures that $\L$ is both an NT-$2^d$-tuple and a relative $2^d$-tuple of $X$.
Hence $\mathfrak{J}^\L=\mathfrak{J}_\L^{(\ol{\pi}_X,\ol{t}_X)}$ by Proposition \ref{P:JL conc}.
Since 
\[
\pi_X^\I(\I_F(X))q_{X,F}^\I = \{0\} \foral F\subseteq[d],
\]
we have
\begin{align*}
Q_\I(\mathfrak{J}^\L) 
& =
Q_\I(\sca{\ol{\pi}_X(\L_F)\ol{q}_{X,F}\mid F\subseteq[d]}) \\
& =
\sca{\pi_X^\I(\L_F)q_{X,F}^\I \mid F\subseteq[d]} \\
& =
\sca{\pi_X^\I(I),\pi_X^\I(I)q_{X,F}^\I\mid \mt\neq F\subseteq[d]}.
\end{align*}

To see that 
\[
\sca{\pi_X^\I(I),\pi_X^\I(I)q_{X,F}^\I\mid \mt\neq F\subseteq[d]}=\sca{Q_\I(\ol{\pi}_X(I))}=\sca{\pi_X^\I(I)},
\]
it suffices to show that the right hand side contains the generators of the left hand side.
To this end, fix $a\in I$ and $\mt\neq F\subseteq[d]$.
An application of Proposition \ref{P:prod cai} gives that
\[
\pi_X^\I(a)q_{X,F}^\I=\pi_X^\I(a)+\sum\{(-1)^{|\un{n}|}\psi_{X,\un{n}}^\I(\phi_\un{n}(a))\mid \un{0}\neq\un{n}\leq\un{1}_F\}.
\]
Fixing $\un{0}\neq\un{n}\leq\un{1}_F$, we have $\un{n}=\un{1}_D$ for some $\mt\neq D\subseteq F$.
Hence we obtain
\[
\psi_{X,\un{n}}^\I(\phi_\un{n}(a))
=
\nor{\cdot}\text{-}\lim_\la \psi_{X,\un{n}}^\I(\phi_\un{n}(a)e_{D,\la})
=
\nor{\cdot}\text{-}\lim_\la \pi_X^\I(a)\psi_{X,\un{n}}^\I(e_{D,\la})
\in
\sca{Q_\I(\ol{\pi}_X(I))},
\]
where we use the nomenclature of Proposition \ref{P:sca ai} in conjunction with \cite[(2.4)]{DK18}.
Hence $\psi_{X,\un{n}}^\I(\phi_\un{n}(a))\in\sca{Q_\I(\ol{\pi}_X(I))}$ for all $\un{0}\neq\un{n}\leq\un{1}_F$, and therefore 
\[
\pi_X^\I(a)q_{X,F}^\I \in \sca{Q_\I(\ol{\pi}_X(I))},
\]
as required.
In total, we conclude that $Q_\I(\mathfrak{J}^\L)=\sca{Q_\I(\ol{\pi}_X(I))}$.

For the final equality, we have
\begin{align*}
\sca{Q_\I(\ol{\pi}_X(I))}
=
\sca{\pi_X^\I(I)}
=
\ol{\spn}\{t_{X,\un{n}}^\I(X_\un{n})\pi_X^\I(I)t_{X,\un{m}}^\I(X_\un{m})^*\mid \un{n},\un{m}\in\bZ_+^d\},
\end{align*}
using item (i) of Proposition \ref{P:IXIherfull} in the second equality.
The last assertion of item (iii) follows from Proposition \ref{P:NOIXIembed}, finishing the proof.
\end{proof}

\subsection{Regular product systems}

Let $X$ be a regular product system over $\bZ_+^d$ with coefficients in a C*-algebra $A$.
Recall that $X$ is automatically strong compactly aligned by Corollary \ref{C:autscaps}.
Also, an ideal $I\subseteq A$ is negatively invariant for $X$ if and only if $X_F^{-1}(I)\subseteq I$ for all $\mt\neq F\subseteq[d]$, since $\I_F=A$ for all $\mt\neq F\subseteq[d]$.
The gauge-invariant ideals of $\N\O_X$ can be parametrised particularly succinctly.
We start with some auxiliary results.

\begin{lemma}\label{L:regNO}
Let $X$ be a regular product system over $\bZ_+^d$ with coefficients in a C*-algebra $A$ and let $I\subseteq A$ be an ideal.
Then the following hold:
\begin{enumerate}
\item If $I$ is negatively invariant for $X$, then $J_F(I,X)=A$ for all $\mt\neq F\subseteq[d]$.
\item If $I$ is positively and negatively invariant for $X$, then $[X]_I$ is regular.
\end{enumerate}
\end{lemma}

\begin{proof}
For item (i), fix $\mt \neq F \subseteq [d]$.
We have $\I_F(X) = A$ by regularity.
Since $I$ is negatively invariant, Lemma \ref{L:neginvchar} yields $A = \I_F(X) \subseteq J_F(I,X)$, as required.

For item (ii), Lemma \ref{L:Kat07} yields $[\phi_\un{n}]_I([A]_I)\subseteq\K([X_\un{n}]_I)$ for all $\un{n}\in\bZ_+^d$, using regularity of $X$.
For injectivity, it suffices to show that $[\phi_\un{i}]_I$ is injective for all $i\in[d]$ by Proposition \ref{P:inj+comp}.
Accordingly, fix $i\in[d]$ and $[a]_I \in \ker [\phi_\un{i}]_I$.
Then we have $\sca{X_\un{i},aX_\un{i}}\subseteq I$ by Lemma \ref{L:quo ker}, and so $a\in X_{\{i\}}^{-1}(I) \subseteq I$, since $I$ is negatively invariant and $X$ is regular.
Consequently $[a]_I=0$, showing that $[\phi_\un{i}]_I$ is injective.
In total, we have that $[X]_I$ is regular, as required.
\end{proof}

\begin{proposition}\label{P:regNOposinv}
Let $X$ be a regular product system over $\bZ_+^d$ with coefficients in a C*-algebra $A$ and let $\L$ be a $2^d$-tuple of $X$.
Then $\L$ is an NO-$2^d$-tuple of $X$ if and only if $\L_\mt$ is a positively and negatively invariant ideal for $X$ and $\L_F=A$ for all $\mt\neq F\subseteq[d]$.
\end{proposition}

\begin{proof}
Assume that $\L$ is an NO-$2^d$-tuple of $X$, so that $\I(X)\subseteq\L$.
We have that $\L_\mt$ is a positively and negatively invariant ideal for $X$ by Proposition \ref{P:NO can surj}.
Next fix $\mt\neq F\subseteq[d]$.
By regularity of $X$, we have $A=\I_F(X)\subseteq\L_F$.
Thus $\L_F=A$ for all $\mt\neq F\subseteq[d]$, proving the forward implication.

Conversely, assume that $\L_\mt$ is a positively and negatively invariant ideal for $X$ and $\L_F=A$ for all $\mt\neq F\subseteq[d]$.
We start by showing that $\L$ is an NT-$2^d$-tuple of $X$.
Note that $\L$ consists of ideals satisfying $\L_\mt\subseteq\L_F$ for all $F\subseteq[d]$, and $\L_\mt$ is in particular positively invariant for $X$.
Hence it suffices to show that $[\L]_{\L_\mt}$ is an (M)-$2^d$-tuple of $[X]_{\L_\mt}$ by Proposition \ref{P:NT tuple + m fam}.
By item (ii) of Lemma \ref{L:regNO}, we have that $[X]_{\L_\mt}$ is regular and thus 
\[
\I_\mt([X]_{\L_\mt})=\{0\}=[\L_\mt]_{\L_\mt}\qand \I_F([X]_{\L_\mt})=[A]_{\L_\mt}=[\L_F]_{\L_\mt} \foral \mt\neq F\subseteq[d].
\]
Thus $\I([X]_{\L_\mt})=[\L]_{\L_\mt}$ and we conclude that $[\L]_{\L_\mt}$ is an (M)-$2^d$-tuple of $[X]_{\L_\mt}$ by Remark \ref{R:0+I}.
Note also that
\[
\I_\mt(X)=\{0\}\subseteq\L_\mt\qand\I_F(X)=A\subseteq\L_F \foral \mt\neq F\subseteq[d]
\]
by regularity of $X$.
Thus $\L$ is an NO-$2^d$-tuple, finishing the proof.
\end{proof}

\begin{corollary}\label{C:regNObij}
Let $X$ be a regular product system over $\bZ_+^d$ with coefficients in a C*-algebra $A$.
Then the association
\begin{equation}\label{Eq:NO to posneg}
I\mapsto\L_I, \; \text{where} \; \L_{I,\mt}:=I \; \text{and} \; \L_{I,F}:=A\foral \mt\neq F\subseteq[d],
\end{equation}
defines a bijection between the set of ideals of $A$ that are positively and negatively invariant for $X$ and the set of NO-$2^d$-tuples of $X$.
Hence the set of gauge-invariant ideals of $\N\O_X$ corresponds bijectively to the set of ideals of $A$ that are positively and negatively invariant for $X$.
\end{corollary}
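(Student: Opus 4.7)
The plan is to use Corollary \ref{C:regNOposinv} as the structural backbone; it already describes which $2^d$-tuples are NO-$2^d$-tuples of a regular product system. The association $I \mapsto \L_I$ is then essentially a rewriting of that characterisation, so the proof reduces to bookkeeping.

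First, I would verify that the map $I \mapsto \L_I$ is well-defined on the set of positively and negatively invariant ideals of $A$. Given such an ideal $I$, by construction $\L_{I,\mt} = I$ is positively and negatively invariant, and $\L_{I,F} = A$ for every $\mt \neq F \subseteq [d]$; Corollary \ref{C:regNOposinv} therefore yields that $\L_I$ is an NO-$2^d$-tuple of $X$.

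Next, I would show that the map is bijective. Injectivity is immediate: if $\L_I = \L_J$, then $I = \L_{I,\mt} = \L_{J,\mt} = J$. For surjectivity, suppose $\L$ is an NO-$2^d$-tuple of $X$. By Corollary \ref{C:regNOposinv}, $\L_\mt$ is a positively and negatively invariant ideal of $A$, and $\L_F = A$ for all $\mt \neq F \subseteq [d]$; by definition $\L = \L_{\L_\mt}$, proving surjectivity.

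Finally, I would derive the bijection with gauge-invariant ideals of $\N\O_X$ by composing $I \mapsto \L_I$ with the bijection $\L \mapsto Q_\I(\mathfrak{J}^\L)$ between NO-$2^d$-tuples and gauge-invariant ideals of $\N\O_X$ provided by Corollary \ref{C:CNP param}. No step should present any real obstacle, since all the heavy analytic content has already been absorbed into Corollaries \ref{C:regNOposinv} and \ref{C:CNP param}; the only mild subtlety is to note that for $F = \mt$ we do not require $\L_{I,\mt} = A$ (which would be meaningless for the zero ideal), and for $\mt \neq F \subseteq [d]$ regularity automatically forces $\I_F(X) = A \subseteq \L_{I,F}$, so the containment $\I(X) \subseteq \L_I$ needed to ensure $\L_I$ is an NO-$2^d$-tuple is free.
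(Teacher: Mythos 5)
Your proposal is correct and follows the paper's own proof exactly: the paper also deduces well-definedness and bijectivity directly from Corollary \ref{C:regNOposinv} and obtains the final assertion from Corollary \ref{C:CNP param}. You have merely spelled out the routine injectivity/surjectivity bookkeeping that the paper leaves implicit.
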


\begin{proof}
By Proposition \ref{P:regNOposinv}, the map (\ref{Eq:NO to posneg}) is well-defined and a bijection.
The last assertion follows from Corollary \ref{C:CNP param}.
\end{proof}

We obtain the following consequence of Corollary \ref{C:regNObij} when $A$ is simple.

\begin{corollary}\label{C:regNOsimp}
Let $X$ be a regular product system over $\bZ_+^d$ with coefficients in a non-zero simple C*-algebra $A$.
Then $\N\O_X$ contains no non-trivial gauge-invariant ideals.
\end{corollary}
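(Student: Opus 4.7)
The plan is to apply Corollary \ref{C:regNObij} directly. That result establishes a bijection between the gauge-invariant ideals of $\N\O_X$ and the ideals of $A$ that are both positively and negatively invariant for $X$. Since $A$ is simple and non-zero, its only ideals are $\{0\}$ and $A$, and both are trivially positively and negatively invariant (for positive invariance, $X(\{0\}) = \{0\}$ and $X(A) \subseteq A$; for negative invariance, $\I_F \cap X_F^{-1}(\{0\}) \subseteq \{0\}$ and $\I_F \cap X_F^{-1}(A) \subseteq A$ vacuously).

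Hence there are at most two gauge-invariant ideals of $\N\O_X$. It remains to identify them with $\{0\}$ and $\N\O_X$. Tracking through the bijection $I \mapsto Q_\I(\mathfrak{J}^{\L_I})$, where $\L_{I,\mt} = I$ and $\L_{I,F} = A$ for all $\mt\neq F\subseteq[d]$: when $I = \{0\}$, the associated NO-$2^d$-tuple coincides with $\I(X)$ by regularity (indeed $\I_F(X) = A$ for $F \neq \mt$), and so $\mathfrak{J}^{\L_{\{0\}}} = \mathfrak{J}_{\I(X)}^{(\ol{\pi}_X, \ol{t}_X)} = \ker Q_\I$, which yields $Q_\I(\mathfrak{J}^{\L_{\{0\}}}) = \{0\}$. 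When $I = A$, the ideal $\mathfrak{J}^{\L_A}$ already contains $\ol{\pi}_X(A)$ via the $\mt$-component, and hence $Q_\I(\mathfrak{J}^{\L_A}) = \N\O_X$.

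Therefore the two gauge-invariant ideals of $\N\O_X$ are the trivial ones, and $\N\O_X$ admits no non-trivial gauge-invariant ideals. There is no genuine obstacle here; the corollary is essentially a specialisation of Corollary \ref{C:regNObij} to the simple coefficient case, and the only point requiring a moment's care is confirming that the two endpoints of the bijection land on the trivial ideals of $\N\O_X$.
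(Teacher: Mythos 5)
Your proposal is correct and matches the paper's own proof: by simplicity the only candidate ideals are $\{0\}$ and $A$, both positively and negatively invariant for $X$ (for $\{0\}$, the paper notes $\I_F\cap X_F^{-1}(\{0\})=\{0\}$, which in the regular case is immediate since injectivity forces $X_F^{-1}(\{0\})=\{0\}$), and Corollary \ref{C:regNObij} finishes. Your extra step tracing the bijection to the trivial ideals of $\N\O_X$ is accurate but unnecessary: once the bijection gives exactly two gauge-invariant ideals, they must be $\{0\}$ and $\N\O_X$, since these are themselves distinct gauge-invariant ideals (as $A\neq\{0\}$ embeds in $\N\O_X$).
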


\begin{proof}
By simplicity, the only ideals of $A$ are $\{0\}$ and $A$, both of which are positively and negatively invariant for $X$.
Note that negative invariance of $\{0\}$ follows from the fact that $\I_F \cap X_F^{-1}(\{0\})=\{0\}$ for all $\mt\neq F\subseteq[d]$.
An application of Corollary \ref{C:regNObij} finishes the proof.
\end{proof}

Accounting for the gauge-invariant ideals of $\N\T_X$ when $X$ is regular and $A$ is simple reduces to a combinatorial argument.
To this end, we consider $\P([d])$ with the usual partial order of inclusion.
Fixing notation, for a family $\S \subseteq \P([d])$ we write
\[
\min(\S) := \{F \in \S \mid \textup{ $F$ is minimal in $\S$}\},
\]
and
\[
\sca{\S} := \{F \subseteq [d] \mid \textup{ there exists $F' \in \S$ such that $F' \subseteq F$}\}.
\]
It follows that $\min(\S)$ consists of pairwise incomparable elements of $\S$ and that $\sca{\min(\S)} = \sca{\S}$.
Moreover, a family $\S \subseteq \P([d])$ consists of pairwise incomparable elements of $\P([d])$ if and only if $\S = \min(\S)$.
Note that if $\S\subseteq\P([d])$ is empty or a singleton, then it is a family of pairwise incomparable subsets vacuously.

\begin{proposition}\label{P:regNTbij}
Let $X$ be a regular product system over $\bZ_+^d$ with coefficients in a non-zero simple C*-algebra $A$.
The following are equivalent for a $2^d$-tuple $\L$ of $X$:
\begin{enumerate}
\item $\L$ is an NT-$2^d$-tuple of $X$;
\item $\L$ is an (M)-$2^d$-tuple of $X$ or $\L_F = A$ for all $F \subseteq [d]$;
\item $\L$ is partially ordered and consists of ideals, each of which is either $A$ or $\{0\}$.
\end{enumerate}
If any (and thus all) of the items (i)-(iii) hold, then $\N\O(\L,X)$ either factors through the quotient map $\N\T_X \to \N\O_X$ or $\N\O(\L,X) = \{0\}$.
\end{proposition}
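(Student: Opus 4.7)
The strategy is to prove the cycle (i) $\Rightarrow$ (iii) $\Rightarrow$ (ii) $\Rightarrow$ (i) and then read off the concluding assertion from characterisation (ii). The implication (i) $\Rightarrow$ (iii) is immediate from Definition \ref{D:NT tuple}: every NT-$2^d$-tuple consists of ideals and is partially ordered, and simplicity of $A$ forces each $\L_F$ into $\{\{0\}, A\}$.

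For (iii) $\Rightarrow$ (ii) I split on $\L_\mt$. If $\L_\mt = A$, partial ordering forces $\L_F = A$ for every $F$. If $\L_\mt = \{0\}$, I verify via Theorem \ref{T:m fam v2} that $\L$ is an (M)-$2^d$-tuple. Regularity gives $\J_F = A$ for every $\mt \neq F \subseteq [d]$, so $\L \subseteq \J$ is automatic; invariance holds trivially since each $\L_F$ is $\{0\}$ or $A$; partial ordering is assumed. The substantive step is $\L^{(1)} \subseteq \L$, which reduces to checking that $\L_F^{(1)} = \{0\}$ for every $\mt \neq F \subsetneq [d]$ with $\L_F = \{0\}$ (the case $\L_F = A$ is vacuous). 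For such $F$, the hypothesis $\L_F = \{0\}$ makes $\K(X_\un{m}\L_F) = \{0\}$, so $\L_{\lim, F} = \{a \in A : \lim_{\un{m}\perp F}\|\phi_\un{m}(a)\| = 0\}$. Here regularity delivers the key input: each $\phi_\un{m}$ is an injective $*$-homomorphism and hence isometric, so $\|\phi_\un{m}(a)\| = \|a\|$ for every $a \in A$. Consequently $\L_{\lim, F} = \{0\}$, and since $\L_F^{(1)} \subseteq \L_{\lim, F}$ by definition, we conclude $\L_F^{(1)} = \{0\} = \L_F$. This is where the bulk of the work sits; the remaining verifications are structural bookkeeping.

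For (ii) $\Rightarrow$ (i), the (M)-$2^d$-tuple case is immediate since every (M)-$2^d$-tuple is an NT-$2^d$-tuple via Proposition \ref{P:NT tuple + m fam} applied with $\L_\mt = \{0\}$ (noting $[X]_{\{0\}} = X$). When $\L_F = A$ for every $F$, the quotient coefficient algebra $[A]_A$ is zero, so $[X]_A$ is trivial and $[\L]_A = \{\{0\}\}_{F \subseteq [d]}$ is an (M)-$2^d$-tuple of $[X]_A$ by Remark \ref{R:0+I}; another application of Proposition \ref{P:NT tuple + m fam} identifies $\L$ as an NT-$2^d$-tuple of $X$. For the final assertion, if $\L$ is an (M)-$2^d$-tuple then $\L \subseteq \I$ gives $\fJ_\L^{(\ol{\pi}_X, \ol{t}_X)} \subseteq \fJ_\I^{(\ol{\pi}_X, \ol{t}_X)}$, so the quotient map $\N\T_X \to \N\O_X$ canonically factors as $\N\T_X \to \N\O(\L, X) \to \N\O_X$. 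If instead $\L_F = A$ for every $F$, then $\L$ is relative (trivially by regularity), hence $\fJ^\L = \fJ_\L^{(\ol{\pi}_X, \ol{t}_X)}$ by Proposition \ref{P:JL conc}, and Proposition \ref{P:tuple ideal}(i) combined with $[A]_A = \{0\}$ yields $\N\O(\L, X) \cong \N\O(\{\{0\}\}_{F \subseteq [d]}, \{0\}) = \{0\}$.
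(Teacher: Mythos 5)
Your proof is correct and follows essentially the same route as the paper: the substantive steps coincide exactly, namely verifying (iii) $\Rightarrow$ (ii) through Theorem \ref{T:m fam v2} with $\L_{\lim,F}=\{0\}$ forced by regularity (you spell out the isometry of the $\phi_{\un{m}}$, which the paper leaves implicit), and deducing (ii) $\Rightarrow$ (i) from Proposition \ref{P:NT tuple + m fam} together with Remark \ref{R:0+I}. The only differences are cosmetic: you run the cycle (i) $\Rightarrow$ (iii) $\Rightarrow$ (ii) $\Rightarrow$ (i) with a direct appeal to Definition \ref{D:NT tuple} for (i) $\Rightarrow$ (iii), where the paper proves (i) $\Leftrightarrow$ (ii) and (ii) $\Leftrightarrow$ (iii), and your justification of $\N\O(\L,X)=\{0\}$ via Propositions \ref{P:JL conc} and \ref{P:tuple ideal} is a valid (slightly more elaborate) version of the paper's terse remark.
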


\begin{proof}
To see that items (i)-(iii) are equivalent, first suppose that $\L$ is an NT-$2^d$-tuple of $X$.
If $\L_\mt = \{0\}$, then $\L$ is an (M)-$2^d$-tuple of $X$ by Proposition \ref{P:NT tuple + m fam}.
If $\L_\mt \neq \{0\}$, then by simplicity of $A$ we have $\L_\mt = A$.
It follows that $\L_F=A$ for all $F\subseteq[d]$ by the partial ordering property of $\L$.
This shows that item (i) implies item (ii).

Conversely, if $\L$ is an (M)-$2^d$-tuple of $X$ then it is an NT-$2^d$-tuple of $X$ by Proposition \ref{P:NT tuple + m fam}.
On the other hand, if $\L_F = A$ for all $F \subseteq [d]$, then $[\L_F]_{\L_\mt}=\{0\}$ for all $F\subseteq[d]$.
Thus $[\L]_{\L_\mt}$ is an (M)-$2^d$-tuple of $[X]_{\L_\mt}$ by Remark \ref{R:0+I}, and an application of Proposition \ref{P:NT tuple + m fam} then yields that $\L$ is an NT-$2^d$-tuple of $X$.
This shows that item (ii) implies item (i).

Clearly item (ii) implies item (iii) by Theorem \ref{T:m fam v2} (and simplicity of $A$).
Finally, suppose that $\L$ is partially ordered and consists of ideals, each of which is either $A$ or $\{0\}$.
If $\L_\mt = A$, then $\L_F = A$ for all $F \subseteq [d]$ by the partial ordering property of $\L$.
On the other hand, if $\L_\mt = \{0\}$ then $\L \subseteq \J(X)$ since $\J_F(X) = A$ for all $\mt \neq F \subseteq [d]$ by regularity of $X$, and thus $\L$ satisfies condition (i) of Theorem \ref{T:m fam v2}.
Condition (ii) of the latter is satisfied trivially, and condition (iii) is satisfied by assumption. 
Moreover, by definition we have $\L_F^{(1)} = \L_F$ when $F = \mt$ or $F=[d]$.
For $\mt \neq F \subsetneq [d]$, we have that either $\L_F = A$ or $\L_F = \{0\}$.
If $\L_F = A$, then $\L_F^{(1)} \subseteq \L_F$ trivially.
If $\L_F = \{0\}$, then $\L_{\lim, F} = \{0\}$ due to injectivity of $X$, and thus $\L_F^{(1)} = \{0\} = \L_F$.
Therefore $\L$ satisfies condition (iv) of Theorem \ref{T:m fam v2} and so is an (M)-$2^d$-tuple of $X$ by the latter, finishing the proof of the equivalences.

For the second claim, if $\L$ satisfies item (ii) then we have that either $\N\O(\L,X)$ factors through $\N\T_X \to \N\O_X$ (if $\L$ is an (M)-$2^d$-tuple of $X$) or $\N\O(\L,X) = \{0\}$ (if $\L_F=A$ for all $F\subseteq[d]$), and the proof is complete.
\end{proof}

\begin{corollary}\label{C:regNTbij}
Let $X$ be a regular product system over $\bZ_+^d$ with coefficients in a non-zero simple C*-algebra $A$.
Then the association
\begin{equation}\label{Eq:NTsimpbij}
\S\mapsto\L_\S, \; \text{where} \; \L_{\S,F}:=\begin{cases} A & \text{if $F \in \sca{\S}$}, \\ \{0\} & \text{otherwise}, \end{cases} \foral F\subseteq[d],
\end{equation}
defines a bijection between the set of families of pairwise incomparable subsets of $[d]$ and the set of NT-$2^d$-tuples of $X$.
Hence the set of gauge-invariant ideals of $\N\T_X$ corresponds bijectively to the set of families of pairwise incomparable subsets of $[d]$.
\end{corollary}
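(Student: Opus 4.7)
The strategy is to pass through the characterisation of NT-$2^d$-tuples provided by Proposition \ref{P:regNTbij}, which, under the simplicity hypothesis on $A$, asserts that the NT-$2^d$-tuples of $X$ are precisely the partially ordered $2^d$-tuples $\L$ with $\L_F \in \{\{0\}, A\}$ for every $F \subseteq [d]$. Together with the bijection of Theorem \ref{T:NT param}, this reduces the corollary to a purely order-theoretic statement on $\P([d])$.

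First I would verify that the assignment $\S \mapsto \L_\S$ takes values in the set of NT-$2^d$-tuples. The entries $\L_{\S,F}$ are each either $\{0\}$ or $A$ by construction, so each is an ideal. For the partial ordering property, if $F_1 \subseteq F_2$ and $\L_{\S,F_1} = A$, then $F_1 \in \sca{\S}$, so there exists $F' \in \S$ with $F' \subseteq F_1 \subseteq F_2$; hence $F_2 \in \sca{\S}$ and $\L_{\S,F_2} = A$. Proposition \ref{P:regNTbij} then yields that $\L_\S$ is an NT-$2^d$-tuple of $X$.

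Next I would establish the bijectivity. For injectivity, suppose $\L_\S = \L_{\S'}$; then $\sca{\S} = \sca{\S'}$, and I would show that $\S = \min(\sca{\S})$ whenever $\S$ consists of pairwise incomparable subsets (any $F' \in \S$ lies in $\sca{\S}$, and any $F \in \sca{\S}$ with $F \subseteq F'$ must contain some $F'' \in \S$, forcing $F'' = F' = F$ by incomparability). Thus $\S = \min(\sca{\S}) = \min(\sca{\S'}) = \S'$. For surjectivity, given an NT-$2^d$-tuple $\L$, set $T := \{F \subseteq [d] \mid \L_F = A\}$ and $\S := \min(T)$. Then $\S$ consists of pairwise incomparable subsets. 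The main order-theoretic point is that $T = \sca{\S}$: the inclusion $\sca{\S} \subseteq T$ follows since $\L$ is partially ordered and the entries take values in $\{\{0\}, A\}$, while $T \subseteq \sca{\S}$ uses that the finite poset $\P([d])$ admits a minimal element below every point of $T$ (so $T$ is upward closed and covered by its minimal elements). This gives $\L = \L_\S$.

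The final sentence is then immediate by composing the bijection $\S \mapsto \L_\S$ with the lattice isomorphism $\L \mapsto \mathfrak{J}^\L$ between NT-$2^d$-tuples and gauge-invariant ideals of $\N\T_X$ supplied by Theorem \ref{T:NT param}. No step presents a genuine obstacle; the only care required is in the order-theoretic identification $T = \sca{\min(T)}$ for upward-closed subsets $T \subseteq \P([d])$, which relies essentially on the finiteness of $[d]$.
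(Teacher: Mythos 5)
Your proposal is correct and follows essentially the same route as the paper: both reduce to the characterisation in Proposition \ref{P:regNTbij}(iii) and then invert the map via $\L \mapsto \min(\S_\L)$ with $\S_\L = \{F \subseteq [d] \mid \L_F = A\}$, using that $\S_\L = \sca{\S_\L}$ by partial ordering, before concluding via Theorem \ref{T:NT param}. Your write-up merely spells out the order-theoretic identity $T = \sca{\min(T)}$ for upward-closed $T$ in more detail than the paper does.
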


\begin{proof}
Item (iii) of Proposition \ref{P:regNTbij} implies that the map (\ref{Eq:NTsimpbij}) is well-defined.
For bijectivity, it suffices to construct an inverse map.
To this end, fixing an NT-$2^d$-tuple $\L$ of $X$, we set
\[
\S_\L:=\{F\subseteq[d]\mid \L_F=A\}\subseteq\P([d]).
\]
Note that $\S_\L=\sca{\S_\L}$ by the partial ordering property of $\L$.
It follows that the map
\[
\L\mapsto\min(\S_\L),
\]
where $\L$ is an NT-$2^d$-tuple of $X$, is the inverse of the map (\ref{Eq:NTsimpbij}), as required.
The last assertion follows from Theorem \ref{T:NT param}, and the proof is complete.
\end{proof}

\subsection{C*-dynamical systems}\label{Ss:dynsys}

In this subsection we interpret the parametrisation of Theorem \ref{T:NT param} in the case of C*-dynamical systems.
As a corollary we recover the well-known parametrisation for crossed products.
We use this class of product systems to produce an example for which the quotient map $X \to [X]_I$ does not lift to a canonical $*$-epimorphism $\N\O_X \to \N\O_{[X]_I}$ for a positively invariant ideal $I \subseteq A$.

The structure of the Nica-Pimsner and Cuntz-Nica-Pimsner algebras of a dynamical system were studied in \cite{DFK17, Kak17}.
In particular, the form of the CNP-relations that arose in this setting was one point of motivation for looking further into strong compactly aligned product systems in \cite{DK18}.
We start by establishing notation and terminology from \cite{DFK17}.

\begin{definition}\label{D:c*-dynsys}
A \emph{C*-dynamical system} $(A,\al,\bZ_+^d)$ consists of a C*-algebra $A$ and a unital semigroup homomorphism $\al\colon\bZ_+^d\to\End(A)$.
The system will be called \emph{injective} (resp. \emph{automorphic}) if $\al_\un{n}$ is injective (resp. a $*$-automorphism) for all $\un{n}\in\bZ_+^d$.
The system will be called \emph{unital} if $A$ is unital and $\al_\un{n}(1_A) = 1_A$ for all $\un{n}\in\bZ_+^d$.
\end{definition}

Let $(A,\al,\bZ_+^d)$ be a C*-dynamical system. We associate a product system $X_\al := \{X_{\al, \un{n}} \}_{\un{n}\in\bZ_+^d}$ with $(A,\al,\bZ_+^d)$ by setting
\[
X_{\al, \un{n}}:=[\al_\un{n}(A)A]
\foral
\un{n}\in\bZ_+^d,
\]
where each $X_{\al, \un{n}}$ becomes a C*-correspondence over $A$ via the left action
\[
\phi_\un{n}\colon A\to\L(X_{\al, \un{n}}); \phi_\un{n}(a)\xi_\un{n}=\al_\un{n}(a)\xi_\un{n}\foral a\in A, \xi_\un{n}\in X_{\al, \un{n}}.
\]
Notice that $X_{\al, \un{0}}=A$ and that $\al_{\un{n}}(A) \subseteq X_{\al, \un{n}}$ for all $\un{n} \in \bZ_+^d$. 
The product system structure is implemented by the multiplication maps 
\begin{equation}\label{Eq:dynsysmult}
\xi_\un{n} \otimes \xi_\un{m} \mapsto\al_\un{m}(\xi_\un{n})\xi_\un{m}\foral \xi_\un{n}\in X_{\al, \un{n}},\xi_\un{m}\in X_{\al, \un{m}},\un{n},\un{m}\in\bZ_+^d.
\end{equation}
All left actions are by compacts since $\phi_{\un{n}}(ab^*) = \Theta_{\al_{\un{n}}(a), \al_{\un{n}}(b)}$ for all $a, b \in A$ and $\un{n} \in \bZ_+^d$.
Thus $X_\al$ is strong compactly aligned by Corollary \ref{C:autscaps}.
For each $\un{n}\in\bZ_+^d$, we have $\ker\al_\un{n}=\ker\phi_\un{n}$; thus $(A, \al, \bZ_+^d)$ is injective if and only if $X_\al$ is injective.
For an ideal $I\subseteq A$, we have
\[
X_{\al, \un{n}}^{-1}(I)=\al_\un{n}^{-1}(I) \foral \un{n}\in\bZ_+^d.
\]

\begin{definition}\label{D:covpair}
Let $(A,\al,\bZ_+^d)$ be a C*-dynamical system. 
A \emph{covariant pair} for $(A,\al,\bZ_+^d)$ is a pair $(\pi,V)$ acting on a Hilbert space $H$ such that $\pi\colon A\to \B(H)$ is a $*$-homomorphism, $V\colon\bZ_+^d\to \B(H)$ is a semigroup homomorphism, and
\begin{equation} \label{Eq:cov dyn}
\pi(a)V_\un{n}=V_\un{n}\pi(\al_\un{n}(a))\foral \un{n}\in\bZ_+^d, a\in A.
\end{equation}
We say that $V$ is \emph{contractive/isometric} if $V_{\un{n}}$ is contractive/isometric for all $\un{n} \in \bZ_+^d$.
We say that $V$ is \emph{Nica-covariant} if it is contractive and $V_\un{n}^*V_\un{m}=V_\un{m}V_\un{n}^*$ for all $\un{n},\un{m}\in\bZ_+^d$ satisfying $\un{n}\wedge\un{m}=\un{0}$.

We say that $(\pi,V)$ is \emph{contractive/isometric/Nica-covariant} if $V$ is contractive/isometric/Nica-covariant.
An isometric Nica-covariant pair $(\pi,V)$ for $(A,\al,\bZ_+^d)$ is called \emph{Cuntz-Nica-Pimsner covariant} (abbrev. \emph{CNP-covariant}) if
\begin{equation}\label{Eq:cnp dyn}
\pi(a)\prod_{i\in F}(I-V_\un{i}V_\un{i}^*)=0\foral a\in\bigcap_{\un{n}\perp F}\al_\un{n}^{-1}((\bigcap_{i\in F}\ker\al_\un{i})^\perp)\;\text{and} \; \mt\neq F\subseteq[d].
\end{equation}
We write $\ca(\pi,V)$ for the C*-subalgebra of $\B(H)$ generated by the images of $\pi$ and $V$.
We write $\N\T(A,\al)$ for the C*-algebra that is universal with respect to the isometric Nica-covariant pairs for $(A,\al,\bZ_+^d)$, and $\N\O(A,\al)$ for the C*-algebra that is universal with respect to the CNP-covariant pairs for $(A,\al,\bZ_+^d)$, both of which are generated by a copy of $A$ and $\bZ_+^d$.
\end{definition}

The connection of (\ref{Eq:cnp dyn}) with the CNP-representations of $X_\al$ was established in \cite{DFK17}, where it is shown that
\[
\J_F=(\bigcap_{i\in F}\ker\al_\un{i})^\perp 
\qand 
\I_F=\bigcap_{\un{n}\perp F}\al_\un{n}^{-1}((\bigcap_{i\in F}\ker\al_\un{i})^\perp)
\]
for all $\mt \neq F \subseteq [d]$.
The isometric Nica-covariant (resp. CNP-covariant) pairs of $(A, \al, \bZ_+^d)$ induce Nica-covariant representations (resp. CNP-representations) of $X_\al$ by the association $(\pi,V) \mapsto (\pi, t)$, where $t_{\un{n}}(\al_{\un{n}}(a)b) = V_{\un{n}} \pi(\al_{\un{n}}(a) b)$ for all $a,b\in A$ and $\un{n} \in \bZ_+^d\setminus\{\un{0}\}$.
In general, we have
\[
\N\T_{X_\al} \hookrightarrow \N\T(A,\al) \qand \N\O_{X_\al} \hookrightarrow \N\O(A,\al).
\]
The embeddings are surjective when each $\al_\un{n}$ is non-degenerate, so that $X_{\al, \un{n}} \equiv [\al_\un{n}(A)A] = A$ for all $\un{n}\in\bZ_+^d$.

\begin{proposition}\label{P:repnbij} \cite{DFK17}
Let $(A,\al,\bZ_+^d)$ be a unital C*-dynamical system.
Then there exists a bijective correspondence between the set of Nica-covariant (resp. CNP-) representations $(\pi,t)$ of $X_\al$ with $\pi$ unital, and the set of isometric Nica-covariant (resp. CNP-covariant) pairs $(\pi,V)$ for $(A,\al,\bZ_+^d)$ with $\pi$ unital.
Consequently,
\[
\N\T_{X_\al} \cong \N\T(A,\al)\qand \N\O_{X_\al} \cong \N\O(A,\al)
\]
by canonical $*$-isomorphisms.
\end{proposition}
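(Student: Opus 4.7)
The plan is to exhibit an explicit bijection at the level of representations and then transfer it to the universal C*-algebras. In the unital setting, each fibre simplifies to $X_{\al,\un{n}}=[\al_\un{n}(A)A]=A$ since $1_A\in[\al_\un{n}(A)A]$, the left action is $\phi_\un{n}(a)\xi=\al_\un{n}(a)\xi$, and the product system multiplication is $\xi_\un{n}\otimes\xi_\un{m}\mapsto\al_\un{m}(\xi_\un{n})\xi_\un{m}$ as in (\ref{Eq:dynsysmult}). Given a Nica-covariant representation $(\pi,t)$ of $X_\al$ with $\pi$ unital, I would set $V_\un{n}:=t_\un{n}(1_A)$; conversely, given an isometric Nica-covariant pair $(\pi,V)$ with $\pi$ unital, I would set $t_\un{n}(\xi):=V_\un{n}\pi(\xi)$ for $\xi\in X_{\al,\un{n}}=A$. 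The two assignments are mutually inverse, since $V_\un{n}\pi(1_A)=V_\un{n}$ and $t_\un{n}(1_A)\pi(\xi)=t_\un{n}(1_A\cdot\xi)=t_\un{n}(\xi)$ by the module property of $t_\un{n}$.

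The algebraic checks are routine. Isometry of each $V_\un{n}$ follows from $V_\un{n}^{*}V_\un{n}=\pi(\langle 1_A,1_A\rangle)=\pi(1_A)=I$; the semigroup law from $t_\un{n}(1_A)t_\un{m}(1_A)=t_{\un{n}+\un{m}}(\al_\un{m}(1_A)\cdot 1_A)=V_{\un{n}+\un{m}}$; and the covariance identity $\pi(a)V_\un{n}=V_\un{n}\pi(\al_\un{n}(a))$ from the module property of $t_\un{n}$ applied to $1_A$. In the opposite direction, the representation axioms for $(\pi,t_\un{n})$ fall out of the isometry of $V_\un{n}$ and the intertwining relation (\ref{Eq:cov dyn}), while the semigroup compatibility $t_\un{n}(\xi_\un{n})t_\un{m}(\xi_\un{m})=t_{\un{n}+\un{m}}(\al_\un{m}(\xi_\un{n})\xi_\un{m})$ follows from $V_\un{n}\pi(\xi_\un{n})V_\un{m}\pi(\xi_\un{m})=V_{\un{n}+\un{m}}\pi(\al_\un{m}(\xi_\un{n})\xi_\un{m})$.

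The main obstacle is matching the two incarnations of Nica-covariance. Identifying $\K(X_{\al,\un{i}})\cong A$ via $a\leftrightarrow\Theta_{a,1_A}$, we get $\psi_\un{i}(\Theta_{a,1_A})=t_\un{i}(a)t_\un{i}(1_A)^{*}=V_\un{i}\pi(a)V_\un{i}^{*}$; in particular the projection $p_\un{i}$ of (\ref{eq:pi defn}) equals $V_\un{i}V_\un{i}^{*}$ (as $A$ being unital allows the approximate unit to be taken as $1_A$), so $q_F=\prod_{i\in F}(I-V_\un{i}V_\un{i}^{*})$. Testing the Nica-covariance condition (\ref{Eq:nc}) of $(\pi,t)$ on $k_\un{n}=\Theta_{1_A,1_A}$ and $k_\un{m}=\Theta_{1_A,1_A}$ with $\un{n}\wedge\un{m}=\un{0}$ yields $V_\un{n}V_\un{n}^{*}V_\un{m}V_\un{m}^{*}=V_{\un{n}+\un{m}}V_{\un{n}+\un{m}}^{*}$, which combined with the isometry property and the semigroup law is equivalent to $V_\un{n}^{*}V_\un{m}=V_\un{m}V_\un{n}^{*}$ by standard manipulations on commuting isometries. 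Conversely, Nica-covariance of $V$, together with the computation that $\iota_\un{n}^{\un{n}+\un{m}}$ acts via $\al_\un{m}$ under the present identifications, propagates (\ref{Eq:nc}) to arbitrary compacts by linearity and continuity on rank-one generators. For the CNP-case, combining $q_F=\prod_{i\in F}(I-V_\un{i}V_\un{i}^{*})$ with the identification $\I_F=\bigcap_{\un{n}\perp F}\al_\un{n}^{-1}((\bigcap_{i\in F}\ker\al_\un{i})^{\perp})$ established in \cite{DFK17} shows that the CNP-relation $\pi(a)q_F=0$ for $a\in\I_F$ coincides verbatim with (\ref{Eq:cnp dyn}).

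For the final assertion, I note that $\ol{\pi}_{X_\al}(1_A)$ acts as the identity on $\ol{\pi}_{X_\al}(A)$ and on each $\ol{t}_{X_\al,\un{n}}(X_{\al,\un{n}})$ by the module property, hence is the unit of $\N\T_{X_\al}$; therefore the restriction to unital $\pi$ imposes no genuine constraint on the universal side. The canonical bijection of representations, being defined by the universal formulae $V_\un{n}=t_\un{n}(1_A)$ and $t_\un{n}(\xi)=V_\un{n}\pi(\xi)$, produces mutually inverse canonical $*$-epimorphisms between $\N\T_{X_\al}$ and $\N\T(A,\al)$ via the respective universal properties, yielding the first $*$-isomorphism; quotienting by the matched CNP-relations then gives $\N\O_{X_\al}\cong\N\O(A,\al)$.
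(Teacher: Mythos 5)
Your proposal is correct and follows essentially the same route as the paper: the paper's proof defines the association by $V_{\un{n}}:=t_{\un{n}}(1_{\un{n}})$ and invokes the fact that $\pi(1_A)$ is a unit for the generated C*-algebra to pass to the universal objects, deferring the verifications to \cite{DFK17}. Your additional computations (the inverse formula $t_{\un{n}}(\xi)=V_{\un{n}}\pi(\xi)$, the identification $\K(X_{\al,\un{i}})\cong A$ with $p_{\un{i}}=V_{\un{i}}V_{\un{i}}^{*}$, the equivalence of the two Nica-covariance conditions via doubly commuting isometries, and the matching of the CNP-relations with (\ref{Eq:cnp dyn})) are precisely the details the paper cites rather than reproduces, so you have simply filled in the same argument.
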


\begin{proof}
The association $(\pi,t) \mapsto (\pi,V)$ is defined by $V_\un{n}:=t_\un{n}(1_\un{n})$ for all $\un{n}\in\bZ_+^d$.
Notice that given any Nica-covariant representation $(\pi,t)$ of $X_\al$ or any isometric Nica-covariant pair $(\pi,V)$ of $(A,\al,\bZ_+^d)$, we have that $\pi(1_A)$ is a unit for $\ca(\pi,t)$ and $\ca(\pi,V)$.
Hence, by suitably restricting the codomains of $\pi, V$ and each $t_\un{n}$, we deduce the canonical $*$-isomorphisms.
\end{proof}

Injectivity of a unital system has an equivalent reformulation in terms of the semigroup representation in the Cuntz-Nica-Pimsner algebra.
This is a standard result in the theory of semicrossed products, but we include a proof here.

\begin{proposition}\label{P:injunitary}
Let $(A,\al,\bZ_+^d)$ be a unital C*-dynamical system.
Let $\N\O_{X_\al}=\ca(\pi,V)$, where $(\pi,V)$ is a CNP-covariant pair of $(A,\al,\bZ_+^d)$ such that $\pi$ is unital and injective.
Then $\al_\un{i}$ is injective for all $i\in[d]$ if and only if $V_\un{i}$ is a unitary for all $i\in[d]$.
\end{proposition}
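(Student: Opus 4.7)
The plan is to prove each direction independently by exploiting the interplay between the CNP-covariance condition~(\ref{Eq:cnp dyn}) and the covariance relation~(\ref{Eq:cov dyn}), together with the unital structure.

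For the forward implication, suppose that each $\al_\un{i}$ is injective. Fix $i \in [d]$ and apply the CNP-covariance relation~(\ref{Eq:cnp dyn}) with $F = \{i\}$. Since $\ker\al_\un{i} = \{0\}$, we have $(\ker\al_\un{i})^\perp = A$, and therefore $\al_\un{n}^{-1}((\ker\al_\un{i})^\perp) = A$ for every $\un{n} \perp \un{i}$. Thus the intersection
\[
\bigcap_{\un{n}\perp \un{i}} \al_\un{n}^{-1}((\ker\al_\un{i})^\perp) = A
\]
contains $1_A$. Applying (\ref{Eq:cnp dyn}) with $a = 1_A$ yields $\pi(1_A)(I - V_\un{i} V_\un{i}^*) = 0$. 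Since $\pi$ is unital, $\pi(1_A)$ is the unit of $\ca(\pi,V) = \N\O_{X_\al}$, and so $V_\un{i} V_\un{i}^* = I$. Combined with $V_\un{i}^* V_\un{i} = I$ (since $V$ is isometric), this shows that $V_\un{i}$ is a unitary.

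For the converse, suppose that $V_\un{i}$ is a unitary for every $i \in [d]$, and fix $i \in [d]$ together with $a \in \ker\al_\un{i}$. Then~(\ref{Eq:cov dyn}) gives $\pi(a) V_\un{i} = V_\un{i} \pi(\al_\un{i}(a)) = 0$. Right-multiplying by $V_\un{i}^*$ and using $V_\un{i} V_\un{i}^* = I$ yields $\pi(a) = 0$, and injectivity of $\pi$ then forces $a = 0$. Hence $\al_\un{i}$ is injective.

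There is no real obstacle here: the proof amounts to choosing $a = 1_A$ in (\ref{Eq:cnp dyn}) in the forward direction and then a direct manipulation of the covariance relation in the backward direction. The only point to stress is that unitality of $\pi$ (together with unitality of the system) is what lets the CNP-relation be applied at the identity, which is precisely the mechanism that trades injectivity of $\al_\un{i}$ for surjectivity of $V_\un{i}$.
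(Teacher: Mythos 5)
Your proof is correct and takes essentially the same route as the paper's: the paper deduces $\I_{\{i\}}=A$ from regularity (via Proposition \ref{P:inj+comp}) and applies the CNP-covariance at $a=1_A$, which is exactly your direct observation that injectivity of $\al_\un{i}$ makes the ideal in (\ref{Eq:cnp dyn}) all of $A$. The converse in the paper is the same covariance manipulation, written as $\pi(a)=\pi(a)V_\un{i}V_\un{i}^*=V_\un{i}\pi(\al_\un{i}(a))V_\un{i}^*$.
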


\begin{proof}
First assume that $\al_{\un{i}}$ is injective for all $i \in [d]$, and so $X_\al$ is regular by Proposition \ref{P:inj+comp}.
Hence we have $\I_F=A$ for all $\mt\neq F\subseteq[d]$, and the CNP-covariance of $(\pi, V)$ gives that
\[
\pi(a)(I-V_\un{i}V_\un{i}^*)=0 \foral a\in\I_{\{i\}}=A, i\in[d].
\]
Applying for $a=1_A$ gives that $I-V_\un{i}V_\un{i}^*=0$, and so $V_\un{i}$ is a unitary for all $i\in[d]$.

Now assume that $V_\un{i}$ is a unitary for all $i\in[d]$.
Fixing $a \in A$ and $i \in [d]$, we have
\[
\pi(a)=\pi(a)V_\un{i}V_\un{i}^*=V_\un{i}\pi(\al_\un{i}(a))V_\un{i}^*,
\]
using (\ref{Eq:cov dyn}).
Thus it follows that if $a\in\ker\al_\un{i}$, then $a\in\ker\pi=\{0\}$ and hence $\al_\un{i}$ is injective, finishing the proof.
\end{proof}

Next we translate the key concepts of Sections \ref{S:relcnpalg} and \ref{S:g inv struc} into the language of C*-dynamical systems. 
Firstly, note that all $2^d$-tuples of $X_\al$ are relative since all left actions are by compacts.
Let $\L$ be a $2^d$-tuple of $X_\al$ consisting of ideals.
By construction of $X_\al$, we see that $\L$ is $X_\al$-invariant if and only if
\[
\L_F\subseteq\bigcap_{\un{n}\perp F}\al_\un{n}^{-1}(\L_F)\foral F\subseteq[d].
\]
In particular, an ideal $I\subseteq A$ is positively invariant for $X_\al$ if and only if
\[
I\subseteq\bigcap_{\un{n}\in\bZ_+^d}\al_\un{n}^{-1}(I).
\]
Also, $I$ is negatively invariant for $X_\al$ if and only if
\[
\I_F\cap(\bigcap\{\al_\un{n}^{-1}(I)\mid \un{0}\neq\un{n}\leq\un{1}_F\})\subseteq I\foral \mt\neq F\subseteq[d].
\]
Moreover, we have a $*$-isomorphism
\[
\K(X_{\al, \un{n}}) \to [\al_{\un{n}}(A) A \al_{\un{n}}(A)]; \Theta_{\xi_{\un{n}}, \eta_{\un{n}}} \mapsto \xi_{\un{n}} \eta_{\un{n}}^* \foral \xi_{\un{n}}, \eta_{\un{n}} \in X_{\al, \un{n}}, \un{n}\in\bZ_+^d.
\]
This follows by a double application of \cite[Lemma 4.6.1]{BO08} in the same way one obtains that $A\cong\K(A)$.
By restriction we obtain a $*$-isomorphism
\[
\K(X_{\al, \un{n}} I) \to [\al_{\un{n}}(A) I  \al_{\un{n}}(A)]; \Theta_{\xi_{\un{n}}, \eta_{\un{n}}} \mapsto \xi_{\un{n}} \eta_{\un{n}}^* \foral \xi_{\un{n}}, \eta_{\un{n}} \in X_{\al, \un{n}} I, \un{n}\in\bZ_+^d,
\]
for an ideal $I\subseteq A$.
Therefore, for each $F \subseteq [d]$ and $\un{m} \in \bZ_+^d$, we have
\[
\|\phi_\un{m}(a)+\K(X_{\al, \un{m}}I)\|=\|\al_\un{m}(a) + [\al_\un{m}(A) I \al_\un{m}(A)]\| \foral a \in A.
\]
Additionally, for each $\mt\neq F\subseteq[d]$, we have
\[
J_F(I,X_\al)=\{a\in A\mid a(\bigcap\{\al_\un{n}^{-1}(I)\mid \un{0}\neq\un{n}\leq\un{1}_F\})\subseteq I\}.
\]
When $I$ is positively invariant for $X_\al$, we define the C*-dynamical system $([A]_I,[\al]_I,\bZ_+^d)$ by
\[
[\al_\un{n}]_I[a]_I=[\al_\un{n}(a)]_I \foral a \in A, \un{n}\in\bZ_+^d.
\]
Hence we obtain the product system $X_{[\al]_I}$ over $\bZ_+^d$ with coefficients in $[A]_I$.
Note that
\[
\ker [\phi_{\un{n}}]_I = \ker [\al_{\un{n}}]_I = [\al^{-1}_{\un{n}}(I)]_I
\foral
\un{n} \in \bZ_+^d.
\]
Moreover, we have $[X_\al]_I \cong X_{[\al]_I}$ when each $X_{\al,\un{n}}$ is non-degenerate.
If $(A,\al,\bZ_+^d)$ is unital, then so is $([A]_I,[\al]_I,\bZ_+^d)$.

\begin{corollary}\label{C:dynsysjargNT}
Let $(A,\al,\bZ_+^d)$ be a C*-dynamical system. 
Let $\K$ and $\L$ be $2^d$-tuples of $X_\al$.
Then $\L$ is a $\K$-relative NO-$2^d$-tuple of $X_\al$ if and only if $\K\subseteq\L$ and the following hold:
\vspace{2pt}
\begin{enumerate} \setlength\itemsep{.3em}
\item $\L$ consists of ideals and $\L_F \cap (\bigcap_{i\in F}\al_\un{i}^{-1}(\L_\mt))\subseteq\L_\mt$ for all $\mt\neq F\subseteq[d]$,
\item $\L_F\subseteq\bigcap_{\un{n}\perp F}\al_\un{n}^{-1}(\L_F)\foral F\subseteq[d]$,
\item $\L$ is partially ordered,
\item $I_{1,F}\cap I_{2,F}\cap I_{3,F}\subseteq\L_F$ for all $\mt\neq F\subsetneq[d]$, where
\vspace{.3em}
\begin{itemize}  \setlength\itemsep{.3em}
\item $I_{1,F}:=\bigcap_{\un{n}\perp F}\al_\un{n}^{-1}(\{a\in A\mid a(\bigcap_{i\in F}\al_\un{i}^{-1}(\L_\mt))\subseteq\L_\mt\})$,
\item $I_{2,F}:=\bigcap_{\un{m}\perp F}\al_\un{m}^{-1}(\cap_{F\subsetneq D}\L_D)$,
\item $I_{3,F}:=\{a\in A\mid \lim_{\un{m}\perp F}\|\al_\un{m}(a)+[\al_\un{m}(A)\L_F\al_\un{m}(A)]\|=0\}$. 
\end{itemize}
\end{enumerate}
\end{corollary}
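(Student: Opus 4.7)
The plan is to observe that, by definition, $\L$ is a $\K$-relative NO-$2^d$-tuple of $X_\al$ if and only if $\K \subseteq \L$ and $\L$ is an NT-$2^d$-tuple of $X_\al$, and then to translate the four conditions of Definition \ref{D:NT tuple} into the C*-dynamical language using the explicit structure of $X_\al$ recorded at the start of this subsection. The starting observation is that every left action of $X_\al$ is by compacts since $\phi_\un{n}(ab^*) = \Theta_{\al_\un{n}(a), \al_\un{n}(b)}$ for all $a,b \in A$ and $\un{n} \in \bZ_+^d$; hence every $2^d$-tuple of $X_\al$ is automatically relative, and Proposition \ref{P:NTcom} is available.

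I would first translate conditions (ii) and (iii) of Definition \ref{D:NT tuple} directly. Partial ordering is identical. For $X_\al$-invariance, the formulas $\phi_\un{n}(a)\eta_\un{n} = \al_\un{n}(a)\eta_\un{n}$ and $\sca{\xi_\un{n},\eta_\un{n}} = \xi_\un{n}^*\eta_\un{n}$ on $X_{\al,\un{n}} = [\al_\un{n}(A)A]$ yield
\[
[\sca{X_{\al,\un{n}}, \L_F X_{\al,\un{n}}}] = [A\al_\un{n}(\L_F)A],
\]
which is contained in the ideal $\L_F$ precisely when $\al_\un{n}(\L_F) \subseteq \L_F$, i.e., $\L_F \subseteq \al_\un{n}^{-1}(\L_F)$. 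This is exactly condition (ii) of the corollary.

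Next, I would observe that by Proposition \ref{P:quo ker}, the positive invariance of $\L_\mt$ (a consequence of condition (ii) at $F=\mt$) gives $X_F^{-1}(\L_\mt) = \bigcap_{i\in F}\al_\un{i}^{-1}(\L_\mt)$, and hence $J_F(\L_\mt, X_\al) = \{a \in A \mid a \bigcap_{i \in F} \al_\un{i}^{-1}(\L_\mt) \subseteq \L_\mt\}$. Since $\L_F$ is an ideal, the inclusion $\L_F \subseteq J_F(\L_\mt, X_\al)$ is equivalent to $\L_F \cap \bigcap_{i \in F}\al_\un{i}^{-1}(\L_\mt) \subseteq \L_\mt$, which is condition (i) of the corollary.

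Finally, since the left actions are by compacts, Proposition \ref{P:NTcom} reduces condition (iv) of Definition \ref{D:NT tuple} to
\[
\bigg(\bigcap_{\un{n}\perp F} X_\un{n}^{-1}(J_F(\L_\mt, X_\al))\bigg) \cap \L_{\inv, F} \cap \L_{\lim, F} \subseteq \L_F
\foral \mt \neq F \subsetneq [d].
\]
Using $X_\un{n}^{-1}(I) = \al_\un{n}^{-1}(I)$, the first factor becomes $I_{1,F}$ and $\L_{\inv,F}$ becomes $I_{2,F}$. For the third factor, I would invoke the $*$-isomorphism $\K(X_{\al,\un{m}}) \cong [\al_\un{m}(A) A \al_\un{m}(A)]$ induced by $\Theta_{\xi_\un{m},\eta_\un{m}} \mapsto \xi_\un{m}\eta_\un{m}^*$, which restricts to $\K(X_{\al,\un{m}}\L_F) \cong [\al_\un{m}(A)\L_F\al_\un{m}(A)]$, and therefore
\[
\|\phi_\un{m}(a) + \K(X_{\al,\un{m}}\L_F)\| = \|\al_\un{m}(a) + [\al_\un{m}(A)\L_F\al_\un{m}(A)]\|,
\]
so that $\L_{\lim,F} = I_{3,F}$. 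The main obstacle is the careful bookkeeping around these identifications of compacts and quotient norms; once they are set up correctly the four conditions translate one-for-one, completing the equivalence.
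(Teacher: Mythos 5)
Your proposal is correct and takes essentially the same approach as the paper: the paper's proof likewise reduces the statement to Definitions \ref{D:NT tuple} and \ref{D:rel NO tuple} plus Proposition \ref{P:NTcom} via the dynamical-system dictionary recorded before the statement (including the automatic relativity of all $2^d$-tuples, the identity $\|\phi_\un{m}(a)+\K(X_{\al,\un{m}}\L_F)\|=\|\al_\un{m}(a)+[\al_\un{m}(A)\L_F\al_\un{m}(A)]\|$, and the formula for $J_F(\L_\mt,X_\al)$), and it invokes Proposition \ref{P:quo ker} together with positive invariance of $\L_\mt$ to collapse $X_F^{-1}(\L_\mt)$ to $\bigcap_{i\in F}\al_{\un{i}}^{-1}(\L_\mt)$, exactly as you do. The only step you make explicit that the paper leaves implicit is the equivalence of $\L_F\subseteq J_F(\L_\mt,X_\al)$ with $\L_F\cap\bigcap_{i\in F}\al_{\un{i}}^{-1}(\L_\mt)\subseteq\L_\mt$, which is justified since the product of two closed ideals coincides with their intersection.
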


\begin{proof}
The result follows immediately by the remarks preceding the statement, applied in conjunction with Definitions \ref{D:NT tuple} and \ref{D:rel NO tuple}, as well as Proposition \ref{P:NTcom}.
Note that the latter applies since the left action of each fibre of $X_\al$ is by compacts.
For item (i) and the definition of $I_{1,F}$, we also use that 
\[
\bigcap\{\al_\un{n}^{-1}(\L_\mt)\mid \un{0}\neq\un{n}\leq\un{1}_F\}=\bigcap_{i\in F}\al_\un{i}^{-1}(\L_\mt)
\foral
\mt\neq F\subseteq[d],
\]
which follows from Lemma \ref{L:quo ker} since $\L_\mt\subseteq A$ is  positively invariant.
\end{proof}

Next we turn our attention to showing that, in general, we do not have a canonical $*$-epimorphism $\N\O_X\to\N\O_{[X]_I}$ for a positively invariant ideal $I\subseteq A$.
In particular, injectivity of $(A,\al,\bZ_+^d)$ need not imply injectivity of $([A]_I,[\al]_I,\bZ_+^d)$.

\begin{example}\label{E:surjcounter}
Let $(A,\al,\bZ_+)$ be a unital and injective C*-dynamical system.
Suppose $I\subseteq A$ is an ideal that is positively invariant for $X_\al$ (i.e., $I\subseteq\al^{-1}(I)$), and so $[X_\al]_I \cong X_{[\al]_I}$.

We claim that we do not have a canonical $*$-epimorphism $\Phi \colon \N\O_{X_\al}\to\N\O_{X_{[\al]_I}}$, in general.
To reach contradiction, assume that such a map $\Phi$ exists.
Write $\N\O_{X_\al}=\ca(\pi,V)$ and $\N\O_{X_{[\al]_I}}=\ca(\si,W)$ for CNP-covariant pairs $(\pi,V)$ and $(\si,W)$ of $(A,\al,\bZ_+)$ and $([A]_I,[\al]_I,\bZ_+)$ (respectively), where $\pi$ and $\si$ are unital and injective.
Since $\al$ is injective, we have that $V_1$ is a unitary by Proposition \ref{P:injunitary}, and thus $W_1 = \Phi(V_1)$ is also a unitary.
Another application of Proposition \ref{P:injunitary} gives that $[\al]_I$ is injective.
However, this leads to contradiction when we choose $A,\al$ and $I$ so that $[\al]_I$ is not injective.

For such an example, let $B$ be a unital C*-algebra and $\be\in\End(B)$ be unital and non-injective.
Set $A=\bigoplus_{n\in\bN}B$ and define $\al \in \End(A)$ by $\al((b_n)_{n \in \bN})=(\be(b_1),b_1,b_2,\dots)$ for all $(b_n)_{n\in\bN}\in A$, noting that $\al$ is unital and injective.
Set
\[
I:=\{(b_n)_{n\in \bN} \in A\mid b_1=0\},
\]
which is an ideal that is positively invariant for $X_\al$.
Then $[\al]_I \in \End([A]_I)$ is conjugate to $\be \in \End(B)$, and thus $[\al]_I$ is not injective by the choice of $\be$.

Notice that this does \emph{not} contradict Proposition \ref{P:NO can surj} because $I$ does not participate in any NO-$2$-tuple of $X_\al$.
Indeed, suppose that $I$ participates in an NO-$2$-tuple $\L$ of $X_\al$, i.e., $\{I, \L_{\{1\}}\}$ is an O-pair of $X_{\al,1}$ by Proposition \ref{P:NO+O}.
Since $\al$ is injective, we have $J_{X_{\al,1}}=A$ and thus $\L_{\{1\}} = A$.
Hence $\al^{-1}(I) = A \cap \al^{-1}(I)\subseteq I$ by item (i) of Corollary \ref{C:dynsysjargNT}.
However, $\be$ is not injective and so we can choose $0 \neq b\in\ker\be$.
Then $\al(b,0,0,\dots)=(0,b,0,\dots)\in I$ but $(b,0,0,\dots)\notin I$, giving the contradiction that $\al^{-1}(I)\not\subseteq I$.
\end{example}

Next we turn to injective C*-dynamical systems $(A,\al,\bZ_+^d)$, so that $X_\al$ is regular.
Hence the gauge-invariant ideal structure of $\N\O_{X_\al}$ falls under the purview of Corollary \ref{C:regNObij}.

\begin{corollary}\label{C:injdynbij}
Let $(A,\al,\bZ_+^d)$ be an injective C*-dynamical system. 
Then the association
\begin{equation}
I\mapsto\L_I, \; \text{where} \; \L_{I,\mt}:=I \; \text{and} \; \L_{I,F}:=A\foral \mt\neq F\subseteq[d],
\end{equation}
defines a bijection between the set of ideals $I \subseteq A$ satisfying $\al_\un{n}(I)\subseteq I$ and $\al_\un{n}^{-1}(I)\subseteq I$ for all $\un{n}\in\bZ_+^d$ and the set of NO-$2^d$-tuples of $X_\al$, which in turn induces a bijection with the set of gauge-invariant ideals of $\N\O_{X_\al}$.
\end{corollary}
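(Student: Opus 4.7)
The plan is to combine Corollary~\ref{C:regNObij} with translations of positive and negative invariance into the dynamical-systems language. First I would observe that since $(A,\al,\bZ_+^d)$ is injective, the product system $X_\al$ is injective by the identification $\ker\phi_\un{n}=\ker\al_\un{n}$, and since every fibre acts by compacts, Proposition~\ref{P:inj+comp} gives that $X_\al$ is regular. Hence Corollary~\ref{C:regNObij} applies and yields a bijection $I\mapsto\L_I$ exactly of the form in the statement, between the set of ideals of $A$ that are positively and negatively invariant for $X_\al$ and the set of NO-$2^d$-tuples of $X_\al$.

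Next I would translate the invariance conditions into the dynamical setting using the remarks preceding Corollary~\ref{C:dynsysjargNT}. Positive invariance of $I$ for $X_\al$ is, by definition, $I\subseteq\al_\un{n}^{-1}(I)$ for every $\un{n}\in\bZ_+^d$, that is, $\al_\un{n}(I)\subseteq I$ for all $\un{n}\in\bZ_+^d$. Negative invariance of $I$ for $X_\al$ reads
\[
\I_F(X_\al)\cap\Big(\bigcap\{\al_\un{n}^{-1}(I)\mid\un{0}\neq\un{n}\leq\un{1}_F\}\Big)\subseteq I\foral\mt\neq F\subseteq[d].
\]
Because $X_\al$ is regular, $\I_F(X_\al)=A$ for all $\mt\neq F\subseteq[d]$, so negative invariance simplifies to
\[
\bigcap\{\al_\un{n}^{-1}(I)\mid\un{0}\neq\un{n}\leq\un{1}_F\}\subseteq I\foral\mt\neq F\subseteq[d].
\]
Specialising to $F=\{i\}$ gives $\al_\un{i}^{-1}(I)\subseteq I$ for every $i\in[d]$, and conversely, if this holds for every $i\in[d]$ then the intersection above is contained in $\al_\un{i}^{-1}(I)\subseteq I$ for any choice of $i\in F$. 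Thus negative invariance is equivalent to $\al_\un{i}^{-1}(I)\subseteq I$ for every $i\in[d]$.

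The final bookkeeping step is to promote this to $\al_\un{n}^{-1}(I)\subseteq I$ for every $\un{n}\in\bZ_+^d$. I would argue by induction on $|\un{n}|$. If $\un{n}=\un{n}'+\un{i}$ with $|\un{n}'|\geq 1$ and $\al_{\un{n}'}^{-1}(I)\subseteq I$ by the inductive hypothesis, then for $a\in\al_\un{n}^{-1}(I)$ we have $\al_{\un{n}'}(\al_\un{i}(a))=\al_\un{n}(a)\in I$, whence $\al_\un{i}(a)\in\al_{\un{n}'}^{-1}(I)\subseteq I$ and therefore $a\in\al_\un{i}^{-1}(I)\subseteq I$. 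The converse implication is trivial by taking $\un{n}=\un{i}$. Combining these translations with Corollary~\ref{C:regNObij} establishes the claimed bijection, and the induced bijection with the set of gauge-invariant ideals of $\N\O_{X_\al}$ is then immediate from Corollary~\ref{C:CNP param}. No single step presents a serious obstacle; the only care required is the observation that regularity collapses $\I_F(X_\al)$ to $A$ and the short induction that lifts $\al_\un{i}^{-1}(I)\subseteq I$ to arbitrary $\un{n}\in\bZ_+^d$.
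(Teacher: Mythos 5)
Your proof is correct and takes essentially the same route as the paper's: both observe that injectivity makes $X_\al$ regular, invoke Corollary~\ref{C:regNObij}, and then translate positive and negative invariance into the conditions $\al_\un{n}(I)\subseteq I$ and $\al_\un{n}^{-1}(I)\subseteq I$ for all $\un{n}\in\bZ_+^d$. Your short induction lifting $\al_\un{i}^{-1}(I)\subseteq I$ for $i\in[d]$ to arbitrary $\un{n}$ merely spells out an equivalence the paper asserts without detail.
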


\begin{proof}
Note that the set of ideals of $A$ that are positively and negatively invariant for $X_\al$ is exactly the set of ideals $I$ of $A$ which satisfy
\begin{equation}\label{Eq:invrel}
I\subseteq\bigcap_{\un{n}\in\bZ_+^d}\al_\un{n}^{-1}(I) \qand \bigcup_{i\in[d]}\al_\un{i}^{-1}(I)\subseteq I.
\end{equation}
Notice that $\bigcup_{i\in[d]}\al_\un{i}^{-1}(I)\subseteq I$ holds if and only if $\al_\un{n}^{-1}(I)\subseteq I$ for all $\un{n}\in\bZ_+^d$.
Consequently, $I$ satisfies (\ref{Eq:invrel}) if and only if $\al_\un{n}(I)\subseteq I$ and $\al_\un{n}^{-1}(I)\subseteq I$ for all $\un{n}\in\bZ_+^d$.
Since $X_\al$ is regular, the claim now follows from Corollary \ref{C:regNObij}.
\end{proof}

Now let $(A,\al,\bZ_+^d)$ be an automorphic C*-dynamical system.
Then we can uniquely extend $\al$ to a group action $\al \colon \bZ^d \to \Aut(A)$ since $\bZ_+^d\subseteq\bZ^d$ is a spanning cone, and consider the crossed product C*-algebra $A\rtimes_\al\bZ^d$.
We have $\N\O_{X_\al}\cong A\rtimes_\al\bZ^d$ by an equivariant $*$-isomorphism, which thus preserves the lattice structure of the gauge-invariant ideals.
Accordingly, Corollary \ref{C:injdynbij} recovers the following well-known result from the theory of crossed products.

\begin{corollary}\label{C:autdynlat}
Let $(A,\al,\bZ_+^d)$ be an automorphic C*-dynamical system.
Then the bijection of Corollary \ref{C:injdynbij} induces a lattice isomorphism between the set of $\al$-invariant ideals of $A$ and the set of gauge-invariant ideals of $A \rtimes_{\al} \bZ^d$.
\end{corollary}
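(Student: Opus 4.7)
The plan is to deduce this result as a concrete specialisation of Corollary \ref{C:injdynbij}. Since the automorphic system extends uniquely to a group action $\al\colon\bZ^d\to\Aut(A)$ (because $\bZ_+^d$ spans $\bZ^d$), and since $\N\O_{X_\al}\cong A\rtimes_\al\bZ^d$ by an equivariant $*$-isomorphism, it suffices to combine the parametrisation of Corollary \ref{C:injdynbij} with a characterisation of the $\al$-invariant ideals and with a lattice-theoretic argument.

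First I would verify that the set of ideals $I\subseteq A$ satisfying $\al_{\un{n}}(I)\subseteq I$ and $\al_{\un{n}}^{-1}(I)\subseteq I$ for all $\un{n}\in\bZ_+^d$ coincides with the set of $\al$-invariant ideals (meaning $\al_{\un{m}}(I)=I$ for all $\un{m}\in\bZ^d$). The forward direction uses that for every $\un{m}\in\bZ^d$ we can write $\un{m}=\un{n}_1-\un{n}_2$ with $\un{n}_1,\un{n}_2\in\bZ_+^d$, so $\al_{\un{m}}=\al_{\un{n}_1}\circ\al_{\un{n}_2}^{-1}$ maps $I$ into itself, and then applying also to $-\un{m}$ gives equality. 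The reverse direction is immediate because the conditions in Corollary \ref{C:injdynbij} are special cases (with $\al_{\un{n}}^{-1}=\al_{-\un{n}}$).

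Next, the bijection of Corollary \ref{C:injdynbij} is given by $I\mapsto\L_I$ followed by $\L\mapsto Q_{\I}(\mathfrak{J}^{\L})$. The first assignment is clearly order-preserving in both directions (since the only non-trivial component is $\L_{I,\mt}=I$), and the second is a lattice isomorphism by Corollary \ref{C:CNP param}. Composing with the equivariant $*$-iso\-morphism $\N\O_{X_\al}\cong A\rtimes_\al\bZ^d$ (which preserves gauge-invariant ideals and their lattice operations), we obtain a bijection $I\mapsto\fK_I$ between $\al$-invariant ideals of $A$ and gauge-invariant ideals of $A\rtimes_\al\bZ^d$ that preserves inclusions in both directions.

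Finally, I would invoke the general lattice-theoretic fact that an order isomorphism between two distributive lattices is automatically a lattice isomorphism. Explicitly, for $\al$-invariant ideals $I_1,I_2$, inclusion-preservation in both directions forces $\fK_{I_1\cap I_2}=\fK_{I_1}\cap\fK_{I_2}$ and $\fK_{I_1+I_2}=\fK_{I_1}+\fK_{I_2}$, since each side equals the meet (resp.\ join) of $\fK_{I_1}$ and $\fK_{I_2}$ in the lattice of gauge-invariant ideals. The only potentially delicate point is checking that $I_1+I_2$ remains $\al$-invariant and that $\fK_{I_1}+\fK_{I_2}$ remains gauge-invariant, both of which are routine (the former from $\al_{\un{m}}(I_1+I_2)=\al_{\un{m}}(I_1)+\al_{\un{m}}(I_2)=I_1+I_2$, the latter from continuity of the gauge action). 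The main (minor) obstacle is thus purely bookkeeping: namely, tracking the bijection through the composition $I\mapsto\L_I\mapsto Q_{\I}(\fJ^{\L_I})\mapsto\fK_I$ and recording that each step is an order isomorphism.
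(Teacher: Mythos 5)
Your proof is correct, but it takes a genuinely different route from the paper. The paper proves the lattice compatibility by hand: it reduces to showing $\L_{I\cap J}=\L_I\wedge\L_J$ and $\L_{I+J}=\L_I\vee\L_J$, handles the meet via Proposition \ref{P:NTwedge}, and for the join carries out a concrete computation --- using Proposition \ref{P:NTvee}, the description $Q(\fJ^{\L_I})=\ol{\spn}\{U_\un{n}\pi(I)U_\un{m}^*\mid\un{n},\un{m}\in\bZ_+^d\}$ from item (iii) of Corollary \ref{C:NO source} (which needs regularity of $[X_\al]_I$, via item (ii) of Proposition \ref{P:regNO}), and the faithful conditional expectation onto the fixed point algebra --- to establish the key identity $\ol{\pi}_{X_\al}^{-1}(\fJ^{\L_I}+\fJ^{\L_J})=I+J$. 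You bypass all of this with the soft order-theoretic observation that an order isomorphism between lattices automatically preserves meets and joins (distributivity is not even needed: the image of a join is an upper bound, and reflecting inclusions shows it is the least one). This is legitimate here because the two-sided inclusion-preservation you need is already on record --- Theorem \ref{T:NT param} states that $\L\mapsto\fJ^\L$ and $\fJ\mapsto\L^\fJ$ \emph{both} respect inclusions, the passage $\fJ^\L\mapsto Q_\I(\fJ^\L)$ reflects inclusions since $\ker Q_\I\subseteq\fJ^\L$ for every NO-$2^d$-tuple $\L$, and you correctly verify the remaining routine points (that $I_1+I_2$ is $\al$-invariant and $\fK_1+\fK_2$ is gauge-invariant, so that the natural operations on both sides really are the order-theoretic glb and lub). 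What each approach buys: yours is shorter and more general --- it would apply verbatim to any inclusion-reflecting parametrisation whose lattice operations are order-theoretic, and it makes transparent that the content of the corollary is purely the order isomorphism; the paper's computation, by contrast, yields explicit formulas (the concrete form of $Q(\fJ^{\L_I})$ in the crossed product and the identification of the $\L_\mt$-component of the join with $I+J$), which both exercises and sanity-checks the $\vee$-machinery of Proposition \ref{P:NTvee} that the paper develops for the general, non-automorphic setting where no such soft shortcut describes the join of tuples concretely.
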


\begin{proof}
Notice that in the automorphic case, the ideals $I\subseteq A$ satisfying $\al_\un{n}(I)\subseteq I$ and $\al_\un{n}^{-1}(I)\subseteq I$ for all $\un{n}\in\bZ_+^d$ are exactly the ideals $I\subseteq A$ satisfying $\al_\un{n}(I)=I$ for all $\un{n}\in\bZ_+^d$.
The set of all such ideals carries the usual lattice structure.
Consider the mapping
\begin{equation*}
I\mapsto\L_I, \; \text{where} \; \L_{I,\mt}:=I \; \text{and} \; \L_{I,F}:=A\foral \mt\neq F\subseteq[d],
\end{equation*}
from Corollary \ref{C:injdynbij}.
In view of Corollary \ref{C:CNP param} and the comments preceding the statement, it suffices to show that if $I,J\subseteq A$ are $\al$-invariant ideals, then
\[
\L_{I \cap J}=\L_I\wedge\L_J
\qand
\L_{I +J} = \L_I \vee \L_J.
\]

For the operation $\wedge$, this follows by Proposition \ref{P:NTwedge}.
For the operation $\vee$, this amounts to showing that $\L_{I+J,F}=(\L_I\vee\L_J)_F$ for all $F\subseteq[d]$.
To this end, first fix $\mt\neq F\subseteq[d]$.
Since $\L_I\vee\L_J$ is an NO-$2^d$-tuple of the regular product system $X_\al$, we have
\[
\I_F=A\subseteq(\L_I\vee\L_J)_F\subseteq A.
\]
By definition of $\L_{I+J}$, we obtain
\[
\L_{I+J,F}=A=(\L_I\vee\L_J)_F,
\]
as required.
It remains to check that $\L_{I+J,\mt}=(\L_I\vee\L_J)_\mt$.
By Proposition \ref{P:NTvee}, this amounts to showing that
\[
I+J=\ol{\pi}_{X_\al}^{-1}(\fJ^{\L_I}+\fJ^{\L_J}).
\]
The forward inclusion is immediate.
For the reverse inclusion, we identify $\N\O_{X_\al}$ with $\ca(\pi, U)$ for a CNP-covariant pair $(\pi,U)$ where $\pi$ is injective and $U$ is a unitary representation, and let $Q \colon \N\T_{X_\al} \to \N\O_{X_\al}$ be the quotient map.
Since $[X_{\al}]_I$ is regular by item (ii) of Lemma \ref{L:regNO}, an application of item (iii) of Corollary \ref{C:NO source} gives that
\[
Q(\fJ^{\L_I}) = \ol{\spn} \{ U_{\un{n}} \pi(I) U_{\un{m}}^* \mid \un{n}, \un{m} \in \bZ_+^d \},
\]
and likewise for $Q(\fJ^{\L_J})$.
Applying the faithful conditional expectation $E_\be\colon\N\O_{X_\al}\to\N\O_{X_\al}^\be$ then gives that
\begin{align*}
Q(\fJ^{\L_I} + \fJ^{\L_J})^\be 
= 
\ol{\spn} \{ U_{\un{n}} \pi(I + J) U_{\un{n}}^* \mid \un{n} \in \bZ_+^d \}
=
\ol{\spn} \{ \pi \circ \al_{\un{n}}^{-1}(I + J) \mid \un{n} \in \bZ_+^d \} 
=
\pi(I + J),
\end{align*}
using $\al$-invariance of $I$ and $J$ together with (\ref{Eq:cov dyn}).
Therefore, if $a \in \ol{\pi}_{X_\al}^{-1}(\fJ^{\L_I} + \fJ^{\L_J})$ then 
\[
\pi(a) = Q(\ol{\pi}_{X_\al}(a)) \in Q(\fJ^{\L_I} + \fJ^{\L_J})^\be = \pi(I +J).
\]
Injectivity of $\pi$ then gives that $a \in I+J$, as required.
\end{proof}

In the automorphic case, we also have
\[
(A\rtimes_\al\bZ^d)/(I\rtimes_{\al|_I}\bZ^d)\cong A/I\rtimes_{[\al]_I}\bZ^d
\]
whenever $I\subseteq A$ is an ideal satisfying $\al_\un{n}(I)=I$ for all $\un{n}\in\bZ_+^d$.
This is in accordance with Corollary \ref{C:NO source}, i.e.,
\[
\N\O_{X_\al}/\sca{Q_\I(\ol{\pi}_{X_\al}(I))}\cong\N\O_{X_{[\al]_{I}}}.
\]

\subsection{Higher-rank graphs}\label{Ss:hrgraph}

In this subsection we interpret the parametrisation in the case of strong finitely aligned higher-rank graphs, which also accounts for row-finite higher-rank graphs.
The parametrisation we offer is in terms of vertex sets.
As a corollary, we recover the parametrisation of Raeburn, Sims and Yeend \cite[Theorem 5.2]{RSY03} for locally convex row-finite higher-rank graphs.
The concepts from the theory of higher-rank graphs that we utilise are taken from \cite{RS05, RSY03, RSY04, Sim06}.
For this subsection, we will reserve $d$ for the degree map of a graph $(\La,d)$ of rank $k$.

Fix $k\in\bN$.
A \emph{$k$-graph} $(\La,d)$ consists of a countable small category $\La=(\text{Obj}(\La),\text{Mor}(\La),r,s)$ together with a functor $d\colon\La\to\bZ_+^k$, called the \emph{degree map}, satisfying the \emph{factorisation property}: 
\begin{quote}
For all $\la\in\text{Mor}(\La)$ and $\un{m},\un{n}\in\bZ_+^k$ such that $d(\la)=\un{m}+\un{n}$, there exist unique $\mu,\nu\in\text{Mor}(\La)$ such that $d(\mu)=\un{m}, d(\nu)=\un{n}$ and $\la=\mu\nu$.
\end{quote}
Here we view $\bZ_+^k$ as a category consisting of a single object, and whose morphisms are exactly the elements of $\bZ_+^k$ (when viewed as a set).
Composition in this category is given by entrywise addition, and the identity morphism is $\un{0}$.
Therefore, $d$ being a functor means that 
\[
d(\la\mu)=d(\la)+d(\mu) \text{ when } \la,\mu\in\text{Mor}(\La) \text{ satisfy } r(\mu)=s(\la),
\]
and 
\[
d(\id_v)=\un{0} \foral v\in\text{Obj}(\La).
\]
We view $k$-graphs as generalised graphs, and therefore refer to the elements of $\text{Obj}(\La)$ as \emph{vertices} and the elements of $\text{Mor}(\La)$ as \emph{paths}.
Fixing $\la\in\text{Mor}(\La)$, the factorisation property guarantees that 
\[
d(\la)=\un{0} \text{ if and only if } \la=\id_{s(\la)}.
\]
Hence we may identify $\text{Obj}(\La)$ with $\{\la\in\text{Mor}(\La)\mid d(\la)=\un{0}\}$, and consequently we may write $\la\in\La$ instead of $\la\in\text{Mor}(\La)$ without any ambiguity.

Fix a $k$-graph $(\La,d)$.
Given $\la\in\La$ and $E\subseteq\La$, we define
\[
\la E:=\{\la\mu\in\La\mid \mu\in E, r(\mu)=s(\la)\}\qand E\la:=\{\mu\la\in\La\mid \mu\in E, r(\la)=s(\mu)\}.
\]
In particular, we may replace $\la$ by a vertex $v\in\La$ and write
\[
vE:=\{\la\in E\mid r(\la)=v\}\qand Ev:=\{\la\in E\mid s(\la)=v\}.
\]
Analogously, given $E,F\subseteq\La$, we define
\[
EF:=\{\la\mu\in\La\mid \la\in E,\mu\in F, r(\mu)=s(\la)\}.
\]
Fixing $\un{n}\in\bZ_+^k$, we set
\begin{align*}
\La^\un{n} :=\{\la\in\La\mid d(\la)=\un{n}\} \; \text{ and } \;
\La^{\leq\un{n}} :=\{\la\in\La\mid d(\la)\leq\un{n} \; \text{and} \; s(\la)\La^\un{i}=\mt\;\text{if}\; d(\la)+\un{i}\leq\un{n}\}.
\end{align*}
We will refer to the elements of $\La^\un{i}$ as \emph{edges} for all $i\in[k]$.
Given any $v\in\La^\un{0}$, the set $v\La^{\leq\un{n}}$ is non-empty (indeed, an element can be constructed inductively).

Suppose that we have $\un{\ell},\un{m},\un{n}\in\bZ_+^k$ satisfying $\un{\ell}\leq\un{m}\leq\un{n}$, and $\la\in\La$ satisfying $d(\la)=\un{n}$.
By the factorisation property, there exist unique paths $\la(\un{0},\un{\ell}), \la(\un{\ell},\un{m}), \la(\un{m},\un{n}) \in \La$ such that 
\[
d(\la(\un{0},\un{\ell}))=\un{\ell}, d(\la(\un{\ell},\un{m}))=\un{m}-\un{\ell}, d(\la(\un{m},\un{n}))=\un{n}-\un{m}, \text{ and }\la=\la(\un{0},\un{\ell})\la(\un{\ell},\un{m})\la(\un{m},\un{n}).
\]
Fixing $\mu,\nu\in\La$, we define the set of \emph{minimal common extensions} of $\mu$ and $\nu$ by
\[
\text{MCE}(\mu,\nu):=\{\la\in\La\mid d(\la)=d(\mu)\vee d(\nu), \la(\un{0},d(\mu))=\mu, \la(\un{0},d(\nu))=\nu\}.
\]
Notice that $\text{MCE}(\mu,\nu)$ may be empty, e.g., when $r(\mu)\neq r(\nu)$.
Intrinsically related to $\text{MCE}(\mu,\nu)$ is the set
\begin{align*}
\La^{\text{min}}(\mu,\nu) & :=\{(\al,\be)\in\La\times\La\mid \mu\al=\nu\be\in\text{MCE}(\mu,\nu)\} \\
						  & =\{(\al,\be)\in\La\times\La\mid \mu\al=\nu\be, d(\mu\al)=d(\mu)\vee d(\nu)=d(\nu\be)\}.
\end{align*}
Note that if $\mu,\nu\in\La$ satisfy $d(\mu) = d(\nu)$, then $\La^{\text{min}}(\mu,\nu)$ being non-empty implies that $\mu=\nu$.
Given a vertex $v\in\La^\un{0}$, a subset $E\subseteq v\La$ is called \emph{exhaustive} if for every $\la\in v\La$ there exists $\mu\in E$ such that $\La^{\text{min}}(\la,\mu)\neq\mt$.

A $k$-graph $(\La,d)$ is said to be \emph{finitely aligned} if 
\[
|\text{MCE}(\mu,\nu)|<\infty \foral \mu,\nu\in\La;
\]
equivalently, if 
\[
|\La^{\text{min}}(\mu,\nu)|<\infty \foral \mu,\nu\in\La.
\]
Following \cite[Definition 7.2]{DK18}, we say that $(\La,d)$ is \emph{strong finitely aligned} if $(\La,d)$ is finitely aligned and for every $\la\in\La$ and $i\in[k]$ satisfying $d(\la)\perp\un{i}$, there are at most finitely many edges $e\in\La^\un{i}$ such that $\La^{\text{min}}(\la,e)\neq\mt$.
A $k$-graph $(\La,d)$ is said to be \emph{row-finite} if $|v\La^\un{n}|<\infty$ for all $v\in\La^\un{0}$ and $\un{n}\in\bZ_+^k$.
Row-finite $k$-graphs are in particular strong finitely aligned. 

We say that $(\La,d)$ is \emph{locally convex} if, for all $v\in\La^\un{0}$, $\la\in v\La^\un{i}$ and $\mu\in v\La^\un{j}$ for $i,j\in[k]$ with $i\neq j$, we have that $s(\la)\La^\un{j}$ and $s(\mu)\La^\un{i}$ are non-empty.
Finally, $(\La,d)$ is said to be \emph{sourceless} if $v\La^\un{n}\neq\mt$ for all $v\in\La^\un{0}$ and $\un{n}\in\bZ_+^k$.
Any sourceless $k$-graph is automatically locally convex.

Let $(\La,d)$ be a finitely aligned $k$-graph.
A set of partial isometries $\{T_{\lambda}\}_{\lambda \in \La}$ in a C*-algebra is called a \emph{Toeplitz-Cuntz-Krieger $\La$-family} if the following hold:
\begin{enumerate}
\item[(TCK1)]
$\{T_v\}_{v\in \La^{\un{0}}}$ is a collection of pairwise orthogonal projections;
\item[(TCK2)]
$T_{\la} T_{\mu} = \de_{s(\la), r(\mu)} T_{\la \mu}$ for all $\la, \mu \in \La$;
\item[(TCK3)]
$T_{\la}^* T_{\mu} = \sum_{(\alpha,\beta)\in \La^{\min}(\la, \mu)} T_{\alpha} T_{\beta}^*$ for all $\la, \mu \in \La$.
\end{enumerate}
A Toeplitz-Cuntz-Krieger $\La$-family $\{T_{\lambda}\}_{\lambda \in \La}$ is called a \emph{Cuntz-Krieger $\La$-family} if it satisfies:
\begin{enumerate}
\item[(CK)]
$\prod_{\lambda \in E}(T_v - T_{\lambda} T_{\lambda}^*) = 0$ for every $v\in \La^{\un{0}}$ and all non-empty finite exhaustive sets $E \subseteq v\La$.
\end{enumerate}
Note that $T_\la^*T_\mu=0$ whenever $\la,\mu\in\La$ satisfy $\La^{\text{min}}(\la,\mu)=\mt$ by (TCK3).
Likewise, (TCK3) implies that $T_\la^*T_\la=T_{s(\la)}$ for all $\la\in\La$.
The C*-algebra $\ca(\La)$ is the universal one with respect to the Cuntz-Krieger $\La$-families, and satisfies a Gauge-Invariant Uniqueness Theorem \cite[Theorem 4.2]{RSY04}.

Every $k$-graph $(\La,d)$ is canonically associated with a product system $X(\La) := \{X_\un{n}(\La)\}_{\un{n}\in\bZ_+^k}$ with coefficients in the C*-algebra $c_0(\La^\un{0})$, where we view $\La^\un{0}$ as a discrete space.
Firstly, set $X_\un{0}(\La):=c_0(\La^\un{0})$, which we view as a C*-correspondence over itself in the usual way.
For each $v\in\La^\un{0}$, we write $\delta_v\in c_0(\La^\un{0})$ for the projection on $\{v\}$.
For every $\un{0} \neq \un{n} \in \bZ_+^k$, we consider the linear space $c_{00}(\La^\un{n})$ and write $\de_\la$ for its generators.
A right pre-Hilbert $c_0(\La^\un{0})$-module structure on $c_{00}(\La^\un{n})$ is given by
\[
\sca{\xi_\un{n}, \eta_\un{n}}(v) := \sum_{s(\lambda) = v}\overline{\xi_\un{n}(\lambda)}\eta_\un{n}(\lambda) \qand (\xi_\un{n}a)(\lambda) :=\xi_\un{n}(\lambda)a(s(\lambda)),
\]
for all $\xi_\un{n},\eta_\un{n}\in c_{00}(\La^\un{n}), a\in c_0(\La^\un{0}), v\in\La^\un{0}$ and $\la\in\La^\un{n}$.
We write $X_{\un{n}}(\La)$ for the right Hilbert C*-module completion of $c_{00}(\La^{\un{n}})$.
A left action $\phi_\un{n}$ of $c_0(\La^\un{0})$ on $X_\un{n}(\La)$ is induced by
\[
\phi_\un{n}(a)\colon c_{00}(\La^\un{n})\to c_{00}(\La^\un{n}); (\phi_\un{n}(a)\xi_\un{n})(\la)=a(r(\la))\xi_\un{n}(\la)\foral a\in c_0(\La^\un{0}), \xi_\un{n}\in c_{00}(\La^\un{n}), \la\in \La^\un{n},
\]
thereby imposing a C*-correspondence structure on $X_\un{n}(\La)$.
Fixing $\un{n},\un{m}\in\bZ_+^k$, we define a multiplication map $u_{\un{n},\un{m}}$ of $X(\La)$ by
\[
u_{\un{n},\un{m}}\colon X_\un{n}(\La)\otimes_{c_0(\La^\un{0})}X_\un{m}(\La)\to X_{\un{n}+\un{m}}(\La); u_{\un{n},\un{m}}(\delta_\la\otimes\delta_\mu)=\begin{cases} \delta_{\la\mu} & \text{if} \; r(\mu)=s(\la), \\ 0 & \text{otherwise,} \end{cases}
\]
for all $\la\in\La^\un{n}$ and $\mu\in\La^\un{m}$, rendering $X(\La)$ a product system over $\bZ_+^k$ with coefficients in $c_0(\La^\un{0})$.

For $\un{n}\in\bZ_+^k$ and $\la,\mu,\nu\in\La^\un{n}$, we have
\[
\Theta_{\delta_\la,\delta_\mu}^{X_\un{n}(\La)}(\delta_\nu)=\delta_\la\sca{\delta_\mu,\delta_\nu}=\begin{cases} \delta_\la\delta_{s(\mu)} & \text{if} \; \nu=\mu, \\ 0 & \text{otherwise}.\end{cases}
\]
In turn, $\Theta_{\delta_\la,\delta_\mu}^{X_\un{n}(\La)}$ is non-zero if and only if $s(\la)=s(\mu)$, and we obtain
\begin{equation*}
\K(X_\un{n}(\La))=\ol{\spn}\{\Theta_{\delta_\la,\delta_\mu}^{X_\un{n}(\La)}\mid \la,\mu\in\La^\un{n}, s(\la)=s(\mu)\}.
\end{equation*}
A $k$-graph $(\La,d)$ is finitely aligned if and only if $X(\La)$ is compactly aligned \cite[Theorem 5.4]{RS05}.
In this case, if $\la_1,\la_2\in\La^\un{n}$ and $\mu_1,\mu_2\in\La^\un{m}$ are non-trivial paths that satisfy $s(\la_1)=s(\la_2)$ and $s(\mu_1)=s(\mu_2)$, then
\begin{equation*}
\iota_\un{n}^{\un{n}\vee\un{m}}(\Theta_{\delta_{\la_1},\delta_{\la_2}}^{X_\un{n}(\La)})\iota_\un{m}^{\un{n}\vee\un{m}}(\Theta_{\delta_{\mu_1},\delta_{\mu_2}}^{X_\un{m}(\La)})=\sum_{(\al,\be)\in\La^{\text{min}}(\la_2,\mu_1)}\Theta_{\delta_{\la_1\al},\delta_{\mu_2\be}}^{X_{\un{n}\vee\un{m}}(\La)}.
\end{equation*}
Similarly, $(\La,d)$ is strong finitely aligned if and only if $X(\La)$ is strong compactly aligned \cite[Proposition 7.1]{DK18}.
We record the following well-known results for future reference.

\begin{lemma}\label{L:hrcomp}
Let $(\La,d)$ be a $k$-graph. 
Fix $\un{n}\in\bZ_+^k$ and $v\in\La^\un{0}$. 
Then the following hold:
\begin{enumerate}
\item $\phi_\un{n}(\delta_v)=0$ if and only if $v\La^\un{n}=\mt$.
\item $\phi_\un{n}(\delta_v)\in\K(X_\un{n}(\La))$ if and only if $|v\La^\un{n}|<\infty$.
\end{enumerate}
\end{lemma}

\begin{lemma}\label{L:hrrfsprodsys}
Let $(\La,d)$ be a $k$-graph.
Then the following hold:
\begin{enumerate}
\item $(\La,d)$ is row-finite if and only if $\phi_\un{n}(c_0(\La^\un{0}))\subseteq\K(X_\un{n}(\La))$ for all $\un{n}\in\bZ_+^k$.
\item $(\La,d)$ is sourceless if and only if $X(\La)$ is injective.
\item $(\La,d)$ is row-finite and sourceless if and only if $X(\La)$ is regular.
\end{enumerate}
\end{lemma}

The following proposition implies that $\ca(\La)\cong\N\O_{X(\La)}$ canonically, for finitely aligned $\La$.

\begin{proposition}\label{P:hrrepnbij}
Let $(\La,d)$ be a finitely aligned $k$-graph. 
Then there exists a bijective correspondence between the set of Nica-covariant (resp. CNP-) representations of $X(\La)$ and the set of Toeplitz-Cuntz-Krieger (resp. Cuntz-Krieger) $\La$-families.
\end{proposition}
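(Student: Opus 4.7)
The plan is to construct both directions of the bijection explicitly on generators and verify the defining relations. For the forward direction, given a Nica-covariant representation $(\pi,t)$ of $X(\La)$, define $T_v := \pi(\de_v)$ for each $v \in \La^{\un{0}}$ and $T_\la := t_{d(\la)}(\de_\la)$ for each $\la \in \La$ with $d(\la) \neq \un{0}$. Property (TCK1) is immediate from the orthogonality of the $\{\de_v\}_{v \in \La^{\un{0}}}$ in $c_0(\La^{\un{0}})$; property (TCK2) follows from the multiplication maps of $X(\La)$, namely $u_{d(\la), d(\mu)}(\de_\la \otimes \de_\mu) = \de_{s(\la), r(\mu)} \de_{\la\mu}$. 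For (TCK3), observe that $T_\la T_\la^* = \psi_{d(\la)}(\Theta_{\de_\la, \de_\la})$ and $T_\la^* T_\la = T_{s(\la)}$; applying Nica-covariance to the product $\psi_{d(\la)}(\Theta_{\de_\la, \de_\la}) \psi_{d(\mu)}(\Theta_{\de_\mu, \de_\mu})$ via the formula $\iota_{d(\la)}^{d(\la) \vee d(\mu)}(\Theta_{\de_\la, \de_\la}) \iota_{d(\mu)}^{d(\la) \vee d(\mu)}(\Theta_{\de_\mu, \de_\mu}) = \sum_{(\al, \be) \in \La^{\min}(\la, \mu)} \Theta_{\de_{\la\al}, \de_{\mu\be}}$ recalled from the excerpt, and then sandwiching with $T_\la^*$ and $T_\mu$, yields (TCK3) after absorbing $T_\la^* T_\la = T_{s(\la)}$ on the left and $T_\mu T_\mu^* T_\mu = T_\mu$ on the right.

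Conversely, given a TCK $\La$-family $\{T_\la\}_{\la \in \La}$, I define $\pi \colon c_0(\La^{\un{0}}) \to \ca(\{T_\la\})$ as the $*$-homomorphism determined by $\pi(\de_v) = T_v$ (this is well-defined by (TCK1)), and for $\un{n} \neq \un{0}$ define $t_{\un{n}}$ on the dense subspace $c_{00}(\La^{\un{n}})$ by $t_{\un{n}}(\de_\la) = T_\la$. Using (TCK3) with $\la, \mu \in \La^{\un{n}}$ (in which case $\La^{\min}(\la, \mu) \neq \mt$ forces $\la = \mu$) shows that $T_\la^* T_\mu = \de_{\la, \mu} T_{s(\la)}$, from which each $t_{\un{n}}$ preserves the inner product on generators, is therefore isometric, and extends uniquely to $X_{\un{n}}(\La)$; the right-action and left-action compatibilities reduce to $T_\la T_v = \de_{s(\la), v} T_\la$ and $T_v T_\la = \de_{v, r(\la)} T_\la$, both consequences of (TCK2). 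The multiplication axiom $t_{\un{n}}(\xi) t_{\un{m}}(\eta) = t_{\un{n} + \un{m}}(\xi\eta)$ reduces on generators to (TCK2) again. For Nica-covariance, one checks the relation on rank-one compact operators $\Theta_{\de_{\la_1}, \de_{\la_2}}$ and $\Theta_{\de_{\mu_1}, \de_{\mu_2}}$: expanding using (TCK3) the product $T_{\la_1} T_{\la_2}^* T_{\mu_1} T_{\mu_2}^* = \sum_{(\al, \be) \in \La^{\min}(\la_2, \mu_1)} T_{\la_1 \al} T_{\mu_2 \be}^*$ and recognising the right-hand side as $\psi_{d(\la_1) \vee d(\mu_1)}\bigl(\sum_{(\al,\be)} \Theta_{\de_{\la_1\al}, \de_{\mu_2\be}}\bigr)$ gives exactly the compact-alignment identity. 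Checking that the two assignments are mutually inverse on generators is immediate.

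For the CNP / Cuntz-Krieger part, I need to show that under the bijection of the first part, $(\pi, t)$ is a CNP-representation of $X(\La)$ if and only if the associated $\{T_\la\}$ satisfies (CK). One direction is to translate the defining relation $\pi(a) q_F = 0$ for $a \in \I_F$ into statements about the $T_\la$. Using the approximate-unit description of $p_{\un{i}}$ from the excerpt and Proposition~\ref{P:prod cai}, together with Proposition~\ref{P:hrcomp} identifying $\phi_{\un{n}}(\de_v) \in \K(X_{\un{n}}(\La))$ with finiteness of $v\La^{\un{n}}$, one can write the CNP-relation at a vertex in terms of finite products $\prod_{\la \in E}(T_v - T_\la T_\la^*)$ indexed by finite exhaustive sets; conversely, any exhaustive set $E$ feeds into the formula for $q_F$ to produce an element of the CNP-ideal. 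The cleanest route is to invoke the identification $\N\O_{X(\La)} \cong \ca(\La)$ established via the Sims--Yeend construction, which is available through the excerpt's remark that $\N\O_X$ coincides with the Sims--Yeend Cuntz-Nica-Pimsner algebra for $(\bZ^d, \bZ_+^d)$; this identification, combined with the universal property of $\ca(\La)$, transports the bijection of the first part to the CNP / CK level.

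The main obstacle is this last step: matching the CNP-ideal generated by $\pi(\I_F) q_F$ for all $F \subseteq [k]$ with the ideal generated by $\prod_{\la \in E}(T_v - T_\la T_\la^*)$ ranging over vertices $v$ and finite exhaustive $E \subseteq v\La$. The difficulty is combinatorial: one must unpack $\I_F$ as intersections of preimages involving the kernels of $\phi_{\un{i}}$ and compact containment conditions, interpret these in terms of $F$-tracing vertices and the absence of extensions along coordinates in $F$, and show that the Wick-ordered sum defining $\pi(a) q_F$ produces exactly the exhaustive-set polynomial when $a = \de_v$ and $v$ has finitely many extensions exhausting the possible incoming directions. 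Appealing to \cite[Theorem~4.1]{SY10} sidesteps this reduction and is the cleanest way to conclude.
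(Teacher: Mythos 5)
The paper offers no proof of this proposition at all: it is recorded as a known fact, resting on \cite{RS05} for the Toeplitz level and on \cite{SY10} (together with \cite[Theorem 7.6]{DK18} in the strong finitely aligned case) for the CNP level, so there is no internal argument for your proposal to diverge from. Your Toeplitz half is correct and complete, and it is the standard argument from the literature: the dictionary $T_v=\pi(\de_v)$, $T_\la=t_{d(\la)}(\de_\la)$, the derivation of (TCK3) by sandwiching the Nica-covariance identity for $\Theta_{\de_\la,\de_\la}$ and $\Theta_{\de_\mu,\de_\mu}$ between $T_\la^*$ and $T_\mu$, and, in the converse direction, the rank-one verification of Nica-covariance via $T_{\la_1}T_{\la_2}^*T_{\mu_1}T_{\mu_2}^*=\sum_{(\al,\be)\in\La^{\min}(\la_2,\mu_1)}T_{\la_1\al}T_{\mu_2\be}^*$ all check out (including the isometric extension of each $t_{\un{n}}$ from $c_{00}(\La^{\un{n}})$, which uses that $\La^{\min}(\la,\mu)\neq\mt$ with $d(\la)=d(\mu)$ forces $\la=\mu$).

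The CNP half, however, contains a genuine gap. Your ``cleanest route'' --- invoking the identification $\N\O_{X(\La)}\cong\ca(\La)$ from the Sims--Yeend construction --- is circular: that isomorphism is established in \cite{SY10} precisely by proving the representation-level equivalence between CNP-covariance and the Cuntz--Krieger relations, i.e., the very statement you are asked to prove, so citing it is citing the conclusion. The specific reference is also off: \cite[Theorem 4.1]{SY10} is the injectivity theorem for the universal CNP-representation (which is what this paper uses it for), not the Cuntz--Krieger identification, which lives in the graph-algebra section of \cite{SY10}. Separately, your intermediate sketch via $\pi(a)q_F$, Proposition \ref{P:prod cai} and $F$-tracing vertices silently assumes strong compact alignment of $X(\La)$ (equivalently, strong finite alignment of $\La$): the CNP-relations of this paper through the tuple $\{\I_F\}_{F\subseteq[k]}$ and the alternating sums defining $\pi(a)q_F$ are only defined in that setting, whereas the proposition assumes mere finite alignment, where the relevant covariance is Sims--Yeend's. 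In the strong finitely aligned case, the combinatorial step you correctly flag as ``the main obstacle'' --- matching the ideal generated by $\pi(\I_F)q_F$ with the products $\prod_{\la\in E}(T_v-T_\la T_\la^*)$ over finite exhaustive sets --- is exactly the content of \cite[Theorem 7.6]{DK18}, the (CK$'$) characterisation quoted in this subsection, and citing that theorem would legitimately close your argument at that level of generality. Without either that reduction or an honest proof of the Sims--Yeend equivalence, the CNP half of your proposal is an appeal to the result rather than a proof of it.
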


We will use the duality between ideals of $c_0(\La^{\un{0}})$ and subsets of $\La^{\un{0}}$ given by the mutually inverse mappings
\begin{align*}
I & \mapsto H_I:=\{v\in\La^\un{0}\mid \delta_v\in I\}, \; \text{for all ideals} \; I\subseteq c_0(\La^\un{0}); \\
H & \mapsto I_H:=\ol{\spn}\{\delta_v\mid v\in H\}, \foral H \subseteq \La^\un{0}.
\end{align*}
Note that this duality implements a lattice isomorphism, and that $I_\mt=\{0\}$ and $I_{\La^\un{0}}=c_0(\La^\un{0})$.

If $(\La,d)$ is strong finitely aligned, then we can simplify (CK) using the machinery of $F$-tracing vertices. 
Fix $\mt\neq F\subseteq[k]$.
We say that $v\in\La^\un{0}$ is an \emph{$F$-source} if $v\La^\un{i}=\mt$ for all $i\in F$.
We say that $v\in\La^\un{0}$ is \emph{$F$-tracing} if:
\begin{quote}
for every $\la\in\La$ with $d(\la)\perp F$ and $r(\la)=v$, we have that $s(\la)$ is not an $F$-source, and $|s(\la)\La^\un{i}|<\infty$ for all $i\in[k]$.
\end{quote}
If $\la\in\La$ satisfies $d(\la)\perp F$ and $r(\la)$ is $F$-tracing, then $s(\la)$ is also $F$-tracing.
Indeed, by \cite[p. 589]{DK18}, we have
\[
H_{\J_F}=\{v\in\La^\un{0}\mid v \; \text{is not an $F$-source and} \; |v\La^\un{i}|<\infty\foral i\in[k]\},
\]
and 
\[
H_{\I_F}=\{v\in\La^\un{0}\mid v \; \text{is $F$-tracing}\}.
\]
In \cite[Theorem 7.6]{DK18}, it is shown that a Toeplitz-Cuntz-Krieger $\La$-family $\{T_\la\}_{\la\in\La}$ is a Cuntz-Krieger $\La$-family if and only if it satisfies 
\begin{enumerate}
\item[(CK$^\prime$)] $\prod\{T_v-T_\la T_\la^*\mid \la\in v\La^\un{i},i\in F\}=0$, for every $F$-tracing vertex $v$ and every $\mt\neq F\subseteq[k]$.
\end{enumerate}

Let $\L$ be a $2^k$-tuple of $X(\La)$ that consists of ideals.
For notational convenience, we set 
\[
H_{\L,F}:=H_{\L_F} \foral F\subseteq[k] \qand H_\L:=\{H_{\L,F}\}_{F\subseteq[k]}.
\]
For an ideal $I \subseteq c_0(\La^\un{0})$, we have
\begin{equation} \label{eq:n la}
H_{X_{\un{n}}(\La)^{-1}(I)}=\{v\in\La^\un{0}\mid s(v\La^\un{n})\subseteq H_I \}
\foral \un{n} \in \bZ_+^k.
\end{equation}
From this we obtain
\begin{equation}\label{eq:hrinv}
H_{\L_{\inv,F}}=\bigcap_{\un{m}\perp F}\{v\in\La^\un{0}\mid s(v\La^\un{m})\subseteq\cap_{F\subsetneq D}H_{\L,D}\}\foral\mt\neq F\subsetneq[k].
\end{equation}
To address each $\L_{\lim,F}$, we have the following proposition.

\begin{proposition}\label{P:hrlim}
Let $(\La,d)$ be a strong finitely aligned $k$-graph.
Let $\L$ be a $2^k$-tuple of $X(\La)$ that consists of ideals and let $H_\L$ be the corresponding family of sets of vertices of $\La$.
Then, fixing $\mt\neq F\subsetneq[k]$, a vertex $v\in\La^\un{0}$ belongs to $H_{\L_{\lim,F}}$ if and only if there exists $\un{m}\perp F$ such that whenever $\un{n}\perp F$ and $\un{n}\geq\un{m}$, we have $s(v\La^\un{n})\subseteq H_{\L,F}$ and $|v\La^\un{n}|<\infty$.
\end{proposition}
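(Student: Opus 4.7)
The plan is to unravel the definition: $v\in H_{\L_{\lim,F}}$ is equivalent to $\lim_{\un{m}\perp F}\|\phi_{\un{m}}(\de_v)+\K(X_{\un{m}}(\La)I_{H_{\L,F}})\|=0$. The strategy is to establish a dichotomy: for each $\un{n}\perp F$, the distance $\|\phi_{\un{n}}(\de_v)+\K(X_{\un{n}}(\La)I_{H_{\L,F}})\|$ is either $0$ or at least $1$. With this dichotomy in hand, the limit vanishes if and only if there exists $\un{m}\perp F$ such that the distance is $0$ for every $\un{n}\geq\un{m}$ with $\un{n}\perp F$, which is precisely the concrete graph-theoretic condition after characterising the vanishing case.

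To characterise when the distance equals $0$, I would argue that this is equivalent to $\phi_{\un{n}}(\de_v)\in\K(X_{\un{n}}(\La)I_{H_{\L,F}})$. By Proposition \ref{P:hrcomp}, $\phi_{\un{n}}(\de_v)\in\K(X_{\un{n}}(\La))$ iff $|v\La^{\un{n}}|<\infty$, in which case $\phi_{\un{n}}(\de_v)=\sum_{\la\in v\La^{\un{n}}}\Theta_{\de_\la,\de_\la}^{X_{\un{n}}(\La)}$. Applying (\ref{Eq: comp}), membership in $\K(X_{\un{n}}(\La)I_{H_{\L,F}})$ is equivalent to $\sca{X_{\un{n}}(\La),\phi_{\un{n}}(\de_v)X_{\un{n}}(\La)}\subseteq I_{H_{\L,F}}$. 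A direct computation using $\sca{\de_\mu,\phi_{\un{n}}(\de_v)\de_\nu}=\de_{\mu,\nu}\de_{s(\nu)}$ when $r(\nu)=v$ (and zero otherwise) shows this in turn is equivalent to $s(v\La^{\un{n}})\subseteq H_{\L,F}$.

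For the lower bound of $1$, I would split into two sub-cases depending on whether $\phi_{\un{n}}(\de_v)$ is compact. Since $\K(X_{\un{n}}(\La)I_{H_{\L,F}})\subseteq\K(X_{\un{n}}(\La))$, the distance we want is bounded below by $\|\phi_{\un{n}}(\de_v)+\K(X_{\un{n}}(\La))\|$, the norm of $\phi_{\un{n}}(\de_v)$ in the Calkin algebra $\L(X_{\un{n}}(\La))/\K(X_{\un{n}}(\La))$. If $|v\La^{\un{n}}|=\infty$, this is the norm of a non-zero projection in a C*-algebra, hence $1$. Otherwise $\phi_{\un{n}}(\de_v)\in\K(X_{\un{n}}(\La))$; Lemma \ref{L:Kat07} ensures that $\K(X_{\un{n}}(\La)I_{H_{\L,F}})$ is an ideal of $\K(X_{\un{n}}(\La))$, so the quotient $\K(X_{\un{n}}(\La))/\K(X_{\un{n}}(\La)I_{H_{\L,F}})$ is a C*-algebra, and the image of the projection $\phi_{\un{n}}(\de_v)$ is a non-zero projection (as we are in the case where it does not belong to the ideal), so again has norm $1$.

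The main obstacle is establishing the dichotomy cleanly, in particular the lower bound of $1$ in the second sub-case. This requires the structural observation that $\K(X_{\un{n}}(\La)I_{H_{\L,F}})$ sits as an ideal inside $\K(X_{\un{n}}(\La))$, so that quotient-of-C*-algebra arguments apply and projections therein must have norm $0$ or $1$; this is exactly the content of Lemma \ref{L:Kat07}. Once the dichotomy is in place, the passage from the analytic limit condition to the combinatorial condition in the statement is then routine via an $\varepsilon=1/2$ argument together with directedness of $\{\un{n}\in\bZ_+^k\mid\un{n}\perp F\}$.
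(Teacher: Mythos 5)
Your proposal is correct and takes essentially the same route as the paper: both proofs reduce the limit condition to the norm-$0$-or-$1$ dichotomy for the projection coset $\phi_\un{n}(\de_v)+\K(X_\un{n}(\La)\L_F)$ in the C*-quotient (the paper phrases this as a $1/2$-argument in its forward direction), and then translate $\phi_\un{n}(\de_v)\in\K(X_\un{n}(\La)\L_F)$ into $|v\La^\un{n}|<\infty$ and $s(v\La^\un{n})\subseteq H_{\L,F}$ via (\ref{Eq: comp}), Proposition \ref{P:hrcomp} and (\ref{eq:n la}). Your explicit case split according to whether $\phi_\un{n}(\de_v)$ is compact is harmless but unnecessary, since the coset is a projection in $\L(X_\un{n}(\La))/\K(X_\un{n}(\La)\L_F)$ regardless, which already yields the dichotomy.
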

\begin{proof}
For the forward implication, take $v\in H_{\L_{\lim,F}}$.
Then we have
\[
\lim_{\un{m}\perp F}\|\phi_\un{m}(\delta_v)+\K(X_\un{m}(\La)\L_F)\|=0.
\] 
By definition there exists $\un{m}\perp F$ such that whenever $\un{n}\perp F$ and $\un{n}\geq\un{m}$, we have
\[
\|\phi_\un{n}(\delta_{v})+\K(X_\un{n}(\La)\L_F)\|<1/2.
\]
Fixing $\un{n}\perp F$ such that $\un{n}\geq\un{m}$, the fact that $\phi_\un{n}(\delta_{v})+\K(X_\un{n}(\La)\L_F)$ is a projection then implies that 
\[
\phi_\un{n}(\delta_{v})\in\K(X_\un{n}(\La)\L_F).
\]
By (\ref{Eq: comp}), we have $\phi_\un{n}(\delta_{v})\in\K(X_\un{n}(\La))$ and 
\[
\sca{X_\un{n}(\La),\phi_\un{n}(\delta_{v})X_\un{n}(\La)}\subseteq\L_F.
\]
The former implies that $|v\La^\un{n}|<\infty$ by item (ii) of Lemma \ref{L:hrcomp}, and the latter implies that $s(v\La^\un{n})\subseteq H_{\L,F}$ by (\ref{eq:n la}).
Hence the forward implication holds.

Now assume that there exists $\un{m}\perp F$ such that whenever $\un{n}\perp F$ and $\un{n}\geq\un{m}$, we have $s(v\La^\un{n})\subseteq H_{\L,F}$ and $|v\La^\un{n}|<\infty$.
Fix $\varepsilon>0$ and $\un{n}\perp F$ satisfying $\un{n}\geq\un{m}$.
By assumption we have $|v\La^\un{n}|<\infty$, and so an application of item (ii) of Lemma \ref{L:hrcomp} gives that 
\[
\phi_\un{n}(\delta_{v})\in\K(X_\un{n}(\La)).
\]
Likewise, the assumption that $s(v\La^\un{n})\subseteq H_{\L, F}$ and (\ref{eq:n la}) imply
\[
\sca{X_\un{n}(\La),\phi_\un{n}(\delta_{v})X_\un{n}(\La)}\subseteq\L_F.
\]
An application of (\ref{Eq: comp}) then gives
\[
\phi_\un{n}(\delta_{v})\in\K(X_\un{n}(\La)\L_F),
\]
and hence
\[
\|\phi_\un{n}(\delta_{v})+\K(X_\un{n}(\La)\L_F)\|=0<\varepsilon.
\]
Therefore $\lim_{\un{m}\perp F}\|\phi_\un{m}(\delta_{v})+\K(X_\un{m}(\La)\L_F)\|=0$, and thus $v\in H_{\L_{\lim,F}}$, finishing the proof.
\end{proof}

Next we present a construction of \cite{RSY03} for an arbitrary $k$-graph $(\La, d)$.
A subset $H$ of $\La^\un{0}$ is called \emph{hereditary in $\La$} if whenever $v\in H$ and $v\La w\neq\mt$ (where $w\in\La^\un{0}$), we have $w\in H$.
Due to duality, hereditarity is captured by positive invariance.

\begin{lemma}\label{L:herposinv}
Let $(\La,d)$ be a $k$-graph and let $H$ be a subset of $\La^\un{0}$.
Then $H$ is hereditary if and only if $I_H$ is positively invariant for $X(\La)$.
\end{lemma}

\begin{proof}
We note that for every $\la, \mu \in \La$ and a vertex $v$ we have
\[
\sca{\de_\la, \de_v \de_\mu} = \de_{\la, \mu} \de_{v, r(\mu)} \de_{s(\la)}.
\]
Therefore, if $H$ is hereditary and $\la, \mu \in \La^{\un{n}}$ for $\un{n}\in \bZ_+^d$ and $v \in H$, we obtain $\sca{\de_\la, \de_v \de_\mu} \in I_H$.
As the $\de_\la$ form the generators for $X_{\un{n}}(\La)$ and $\un{n}$ is arbitrary, we have that $I_H$ is positively invariant.

Conversely, if $I_H$ is positively invariant and we choose $v \in H$ and $\la \in \La^{\un{n}}$ with $r(\la) = v$, then we have $\de_{s(\la)} = \sca{\de_\la, \de_v \de_\la} \in I_H$, and thus $s(\la) \in H$ by duality.
This shows that $H$ is hereditary, and the proof is complete.
\end{proof}

For a hereditary subset $H$ of $\La^\un{0}$, we form a new countable small category $\Ga(\La\setminus H)$ by
\[
\Ga(\La\setminus H):=(\La^\un{0}\setminus H, \{\la\in\La\mid s(\la)\not\in H\}, r, s),
\]
i.e., the objects of $\Ga(\La\setminus H)$ are the vertices of $\La$ that do not belong to $H$, and the morphisms are those paths in $\La$ whose source does not belong to $H$.
Notice that the range and source maps are inherited from $\La$.
As $\Ga(\La\setminus H)$ is a subcategory of $\La$ and $H$ is hereditary, $\Ga(\La\setminus H)$ inherits a $k$-graph structure from $\La$ (e.g., see the proof of \cite[Theorem 5.2]{RSY03}).
Hence we can form the product systems $X(\Ga(\La\setminus H))$ and $[X(\La)]_{I_H}$, using Proposition \ref{P:qnt} and Lemma \ref{L:herposinv} for the latter.
To avoid confusion, we will denote the corresponding generators by $\delta_{\la,\La}$, for $\la\in\La$, and by $\delta_{\mu,\Ga}$, for $\mu\in\Ga(\La\setminus H)$.

\begin{proposition}\label{P:hrqnt}
Let $(\La,d)$ be a $k$-graph and let $H\subseteq\La^\un{0}$ be hereditary.
Then $X(\Ga(\La\setminus H))$ and $[X(\La)]_{I_H}$ are unitarily equivalent by the family of maps $\{W_\un{n}\}_{\un{n}\in\bZ_+^k}$ defined by
\[
W_\un{n}\colon X_\un{n}(\Ga(\La\setminus H))\to [X_\un{n}(\La)]_{I_H}; W_\un{n}(\delta_{\la,\Ga})=[\delta_{\la,\La}]_{I_H}\foral \la\in\Ga(\La\setminus H)^\un{n}, \un{n}\in\bZ_+^k.
\]
\end{proposition}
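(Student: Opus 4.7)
The plan is to verify the five conditions of Definition \ref{D:prodsysiso} directly on the generators $\{\delta_{\lambda,\Gamma}\}$ of the fibres of $X(\Gamma(\Lambda\setminus H))$, extend by linearity and continuity, and then invoke surjectivity to conclude. Throughout we write $\Gamma:=\Gamma(\Lambda\setminus H)$ for brevity, and recall that by construction $\Gamma^{\un{n}}=\{\lambda\in\Lambda^{\un{n}}\mid s(\lambda)\notin H\}$.

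First I would handle $\un{n}=\un{0}$. Here $W_{\un{0}}\colon c_0(\Lambda^{\un{0}}\setminus H)\to[c_0(\Lambda^{\un{0}})]_{I_H}$ sends $\delta_{v,\Gamma}$ to $[\delta_{v,\Lambda}]_{I_H}$, and is the standard $*$-isomorphism implemented by the short exact sequence $0\to I_H\to c_0(\Lambda^{\un{0}})\to c_0(\Lambda^{\un{0}}\setminus H)\to 0$. This verifies item (i) of Definition \ref{D:prodsysiso}.

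Next, for $\un{n}\neq\un{0}$, I would check on finite linear combinations of generators that the assignment $\delta_{\lambda,\Gamma}\mapsto[\delta_{\lambda,\Lambda}]_{I_H}$ preserves the $c_0(\Lambda^{\un{0}})$-valued inner product in the sense of item (ii): for $\lambda,\mu\in\Gamma^{\un{n}}$,
\[
\langle[\delta_{\lambda,\Lambda}]_{I_H},[\delta_{\mu,\Lambda}]_{I_H}\rangle=[\langle\delta_{\lambda,\Lambda},\delta_{\mu,\Lambda}\rangle]_{I_H}=\delta_{s(\lambda),s(\mu)}[\delta_{s(\lambda),\Lambda}]_{I_H}=W_{\un{0}}(\langle\delta_{\lambda,\Gamma},\delta_{\mu,\Gamma}\rangle).
\]
Because $W_{\un{0}}$ is isometric, this forces the linear extension of $W_{\un{n}}$ to $c_{00}(\Gamma^{\un{n}})$ to be norm-preserving with respect to the Hilbert module norms, hence well-defined and extendable (uniquely, and still isometrically) to the completion $X_{\un{n}}(\Gamma)$.

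Then I would verify surjectivity of $W_{\un{n}}$. Since $[X_{\un{n}}(\Lambda)]_{I_H}$ is densely spanned by the classes $[\delta_{\lambda,\Lambda}]_{I_H}$ for $\lambda\in\Lambda^{\un{n}}$, it suffices to observe two cases. If $s(\lambda)\in H$, then $\delta_{\lambda,\Lambda}=\delta_{\lambda,\Lambda}\cdot\delta_{s(\lambda),\Lambda}\in X_{\un{n}}(\Lambda)I_H$, so $[\delta_{\lambda,\Lambda}]_{I_H}=0$. If $s(\lambda)\notin H$, then $\lambda\in\Gamma^{\un{n}}$ and $[\delta_{\lambda,\Lambda}]_{I_H}=W_{\un{n}}(\delta_{\lambda,\Gamma})$. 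Thus every generator lies in the range of $W_{\un{n}}$, so $W_{\un{n}}$ is surjective. The remaining items (iii)–(v) of Definition \ref{D:prodsysiso} can be checked on generators by direct computation: the left and right actions reduce to multiplication by Kronecker deltas $\delta_{v,r(\lambda)}$ and $\delta_{s(\lambda),v}$ respectively, which are transported correctly by $W_{\un{n}}$; and for (v), one uses that the multiplication map in $X(\Lambda)$ sends $\delta_{\lambda,\Lambda}\otimes\delta_{\mu,\Lambda}$ to $\delta_{\lambda\mu,\Lambda}$ when $r(\mu)=s(\lambda)$ (and to $0$ otherwise), while $s(\lambda\mu)=s(\mu)\notin H$ whenever $\mu\in\Gamma^{\un{m}}$, so $\lambda\mu\in\Gamma^{\un{n}+\un{m}}$ and the identification is compatible.

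There is no real obstacle here, only bookkeeping: the only delicate point is ensuring that the linear map defined on generators actually extends to the Hilbert module completion, which is precisely what the inner product computation secures. Hereditarity of $H$ is used implicitly to guarantee that $\Gamma$ is a genuine subcategory (hence a $k$-graph) and that $I_H$ is positively invariant so that $[X(\Lambda)]_{I_H}$ makes sense as a product system via Proposition \ref{P:qnt}.
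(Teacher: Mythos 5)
Your proposal is correct and follows essentially the same route as the paper: the paper realises each $W_{\un{n}}$ as the composition of the canonical isometric inclusion $X_{\un{n}}(\Ga(\La\setminus H))\hookrightarrow X_{\un{n}}(\La)$ with the quotient map onto $[X_{\un{n}}(\La)]_{I_H}$, obtains isometry from the duality $H\mapsto I_H$, invertibility from the observation that $\delta_{\la,\La}\in X_{\un{n}}(\La)I_H$ if and only if $s(\la)\in H$ (your two-case surjectivity argument), and leaves the verification of Definition \ref{D:prodsysiso} as straightforward — which is exactly the checking you carry out explicitly on generators. One small slip worth correcting: for $\la,\mu\in\Ga(\La\setminus H)^{\un{n}}$ the inner product is $\sca{\delta_{\la,\La},\delta_{\mu,\La}}=\delta_{\la,\mu}\,\delta_{s(\la),\La}$ with the Kronecker delta taken on the \emph{paths} themselves, not $\delta_{s(\la),s(\mu)}\,\delta_{s(\la),\La}$ as you wrote (distinct paths of the same degree with common source are orthogonal); since the same misstatement appears symmetrically in both middle terms of your displayed chain, the asserted equality $\sca{W_{\un{n}}(\xi),W_{\un{n}}(\eta)}=W_{\un{0}}(\sca{\xi,\eta})$ — and hence the isometric extension argument — remains valid as stated.
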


\begin{proof}
Since $\Ga(\La \setminus H)$ is a sub-$k$-graph of $\La$, each map $W_{\un{n}}$ is realised as the composition of canonical maps so that
\[
W_\un{n}\colon X_{\un{n}}(\Ga(\La \setminus H)) \hookrightarrow X_\un{n}(\La) \to [X_\un{n}(\La)]_{I_H},
\]
where the first map is isometric and the second is the quotient map.
This map is an isometry because of the duality $H \mapsto I_H$.
Moreover, $\de_{\la, \La} \in X_{\un{n}}(\La) I_H$ if and only if $s(\la) \in H$, giving that $W_{\un{n}}$ is invertible.
It is straightforward to check that the family $\{W_{\un{n}}\}_{\un{n} \in \bZ_+^k}$ satisfies the conditions of unitary equivalence.
\end{proof}

\begin{corollary}\label{C:Gafa}
Let $(\La,d)$ be a $k$-graph and let $H\subseteq\La^\un{0}$ be hereditary.
If $(\La,d)$ is (strong) finitely aligned, then so is $\Ga(\La\setminus H)$.
\end{corollary}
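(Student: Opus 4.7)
The plan is to chain together three ingredients already established in the excerpt: the dictionary between (strong) finite alignment of a $k$-graph and (strong) compact alignment of its product system, the fact that the quotient construction $[X]_I$ preserves (strong) compact alignment for positively invariant $I$, and the invariance of these properties under unitary equivalence. Proposition \ref{P:hrqnt} links $X(\Ga(\La\setminus H))$ to $[X(\La)]_{I_H}$, so the result should follow with essentially no new combinatorics.

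More concretely, first I would observe that since $H \subseteq \La^{\un 0}$ is hereditary, Proposition \ref{P:herposinv} gives that $I_H \subseteq c_0(\La^{\un 0})$ is positively invariant for $X(\La)$. Hence the quotient product system $[X(\La)]_{I_H}$ makes sense, and by Proposition \ref{P:hrqnt} it is unitarily equivalent to $X(\Ga(\La\setminus H))$ via the maps $\{W_{\un n}\}_{\un n \in \bZ_+^k}$.

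Now suppose $(\La,d)$ is finitely aligned. By \cite[Theorem 5.4]{RS05} (quoted in the excerpt), $X(\La)$ is then compactly aligned. Proposition \ref{P:qnt ca} says that $[X(\La)]_{I_H}$ inherits compact alignment, and Proposition \ref{P:unitca} transfers this property across the unitary equivalence, so $X(\Ga(\La\setminus H))$ is compactly aligned. Invoking \cite[Theorem 5.4]{RS05} in reverse yields that $\Ga(\La\setminus H)$ is finitely aligned. The strong case is completely parallel: if $(\La,d)$ is strong finitely aligned then $X(\La)$ is strong compactly aligned by \cite[Proposition 7.1]{DK18}; Proposition \ref{P:qnt sca} passes this to $[X(\La)]_{I_H}$, Proposition \ref{P:unitsca} passes it to $X(\Ga(\La\setminus H))$, and \cite[Proposition 7.1]{DK18} then gives that $\Ga(\La\setminus H)$ is strong finitely aligned.

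There is essentially no obstacle here, because every step is an immediate application of a result already in place; the only thing to be careful about is confirming that $\Ga(\La\setminus H)$ is genuinely a sub-$k$-graph (hence that the product system construction applies), which is the content of the remark before Proposition \ref{P:hrqnt} citing the proof of \cite[Theorem 5.2]{RSY03}. So the proof is really just a two-line chain of references per case.
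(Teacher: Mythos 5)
Your proposal is correct and is essentially identical to the paper's own proof, which likewise combines the unitary equivalence $X(\Ga(\La\setminus H))\cong[X(\La)]_{I_H}$ from Proposition \ref{P:hrqnt} with Proposition \ref{P:qnt ca} (resp.\ Proposition \ref{P:qnt sca}) and Proposition \ref{P:unitca} (resp.\ Proposition \ref{P:unitsca}), using the equivalences of \cite[Theorem 5.4]{RS05} and \cite[Proposition 7.1]{DK18} to translate between graph and product system language. Your extra care in checking positive invariance of $I_H$ via Proposition \ref{P:herposinv} and the sub-$k$-graph structure of $\Ga(\La\setminus H)$ merely makes explicit what the paper leaves implicit.
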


\begin{proof}
Since $X(\Ga(\La\setminus H)) \cong [X(\La)]_{I_H}$ by Proposition \ref{P:hrqnt}, the result follows by combining Proposition \ref{P:qnt ca} (resp. Proposition \ref{P:qnt sca}) with Remark \ref{R:unitca} (resp. Remark \ref{R:unitsca}).
\end{proof}

We now turn our attention to relative NO-$2^k$-tuples in the case of a strong finitely aligned $k$-graph $(\La,d)$.
Our first aim is to describe the NT-$2^k$-tuples of $X(\La)$ from a graph theoretic perspective, by translating items (i)-(iv) of Definition \ref{D:NT tuple} into properties on vertices.
We give the following lemma towards the characterisation of item (i) of Definition \ref{D:NT tuple}.

\begin{lemma}\label{L:hrNT1}
Let $(\La,d)$ be a strong finitely aligned $k$-graph and let $H \subseteq \La^{\un{0}}$ be hereditary in $\La$.
Then, fixing $\mt\neq F\subseteq[k]$, the vertex set associated with $J_F(I_H, X(\La))$ is given by
\[
H_{J_F(I_H,X(\La))}
=
H
\cup 
\{v \notin H \mid |v\Ga(\La\setminus H)^\un{i}|<\infty \; \forall i\in[k] \; \textup{and} \; v \; \textup{is not an $F$-source in $\Ga(\La\setminus H)$}\}.
\]
\end{lemma}

\begin{proof}
Fix $v\in\La^\un{0}$ and recall that $v\in H_{J_F(I_H,X(\La))}$ if and only if $\delta_{v,\La}\in J_F(I_H,X(\La))$.
Combining Lemma \ref{L:quo ideal} and Lemma \ref{L:herposinv}, we have 
\[
J_F(I_H,X(\La))=[ \hspace{1pt} \cdot \hspace{1pt} ]_{I_H}^{-1}(\J_F([X(\La)]_{I_H})).
\]
Thus $\delta_{v,\La}\in J_F(I_H,X(\La))$ if and only if $[\delta_{v,\La}]_{I_H}\in\J_F([X(\La)]_{I_H})$.
Consider the unitary equivalence 
\[
\{W_\un{n}\colon X_\un{n}(\Ga(\La\setminus H))\to [X_\un{n}(\La)]_{I_H}\}_{\un{n}\in\bZ_+^k}
\]
of Proposition \ref{P:hrqnt}.
Then we have 
\[
[\delta_{v,\La}]_{I_H}\in\J_F([X(\La)]_{I_H})
\]
if and only if 
\[
W_\un{0}^{-1}([\delta_{v,\La}]_{I_H})\in W_\un{0}^{-1}(\J_F([X(\La)]_{I_H}))=\J_F(X(\Ga(\La\setminus H))),
\]
using Remark \ref{R:JIunit} in the final equality.
Notice that if $v\not\in H$, then 
\[
W_\un{0}^{-1}([\delta_{v,\La}]_{I_H})=\delta_{v,\Ga}
\]
by definition.
In this case, we have $\delta_{v,\Ga}\in\J_F(X(\Ga(\La\setminus H)))$ if and only if $|v\Ga(\La\setminus H)^\un{i}|<\infty$ for all $i\in[k]$ and $v$ is not an $F$-source in $\Ga(\La\setminus H)$, by applying the comments succeeding Proposition \ref{P:hrrepnbij} to $\Ga(\La\setminus H)$.
Thus $v\in H_{J_F(I_H,X(\La))}$ if and only if either $v\in H$ or $v\not\in H$ but $|v\Ga(\La\setminus H)^\un{i}|<\infty$ for all $i\in[k]$ and $v$ is not an $F$-source in $\Ga(\La\setminus H)$, proving the result.
\end{proof}

Next we translate items (ii) and (iii) of Definition \ref{D:NT tuple}.

\begin{definition}\label{D:Fperpher}
Let $(\La,d)$ be a $k$-graph.
\begin{enumerate}
\item Given $F \subseteq [k]$, we say that $H\subseteq\La^\un{0}$ is \emph{$F^\perp$-hereditary in $\La$} if $s(H\La^\un{n}) \subseteq H$ for all $\un{n} \perp F$.

\item We say that a family $H:=\{H_F\}_{F\subseteq[k]}$ of subsets of $\La^\un{0}$ is \emph{hereditary in $\La$} if $H_F$ is $F^\perp$-hereditary in $\La$ for all $F\subseteq[k]$.
\end{enumerate}
\end{definition}

Notice that $H\subseteq\La^\un{0}$ is $\mt^\perp$-hereditary if and only if $H$ is hereditary in the usual sense.
For a strong finitely aligned $k$-graph $\La$, a $2^k$-tuple $\L$ of $X(\La)$ that consists of ideals is $X(\La)$-invariant if and only if the associated family $H_\L$ of sets of vertices is hereditary.

\begin{definition}\label{D:hrpo}
Let $(\La,d)$ be a $k$-graph and let $H:=\{H_F\}_{F\subseteq[k]}$ be a family of subsets of $\La^\un{0}$.
We say that $H$ is \emph{partially ordered} if $H_{F_1}\subseteq H_{F_2}$ whenever $F_1\subseteq F_2\subseteq[k]$.
\end{definition}

If $\La$ is a strong finitely aligned $k$-graph, then a $2^k$-tuple $\L$ of $X(\La)$ that consists of ideals is partially ordered if and only if the associated family $H_\L$ of sets of vertices is partially ordered.

Finally, to translate item (iv) of Definition \ref{D:NT tuple}, we will need the following definition.

\begin{definition}\label{D:satfam}
Let $(\La,d)$ be a strong finitely aligned $k$-graph.
Let $H:=\{H_F\}_{F\subseteq[k]}$ be a family of subsets of $\La^\un{0}$.
We say that $H$ is \emph{absorbent in $\La$} if the following holds for every $\mt\neq F\subsetneq[k]$: a vertex $v \in \La^{\un{0}}$ belongs to $H_F$ whenever it satisfies
\begin{enumerate}
\item $v$ is $F$-tracing,
\item $s(v\La^\un{m})\subseteq\cap_{F\subsetneq D}H_D\foral \un{m}\perp F$, and
\item there exists $\un{m} \perp F$ such that whenever $\un{n}\perp F$ and $\un{n}\geq\un{m}$, we have $s(v\La^\un{n})\subseteq H_F$ and $|v\La^\un{n}|<\infty$.
\end{enumerate}
\end{definition}

\begin{proposition}\label{P:hrNT}
Let $(\La,d)$ be a strong finitely aligned $k$-graph.
Let $\L$ be a $2^k$-tuple of $X(\La)$ that consists of ideals and let $H_\L$ be the corresponding family of sets of vertices of $\La$.
Then $\L$ is an NT-$2^k$-tuple of $X(\La)$ if and only if  the following four conditions hold:
\begin{enumerate}
\item for each $\mt\neq F\subseteq[k]$, we have
\[
H_{\L, F} 
\subseteq 
H_{\L, \mt} 
\cup 
\{v \notin H_{\L, \mt} \mid |v\Ga(\La\setminus H_{\L, \mt})^\un{i}|<\infty \; \forall i\in[k]\; \textup{and} \; v \; \textup{is not an $F$-source in $\Ga(\La\setminus H_{\L, \mt})$}\},
\]
\item $H_\L$ is hereditary in $\La$,
\item $H_\L$ is partially ordered,
\item $H_\L \setminus H_{\L, \mt} := \{H_{\L, F} \setminus H_{\L, \mt} \}_{F\subseteq[k]}$ is absorbent in $\Ga(\La\setminus H_{\L, \mt})$.
\end{enumerate}
\end{proposition}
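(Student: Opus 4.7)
The plan is to translate each of the four defining conditions of an NT-$2^k$-tuple (Definition \ref{D:NT tuple}) into the graph-theoretic language under the lattice isomorphism $I \leftrightarrow H_I$ between ideals of $c_0(\La^{\un{0}})$ and subsets of $\La^{\un{0}}$.

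First, the hypothesis that $\L$ consists of ideals is built into the statement. For each $\mt \neq F \subseteq [k]$, the containment $\L_F \subseteq J_F(\L_\mt, X(\La))$ translates directly into condition (i) of the proposition by a straightforward application of Proposition \ref{P:hrNT1} (identifying $H_{\L,\mt}$ as the hereditary set associated with the positively invariant ideal $\L_\mt$, using Proposition \ref{P:herposinv}). Condition (ii) of Definition \ref{D:NT tuple} (invariance) translates into hereditariness of $H_\L$, and condition (iii) (partial ordering of $\L$) translates directly into condition (iii) of the proposition; both correspondences are immediate from the comments succeeding Definitions \ref{D:Fperpher} and \ref{D:hrpo}.

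The main obstacle is condition (iv) of Definition \ref{D:NT tuple}, which involves the quotient iteration $[\L_F]_{\L_\mt}^{(1)}$. The strategy is to first invoke Proposition \ref{P:NT tuple + m fam} to reduce this to the requirement that $[\L]_{\L_\mt}$ be an (M)-$2^k$-tuple of $[X(\La)]_{\L_\mt}$, and then to transport the problem to $X(\Ga(\La \setminus H_{\L,\mt}))$ via the canonical unitary equivalence of Proposition \ref{P:hrqnt}, which applies since $H_{\L,\mt}$ is hereditary. Under this equivalence, which preserves both the lattice of ideals of the coefficient algebra and the ideals $\J_F$ and $\I_F$ by Proposition \ref{P:JIunit}, the family $[\L]_{\L_\mt}$ corresponds to the family $\{I_{H_{\L,F} \setminus H_{\L,\mt}}\}_{F \subseteq [k]}$ of ideals of $c_0(\Ga(\La \setminus H_{\L,\mt})^{\un{0}})$. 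Applying Theorem \ref{T:m fam v2} at that level, the first three conditions there follow already from conditions (i)--(iii) of the present proposition (and in particular the $\J$-containment uses the description $H_{\J_F} = \{v \in \Ga(\La\setminus H_{\L,\mt})^{\un{0}} \mid v \text{ not an $F$-source},\ |v\Ga(\La\setminus H_{\L,\mt})^{\un{i}}|<\infty\}$ from \cite{DK18}).

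The remaining condition, $\L^{(1)} \subseteq \L$ of Theorem \ref{T:m fam v2}, is where absorbency enters. Unpacking $\L^{(1)}_F = \I_F \cap \L_{\inv,F} \cap \L_{\lim,F}$ for $\mt \neq F \subsetneq [k]$ and reading off the corresponding vertex sets inside $\Ga(\La\setminus H_{\L,\mt})$: the $\I_F$-condition yields exactly the $F$-tracing requirement (item (i) of Definition \ref{D:satfam}), using that $H_{\I_F}$ consists of $F$-tracing vertices; the $\L_{\inv,F}$-condition matches item (ii) of Definition \ref{D:satfam} via equation \eqref{eq:hrinv}; and the $\L_{\lim,F}$-condition is precisely item (iii) of Definition \ref{D:satfam} by Proposition \ref{P:hrlim}. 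The requirement that the intersection of these three sets be contained in $H_{\L,F} \setminus H_{\L,\mt}$ is exactly absorbency of $H_\L \setminus H_{\L,\mt}$ in $\Ga(\La\setminus H_{\L,\mt})$. Assembling these equivalences completes the characterisation. The principal technical subtlety is ensuring that the quotient-and-restrict procedure preserves the formulas defining $\I_F$, $\L_{\inv,F}$, and $\L_{\lim,F}$ at the vertex-set level, which is handled uniformly by Propositions \ref{P:hrqnt} and \ref{P:JIunit} together with Corollary \ref{C:Gafa}.
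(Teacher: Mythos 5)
Your proposal is correct and follows essentially the same route as the paper's proof: conditions (i)--(iii) are translated exactly as in the paper via Propositions \ref{P:herposinv} and \ref{P:hrNT1} and the remarks after Definitions \ref{D:Fperpher} and \ref{D:hrpo}, while condition (iv) is handled, after passing to $\Ga := \Ga(\La\setminus H_{\L,\mt})$ through the unitary equivalence of Proposition \ref{P:hrqnt}, by unpacking the containment $\I_F(X(\Ga))\cap\L^{\Ga}_{\inv,F}\cap\L^{\Ga}_{\lim,F}\subseteq\L^{\Ga}_F$ at the vertex level using (\ref{eq:hrinv}) and Proposition \ref{P:hrlim}, which is precisely absorbency. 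Your detour through Proposition \ref{P:NT tuple + m fam} and Theorem \ref{T:m fam v2} is valid but redundant, since condition (iv) of Definition \ref{D:NT tuple} already expresses the $(1)$-iteration condition in the quotient directly, which is how the paper proceeds.
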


\begin{proof}
We have already commented on the equivalence of invariance of $\L$ with item (ii), and of the partial ordering of $\L$ with item (iii).
By Lemma \ref{L:herposinv}, we have that $\L_\mt$ is positively invariant for $X(\La)$ if and only if $H_{\L, \mt}$ is hereditary.
In turn, we obtain the equivalence of item (i) with item (i) of Definition \ref{D:NT tuple} by Lemma \ref{L:hrNT1}.
To complete the proof, let $\L$ be a $2^k$-tuple of $X(\La)$ that satisfies items (i)-(iii) of Definition \ref{D:NT tuple}.
We will show that item (iv) of Definition \ref{D:NT tuple} is equivalent to item (iv) of the statement.

Since $H_{\L, \mt}$ is hereditary, we can form the product system $[X(\La)]_{\L_\mt}$ and the $k$-graph $\Ga:=\Ga(\La\setminus H_{\L, \mt})$.
We identify $X(\Ga)$ with $[X(\La)]_{\L_\mt}$ as in Proposition \ref{P:hrqnt}, and for $F \subseteq [k]$ we write
\[
\L_F^\Ga := \ol{\spn}\{\de_{v, \Ga} \mid v \in H_{\L, F} \setminus H_{\L, \mt} \} \subseteq c_0(\Ga^{\un{0}}),
\]
which corresponds to $[\L_F]_{\L_\mt}$ under the identification $X(\Ga)\cong [X(\La)]_{\L_\mt}$.
Now notice that
\[
[\hspace{1pt}\cdot\hspace{1pt}]_{\L_\mt}^{-1} ([\L_F]_{\L_\mt}^{(1)} ) \subseteq \L_F
\foral 
F \subseteq [k]
\]
(which holds automatically for $F = \mt$ and $F=[k]$) if and only if 
\[
(\L_F^\Ga)^{(1)} \subseteq \L_F^\Ga 
\foral 
\mt \neq F \subsetneq [k],
\]
which is in turn equivalent to having that
\begin{equation}\label{Eq:abscond}
\I_F(X(\Ga))\cap\L^\Ga_{\inv,F}\cap\L^\Ga_{\lim,F}\subseteq\L_F^\Ga\foral\mt\neq F\subsetneq[k],
\end{equation}
by definition.
We will show that (\ref{Eq:abscond}) holds exactly when $H_\L \setminus H_{\L, \mt}$ is absorbent in $\Ga$, thereby completing the proof.

To this end, fix $\mt \neq F \subsetneq [k]$ and $v\in\Ga^\un{0}$.
We have that $\delta_{v,\Ga}\in\I_F(X(\Ga))$ if and only if $v$ is $F$-tracing in $\Ga$.
Likewise, by (\ref{eq:hrinv}) we have that $\delta_{v,\Ga}\in\L^\Ga_{\inv,F}$ if and only if 
\[
s(v\Ga^\un{m})\subseteq\cap_{F\subsetneq D}(H_{\L,D} \setminus H_{\L, \mt})
\foral 
\un{m}\perp F.
\]
Finally, we have that $\delta_{v,\Ga}\in\L^\Ga_{\lim,F}$ if and only if there exists $\un{m}\perp F$ such that whenever $\un{n}\perp F$ and $\un{n}\geq\un{m}$, we have $s(v\Ga^\un{n})\subseteq H_{\L,F} \setminus H_{\L, \mt}$ and $|v\Ga^\un{n}|<\infty$ by Proposition \ref{P:hrlim}.
It follows that (\ref{Eq:abscond}) holds if and only if $H_\L\setminus H_{\L,\mt}$ is absorbent in $\Ga$, as required.
\end{proof}

In the row-finite case, the characterisation of Proposition \ref{P:hrNT} simplifies as follows.

\begin{proposition}\label{P:rfNT}
Let $(\La,d)$ be a row-finite $k$-graph.
Let $\L$ be a $2^k$-tuple of $X(\La)$ that consists of ideals and let $H_\L$ be the corresponding family of sets of vertices of $\La$.
Then $\L$ is an NT-$2^k$-tuple of $X(\La)$ if and only if the following four conditions hold:
\begin{enumerate}
\item for each $\mt\neq F\subseteq[k]$, we have
\[
H_{\L, F} 
\subseteq 
H_{\L, \mt} 
\cup 
\{v \notin H_{\L, \mt} \mid v \; \textup{is not an $F$-source in $\Ga:=\Ga(\La\setminus H_{\L, \mt})$}\},
\]
\item $H_\L$ is hereditary in $\La$,
\item $H_\L$ is partially ordered,
\item $H_{1,F}\cap H_{2,F}\cap H_{3,F}\subseteq H_{\L,F}$ for all $\mt\neq F\subsetneq[k]$, where
\vspace{.3em}
\begin{itemize}
\item $H_{1,F}:=\bigcap_{\un{n}\perp F}\{v\in\La^\un{0}\mid s(v\La^\un{n})\subseteq H_{\L, \mt}\cup\{v \notin H_{\L, \mt} \mid v \; \textup{is not an $F$-source in $\Ga$}\}\}$,
\item $H_{2,F}:=\bigcap_{\un{m}\perp F}\{v\in\La^\un{0}\mid s(v\La^\un{m})\subseteq\cap_{F\subsetneq D}H_{\L,D}\}$,
\item $H_{3,F}$ is the set of all $v\in\La^\un{0}$ for which there exists $\un{m}\perp F$ such that whenever $\un{n}\perp F$ and $\un{n}\geq\un{m}$, we have $s(v\La^\un{n})\subseteq H_{\L,F}$.
\end{itemize}
\end{enumerate}
\end{proposition}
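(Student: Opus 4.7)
The approach is to invoke Proposition \ref{P:NTcom} and then convert each of its conditions into the vertex language via the standard duality $I \mapsto H_I$. The hypothesis of Proposition \ref{P:NTcom} is that every left action is by compact operators, which holds for $X(\La)$ precisely because $(\La,d)$ is row-finite, by item (i) of Proposition \ref{P:hrrfsprodsys}. Hence $\L$ is an NT-$2^k$-tuple of $X(\La)$ if and only if it satisfies conditions (i)-(iii) of Definition \ref{D:NT tuple}, together with
\[
\bigg(\bigcap_{\un{n}\perp F} X_\un{n}(\La)^{-1}(J_F(\L_\mt, X(\La)))\bigg) \cap \L_{\inv,F} \cap \L_{\lim,F} \subseteq \L_F
\]
for all $\mt \neq F \subsetneq [k]$. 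The task is to translate each of these conditions into the claimed statement.

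First, conditions (ii) and (iii) of Definition \ref{D:NT tuple} correspond directly to conditions (ii) and (iii) of the statement under the duality $I \mapsto H_I$: invariance of $\L$ for $X(\La)$ is the hereditarity of $H_\L$ in $\La$ (as noted preceding Definition \ref{D:hrpo}), and partial ordering is preserved by the duality. In particular, $\L_\mt$ being positively invariant is equivalent to $H_{\L,\mt}$ being hereditary in $\La$ by Proposition \ref{P:herposinv}, so the $k$-graph $\Ga := \Ga(\La\setminus H_{\L,\mt})$ can be formed; it is itself row-finite by Corollary \ref{C:Gafa}. Applying Proposition \ref{P:hrNT1} and noting that the finiteness clause $|v\Ga^\un{i}|<\infty$ is automatic in the row-finite setting then yields condition (i) of the proposition as the translation of condition (i) of Definition \ref{D:NT tuple}.

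The remaining step is to translate the displayed inclusion. Using Proposition \ref{P:hrNT1} with the same row-finite simplification,
\[
H_{J_F(\L_\mt, X(\La))} = H_{\L,\mt} \cup \{v \notin H_{\L,\mt} \mid v \text{ is not an } F\text{-source in } \Ga\}.
\]
Combining with the formula (\ref{eq:n la}) for $H_{X_\un{n}(\La)^{-1}(I)}$ and taking intersections over $\un{n} \perp F$ identifies the vertex set of $\bigcap_{\un{n}\perp F} X_\un{n}(\La)^{-1}(J_F(\L_\mt, X(\La)))$ with $H_{1,F}$. Equation (\ref{eq:hrinv}) gives $H_{\L_{\inv,F}} = H_{2,F}$, and Proposition \ref{P:hrlim} together with row-finiteness (which removes the $|v\La^\un{n}|<\infty$ clause) gives $H_{\L_{\lim,F}} = H_{3,F}$. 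Since the duality $I \mapsto H_I$ sends intersections to intersections and inclusions to inclusions, the displayed inclusion above corresponds exactly to $H_{1,F}\cap H_{2,F}\cap H_{3,F} \subseteq H_{\L,F}$, which is condition (iv) of the statement.

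No serious obstacle is anticipated; the argument is a careful bookkeeping exercise using the vertex translations already established in Propositions \ref{P:hrNT1} and \ref{P:hrlim}, with the only subtlety being the consistent use of row-finiteness to suppress the automatic finiteness conditions that appear in the strong finitely aligned setting of Proposition \ref{P:hrNT}.
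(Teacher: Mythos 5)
Your proposal is correct and follows essentially the same route as the paper's proof: reduce via Proposition \ref{P:NTcom} (valid by item (i) of Proposition \ref{P:hrrfsprodsys}), then translate each condition through the duality $I \mapsto H_I$, identifying the three ideals with $H_{1,F}$, $H_{2,F}$, $H_{3,F}$ via (\ref{eq:n la}), Proposition \ref{P:hrNT1}, (\ref{eq:hrinv}) and Proposition \ref{P:hrlim}, with row-finiteness suppressing the finiteness clauses throughout. One trivial slip: Corollary \ref{C:Gafa} gives (strong) finite alignment of $\Ga$, not row-finiteness --- but row-finiteness of $\Ga$ is immediate since $v\Ga^{\un{n}} \subseteq v\La^{\un{n}}$, which is the observation the paper itself uses.
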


\begin{proof}
Firstly, note that $\La$ is in particular strong finitely aligned, so we are free to use Proposition \ref{P:hrNT}.
Next, assuming that $H_{\L,\mt}$ is hereditary, we have that $\Ga$ inherits row-finiteness from $\La$ and therefore $|v\Ga^\un{i}|<\infty$ for all $v\in\Ga^\un{0}$ and $i\in[k]$ automatically. 
Consequently, items (i)-(iii) of the statement coincide with items (i)-(iii) of Proposition \ref{P:hrNT}, which are in turn equivalent to items (i)-(iii) of Definition \ref{D:NT tuple}.
Thus, without loss of generality, we may assume that $\L$ satisfies items (i)-(iii) of Definition \ref{D:NT tuple}.
Since $\phi_\un{n}(c_0(\La^\un{0}))\subseteq\K(X_\un{n}(\La))$ for all $\un{n}\in\bZ_+^k$ by item (i) of Lemma \ref{L:hrrfsprodsys}, by Proposition \ref{P:NTcom} it suffices to show that item (iv) of the statement is equivalent to the following condition:
\[
\bigg(\bigcap_{\un{n}\perp F}X_\un{n}(\La)^{-1}(J_F(\L_\mt,X(\La)))\bigg)\cap\L_{\inv,F}\cap\L_{\lim,F}\subseteq\L_F\foral\mt\neq F\subsetneq[k].
\]
To this end, fix $\mt\neq F\subsetneq[k]$.
The vertex set associated with $\bigcap_{\un{n}\perp F}X_\un{n}(\La)^{-1}(J_F(\L_\mt,X(\La)))$ is nothing but $H_{1,F}$, which can be seen by combining (\ref{eq:n la}) and Lemma \ref{L:hrNT1}.
Likewise, we have $H_{\L_{\inv,F}}=H_{2,F}$ by (\ref{eq:hrinv}).
Finally, we have $H_{\L_{\lim,F}}=H_{3,F}$ by Proposition \ref{P:hrlim}, noting that the stipulation that $|v\La^\un{n}|<\infty$ can be dropped by row-finiteness of $\La$.
The result now follows from the fact that the duality between ideals of $c_0(\La^\un{0})$ and subsets of $\La^\un{0}$ preserves inclusions and intersections.
\end{proof}

The characterisation of relative NO-$2^k$-tuples in the case of strong finite alignment (resp. row-finiteness) follows directly from Proposition \ref{P:hrNT} (resp. Proposition \ref{P:rfNT}), as inclusion of ideals corresponds to inclusion of their associated vertex sets.

\begin{corollary}\label{C:hrNO}
Let $(\La,d)$ be a strong finitely aligned (resp. row-finite) $k$-graph.
Let $\K$ be a relative $2^k$-tuple of $X(\La)$ that consists of ideals and let $H_\K$ be the corresponding family of sets of vertices of $\La$.
Let $\L$ be a $2^k$-tuple of $X(\La)$ that consists of ideals and let $H_\L$ be the corresponding family of sets of vertices of $\La$.
Then the following are equivalent: 
\begin{enumerate}
\item $\L$ is a $\K$-relative NO-$2^k$-tuple of $X(\La)$;
\item $H_\L$ satisfies \textup{(i)-(iv)} of Proposition \ref{P:hrNT} (resp. Proposition \ref{P:rfNT}) and $H_{\K,F}\subseteq H_{\L,F}$ for all $F\subseteq[k]$.
\end{enumerate}
In particular, the following are equivalent:
\begin{enumerate}
\item $\L$ is an NO-$2^k$-tuple of $X(\La)$;
\item $H_\L$ satisfies \textup{(i)-(iv)} of Proposition \ref{P:hrNT} (resp. Proposition \ref{P:rfNT}) and every $F$-tracing vertex of $\La$ belongs to $H_{\L,F}$ for all $\mt\neq F\subseteq[k]$.
\end{enumerate}
\end{corollary}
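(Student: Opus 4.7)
The plan is to reduce both equivalences to the corresponding NT-characterisations in Proposition \ref{P:hrNT} (strong finitely aligned case) or Proposition \ref{P:rfNT} (row-finite case), together with the lattice duality between ideals of $c_0(\La^{\un{0}})$ and subsets of $\La^{\un{0}}$ given by $I \mapsto H_I$ and $H \mapsto I_H$. I will treat the strong finitely aligned case explicitly; the row-finite case is identical with Proposition \ref{P:rfNT} used in place of Proposition \ref{P:hrNT}.

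For the first equivalence, recall from Definition \ref{D:rel NO tuple} that $\L$ is a $\K$-relative NO-$2^k$-tuple if and only if $\L$ is an NT-$2^k$-tuple of $X(\La)$ and $\K \subseteq \L$. The first of these two conditions is equivalent to items (i)--(iv) of Proposition \ref{P:hrNT} by that proposition, so it remains only to translate $\K \subseteq \L$ into graph-theoretic data. Since $\K$ and $\L$ both consist of ideals, and the mapping $I \mapsto H_I$ is a lattice isomorphism preserving inclusions, we have $\K_F \subseteq \L_F$ if and only if $H_{\K,F} \subseteq H_{\L,F}$, for each $F \subseteq [k]$. Combining these two observations gives the stated equivalence.

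For the second equivalence, we apply the first to $\K := \I$, so that we must identify $H_{\I,F}$ for each $F \subseteq [k]$. By convention $\I_\mt = \{0\}$, so $H_{\I,\mt} = \mt$ and the condition $H_{\I,\mt} \subseteq H_{\L,\mt}$ holds vacuously. For $\mt \neq F \subseteq [k]$, the formula recorded just after Corollary \ref{C:Gafa} (extracted from \cite[p.\ 589]{DK18}) asserts that
\[
H_{\I_F} = \{v \in \La^{\un{0}} \mid v \text{ is $F$-tracing}\}.
\]
Hence $H_{\I,F} \subseteq H_{\L,F}$ is precisely the condition that every $F$-tracing vertex of $\La$ belongs to $H_{\L,F}$, for each $\mt \neq F \subseteq [k]$. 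Substituting this into the first equivalence (with $\K := \I$) yields the characterisation of NO-$2^k$-tuples in the second part of the statement.

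The only substantive content is therefore the translation of $\K \subseteq \L$ via the duality $I \mapsto H_I$ and the recognition of $H_{\I_F}$ as the set of $F$-tracing vertices; the heavy lifting has already been carried out in Proposition \ref{P:hrNT} (and Proposition \ref{P:rfNT} for the row-finite case), so no further obstacle arises.
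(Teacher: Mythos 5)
Your proof is correct and follows essentially the same route as the paper, which simply notes that the corollary follows from Proposition \ref{P:hrNT} (resp. Proposition \ref{P:rfNT}) because inclusion of ideals corresponds to inclusion of the associated vertex sets under the duality $I \mapsto H_I$. Your additional explicit steps — unpacking Definition \ref{D:rel NO tuple}, handling $F = \mt$ via $\I_\mt = \{0\}$, and identifying $H_{\I_F}$ with the $F$-tracing vertices for $\mt \neq F \subseteq [k]$ — are exactly the details the paper leaves implicit.
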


Finally, we turn our attention to the case of a locally convex row-finite $k$-graph $(\La,d)$.
In accordance with \cite{RSY03}, a subset $H$ of $\La^\un{0}$ is called \emph{saturated} if whenever a vertex $v\in\La^\un{0}$ satisfies $s(v\La^{\leq\un{i}})\subseteq H$ for some $i\in[k]$, we have $v\in H$.
Note that $\mt$ is vacuously saturated.
When $H\subseteq\La^\un{0}$ is both hereditary and saturated, the row-finite $k$-graph $\Ga(\La\setminus H)$ is also locally convex \cite[Theorem 5.2]{RSY03}.
The \emph{saturation} $\ol{H}^{\text{s}}$ of $H \subseteq \La^\un{0}$ is the smallest saturated subset of $\La^\un{0}$ that contains $H$.
The saturation of a hereditary set is also hereditary \cite[Lemma 5.1]{RSY03}.

\begin{proposition}\label{P:hersatlat} \cite{BPRS00, RSY03}
Let $(\La,d)$ be a locally convex row-finite $k$-graph.
Then the operations
\begin{align*}
H_1\vee H_2 & :=\ol{H_1\cup H_2}^{\text{s}} \qand
H_1\wedge H_2 :=H_1\cap H_2
\end{align*}
for hereditary saturated subsets $H_1, H_2 \subseteq \La^{\un{0}}$ define a lattice structure on the set of hereditary saturated subsets of $\La^\un{0}$.
\end{proposition}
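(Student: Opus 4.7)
The plan is to verify that the two operations are well-defined on the set of hereditary saturated subsets of $\La^\un{0}$ and then to establish the lattice axioms (commutativity, associativity, and absorption). Distributivity is not required here since a lattice in this paper is distributive by convention only at the level of the abstract definition; however, the operations on subsets are easily seen to be distributive from the reasoning below.

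First I would check well-definedness. The intersection $H_1 \cap H_2$ inherits hereditarity directly from $H_1$ and $H_2$; for saturation, if $s(v\La^{\leq\un{i}}) \subseteq H_1 \cap H_2$ for some $i \in [k]$, then $s(v\La^{\leq\un{i}})$ lies in each of $H_1$ and $H_2$, so $v \in H_1 \cap H_2$ by saturation of each. For the join, $H_1 \cup H_2$ is hereditary (the defining condition is preserved by unions), so its saturation $\ol{H_1 \cup H_2}^{\text{s}}$ is hereditary by \cite[Lemma 5.1]{RSY03} and saturated by definition.

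Next I would address the lattice axioms. Commutativity of both operations is immediate. Associativity of $\wedge$ is inherited from the associativity of set-theoretic intersection. For associativity of $\vee$, the key step is to observe that saturation is a closure operator on the set of subsets of $\La^\un{0}$: it is extensive ($H \subseteq \ol{H}^{\text{s}}$), monotone (if $H \subseteq H'$ then $\ol{H}^{\text{s}} \subseteq \ol{H'}^{\text{s}}$, since $\ol{H'}^{\text{s}}$ is a saturated set containing $H$), and idempotent ($\ol{\ol{H}^{\text{s}}}^{\text{s}} = \ol{H}^{\text{s}}$ by saturation of $\ol{H}^{\text{s}}$). From these three properties one deduces the standard identity
\[
\ol{\ol{A}^{\text{s}} \cup B}^{\text{s}} = \ol{A \cup B}^{\text{s}} = \ol{A \cup \ol{B}^{\text{s}}}^{\text{s}},
\]
by sandwiching $\ol{A}^{\text{s}} \cup B$ between $A \cup B$ and $\ol{A \cup B}^{\text{s}}$ and then applying monotonicity and idempotence. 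Applying this twice yields $(H_1 \vee H_2) \vee H_3 = \ol{H_1 \cup H_2 \cup H_3}^{\text{s}} = H_1 \vee (H_2 \vee H_3)$.

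Finally I would verify the absorption laws. For $H_1 \wedge (H_1 \vee H_2)$, note that $H_1 \subseteq H_1 \cup H_2 \subseteq \ol{H_1 \cup H_2}^{\text{s}}$, hence $H_1 \cap \ol{H_1 \cup H_2}^{\text{s}} = H_1$. For $H_1 \vee (H_1 \wedge H_2)$, we have $H_1 \subseteq H_1 \cup (H_1 \cap H_2) \subseteq H_1$, so this union equals $H_1$, and since $H_1$ is already saturated, $\ol{H_1}^{\text{s}} = H_1$. The main obstacle here is really the associativity of $\vee$, which reduces to the closure-operator identity above; everything else is essentially bookkeeping, and the well-definedness of the join relies on the nontrivial lemma \cite[Lemma 5.1]{RSY03} that the saturation of a hereditary set is hereditary.
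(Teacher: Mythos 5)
Your verification of the lattice axioms themselves is correct, and note that the paper offers no proof of this proposition at all --- it is quoted from \cite{BPRS00, RSY03} --- so there is no in-paper argument to compare against. Your treatment of well-definedness (intersections of hereditary saturated sets are hereditary and saturated, unions of hereditary sets are hereditary, and \cite[Lemma 5.1]{RSY03} gives hereditarity of the saturation), the closure-operator identity $\ol{\ol{A}^{\text{s}}\cup B}^{\text{s}}=\ol{A\cup B}^{\text{s}}=\ol{A\cup\ol{B}^{\text{s}}}^{\text{s}}$, associativity, and the absorption laws are all sound.

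The one genuine gap is distributivity. By the standing convention of this paper, a lattice always means a \emph{distributive} lattice, so distributivity is part of the assertion, and your remark that it is ``easily seen to be distributive from the reasoning below'' is not justified by anything you wrote. Closure operators do not in general produce distributive lattices of closed sets: the linear span operator on subsets of a vector space is extensive, monotone and idempotent, yet the lattice of subspaces is famously non-distributive. Hence the extensive/monotone/idempotent formalism cannot by itself yield the inclusion $H_1\cap\ol{H_2\cup H_3}^{\text{s}}\subseteq\ol{(H_1\cap H_2)\cup(H_1\cap H_3)}^{\text{s}}$, which is the only nontrivial one (the reverse inequality holds in every lattice). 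A correct argument uses the specific interplay between heredity and saturation: write $\ol{H}^{\text{s}}=\bigcup_{n\geq 0}\Sigma^n(H)$, where $\Sigma(H):=H\cup\{v\in\La^\un{0}\mid s(v\La^{\leq\un{i}})\subseteq H\ \text{for some}\ i\in[k]\}$; row-finiteness guarantees that this union is saturated, since each $v\La^{\leq\un{i}}$ is finite and the sets $\Sigma^n(H)$ increase, while minimality gives that the union is the saturation. Now induct on $n$: if $v\in H_1\cap\Sigma^{n+1}(H_2\cup H_3)$ with $s(v\La^{\leq\un{i}})\subseteq\Sigma^n(H_2\cup H_3)$, then heredity of $H_1$ gives $s(v\La^{\leq\un{i}})\subseteq H_1\cap\Sigma^n(H_2\cup H_3)$, which by the inductive hypothesis lies in the saturated set $\ol{(H_1\cap H_2)\cup(H_1\cap H_3)}^{\text{s}}$, whence so does $v$. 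Alternatively, distributivity follows from the lattice isomorphism with the gauge-invariant ideals of $\ca(\La)$, whose lattice is distributive because closed ideals of a C*-algebra correspond to open subsets of the primitive ideal space; but in this paper that isomorphism (Corollary \ref{C:RSYparam1}) is derived \emph{after} the present proposition, so the direct inductive argument is the appropriate repair.
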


Local convexity implies that $\J_F(X(\La))$ and $\I_F(X(\La))$ coincide for all $F\subseteq[k]$.

\begin{proposition}\label{P:lcideals}
Let $(\La,d)$ be a locally convex row-finite $k$-graph.
Then $\J_F(X(\La))=\I_F(X(\La))$ for all $F\subseteq[k]$.
\end{proposition}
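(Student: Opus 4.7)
The inclusion $\I_F(X(\La)) \subseteq \J_F(X(\La))$ holds for any strong finitely aligned $k$-graph by taking $\un{n} = \un{0}$ in the definition of $\I_F$, and the case $F = \mt$ is trivial since both ideals are $\{0\}$. The plan is therefore to fix $\mt \neq F \subseteq [k]$ and to prove the reverse inclusion via the duality between ideals of $c_0(\La^{\un{0}})$ and subsets of $\La^{\un{0}}$. Recalling from the discussion following Corollary \ref{C:Gafa} that
\[
H_{\J_F(X(\La))} = \{v \in \La^{\un{0}} \mid v \text{ is not an $F$-source and } |v\La^{\un{i}}| < \infty \text{ for all } i \in [k]\}
\]
and
\[
H_{\I_F(X(\La))} = \{v \in \La^{\un{0}} \mid v \text{ is $F$-tracing}\},
\]
row-finiteness of $\La$ renders the finiteness conditions $|v\La^{\un{i}}| < \infty$ and $|s(\la)\La^{\un{i}}| < \infty$ automatic. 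Hence it suffices to establish that every vertex of $\La$ that is not an $F$-source is $F$-tracing.

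Accordingly, the task reduces to the following claim: whenever $v \in \La^{\un{0}}$ is not an $F$-source and $\la \in v\La$ satisfies $d(\la) \perp F$, the source $s(\la)$ is not an $F$-source either. I will prove this by induction on $|d(\la)|$. The base case $d(\la) = \un{0}$ is immediate, since then $s(\la) = v$. For the inductive step, write $d(\la) = \un{j} + \un{m}$ with $j \notin F$ and $\un{m} \perp F$ of strictly smaller length, and use the factorisation property to decompose $\la = \la_1 \la_2$ with $d(\la_1) = \un{j}$ and $d(\la_2) = \un{m}$. Since $v$ is not an $F$-source, choose $i \in F$ with $v\La^{\un{i}} \neq \mt$, noting $i \neq j$ because $j \notin F$. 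Local convexity applied to an edge in $v\La^{\un{i}}$ together with $\la_1 \in v\La^{\un{j}}$ yields $s(\la_1) \La^{\un{i}} \neq \mt$, so $r(\la_2) = s(\la_1)$ is not an $F$-source. The inductive hypothesis applied to $\la_2$ then gives that $s(\la) = s(\la_2)$ is not an $F$-source, completing the induction.

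The main (and essentially only) obstacle is the inductive step, where local convexity must be combined with the factorisation property; once this is established, the rest of the argument amounts to translating between ideals and their vertex sets and invoking the simplifications afforded by row-finiteness.
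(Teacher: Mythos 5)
Your proof is correct and takes essentially the same route as the paper: reduce, via row-finiteness and the duality between ideals of $c_0(\La^{\un{0}})$ and vertex sets, to showing that every vertex that is not an $F$-source is $F$-tracing, and then induct on $|d(\la)|$ for paths $\la\in v\La$ with $d(\la)\perp F$, invoking local convexity in the inductive step. The only (immaterial) difference is that you factor off the \emph{first} edge of $\la$ and apply the inductive hypothesis to the tail, whereas the paper factors off the \emph{last} edge and applies local convexity at the far end of the path.
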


\begin{proof}
The claim holds trivially when $F=\mt$, so fix $\mt\neq F\subseteq[k]$.
It suffices to show that $\J_F(X(\La))\subseteq\I_F(X(\La))$.
To this end, since $\La$ is row-finite this amounts to showing that if $v\in\La^\un{0}$ is not an $F$-source, then it is $F$-tracing.
Recalling the definition of $F$-tracing vertices, we proceed by induction on the length of the degree of the paths $\la \in v \La$ with $d(\la) \perp F$.

For $|d(\la)| = 0$, there is nothing to show (this accounts for $F=[k]$).
For $|d(\la)|=1$, we have $d(\la)=\un{i}$ for some $i\in F^c$.
Since $v$ is not an $F$-source, we can find $\mu \in v\La^\un{j}$ for some $j\in F$.
Since $i\neq j, \la\in v\La^\un{i}$ and $\mu\in v\La^\un{j}$, local convexity of $(\La,d)$ gives in particular that $s(\la)\La^\un{j} \neq \mt$, and thus $s(\la)$ is not an $F$-source, as required.

Now assume that $s(\la)$ is not an $F$-source for all $\la\in v\La$ satisfying $d(\la)\perp F$ and $|d(\la)|=N$ for some $N\in\bN$.
Fix $\la \in v\La$ such that $d(\la)\perp F$ and $|d(\la)|=N+1$.
Then $d(\la)=\un{n}+\un{i}$ for some $\un{n}\perp F$ satisfying $|\un{n}|=N$ and some $i\in F^c$.
The factorisation property gives unique paths $\mu,\nu\in\La$ such that 
\[
d(\mu)=\un{n}, d(\nu)=\un{i} \text{ and } \la=\mu\nu.
\]
Note that 
\[
v=r(\la)=r(\mu\nu)=r(\mu),
\]
so the inductive hypothesis implies that $s(\mu)\La^\un{j}\neq\mt$ for some $j\in F$.
We also have $\nu\in s(\mu)\La^\un{i}$ and $i\neq j$, so local convexity of $(\La,d)$ gives that $s(\nu)\La^\un{j}\neq\mt$.
In other words, $s(\nu)=s(\la)$ is not an $F$-source, as required.
By induction, the proof is complete.
\end{proof}

\begin{proposition}\label{P:lcrfCNP}
Let $(\La,d)$ be a locally convex row-finite $k$-graph and let $H\subseteq\La^\un{0}$ be hereditary and saturated.
Then
\[
\I_F([X(\La)]_{I_H}) = (\I_F(X(\La))+I_H)/I_H \foral F\subseteq[k],
\]
and consequently
\[
J_F(I_H,X(\La))=\I_F(X(\La))+I_H\foral \mt\neq F\subseteq[k].
\]
\end{proposition}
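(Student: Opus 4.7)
The plan is to reduce the statement to a computation on vertex sets, exploiting the quotient construction for $k$-graphs, and then invoke saturation of $H$ as the key combinatorial ingredient.

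Setting $\Ga := \Ga(\La\setminus H)$, we first note that $\Ga$ is a locally convex row-finite $k$-graph by \cite[Theorem 5.2]{RSY03}, since $H$ is hereditary and saturated. By Proposition \ref{P:hrqnt} we have the canonical unitary equivalence $[X(\La)]_{I_H}\cong X(\Ga)$, which by Proposition \ref{P:JIunit} sends $\I_F([X(\La)]_{I_H})$ onto $\I_F(X(\Ga))$ (upon identifying $\Ga^\un{0}$ with $\La^\un{0}\setminus H$). By Proposition \ref{P:lcideals} applied to both $\La$ and $\Ga$, we have $\I_F=\J_F$ for the corresponding product systems, reducing the first claim to a statement about the ideals $\J_F$.

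Translating to vertex sets, I would use that row-finiteness makes the compactness condition automatic, giving
\[
H_{\J_F(X(\Ga))}=\{v\in\La^\un{0}\setminus H \mid v \textup{ is not an $F$-source in }\Ga\},
\]
and
\[
H_{(\I_F(X(\La))+I_H)/I_H}=H_{\J_F(X(\La))}\setminus H=\{v\in\La^\un{0}\setminus H\mid v\textup{ is not an $F$-source in }\La\},
\]
where I use that sums of ideals of $c_0(\La^\un{0})$ correspond to unions of their vertex sets, and that quotienting by $I_H$ removes $H$ from the vertex set. Hence the first equality of the proposition reduces to showing that for $v\in\La^\un{0}\setminus H$, we have that $v$ is not an $F$-source in $\Ga$ if and only if $v$ is not an $F$-source in $\La$.

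The forward direction is trivial since $v\Ga^\un{l}\subseteq v\La^\un{l}$ for every $l\in[k]$. The main obstacle is the reverse direction, which is where saturation of $H$ enters: if $v\notin H$ and there exists $l\in F$ with $v\La^\un{l}\neq\mt$, then a direct inspection of the definition of $\La^{\leq\un{l}}$ in a locally convex graph gives $v\La^{\leq\un{l}}=v\La^\un{l}$; were it the case that $s(v\La^\un{l})\subseteq H$, saturation of $H$ would force $v\in H$, a contradiction. Therefore some $e\in v\La^\un{l}$ satisfies $s(e)\notin H$, i.e., $e\in v\Ga^\un{l}\neq\mt$, so $v$ is not an $F$-source in $\Ga$.

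For the second claim, combining the first equality with Proposition \ref{P:quo ideal}(ii) (applied to the positively invariant ideal $I_H$) and the identity $\J_F([X(\La)]_{I_H})=\I_F([X(\La)]_{I_H})$ from Proposition \ref{P:lcideals} for $\Ga$, I obtain
\[
J_F(I_H,X(\La))=[\hspace{1pt}\cdot\hspace{1pt}]_{I_H}^{-1}(\I_F([X(\La)]_{I_H}))=[\hspace{1pt}\cdot\hspace{1pt}]_{I_H}^{-1}((\I_F(X(\La))+I_H)/I_H)=\I_F(X(\La))+I_H,
\]
where the last equality follows from the fact that $\I_F(X(\La))+I_H$ already contains $I_H$.
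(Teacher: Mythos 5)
Your proof is correct and follows essentially the same route as the paper's: reduce via $\Ga(\La\setminus H)$ using Propositions \ref{P:hrqnt}, \ref{P:JIunit} and \ref{P:lcideals}, translate to vertex sets, use saturation of $H$ to show that a vertex outside $H$ which is not an $F$-source in $\La$ remains one in $\Ga$, and deduce the second claim from Proposition \ref{P:quo ideal}(ii) together with $I_H\subseteq\I_F(X(\La))+I_H$. The only cosmetic slip is attributing $v\La^{\leq\un{l}}=v\La^{\un{l}}$ (when $v\La^{\un{l}}\neq\mt$) to local convexity, whereas it is immediate from the definition of $\La^{\leq\un{l}}$; this does not affect the argument.
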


\begin{proof}
The first claim holds trivially when $F=\mt$, so fix $\mt\neq F\subseteq[k]$.
Recall that $\Ga:=\Ga(\La\setminus H)$ is a locally convex row-finite $k$-graph, and that $X(\Ga)$ and $[X(\La)]_{I_H}$ are unitarily equivalent via the family $\{W_\un{n}\colon X_\un{n}(\Ga(\La\setminus H))\to [X_\un{n}(\La)]_{I_H}\}_{\un{n}\in\bZ_+^k}$ of Proposition \ref{P:hrqnt}.
We have 
\[
W_\un{0}(\I_F(X(\Ga)))=\I_F([X(\La)]_{I_H})
\]
by Remark \ref{R:JIunit}.
Moreover, since $\La$ and $\Ga$ are locally convex and row-finite, Proposition \ref{P:lcideals} gives that 
\[
\I_F(X(\La))=\J_F(X(\La))
\qand
\I_F(X(\Ga))=\J_F(X(\Ga)).
\]
Hence it suffices to show that
\begin{equation}\label{Eq:hrJ}
(\J_F(X(\La))+I_H)/I_H=W_\un{0}(\J_F(X(\Ga))).
\end{equation}
Note that
\begin{equation}\label{Eq:hrJqn}
(\J_F(X(\La))+I_H)/I_H=\ol{\spn}\{[\delta_{v,\La}]_{I_H}\mid v\not\in H, v \; \text{is not an $F$-source in $\La$}\}.
\end{equation}
Thus, to prove the forward inclusion of (\ref{Eq:hrJ}), it suffices to show that $[\delta_{v,\La}]_{I_H}\in W_\un{0}(\J_F(X(\Ga)))$ whenever $v\not\in H$ and $v$ is not an $F$-source in $\La$.

Fix such a $v\in\La^\un{0}$ and note that $W_\un{0}^{-1}([\delta_{v,\La}]_{I_H})=\delta_{v,\Ga}$.
We claim that $v$ is not an $F$-source in $\Ga$.
Towards contradiction, suppose that $v\Ga^\un{i}=\mt$ for all $i\in F$.
Since $v$ is not an $F$-source in $\La$, there exists $i\in F$ such that $v\La^\un{i}\neq\mt$.
For each $\la\in v\La^\un{i}$, we must have $s(\la)\in H$, as otherwise we would obtain $v\Ga^\un{i}\neq\mt$.
Thus $s(v\La^{\leq \un{i}}) = s(v\La^\un{i}) \subseteq H$.
Since $H$ is saturated, we obtain the contradiction that $v\in H$, establishing the forward inclusion of (\ref{Eq:hrJ}).

For the reverse inclusion of (\ref{Eq:hrJ}), take $v\in\Ga^\un{0}$, i.e., $v\not\in H$, such that $v$ is not an $F$-source in $\Ga$.
In particular, $v\in\La^\un{0}$ is not an $F$-source in $\La$.
Hence 
\[
W_\un{0}(\delta_{v,\Ga}) = [\delta_{v,\La}]_{I_H} \in (\J_F(X(\La))+I_H)/I_H
\]
by (\ref{Eq:hrJqn}), giving (\ref{Eq:hrJ}).

The last claim follows by item (ii) of Lemma \ref{L:quo ideal} and the fact that $I_H \subseteq J_F(I_H,X(\La))$ for all $\mt \neq F \subseteq [k]$, as $I_H$ is positively invariant.
\end{proof}

\begin{proposition}\label{P:hrposneginv}
Let $(\La,d)$ be a locally convex row-finite $k$-graph and let $H$ be a subset of $\La^\un{0}$.
Then $H$ is hereditary and saturated if and only if $I_H$ is positively and negatively invariant for $X(\La)$.
\end{proposition}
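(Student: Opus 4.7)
The plan is to translate both invariance conditions on the ideal $I_H$ into vertex-set conditions on $H$, exploiting the duality $H \leftrightarrow I_H$ and the simplification $\I_F(X(\La)) = \J_F(X(\La))$ available in the locally convex row-finite setting via Proposition \ref{P:lcideals}. Proposition \ref{P:herposinv} already handles hereditarity$\iff$positive invariance, so only the equivalence of saturation with negative invariance requires work.

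For the forward direction, assume $H$ is hereditary and saturated. By Proposition \ref{P:herposinv}, $I_H$ is positively invariant. To verify negative invariance, fix $\mt \neq F \subseteq [k]$ and a vertex $v \in H_{\I_F} \cap H_{X_F^{-1}(I_H)}$; I would aim to show $v \in H$. Since $\La$ is row-finite and locally convex, Proposition \ref{P:lcideals} together with the description of $H_{\J_F}$ in Subsection \ref{Ss:hrgraph} shows that $H_{\I_F}$ equals the set of vertices that are not $F$-sources, so there exists $i \in F$ with $v\La^{\un{i}} \neq \mt$. By definition of $X_F^{-1}(I_H)$ and \eqref{eq:n la}, we have $s(v\La^\un{i}) \subseteq H$. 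Because $v\La^{\un{i}} \neq \mt$, the definition of $\La^{\leq \un{i}}$ forces $v\La^{\leq \un{i}} = v\La^{\un{i}}$, so $s(v\La^{\leq \un{i}}) \subseteq H$. Saturation of $H$ then yields $v \in H$, as desired.

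For the converse, assume $I_H$ is positively and negatively invariant. Hereditarity is immediate from Proposition \ref{P:herposinv}. To prove saturation, let $v \in \La^\un{0}$ satisfy $s(v\La^{\leq \un{i}}) \subseteq H$ for some $i \in [k]$. If $v\La^\un{i} = \mt$, then $v\La^{\leq \un{i}} = \{v\}$ and the hypothesis directly gives $v \in H$. Otherwise $v\La^\un{i} \neq \mt$, so $v\La^{\leq \un{i}} = v\La^\un{i}$ and thus $s(v\La^{\un{i}}) \subseteq H$; by \eqref{eq:n la} this says $v \in H_{X_{\un{i}}^{-1}(I_H)}$. Taking $F = \{i\}$, we have $X_F^{-1}(I_H) = X_{\un{i}}^{-1}(I_H)$, and the fact that $v$ is not an $\{i\}$-source together with row-finiteness and Proposition \ref{P:lcideals} gives $v \in H_{\I_{\{i\}}}$. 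Negative invariance then forces $\de_v \in \I_{\{i\}} \cap X_{\{i\}}^{-1}(I_H) \subseteq I_H$, i.e., $v \in H$.

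The main obstacle is the bookkeeping around the set $\La^{\leq \un{i}}$ in the definition of saturation, ensuring its source set matches with the vertex-set description of $X_{\un{i}}^{-1}(I_H)$ when $v\La^{\un{i}} \neq \mt$, and handling the degenerate case $v\La^{\un{i}} = \mt$ separately; once this matching is identified, the rest is a direct translation using Proposition \ref{P:lcideals} to replace $\J_F$ by $\I_F$ and the duality of ideals with vertex sets.
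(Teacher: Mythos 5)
Your proposal is correct, and your converse direction coincides with the paper's argument essentially verbatim (the case split on $v\La^{\un{i}} = \mt$, the use of \eqref{eq:n la}, Proposition \ref{P:lcideals} to promote ``not an $\{i\}$-source'' to $\{i\}$-tracing, and then negative invariance). The forward direction, however, is genuinely different. The paper deduces negative invariance abstractly: it invokes Proposition \ref{P:lcrfCNP} to get $J_F(I_H, X(\La)) = \I_F(X(\La)) + I_H$ (a computation whose proof passes through the quotient graph $\Ga(\La \setminus H)$ and the unitary equivalence $X(\Ga(\La\setminus H)) \cong [X(\La)]_{I_H}$), and then applies the characterisation of negative invariance via $\I_F \subseteq J_F(I_H, X(\La))$ from Proposition \ref{P:neginvchar}. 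You instead verify the defining inclusion $\I_F \cap X_F^{-1}(I_H) \subseteq I_H$ directly at the level of vertices: since the duality $H \leftrightarrow I_H$ is a lattice isomorphism, it suffices to show $H_{\I_F} \cap H_{X_F^{-1}(I_H)} \subseteq H$, and your chain — $v$ not an $F$-source (via Proposition \ref{P:lcideals} plus row-finiteness), pick $i \in F$ with $v\La^{\un{i}} \neq \mt$, use the trivial inclusion $X_F^{-1}(I_H) \subseteq X_{\un{i}}^{-1}(I_H)$ and \eqref{eq:n la} to get $s(v\La^{\un{i}}) \subseteq H$, observe $v\La^{\leq\un{i}} = v\La^{\un{i}}$ when the latter is non-empty, and close with saturation — is sound and checks out against the definitions. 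Your route is more elementary and self-contained, using saturation directly where the paper buries it inside the proof of Proposition \ref{P:lcrfCNP}; the paper's route is heavier here but yields Proposition \ref{P:lcrfCNP} as a standalone result, which it needs again anyway (e.g., in Proposition \ref{P:hrNObij} and Corollary \ref{C:RSYparam1}).
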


\begin{proof}
Assume that $H$ is hereditary and saturated.
Then $I_H$ is positively invariant for $X(\La)$ by Lemma \ref{L:herposinv}.
Fix $\mt\neq F\subseteq[k]$.
By Proposition \ref{P:lcrfCNP}, we obtain
\[
\I_F(X(\La))\subseteq\I_F(X(\La))+I_H=J_F(I_H, X(\La)).
\]
Hence $I_H$ is negatively invariant for $X(\La)$ by Lemma \ref{L:neginvchar}, as required.

Now assume that $I_H$ is positively and negatively invariant for $X(\La)$.
We have that $H$ is hereditary by Lemma \ref{L:herposinv}, so it remains to check that $H$ is saturated.
Accordingly, fix $v\in\La^\un{0}$ and suppose that $s(v\La^{\leq\un{i}})\subseteq H$ for some $i\in[k]$.
We must show that $v\in H$.
This is clear when $v\La^\un{i}=\mt$, as in this case $v\La^{\leq\un{i}}=\{v\}$; so assume that $v\La^\un{i}\neq\mt$.
In this case we have $s(v\La^{\leq \un{i}}) = s(v\La^\un{i}) \subseteq H$, and thus $\delta_{v,\La}\in X(\La)_{\{i\}}^{-1}(I_H)$ by (\ref{eq:n la}).
Since $v\La^\un{i}\neq\mt$, we have that $v$ is not an $\{i\}$-source, and thus $v$ is $\{i\}$-tracing by Proposition \ref{P:lcideals}.
Hence
\[
\delta_{v,\La}\in\I_{\{i\}}(X(\La))\cap X(\La)_{\{i\}}^{-1}(I_H)\subseteq I_H,
\]
using negative invariance of $I_H$ in the final inclusion.
Consequently, we obtain $v\in H$ and hence $H$ is saturated, finishing the proof.
\end{proof}

\begin{proposition}\label{P:hrNObij}
Let $(\La,d)$ be a locally convex row-finite $k$-graph. 
Then the association
\[
I\mapsto \L_I, \; \text{where} \; \L_{I,F} := \I_F(X(\La))+I \foral F\subseteq[k],
\]
defines a bijection between the set of ideals of $c_0(\La^\un{0})$ that are positively and negatively invariant for $X(\La)$ and the set of NO-$2^k$-tuples of $X(\La)$, which in turn induces a bijection with the set of gauge-invariant ideals of $\N\O_{X(\La)}$.
\end{proposition}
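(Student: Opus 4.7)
The strategy is to pass through Corollary \ref{C:CNP param}, which already gives a lattice isomorphism between NO-$2^k$-tuples of $X(\La)$ and gauge-invariant ideals of $\N\O_{X(\La)}$. Hence it suffices to establish the claimed bijection between positively and negatively invariant ideals of $c_0(\La^{\un{0}})$ and NO-$2^k$-tuples of $X(\La)$.

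First I would verify that the proposed map $I \mapsto \L_I$ is well-defined. Given $I$ positively and negatively invariant for $X(\La)$, Proposition \ref{P:lcrfCNP} gives that $\I_F([X(\La)]_I) = (\I_F(X(\La)) + I)/I$ for all $F \subseteq [k]$, so item (i) of Corollary \ref{C:NO source} applies and yields that the $2^k$-tuple defined by $\L_{I,F} := \I_F(X(\La)) + I$ is an NO-$2^k$-tuple of $X(\La)$. Note that $\L_{I,\mt} = \I_\mt(X(\La)) + I = I$ since $\I_\mt(X(\La)) = \{0\}$.

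Next I would define the candidate inverse $\L \mapsto \L_\mt$ on the set of NO-$2^k$-tuples. By Proposition \ref{P:NO can surj}, the $\L_\mt$-member of any NO-$2^k$-tuple is positively and negatively invariant for $X(\La)$, so this map lands in the correct set. The composition $I \mapsto \L_I \mapsto \L_{I, \mt} = I$ is the identity on the ideal side, as observed above.

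For the other composition, fix an NO-$2^k$-tuple $\L$ of $X(\La)$ and set $I := \L_\mt$. The goal is to show $\L_F = \I_F(X(\La)) + \L_\mt$ for all $F \subseteq [k]$. The inclusion $\I_F(X(\La)) + \L_\mt \subseteq \L_F$ follows at once from $\I \subseteq \L$ (definition of NO-$2^k$-tuple) together with the partial ordering property $\L_\mt \subseteq \L_F$. For the reverse inclusion, Proposition \ref{P:NT tuple + m fam} gives that $[\L]_{\L_\mt}$ is an (M)-$2^k$-tuple of $[X(\La)]_{\L_\mt}$, and in particular an (E)-$2^k$-tuple, so
\[
[\L_F]_{\L_\mt} \subseteq \I_F([X(\La)]_{\L_\mt}) = (\I_F(X(\La)) + \L_\mt)/\L_\mt,
\]
using Proposition \ref{P:lcrfCNP} in the equality. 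Pulling back along $[\,\cdot\,]_{\L_\mt}$ and using $\L_\mt \subseteq \I_F(X(\La)) + \L_\mt$ gives $\L_F \subseteq \I_F(X(\La)) + \L_\mt$, as required.

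The main obstacle is the reverse inclusion in the composition $\L \mapsto \L_\mt \mapsto \L_{\L_\mt}$, which forces $\L$ to be entirely determined by its $\mt$-component; the key leverage is Proposition \ref{P:lcrfCNP}, which trivialises the passage to the quotient and is where local convexity and row-finiteness are used essentially. Combining the established bijection with Corollary \ref{C:CNP param} completes the proof by transporting the bijection to the gauge-invariant ideals of $\N\O_{X(\La)}$.
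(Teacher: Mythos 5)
Your proof is correct and follows essentially the same route as the paper: well-definedness via Proposition \ref{P:lcrfCNP} and Corollary \ref{C:NO source}, the inverse $\L\mapsto\L_\mt$ via Proposition \ref{P:NO can surj}, and surjectivity by forcing $\L_F=\I_F(X(\La))+\L_\mt$ through the collapse of the quotient invariants in Proposition \ref{P:lcrfCNP}. The only cosmetic difference is in the upper bound on $\L_F$: you use the (E)-property of $[\L]_{\L_\mt}$ via Proposition \ref{P:NT tuple + m fam} and the first claim of Proposition \ref{P:lcrfCNP}, whereas the paper sandwiches $\L_F/\L_\mt$ using the NT-condition $\L_F\subseteq J_F(\L_\mt,X(\La))$ and the second claim of Proposition \ref{P:lcrfCNP} --- the two routes coincide, though you should explicitly invoke Proposition \ref{P:hrposneginv} when translating positive and negative invariance of $\L_\mt$ into the hereditary saturated hypothesis needed to apply Proposition \ref{P:lcrfCNP}.
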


\begin{proof}
By Proposition \ref{P:lcrfCNP} and Corollary \ref{C:NO source}, the map is well-defined and clearly injective.
For surjectivity, we must show that if $\L$ is an NO-$2^k$-tuple then necessarily $\L_F= \I_F(X(\La)) + \L_{\mt}$ for every $\mt\neq F \subseteq [k]$ (as the equality clearly holds when $F=\mt$).
We have that $\L_\mt$ is positively and negatively invariant for $X(\La)$ by Proposition \ref{P:NO can surj}.
In turn, we obtain
\[
(\I_F(X(\La))+\L_\mt)/\L_\mt\subseteq\L_F/\L_\mt\subseteq J_F(\L_\mt,X(\La))/\L_\mt=(\I_F(X(\La))+\L_\mt)/\L_\mt \foral\mt\neq F\subseteq[k],
\]
using Propositions \ref{P:lcrfCNP} and \ref{P:hrposneginv} in the final equality.
Thus $\L_F = \I_F(X(\La)) + \L_\mt$ for all $\mt\neq F\subseteq[k]$, showing that the map of the statement is bijective.
The last claim follows from Corollary \ref{C:CNP param}, completing the proof.
\end{proof}

To recover \cite[Theorem 5.2]{RSY03}, we need the following identification.
Let $H\subseteq\La^\un{0}$ be a hereditary vertex set.
Then
\[
\La(H):=(H,\{\la\in\La\mid r(\la)\in H\},r,s)
\]
is a locally convex row-finite sub-$k$-graph of $\La$, and $X(\La(H))$ and $I_HX(\La)I_H$ are unitarily equivalent via the family of maps $\{W_\un{n}\}_{\un{n}\in\bZ_+^k}$ defined by
\[
W_\un{n}\colon X_\un{n}(\La(H))\to I_HX_\un{n}(\La)I_H; \delta_{\la, \La(H)}\mapsto\delta_{\la,\La}\foral\la\in\La(H)^\un{n}, \un{n}\in\bZ_+^k.
\]
Consequently, by Remark \ref{R:noiso} and Proposition \ref{P:NOIXIembed} we obtain
\[
\ca(\La(H)) \cong \N\O_{X(\La(H))} \cong \N\O_{I_HX(\La)I_H} \cong I_H\ca(\La)I_H,
\]
where we identify $I_H$ with its faithful image inside $\ca(\La)$ in the final $*$-isomorphism.

\begin{corollary}\label{C:RSYparam1} \cite[Theorem 5.2]{RSY03}
Let $(\La,d)$ be a locally convex row-finite $k$-graph. 
Equip the set of hereditary saturated subsets of $\La^\un{0}$ with the lattice structure of Proposition \ref{P:hersatlat}, and equip the set of NO-$2^k$-tuples of $X(\La)$ with the lattice structure of Definition \ref{D:NTlattice} (suitably restricted).
Let $\{T_\la\}_{\la \in \La}$ be the universal Cuntz-Krieger $\La$-family.
Then the following hold:

\begin{enumerate}
\item The set of hereditary saturated subsets of $\La^\un{0}$ and the set of NO-$2^k$-tuples of $X(\La)$ are isomorphic as lattices via the map
\begin{equation}\label{Eq:hrlatiso}
H\mapsto\L_{I_H}, \; \text{where} \; \L_{I_H, F} := \I_F(X(\La)) + I_H \; \text{for all} \; F \subseteq [k].
\end{equation}
Consequently, the set of hereditary saturated subsets of $\La^\un{0}$ and the set of gauge-invariant ideals of $\ca(\La)$ are isomorphic as lattices.

\item Let $H$ be a hereditary and saturated subset of $\La^{\un{0}}$ and let $Q \colon \N\T_{X(\La)} \to \ca(\La)$ be the canonical quotient map.
Then the quotient $\ca(\La)/ Q(\fJ^{\L_{I_H}})$ is canonically $*$-isomorphic to the graph C*-algebra $\ca(\Ga(\La \setminus H))$.

\item Let $H$ be a hereditary subset of $\La^{\un{0}}$.
Then $\ca(\La(H))$ is canonically $*$-isomorphic to the C*-subalgebra $\ca(T_\la \mid r(\la) \in H)$ of $\ca(\La)$, and this C*-subalgebra is a full corner of the ideal generated by $\{T_v \mid v \in H\}$ in $\ca(\La)$.
If $H$ is in addition saturated, then the ideal generated by $\{T_v \mid v \in H\}$ in $\ca(\La)$ coincides with  $Q(\fJ^{\L_{I_H}})$, for the canonical quotient map $Q \colon \N\T_{X(\La)} \to \ca(\La)$.
\end{enumerate}
\end{corollary}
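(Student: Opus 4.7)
The plan is to reduce all three items to results already established in Sections 3--4 combined with the $k$-graph translations gathered in this subsection. Throughout, I will freely identify $\N\O_{X(\La)}$ with $\ca(\La)$ and $\ol{\pi}_{X(\La)}(\de_v)$ with the image of $T_v$ under $Q$ via Proposition \ref{P:hrrepnbij}.

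For item (i), I would view the map $H \mapsto \L_{I_H}$ as the composition of the bijection $H \mapsto I_H$ between hereditary saturated subsets and positively and negatively invariant ideals (Proposition \ref{P:hrposneginv}) with the bijection $I \mapsto \L_I$ provided by Proposition \ref{P:hrNObij}. Bijectivity is therefore automatic, and it remains to verify compatibility with the lattice operations. For meets, Proposition \ref{P:NTwedge} gives $(\L_{I_{H_1}} \wedge \L_{I_{H_2}})_\mt = I_{H_1} \cap I_{H_2} = I_{H_1 \cap H_2}$, and since $\L_{I_{H_1}} \wedge \L_{I_{H_2}}$ is itself an NO-$2^k$-tuple (Proposition \ref{P:Krellattice}), injectivity of $I \mapsto \L_I$ in Proposition \ref{P:hrNObij} forces equality with $\L_{I_{H_1 \cap H_2}}$. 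For joins, combine Propositions \ref{P:NTvee} and \ref{P:NO can surj} to see that $(\L_{I_{H_1}} \vee \L_{I_{H_2}})_\mt$ is a positively and negatively invariant ideal containing $I_{H_1} + I_{H_2}$, hence corresponds under Proposition \ref{P:hrposneginv} to a hereditary saturated set containing $H_1 \cup H_2$; minimality of $H_1 \vee H_2$ in Proposition \ref{P:hersatlat} then forces the reverse inclusion. Corollary \ref{C:CNP param} yields the final lattice isomorphism with the gauge-invariant ideals of $\ca(\La)$.

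For item (ii), the compatibility condition $\I_F([X(\La)]_{I_H}) = (\I_F(X(\La)) + I_H)/I_H$ required by Corollary \ref{C:NO source} is precisely Proposition \ref{P:lcrfCNP}, so Corollary \ref{C:NO source}(ii) gives a canonical $*$-isomorphism $\ca(\La)/Q(\fJ^{\L_{I_H}}) \cong \N\O_{[X(\La)]_{I_H}}$. Identifying $[X(\La)]_{I_H}$ with $X(\Ga(\La \setminus H))$ by Proposition \ref{P:hrqnt} and applying Propositions \ref{P:noiso} and \ref{P:hrrepnbij} produces the canonical $*$-isomorphism $\N\O_{[X(\La)]_{I_H}} \cong \ca(\Ga(\La\setminus H))$.

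For item (iii), I would first invoke the unitary equivalence $X(\La(H)) \cong I_H X(\La) I_H$ noted immediately before the corollary, which only needs $H$ to be hereditary. Chaining Propositions \ref{P:noiso}, \ref{P:hrrepnbij} and \ref{P:NOIXIembed} identifies $\ca(\La(H)) \cong \N\O_{X(\La(H))}$ with the C*-subalgebra $\ca(\pi_{X(\La)}^\I|_{I_H}, t_{X(\La)}^\I|_{I_H X(\La) I_H})$ of $\ca(\La)$, which is hereditary and full in $\sca{\pi_{X(\La)}^\I(I_H)}$. Unpacking the unitary equivalence of Proposition \ref{P:hrqnt} on generators, this C*-subalgebra is generated by $\{T_\la \mid r(\la) \in H\}$, giving the stated identification. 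Assuming in addition that $H$ is saturated, Proposition \ref{P:hrrfsprodsys}(i) gives that $I_H \subseteq \bigcap_i \phi_{\un{i}}^{-1}(\K(X_{\un{i}}(\La)))$ since $\La$ is row-finite, so Corollary \ref{C:NO source}(iii) applies and yields $Q(\fJ^{\L_{I_H}}) = \sca{Q(\ol{\pi}_{X(\La)}(I_H))} = \sca{T_v \mid v \in H}$.

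The main obstacle I anticipate is the careful bookkeeping in item (iii): pinning down that $\ca(\pi_{X(\La)}^\I|_{I_H}, t_{X(\La)}^\I|_{I_H X(\La) I_H})$ coincides with $\ca(T_\la \mid r(\la) \in H)$ as concrete C*-subalgebras of $\ca(\La)$, rather than just being abstractly isomorphic. This is not deep, but it requires tracking the $W_{\un{n}}$ of Proposition \ref{P:hrqnt} through the identification $X(\La(H)) \cong I_H X(\La) I_H$ and using Nica-covariance (specifically (TCK3)) to absorb the $T_\mu^*$ factors back into products $T_\la T_\nu^*$ with $r(\la), r(\nu) \in H$. Once this bookkeeping is done, the join computation in item (i) and the application of Corollary \ref{C:NO source}(iii) in item (iii) are the only non-bookkeeping steps, and both are short.
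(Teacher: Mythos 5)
Your proposal is correct, and items (ii) and (iii) follow the paper's own proof almost verbatim (Proposition \ref{P:lcrfCNP} feeding Corollary \ref{C:NO source}(ii) plus Propositions \ref{P:hrqnt} and \ref{P:noiso} for (ii); the $I_HX(\La)I_H$ identification plus Proposition \ref{P:NOIXIembed} and Corollary \ref{C:NO source}(iii) for (iii)). Where you genuinely diverge is item (i). The paper verifies lattice compatibility by brute force: for the meet it computes $(\L_{I_{H_1}}\wedge\L_{I_{H_2}})_F = \I_F(X(\La)) + I_{H_1\cap H_2}$ componentwise via Proposition \ref{P:NTwedge}, and for the join it identifies the $\mt$-component and then grinds through the $(k-1)$-iteration formula of Proposition \ref{P:NTvee} to show $(\L_{I_{H_1}}\vee\L_{I_{H_2}})_F = \I_F(X(\La)) + I_{\ol{H_1\cup H_2}^{\textup{s}}}$ for $\mt \neq F$. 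You instead observe that the meet and join are themselves NO-$2^k$-tuples (Proposition \ref{P:Krellattice}), and that by the surjectivity half of Proposition \ref{P:hrNObij} every NO-$2^k$-tuple over a locally convex row-finite graph has the rigid form $\L_F = \I_F(X(\La)) + \L_\mt$, so it suffices to compute the $\mt$-components. This kills the $(k-1)$-iteration computation entirely and is genuinely shorter; the trade-off is that it leans on the structure theorem for NO-tuples in this class, whereas the paper's computation re-derives that structure for the specific tuples at hand. (Minor wording point: what you need from Proposition \ref{P:hrNObij} is surjectivity together with the fact that the tuple is determined by its $\mt$-member, not injectivity per se.)

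Two small points to shore up. First, in the join computation you only establish $I_{H_1\vee H_2} \subseteq (\L_{I_{H_1}}\vee\L_{I_{H_2}})_\mt$ via the saturation/minimality argument; the reverse containment is needed too, and follows routinely because $\L_{I_{H_j}} \subseteq \L_{I_{H_1\vee H_2}}$ gives $\fJ^{\L_{I_{H_1}}} + \fJ^{\L_{I_{H_2}}} \subseteq \fJ^{\L_{I_{H_1\vee H_2}}}$ by inclusion-preservation of Theorem \ref{T:NT param}, whence $\ol{\pi}_{X(\La)}^{-1}(\fJ^{\L_{I_{H_1}}}+\fJ^{\L_{I_{H_2}}}) \subseteq I_{H_1\vee H_2}$. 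Second, in item (iii) the statement claims a full \emph{corner}, while Proposition \ref{P:NOIXIembed} delivers only a hereditary full subalgebra; the upgrade to a corner uses the paper's observation that the generators $\{\de_v\}$ of $c_0(\La^{\un{0}})$ form a countable approximate unit of projections, so that $\sum_{v\in H} T_v$ converges strictly to a projection in the multiplier algebra compressing onto the subalgebra. Both fixes are one-liners, so neither is a genuine gap in your approach.
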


\begin{proof}
(i) Firstly, note that (\ref{Eq:hrlatiso}) is the composition of the duality map $H\mapsto I_H$ and the map $I\mapsto \L_I$ of Proposition \ref{P:hrNObij}, where $H\subseteq\La^\un{0}$ is hereditary and saturated, and $I$ is an ideal of $c_0(\La^\un{0})$ that is positively and negatively invariant for $X(\La)$.
These maps are bijections by Propositions \ref{P:hrposneginv} and \ref{P:hrNObij}, respectively, and hence the map  (\ref{Eq:hrlatiso}) is also a bijection.
Therefore, to prove the first claim, it suffices to show that (\ref{Eq:hrlatiso}) preserves the lattice structure.
To this end, fix hereditary saturated subsets $H_1$ and $H_2$ of $\La^\un{0}$.
We must show that
\[
\L_{I_{H_1}}\wedge\L_{I_{H_2}} = \L_{I_{H_1\wedge H_2}}
\qand
\L_{I_{H_1}}\vee\L_{I_{H_2}} = \L_{I_{H_1\vee H_2}}.
\]

For the operation $\wedge$, recall that $H_1 \wedge H_2 \equiv H_1 \cap H_2$ and $I_{H_1} I_{H_2} = I_{H_1 \cap H_2}$.
Hence for each $F \subseteq [k]$ we obtain
\begin{align*}
(\L_{I_{H_1}}\wedge\L_{I_{H_2}})_{F}
& =
\L_{I_{H_1}, F} \L_{I_{H_2}, F} 
=
(\I_F(X(\La))+I_{H_1})(\I_F(X(\La))+I_{H_2}) \\
& =
\I_F(X(\La))+I_{H_1} I_{H_2} 
= 
\I_F(X(\La))+I_{H_1 \cap H_2}
=
\L_{I_{H_1\wedge H_2}, F},
\end{align*}
by Proposition \ref{P:NTwedge}.
For the operation $\vee$, we must show that 
\[
(\L_{I_{H_1}}\vee\L_{I_{H_2}})_F = \L_{I_{\ol{H_1\cup H_2}^\text{s},F}}
\foral
F\subseteq [k].
\]
For $F=\mt$, we must show that 
\[
(\L_{I_{H_1}}\vee\L_{I_{H_2}})_\mt
\equiv
\ol{\pi}_{X(\La)}^{-1}(\fJ^{\L_{I_{H_1}}}+\fJ^{\L_{I_{H_2}}})
=
I_{\ol{H_1\cup H_2}^\text{s}},
\]
using Proposition \ref{P:NTvee} in the first identity.
We have $H_1 \subseteq \ol{H_1\cup H_2}^\text{s}$ and hence we obtain $I_{H_1}\subseteq I_{\ol{H_1\cup H_2}^\text{s}}$.
Thus 
\[
\L_{I_{H_1}}\subseteq\L_{I_{\ol{H_1\cup H_2}^\text{s}}},
\]
and so 
\[
\fJ^{\L_{I_{H_1}}} \subseteq\fJ^{\L_{I_{\ol{H_1\cup H_2}^\text{s}}}}
\]
using that the parametrisation of Theorem \ref{T:NT param} respects inclusions.
Likewise for $H_2$.
In turn, we obtain 
\[
\ol{\pi}_{X(\La)}^{-1}(\fJ^{\L_{I_{H_1}}}+\fJ^{\L_{I_{H_2}}})\subseteq\ol{\pi}_{X(\La)}^{-1}(\fJ^{\L_{I_{\ol{H_1\cup H_2}^\text{s}}}})=\L_\mt^{\fJ^{\L_{I_{\ol{H_1\cup H_2}^\text{s}}}}}=I_{\ol{H_1\cup H_2}^\text{s}}.
\]
Conversely, note that $(\L_{I_{H_1}}\vee\L_{I_{H_2}})_\mt$ is positively and negatively invariant for $X(\La)$ by Proposition \ref{P:NO can surj}, since it participates in the NO-$2^k$-tuple $\L_{I_{H_1}}\vee\L_{I_{H_2}}$.
An application of Proposition \ref{P:hrposneginv} then gives that $H_{(\L_{I_{H_1}}\vee\L_{I_{H_2}})_\mt}$ is hereditary and saturated.
Additionally, we have 
\[
I_{H_1\cup H_2}\subseteq (\L_{I_{H_1}}\vee\L_{I_{H_2}})_\mt
\]
by definition.
From this we deduce that 
\[
H_1\cup H_2\subseteq H_{(\L_{I_{H_1}}\vee\L_{I_{H_2}})_\mt}.
\]
Minimality of the saturation then implies that 
\[
\ol{H_1\cup H_2}^\text{s}\subseteq H_{(\L_{I_{H_1}}\vee\L_{I_{H_2}})_\mt}.
\]
Consequently we have 
\[
I_{\ol{H_1\cup H_2}^\text{s}}\subseteq(\L_{I_{H_1}}\vee\L_{I_{H_2}})_\mt,
\]
as required.

Now fix $\mt\neq F\subseteq[k]$.
Recall from the preceding argument that $\fJ^{\L_{I_{H_1}}}+\fJ^{\L_{I_{H_2}}}\subseteq\fJ^{\L_{I_{\ol{H_1\cup H_2}^\text{s}}}}$ and hence 
\[
\L_{I_{H_1}}\vee\L_{I_{H_2}}
\equiv
\L^{\fJ^{\L_{I_{H_1}}}+\fJ^{\L_{I_{H_2}}}}
\subseteq
\L^{\fJ^{\L_{I_{\ol{H_1\cup H_2}^\text{s}}}}}=\L_{I_{\ol{H_1\cup H_2}^\text{s}}}
\equiv
\L_{I_{H_1\vee H_2}},
\]
since the parametrisation of Theorem \ref{T:NT param} respects inclusions.
In particular, we have 
\[
(\L_{I_{H_1}}\vee\L_{I_{H_2}})_F\subseteq \L_{I_{H_1\vee H_2},F}.
\]
For the reverse inclusion, first recall that
\[
(\L_{I_{H_1}}\vee\L_{I_{H_2}})_F
=
[\hspace{1pt}\cdot\hspace{1pt}]_{(\L_{I_{H_1}}\vee\L_{I_{H_2}})_\mt}^{-1}[((\L_{I_{H_1},F}+\L_{I_{H_2},F}+(\L_{I_{H_1}}\vee\L_{I_{H_2}})_\mt)/(\L_{I_{H_1}}\vee\L_{I_{H_2}})_\mt)^{(k-1)}],
\]
by Proposition \ref{P:NTvee}.
Observe that
\begin{align*}
\I_F(X(\La)) + (\L_{I_{H_1}}\vee\L_{I_{H_2}})_\mt
&\subseteq
\L_{I_{H_1},F} + (\L_{I_{H_1}}\vee\L_{I_{H_2}})_\mt \\
& =
\I_F(X(\La))+I_{H_1}+(\L_{I_{H_1}}\vee\L_{I_{H_2}})_\mt \\
& \subseteq
\I_F(X(\La))+(\L_{I_{H_1}}\vee\L_{I_{H_2}})_\mt,
\end{align*}
using that $I_{H_1}\subseteq(\L_{I_{H_1}}\vee\L_{I_{H_2}})_\mt$ in the final inclusion.
Likewise for $H_2$.
Therefore, we have
\[
(\L_{I_{H_1}}\vee\L_{I_{H_2}})_F=[\hspace{1pt}\cdot\hspace{1pt}]_{(\L_{I_{H_1}}\vee\L_{I_{H_2}})_\mt}^{-1}[((\I_F(X(\La))+(\L_{I_{H_1}}\vee\L_{I_{H_2}})_\mt)/(\L_{I_{H_1}}\vee\L_{I_{H_2}})_\mt)^{(k-1)}].
\]
We then obtain
\begin{align*}
[\L_{I_{H_1\vee H_2},F}]_{(\L_{I_{H_1}}\vee\L_{I_{H_2}})_\mt} 
& =
[\I_F(X(\La))+I_{\ol{H_1\cup H_2}^\text{s}}]_{(\L_{I_{H_1}}\vee\L_{I_{H_2}})_\mt} \\
& =
[\I_F(X(\La))+(\L_{I_{H_1}}\vee\L_{I_{H_2}})_\mt]_{(\L_{I_{H_1}}\vee\L_{I_{H_2}})_\mt} \\
& \subseteq
\left([\I_F(X(\La))+(\L_{I_{H_1}}\vee\L_{I_{H_2}})_\mt]_{(\L_{I_{H_1}}\vee\L_{I_{H_2}})_\mt}\right)^{(k-1)},
\end{align*}
using that $I_{\ol{H_1\cup H_2}^\text{s}} = (\L_{I_{H_1}}\vee\L_{I_{H_2}})_\mt$ in the second equality.
Thus 
\[
\L_{I_{H_1\vee H_2},F}\subseteq (\L_{I_{H_1}}\vee\L_{I_{H_2}})_F,
\]
as required.
The second claim now follows by an application of Corollary \ref{C:CNP param}.

\smallskip

\noindent 
(ii) Let $H\subseteq\La^\un{0}$ be hereditary and saturated. 
Then 
\[
\I_F([X(\La)]_{I_H})=(\I_F(X(\La))+I_H)/I_H \foral F\subseteq[k]
\]
by Proposition \ref{P:lcrfCNP}.
Hence, applying items (ii) and (iii) of Corollary \ref{C:NO source} and adopting the notation therein, we obtain
\begin{equation*}
\N\O_{X(\La)}/\sca{Q_{\I}(\ol{\pi}_{X(\La)}(I_H))}\cong\N\O_{[X(\La)]_{I_H}}\cong\N\O_{X(\Ga(\La\setminus H))},
\end{equation*}
using Remark \ref{R:noiso} and Proposition \ref{P:hrqnt} in the final $*$-isomorphism.
Note that item (iii) of Corollary \ref{C:NO source} applies due to row-finiteness of $(\La,d)$.
Item (ii) of the statement follows.

\smallskip

\noindent
(iii) The first statement follows by the comments preceding the corollary, using Proposition \ref{P:NOIXIembed} for fullness.
For the corner property, note that the generators of $c_0(\La^{\un{0}})$ form a countable approximate unit of projections.
The second statement is an immediate consequence of item (iii) of Corollary \ref{C:NO source}.
\end{proof}

\subsection{Finite frames}\label{Ss:finfram}

In this subsection we apply the NT-$2^d$-tuple machinery to the case of a product system $X$ over $\bZ_+^d$ in which $X_\un{i}$ admits a finite frame for all $i\in[d]$.
In \cite{Kak20}, the second-named author used the quotients of $\N\T_X$ by the ideals $\sca{\ol{\pi}_X(A) \ol{q}_{X, \un{i}} \mid i \in F}$ for $\mt \neq F \subseteq [d]$ in order to examine the structure of subinvariant KMS-states.
A note was made in \cite{Kak20} that such quotients can be realised as the Cuntz-Nica-Pimsner algebra of an induced product system.
We will see here how this is achieved from the parametrisation that we have established.

\begin{definition}\label{D:finfram}
Let $X$ be a right Hilbert module over a C*-algebra $A$.
A finite non-empty subset $\{\xi^{(j)}\}_{j\in [N]}$ of $X$ is said to be a \emph{finite frame of $X$} if $\sum_{j=1}^N\Theta_{\xi^{(j)},\xi^{(j)}}=\id_X$.
When such a subset exists, we say that $X$ \emph{admits a finite frame}.
\end{definition}

Observe that a right Hilbert C*-module $X$ admits a finite frame if and only if $\id_X\in\K(X)$, which in turn holds if and only if $\K(X)=\L(X)$.
Hence $X$ is projective and finitely generated.

\begin{lemma}\label{L:tensfinfram}
Let $X$ and $Y$ be C*-correspondences over a C*-algebra $A$.
Suppose that $Y$ admits a finite frame $\{y^{(j)}\}_{j\in[N]}$.
Then
\[
\Theta_{x_1,x_2}^X \otimes \id_Y=\sum_{j=1}^N\Theta_{x_1\otimes y^{(j)}, x_2\otimes y^{(j)}}^{X \otimes_A Y}, \foral x_1,x_2\in X.
\]
If in addition $X$ admits a finite frame $\{x^{(i)}\}_{i\in[M]}$, then $X \otimes_A Y$ admits the finite frame 
\[
\{x^{(i)} \otimes y^{(j)} \mid i \in [M], j \in [N]\}.
\]
\end{lemma}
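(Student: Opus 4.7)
The plan is to verify the first identity on simple tensors and then deduce the second statement as a quick corollary. Since both sides of the asserted equality are bounded operators on $X \otimes_A Y$, and since $X \otimes_A Y$ is the closed linear span of simple tensors $x \otimes y$, it suffices to check the equality on such elements.

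First I would fix $x_1, x_2 \in X$ and an arbitrary simple tensor $x \otimes y \in X \otimes_A Y$ and compute both sides. The left hand side yields
\[
(\Theta_{x_1,x_2}^X \otimes \id_Y)(x \otimes y) = x_1 \sca{x_2, x} \otimes y,
\]
by definition of the $\otimes \id_Y$ construction on rank-one operators. For the right hand side, expanding the rank-one operators in $X \otimes_A Y$ and using the inner product formula $\sca{x_2 \otimes y^{(j)}, x \otimes y} = \sca{y^{(j)}, \phi_Y(\sca{x_2, x}) y}$ gives
\[
\sum_{j=1}^N (x_1 \otimes y^{(j)}) \sca{y^{(j)}, \phi_Y(\sca{x_2, x})y} = x_1 \otimes \bigg(\sum_{j=1}^N y^{(j)} \sca{y^{(j)}, \phi_Y(\sca{x_2, x})y}\bigg).
\]
The key step now is to invoke the finite frame property of $\{y^{(j)}\}_{j \in [N]}$, which collapses the inner sum to $\phi_Y(\sca{x_2, x})y$, since $\sum_{j} \Theta_{y^{(j)}, y^{(j)}}^Y = \id_Y$. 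Hence the right hand side equals $x_1 \otimes \phi_Y(\sca{x_2, x})y$, and this coincides with $x_1 \sca{x_2, x} \otimes y$ by the $A$-balancing of the tensor product. This establishes the displayed identity.

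For the second claim, I would assume in addition that $X$ admits a finite frame $\{x^{(i)}\}_{i \in [M]}$ and apply the first part to each $x_1 = x_2 = x^{(i)}$. Summing over $i$ yields
\[
\sum_{i=1}^M \sum_{j=1}^N \Theta_{x^{(i)} \otimes y^{(j)}, x^{(i)} \otimes y^{(j)}}^{X \otimes_A Y} = \bigg(\sum_{i=1}^M \Theta_{x^{(i)}, x^{(i)}}^X\bigg) \otimes \id_Y = \id_X \otimes \id_Y = \id_{X \otimes_A Y},
\]
where the final equality uses that the $*$-homomorphism $S \mapsto S \otimes \id_Y$ from $\L(X)$ to $\L(X \otimes_A Y)$ is unital. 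This shows $\{x^{(i)} \otimes y^{(j)}\}_{i \in [M], j \in [N]}$ is a finite frame of $X \otimes_A Y$, as required.

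There is no serious obstacle here: the argument is a direct unpacking of the definitions, with the only substantive ingredient being the $A$-balancing identity $x_1 a \otimes y = x_1 \otimes \phi_Y(a)y$ combined with the frame reconstruction formula in $Y$. Both are standard facts in the theory of Hilbert C*-modules, so the proof should be short and routine.
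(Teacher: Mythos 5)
Your proposal is correct and follows essentially the same route as the paper, whose proof is exactly ``a direct computation on elementary tensors''; you have simply written out that computation in full, with the two substantive ingredients (the frame identity $\sum_j \Theta_{y^{(j)},y^{(j)}}^Y = \id_Y$ and the $A$-balancing $x_1\sca{x_2,x}\otimes y = x_1 \otimes \phi_Y(\sca{x_2,x})y$) correctly deployed. The deduction of the second claim by summing the first identity over the frame of $X$ and using unitality of $S \mapsto S \otimes \id_Y$ is likewise exactly what the paper intends.
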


\begin{proof}
This follows by a direct computation on elementary tensors.
\end{proof}

Admission of a finite frame is preserved when we pass to quotients, since $[\id_X]_I=\id_{[X]_I}$.

\begin{lemma}\label{L:finframquot}
Let $X$ be a right Hilbert module over a C*-algebra $A$ and let $I\subseteq A$ be an ideal.
If $X$ admits a finite frame $\{\xi^{(j)}\}_{j\in[N]}$, then $[X]_I$ admits the finite frame $\{[\xi^{(j)}]_I\}_{j\in[N]}$.
\end{lemma}

Passing to product systems, we have the following lemma.

\begin{lemma}\label{L:fibfinfram}
Let $X$ be a product system over $\bZ_+^d$ with coefficients in a C*-algebra $A$.
Then $X_\un{i}$ admits a finite frame for all $i\in [d]$ if and only if $X_\un{n}$ admits a finite frame for all $\un{n}\in\bZ_+^d\setminus\{\un{0}\}$.
\end{lemma}

\begin{proof}
The forward implication follows by inducting on $|\un{n}|$ and using Lemma \ref{L:tensfinfram}.
The converse is immediate.
\end{proof}

We do not assume that $X_\un{0}=A$ admits a finite frame, as this would force $A$ to be unital, and the results of this subsection hold even when $A$ is non-unital. 
Notice that $X$ is automatically strong compactly aligned when every $X_{\un{i}}$ admits a finite frame by Corollary \ref{C:autscaps}.

\begin{corollary}\label{C:iotafram}
Let $X$ be a product system over $\bZ_+^d$ with coefficients in a C*-algebra $A$, wherein $X_\un{i}$ admits a finite frame for all $i\in[d]$.
Fix $\un{n},\un{m}\in\bZ_+^d\setminus\{\un{0}\}$ and let $\{\xi_{\un{m}}^{(j)}\}_{j\in[N_\un{m}]}$ be a finite frame of $X_\un{m}$.
Then we have
\[
\iota_\un{n}^{\un{n}+\un{m}}(\Theta_{\xi_\un{n},\eta_\un{n}}^{X_{\un{n}}})=\sum_{j=1}^{N_\un{m}}\Theta_{\xi_\un{n}\xi_{\un{m}}^{(j)},\eta_\un{n}\xi_{\un{m}}^{(j)}}^{X_{\un{n} + \un{m}}},
\foral
\xi_\un{n},\eta_\un{n}\in X_\un{n}.
\]
In particular, if $(\pi,t)$ is a representation of $X$ then
\[
\psi_{\un{n}+\un{m}}(\iota_\un{n}^{\un{n}+\un{m}}(\Theta_{\xi_\un{n},\eta_\un{n}}^{X_{\un{n}}}))=\sum_{j=1}^{N_\un{m}}t_\un{n}(\xi_\un{n})t_\un{m}(\xi_{\un{m}}^{(j)})t_\un{m}(\xi_{\un{m}}^{(j)})^*t_\un{n}(\eta_\un{n})^*,
\foral
\xi_\un{n},\eta_\un{n}\in X_\un{n}.
\]
\end{corollary}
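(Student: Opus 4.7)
The plan is to chain together three standard facts: the formula for $\Theta \otimes \id_{X_\un{m}}$ from Lemma \ref{L:tensfinfram}, the definition of the connecting map $\iota_\un{n}^{\un{n}+\un{m}}$ via conjugation by $u_{\un{n},\un{m}}$, and the behaviour of rank-one operators under this conjugation. The second display will then follow from the first by a direct application of $\psi_{\un{n}+\un{m}}$ together with multiplicativity of $t$.

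For the first equation, I would start from the definition
\[
\iota_\un{n}^{\un{n}+\un{m}}(\Theta_{\xi_\un{n},\eta_\un{n}}^{X_{\un{n}}}) = u_{\un{n},\un{m}}\bigl(\Theta_{\xi_\un{n},\eta_\un{n}}^{X_{\un{n}}}\otimes\id_{X_\un{m}}\bigr)u_{\un{n},\un{m}}^*,
\]
which is valid since $\un{n}\neq\un{0}$. By Lemma \ref{L:tensfinfram} applied to the finite frame $\{\xi_\un{m}^{(j)}\}_{j\in[N_\un{m}]}$ of $X_\un{m}$,
\[
\Theta_{\xi_\un{n},\eta_\un{n}}^{X_{\un{n}}}\otimes\id_{X_\un{m}} = \sum_{j=1}^{N_\un{m}}\Theta_{\xi_\un{n}\otimes\xi_\un{m}^{(j)},\,\eta_\un{n}\otimes\xi_\un{m}^{(j)}}^{X_\un{n}\otimes_A X_\un{m}}.
\]
Since $u_{\un{n},\un{m}}$ is a unitary (hence adjointable) from $X_\un{n}\otimes_A X_\un{m}$ onto $X_{\un{n}+\un{m}}$ sending elementary tensors $\xi\otimes\eta$ to the product $\xi\eta$, the standard identity $u\,\Theta_{\zeta,\omega}\,u^* = \Theta_{u(\zeta),u(\omega)}$ yields
\[
u_{\un{n},\un{m}}\,\Theta_{\xi_\un{n}\otimes\xi_\un{m}^{(j)},\,\eta_\un{n}\otimes\xi_\un{m}^{(j)}}^{X_\un{n}\otimes_A X_\un{m}}\,u_{\un{n},\un{m}}^* = \Theta_{\xi_\un{n}\xi_\un{m}^{(j)},\,\eta_\un{n}\xi_\un{m}^{(j)}}^{X_{\un{n}+\un{m}}}.
\]
Summing over $j$ delivers the first identity.

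For the second equation, I apply $\psi_{\un{n}+\un{m}}$ to both sides of the first identity and use that $\psi_{\un{n}+\un{m}}(\Theta_{\zeta,\omega}^{X_{\un{n}+\un{m}}}) = t_{\un{n}+\un{m}}(\zeta)t_{\un{n}+\un{m}}(\omega)^*$ together with the multiplicativity relation $t_{\un{n}+\un{m}}(\xi\eta) = t_\un{n}(\xi)t_\un{m}(\eta)$ inherent to a (Toeplitz) representation of the product system. Substituting $\zeta = \xi_\un{n}\xi_\un{m}^{(j)}$ and $\omega = \eta_\un{n}\xi_\un{m}^{(j)}$ gives the required formula.

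Since every step is a direct computation using results that are already established in the excerpt, I do not anticipate any real obstacle; the only point requiring mild care is making explicit that $u_{\un{n},\un{m}}$ is a unitary in the C*-module sense (so the conjugation formula for rank-one operators applies), which is guaranteed by the product system axioms because $\un{n}, \un{m}\neq\un{0}$.
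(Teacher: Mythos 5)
Your proof is correct and follows essentially the same route as the paper, whose proof is precisely the direct computation you carried out: apply Lemma \ref{L:tensfinfram} to $\Theta_{\xi_\un{n},\eta_\un{n}}^{X_\un{n}}\otimes\id_{X_\un{m}}$, conjugate by the unitary $u_{\un{n},\un{m}}$ using the definition of $\iota_\un{n}^{\un{n}+\un{m}}$, and then apply $\psi_{\un{n}+\un{m}}$ together with the multiplicativity of the representation. Your explicit justification of the conjugation identity $u\,\Theta_{\zeta,\omega}\,u^*=\Theta_{u(\zeta),u(\omega)}$ is exactly the detail the paper leaves implicit, so there is nothing to add.
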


\begin{proof}
First note that the finite frame of $X_\un{m}$ exists by Lemma \ref{L:fibfinfram}.
The proof now follows by a direct computation using Lemma \ref{L:tensfinfram} and the properties of $(\pi,t$).
\end{proof}

\begin{remark}\label{R:finframq}
Let $X$ be a product system over $\bZ_+^d$ with coefficients in a C*-algebra $A$.
Suppose that every $X_{\un{i}}$ admits a finite frame $\{\xi_{\un{i}}^{(j)}\}_{j\in[N_\un{i}]}$.
If $(\pi,t)$ is a Nica-covariant representation of $X$, then we have
\[
p_{\un{i}}
=
\psi_{\un{i}}(\id_{X_\un{i}})
=
\psi_{\un{i}}(\sum_{j=1}^{N_\un{i}}\Theta_{\xi_{\un{i}}^{(j)},\xi_{\un{i}}^{(j)}})
=
\sum_{j=1}^{N_\un{i}} t_{\un{i}}(\xi_{\un{i}}^{(j)}) t_{\un{i}}(\xi_{\un{i}}^{(j)})^*
\in \ca(\pi,t),
\foral i \in [d].
\]
Since the left action of each fibre of $X$ is by compacts, we have $\pi(A)q_F\subseteq\ca(\pi,t)$ for all $F\subseteq[d]$ by Proposition \ref{P:prod cai}.
However, we may still have $q_F\notin\ca(\pi,t)$ for $F\subseteq[d]$ (unless $\ca(\pi,t)$ is unital).
\end{remark}

\begin{proposition}\label{P:itoF}
Let $X$ be a product system over $\bZ_+^d$ with coefficients in a C*-algebra $A$, wherein each left action is by compact operators.
Let $(\pi,t)$ be a Nica-covariant representation of $X$ and fix $\mt\neq F\subseteq[d]$.
Then for each $\mt\neq D\subseteq F$, we have
\[
\pi(A)q_D \subseteq \sca{\pi(A)q_\un{i}\mid i\in F}\subseteq\ca(\pi,t).
\]
\end{proposition}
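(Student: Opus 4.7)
The plan is to reduce the containment $\pi(A)q_D \subseteq \sca{\pi(A)q_{\un{i}} \mid i \in F}$ to the one-index case by peeling off a single generator $\un{i_0}$ from $D$ and absorbing the remaining $q_{D \setminus \{i_0\}}$ into $\ca(\pi,t)$ via an approximate unit. First I would dispose of the second inclusion by observing that Proposition \ref{P:prod cai} applies to every $a \in A$ (since each left action is by compacts), so $\pi(A) q_{\un{i}} \subseteq \ca(\pi,t)$ for every $i \in F$; consequently $J := \sca{\pi(A) q_{\un{i}} \mid i \in F}$ is a genuine closed two-sided ideal of $\ca(\pi,t)$.

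For the first inclusion, fix $\mt \neq D \subseteq F$, choose $i_0 \in D$, and set $D' := D \setminus \{i_0\}$. The commuting projections $p_{\un{i}}$ give the factorisation $q_D = q_{\un{i_0}} q_{D'}$, and Proposition \ref{P:pf reducing} tells us $q_{\un{i_0}} \in \pi(A)'$. Although $q_{D'}$ itself need not lie in $\ca(\pi,t)$, for any $v \in A$ the element $\pi(v) q_{D'}$ \emph{does} lie in $\ca(\pi,t)$ by Proposition \ref{P:prod cai} (using once more that every left action is by compacts; the degenerate case $D' = \mt$ reduces to $\pi(v) \in \ca(\pi,t)$). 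For $a \in A$ and an approximate unit $(v_\mu)$ of $A$, commutation yields
\[
(\pi(a) q_{\un{i_0}})(\pi(v_\mu) q_{D'}) \;=\; \pi(a) \pi(v_\mu) q_{\un{i_0}} q_{D'} \;=\; \pi(a v_\mu) q_D,
\]
which sits in $J$ because $\pi(a) q_{\un{i_0}} \in \pi(A) q_{\un{i_0}} \subseteq J$ and the second factor is in $\ca(\pi,t)$.

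Letting $\mu \to \infty$, the approximate-unit property gives $a v_\mu \to a$ in norm, whence $\|\pi(a v_\mu) q_D - \pi(a) q_D\| \leq \|a v_\mu - a\| \to 0$. Since $J$ is norm-closed in $\ca(\pi,t)$, this forces $\pi(a) q_D \in J$, which proves the first inclusion. I do not expect a serious obstacle: the argument rests only on the projections $q_F$ commuting with $\pi(A)$ and on the systematic use of Proposition \ref{P:prod cai} to ensure that the relevant compressed elements live in $\ca(\pi,t)$. The mildest point to watch is the boundary case $D = \{i_0\}$, in which $D' = \mt$ and the containment is immediate.
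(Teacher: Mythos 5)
Your proof is correct, but it takes a genuinely different route from the paper's. The paper assumes without loss of generality that $D=[m]$ and $F=[n]$ with $m\leq n$, invokes the Hewitt-Cohen Factorisation Theorem (Theorem \ref{T:HCFT}) to write $a=b_1\cdots b_m$, and then uses $q_{\un{i}}\in\pi(A)'$ (Proposition \ref{P:pf reducing}) to distribute the projections, obtaining the exact algebraic identity
\[
\pi(a)q_D=(\pi(b_1)q_{\un{1}})\cdots(\pi(b_m)q_{\un{m}})\in\sca{\pi(A)q_{\un{i}}\mid i\in F},
\]
with no limits required: the first factor is a generator of the ideal and the remaining factors lie in $\ca(\pi,t)$ by Proposition \ref{P:prod cai}. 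You instead peel off a single index and compensate with an approximate unit: your identity $(\pi(a)q_{\un{i_0}})(\pi(v_\mu)q_{D'})=\pi(av_\mu)q_D$ rests on exactly the same two pillars, namely $q_{\un{i_0}}\in\pi(A)'$ and $\pi(A)q_{D'}\subseteq\ca(\pi,t)$ (the latter valid for all of $A$ since every left action is by compacts; note this hypothesis also yields strong compact alignment via Corollary \ref{C:autscaps}, which is what makes the $p_{\un{i}}$ commute and the factorisation $q_D=q_{\un{i_0}}q_{D'}$ meaningful), after which norm-closedness of the ideal and the estimate $\|\pi(av_\mu-a)q_D\|\leq\|av_\mu-a\|\to 0$ finish the argument, with the boundary case $D'=\mt$ handled as you note. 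What the paper's factorisation buys is a limit-free, purely multiplicative one-line proof; what yours buys is the avoidance of Hewitt-Cohen altogether, trading it for the standard facts that ideals are norm-closed and that an approximate unit acts trivially in the limit. Both are complete and correct.
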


\begin{proof}
Without loss of generality, assume that $D = [m]$ and $F = [n]$ for $m \leq n$.
By the Hewitt-Cohen Factorisation Theorem, for $a \in A$ there exist $b_1, \dots, b_m \in A$ such that $a = b_1 \cdots b_m$.
In turn, we obtain
\[
\pi(a) q_D = (\pi(b_1) \cdots \pi(b_m))(q_{\un{1}} \cdots q_{\un{m}}) = (\pi(b_1)q_{\un{1}}) \cdots (\pi(b_m) q_{\un{m}}) \in \sca{\pi(A)q_\un{i}\mid i\in F},
\]
using that $q_{\un{i}} \in \pi(A)'$ for all $i\in D$ by Proposition \ref{P:pf reducing}.
This finishes the proof.
\end{proof}

We now pass to the study of the quotient $\N\T_X/\sca{\ol{\pi}_X(A)\ol{q}_{X,\un{i}}\mid i\in F}$ for $\mt \neq F \subseteq [d]$.
We first turn our attention to the case of $F = [d]$, where the quotient corresponds to the finite part of the Wold decomposition of a KMS-state in the context of \cite{Kak20}.

\begin{proposition}\label{P:alldir}
Let $X$ be a product system over $\bZ_+^d$ with coefficients in a C*-algebra $A$, wherein $X_\un{i}$ admits a finite frame for all $i\in[d]$.
Consider the gauge-invariant ideal
\[
\fJ := \sca{\ol{\pi}_X(A)\ol{q}_{X,\un{i}}\mid i\in [d]}\subseteq\N\T_X.
\]
Then
\[
\L^{\fJ}_\mt = \{a \in A \mid \lim_{\un{n} \in \bZ_+^d} \|\phi_{\un{n}}(a)\| = 0\}
\qand
\L^{\fJ}_F = A \foral \mt \neq F \subseteq [d].
\]
Moreover, the product system $[X]_{\L^{\fJ}_\mt}$ is regular, and thus there is a canonical $*$-isomorphism
\[
\N\T_X / \fJ \cong \N\O_{[X]_{\L^{\fJ}_\mt}}.
\]
If $X$ is in addition injective, then $\L_\mt^\fJ = \{0\}$ and thus $\N\T_X/\fJ = \N\O_X$.
\end{proposition}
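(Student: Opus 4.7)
The plan is to compute the NT-$2^d$-tuple $\L := \L^\fJ$ associated with $\fJ$ via the parametrisation of Theorem \ref{T:NT param}. First I would establish $\L_F = A$ for every $\mt \neq F \subseteq [d]$: Proposition \ref{P:itoF} gives $\ol{\pi}_X(A)\ol{q}_{X,F} \subseteq \fJ$, and Proposition \ref{P:prod cai} rewrites $\ol{\pi}_X(a)\ol{q}_{X,F} = \ol{\pi}_X(a) + \sum_{\un{0}\neq\un{n}\leq\un{1}_F}(-1)^{|\un{n}|}\ol{\psi}_{X,\un{n}}(\phi_\un{n}(a))$, so passing to $\N\T_X/\fJ$ places $Q_\fJ(\ol{\pi}_X(a))$ inside $B_{(\un{0},\un{1}_F]}^{(Q_\fJ\circ\ol{\pi}_X,\, Q_\fJ\circ\ol{t}_X)}$, i.e., $a \in \L_F$. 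Consequently $\L$ is an NO-$2^d$-tuple, so by Proposition \ref{P:NT tuple + m fam} the quotient family $[\L]_{\L_\mt}$ is an (M)-$2^d$-tuple of $[X]_{\L_\mt}$, and hence an (E)-$2^d$-tuple. The inclusion $[A]_{\L_\mt}=[\L_{\{i\}}]_{\L_\mt} \subseteq \I_{\{i\}}([X]_{\L_\mt})$ forces $[\phi_\un{i}]_{\L_\mt}$ to be injective for every $i\in[d]$; since the left actions remain by compacts in the quotient, $[X]_{\L_\mt}$ is regular, and Proposition \ref{P:tuple ideal}(i) identifies $\N\T_X/\fJ$ canonically with $\N\O([\L]_{\L_\mt}, [X]_{\L_\mt}) = \N\O_{[X]_{\L_\mt}}$.

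Setting $I_0 := \{a \in A : \lim_{\un{n}\in\bZ_+^d}\|\phi_\un{n}(a)\|=0\}$ and noting that the monotonicity $\|\phi_{\un{n}+\un{m}}(a)\|\leq\|\phi_\un{n}(a)\|$ (from contractivity of the connecting $*$-homomorphisms) makes the limit an infimum, it remains to identify $\L_\mt$ with $I_0$. For $I_0 \subseteq \L_\mt$, let $(\dot{\pi},\dot{t})$ denote the universal CNP-representation of the regular product system $[X]_{\L_\mt}$: then $\dot{\pi}$ is injective, each $\dot{\psi}_\un{n}$ is isometric, and the CNP-identity for a regular product system yields $\dot{\pi}([a]_{\L_\mt}) = \dot{\psi}_\un{n}([\phi_\un{n}(a)]_{\L_\mt})$, whence $\|[a]_{\L_\mt}\| \leq \|\phi_\un{n}(a)\|$ for every $\un{n}$; letting $\un{n} \to \infty$ for $a \in I_0$ gives $[a]_{\L_\mt}=0$, so $a \in \L_\mt$. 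The injective case of the proposition then follows immediately: if $X$ is injective, each $\phi_\un{n}\colon A \to \L(X_\un{n})$ is an injective $*$-homomorphism between C*-algebras, hence isometric, so $I_0 = \{0\}$, $\L_\mt = \{0\}$, and $\N\T_X/\fJ = \N\O_X$.

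The converse $\L_\mt \subseteq I_0$ is the main obstacle, and I would handle it in the Fock representation (faithful on $\N\T_X$ since $\bZ^d$ is amenable). Let $\L'$ be the invariant, partially ordered, relative $2^d$-tuple of $X$ with $\L'_\mt=\{0\}$ and $\L'_F = A$ for every $F\neq\mt$; Proposition \ref{P:itoF} yields $\fJ = \fJ_{\L'}^{(\ol{\pi}_X,\ol{t}_X)}$, and Proposition \ref{P: JLF} expresses each piece $\fJ_{\L',F}^{(\ol{\pi}_X,\ol{t}_X)}$ as the closed linear span of operators of the form $\ol{t}_{X,\un{n}}(X_\un{n})\ol{\pi}_X(A)\ol{q}_{X,F}\ol{t}_{X,\un{m}}(X_\un{m})^*$. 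Given $a\in\L_\mt$ and $\varepsilon>0$, the faithful gauge-invariant conditional expectation $E_\be$ fixes $\ol{\pi}_X(a)$ and annihilates the off-diagonal ($\un{n}\neq\un{m}$) terms, so we may $\varepsilon$-approximate $\ol{\pi}_X(a)$ in $\L(\F X)$ by a finite sum $\sum_k\ol{t}_{X,\un{n}_k}(\xi_k)\ol{\pi}_X(b_k)\ol{q}_{X,F_k}\ol{t}_{X,\un{n}_k}(\eta_k)^*$ with each $F_k\neq\mt$. The key Fock-space computation shows that the $k$-th term vanishes on the fibre $X_{\un{s}}$ unless $s_i = (n_k)_i$ for every $i\in F_k$; setting $M := \max_{k,\,i\in F_k}(n_k)_i$, any $\un{s}$ with $s_i > M$ for all $i\in[d]$ has some $i\in F_k$ with $s_i>(n_k)_i$ (since $F_k\neq\mt$), so the entire approximation vanishes on $X_{\un{s}}$. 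Hence $\|\phi_{\un{s}}(a)\|<\varepsilon$ for such $\un{s}$, which forces $a \in I_0$ and completes the identification.
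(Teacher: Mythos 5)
Your proposal is correct, and it diverges from the paper's proof in two sub-arguments while sharing its skeleton (namely $\L^\fJ_F=A$ for $\mt\neq F$ via Propositions \ref{P:itoF} and \ref{P:prod cai}, regularity of the quotient system, and the conclusion via Proposition \ref{P:tuple ideal}). For regularity, the paper computes at the representation level: for $[a]_{\L_\mt^\fJ}\in\ker[\phi_\un{i}]_{\L_\mt^\fJ}$, a finite frame of $X_\un{i}$ gives $\pi_\fJ(a)p_{\fJ,\un{i}}=0$, while $\pi_\fJ(a)q_{\fJ,\un{i}}=0$ by the definition of $\fJ$, whence $\pi_\fJ(a)=0$; your argument is instead structural, extracting injectivity of $[\phi_\un{i}]_{\L_\mt^\fJ}$ from $[A]_{\L_\mt^\fJ}=[\L_{\{i\}}^\fJ]_{\L_\mt^\fJ}\subseteq\I_{\{i\}}([X]_{\L_\mt^\fJ})\subseteq(\ker[\phi_\un{i}]_{\L_\mt^\fJ})^\perp$, so that the kernel is an ideal annihilated by the whole algebra and hence zero --- this shows regularity is forced by the NT-$2^d$-tuple theory alone once $\L_F^\fJ=A$, the frames entering only to keep the quotient left actions compact (Proposition \ref{P:finframquot}). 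For the identification of $\L_\mt^\fJ$ with $I_0:=\{a\in A\mid \lim_{\un{n}}\|\phi_\un{n}(a)\|=0\}$, the paper simply cites \cite[Proposition 4.3]{Kak20}, whereas you prove both inclusions internally: your hard direction is, in effect, the $F=\mt$ instance of Proposition \ref{P:JL lim} applied to the relative $2^d$-tuple $\L'$ with $\L'_\mt=\{0\}$ and $\L'_F=A$ (there $\ol{q}_{X,\mt}=I$ and $\K(X_\un{m}\L'_\mt)=\{0\}$, so quoting that proposition would let you skip the Fock-space computation, which is in any case the same expectation-and-compression technique used in its proof, and your vanishing criterion $s_i=(n_k)_i$ for $i\in F_k$ is the correct one); your easy direction correctly exploits injectivity of the universal CNP-representation of the regular quotient, though the asserted identity $\dot{\pi}([a]_{\L_\mt^\fJ})=\dot{\psi}_\un{n}([\phi_\un{n}(a)]_{\L_\mt^\fJ})$ for all $\un{n}$ deserves one line of justification: it follows by induction on $|\un{n}|$ from CNP-covariance at $F=\{i\}$ (where $\I_{\{i\}}$ of a regular system is the whole algebra) together with $\dot{t}_\un{n}(\xi)\dot{p}_\un{i}=\dot{t}_\un{n}(\xi)$, obtained from a frame expansion of $\dot{p}_\un{i}$, Proposition \ref{P:prod cai}, and an approximate unit of the coefficient algebra. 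The injective case matches the paper verbatim; the net trade-off is that the paper's proof is shorter because it outsources the $\L_\mt^\fJ$ computation to \cite{Kak20}, while yours is self-contained within this paper's toolkit.
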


\begin{proof}
Let $Q_\fJ \colon \N\T_X \to \N\T_X/\fJ$ be the canonical quotient map.
The fact that 
\[
\L^{\fJ}_\mt \equiv \ker Q_\fJ \circ \ol{\pi}_X
=
\{a \in A \mid \lim_{\un{n} \in \bZ_+^d} \|\phi_{\un{n}}(a)\| = 0\}
\]
follows from \cite[Proposition 4.3]{Kak20}.
Next, fix $\mt \neq F \subseteq [d]$ and $a \in A$.
By Proposition \ref{P:itoF}, we have $\ol{\pi}_X(a) \ol{q}_{X,F} \in \fJ$.
Thus $A \subseteq \L^\fJ_F$, as required.

Next we show that $[X]_{\L_\mt^\fJ}$ is regular.
By Proposition \ref{P:inj+comp}, and since the left actions of the fibres of $X$ (and hence of $[X]_{\L_\mt^\fJ}$) are by compacts, it suffices to show that $[\phi_\un{i}]_{\L_\mt^\fJ}$ is injective for all $i\in[d]$.
To this end, fix $i \in [d]$, a finite frame $\{\xi_{\un{i}}^{(j)}\}_{j\in[N]}$ of $X_\un{i}$ and $[a]_{\L_\mt^\fJ} \in \ker [\phi_\un{i}]_{\L_\mt^\fJ}$.
We have $[\phi_\un{i}(a)\xi_{\un{i}}^{(j)}]_{\L_\mt^\fJ} = 0$ for every $j \in [N]$, and hence $\phi_\un{i}(a)\xi_{\un{i}}^{(j)}\in X_\un{i} \L_\mt^\fJ$ for all $j\in[N]$.
For notational convenience, we write 
\[
\pi_\fJ := Q_\fJ \circ \ol{\pi}_X \qand t_{\fJ,\un{n}} := Q_\fJ \circ \ol{t}_{X,\un{n}} \foral\un{n}\in\bZ_+^d\setminus\{0\}.
\] 
Note that $(\pi_\fJ,t_\fJ)$ is a Nica-covariant representation of $X$.
Thus we have
\[
\pi_\fJ(a)t_{\fJ,\un{i}}(\xi_{\un{i}}^{(j)})=t_{\fJ,\un{i}}(\phi_\un{i}(a)\xi_{\un{i}}^{(j)})\in t_{\fJ,\un{i}}(X_\un{i}\L_\mt^\fJ)=t_{\fJ,\un{i}}(X_\un{i})\pi_\fJ(\L_\mt^\fJ)=\{0\},
\foral j \in [N].
\]
In turn, we obtain $\pi_\fJ(a)t_{\fJ,\un{i}}(\xi_{\un{i}}^{(j)})t_{\fJ,\un{i}}(\xi_{\un{i}}^{(j)})^*=0$ for all $j\in[N]$, and therefore
\[
\pi_\fJ(a)p_{\fJ,\un{i}}=\pi_\fJ(a)\sum_{j=1}^{N}t_{\fJ,\un{i}}(\xi_{\un{i}}^{(j)})t_{\fJ,\un{i}}(\xi_{\un{i}}^{(j)})^*=0.
\]
We also have $\ol{\pi}_X(a)\ol{q}_{X,\un{i}}\in\mathfrak{J}$ by definition, and hence $\pi_\fJ(a)q_{\fJ,\un{i}} = Q_{\fJ}(\ol{\pi}_X(a)\ol{q}_{X,\un{i}}) = 0$.
Consequently, we obtain
\[
\pi_\fJ(a)=\pi_\fJ(a)q_{\fJ,\un{i}}+\pi_\fJ(a)p_{\fJ,\un{i}}=0.
\]
Hence $a\in\L_\mt^\fJ$ and thus $[a]_{\L_\mt^\fJ}=0$, proving that $[\phi_\un{i}]_{\L_\mt^\fJ}$ is injective, as required.

Regularity of $[X]_{\L_\mt^\fJ}$ implies that $\I([X]_{\L_\mt^\fJ}) = [\L^\fJ]_{\L_\mt^\fJ}$.
Therefore, we conclude that
\[
\N\T_X / \fJ \cong \N\O([\L^\fJ]_{\L_\mt^\fJ}, [X]_{\L_\mt^\fJ}) = \N\O_{[X]_{\L_\mt^\fJ}},
\]
using item (ii) of Proposition \ref{P:tuple ideal} to establish the canonical $*$-isomorphism.

If $X$ is injective, then every $\phi_{\un{n}}$ is isometric and hence $\L_\mt^\fJ = \{0\}$, finishing the proof.
\end{proof}

Next we move to the characterisation of the quotient of $\N\T_X$ by $\sca{\ol{\pi}_X(A)\ol{q}_{X,\un{i}}\mid i\in F}$, for some fixed $\mt\neq F\subsetneq[d]$, as the Cuntz-Nica-Pimsner algebra of an appropriate quotient of a new product system $Y^F$ over $\bZ_+^{|F|}$.
The key is that every non-trivial fibre of $Y^F$ inherits a finite frame, and $\N\T_X\cong\N\T_{Y^F}$ in a canonical way.

For ease of notation, we will assume that $F=[r]$ for some $r < d$. 
This is sufficient, as the ensuing arguments can be adapted to account for general $\mt\neq F\subsetneq[d]$ by relabelling the elements.
For each $\un{n}=(n_1,\dots,n_d)\in\bZ_+^d$, we define
\begin{align*}
\un{n}_F:= & (n_1,\dots,n_r)\in\bZ_+^r \qand
\un{n}_{F^\perp}:= (n_{r+1},\dots,n_d)\in\bZ_+^{d-r}.
\end{align*}
Therefore we can canonically express every $\un{n}\in\bZ_+^d$ as $\un{n}=(\un{n}_F,\un{n}_{F^\perp})$.
Conversely, given $\un{k}\in\bZ_+^r$ and $\un{\ell}\in\bZ_+^{d-r}$, we can form $\un{n} :=(\un{k},\un{\ell}) \in \bZ_+^d$ so that $\un{n}_F=\un{k}$ and $\un{n}_{F^\perp}=\un{\ell}$.

For the remainder of the subsection, we will take $X$ to be compactly aligned and identify $\N\T_X$ with $\ca(\ol{\pi},\ol{t})$ for the Fock representation $(\ol{\pi},\ol{t})$.
We use $F$ to split the gauge action $\be$ of $(\ol{\pi},\ol{t})$ into two parts.
More specifically, we define families $\be_F$ and $\be_{F^\perp}$ via
\[
\be_F:=\{\be_{(\un{x},\un{1}_{[d-r]})}\}_{\un{x}\in\bT^r}\qand \be_{F^\perp}:=\{\be_{(\un{1}_{[r]},\un{y})}\}_{\un{y}\in\bT^{d-r}}.
\]
Note that $\be_F$ and $\be_{F^\perp}$ are point-norm continuous families of $*$-automorphisms of $\N\T_X$, being restrictions of $\be$.
For $\un{n}\in\bZ_+^d$ and $\xi_\un{n}\in X_\un{n}$, we have
\[
\be_{F,\un{x}}(\ol{t}_\un{n}(\xi_\un{n}))=\be_{(\un{x},\un{1}_{[d-r]})}(\ol{t}_\un{n}(\xi_\un{n}))=\un{x}^{\un{n}_F}\ol{t}_\un{n}(\xi_\un{n})\foral\un{x}\in\bT^r,
\]
and
\[
\be_{F^\perp,\un{y}}(\ol{t}_\un{n}(\xi_\un{n}))=\be_{(\un{1}_{[r]},\un{y})}(\ol{t}_\un{n}(\xi_\un{n}))=\un{y}^{\un{n}_{F^\perp}}\ol{t}_\un{n}(\xi_\un{n}) \foral\un{y}\in\bT^{d-r}.
\]
In particular, we have $\be_{F,\un{x}}(\ol{\pi}(a)) = \ol{\pi}(a) = \be_{F^\perp,\un{y}}(\ol{\pi}(a))$ for all $\un{x}\in\bT^r,\un{y}\in\bT^{d-r}$ and $a\in A$.

\begin{definition}\label{D:YF}
Let $X$ be a compactly aligned product system over $\bZ_+^d$ with coefficients in a C*-algebra $A$ and let $F=[r]$ for some $r < d$.
We define the C*-algebra $B^{F^\perp}$ by
\[
B^{F^\perp}:=\ca(\ol{\pi}(A),\ol{t}_\un{i}(X_\un{i})\mid i\in F^c)\subseteq\N\T_X,
\]
and a collection of linear spaces $Y^{F}:=\{Y_\un{n}^{F}\}_{\un{n}\in\bZ_+^r}$ by
\begin{align*}
Y_\un{0}^{F} & :=B^{F^\perp}
\qand
Y_\un{n}^{F} :=[\ol{t}_{(\un{n},\un{0})}(X_{(\un{n},\un{0})})B^{F^\perp}]\subseteq\N\T_X\foral\un{n}\in\bZ_+^r\setminus\{\un{0}\}.
\end{align*}
\end{definition}

Since $\ol{\pi}(A) \subseteq B^{F^\perp}$, by using an approximate unit of $A$ we obtain $\ol{t}_{(\un{n},\un{0})}(X_{(\un{n}, \un{0})}) \subseteq Y_{\un{n}}^F$ for all $\un{n} \in \bZ_+^r$.
If $\un{n}, \un{m} \in \bZ_+^d$ have support contained in $F^c$, then the same is true of $\un{n} \vee \un{m}$.
Hence Nica covariance yields
\begin{equation}\label{Eq:Bchar}
B^{F^\perp}=\ca(\ol{\pi}(A),\ol{t}_\un{n}(X_\un{n})\mid \un{n}\perp F)=\ol{\spn}\{\ol{t}_\un{n}(X_\un{n})\ol{t}_\un{m}(X_\un{m})^*\mid \un{n},\un{m}\perp F\}.
\end{equation}
Note that $\be_{F}|_ {B^{F^\perp}} = \id$.
We can restrict $\be_{F^\perp}$ to $B^{F^\perp}$ to obtain a point-norm continuous family
\[
\be_{F^\perp}|_{B^{F^\perp}}:=\{\be_{F^\perp,\un{z}}|_{B^{F^\perp}}\}_{\un{z}\in\bT^{d-r}}
\]
of $*$-automorphisms of $B^{F^\perp}$.
We may characterise $B^{F^\perp}$ as the Toeplitz-Nica-Pimsner algebra of a compactly aligned product system over $\bZ_+^{d-r}$.

\begin{definition}\label{D:ZFperp}
Let $X$ be a product system over $\bZ_+^d$ with coefficients in a C*-algebra $A$ and let $F=[r]$ for some $r < d$.
We define the collection of C*-correspondences $Z^{F^\perp}:=\{Z_\un{n}^{F^\perp}\}_{\un{n}\in\bZ_+^{d-r}}$ over $A$ by
\begin{align*}
Z_\un{0}^{F^\perp} := A
\qand
Z_\un{n}^{F^\perp} := X_{(\un{0},\un{n})} \foral \un{n}\in\bZ_+^{d-r} \setminus \{\un{0}\}.
\end{align*}
\end{definition}

We can endow $Z^{F^\perp}$ with a canonical product system structure, inherited from $X$.

\begin{lemma}\label{L:Zprodsys}
Let $X$ be a product system over $\bZ_+^d$ with coefficients in a C*-algebra $A$ and let $F=[r]$ for some $r < d$.
Then $Z^{F^\perp}$ inherits a product system structure from $X$.
Moreover, if every non-trivial fibre of $X$ admits a finite frame (resp. $X$ is compactly aligned, strong compactly aligned), then every non-trivial fibre of $Z^{F^\perp}$ admits a finite frame (resp. $Z^{F^\perp}$ is compactly aligned, strong compactly aligned).
\end{lemma}

\begin{proof}
This is immediate when seeing $Z^{F^\perp}$ as a subfamily of $X$.
\end{proof}

\begin{proposition}\label{P:NTZcongB}
Let $X$ be a compactly aligned product system over $\bZ_+^d$ with coefficients in a C*-algebra $A$ and let $F=[r]$ for some $r < d$.
Then the maps
\begin{align*}
\pi & \colon A\to B^{F^\perp}; \pi(a)=\ol{\pi}(a) \foral a\in A, \\
t_\un{n} & \colon Z_\un{n}^{F^\perp} \to B^{F^\perp}; t_\un{n}(z_\un{n})=\ol{t}_{(\un{0},\un{n})}(z_\un{n}) \foral z_\un{n}\in Z_\un{n}^{F^\perp}, \un{n}\in\bZ_+^{d-r}\setminus\{\un{0}\},
\end{align*}
form an injective Nica-covariant representation $(\pi,t)$ of $Z^{F^\perp}$ which induces a canonical $*$-isomorphism $\N\T_{Z^{F^\perp}}\cong B^{F^\perp}$.
\end{proposition}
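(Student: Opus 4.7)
\textbf{Proof proposal for Proposition \ref{P:NTZcongB}.}

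The plan is to first verify that $(\pi,t)$ is a well-defined injective Nica-covariant representation of $Z^{F^\perp}$, and then establish the canonical $*$-isomorphism by combining surjectivity (which is essentially the definition of $B^{F^\perp}$) with injectivity (the main obstacle, to be handled via amenability and a Fock-space compression argument).

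First I would check the representation axioms. By Definition \ref{D:ZFperp}, each fibre $Z_{\un{n}}^{F^\perp}$ is literally $X_{(\un{0},\un{n})}$, with the same bimodule structure and $A$-valued inner product. The multiplication map of $Z^{F^\perp}$ at $(\un{n},\un{m}) \in \bZ_+^{d-r} \times \bZ_+^{d-r}$ coincides with the multiplication map $u_{(\un{0},\un{n}),(\un{0},\un{m})}$ of $X$ under this identification. Hence the fact that $(\ol\pi, \{\ol t_{(\un{0},\un{n})}\}_{\un{n} \in \bZ_+^{d-r}})$ is a family satisfying $\ol t_{(\un{0},\un{n})}(\xi) \ol t_{(\un{0},\un{m})}(\eta) = \ol t_{(\un{0},\un{n}+\un{m})}(\xi\eta)$ and preserves inner products and the left action immediately yields that $(\pi,t)$ is a representation of $Z^{F^\perp}$. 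Injectivity of $\pi$ is inherited from the injectivity of $\ol\pi$. For Nica-covariance, the join $\un{n} \vee \un{m}$ in $\bZ_+^{d-r}$ corresponds to $(\un{0},\un{n}) \vee (\un{0},\un{m})$ in $\bZ_+^d$, and the connecting $*$-homomorphisms $\iota$ of $Z^{F^\perp}$ agree with those of $X$ under the identification of fibres. The Nica-covariance of $(\ol\pi,\ol t)$ then transfers directly to $(\pi,t)$.

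By the universal property of $\N\T_{Z^{F^\perp}}$, the representation $(\pi,t)$ induces a canonical $*$-epimorphism $\pi \times t \colon \N\T_{Z^{F^\perp}} \to \ca(\pi,t)$. The image is generated by $\pi(A) = \ol\pi(A)$ and $t_{\un{n}}(Z_{\un{n}}^{F^\perp}) = \ol t_{(\un{0},\un{n})}(X_{(\un{0},\un{n})})$ for $\un{n} \in \bZ_+^{d-r}$, which by (\ref{Eq:Bchar}) is precisely $B^{F^\perp}$. Hence $\ca(\pi,t) = B^{F^\perp}$ and $\pi \times t$ is surjective onto $B^{F^\perp}$.

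The hard part is injectivity. I would proceed by exhibiting a compression of $B^{F^\perp}$ to the Fock representation of $Z^{F^\perp}$. Consider the closed subspace
\[
V := \bigoplus_{\un{n} \in \bZ_+^{d-r}} X_{(\un{0},\un{n})} \subseteq \F X,
\]
and the canonical unitary identification $V \cong \F Z^{F^\perp}$. A direct computation on elementary tensors shows that $V$ is invariant under $\ol\pi(A)$ and under $\ol t_{(\un{0},\un{n})}(X_{(\un{0},\un{n})})$ and $\ol t_{(\un{0},\un{n})}(X_{(\un{0},\un{n})})^*$ for every $\un{n} \in \bZ_+^{d-r}$, hence $V$ is invariant under all of $B^{F^\perp}$. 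Let $\rho \colon B^{F^\perp} \to \L(V)$ denote compression to $V$. Under the identification $V \cong \F Z^{F^\perp}$, the composition $\rho \circ (\pi \times t)$ is exactly the canonical $*$-homomorphism $\N\T_{Z^{F^\perp}} \to \L(\F Z^{F^\perp})$ induced by the Fock representation of $Z^{F^\perp}$. Since $\bZ_+^{d-r}$ sits inside the amenable group $\bZ^{d-r}$, the Fock representation of $Z^{F^\perp}$ yields a faithful $*$-representation of $\N\T_{Z^{F^\perp}}$, as noted in Subsection \ref{Ss: ca prod sys} (see also \cite{KKLL21b}). Therefore $\rho \circ (\pi \times t)$ is faithful, forcing $\pi \times t$ itself to be injective. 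Combined with surjectivity, this yields the canonical $*$-isomorphism $\N\T_{Z^{F^\perp}} \cong B^{F^\perp}$.
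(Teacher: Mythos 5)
Your proof is correct, and it turns on the same key observation as the paper's proof --- namely that the subspace $\sum_{\un{n}\perp F}X_{\un{n}}\subseteq\F X$ identifies with $\F Z^{F^\perp}$ and that restriction of operators in $B^{F^\perp}$ to it recovers the Fock representation of $Z^{F^\perp}$ --- but you organise the injectivity step differently. The paper first reduces to fixed point algebras: it notes that $\be_{F^\perp}|_{B^{F^\perp}}$ is a gauge action for $(\pi,t)$, that $\pi\times t$ is equivariant for it and the gauge action $\ga$ of $(\ol{\pi}_{Z^{F^\perp}},\ol{t}_{Z^{F^\perp}})$, so that injectivity need only be checked on $\N\T_{Z^{F^\perp}}^{\ga}$, where it is exhibited by the left inverse $\Psi(b)=b|_{\sum_{\un{n}\perp F}X_{\un{n}}}$ (after identifying $\N\T_{Z^{F^\perp}}$ with its Fock image ``without loss of generality''). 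You instead observe that this subspace is reducing for \emph{all} of $B^{F^\perp}$, not merely for the fixed point algebra, so that the compression $\rho$ is a $*$-homomorphism on the whole algebra; since $\rho\circ(\pi\times t)$ agrees with $\ol{\pi}_{Z^{F^\perp}}\times\ol{t}_{Z^{F^\perp}}$ on generators, faithfulness of the Fock representation of $\N\T_{Z^{F^\perp}}$ (amenability of $\bZ^{d-r}$, as recorded in Subsection \ref{Ss: ca prod sys} via \cite{KKLL21b}) finishes the argument with no gauge-action detour. As for what each route buys: yours is shorter and makes the reliance on amenability explicit --- note that the paper's ``WLOG the Fock representation'' step tacitly invokes the very same faithfulness --- whereas the paper's equivariance-plus-fixed-point-algebra template, if one tracks hypotheses carefully, only genuinely requires injectivity of the Fock representation on the fixed point algebra (a weaker input), and it matches the pattern the paper deploys repeatedly elsewhere (e.g., Propositions \ref{P:inj fp} and \ref{P:NTIXIembed}). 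One small point of hygiene in your write-up: you should cite Proposition \ref{P:Zprodsys} for the fact that $Z^{F^\perp}$ is compactly aligned, so that $\N\T_{Z^{F^\perp}}$ and Nica-covariance make sense in the first place; this is implicit in your appeal to Definition \ref{D:ZFperp} but deserves a word.
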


\begin{proof}
For notational convenience, we set $B:=B^{F^\perp}$ and $Z:=Z^{F^\perp}$.
The maps are well-defined by (\ref{Eq:Bchar}).
It is routine to check that $(\pi,t)$ constitutes an injective representation of $Z$ with $\ca(\pi,t) = B$.
Since $Z$ inherits the product system structure of $X$ and $(\ol{\pi},\ol{t})$ is Nica-covariant, it follows that $(\pi,t)$ is Nica-covariant.
By the universal property of $\N\T_{Z}$, there exists a canonical $*$-epimorphism $\pi \times t \colon \N\T_{Z} \to B$.
We will show that $\pi \times t$ is injective, thereby completing the proof.

Without loss of generality, we may take $(\ol{\pi}_Z,\ol{t}_Z)$ to be the Fock representation of $Z$.
Note that $\be_{F^\perp}|_B$ defines a gauge action of $(\pi,t)$.
Let $\ga$ denote the gauge action of $(\ol{\pi}_{Z},\ol{t}_{Z})$.
Then $\pi \times t$ is equivariant, and so it suffices to show that the restriction
\[
(\pi \times t)|_{\N\T_{Z}^\ga}\colon \N\T_{Z}^\ga\to B^{\be_{F^\perp}|_{B}}
\]
to the fixed point algebras is injective.
We have that $\sum_{\un{n}\perp F}X_\un{n}$ is reducing for $B^{\be_{F^\perp}|_{B}}$ by construction, and thus the map
\[
\Psi\colon B^{\be_{F^\perp}|_B}\to\L(\sum_{\un{n}\perp F}X_\un{n}); \Psi(b)=b|_{\sum_{\un{n}\perp F}X_\un{n}}\foral b\in B^{\be_{F^\perp}|_B},
\]
is a well-defined $*$-homomorphism.
By identifying $\F Z \cong \sum_{\un{n}\perp F}X_\un{n}$, we obtain that $\Psi$ is a left inverse of $(\pi \times t)|_{\N\T_{Z}^\ga}$, as required.
\end{proof}

We isolate the following corollary from the proof of Proposition \ref{P:NTZcongB}.

\begin{corollary}\label{C:psiinj}
Let $X$ be a compactly aligned product system over $\bZ_+^d$ with coefficients in a C*-algebra $A$ and let $F=[r]$ for some $r < d$.
Then the map
\[
\Psi\colon (B^{F^\perp})^{\be_{F^\perp}|_{B^{F^\perp}}}\to\L(\sum_{\un{n}\perp F}X_\un{n}); \Psi(b)=b|_{\sum_{\un{n}\perp F}X_\un{n}}\foral b\in (B^{F^\perp})^{\be_{F^\perp}|_{B^{F^\perp}}}
\]
is an injective $*$-homomorphism.
In particular, we have
\[
\|b|_{\sum_{\un{n}\perp F}X_\un{n}}\|=\|b\| \foral b\in (B^{F^\perp})^{\be_{F^\perp}|_{B^{F^\perp}}}.
\]
\end{corollary}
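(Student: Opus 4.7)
The plan is to extract the injectivity of $\Psi$ from the argument used in the proof of Proposition~\ref{P:NTZcongB}, thereby identifying $\Psi$ with the inverse of the restriction of the $*$-isomorphism $\pi \times t$ obtained there. First I will confirm that $\Psi$ is a well-defined $*$-homomorphism by verifying that the subspace $\sum_{\un{n}\perp F}X_\un{n}\subseteq\F X$ is reducing for $(B^{F^\perp})^{\be_{F^\perp}|_{B^{F^\perp}}}$. Applying the faithful conditional expectation from $B^{F^\perp}$ onto its fixed point algebra to the spanning set in (\ref{Eq:Bchar}) yields a total set of elements of the form $\ol{t}_\un{n}(\xi_\un{n})\ol{t}_\un{n}(\eta_\un{n})^*=\ol{\psi}_\un{n}(\Theta_{\xi_\un{n},\eta_\un{n}})$, with $\un{n}\perp F$ and $\xi_\un{n},\eta_\un{n}\in X_\un{n}$. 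Such an operator annihilates every summand $X_\un{s}$ with $\un{s}\not\geq\un{n}$ and sends $X_\un{s}$ into itself whenever $\un{s}\geq\un{n}$; since both $\un{s}$ and $\un{n}$ being perpendicular to $F$ is consistent with $\un{s}\geq\un{n}$, the subspace $\sum_{\un{s}\perp F}X_{\un{s}}$ is invariant, and the same holds for adjoints. Hence the restriction map $\Psi$ is a bona fide $*$-homomorphism.

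Next I would identify $\F Z^{F^\perp}$ with $\sum_{\un{n}\perp F}X_\un{n}$ via the canonical right Hilbert $A$-module isomorphism induced by $Z_\un{n}^{F^\perp}=X_{(\un{0},\un{n})}$. Under this identification, the $*$-isomorphism $\pi\times t\colon\N\T_{Z^{F^\perp}}\to B^{F^\perp}$ supplied by Proposition~\ref{P:NTZcongB} intertwines the gauge action of $\N\T_{Z^{F^\perp}}$ with $\be_{F^\perp}|_{B^{F^\perp}}$, and therefore restricts to a $*$-isomorphism between the corresponding fixed point algebras. A direct verification on the generators $\ol{\pi}_{Z^{F^\perp}}(a)$ and $\ol{t}_{Z^{F^\perp},\un{n}}(z_\un{n})$ shows that $\Psi\circ(\pi\times t)$ coincides with the Fock representation of $Z^{F^\perp}$ acting on $\F Z^{F^\perp}\cong\sum_{\un{n}\perp F}X_\un{n}$, so that $\Psi\circ(\pi\times t)|_{\N\T_{Z^{F^\perp}}^\ga}=\id_{\N\T_{Z^{F^\perp}}^\ga}$. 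Since the factor $(\pi\times t)|_{\N\T_{Z^{F^\perp}}^\ga}$ is a bijection onto $(B^{F^\perp})^{\be_{F^\perp}|_{B^{F^\perp}}}$, the map $\Psi$ must agree with its inverse on the whole of this fixed point algebra, and is therefore injective. The norm equality then follows at once, since every injective $*$-homomorphism of C*-algebras is isometric.

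The main point requiring care is the well-definedness step, namely checking that a restriction argument on the dense spanning set extends to the entire fixed point algebra as a reducing subspace. Once this is secured, the identification of $\Psi$ with an inverse of a known $*$-isomorphism is essentially a bookkeeping exercise on generators.
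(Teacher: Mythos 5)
Your proposal is correct and follows essentially the same route as the paper: Corollary \ref{C:psiinj} is isolated from the proof of Proposition \ref{P:NTZcongB}, where $\Psi$ is shown to be a well-defined $*$-homomorphism (via the reducing subspace $\sum_{\un{n}\perp F}X_{\un{n}}$) and a left inverse of the equivariant isomorphism $(\pi\times t)|_{\N\T_{Z^{F^\perp}}^{\ga}}$ under the identification $\F Z^{F^\perp}\cong\sum_{\un{n}\perp F}X_{\un{n}}$, exactly as you argue. Your verification of the reducing property on the spanning set of the fixed point algebra merely fills in what the paper dismisses as holding ``by construction''.
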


Next we focus on $Y^F$ and explore the connection with $X$.

\begin{lemma}\label{L:Yc*-cor}
Let $X$ be a compactly aligned product system over $\bZ_+^d$ with coefficients in a C*-algebra $A$ and let $F=[r]$ for some $r < d$.
Then $Y_\un{n}^F$ is a sub-C*-correspondence of $\N\T_X$ over $B^{F^\perp}$ for all $\un{n}\in\bZ_+^r$.
\end{lemma}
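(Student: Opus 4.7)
The plan is to split into the trivial case $\un{n}=\un{0}$ and the substantive case $\un{n}\neq\un{0}$, and for the latter to verify separately (a) the right $B^{F^\perp}$-module structure, (b) that the natural inner product $\sca{y,y'}:= y^*y'$ lands in $B^{F^\perp}$, and (c) that left multiplication by $B^{F^\perp}$ preserves $Y_{\un{n}}^F$. The case $\un{n}=\un{0}$ is immediate: $Y_{\un{0}}^F = B^{F^\perp}$, viewed as a C*-correspondence over itself. For $\un{n}\neq\un{0}$, the space $Y_{\un{n}}^F = [\ol{t}_{(\un{n},\un{0})}(X_{(\un{n},\un{0})}) B^{F^\perp}]$ is by construction a norm-closed subspace of $\N\T_X$ invariant under right multiplication by $B^{F^\perp}$; completeness in the module norm will be automatic, since $\|y\|^2_{Y_{\un{n}}^F} = \|y^*y\|_{B^{F^\perp}} = \|y^*y\|_{\N\T_X} = \|y\|_{\N\T_X}^2$.

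For the inner product, I will check on typical elements $y=\ol{t}_{(\un{n},\un{0})}(\xi) b$ and $y'=\ol{t}_{(\un{n},\un{0})}(\eta) b'$ with $\xi,\eta\in X_{(\un{n},\un{0})}$ and $b,b'\in B^{F^\perp}$, that
\[
y^*y' = b^* \ol{t}_{(\un{n},\un{0})}(\xi)^* \ol{t}_{(\un{n},\un{0})}(\eta) b' = b^* \ol{\pi}(\sca{\xi,\eta}) b' \in B^{F^\perp},
\]
where I use the defining relation $\ol{t}_{\un{m}}(\xi)^*\ol{t}_{\un{m}}(\eta) = \ol{\pi}(\sca{\xi,\eta})$ for a representation of $X_{\un{m}}$ and the fact that $\ol{\pi}(A)\subseteq B^{F^\perp}$. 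Passing to norm-limits of finite sums of such elements, the map $\sca{\cdot,\cdot}$ is indeed $B^{F^\perp}$-valued, turning $Y_{\un{n}}^F$ into a right Hilbert $B^{F^\perp}$-module.

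The main obstacle is verifying that $B^{F^\perp}\cdot Y_{\un{n}}^F \subseteq Y_{\un{n}}^F$, after which the left action $\phi_{Y_{\un{n}}^F}(b)y := by$ is tautologically an adjointable operator (with adjoint $\phi_{Y_{\un{n}}^F}(b^*)$). Using (\ref{Eq:Bchar}), it suffices to check this on generators $b = \ol{t}_{\un{m}}(\zeta_{\un{m}})\ol{t}_{\un{k}}(\zeta_{\un{k}})^*$ with $\un{m},\un{k}\perp F$ and on elements $y = \ol{t}_{(\un{n},\un{0})}(\xi) c$ with $c\in B^{F^\perp}$. Since $\un{k}\wedge (\un{n},\un{0}) = \un{0}$ and $\un{k}\vee(\un{n},\un{0}) = \un{k}+(\un{n},\un{0})$, Nica covariance yields
\[
\ol{t}_{\un{k}}(\zeta_{\un{k}})^* \ol{t}_{(\un{n},\un{0})}(\xi) \in \bigl[ \ol{t}_{(\un{n},\un{0})}(X_{(\un{n},\un{0})}) \ol{t}_{\un{k}}(X_{\un{k}})^* \bigr].
\]
Left multiplying by $\ol{t}_{\un{m}}(\zeta_{\un{m}})$ and using that both multiplication maps $u_{\un{m},(\un{n},\un{0})}$ and $u_{(\un{n},\un{0}),\un{m}}$ are surjective unitaries onto $X_{\un{m}+(\un{n},\un{0})}$, the subtle reordering step is
\[
\bigl[ \ol{t}_{\un{m}}(X_{\un{m}}) \ol{t}_{(\un{n},\un{0})}(X_{(\un{n},\un{0})}) \bigr] = \ol{t}_{\un{m}+(\un{n},\un{0})}(X_{\un{m}+(\un{n},\un{0})}) = \bigl[ \ol{t}_{(\un{n},\un{0})}(X_{(\un{n},\un{0})}) \ol{t}_{\un{m}}(X_{\un{m}}) \bigr].
\]
This yields
\[
by \in \bigl[ \ol{t}_{(\un{n},\un{0})}(X_{(\un{n},\un{0})}) \ol{t}_{\un{m}}(X_{\un{m}}) \ol{t}_{\un{k}}(X_{\un{k})}^* \bigr] c \subseteq \bigl[ \ol{t}_{(\un{n},\un{0})}(X_{(\un{n},\un{0})}) B^{F^\perp} \bigr] = Y_{\un{n}}^F,
\]
completing the closure argument. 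Altogether, right action, inner product, and left action endow $Y_{\un{n}}^F$ with the required sub-C*-correspondence structure. The most delicate point is the commutation step above; everything else is a direct bookkeeping exercise with Nica covariance.
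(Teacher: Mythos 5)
Your proof is correct and follows essentially the same route as the paper's: the same decomposition into the trivial case $\un{n}=\un{0}$, the right Hilbert $B^{F^\perp}$-module structure with the module norm agreeing with the restriction of the $\N\T_X$-norm, the $B^{F^\perp}$-valued inner product $y^*y'$ checked on elements $\ol{t}_{(\un{n},\un{0})}(\xi)b$, and the key containment $B^{F^\perp}Y_{\un{n}}^F\subseteq Y_{\un{n}}^F$ obtained by the same Nica-covariant Wick reordering applied to the generators from (\ref{Eq:Bchar}). One cosmetic caveat: your displayed equality $[\ol{t}_{\un{m}}(X_{\un{m}})\ol{t}_{(\un{n},\un{0})}(X_{(\un{n},\un{0})})]=\ol{t}_{\un{m}+(\un{n},\un{0})}(X_{\un{m}+(\un{n},\un{0})})$ invokes $u_{\un{m},(\un{n},\un{0})}$ being a unitary onto the full fibre, which requires $\un{m}\neq\un{0}$ since the paper does not assume non-degenerate fibres; for $\un{m}=\un{0}$ only the inclusion $\ol{\pi}(A)\ol{t}_{(\un{n},\un{0})}(X_{(\un{n},\un{0})})\subseteq\ol{t}_{(\un{n},\un{0})}(X_{(\un{n},\un{0})})$ is needed, which is immediate, so the argument is unaffected.
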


\begin{proof}
For notational convenience, we set $B:=B^{F^\perp}$ and $Y:=Y^F$.
We view $Y_\un{0}$ as a C*-correspondence over itself in the usual way.
For $\un{n}\in\bZ_+^r\setminus\{\un{0}\}$, the space $Y_\un{n}$ inherits the usual inner product from $\N\T_X$ (now taking values in $B$) as well as the usual right multiplication in $\N\T_X$ (now by elements of $B$).
A direct computation gives that the norm induced by the $B$-valued inner product coincides with the restriction norm on $\N\T_X$, and thus $Y_{\un{n}}$ is a right Hilbert $B$-module.

It remains to equip $Y_\un{n}$ with a left action $\phi_{Y_\un{n}}\colon B\to\L(Y_\un{n})$.
To this end, fix $b\in B$ and define
\[
\phi_{Y_\un{n}}(b)\colon Y_\un{n}\to Y_\un{n}; \phi_{Y_\un{n}}(b)y_\un{n}=by_\un{n} \foral y_\un{n}\in Y_\un{n},
\]
where the multiplication is performed in $\N\T_X$.
To see that this map is well-defined, fix $\un{k},\un{\ell}\in \bZ_+^d$ satisfying $\un{k}, \un{\ell}\perp F$.
Thus we may write $\un{k}=(\un{0},\un{k}_{F^\perp})$ and $\un{\ell}=(\un{0},\un{\ell}_{F^\perp})$.
Since $(\un{0},\un{\ell}_{F^\perp}) \perp (\un{n},\un{0})$, Nica-covariance yields
\begin{align*}
\ol{t}_{(\un{0},\un{k}_{F^\perp})}(X_{(\un{0},\un{k}_{F^\perp})}) 
\ol{t}_{(\un{0},\un{\ell}_{F^\perp})}(X_{(\un{0},\un{\ell}_{F^\perp})})^*
\ol{t}_{(\un{n},\un{0})}(X_{(\un{n},\un{0})})B 
& \subseteq \\
& \hspace{-4cm} \subseteq
[\ol{t}_{(\un{n},\un{0})}(X_{(\un{n},\un{0})})\ol{t}_{(\un{0},\un{k}_{F^\perp})}(X_{(\un{0},\un{k}_{F^\perp})})\ol{t}_{(\un{0},\un{\ell}_{F^\perp})}(X_{(\un{0},\un{\ell}_{F^\perp})})^*B] \\
& \hspace{-4cm} \subseteq
[\ol{t}_{(\un{n},\un{0})}(X_{(\un{n},\un{0})})BB] 
\subseteq
[\ol{t}_{(\un{n},\un{0})}(X_{(\un{n},\un{0})})B] \equiv Y_\un{n},
\end{align*}
using (\ref{Eq:Bchar}) to pass to the last line.
It then follows by another application of (\ref{Eq:Bchar}) that $BY_\un{n}\subseteq Y_\un{n}$, and hence $\phi_{Y_\un{n}}(b)$ is well-defined.
It is also clear that $\phi_{Y_\un{n}}(b)\in \L(Y_\un{n})$ with adjoint $\phi_{Y_\un{n}}(b^*)$, and so $\phi_{Y_\un{n}}$ is well-defined.
It is routine to check that $\phi_{Y_\un{n}}$ is a $*$-homomorphism, and the proof is complete.
\end{proof}

\begin{lemma}\label{L:Yprodsys}
Let $X$ be a compactly aligned product system over $\bZ_+^d$ with coefficients in a C*-algebra $A$ and let $F=[r]$ for some $r < d$.
Then $Y^F$ carries a canonical structure as a product system over $\bZ_+^r$ with coefficients in $B^{F^\perp}$, with the multiplication maps given by multiplication in $\N\T_X$.

Moreover, if $\{\xi_{(\un{n}, \un{0})}^{(j)}\}_{j\in[N]}$ is a finite frame of $X_{(\un{n}, \un{0})}$ for $\un{n} \in \bZ_+^r \setminus \{\un{0}\}$, then $\{\ol{t}_{(\un{n}, \un{0})}(\xi_{(\un{n}, \un{0})}^{(j)})\}_{j\in[N]}$ is a finite frame of $Y^F_{\un{n}}$.
\end{lemma}

\begin{proof}
For notational convenience, we set $B:=B^{F^\perp}$ and $Y:=Y^F$.
Lemma \ref{L:Yc*-cor} gives that the fibres of $Y$ are C*-correspondences over $B$, so it remains to construct multiplication maps 
\[
v_{\un{n},\un{m}}\colon Y_\un{n}\otimes_{B} Y_\un{m}\to Y_{\un{n}+\un{m}} \foral \un{n},\un{m}\in\bZ_+^r
\]
that are compatible with axioms (i)-(v) for product systems.

Axiom (i) is satisfied by construction, and axioms (ii) and (iii) determine the maps $v_{\un{0},\un{n}}$ and $v_{\un{n},\un{0}}$, respectively, for all $\un{n}\in\bZ_+^r$.
For $\un{n},\un{m}\in\bZ_+^r\setminus\{\un{0}\}$, we start by defining a map
\[
v_{\un{n},\un{m}}\colon Y_\un{n}\times Y_\un{m}\to Y_{\un{n}+\un{m}}; (y_\un{n},y_\un{m})\mapsto y_\un{n}y_\un{m}\foral y_\un{n}\in Y_\un{n} \; \text{and} \; y_\un{m}\in Y_\un{m},
\]
where the multiplication is performed in $\N\T_X$.
To see that $v_{\un{n},\un{m}}$ is well-defined, observe that
\begin{align*}
\ol{t}_{(\un{n},\un{0})}(X_{(\un{n},\un{0})})B\ol{t}_{(\un{m},\un{0})}(X_{(\un{m},\un{0})})B 
& \subseteq
[\ol{t}_{(\un{n},\un{0})}(X_{(\un{n},\un{0})}) \ol{t}_{(\un{m},\un{0})}(X_{(\un{m},\un{0})})B] \\
&\subseteq
[\ol{t}_{(\un{n}+\un{m},\un{0})}(X_{(\un{n}+\un{m},\un{0})})B] 
\equiv
Y_{\un{n}+\un{m}},
\end{align*}
using that $BY_\un{m}\subseteq Y_\un{m}$ by Lemma \ref{L:Yc*-cor}.
This shows that $Y_\un{n}Y_\un{m}\subseteq Y_{\un{n}+\un{m}}$ and hence $v_{\un{n},\un{m}}$ is well-defined.
It is routine to check that $v_{\un{n},\un{m}}$ is bilinear and $B$-balanced, and therefore it linearises to a map on $Y_\un{n}\odot_{B} Y_\un{m}$, denoted by the same symbol.
For all $y_\un{n}, y_\un{n}'\in Y_\un{n}$ and $y_\un{m},y_\un{m}'\in Y_\un{m}$, we have
\begin{align*}
\sca{v_{\un{n},\un{m}}(y_\un{n}\otimes y_\un{m}),v_{\un{n},\un{m}}(y_\un{n}'\otimes y_\un{m}')} 
& =
y_\un{m}^*y_\un{n}^*y_\un{n}'y_\un{m}' 
=
\sca{y_\un{n}\otimes y_\un{m},y_\un{n}'\otimes y_\un{m}'},
\end{align*}
from which it follows that $\sca{v_{\un{n},\un{m}}(\zeta),v_{\un{n},\un{m}}(\zeta')}=\sca{\zeta,\zeta'}$ for all $\zeta, \zeta' \in Y_\un{n}\odot_{B} Y_\un{m}$. 
In particular, we have that $v_{\un{n},\un{m}}$ is bounded and hence it extends to $Y_\un{n}\otimes_{B} Y_\un{m}$.
A direct computation gives that $v_{\un{n},\un{m}}$ is a $B$-bimodule map, and the preceding calculation shows that it is isometric.
To see that $v_{\un{n},\un{m}}$ is surjective, fix $\xi_{(\un{n},\un{0})}\in X_{(\un{n},\un{0})}, \xi_{(\un{m},\un{0})}\in X_{(\un{m},\un{0})}, b\in B$ and an approximate unit $(u_\la)_{\la\in\La}$ of $A$.
We have that
\begin{align*}
\ol{t}_{(\un{n}+\un{m},\un{0})}(\xi_{(\un{n},\un{0})}\xi_{(\un{m},\un{0})})b 
& =
\nor{\cdot}\text{-}\lim_\la \ol{t}_{(\un{n},\un{0})}(\xi_{(\un{n},\un{0})})\ol{\pi}(u_\la)\ol{t}_{(\un{m},\un{0})}(\xi_{(\un{m},\un{0})})b \\
& =
\nor{\cdot}\text{-}\lim_\la v_{\un{n},\un{m}}(\ol{t}_{(\un{n},\un{0})}(\xi_{(\un{n},\un{0})})\ol{\pi}(u_\la)\otimes\ol{t}_{(\un{m},\un{0})}(\xi_{(\un{m},\un{0})})b).
\end{align*}
Hence $\ol{t}_{(\un{n}+\un{m},\un{0})}(\xi_{(\un{n},\un{0})}\xi_{(\un{m},\un{0})})b$ can be realised as the norm-limit of a net that is contained in the closed linear space $v_{\un{n},\un{m}}(Y_\un{n}\otimes_{B} Y_\un{m})$.
It follows that
\[
\ol{t}_{(\un{n}+\un{m},\un{0})}(X_{(\un{n}+\un{m},\un{0})})B\subseteq v_{\un{n},\un{m}}(Y_\un{n}\otimes_{B} Y_\un{m}),
\]
and hence $v_{\un{n},\un{m}}$ is surjective.
Associativity of the multiplication maps follows by associativity of multiplication in $\N\T_X$.
In total, we have that $Y$ is a product system over $\bZ_+^r$ with coefficients in $B$, as required.

Finally, let $\un{n} \in \bZ_+^r \setminus \{\un{0}\}$ and let $\{\xi_{(\un{n}, \un{0})}^{(j)}\}_{j\in[N]}$ be a finite frame of $X_{(\un{n}, \un{0})}$.
For $y_\un{n}=\ol{t}_{(\un{n},\un{0})}(\xi_{(\un{n},\un{0})})b$, where $\xi_{(\un{n},\un{0})}\in X_{(\un{n},\un{0})}$ and $b\in B$, we have
\begin{align*}
\sum_{j=1}^{N}\Theta_{\ol{t}_{(\un{n},\un{0})}(\xi_{(\un{n},\un{0})}^{(j)}),\ol{t}_{(\un{n},\un{0})}(\xi_{(\un{n},\un{0})}^{(j)})}^{Y_{\un{n}}}   (y_\un{n}) 
& =
\sum_{j=1}^{N}\ol{t}_{(\un{n},\un{0})}(\xi_{(\un{n},\un{0})}^{(j)})\ol{t}_{(\un{n},\un{0})}(\xi_{(\un{n},\un{0})}^{(j)})^*\ol{t}_{(\un{n},\un{0})}(\xi_{(\un{n},\un{0})})b \\
& =
\ol{t}_{(\un{n},\un{0})}(\sum_{j=1}^{N}\Theta_{\xi_{(\un{n},\un{0})}^{(j)},\xi_{(\un{n},\un{0})}^{(j)}}^{X_{(\un{n},\un{0})}}   (\xi_{(\un{n},\un{0})}))b 														
=
\ol{t}_{(\un{n},\un{0})}(\xi_{(\un{n},\un{0})})b 														     =
y_\un{n},
\end{align*}
as $\{\xi_{(\un{n},\un{0})}^{(j)}\}_{j\in[N]}$ is a finite frame of $X_{(\un{n},\un{0})}$.
Since $\ol{t}_{(\un{n},\un{0})}(X_{(\un{n},\un{0})})B$ densely spans $Y_\un{n}$, it follows that
\[
\sum_{j=1}^{N}\Theta_{\ol{t}_{(\un{n},\un{0})}(\xi_{(\un{n},\un{0})}^{(j)}),\ol{t}_{(\un{n},\un{0})}(\xi_{(\un{n},\un{0})}^{(j)})}^{Y_{\un{n}}} = \id_{Y_\un{n}},
\]
and the proof is complete.
\end{proof}

Next we show that $\N\T_X\cong\N\T_{Y^F}$ when each $X_\un{i}$ admits a finite frame.
Note that each $Y_\un{i}^F$ admits a finite frame by Lemma \ref{L:Yprodsys}.
We start by representing $Y^F$ in $\N\T_X$.

\begin{proposition}\label{P:YonNTX}
Let $X$ be a product system over $\bZ_+^d$ with coefficients in a C*-algebra $A$, wherein $X_\un{i}$ admits a finite frame for all $i\in[d]$.
Let $F=[r]$ for some $r < d$.
Define the maps
\begin{align*}
\pi & \colon B^{F^\perp}\to\N\T_X; \pi(b)=b \foral b\in B^{F^\perp}, \\
t_\un{n} & \colon Y_\un{n}^F\to\N\T_X; t_\un{n}(y_\un{n})=y_\un{n} \foral y_\un{n}\in Y_\un{n}^F, \un{n}\in\bZ_+^r\setminus\{\un{0}\}.
\end{align*}
Then $(\pi,t)$ is an injective Nica-covariant representation of $Y^F$ satisfying $\ca(\pi,t)=\N\T_X$.
\end{proposition}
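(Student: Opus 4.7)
The plan is to verify the four claims packaged into the statement: that $(\pi, t)$ is a representation of $Y^F$, that $\pi$ is injective, that $(\pi, t)$ is Nica-covariant, and that $\ca(\pi, t) = \N\T_X$. The first two are essentially automatic: $\pi$ is the inclusion $B^{F^\perp} \hookrightarrow \N\T_X$ and each $t_\un{n}$ is the inclusion $Y^F_\un{n} \hookrightarrow \N\T_X$; the $B^{F^\perp}$-bimodule structure and $B^{F^\perp}$-valued inner product on each $Y^F_\un{n}$ are inherited from operations in $\N\T_X$ (Lemma \ref{L:Yc*-cor}); and by Proposition \ref{P:Yprodsys} the multiplication maps of $Y^F$ are multiplication in $\N\T_X$. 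Hence the required algebraic identities hold by inspection, and injectivity of $\pi$ is immediate.

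The crux of the proof is Nica-covariance. By Proposition \ref{P:Yprodsys}, each non-trivial fibre $Y^F_\un{n}$ inherits a finite frame of the form $\{\ol{t}_{(\un{n}, \un{0})}(\xi^{(j)}_{(\un{n}, \un{0})})\}_j$ from a finite frame of $X_{(\un{n}, \un{0})}$; in particular $\K(Y^F_\un{n}) = \L(Y^F_\un{n})$, so it suffices to verify the Nica-covariance identity on rank-one operators $\Theta_{y, y'}^{Y^F_\un{n}}$ and $\Theta_{z, z'}^{Y^F_\un{m}}$ for $\un{n}, \un{m} \in \bZ_+^r \setminus \{\un{0}\}$. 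The left-hand side evaluated by $\psi$ equals $y (y')^* z (z')^*$ in $\N\T_X$. For the right-hand side, set $\un{p} := \un{n} \vee \un{m} - \un{n}$ and $\un{q} := \un{n} \vee \un{m} - \un{m}$; the analogue of Corollary \ref{C:iotafram} for $Y^F$ expresses $\iota_\un{n}^{\un{n} \vee \un{m}}(\Theta_{y, y'})$ and $\iota_\un{m}^{\un{n} \vee \un{m}}(\Theta_{z, z'})$ as sums of rank-one operators indexed by the inherited frames of $Y^F_\un{p}$ and $Y^F_\un{q}$. Composing, applying $\psi_{\un{n} \vee \un{m}}$, and collapsing the outer-product sums yields that the right-hand side equals $y P_{(\un{p}, \un{0})} (y')^* z P_{(\un{q}, \un{0})} (z')^*$, where $P_{(\un{p}, \un{0})} := \ol{\psi}_{(\un{p}, \un{0})}(\id_{X_{(\un{p}, \un{0})}})$ and $P_{(\un{q}, \un{0})} := \ol{\psi}_{(\un{q}, \un{0})}(\id_{X_{(\un{q}, \un{0})}})$. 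The identity thus reduces to showing $P_{(\un{p}, \un{0})} (y')^* z P_{(\un{q}, \un{0})} = (y')^* z$, which follows from Nica-covariance of $(\ol{\pi}, \ol{t})$: the Wick ordering yields $\ol{t}_{(\un{n}, \un{0})}(X_{(\un{n}, \un{0})})^* \ol{t}_{(\un{m}, \un{0})}(X_{(\un{m}, \un{0})}) \subseteq [\ol{t}_{(\un{p}, \un{0})}(X_{(\un{p}, \un{0})}) \ol{t}_{(\un{q}, \un{0})}(X_{(\un{q}, \un{0})})^*]$, the projections $P_{(\un{p}, \un{0})}$ and $P_{(\un{q}, \un{0})}$ act as identities on the respective shifts by a direct frame calculation, and they commute with the surrounding $B^{F^\perp}$-components by a Nica-covariance argument analogous to Proposition \ref{P:pf reducing} (using that the supports of $(\un{p}, \un{0})$ and $(\un{q}, \un{0})$ lie in $F$ whereas $B^{F^\perp}$ is generated by elements indexed by multi-indices supported in $F^c$).

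For $\ca(\pi, t) = \N\T_X$, only the reverse inclusion requires work. By definition of $B^{F^\perp}$, $\ca(\pi, t)$ contains $\ol{\pi}(A)$ and every $\ol{t}_{(\un{0}, \un{m})}(X_{(\un{0}, \un{m})})$; it also contains $t_\un{n}(Y^F_\un{n}) \supseteq \ol{t}_{(\un{n}, \un{0})}(X_{(\un{n}, \un{0})})$ (after factoring via an approximate unit of $A$, using the Hewitt-Cohen Factorisation Theorem). For general $\un{k} = (\un{k}_F, \un{k}_{F^\perp}) \in \bZ_+^d$ and $\xi_\un{k} \in X_\un{k}$, approximating $\xi_\un{k}$ in norm by sums of simple tensors $\xi_1 \xi_2$ with $\xi_1 \in X_{(\un{k}_F, \un{0})}$ and $\xi_2 \in X_{(\un{0}, \un{k}_{F^\perp})}$ gives $\ol{t}_\un{k}(\xi_1 \xi_2) = \ol{t}_{(\un{k}_F, \un{0})}(\xi_1) \ol{t}_{(\un{0}, \un{k}_{F^\perp})}(\xi_2) \in \ca(\pi, t)$, so that $\ol{t}_\un{k}(X_\un{k}) \subseteq \ca(\pi, t)$ by closure.

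The main obstacle is the Nica-covariance step: the algebraic reduction to Nica-covariance of $(\ol{\pi}, \ol{t})$ is conceptually clean, but carrying it out requires careful bookkeeping with the frame-induced projections on $Y^F_\un{p}$ and $Y^F_\un{q}$, their identification inside $\N\T_X$ as $\ol{\psi}_{(\un{p}, \un{0})}(\id_{X_{(\un{p}, \un{0})}})$ and $\ol{\psi}_{(\un{q}, \un{0})}(\id_{X_{(\un{q}, \un{0})}})$, and verification that these projections interact trivially with the $B^{F^\perp}$-components embedded in the fibres of $Y^F$.
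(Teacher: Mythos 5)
Your proposal is correct and follows essentially the same route as the paper's proof: reduce Nica-covariance to rank-one operators using the finite frames that $Y^F$ inherits (Proposition \ref{P:Yprodsys}), expand the connecting maps via Corollary \ref{C:iotafram}, recognise the resulting frame sums inside $\N\T_X$ as the Fock projections onto $\sum_{\un{s}\geq(\un{p},\un{0})}X_{\un{s}}$, and collapse them using their commutation with $B^{F^\perp}$ (supports in $F$ versus $F^c$) together with absorption by creation operators of larger degree; your argument for $\ca(\pi,t)=\N\T_X$ is identical to the paper's. The only substantive difference is organisational: the paper shifts the projections through the creation operators via $\ol{t}_{(\un{n},\un{0})}(\zeta)P_{(\un{n}\vee\un{m}-\un{n},\un{0})}=P_{(\un{n}\vee\un{m},\un{0})}\ol{t}_{(\un{n},\un{0})}(\zeta)$ and then uses $P_{(\un{n}\vee\un{m},\un{0})}=P_{(\un{n},\un{0})}P_{(\un{m},\un{0})}$, whereas you keep the projections where they arise and instead Wick-order the middle factor $(y')^*z$ via Nica-covariance of the Fock representation, absorbing $P_{(\un{p},\un{0})}$ and $P_{(\un{q},\un{0})}$ into the Wick-ordered legs; both rest on the same two structural facts, so neither buys more than the other. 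One bookkeeping point you elide, which the paper handles by an explicit case split: when $\un{n}$ and $\un{m}$ are comparable (say $\un{m}\leq\un{n}$, so $\un{p}=\un{0}$), the fibre $Y^F_{\un{0}}=B^{F^\perp}$ admits no finite frame, Corollary \ref{C:iotafram} does not apply, and your formula $P_{(\un{p},\un{0})}=\ol{\psi}_{(\un{p},\un{0})}(\id_{X_{(\un{p},\un{0})}})$ is not defined for non-unital coefficients; however, in this case $\iota_{\un{n}}^{\un{n}\vee\un{m}}$ is the identity, no projection appears on that side, and the reduced identity $(y')^*z\,P_{(\un{q},\un{0})}=(y')^*z$ follows by the same Wick argument, so this is a one-line patch rather than a flaw in the method.
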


\begin{proof}
For notational convenience, we set $B:=B^{F^\perp}$ and $Y:=Y^F$.
For $\un{s}\in\bZ_+^d$, let $P_\un{s}$ denote the projection of $\F X$ onto $\sum_{\un{k}\geq\un{s}}X_\un{k}$.
Then $P_\un{s}\ol{t}_\un{s}(\zeta_\un{s})=\ol{t}_\un{s}(\zeta_\un{s})$ for all $\zeta_\un{s}\in X_\un{s}$.

It is routine to check that $(\pi,t)$ defines an injective representation of the product system $Y$.
To see that $\ca(\pi,t)=\N\T_X$, it suffices to show that $\ca(\pi,t)$ contains the generators of $\N\T_X$.
By (\ref{Eq:Bchar}) we have $\ol{t}_\un{n}(X_\un{n})\subseteq\ca(\pi,t)$ for all $\un{n} \perp F$.
It remains to see that 
\[
\ol{t}_{(\un{k},\un{\ell})}(X_{(\un{k},\un{\ell})})\subseteq\ca(\pi,t)
\foral
\un{k}\in\bZ_+^r\setminus\{\un{0}\}, \un{\ell}\in\bZ_+^{d-r}.
\]
Fixing $\xi_{(\un{k},\un{0})}\in X_{(\un{k},\un{0})}$ and $\xi_{(\un{0},\un{\ell})}\in X_{(\un{0},\un{\ell})}$, we have
\[
\ol{t}_{(\un{k},\un{\ell})}(\xi_{(\un{k},\un{0})}\xi_{(\un{0},\un{\ell})})=\ol{t}_{(\un{k},\un{0})}(\xi_{(\un{k},\un{0})})\ol{t}_{(\un{0},\un{\ell})}(\xi_{(\un{0},\un{\ell})})=t_\un{k}(\ol{t}_{(\un{k},\un{0})}(\xi_{(\un{k},\un{0})}))\pi(\ol{t}_{(\un{0},\un{\ell})}(\xi_{(\un{0},\un{\ell})}))\in\ca(\pi,t).
\]
Hence we have
\[
\ol{t}_{(\un{k},\un{\ell})}(X_{(\un{k},\un{\ell})})\subseteq[\ol{t}_{(\un{k},\un{0})}(X_{(\un{k},\un{0})})\ol{t}_{(\un{0},\un{\ell})}(X_{(\un{0},\un{\ell})})]\subseteq\ca(\pi,t),
\]
as required.

Next we show that $(\pi,t)$ is Nica-covariant.
Let $\{\iota_{\un{n}}^{\un{n} + \un{m}}\}_{\un{n}, \un{m} \in \bZ_+^r}$ denote the connecting $*$-homomorphisms of $Y$.
Fix $\un{n},\un{m}\in\bZ_+^r \setminus \{\un{0}\}, k_\un{n}\in\K(Y_\un{n})$ and $k_\un{m}\in\K(Y_\un{m})$.
We have
\[
\psi_{\un{n}\vee\un{m}}(\iota_\un{n}^{\un{n}\vee\un{m}}(k_\un{n})\iota_\un{m}^{\un{n}\vee\un{m}}(k_\un{m}))=\psi_{\un{n}\vee\un{m}}(\iota_\un{n}^{\un{n}\vee\un{m}}(k_\un{n}))\psi_{\un{n}\vee\un{m}}(\iota_\un{m}^{\un{n}\vee\un{m}}(k_\un{m}))
\]
by Proposition \ref{P:autcaps}.
Therefore, we must show that
\begin{equation}\label{Eq:YNC}
\psi_{\un{n}\vee\un{m}}(\iota_\un{n}^{\un{n}\vee\un{m}}(k_\un{n}))\psi_{\un{n}\vee\un{m}}(\iota_\un{m}^{\un{n}\vee\un{m}}(k_\un{m}))
=
\psi_\un{n}(k_\un{n})\psi_\un{m}(k_\un{m}).
\end{equation}
This holds trivially when $\un{n} = \un{n}\vee\un{m} = \un{m}$, so we consider the cases where $\un{n} \neq \un{n}\vee\un{m}$ or $\un{m} \neq \un{n}\vee\un{m}$.

Assume that $\un{n} \neq \un{n}\vee\un{m}$.
We proceed by showing that (\ref{Eq:YNC}) holds for $k_\un{n}=\Theta_{y_\un{n},y_\un{n}'}^{Y_{\un{n}}}$ and $k_\un{m}=\Theta_{y_\un{m},y_\un{m}'}^{Y_{\un{m}}}$, where
\begin{align*}
y_\un{n} & =\ol{t}_{(\un{n},\un{0})}(\xi_{(\un{n},\un{0})})b, 
y_\un{n}'  =\ol{t}_{(\un{n},\un{0})}(\eta_{(\un{n},\un{0})}), 
y_\un{m} =\ol{t}_{(\un{m},\un{0})}(\xi_{(\un{m},\un{0})})c,
y_\un{m}' =\ol{t}_{(\un{m},\un{0})}(\eta_{(\un{m},\un{0})}),
\end{align*}
for some $\xi_{(\un{n},\un{0})},\eta_{(\un{n},\un{0})}\in X_{(\un{n},\un{0})}, \xi_{(\un{m},\un{0})},\eta_{(\un{m},\un{0})}\in X_{(\un{m},\un{0})}$ and $b,c\in B$.
To this end, we compute $\psi_{\un{n}\vee\un{m}}(\iota_\un{n}^{\un{n}\vee\un{m}}(\Theta_{y_\un{n},y_\un{n}'}^{Y_{\un{n}}}))$ and $\psi_{\un{n}\vee\un{m}}(\iota_\un{m}^{\un{n}\vee\un{m}}(\Theta_{y_\un{m},y_\un{m}'}^{Y_{\un{m}}}))$.

For $\psi_{\un{n}\vee\un{m}}(\iota_\un{n}^{\un{n}\vee\un{m}}(\Theta_{y_\un{n},y_\un{n}'}^{Y_{\un{n}}}))$, let $\{\xi^{(j)}_{(\un{n}\vee\un{m}-\un{n},\un{0})}\}_{j\in[N]}$ be a finite frame of $X_{(\un{n}\vee\un{m}-\un{n},\un{0})}$.
By Lemma \ref{L:Yprodsys}, we have $\{\ol{t}_{(\un{n}\vee\un{m}-\un{n},\un{0})}(\xi^{(j)}_{(\un{n}\vee\un{m}-\un{n},\un{0})})\}_{j\in[N]}$ is a finite frame of $Y_{\un{n}\vee\un{m}-\un{n}}$.
Note that
\[
P_{({\un{n}\vee\un{m}-\un{n}},\un{0})}
=
\sum_{j=1}^{N} \ol{t}_{(\un{n}\vee\un{m}-\un{n},\un{0})}(\xi^{(j)}_{(\un{n}\vee\un{m}-\un{n},\un{0})})\ol{t}_{(\un{n}\vee\un{m}-\un{n},\un{0})}(\xi^{(j)}_{(\un{n}\vee\un{m}-\un{n},\un{0})})^*\in\N\T_X.
\]
Observe that $P_{({\un{n}\vee\un{m}-\un{n}},\un{0})}$ belongs to the commutant of $\ol{t}_{(\un{0},\un{\ell})}(X_{(\un{0},\un{\ell})})$ (and of $\ol{t}_{(\un{0},\un{\ell})}(X_{(\un{0},\un{\ell})})^*$ by taking adjoints) for all $\un{\ell}\in\bZ_+^{d-r}$, since for $\un{s}\in\bZ_+^d$ we have
\[
(\un{n}\vee\un{m}-\un{n},\un{0})\leq\un{s}\iff (\un{n}\vee\un{m}-\un{n},\un{0}) \leq \un{s} + (\un{0},\un{\ell}) \foral  \un{\ell}\in\bZ_+^{d-r}.
\]
In particular, we have that $P_{(\un{n}\vee\un{m}-\un{n},\un{0})}$ belongs to $B'$.
Additionally, we have
\[
\ol{t}_{(\un{n},\un{0})}(\zeta_{(\un{n},\un{0})})P_{(\un{n}\vee\un{m}-\un{n},\un{0})}=P_{(\un{n}\vee\un{m},\un{0})}\ol{t}_{(\un{n},\un{0})}(\zeta_{(\un{n},\un{0})}),\foral\zeta_{(\un{n},\un{0})}\in X_{(\un{n},\un{0})},
\]
and by taking adjoints we obtain
\[
P_{(\un{n}\vee\un{m}-\un{n},\un{0})} \ol{t}_{(\un{n},\un{0})}(\zeta_{(\un{n},\un{0})})^* 
= 
\ol{t}_{(\un{n},\un{0})}(\zeta_{(\un{n},\un{0})})^* P_{(\un{n}\vee\un{m},\un{0})}, \foral \zeta_{(\un{n},\un{0})}\in X_{(\un{n},\un{0})}.
\]
This follows from the observation that for $\un{s}\in\bZ_+^d$ we have
\[
(\un{n}\vee\un{m}-\un{n},\un{0})\leq\un{s}\iff (\un{n}\vee\un{m},\un{0})\leq(\un{n},\un{0}) + \un{s}.
\]
An application of Corollary \ref{C:iotafram} gives that
\begin{align*}
\psi_{\un{n}\vee\un{m}}(\iota_\un{n}^{\un{n}\vee\un{m}}(\Theta_{y_\un{n},y_\un{n}'}^{Y_{\un{n}}})) 
& =
\sum_{j=1}^{N} y_\un{n}  \ol{t}_{(\un{n}\vee\un{m}-\un{n},\un{0})}(\xi^{(j)}_{(\un{n}\vee\un{m}-\un{n},\un{0})})\ol{t}_{(\un{n}\vee\un{m}-\un{n},\un{0})}(\xi^{(j)}_{(\un{n}\vee\un{m}-\un{n},\un{0})})^* (y_\un{n}')^* \\
& =
\ol{t}_{(\un{n},\un{0})}(\xi_{(\un{n},\un{0})})bP_{({\un{n}\vee\un{m}-\un{n}},\un{0})}\ol{t}_{(\un{n},\un{0})}(\eta_{(\un{n},\un{0})})^* \\
& = 
\ol{t}_{(\un{n},\un{0})}(\xi_{(\un{n},\un{0})})b\ol{t}_{(\un{n},\un{0})}(\eta_{(\un{n},\un{0})})^*P_{(\un{n}\vee\un{m},\un{0})} \\
& = 
P_{(\un{n}\vee\un{m},\un{0})}\ol{t}_{(\un{n},\un{0})}(\xi_{(\un{n},\un{0})})b\ol{t}_{(\un{n},\un{0})}(\eta_{(\un{n},\un{0})})^*.
\end{align*}

For $\psi_{\un{n}\vee\un{m}}(\iota_\un{m}^{\un{n}\vee\un{m}}(\Theta_{y_\un{m},y_\un{m}'}^{Y_{\un{m}}}))$, we consider two cases.
If $\un{m} = \un{n}\vee\un{m}$, then we have
\[
\psi_{\un{n}\vee\un{m}}(\iota_\un{m}^{\un{n}\vee\un{m}}(\Theta_{y_\un{m},y_\un{m}'}^{Y_{\un{m}}}))
=
\psi_{\un{m}}(\Theta_{y_\un{m},y_\un{m}'}^{Y_{\un{m}}})
=
\ol{t}_{(\un{m},\un{0})}(\xi_{(\un{m},\un{0})})c\ol{t}_{(\un{m},\un{0})}(\eta_{(\un{m},\un{0})})^*,
\]
and if $\un{m} \neq \un{n}\vee\un{m}$ then we have
\begin{align*}
\psi_{\un{n}\vee\un{m}}(\iota_\un{m}^{\un{n}\vee\un{m}}(\Theta_{y_\un{m},y_\un{m}'}^{Y_{\un{m}}})) 
& =
P_{(\un{n}\vee\un{m},\un{0})}\ol{t}_{(\un{m},\un{0})}(\xi_{(\un{m},\un{0})})c \ol{t}_{(\un{m},\un{0})}(\eta_{(\un{m},\un{0})})^*
\end{align*}
by swapping $\un{n}$ and $\un{m}$, as well as $b$ and $c$, in the preceding arguments.

Since $P_{(\un{n}\vee\un{m},\un{0})}=P_{(\un{n},\un{0})}P_{(\un{m},\un{0})}$, we conclude that
\begin{align*}
\psi_{\un{n}\vee\un{m}}(\iota_\un{n}^{\un{n}\vee\un{m}}(\Theta_{y_\un{n},y_\un{n}'}^{Y_{\un{n}}})) 
& 
\psi_{\un{n}\vee\un{m}}(\iota_\un{m}^{\un{n}\vee\un{m}}(\Theta_{y_\un{m},y_\un{m}'}^{Y_{\un{m}}})) = \\
& =
\ol{t}_{(\un{n},\un{0})}(\xi_{(\un{n},\un{0})})b\ol{t}_{(\un{n},\un{0})}(\eta_{(\un{n},\un{0})})^*P_{(\un{n}\vee\un{m},\un{0})}\ol{t}_{(\un{m},\un{0})}(\xi_{(\un{m},\un{0})})c \ol{t}_{(\un{m},\un{0})}(\eta_{(\un{m},\un{0})})^* \\
& =
\ol{t}_{(\un{n},\un{0})}(\xi_{(\un{n},\un{0})}) b \bigg[\ol{t}_{(\un{n},\un{0})}(\eta_{(\un{n},\un{0})})^*P_{(\un{n},\un{0})} \bigg] \bigg[ P_{(\un{m},\un{0})}\ol{t}_{(\un{m},\un{0})}(\xi_{(\un{m},\un{0})}) \bigg] c \ol{t}_{(\un{m},\un{0})}(\eta_{(\un{m},\un{0})})^* \\
& =
\ol{t}_{(\un{n},\un{0})}(\xi_{(\un{n},\un{0})})b\ol{t}_{(\un{n},\un{0})}(\eta_{(\un{n},\un{0})})^*\ol{t}_{(\un{m},\un{0})}(\xi_{(\un{m},\un{0})})c \ol{t}_{(\un{m},\un{0})}(\eta_{(\un{m},\un{0})})^* \\
& =
\psi_\un{n}(\Theta_{y_\un{n},y_\un{n}'}^{Y_{\un{n}}})\psi_\un{m}(\Theta_{y_\un{m},y_\un{m}'}^{Y_{\un{m}}}),
\end{align*}
using that $P_{(\un{n}\vee\un{m},\un{0})}^2=P_{(\un{n}\vee\un{m},\un{0})}$ when $\un{m} \neq \un{n}\vee\un{m}$ in the first equality.
By taking finite linear combinations and their norm-limits, we conclude that (\ref{Eq:YNC}) holds when $\un{n} \neq \un{n}\vee\un{m}$.

Since (\ref{Eq:YNC}) is symmetric with respect to $\un{n}$ and $\un{m}$, taking adjoints (and relabelling) deals with the case where $\un{m} \neq \un{n} \vee \un{m}$, showing that $(\pi,t)$ is Nica-covariant.
This completes the proof.
\end{proof}

We now arrive at the next main result of this subsection, namely that the decomposition of $X$ along $\mt \neq F \subsetneq [d]$ induces a similar decomposition of the Toeplitz-Nica-Pimsner algebras.
The following is noted in \cite[Proof of Theorem 4.4 (i)]{Kak20}.

\begin{theorem}\label{T:NTXcongNTY}
Let $X$ be a product system over $\bZ_+^d$ with coefficients in a C*-algebra $A$, wherein $X_\un{i}$ admits a finite frame for all $i\in[d]$.
Let $F =[r]$ for some $r < d$.
Consider the product system $Y^F$ over $B^{F^\perp}$ related to $X$ and $F$, and define the maps
\begin{align*}
\pi & \colon B^{F^\perp}\to\N\T_X; \pi(b)=b \foral b\in B^{F^\perp}, \\
t_\un{n} & \colon Y_\un{n}^F\to\N\T_X; t_\un{n}(y_\un{n})=y_\un{n} \foral y_\un{n}\in Y_\un{n}^F, \un{n}\in\bZ_+^r\setminus\{\un{0}\}.
\end{align*}
Then the induced map $\pi \times t \colon \N\T_{Y^F} \to \N\T_X$ is a $*$-isomorphism.
\end{theorem}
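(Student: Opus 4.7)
Plan: The surjectivity of the canonical $*$-epimorphism $\pi\times t\colon\N\T_{Y^F}\to\N\T_X$, and the fact that $(\pi,t)$ is an injective Nica-covariant representation of $Y^F$ with $\ca(\pi,t)=\N\T_X$, are established in Proposition \ref{P:YonNTX}. Restricting the $\bT^r$-component $\be_F$ of the full gauge action of $\N\T_X$ to $\ca(\pi,t)=\N\T_X$ provides a gauge action for $(\pi,t)$: it fixes $\pi(B^{F^\perp})$ and scales $t_{\un{n}}(Y_{\un{n}}^F)$ by $\un{x}^{\un{n}}$. Hence $\ker(\pi\times t)$ is a gauge-invariant ideal of $\N\T_{Y^F}$, and Theorem \ref{T:NT param} applied to $\N\T_{Y^F}$ identifies it with the ideal $\fJ^{\L}$ induced by the unique NT-$2^r$-tuple $\L:=\L^{\ker(\pi\times t)}$ of $Y^F$. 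Since $\pi$ is injective, $\L_\mt=\ker\pi=\{0\}$, so the task reduces to showing $\L_F=\{0\}$ for every $\mt\neq F\subseteq[r]$.

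By definition $\L_F=\pi^{-1}(B^{(\pi,t)}_{(\un{0},\un{1}_F]})$, so it suffices to prove the following: if $b\in B^{F^\perp}$ satisfies
\[
b=\sum_{\un{0}\neq\un{n}\leq\un{1}_F}\psi_{\un{n}}(k_{\un{n}}) \qquad \text{in } \N\T_X
\]
for some $k_{\un{n}}\in\K(Y_{\un{n}}^F)$, then $b=0$. I will verify this in the Fock representation of $X$ on $\F X$, which is faithful on $\N\T_X$ since $\bZ^d$ is amenable. For each $\un{\ell}\in\bZ_+^{d-r}$, the projection $P_{(\un{0},\un{\ell})}$ of $\F X$ onto $X_{(\un{0},\un{\ell})}$ is annihilated from the left by $\ol{t}_{(\un{n},\un{0})}(X_{(\un{n},\un{0})})^*$ whenever $\un{n}\neq\un{0}$, since $(\un{n},\un{0})\not\leq(\un{0},\un{\ell})$ in that case. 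As each $\psi_{\un{n}}(k_{\un{n}})$ with $\un{n}\neq\un{0}$ is a norm-limit of finite sums of operators of the form $\ol{t}_{(\un{n},\un{0})}(\xi)\,b'\,\ol{t}_{(\un{n},\un{0})}(\xi')^*$ with $\xi,\xi'\in X_{(\un{n},\un{0})}$ and $b'\in B^{F^\perp}$, it follows that $\psi_{\un{n}}(k_{\un{n}})P_{(\un{0},\un{\ell})}=0$. Hence $b\,P_{(\un{0},\un{\ell})}=0$ for every $\un{\ell}\in\bZ_+^{d-r}$, so $b$ annihilates the subspace $H_0:=\bigoplus_{\un{\ell}\in\bZ_+^{d-r}}X_{(\un{0},\un{\ell})}$.

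The concluding step is to transfer $b$ across the canonical $*$-isomorphism $B^{F^\perp}\cong\N\T_{Z^{F^\perp}}$ of Proposition \ref{P:NTZcongB}. A direct check on the generators $\ol{\pi}(a)$ and $\ol{t}_{(\un{0},\un{n})}(\xi)$ shows that this isomorphism identifies the compression map $b\mapsto b|_{H_0}$ from $B^{F^\perp}$ to $\L(H_0)$ with the Fock representation of $Z^{F^\perp}$ on $\F Z^{F^\perp}=H_0$. Since $\bZ^{d-r}$ is amenable, this Fock representation is faithful, so $b|_{H_0}=0$ forces $b=0$. Therefore $\L=\{\{0\}\}_{F\subseteq[r]}$, $\ker(\pi\times t)=\fJ^{\L}=\{0\}$, and $\pi\times t$ is a $*$-isomorphism. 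The main obstacle is this final identification — that $B^{F^\perp}\cong\N\T_{Z^{F^\perp}}$ intertwines compression to $H_0$ with the Fock representation of $Z^{F^\perp}$ — as it requires careful bookkeeping of the generators; once confirmed, amenability of $\bZ^{d-r}$ closes the argument.
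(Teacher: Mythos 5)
Your proposal is correct, but it takes a genuinely different route from the paper's proof, in two respects. The paper reduces injectivity of $\pi\times t$ to the $[\un{0},\un{1}_{[r]}]$-core via Proposition \ref{P:inj fp} and then argues by contradiction: it massages a kernel element to have a nonzero \emph{positive} $\un{0}$-component $b$ (the $g^*g$ trick), uses the frame identity $p_{\un{i}}=\ol{p}_{(\un{i},\un{0})}$ to get $b\,\ol{q}_F=0$, and then must pass through the faithful conditional expectation $E_{\be_{F^\perp}}$ before applying Corollary \ref{C:psiinj} --- unavoidably so, because that corollary asserts faithfulness of the compression to $\sum_{\un{n}\perp F}X_{\un{n}}$ only on the fixed-point algebra $(B^{F^\perp})^{\be_{F^\perp}|_{B^{F^\perp}}}$, and positivity of $b$ is exactly what makes the expectation step conclusive. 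You instead reduce via the Section \ref{S:g inv struc} parametrisation (equivalently, Proposition \ref{P:inj rel} applied to the injective gauge-equivariant representation $(\pi,t)$ of Proposition \ref{P:YonNTX}) to showing $\pi^{-1}(B^{(\pi,t)}_{(\un{0},\un{1}_D]})=\{0\}$, and then prove the \emph{stronger} statement that compression to $H_0=\sum_{\un{n}\perp F}X_{\un{n}}$ is faithful on all of $B^{F^\perp}$, by checking on generators that $H_0$ is reducing and that the compressed generators are precisely the Fock operators of $Z^{F^\perp}$ under the isomorphism of Proposition \ref{P:NTZcongB}, then citing amenability of $\bZ^{d-r}$; this check is correct, and your annihilation step $\psi_{\un{n}}(k_{\un{n}})P_{(\un{0},\un{\ell})}=0$ for $\un{n}\neq\un{0}$ is valid since $(\un{n},\un{0})\not\leq(\un{0},\un{\ell})$. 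What your route buys: a cleaner endgame with no expectation and no positivity manipulation (the exact relation $b=\sum_{\un{0}\neq\un{n}\leq\un{1}_D}\psi_{\un{n}}(k_{\un{n}})$ already isolates the $\un{0}$-component), plus an incidental upgrade of Corollary \ref{C:psiinj} from the fixed-point algebra to all of $B^{F^\perp}$. What it costs: reliance on amenability-faithfulness of the Fock representation of $\N\T_{Z^{F^\perp}}$, which the paper's corollary avoids by a purely algebraic left-inverse argument --- though since the paper already identifies $\N\T_X$ with its Fock image in this subsection by the same amenability fact, you introduce no new hypotheses; and the heavier machinery of Theorem \ref{T:NT param} in place of the elementary Proposition \ref{P:inj fp}, which is harmless since Section \ref{S:g inv struc} precedes this theorem, so there is no circularity.
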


\begin{proof}
For notational convenience, we set $B:=B^{F^\perp}$ and $Y:=Y^F$.
By Proposition \ref{P:YonNTX} we have that $(\pi,t)$ is Nica-covariant and thus $\pi \times t$ is well-defined.
Let $\ga$ be the gauge action of $(\ol{\pi}_Y,\ol{t}_Y)$.
It is routine to check that $\be_F$ defines a gauge action of $(\pi,t)$.
Then $\pi \times t$ is an equivariant $*$-epimorphism, and it suffices to show that $\pi \times t$ is injective on $\N\T_Y^\ga$.
By Proposition \ref{P:inj fp}, this amounts to showing that $\pi \times t$ is injective on the $[\un{0}, \un{1}_{[r]}]$-core.

Towards contradiction, suppose that $\ker\pi \times t \cap B_{[\un{0},\un{1}_{[r]}]}^{(\ol{\pi}_Y,\ol{t}_Y)} \neq \{0\}$.
We claim that we can find $0 \neq f \in \ker\pi \times t \cap B_{[\un{0},\un{1}_{[r]}]}^{(\ol{\pi}_Y,\ol{t}_Y)}$ of the form
\begin{equation} \label{eq:form f}
f=\ol{\pi}_Y(b)+\sum\{\ol{\psi}_{Y,\un{n}}(k_\un{n})\mid \un{0}\neq\un{n}\leq\un{1}_{[r]}\},
\; \text{where} \; 
0 \neq b \in B_+
\; \text{and each} \;
k_\un{n}\in\K(Y_\un{n}).
\end{equation}
Indeed, start by taking $0\neq g\in\ker\pi \times t\cap B_{[\un{0},\un{1}_{[r]}]}^{(\ol{\pi}_Y,\ol{t}_Y)}$, so that
\[
g=\ol{\pi}_Y(b')+\sum\{\ol{\psi}_{Y,\un{n}}(k_\un{n}')\mid \un{0}\neq\un{n}\leq\un{1}_{[r]}\},
\; \text{where} \; 
b' \in B
\; \text{and each} \;
k_\un{n}' \in \K(Y_\un{n}).
\]
If $b' \neq 0$, then choose $f = g^*g$.
If $b' =0 $, then choose $\un{0}\neq\un{m}\leq\un{1}_{[r]}$ minimal such that $k_\un{m}'\neq 0$.
We may assume that $\un{m}\neq\un{1}_{[r]}$, as otherwise we would have $g=\ol{\psi}_{Y,\un{1}_{[r]}}(k_{\un{1}_{[r]}}')$ and injectivity of $\psi_{\un{1}_{[r]}}$ would give that $g=0$, a contradiction.
Since $k_\un{m}'\neq 0$, we may find $0\neq y_\un{m}\in Y_\un{m}$ such that $k_\un{m}'y_\un{m}\neq 0$.
We set
\[
f:=\ol{t}_{Y,\un{m}}(k_\un{m}'y_\un{m})^*g\ol{t}_{Y,\un{m}}(y_\un{m}) \in \ker\pi \times t \cap B_{[\un{0},\un{1}_{[r]}]}^{(\ol{\pi}_Y,\ol{t}_Y)},
\]
and we note that
\[
f=\ol{\pi}_Y(\sca{k_\un{m}'y_\un{m},k_\un{m}'y_\un{m}})+\sum\{\ol{\psi}_{Y,\un{n}}(k_\un{n}'')\mid \un{0}\neq\un{n}\leq\un{1}_{[r]}-\un{m}\} 
\]
for suitably defined $k_\un{n}''\in\K(Y_\un{n})$, for all $\un{0}\neq\un{n}\leq\un{1}_{[r]}-\un{m}$.
Notice that $0 \neq \sca{k_\un{m}'y_\un{m},k_\un{m}'y_\un{m}}\in B_+$ by construction, and so by padding with zeroes we deduce that $f$ has the required form.

Hence, without loss of generality, we may assume that $f$ is of the form (\ref{eq:form f}).
We have
\[
\pi(b)+\sum\{\psi_\un{n}(k_\un{n})\mid \un{0}\neq\un{n}\leq\un{1}_{[r]}\} = (\pi \times t)(f) = 0,
\]
and hence $\pi(b)q_F=0$ by (\ref{eq:out}).
Fixing $i\in F$, let $\{\xi_{(\un{i},\un{0})}^{(j)}\}_{j\in[N]}$ be a finite frame of $X_{(\un{i},\un{0})}$.
Then $\{\ol{t}_{(\un{i},\un{0})}(\xi_{(\un{i},\un{0})}^{(j)})\}_{j\in[N]}$ is a finite frame of $Y_\un{i}$ by Lemma \ref{L:Yprodsys}.
Hence we have
\begin{align*}
p_\un{i} 
& =
\sum_{j=1}^{N}t_\un{i}(\ol{t}_{(\un{i},\un{0})}(\xi_{(\un{i},\un{0})}^{(j)}))t_\un{i}(\ol{t}_{(\un{i},\un{0})}(\xi_{(\un{i},\un{0})}^{(j)}))^* 
=
\sum_{j=1}^{N}\ol{t}_{(\un{i},\un{0})}(\xi_{(\un{i},\un{0})}^{(j)})\ol{t}_{(\un{i},\un{0})}(\xi_{(\un{i},\un{0})}^{(j)})^* 
=\ol{p}_{(\un{i},\un{0})}.
\end{align*}
In turn, we obtain
\begin{align*}
b\ol{q}_F
& =
b\prod_{i\in F}(I-\ol{p}_{(\un{i},\un{0})}) 
=
b\prod_{i\in F}(I-p_\un{i})
=
b q_F
=
\pi(b)q_F 
=0.
\end{align*}
Since $\ol{p}_{(\un{i},\un{0})}\in \N\T_X^{\be_{F^\perp}}$ for all $i\in F$, we have
\begin{align*}
E_{\be_{F^\perp}}(b) \ol{q}_F
& =
E_{\be_{F^\perp}}(b) + \sum\{(-1)^{|D|} E_{\be_{F^\perp}}(b) \prod_{i\in D}\ol{p}_{(\un{i},\un{0})}\mid \mt\neq D\subseteq F\} \\
& =
E_{\be_{F^\perp}}\bigg( b + \sum\{(-1)^{|D|} b \prod_{i\in D}\ol{p}_{(\un{i},\un{0})}\mid \mt\neq D\subseteq F\} \bigg)
=
E_{\be_{F^\perp}}(b\ol{q}_F)
=
0,
\end{align*}
where in the second equality we use that $E_{\be_{F^\perp}}$ is an $\N\T_X^{\be_{F^\perp}}$-bimodule map.
In particular, for every $\un{n} \in \bZ_+^d$ satisfying $\un{n} \perp F$, we have 
\[
E_{\be_{F^\perp}}(b)\xi_\un{n} = E_{\be_{F^\perp}}(b)\ol{q}_F\xi_\un{n}= 0, \foral \xi_{\un{n}} \in X_{\un{n}},
\] 
and thus $E_{\be_{F^\perp}}(b) = 0$ by Corollary \ref{C:psiinj}.
Since $b \in B_+$, faithfulness of $E_{\be_{F^\perp}}$ gives the contradiction that $b = 0$, and the proof is complete.
\end{proof}

We are now ready to capture the quotient of $\N\T_X$ by the ideal $\sca{\ol{\pi}(A)\ol{q}_{\un{i}}\mid i\in F}$ induced by $F$ as a Cuntz-Nica-Pimsner algebra.

\begin{theorem}\label{T:somedir}
Let $X$ be a product system over $\bZ_+^d$ with coefficients in a C*-algebra $A$, wherein $X_\un{i}$ admits a finite frame for all $i\in[d]$.
Let $F=[r]$ for some $r < d$.
Consider the product system $Y^F$ over $B^{F^\perp}$ related to $X$ and $F$, and define the ideal
\[
I_{Y^F} := \ker \{ B^{F^\perp} \to \N\T_{Y^F}/\sca{\ol{\pi}_{Y^F}(B^{F^\perp}) \ol{q}_{Y^F,\un{i}}\mid i\in F} \}.
\]
Then the canonical $*$-isomorphism $\N\T_{Y^F} \cong \N\T_X$ descends to a $*$-isomorphism
\[
\N\O_{[Y^F]_{I_{Y^F}}} \cong \N\T_X/\sca{\ol{\pi}(A)\ol{q}_\un{i}\mid i\in F}.
\]
\end{theorem}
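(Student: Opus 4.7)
The plan is to derive the claimed $*$-isomorphism by composing two canonical identifications. The first is Proposition \ref{P:alldir} applied to the product system $Y^F$ over $\bZ_+^r$ with coefficients in $B^{F^\perp}$; since every fibre $Y^F_\un{i}$ admits a finite frame by Proposition \ref{P:Yprodsys}, this proposition yields a canonical $*$-isomorphism
\[
\N\T_{Y^F}/\sca{\ol{\pi}_{Y^F}(B^{F^\perp})\ol{q}_{Y^F,\un{i}}\mid i\in F} \cong \N\O_{[Y^F]_{I_{Y^F}}},
\]
where the ideal $I_{Y^F}$ of the theorem statement matches, by its very definition and the explicit description in Proposition \ref{P:alldir}, the kernel $\L_\mt^\fJ = \ker Q_\fJ\circ\ol{\pi}_{Y^F}$ for this particular gauge-invariant ideal $\fJ$. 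The second identification is Theorem \ref{T:NTXcongNTY}, which provides a canonical $*$-isomorphism $\Phi := \pi \times t : \N\T_{Y^F} \to \N\T_X$. The proof therefore reduces to verifying that $\Phi$ identifies the ideal $\sca{\ol{\pi}_{Y^F}(B^{F^\perp})\ol{q}_{Y^F,\un{i}}\mid i\in F}$ of $\N\T_{Y^F}$ with the ideal $\sca{\ol{\pi}_X(A)\ol{q}_{X,\un{i}}\mid i\in F}$ of $\N\T_X$.

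The key computation is to determine the image of each generator under $\Phi$. Fixing $i \in F$, a finite frame $\{\xi_{(\un{i},\un{0})}^{(j)}\}_{j\in[N]}$ of $X_{(\un{i},\un{0})}$ induces, via Proposition \ref{P:Yprodsys}, a finite frame $\{\ol{t}_{X,(\un{i},\un{0})}(\xi_{(\un{i},\un{0})}^{(j)})\}_{j\in[N]}$ of $Y^F_\un{i}$. Expressing $\ol{p}_{Y^F,\un{i}}$ using this frame as in Remark \ref{R:finframq} and applying $\Phi$ termwise, I obtain $\Phi(\ol{p}_{Y^F,\un{i}}) = \ol{p}_{X,(\un{i},\un{0})} = \ol{p}_{X,\un{i}}$, where the second equality is recorded in the proof of Theorem \ref{T:NTXcongNTY}. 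Since $\phi_{Y^F,\un{i}}(b)\in\K(Y^F_\un{i})$ automatically for every $b \in B^{F^\perp}$ (because $Y^F_\un{i}$ admits a finite frame), Proposition \ref{P:prod cai} gives that $\ol{\pi}_{Y^F}(b)\ol{q}_{Y^F,\un{i}} = \ol{\pi}_{Y^F}(b) - \ol{\pi}_{Y^F}(b)\ol{p}_{Y^F,\un{i}}$ is a well-defined element of $\N\T_{Y^F}$, whose image under $\Phi$ is $b - b\ol{p}_{X,\un{i}}$; I will write this element as $b\ol{q}_{X,\un{i}}$ in $\N\T_X$.

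It remains to show that the families $\{b\ol{q}_{X,\un{i}} \mid b\in B^{F^\perp}, i \in F\}$ and $\{\ol{\pi}_X(a)\ol{q}_{X,\un{i}} \mid a \in A, i \in F\}$ generate the same ideal in $\N\T_X$. The containment $\supseteq$ is immediate because $\ol{\pi}_X(A) \subseteq B^{F^\perp}$. For the reverse containment, by (\ref{Eq:Bchar}) and norm continuity of the map $b\mapsto b\ol{q}_{X,\un{i}} = b - b\ol{p}_{X,\un{i}}$, it suffices to treat elementary elements of the form $b = \ol{t}_{X,\un{n}}(\xi_\un{n})\ol{t}_{X,\un{m}}(\eta_\un{m})^*$ with $\un{n},\un{m}\perp F$. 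Since $\un{n},\un{m}\perp\un{i}$, Proposition \ref{P:pf reducing} implies that $\ol{q}_{X,\un{i}}$ commutes with both $\ol{t}_{X,\un{n}}(\xi_\un{n})$ and $\ol{t}_{X,\un{m}}(\eta_\un{m})^*$. Applying the Hewitt-Cohen Factorisation Theorem (Theorem \ref{T:HCFT}) to write $\xi_\un{n}=\zeta_\un{n}a$ for some $\zeta_\un{n}\in X_\un{n}$ and $a \in A$ (the case $\un{n}=\un{0}$ being immediate), I obtain
\[
b\ol{q}_{X,\un{i}} = \ol{t}_{X,\un{n}}(\zeta_\un{n})\bigl(\ol{\pi}_X(a)\ol{q}_{X,\un{i}}\bigr)\ol{t}_{X,\un{m}}(\eta_\un{m})^*,
\]
which belongs to the ideal generated by $\ol{\pi}_X(A)\ol{q}_{X,\un{i}}$. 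The principal technical point is ensuring that the ideal identification passes through this Hewitt-Cohen step and the norm closure uniformly across all generators; once this is verified, composing the two canonical $*$-isomorphisms delivers the theorem.
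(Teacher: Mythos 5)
Your proposal is correct and follows essentially the same route as the paper's proof: compose Proposition \ref{P:alldir} (applied to $Y^F$) with the canonical $*$-isomorphism of Theorem \ref{T:NTXcongNTY}, and verify that the latter carries $\sca{\ol{\pi}_{Y^F}(B^{F^\perp})\ol{q}_{Y^F,\un{i}}\mid i\in F}$ onto $\sca{\ol{\pi}_X(A)\ol{q}_{X,\un{i}}\mid i\in F}$ via the finite-frame expansion of $\ol{p}_{Y^F,\un{i}}$, the commutation relations of Proposition \ref{P:pf reducing}, the description (\ref{Eq:Bchar}) of $B^{F^\perp}$, and a factorisation of $X_{\un{n}}$ through $A$. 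The paper performs the same reduction, writing $\ol{t}_{X,\un{n}}(X_\un{n})\ol{q}_{X,\un{i}}\subseteq[\ol{t}_{X,\un{n}}(X_\un{n})\ol{\pi}_X(A)\ol{q}_{X,\un{i}}]$ and taking adjoints where you invoke Hewitt--Cohen explicitly, which is the identical argument.
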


\begin{proof}
For notational convenience, we set $B:=B^{F^\perp}$, $Y:=Y^F$ and $I := I_{Y^F}$.
We define
\[
\mathfrak{J}_Y:=\sca{\ol{\pi}_Y(B)\ol{q}_{Y,\un{i}}\mid i\in F}\subseteq\N\T_Y
\qand
\mathfrak{J}_X:=\sca{\ol{\pi}(A)\ol{q}_\un{i}\mid i\in F}\subseteq\N\T_X.
\]
By applying Proposition \ref{P:alldir} to $Y$, we have 
\[
\N\O_{[Y]_I} \cong \N\T_Y/ \fJ_Y.
\]
We note here that $[Y]_I$ admits a finite frame by Lemma \ref{L:finframquot} and Lemma \ref{L:Yprodsys}.
Let $\pi \times t \colon \N\T_{Y} \to \N\T_X$ be the canonical $*$-isomorphism of Theorem \ref{T:NTXcongNTY}.
We will show that $(\pi \times t)(\mathfrak{J}_Y)=\mathfrak{J}_X$.
This ensures that $\pi \times t$ descends to a $*$-isomorphism on the quotients, and thus
\[
\N\O_{[Y]_I} \cong \N\T_Y/ \fJ_Y \cong \N\T_X/ \fJ_X,
\]
as required.
To this end, fix $i\in F$.
We have $(\pi \times t)(\ol{p}_{Y,\un{i}})=\ol{p}_\un{i}$ by use of a finite frame expansion, and thus 
\[
(\pi \times t)(\ol{\pi}_Y(b)\ol{q}_{Y,\un{i}})=b\ol{q}_\un{i}, \foral b \in B.
\]
Next fix $\un{n} \in \bZ_+^d$ such that $\un{n} \perp F$, and observe that
\[
\ol{q}_\un{i} \ol{t}_\un{n}(X_\un{n}) = \ol{t}_\un{n}(X_\un{n})\ol{q}_\un{i} \subseteq [\ol{t}_\un{n}(X_\un{n})\ol{\pi}(A)\ol{q}_\un{i}] \subseteq \mathfrak{J}_X,
\]
using Proposition \ref{P:pf reducing} in the first equality.
By taking adjoints, we also deduce that 
\[
\ol{t}_\un{n}(X_\un{n})^*\ol{q}_\un{i} = \ol{q}_\un{i} \ol{t}_\un{n}(X_\un{n})^* \subseteq\mathfrak{J}_X.
\]
Hence we have
\[
\ol{t}_\un{n}(X_\un{n}) \ol{t}_\un{m}(X_\un{m})^*\ol{q}_\un{i} \subseteq \mathfrak{J}_X,
\foral \un{n}, \un{m} \in \bZ_+^d \; \text{satisfying} \; \un{n}, \un{m} \perp F.
\]
Thus, by taking finite linear combinations and their norm-limits and using (\ref{Eq:Bchar}), we derive that $B\ol{q}_\un{i}\subseteq\fJ_X$ and thus in particular
\[
(\pi \times t)(\ol{\pi}_Y(b)\ol{q}_{Y,\un{i}})=b\ol{q}_\un{i}\in\mathfrak{J}_X,
\foral 
b \in B.
\]
Therefore $\pi \times t$ maps the generators of $\mathfrak{J}_Y$ into $\mathfrak{J}_X$, and it follows that  $(\pi \times t)(\mathfrak{J}_Y)\subseteq\mathfrak{J}_X$.
For the reverse inclusion, fix $i\in F$.
Then we have
\[
\ol{\pi}(a)\ol{q}_\un{i} = (\pi \times t)(\ol{\pi}_Y(\ol{\pi}(a))\ol{q}_{Y,\un{i}}), 
\foral a \in A.
\]
Note that $\ol{\pi}_Y(\ol{\pi}(A))\ol{q}_{Y,\un{i}} \subseteq \mathfrak{J}_Y$, and so the generators of $\mathfrak{J}_X$ are contained in $(\pi \times t)(\mathfrak{J}_Y)$.
Thus $\mathfrak{J}_X \subseteq (\pi \times t)(\mathfrak{J}_Y)$, completing the proof.
\end{proof}

Having generalised the first part of Proposition \ref{P:alldir}, next we account for the injectivity clause.
We will need the following proposition.
Recall that the projections $\ol{p}_{\un{i}}$ can be defined for a (just) compactly aligned product system by Remark \ref{R:pi proj}.

\begin{proposition}\label{P:fpiinj}
Let $X$ be a compactly aligned product system over $\bZ_+^d$ with coefficients in a C*-algebra $A$.
Let $F=[r]$ for some $r < d$ and fix $i\in F$.
If $X_{\un{i}}$ is injective, then the map
\[
\Phi\colon (B^{F^\perp})^{\be_{F^\perp}|_{B^{F^\perp}}}\to\L(\F X); b\mapsto b\ol{p}_\un{i}\foral b\in (B^{F^\perp})^{\be_{F^\perp}|_{B^{F^\perp}}}
\]
is an injective $*$-homomorphism.
\end{proposition}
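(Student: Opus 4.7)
The plan is to prove this in two steps: first establish that $\Phi$ is a $*$-homomorphism by showing $\ol{p}_\un{i}$ is central in $B^{F^\perp}$, and then verify injectivity by combining the action of $\ol{p}_\un{i}$ on Fock space summands with the structure of the fixed-point algebra.

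For the $*$-homomorphism property, I would first check that $\ol{p}_\un{i}\in (B^{F^\perp})'$. Since $B^{F^\perp}$ is generated by $\ol{\pi}(A)$ together with $\ol{t}_\un{n}(X_\un{n})$ for $\un{n}\perp F$, and $i\in F$ forces $\un{n}\perp\un{i}$ for every such $\un{n}$, Proposition \ref{P:pf reducing} applied with $\{i\}$ in place of $F$ shows that $\ol{q}_\un{i}$, and hence $\ol{p}_\un{i}=I-\ol{q}_\un{i}$, commutes with every generator of $B^{F^\perp}$. Using that $\ol{p}_\un{i}$ is a selfadjoint projection, it then follows immediately that $\Phi(b_1b_2)=b_1b_2\ol{p}_\un{i}=b_1\ol{p}_\un{i} b_2\ol{p}_\un{i}=\Phi(b_1)\Phi(b_2)$ and $\Phi(b^*)=\Phi(b)^*$.

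For injectivity, suppose $b\in(B^{F^\perp})^{\be_{F^\perp}|_{B^{F^\perp}}}$ satisfies $b\ol{p}_\un{i}=0$. I would first identify $\ol{p}_\un{i}$ concretely as the projection of $\F X$ onto $\sum_{\un{n}\geq\un{i}}X_\un{n}$. This follows by examining the action of $\ol{\psi}_\un{i}(\K(X_\un{i}))=\ol{t}_\un{i}(X_\un{i})\ol{t}_\un{i}(X_\un{i})^*$ on each summand: it annihilates $X_\un{n}$ whenever $n_i=0$ (since $\ol{t}_\un{i}(\eta)^*X_\un{n}=\{0\}$ in that case), while for $n_i\geq1$ the unitary $u_{\un{i},\un{n}-\un{i}}$ and Cohen factorisation together give $[\ol{\psi}_\un{i}(\K(X_\un{i}))X_\un{n}]=X_\un{n}$. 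Hence $b\ol{p}_\un{i}=0$ forces $b|_{X_{\un{m}+\un{i}}}=0$ for every $\un{m}\perp F$, and by Corollary \ref{C:psiinj} it suffices to upgrade this to $b|_{X_\un{m}}=0$ for all $\un{m}\perp F$.

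The upgrade relies on the identity
\[
b|_{X_{\un{m}+\un{i}}}=\iota_\un{m}^{\un{m}+\un{i}}(b|_{X_\un{m}}) \foral \un{m}\perp F.
\]
To prove it, I would use the isomorphism $B^{F^\perp}\cong\N\T_{Z^{F^\perp}}$ of Proposition \ref{P:NTZcongB}, under which the fixed-point algebra identifies with $\ol{\spn}\{\ol{\psi}_\un{n}(\K(X_\un{n}))\mid\un{n}\perp F\}$. On a generating term $\ol{\psi}_\un{n}(\Theta_{\eta,\zeta})$ with $\un{n}\perp F$, a direct calculation using associativity of the product system multiplication, together with the fact that $n_i=0$ (so $\un{n}\leq\un{m}$ iff $\un{n}\leq\un{m}+\un{i}$), gives $b(\xi_\un{m}\eta_\un{i})=(b\xi_\un{m})\eta_\un{i}$ for all $\xi_\un{m}\in X_\un{m}$ and $\eta_\un{i}\in X_\un{i}$; via $u_{\un{m},\un{i}}$ this is precisely the ampliation formula, and contractivity of $\iota_\un{m}^{\un{m}+\un{i}}$ extends it to norm limits. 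Finally, injectivity of $X_\un{i}$ means $\phi_\un{i}$ is faithful, hence the ampliation $S\mapsto S\otimes\id_{X_\un{i}}$ is injective on $\L(X_\un{m})$ (and on $\K(A)\cong A$ in the edge case $\un{m}=\un{0}$) by a standard Hilbert module argument, so $\iota_\un{m}^{\un{m}+\un{i}}$ is injective. Therefore $b|_{X_\un{m}}=0$ for every $\un{m}\perp F$, and Corollary \ref{C:psiinj} gives $b=0$. The main obstacle is the verification of the displayed identity: although intuitively clear because $b$ is built from operators supported on coordinates disjoint from $\{i\}$, establishing it cleanly requires careful bookkeeping with the connecting maps, the multiplication unitaries, and the passage from generating elements to arbitrary elements of the fixed-point algebra.
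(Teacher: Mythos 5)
Your proof is correct and follows essentially the same route as the paper's: both identify $\ol{p}_\un{i}$ as the projection onto $\sum_{\un{n}\geq\un{i}}X_\un{n}$, relate $b|_{X_{\un{m}+\un{i}}}$ to $b|_{X_\un{m}}$ via $\iota_\un{m}^{\un{m}+\un{i}}$ (using that $\un{n}\leq\un{m}+\un{i}$ if and only if $\un{n}\leq\un{m}$ when $\un{n}\perp F$), exploit injectivity of $X_\un{i}$ through the ampliation, and conclude with Corollary \ref{C:psiinj}. The only divergence is bookkeeping: the paper reduces to finite sums by realising the fixed-point algebra as the direct limit of the cores spanned by $\ol{\psi}_\un{n}(\K(X_\un{n}))$ over the finite $\vee$-closed sets $\S_m$ and uses that $\iota_\un{m}^{\un{m}+\un{i}}$ is \emph{isometric} to compute $\|b\|=\sup\{\|b|_{X_{\un{m}+\un{i}}}\| \mid \un{m}\perp F\}=0$, whereas you extend the intertwining identity to arbitrary fixed-point elements by continuity and need only \emph{injectivity} of the ampliation -- both are valid, with the one caveat that your appeal to Proposition \ref{P:pf reducing} (stated for strong compactly aligned systems) for the commutation of $\ol{p}_\un{i}$ with the generators of $B^{F^\perp}$ should instead be verified directly on the Fock space, which is immediate from your own description of $\ol{p}_\un{i}$ since $X$ is here only assumed compactly aligned.
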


\begin{proof}
For notational convenience, we set $B:=B^{F^\perp}$.
Note that $\ol{p}_{\un{i}} \in \ol{t}_{\un{n}}(X_{\un{n}})'$ for all $\un{n} \perp F$, and therefore $\Phi$ is a well-defined $*$-homomorphism since
\begin{align*}
B^{\be_{F^\perp}|_B} & =\ol{\spn}\{\ol{t}_\un{n}(X_\un{n})\ol{t}_\un{n}(X_\un{n})^*\mid \un{n}\perp F\}. 
\end{align*}
For each $m \in \bN$, define the finite $\vee$-closed set $\S_m := \{ \un{n} \in \bZ_+^d \mid \un{n} \leq m \cdot\un{1}_{F^c}\}$,
and notice that $B^{\be_{F^\perp}|_B}$ is the direct limit of the C*-subalgebras
\[
B^{\be_{F^\perp}|_B}_m:=\spn\{\ol{\psi}_\un{n}(\K(X_\un{n})) \mid \un{n} \in \S_m\}\foral m\in\bN.
\]
Hence, to show that $\Phi$ is injective it suffices to show that $\Phi$ is injective on every $B_m^{\be_{F^\perp}|_B}$.

To this end, fix $m\in\bN$ and let 
\[
b:=\sum_{\un{n}\in \S_m}\ol{\psi}_\un{n}(k_\un{n}) \in \ker \Phi.
\]
Then, in particular, we have
\[
b|_{\sum_{\un{\ell}\geq\un{i}}X_\un{\ell}}
=
\Phi(b)|_{\sum_{\un{\ell}\geq\un{i}}X_\un{\ell}}
=
0.
\]
Notice that $\un{m}+\un{i}\geq\un{i}$ whenever $\un{m}\perp F$, and therefore
\[
\sup\{\|b|_{X_{\un{m}+\un{i}}}\|\mid \un{m}\perp F\}\leq
\|b|_{\sum_{\un{\ell}\geq\un{i}}X_\un{\ell}}\|
=0.
\]
For each $\un{n}\in \S_m$, we have $\ol{\psi}_\un{n}(k_\un{n})=\sum_{\un{\ell}\geq\un{n}}\iota_\un{n}^\un{\ell}(k_\un{n}) \in \L(\F X)$.
Hence for each $\un{m}\perp F$ we have
\[
\|b|_{X_{\un{m}+\un{i}}}\|=\|\sum_{\un{n}\in \S_m} \sum_{\un{\ell}\geq\un{n}}\iota_\un{n}^\un{\ell}(k_\un{n}) |_{X_{\un{m}+\un{i}}}\|=\|\sum_{\substack{\un{n}\in \S_m \\ \un{n}\leq\un{m}+\un{i}}}\iota_\un{n}^{\un{m}+\un{i}}(k_\un{n})\|.
\]
We then compute
\begin{align*}
0 
& = 
\sup\{\|b|_{X_{\un{m}+\un{i}}}\|\mid \un{m}\perp F\}
=
\sup\{\|\sum_{\substack{\un{n}\in \S_m \\ \un{n}\leq\un{m}+\un{i}}}\iota_\un{n}^{\un{m}+\un{i}}(k_\un{n})\|\mid \un{m}\perp F\} \\
& =
\sup\{\|\sum_{\substack{\un{n}\in \S_m \\ \un{n}\leq\un{m}}}\iota_\un{n}^{\un{m}+\un{i}}(k_\un{n})\|\mid \un{m}\perp F\}
=
\sup\{\|\iota_\un{m}^{\un{m}+\un{i}}(\sum_{\substack{\un{n}\in \S_m \\ \un{n}\leq\un{m}}}\iota_\un{n}^\un{m}(k_\un{n}))\|\mid \un{m}\perp F\} \\
& =
\sup\{\|\sum_{\substack{\un{n}\in \S_m \\ \un{n}\leq\un{m}}}\iota_\un{n}^\un{m}(k_\un{n})\|\mid \un{m}\perp F\} 
=
\|\sum_{\un{m}\perp F} \sum_{\substack{\un{n}\in \S_m \\ \un{n}\leq\un{m}}}\iota_\un{n}^\un{m}(k_\un{n}) \| \\
& =
\|\sum_{\un{n}\in \S_m} \sum_{\substack{\un{m}\perp F \\ \un{m}\geq\un{n}}}\iota_\un{n}^\un{m}(k_\un{n}) \| 
=
\|b|_{\sum_{\un{m}\perp F}X_\un{m}}\| 
=
\|b\|.
\end{align*}
In the second line we use that $\un{n}\leq\un{m}+\un{i}$ if and only if $\un{n}\leq\un{m}$, whenever $\un{n}\in\S_m$ and $\un{m} \perp F$.
In the third line we use injectivity of $X_{\un{i}}$ to deduce that $\iota_\un{m}^{\un{m}+\un{i}}$ is isometric for all $\un{m}\perp F$.
In the final line we apply Corollary \ref{C:psiinj}.
In total, we have $b=0$ and the proof is complete.
\end{proof}

\begin{proposition}\label{P:XinjYinj}
Let $X$ be a compactly aligned product system over $\bZ_+^d$ with coefficients in a C*-algebra $A$.
Let $F=[r]$ for some $r < d$ and let $i \in F$.
If $X_\un{i}$ admits a finite frame and is injective, then $Y_\un{i}^F$ admits a finite frame and is injective.
\end{proposition}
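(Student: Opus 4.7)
The first assertion, that $Y_{\un{i}}^F$ admits a finite frame, is immediate from Proposition \ref{P:Yprodsys}: if $\{\xi_{\un{i}}^{(j)}\}_{j \in [N]}$ is a finite frame of $X_{(\un{i}, \un{0})} = X_{\un{i}}$, then $\{\ol{t}_{(\un{i}, \un{0})}(\xi_{\un{i}}^{(j)})\}_{j \in [N]}$ is a finite frame of $Y_{\un{i}}^F$. The content lies in showing that the left action $\phi_{Y_{\un{i}}^F} \colon B^{F^\perp} \to \L(Y_{\un{i}}^F)$ is injective, so suppose $b \in B^{F^\perp}$ satisfies $\phi_{Y_{\un{i}}^F}(b) = 0$. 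Since $\ol{t}_{(\un{i}, \un{0})}(X_{\un{i}}) \subseteq Y_{\un{i}}^F$, this yields $b\, \ol{t}_{(\un{i}, \un{0})}(\xi) = 0$ for every $\xi \in X_{\un{i}}$, and summing against the finite frame as in Remark \ref{R:finframq} gives $b\, \ol{p}_{\un{i}} = 0$ in $\L(\F X)$; equivalently, $b$ annihilates $X_{\un{\ell}}$ for every $\un{\ell} \geq \un{i}$.

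To promote this to $b = 0$, the key commutation identity will be
\[
b(\eta_{\un{s}}\, \xi) = (b\, \eta_{\un{s}})\, \xi \quad \text{in } \F X,
\]
for every $\un{s} \perp F$, $\eta_{\un{s}} \in X_{\un{s}}$, and $\xi \in X_{\un{i}}$. I would verify this on generators $b = \ol{t}_{\un{n}}(\eta_{\un{n}}) \ol{t}_{\un{m}}(\zeta_{\un{m}})^*$ with $\un{n}, \un{m} \perp F$: either $\un{s} \not\geq \un{m}$, in which case both sides vanish, or $\un{s} \geq \un{m}$, in which case factoring $\eta_{\un{s}} = \eta'_{\un{m}} \eta''_{\un{s} - \un{m}}$ and computing $\ol{t}_{\un{m}}(\zeta_{\un{m}})^*(\eta'_{\un{m}} \eta''_{\un{s} - \un{m}} \xi) = \sca{\zeta_{\un{m}}, \eta'_{\un{m}}}\, \eta''_{\un{s} - \un{m}}\, \xi$ shows both sides agree. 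Linearity and continuity then extend the identity to all of $B^{F^\perp}$.

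Assuming the identity, $\eta_{\un{s}} \xi$ lives in $X_{\un{s} + \un{i}}$ with $\un{s} + \un{i} \geq \un{i}$, so $b\, \ol{p}_{\un{i}} = 0$ gives $(b\, \eta_{\un{s}})\, \xi = 0$ for every $\xi \in X_{\un{i}}$. Decomposing $b\, \eta_{\un{s}} = \sum_{\un{t} \perp F} (b\, \eta_{\un{s}})_{\un{t}}$ along the $F$-grading, which $b \in B^{F^\perp}$ preserves, and using orthogonality of the $X_{\un{t} + \un{i}}$ yields $(b\,\eta_{\un{s}})_{\un{t}}\, \xi = 0$ for each $\un{t} \perp F$. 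Lemma \ref{L:lance} combined with injectivity of $X_{\un{i}}$ then forces the operator $\Theta_{(b\,\eta_{\un{s}})_{\un{t}}} \in \L(X_{\un{i}}, X_{\un{t}+\un{i}})$ to have norm $\|(b\,\eta_{\un{s}})_{\un{t}}\|$, hence $(b\,\eta_{\un{s}})_{\un{t}} = 0$ and $b\, \eta_{\un{s}} = 0$. Ranging over $\un{s} \perp F$ shows that $b$ vanishes on $\F Z^{F^\perp} = \sum_{\un{s} \perp F} X_{\un{s}}$; by the identification $B^{F^\perp} \cong \N\T_{Z^{F^\perp}}$ of Proposition \ref{P:NTZcongB} and faithfulness of the Fock representation over the amenable semigroup $\bZ_+^{d-r}$, we conclude $b = 0$. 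The main obstacle is the commutation identity, since the remaining steps follow as direct applications of the preceding results.
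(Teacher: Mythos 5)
Your proof is correct, but it takes a genuinely different route from the paper's. After the common first step (the frame yields $b\,\ol{p}_{\un{i}}=0$ from $\phi_{Y^F_{\un{i}}}(b)=0$), the paper applies the frame to the positive element $b^*b$, uses that $\ol{p}_{\un{i}}\in\N\T_X^{\be_{F^\perp}}$ and that $E_{\be_{F^\perp}}$ is a bimodule map over the fixed point algebra to get $E_{\be_{F^\perp}}(b^*b)\,\ol{p}_{\un{i}}=0$, and then invokes Proposition \ref{P:fpiinj} (injectivity of $c\mapsto c\,\ol{p}_{\un{i}}$ on $(B^{F^\perp})^{\be_{F^\perp}|_{B^{F^\perp}}}$) together with faithfulness of $E_{\be_{F^\perp}}$; the passage through $b^*b$ is precisely what lands the argument in the fixed-point algebra, where Proposition \ref{P:fpiinj} applies. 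You instead prove directly --- via the commutation identity, the $F$-grading, and Lemma \ref{L:lance} plus injectivity of $\phi_{\un{i}}$ --- that $c\mapsto c\,\ol{p}_{\un{i}}$ is injective on \emph{all} of $B^{F^\perp}$, which strengthens Proposition \ref{P:fpiinj}, and you replace the faithful expectation by faithfulness of the Fock representation of $Z^{F^\perp}$: indeed, restriction of $B^{F^\perp}$ to $\sum_{\un{s}\perp F}X_{\un{s}}\cong\F Z^{F^\perp}$ composed with the isomorphism of Proposition \ref{P:NTZcongB} is the Fock representation of $\N\T_{Z^{F^\perp}}$, which is faithful since $\bZ_+^{d-r}$ sits inside the amenable group $\bZ^{d-r}$ (as the paper records with reference to \cite{KKLL21b}). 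Your route buys the stronger compression-injectivity statement and dispenses with the expectation/positivity trick; its costs are the amenability input and a few points you should spell out: right multiplication by $\xi\in X_{\un{i}}$ on $\sum_{\un{t}\perp F}X_{\un{t}}$ must be defined fiberwise and shown bounded --- from $\sca{\zeta\xi,\zeta\xi}=\sca{\xi,\phi_{\un{i}}(\sca{\zeta,\zeta})\xi}$ one gets $\|\zeta\xi\|\leq\|\zeta\|\,\|\xi\|$ --- before ``linearity and continuity'' extend the identity from monomials to $B^{F^\perp}$ and before the infinite-sum manipulation $(b\eta_{\un{s}})\xi=\sum_{\un{t}\perp F}(b\eta_{\un{s}})_{\un{t}}\,\xi$ is legitimate; the case dichotomy rests on the equivalence $\un{s}+\un{i}\geq\un{m}\iff\un{s}\geq\un{m}$, valid because $\un{m}\perp F\ni i$ forces the $i$-th coordinate of $\un{m}$ to vanish; and the factorisation $\eta_{\un{s}}=\eta'_{\un{m}}\eta''_{\un{s}-\un{m}}$ is only available densely, with the degenerate cases $\un{m}=\un{0}$, $\un{s}=\un{m}$ and $\un{t}=\un{0}$ needing separate one-line checks. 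None of these is an obstacle, so the argument stands.
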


\begin{proof}
For notational convenience, we set $B:=B^{F^\perp}$ and $Y:=Y^F$.
Let $\{\xi_{\un{i}}^{(j)}\}_{j\in[N]}$ be a finite frame of $X_\un{i}$.
Then $\{\ol{t}_\un{i}(\xi_{\un{i}}^{(j)})\}_{j\in[N]}$ defines a finite frame of $Y_\un{i}$ by Lemma \ref{L:Yprodsys}.
Next let $b \in \ker \phi_{Y_{\un{i}}}$, so that
\[
b^*b\ol{p}_\un{i}
=
\sum_{j=1}^{N} b^*b\ol{t}_\un{i}(\xi_{\un{i}}^{(j)})\ol{t}_\un{i}(\xi_{\un{i}}^{(j)})^*
=
\sum_{j=1}^{N} b^* \left(\phi_{Y_{\un{i}}}(b) \ol{t}_\un{i}(\xi_{\un{i}}^{(j)}) \right) \ol{t}_\un{i}(\xi_{\un{i}}^{(j)})^*
=
0.
\]
Noting that $\ol{p}_\un{i}\in\N\T_X^{\be_{F^\perp}}$, we obtain $E_{\be_{F^\perp}}(b^*b)\ol{p}_\un{i}=0$ by using that $E_{\be_{F^\perp}}$ is a bimodule map over $\N\T_X^{\be_{F^\perp}}$.
An application of Proposition \ref{P:fpiinj} gives that $E_{\be_{F^\perp}}(b^*b)=0$, and since $E_{\be_{F^\perp}}$ is faithful we obtain $b^*b = 0$.
We conclude that $b=0$, and the proof is complete.
\end{proof}

\begin{corollary}\label{C:somedirreg}
Let $X$ be a product system over $\bZ_+^d$ with coefficients in a C*-algebra $A$, wherein $X_\un{i}$ admits a finite frame for all $i \in [d]$.
Let $F=[r]$ for some $r < d$.
If $X_{\un{i}}$ is injective for all $i \in F$, then $Y^F$ is regular, and the canonical $*$-isomorphism $\N\T_{Y^F} \cong \N\T_X$ descends to a $*$-isomorphism
\[
\N\O_{Y^F} \cong \N\T_X/\sca{\ol{\pi}(A)\ol{q}_\un{i}\mid i\in F}.
\]
\end{corollary}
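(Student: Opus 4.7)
The plan is to derive the corollary as an essentially combinatorial consequence of Theorem \ref{T:somedir} together with the structural results on $Y^F$ already established. The two tasks are: (a) establish that $Y^F$ is regular; (b) show that the ideal $I_{Y^F}$ appearing in Theorem \ref{T:somedir} is trivial, so that $[Y^F]_{I_{Y^F}} = Y^F$ and therefore $\N\O_{[Y^F]_{I_{Y^F}}} = \N\O_{Y^F}$.

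For (a), observe that $Y^F$ is a product system over $\bZ_+^r$ with generators $\un{i}$, $i \in F = [r]$. By Proposition \ref{P:fibfinfram}, every $X_{(\un{n},\un{0})}$ admits a finite frame, so Proposition \ref{P:Yprodsys} yields that each non-trivial fibre $Y^F_{\un{n}}$ ($\un{n} \in \bZ_+^r \setminus \{\un{0}\}$) admits a finite frame; in particular, $\id_{Y^F_{\un{n}}} \in \K(Y^F_{\un{n}})$, and so $\K(Y^F_{\un{n}}) = \L(Y^F_{\un{n}})$, giving $\phi_{Y^F_\un{n}}(B^{F^\perp}) \subseteq \K(Y^F_{\un{n}})$. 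For injectivity, Proposition \ref{P:XinjYinj} applies at each generator $\un{i}$ with $i \in F$ (using the hypothesis that $X_{\un{i}}$ is both injective and admits a finite frame), producing injectivity of $Y^F_{\un{i}}$. Proposition \ref{P:inj+comp}(i), applied to $Y^F$ over $\bZ_+^r$, then lifts injectivity from the generators to every fibre, establishing that $Y^F$ is regular.

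For (b), note that we now satisfy the hypotheses of Proposition \ref{P:alldir} for $Y^F$ (each $Y^F_{\un{i}}$ with $i \in F$ admits a finite frame). The last sentence of that proposition identifies $I_{Y^F}$ with $\L^{\fJ}_\mt = \{b \in B^{F^\perp} \mid \lim_{\un{n} \in \bZ_+^r} \|\phi_{Y^F_\un{n}}(b)\| = 0\}$, and when $Y^F$ is injective this ideal is $\{0\}$, since every $\phi_{Y^F_\un{n}}$ is then isometric. Therefore $[Y^F]_{I_{Y^F}} = Y^F$ in the canonical sense, and so $\N\O_{[Y^F]_{I_{Y^F}}} = \N\O_{Y^F}$.

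Combining (a) and (b) with Theorem \ref{T:somedir} — whose canonical $*$-isomorphism $\N\T_{Y^F} \cong \N\T_X$ (from Theorem \ref{T:NTXcongNTY}) descends to a $*$-isomorphism $\N\O_{[Y^F]_{I_{Y^F}}} \cong \N\T_X/\sca{\ol{\pi}(A)\ol{q}_\un{i}\mid i\in F}$ — immediately yields the claimed identification $\N\O_{Y^F} \cong \N\T_X/\sca{\ol{\pi}(A)\ol{q}_\un{i}\mid i\in F}$. There is no serious obstacle: the work is already encoded in Propositions \ref{P:Yprodsys}, \ref{P:XinjYinj}, \ref{P:inj+comp}, \ref{P:alldir}, and Theorem \ref{T:somedir}; the only mild care needed is to verify that the quotient construction at the trivial ideal yields $Y^F$ itself, so that the two Cuntz-Nica-Pimsner algebras are canonically identified.
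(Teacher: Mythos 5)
Your proposal is correct and follows essentially the same route as the paper: regularity of $Y^F$ via Propositions \ref{P:Yprodsys}, \ref{P:XinjYinj} and \ref{P:inj+comp}, triviality of $I_{Y^F}$ via the last part of Proposition \ref{P:alldir} (injectivity forcing each $\phi_{Y^F_{\un{n}}}$ to be isometric), and then a direct application of Theorem \ref{T:somedir}. The only difference is that you spell out the intermediate steps (finite frames on all fibres, the identification $[Y^F]_{I_{Y^F}} = Y^F$) which the paper leaves implicit.
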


\begin{proof}
By Propositions \ref{P:inj+comp} and \ref{P:XinjYinj}, we have that $Y^F$ is regular.
Next, recall the definition of the ideal $I_{Y^F}$ of Theorem \ref{T:somedir}.
An application of the last part of Proposition \ref{P:alldir} to $Y^F$ gives that $I_{Y^F} = \{0\}$.
Thus the final claim follows by a direct application of Theorem \ref{T:somedir}.
\end{proof}

The description in Theorem \ref{T:somedir} first applies the $Y^F$ construction to $X$ and then passes to a quotient.
We can have an alternative route by first considering a quotient of $X$ and then applying the $Y^F$ construction.
To avoid confusion, we will denote by $Y^F_X$ the product system induced by $F=[r]$ and $X$, as in Definition \ref{D:YF}.

\begin{theorem}\label{T:somedir2}
Let $X$ be a product system over $\bZ_+^d$ with coefficients in a C*-algebra $A$, wherein $X_\un{i}$ admits a finite frame for all $i\in[d]$.
Let $F=[r]$ for some $r < d$, and let 
\[
\fJ := \sca{\ol{\pi}_X(A) \ol{q}_{X, \un{i}} \mid i \in F } \subseteq \N\T_X
\qand
I^F_X := \L_\mt^{\fJ} \subseteq A.
\]
Then the following hold:
\begin{enumerate}
\item $I^F_X = \{a \in A \mid \lim_{\un{n} \in \bZ_+^r} \|\phi_{(\un{n}, \un{0})}(a)\| = 0\}$.
\item $[X_{\un{i}}]_{I^F_X}$ is injective for all $i \in F$.
\item The product system $Y^F_{[X]_{I^F_X}}$ is regular.
\item The association $X \to [X]_{I^F_X} \to Y^F_{[X]_{I^F_X}}$ induces a $*$-isomorphism $\N\T_X / \fJ \cong \N\O_{Y^F_{[X]_{I^F_X}}}$.
\end{enumerate}
\end{theorem}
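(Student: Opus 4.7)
The plan is to establish the four items in order, pivoting on the canonical $*$-isomorphism $\N\T_X \cong \N\T_{Y^F_X}$ of Theorem \ref{T:NTXcongNTY}. Under this isomorphism $\fJ$ corresponds to $\fJ_Y := \sca{\ol{\pi}_{Y^F_X}(B^{F^\perp}) \ol{q}_{Y^F_X, \un{i}} \mid i \in [r]}$, as observed in the proof of Theorem \ref{T:somedir}; since $F = [r]$ exhausts the generators of $\bZ_+^r$, this is exactly the ``all-directions'' ideal of Proposition \ref{P:alldir} for $Y^F_X$, which delivers a complete description of $\L_\mt^{\fJ_Y} = I_{Y^F_X}$ and later of the quotient. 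Items (ii) and (iii) feed into item (iv), where the same $Y^F$-construction is applied to the quotient product system $[X]_{I^F_X}$.

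For (i), Proposition \ref{P:alldir} applied to $Y^F_X$ gives $I_{Y^F_X} = \{b \in B^{F^\perp} \mid \lim_{\un{n} \in \bZ_+^r} \|\phi_{Y^F_{X,\un{n}}}(b)\| = 0\}$, and a direct computation on the dense subset $\ol{t}_{X, (\un{n}, \un{0})}(X_{(\un{n}, \un{0})}) B^{F^\perp}$ of $Y^F_{X, \un{n}}$ yields $\|\phi_{Y^F_{X, \un{n}}}(a)\| = \|\phi_{(\un{n}, \un{0})}(a)\|$ for $a \in A$, whence $I^F_X = A \cap I_{Y^F_X}$ assumes the claimed form. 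For (ii), fix $i \in F$ and $[a]_{I^F_X} \in \ker [\phi_{\un{i}}]_{I^F_X}$; positive invariance of $I^F_X$ (automatic since $I^F_X = \L_\mt^\fJ$ is the $\mt$-component of an NT-$2^d$-tuple) gives $\phi_{\un{i}}(a) X_{\un{i}} \subseteq X_{\un{i}} I^F_X$, so $\ol{\pi}_X(a) \ol{t}_{X, \un{i}}(\xi) \in \fJ$ for every $\xi \in X_{\un{i}}$. Expanding $\ol{p}_{X, \un{i}}$ through a finite frame $\{\xi^{(j)}\}_{j \in [N]}$ of $X_{\un{i}}$ forces $\ol{\pi}_X(a) \ol{p}_{X, \un{i}} \in \fJ$, and combined with the defining relation $\ol{\pi}_X(a) \ol{q}_{X, \un{i}} \in \fJ$ this yields $\ol{\pi}_X(a) \in \fJ$, so $a \in I^F_X$. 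For (iii), Proposition \ref{P:finframquot} supplies finite frames for each $[X_{\un{i}}]_{I^F_X}$ ($i \in [d]$), and combined with (ii) the hypotheses of Corollary \ref{C:somedirreg} are met for $[X]_{I^F_X}$, establishing regularity of $Y^F_{[X]_{I^F_X}}$.

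The main step is (iv). Corollary \ref{C:somedirreg} applied to $[X]_{I^F_X}$ (justified by (ii) and the finite frame transfer) yields a canonical $*$-isomorphism $\N\O_{Y^F_{[X]_{I^F_X}}} \cong \N\T_{[X]_{I^F_X}}/\fK$, where $\fK := \sca{\ol{\pi}_{[X]_{I^F_X}}([A]_{I^F_X}) \ol{q}_{[X]_{I^F_X}, \un{i}} \mid i \in F}$; it remains to construct a canonical $*$-isomorphism $\N\T_X/\fJ \cong \N\T_{[X]_{I^F_X}}/\fK$. In one direction, the surjection $[\cdot]_{I^F_X} \colon \N\T_X \to \N\T_{[X]_{I^F_X}}$ sends each generator $\ol{\pi}_X(a) \ol{q}_{X, \un{i}}$ of $\fJ$ to the generator $\ol{\pi}_{[X]_{I^F_X}}([a]_{I^F_X}) \ol{q}_{[X]_{I^F_X}, \un{i}}$ of $\fK$, inducing $\Phi \colon \N\T_X/\fJ \to \N\T_{[X]_{I^F_X}}/\fK$. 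In the other direction, the Nica-covariant representation $(Q_\fJ \circ \ol{\pi}_X, Q_\fJ \circ \ol{t}_X)$ of $X$ on $\N\T_X/\fJ$ has $A$-kernel $I^F_X$ and therefore factors through $[X]_{I^F_X}$ by positive invariance; the universal property of $\N\T_{[X]_{I^F_X}}$ produces a canonical $*$-homomorphism whose image annihilates each generator of $\fK$ (since $\Psi(\ol{\pi}_{[X]_{I^F_X}}([a]_{I^F_X}) \ol{q}_{[X]_{I^F_X}, \un{i}}) = Q_\fJ(\ol{\pi}_X(a) \ol{q}_{X, \un{i}}) = 0$ by definition of $\fJ$), yielding $\Psi \colon \N\T_{[X]_{I^F_X}}/\fK \to \N\T_X/\fJ$. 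A generator-level check confirms $\Phi$ and $\Psi$ are mutually inverse. The main obstacle is in (iv): although each of $\Phi$ and $\Psi$ is easy to construct, verifying $\Psi \circ \Phi = \id$ forces one to handle the non-uniqueness of lifting $\N\T_{[X]_{I^F_X}}$-elements back to $\N\T_X$; this ambiguity is absorbed automatically because the universal construction of $\Psi$ shows that the composition $\N\T_X \to \N\T_{[X]_{I^F_X}} \xrightarrow{\Psi} \N\T_X/\fJ$ coincides with $Q_\fJ$ on generators, implicitly establishing $\ker [\cdot]_{I^F_X} \subseteq \fJ$ as a byproduct rather than requiring a separate lemma.
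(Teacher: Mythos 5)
Your proposal is correct and follows essentially the same route as the paper's proof. Items (ii) and (iii) reproduce the paper's argument verbatim in substance: the paper proves (ii) by ``following the same arguments as in the proof of Proposition \ref{P:alldir}, but for $i \in F$'', which is exactly the frame-expansion computation you spell out (using $\ol{\pi}_X(I^F_X)\subseteq\fJ$, $\ol{\pi}_X(a)\ol{p}_{X,\un{i}}\in\fJ$ and the generator $\ol{\pi}_X(a)\ol{q}_{X,\un{i}}\in\fJ$), and (iii) is obtained in both cases from Proposition \ref{P:finframquot} together with Corollary \ref{C:somedirreg} applied to $[X]_{I^F_X}$. Your item (iv) is also the paper's argument: the paper descends $[\hspace{1pt}\cdot\hspace{1pt}]_{I^F_X}$ to $\Phi$ using $[\ol{\pi}_X(a)\ol{q}_{X,\un{i}}]_{I^F_X}=\ol{\pi}_{[X]_{I^F_X}}([a]_{I^F_X})\ol{q}_{[X]_{I^F_X},\un{i}}$, and builds the reverse map $\tilde{\Psi}$ by invoking item (ii) of Proposition \ref{P:tuple ideal} --- whose content is precisely the factoring of $(Q_\fJ\circ\ol{\pi}_X,Q_\fJ\circ\ol{t}_X)$ through $[X]_{I^F_X}$ that you describe --- and then kills the generators of $\fK$; like you, it only needs the left-inverse identity on generators, with surjectivity of $\Phi$ doing the rest, so your worry about ``non-uniqueness of lifting'' is moot (all maps are canonical, so checking on generators suffices, as you in effect conclude). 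The one genuine deviation is item (i): the paper simply cites \cite[Proposition 4.3]{Kak20} for $X$ and $F$, whereas you transport Proposition \ref{P:alldir} through the isomorphism $\N\T_X\cong\N\T_{Y^F_X}$ of Theorem \ref{T:NTXcongNTY}, the ideal matching $(\pi\times t)(\fJ_Y)=\fJ$ from the proof of Theorem \ref{T:somedir}, and the norm identity $\|\phi_{Y^F_{X,\un{n}}}(\ol{\pi}(a))\|=\|\phi_{(\un{n},\un{0})}(a)\|$ (which does hold, by an approximate-unit argument and isometry of the Fock representation); since Proposition \ref{P:alldir} derives its kernel description from the same external citation, this is a legitimate internal rewiring of the same input rather than a genuinely different argument, and it buys you a derivation expressed entirely in the paper's own machinery at the cost of routing through the $Y^F_X$ construction.
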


\begin{proof}
To ease notation, we write $I:=I^F_X$.
We note that the product systems we consider here all admit a finite frame, by virtue of Lemma \ref{L:finframquot} and Lemma \ref{L:Yprodsys}.
We note that the quotient $[X]_I$ admits a finite frame by Lemma \ref{L:finframquot}.
Item (i) follows by \cite[Proposition 4.3]{Kak20}.
By following the same arguments as in the proof of Proposition \ref{P:alldir}, but for $i \in F$ instead of $i \in [d]$, we obtain that $[X_{\un{i}}]_{I}$ is injective for all $i \in F$.
An application of Corollary \ref{C:somedirreg} yields that $Y^F_{[X]_I}$ is regular, and that there is a canonical $*$-isomorphism
\[
\N\T_{[X]_I} / \sca{\ol{\pi}_{[X]_I}([A]_I)\ol{q}_{[X]_I, \un{i}} \mid i\in F} \cong \N\O_{Y_{[X]_I}^F}.
\]
It suffices to show that the canonical map $[\hspace{1pt}\cdot\hspace{1pt}]_I \colon \N\T_X \to \N\T_{[X]_I}$ descends to a $*$-isomorphism
\[
\Phi \colon \N\T_X/\fJ \to \N\T_{[X]_I} / \sca{\ol{\pi}_{[X]_I}([A]_I)\ol{q}_{[X]_I, \un{i}} \mid i\in F}.
\]

To this end, we have $[\ol{\pi}_{X}(a) \ol{q}_{X, \un{i}}]_I = \ol{\pi}_{[X]_I}([a]_I) \ol{q}_{[X]_I, \un{i}}$ for all $a \in A$ and $i \in F$.
Therefore
\[
[\fJ]_I
=
\sca{\ol{\pi}_{[X]_I}([A]_I)\ol{q}_{[X]_I, \un{i}} \mid i\in F},
\]
and thus $\Phi$ is a well-defined $*$-epimorphism.
On the other hand, by applying item (ii) of Proposition \ref{P:tuple ideal} to $\fJ$ and noting that $I \equiv \L^\fJ_\mt$, we obtain a canonical $*$-epimorphism
\[
\Psi \colon \N\T_{[X]_I} \to \N\O([\L^\fJ]_{I}, [X]_I) \cong \N\T_X/ \fJ,
\]  
such that
\begin{align*}
\Psi(\ol{\pi}_{[X]_I}([a]_I)) & = \ol{\pi}_X(a) + \fJ 
\qand
\Psi(\ol{t}_{[X]_I, \un{n}}([\xi_{\un{n}}]_I) = \ol{t}_{X,\un{n}}(\xi_{\un{n}}) + \fJ 
\end{align*}
for all $a \in A, \xi_\un{n} \in X_{\un{n}}$ and $\un{n} \in \bZ_+^d$.
In particular, we have $\Psi( \ol{p}_{[X]_I, \un{i}} ) = \ol{p}_{X, \un{i}} + \fJ$ for all $i \in F$ using a finite frame expansion, and thus
\[
\Psi( \ol{\pi}_{[X]_I}([a]_I) \ol{q}_{[X]_I, \un{i}} ) = \ol{\pi}_X(a) \ol{q}_{X, \un{i}} + \fJ = 0,
\foral
a \in A, i \in F,
\]
by definition of $\fJ$.
Hence $\Psi$ descends to a canonical $*$-epimorphism
\[
\tilde{\Psi} \colon \N\T_{[X]_I} / \sca{\ol{\pi}_{[X]_I}([A]_I)\ol{q}_{[X]_I, \un{i}} \mid i\in F} \to \N\T_X/\fJ.
\]
By definition $\tilde{\Psi}$ is a left inverse of $\Phi$ and thus $\Phi$ is a $*$-isomorphism, as required.
\end{proof}

Theorems \ref{T:somedir} and \ref{T:somedir2} show that there is no difference as when to consider the quotient product system.

\begin{corollary}\label{C:somedir}
Let $X$ be a product system over $\bZ_+^d$ with coefficients in a C*-algebra $A$, wherein $X_\un{i}$ admits a finite frame for all $i\in[d]$.
Let $F=[r]$ for some $r < d$.
On the one hand, define the positively invariant ideal
\[
I_{Y_X^F} := \ker \{ Y^F_{X, \un{0}} \to \N\T_{Y^F_X} / \sca{\ol{\pi}_{Y^F_X}(Y^F_{X, \un{0}})\ol{q}_{Y^F_X,\un{i}}\mid i\in F} \}
\]
for the product system $Y^F_X$ related to $X$ and $F$.
On the other hand, define the positively invariant ideal
\[
I_X^F := \ker\{ A \to \N\T_X/ \sca{\ol{\pi}_X(A) \ol{q}_{X, \un{i}} \mid i \in F } \}
\]
for $X$, and consider the product system $Y^F_{[X]_{I_X^F}}$ related to $[X]_{I_X^F}$ and $F$.
Then there are canonical $*$-isomorphisms
\[
\N\O_{[Y^F_X]_{I_{Y_X^F}}} 
\cong
\N\T_X / \sca{\ol{\pi}_X(A) \ol{q}_{X, \un{i}} \mid i \in F }
\cong
\N\O_{Y^F_{[X]_{I_X^F}}}.
\]
If, in addition, $X_{\un{i}}$ is injective for all $i \in F$, then $Y_X^F$ is regular, $I_{Y_X^F} = \{0\}$ and $I_X^F = \{0\}$.
\end{corollary}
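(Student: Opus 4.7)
The plan is to assemble the corollary directly from Theorems \ref{T:somedir} and \ref{T:somedir2}, with a short auxiliary argument for the regularity claim. Without loss of generality, I would reduce to the case $F=[r]$ for some $r<d$ (as in the ambient theorems) by relabelling coordinates.

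First I would observe that the left-hand $*$-isomorphism
\[
\N\O_{[Y^F_X]_{I_{Y_X^F}}} \cong \N\T_X/\sca{\ol{\pi}_X(A)\ol{q}_{X,\un{i}} \mid i \in F}
\]
is precisely the statement of Theorem \ref{T:somedir}, because the ideal denoted $I_{Y^F}$ there is exactly the ideal we have named $I_{Y_X^F}$ in the corollary. Dually, the right-hand $*$-isomorphism
\[
\N\T_X/\sca{\ol{\pi}_X(A)\ol{q}_{X,\un{i}} \mid i \in F} \cong \N\O_{Y^F_{[X]_{I_X^F}}}
\]
is item (iv) of Theorem \ref{T:somedir2}, with the ideal $I_X^F$ matching verbatim.

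For the addendum, suppose $X_{\un{i}}$ is injective for all $i \in F$. My next step would be to check that $Y^F_X$ is regular. Proposition \ref{P:Yprodsys} guarantees that each fibre $Y^F_{X,\un{i}}$ admits a finite frame, so its left action lies in the compacts; Proposition \ref{P:XinjYinj} then transfers injectivity from $X_{\un{i}}$ to $Y^F_{X,\un{i}}$ for every $i \in F$. Proposition \ref{P:inj+comp} upgrades this regularity on the generators to regularity of every fibre of $Y^F_X$.

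With $Y^F_X$ regular in hand, the equalities $I_{Y_X^F}=\{0\}$ and $I_X^F=\{0\}$ follow from two applications of the vanishing description in Proposition \ref{P:alldir} (equivalently item (i) of Theorem \ref{T:somedir2}). For $I_{Y_X^F}$, I would view $Y^F_X$ as a product system over $\bZ_+^r$ and apply Proposition \ref{P:alldir} with index set $[r]$; regularity makes every left action of $Y^F_X$ isometric, so the resulting limit condition forces $I_{Y_X^F}=\{0\}$. For $I_X^F$, item (i) of Theorem \ref{T:somedir2} gives
\[
I_X^F=\{a\in A \mid \lim_{\un{n}\in\bZ_+^r}\|\phi_{X,(\un{n},\un{0})}(a)\|=0\},
\]
and applying Proposition \ref{P:inj+comp} to the sub-product system $\{X_{(\un{n},\un{0})}\}_{\un{n}\in\bZ_+^r}$ (whose generators are injective by hypothesis) makes each $\phi_{X,(\un{n},\un{0})}$ isometric, again forcing $I_X^F=\{0\}$. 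The argument is essentially book-keeping; there is no single hard step, the main care being to correctly specialise Proposition \ref{P:alldir} to the product system $Y^F_X$ over the smaller semigroup $\bZ_+^r$ and to match the ideal notation of Theorems \ref{T:somedir} and \ref{T:somedir2} with that of the corollary.
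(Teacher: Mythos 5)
Your proposal is correct and follows essentially the same route as the paper: the two isomorphisms are indeed direct citations of Theorem \ref{T:somedir} and item (iv) of Theorem \ref{T:somedir2} (with matching ideals), and your addendum argument -- regularity of $Y^F_X$ via Propositions \ref{P:Yprodsys}, \ref{P:XinjYinj} and \ref{P:inj+comp}, then vanishing of both ideals via the limit description in Proposition \ref{P:alldir} and item (i) of Theorem \ref{T:somedir2} -- is exactly the content of Corollary \ref{C:somedirreg} and the paper's proof of the corollary, merely unfolded into its constituent citations. No gaps; the paper phrases $I_{Y_X^F}=\{0\}$ as "the coefficient algebra embeds in the Cuntz-Nica-Pimsner algebra", which is the same mechanism as your isometry-of-left-actions argument.
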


\begin{proof}
It suffices to comment on the last part.
If $X_{\un{i}}$ is injective for all $i \in F$, then $Y_X^F$ is regular by Corollary \ref{C:somedirreg}.
Thus it follows that $I_{Y_X^F} = \{0\}$ since the coefficient algebra embeds in the Cuntz-Nica-Pimsner algebra.
Finally, every $\phi_{(\un{n}, \un{0})}$ with $\un{n} \in \bZ_+^r$ is isometric, and thus $I_X^F = \{0\}$ by item (i) of Theorem \ref{T:somedir2}.
\end{proof}


\end{document}